\newtheoremstyle{newremark}
  {5pt}
  {5pt}
  {\rmfamily}
  {}
  {\rmfamily\bf}
  {.}
  {.5em}
  {}
\newtheorem{theorem}{Theorem}
\newtheorem{lemma}[theorem]{Lemma}
\newtheorem{corollary}[theorem]{Corollary}
\newtheorem{proposition}[theorem]{Proposition}
\theoremstyle{newremark}
\newtheorem{remark}[theorem]{Remark}
\newtheorem{definition}[theorem]{Definition}
\newtheorem*{definition*}{Definition} %no numbering for Theorem*
\newtheorem*{notations*}{Notations}
\newtheorem*{convention*}{Convention}
\numberwithin{theorem}{section}
\numberwithin{equation}{section}
\newcommand{\N}{\mathbb{N}} %natural numbers
\newcommand{\R}{\mathbb{R}} %real numbers
\def\XXint#1#2#3{{%
\setbox0=\hbox{$#1{#2#3}{\int}$}
\vcenter{\hbox{$#2#3$}}\kern-.5\wd0}}
\renewcommand{\leq}{\leqslant}
\renewcommand{\geq}{\geqslant}
\renewcommand{\subset}{\subseteq}
\newcommand{\LL}{\mathop{\hbox{\vrule height 6pt width .5pt depth 0pt
\vrule height .5pt width 3pt depth 0pt}}\nolimits}
\newcommand{\res}{\mathop{\hbox{\vrule height 7pt width .5pt depth 0pt
\vrule height .5pt width 6pt depth 0pt}}\nolimits}
\newcommand{\Om}{\Omega}
\newcommand{\eps}{\varepsilon}
\newcommand{\e}{{\rm e}}
\newcommand{\de}{{\rm d}}
\begin{document}

%=================
% TITLE AND AUTHOR
%=================

\title[\bf Fractional multi-phase transitions]{Fractional multi-phase transitions\\
 and nonlocal minimal partitions}

\author{Thomas Gabard}
\address{Thomas Gabard\\ LAMA, Universit\'e Paris Est Cr\'eteil, France} 
%\\  Department of Mathematics\\ Baltimore, USA}
\email{thomas.gabard@u-pec.fr}
\author{Vincent Millot}
\address{Vincent Millot\\ LAMA, Universit\'e Paris Est Cr\'eteil, France}
%\\ 
% Lab. J.L. Lions (CNRS UMR 7598)\\ Paris, France}
\email{vincent.millot@u-pec.fr}

%\date{\today}

%=========
% ABSTRACT
%=========

%\begin{abstract}

%\end{abstract}

\maketitle

%==================
% TABLE OF CONTENTS
%==================

\tableofcontents

%%%%%%%%%%%%%%%%%%%%%%%%%%%%%%%%%%%%%%%%%%%%%%%%%%%%%%%%%%

\section{Introduction}

The most popular phase-field model for phase transitions is probably the van der Waals-Cahn-Hilliard theory for mixtures of immiscible fluids. In this theory,  
multi-phase systems are described through energetic flows of Allen-Cahn or Ginzburburg-Landau functionals of the form 
\begin{equation}\label{EN0}
\int_{\Om} \eps|\nabla u|^2+\frac{1}{\eps}W(u) \,\de x\quad\text{ with }\,\eps\in(0,1)\,,
\end{equation}
where the {\sl phase indicator function} $u:\Om\subset\R^n\to\R^d$ represents a vector valued density distribution of the different phases  within the container $\Omega$, and each scalar component of $u$ usually describes the density of a single ingredient of the mixture. The potential $W:\R^d\to[0,\infty)$ vanishes exactly at finitely many global minima ${\bf a}_1,\ldots,{\bf a}_m\in\R^d$ with $m\geq 2$, see e.g. \cite{Bal,BrRe,FonTar,Gurt,Ster,Ster2} and the references therein.  
Here the container $\Omega$ is assumed to be a smooth and bounded open set in dimension $n\geq 2$. Equilibrium states of the model corresponds to critical points of the energy, and they 
satisfy the vectorial Allen-Cahn  equation
\begin{equation}\label{AC0}
-\Delta u_\eps+\frac{1}{\eps^{2}}\nabla W(u_\eps)=0\quad\text{in $\Om$}\,. 
\end{equation}
For small values of the parameter $\eps$, a uniform upper bound on the energy  implies  that $W(u_\eps)\simeq 0$ away from a thin region of characteristic thickness $\eps$. At the formal level, the transition layer between two different minima  has a width of order~$\eps$, and the container $\Omega$ is gets divided into subdomains where $u_\varepsilon$ assumes values close to one of the minima. In the {\sl sharp interface limit} $\varepsilon\to 0$, the union of the boundaries of the resulting partition of $\Omega$ 
is expected to be a critical point of some inhomogeneous area functional where the area of an interface separating two subdomains is weighted by a surface tension coefficient depending on the two minima of $W$ on both sides of it.   

In the scalar case $d=1$ and for binary systems $m=2$, this description has been first proved for energy minimizing solutions by means of $\Gamma$-convergence \cite{Mod,ModMort}.  Still concerning minimizers, the vectorial case $d\geq 2$ for binary systems has been addressed in \cite{FonTar,Ster,Ster2}. The $\Gamma$-convergence as $\varepsilon\to 0$ of the functional \eqref{EN0} in the general case has been established in \cite{Bal}.  
It shows that sequences $\{u_\varepsilon\}$ with uniformly bounded energy subconverges in $L^1(\Omega)$ as $\varepsilon\to 0$ to some map $u_0\in BV(\Om;\R^d)$ with values in 
$\{W=0\}=\{{\bf a}_1,\ldots,{\bf a}_m\}$. The limiting map $u_0$ being of bounded variation with values in the finite set $\{{\bf a}_1,\ldots,{\bf a}_m\}$ means that $u_0$ rewrites as  $u_0=\sum_j\chi_{E_j}{\bf a}_j$ (with $\chi_{E_j}$  the characteristic function of $E_j$)  for a family $(E_1,\ldots,E_m)$ of essentially disjoint subsets of $\Omega$ of finite distributional perimeter (see e.g. \cite{AFP}) realizing a partition of $\Omega$ (in the sense that $|\Omega\setminus(\cup_jE_j)|=0$ and $|E_i\cap E_j|=0$ for $i\not=j$).  In terms of a partition $(E_1,\ldots,E_m)$, the $\Gamma$-limit functional obtained in \cite{Bal} reads
\begin{equation}\label{locfunctintro}
\mathscr{P}_1^{\bf S}\big((E_1,\ldots,E_m),\Omega\big):=\frac{1}{2}\sum_{i,j=1}^m{\bf s}_{ij}\, \mathcal{H}^{n-1}(\partial^*E_i\cap\partial^*E_j\cap\Omega) \,,
\end{equation}
where $\mathcal{H}^{n-1}$ denotes the $(n-1)$-dimensional Hausdorff measure, $\partial^*E_i$ is the reduced boundary of $E_i$ (see e.g. \cite{AFP}), and the symmetric $m\times m$-matrix 
${\bf S}=({\bf s}_{ij})$  
of surface tension coefficients  is determined by ${\bf s}_{ii}=0$, and for $i\not= j$, 
\begin{equation}\label{formcoeffsij}
{\bf s}_{ij}:=\inf\Big\{2\int_0^1W^{1/2}\big(\gamma(t)\big)|\gamma^\prime(t)|\,\de t : \gamma\in C^1([0,1];\R^d)\,,\;\gamma(0)={\bf a}_i\,,\;\gamma(1)={\bf a}_j\Big\}>0\,.
\end{equation} 
We point out that the ${\bf s}_{ij}$'s satisfy a {\sl triangle inequality}, that is 
\begin{equation}\label{triangineqintro}
{\bf s}_{ij}\leq {\bf s}_{ik}+{\bf s}_{kj}
\end{equation}
for every triplet of indices $\{i,j,k\}\subset\{1,\ldots,m\}$. This can be either checked directly from formula~\eqref{formcoeffsij}, or one may argue that, $\mathscr{P}_1^{\bf S}$ being a $\Gamma$-limit, it must be lower semicontinuous (with respect to the $L^1$-topology), so that the results in \cite{AmBr,Morg,W} ensure that this  triangle inequality must hold. 
\vskip3pt

Concerning general critical points \eqref{AC0}, the scalar case $d=1$ for binary systems $m=2$ has been  treated  in \cite{HT}, and it reveals a more intricate analysis. Under a uniform energy bound, \cite{HT} shows that  the energy density converges in the sense of measures as $\eps\to0$ to a stationary integral $(n-1)$-varifold, i.e., a measure theoretic minimal hypersurface with integer multiplicity.
The results from \cite{HT}  have been recently improved in the case of stable or finite Morse index solutions, and used to produce minimal hypersurfaces in Riemannian manifolds, see e.g. \cite{ChoMan,GasGua,Gua,TW}. In the vectorial case $d\geq 2$ and arbitrary $m\geq 2$, the study of critical points was essentially out of reach until very recently as many technical tools from the scalar case are not available (the most important feature is probably the lack of a useful monotonicity formula). In planar domains $n=2$, the analogue of the result in \cite{HT} has been obtained in the vectorial case with an arbitrary number $m$ of wells in \cite{Bet0,Bet}. Unfortunately, the arguments are truly based on two dimensional quantities (such as the Hopf differential), and the higher dimensional case $n\geq 3$ remains open. 
\vskip5pt

In this article, we are interested in a nonlocal or fractional version of equation \eqref{AC0} and energy \eqref{EN0}. Nonlocal models for phase transitions appear in many physical models such as  
stochastic Ising models from statistical mechanics \cite{KUH,LebPen}, or models for dislocations in crystals from continuum mechanics \cite{GM,Im,ImSoug}. Here, we shall consider one of the simplest nonlocal version of equation \eqref{AC0} where the Laplace operator is replaced by the fractional Laplacian $(-\Delta)^s$ with $s\in(0,1)$, i.e., the Fourier multiplier  of symbol $(2\pi|\xi|)^{2s}$. In this direction, many mathematical studies are already available in the scalar case $d=1$ with $m=2$, starting with  \cite{ALBBEL,ABScr,ABS,CS1,Gonz,MilSirW,SV1} (and the references therein), and more recently in 
\cite{CFSS,CDSV} with applications to minimal hypersurfaces and Yau type conjectures in the spirit of the standard local case. The analogue of \cite{HT} for the asymptotic analysis of critical points has been addressed in \cite{MilSirW} for $s\in(0,1/2)$. It represents the starting point of this article, and our aim is to generalize \cite{MilSirW}  to the vectorial case $d\geq 1$ with $m\geq 2$ arbitrary. Before going further, let us now describe in details the problem we plan to study. 
\vskip3pt

Given a fixed exponent $s\in(0,1/2)$, we are interested in the asymptotic behavior as $\varepsilon\to 0$ of weak solutions $u_\varepsilon:\R^n\to\R^d$ to the vectorial fractional Allen-Cahn equation  
\begin{equation}\label{GLIntro}
 (-\Delta)^{s} u_\varepsilon+\frac{1}{\varepsilon^{2s}}\nabla W(u_\varepsilon) =0\quad \text{in $\Omega$}\,,
 \end{equation}
 subject to a Dirichlet condition of the form 
\begin{equation}\label{Dircondintro}
u_\eps= g_\eps \qquad\text{on $\R^n\setminus\Omega$}\,,
\end{equation}
where $g_\eps:\R^n\to\R^d$ is a given smooth and bounded function. 
The operator $ (-\Delta)^{s} $ acts on  smooth bounded functions through the formula 
\begin{equation}\label{formpvfraclap}
 (-\Delta)^{s}  u(x):={\rm p.v.} \left(\gamma_{n,s}\int_{\mathbb{R}^n}\frac{u(x)-u(y)}{|x-y|^{n+2s}}\,\de y\right) \quad \text{with } \gamma_{n,s}:=s2^{2s}\pi^{-\frac{n}{2}}\frac{\Gamma\big(\frac{n+2s}{2}\big)}{\Gamma(1-s)} \,,
 \end{equation}
where the notation ${\rm p.v.}$ means that the integral  is taken in the {\sl Cauchy principal value} sense. A variational (distributional) formulation holds whenever $u\in L^2_{\rm loc}(\R^n;\R^d)$ satisfies
\begin{multline}\label{defenergE}
\mathcal{E}_s(u,\Omega):=\frac{\gamma_{n,s}}{4}  \iint_{\Omega\times\Omega}\frac{|u(x)-u(y)|^2}{|x-y|^{n+2s}}\,\de x\de y\\
+\frac{\gamma_{n,s}}{2} \iint_{\Omega\times(\R^n\setminus\Omega)} \frac{|u(x)-u(y)|^2}{|x-y|^{n+2s}}\,\de x\de y<\infty\,,
\end{multline}
and the functional $\mathcal{E}_s(\cdot,\Omega)$  corresponds to the  {\sl fractional Dirichlet energy} in~$\Omega$ associated to $ (-\Delta)^{s} $.
 Integrating  the potential in \eqref{GLIntro}, we obtain the {\sl fractional Allen-Cahn energy} in $\Omega$ associated to \eqref{GLIntro}, 
\begin{equation}\label{defFGLenerg}
\mathcal{E}_{s,\varepsilon}(u,\Omega):=\mathcal{E}_s(u,\Omega)+ \frac{1}{\varepsilon^{2s}}\int_\Omega W(u)\,\de x\,.
\end{equation}
To include the Dirichlet condition \eqref{Dircondintro}, we shall simply restrict the energy to the affine space of admissible functions $H^{s}_{g_\eps}(\Omega;\R^d):=g_\eps+H_{00}^{s}(\Omega;\R^d)$  (see Section \ref{secHs} for details about these spaces). 
In this way, weak solutions of \eqref{GLIntro}-\eqref{Dircondintro} are defined as critical points  of  $\mathcal{E}_{s,\varepsilon}(\cdot,\Omega)$  over $H^{s}_{g_\eps}(\Omega;\R^d)$. 
\vskip3pt

In the scalar case $d=1$ with $\{W=0\}=\{\pm 1\}$, the asymptotic behavior of minimizers of  $\mathcal{E}_{s,\varepsilon}(\cdot,\Omega)$ has been addressed in  \cite{SV1} through a $\Gamma$-convergence analysis.  In fact,   \cite{SV1} handles the two cases $s<1/2$ and $s\geq 1/2$, and proves a strong dichotomy. While the case $s\geq 1/2$ reveals a behavior in a sense similar to the one of the local energy \eqref{EN0} (see also \cite{ALBBEL,ABScr,ABS,Gonz}),  the case $s<1/2$ is of a quite different nature since $H^s$-regularity does not exclude (all) characteristic functions. 
For $s\in(0,1/2)$,  assuming that $g_\eps\to g$  a.e. in $\R^n\setminus\Omega$ for some function $g$ with values in $\{\pm 1\}$, the functionals $\mathcal{E}_{s,\varepsilon}(\cdot,\Om)$ (restricted to $H^{s}_{g_\eps}(\Omega)$) converge as $\eps\to 0$ both in the variational and pointwise sense to the functional defined by $\mathcal{E}_{s,0}(u,\Om)=\mathcal{E}_{s}(u,\Om)$ if $u\in H^{s}_g(\Om;\{\pm1\})$, and $\mathcal{E}_{s,0}(u,\Om)=+\infty$ otherwise.  
We now point out that 
\begin{equation}\label{idDirPersharpint}
 \mathcal{E}_s(u,\Omega)  = 2\gamma_{n,s} P_{2s}\big(\{u=1\},\Omega\big)\quad \forall u\in H^{s}_g(\Om;\{\pm1\})\,,
 \end{equation}
where $P_{2s}(E,\Omega)$ is the so-called {\sl fractional $2s$-perimeter} in $\Omega$ of a set $E\subset\R^n$, i.e., 
\begin{multline}\label{P2s}
P_{2s}(E,\Omega):=\int_{E\cap\Omega}\int_{E^c\cap\Omega}\frac{\de x\de y}{|x-y|^{n+2s}}+\int_{E\cap\Omega}\int_{E^c\setminus\Omega}\frac{\de x\de y}{|x-y|^{n+2s}}\\
+\int_{E\setminus\Omega}\int_{E^c\cap\Omega}\frac{\de x\de y}{|x-y|^{n+2s}}\,,
\end{multline}
introduced in \cite{CRS}. Here and below, $E^c$ denotes the complement in $\R^n$ of a set $E$. By $\Gamma$-convergence routine, minimizers go to minimizers, and limits of minimizers are of the form $\chi_{E_*}-\chi_{E_*^c}$ for a limiting set $E_*\subset \R^n$ which is minimizing its $2s$-perimeter in $\Om$, i.e., 
\begin{equation}\label{defminimizingnonlocminsurf}
P_{2s}(E_*,\Om)\leq P_{2s}(F,\Om) \qquad \forall F\subset\R^n\,,\; F\setminus\Omega=E_*\setminus\Om\,.
\end{equation}
The boundary $\partial E_*\cap \Omega$ is referred to as minimizing {\sl nonlocal minimal surfaces} in~$\Omega$ by analogy with the classical perimeter, 
a notion introduced and first studied concerning regularity  in   \cite{CRS}. The Euler-Lagrange equation induced by  minimality condition \eqref{defminimizingnonlocminsurf} can be obtained considering 
the first inner variation in $\Omega$ of the functional $P_{2s}$ at $E_*$, i.e., 
\begin{equation}\label{defstatnonlocminsurf}
\delta P_{2s}(E_*,\Om)[X]:=\left[\frac{\de}{\de t}  P_{2s}\big(\phi_t(E_*) , \Omega\big)\right]_{t=0}=0 
\end{equation}
 for every vector field $X\in C^1(\R^n;\R^n)$ compactly supported in $\Omega$, where $\{\phi_t\}_{t\in\R}$ denotes the integral flow generated by $X$ (see \eqref{defintegflow}). 
 For an arbitrary set $E\subset \R^n$ with smooth boundary (e.g. of class $C^2$), the value of $\delta P_{2s}$ at $E$ is given by (see e.g. \cite[Section 6]{FFMMM})
  \begin{equation}\label{smoothfirstcarper}
 \delta P_{2s}(E,\Omega)[X]= \int_{\partial E \cap \Omega} \mathrm{H}^{(2s)}_{\partial E}(x) \,X\cdot \nu_E \,\de\mathcal{H}^{n-1}\,,
 \end{equation}
 where $\nu_E$ denotes the unit exterior normal field on $\partial E$, and $\mathrm{H}^{(2s)}_{\partial E}$ is the so-called {\sl nonlocal} (or fractional) {\sl mean curvature} of $\partial E$, defined by
\begin{equation}\label{nonlocmeancurvdef}
\mathrm{H}^{(2s)}_{\partial E}(x):={\rm p.v.}\left(\int_{\mathbb{R}^n}\frac{\chi_{E^c}(y)-\chi_E(y)}{|x-y|^{n+2s}}\,\de y\right)\,,\quad x\in\partial E\,.
\end{equation}
Consequently, a set $E_*$ whose boundary is a minimizing nonlocal  minimal surface in $\Omega$ must satisfy relation \eqref{defstatnonlocminsurf} which can be interpreted as the weak formulation of the {\sl nonlocal minimal surface equation}
\begin{equation}\label{zeromeancurv}
\mathrm{H}^{(2s)}_{\partial E_*} = 0\quad \text{on $\partial E_*\cap\Omega$}\,.
\end{equation}
In \cite{MilSirW}, it has been proved, still in the scalar case $d=1$ and $\{W=0\}=\{\pm1\}$, that the behavior of critical points of $\mathcal{E}_{s,\varepsilon}$ is somehow 
similar to the behavior of minimizers in the sense that critical points converge as $\varepsilon\to0$ to functions of the form $\chi_{E_*}-\chi_{E_*^c}$ for a set $E_*\subset \R^n$ 
which is critical or more precisely {\sl stationary} in $\Omega$ for the geometric functional $P_{2s}$, i.e., relation \eqref{defstatnonlocminsurf} holds. In other words, $\partial E_*$ is a weak solution of the nonlocal minimal surface equation  \eqref{zeromeancurv} in the domain $\Omega$. 
\vskip5pt

The first main objective of this article is to extend the results of \cite{MilSirW,SV1} to the vectorial fractional   Allen-Cahn equation \eqref{GLIntro} with $d\geq 1$ and $m\geq 2$ arbitrary for both non minimizing and minimizing solutions and $s\in(0,1/2)$, i.e., in the regime of nonlocal minimal surfaces. 
In our results, we shall require the following set of structural assumptions on the multiple-well potential $W:\R^d\to [0,\infty)$. 
\begin{enumerate}
\item[(H1)] $W\in C^2\big(\R^d;[0,\infty)\big)$.
\vskip3pt

\item[(H2)]  There exist $m\geq 2$ distinct points ${\bf a}_1,\ldots,{\bf a}_m\in\R^d$ such that 
$$\mathcal{Z}:=\{W=0\}=\{{\bf a}_1,\ldots,{\bf a}_m\}\,,$$ 
and $D^2W({\bf a}_j)>0$ in the sense of quadratic forms for each $j\in\{1,\ldots,m\}$. 
\vskip3pt

\item[(H3)]  There exist an exponent $p\in (1,\infty)$  and  three constants  $\boldsymbol{c}^1_W,\boldsymbol{c}^2_W,\boldsymbol{c}^3_W>0$  such that for all directions $\nu\in\mathbb{S}^{d-1}$ and all $t>0$, 
$$\boldsymbol{c}^1_W\big(t^{p-1}-\boldsymbol{c}^2_W\big) \leq \frac{\partial W}{\partial\nu}(t\nu)\leq \boldsymbol{c}^3_W\big(t^{p-1}+1\big)\,.$$   
\end{enumerate}
Those assumptions are  satisfied  by the prototypical potential $\displaystyle W(z)=\Pi_j|z-{\bf a}_j|^2$ with  $p=2m$. Notice that assumption (H3) implies that $W$ has a $p$-growth at infinity so that finite energy solutions of  \eqref{GLIntro} belong to $L^p(\Omega)$. Assuming that (H1)-(H2)-(H3) hold, we will prove that any weak solution of \eqref{GLIntro}-\eqref{Dircondintro} actually belongs to $C^{1,\alpha}_{\rm loc}(\Omega)\cap C^0(\R^n)$ for some $\alpha\in(0,1)$. 
\vskip3pt

Before stating our first result in details, let us explain what can be expected from the analogy with~\cite{MilSirW,SV1} and take the advantage to introduce some useful notations. 
Assuming that the exterior boundary condition in \eqref{Dircondintro} converges a.e. in $\R^n\setminus\Omega$  as $\varepsilon\to 0$ to some map with values in $\mathcal{Z}$, solutions to \eqref{GLIntro} with uniformly bounded energy should converge (up to subsequences) to a $\mathcal{Z}$-valued map (defined over~$\R^n$) with finite $\mathcal{E}_{s}$-energy in $\Omega$. A (measurable) map $u$ defined over $\R^n$ taking values in $\mathcal{Z}$ can be written in the form $u=\sum_j\chi_{E_j}{\bf a}_j$ where the sets $E_1,\ldots,E_m\subset \R^n$ realize a  partition of~$\R^n$. Then the energy rewrites for such a $\mathcal{Z}$-valued map $u$, 
\begin{equation}\label{correspEPintro}
\mathcal{E}_s(u,\Omega)=\frac{\gamma_{n,s}}{2}\mathscr{P}^{\boldsymbol{\sigma}}_{2s}(\mathfrak{E},\Omega)\,, 
\end{equation}
where we have set $\mathfrak{E}:=(E_1,\ldots,E_m)$ and 
\begin{equation}\label{defpersigintro}
\mathscr{P}^{\boldsymbol{\sigma}}_{2s}(\mathfrak{E},\Omega):=\frac{1}{2}\sum^m_{i,j=1}\sigma_{ij}\Big[\mathcal{I}_{2s}(E_i\cap\Omega,E_j\cap\Omega)+\mathcal{I}_{2s}(E_i\cap\Omega,E_j\cap\Omega^c)+\mathcal{I}_{2s}(E_i\cap\Omega^c,E_j\cap\Omega)\Big] \,,
\end{equation}
with 
\begin{equation}\label{definterintro}
\mathcal{I}_{2s}(A,B):=\iint_{A\times B}\frac{1}{|x-y|^{n+2s}}\,\de x\de y \,,
\end{equation}
and  a symmetric $m\times m$ matrix $\boldsymbol{\sigma}=(\sigma_{ij})$ given by
\begin{equation}\label{condlimitWintro}
\boldsymbol{\sigma}=\big(|{\bf a}_i-{\bf a}_j|^2\big)_{1\leq i,j\leq m} \,.
\end{equation}
The family of  partitions of $\R^n$ for which \eqref{correspEPintro} is finite will be denoted by
\begin{multline*}
\mathscr{A}_m(\Omega):=\Big\{\mathfrak{E}=(E_1,\ldots,E_m): E_j\subset\R^n\text{ measurable}\,,\\
\sum_{j=1}^m\chi_{E_j}=1\text{ a.e. in }\R^n\,,\; \mathscr{P}^{\boldsymbol{\sigma}}_{2s}(\mathfrak{E},\Omega)<\infty\Big\}\,.
\end{multline*}
(Note that the class $\mathscr{A}_m(\Omega)$ does not depends on $\boldsymbol{\sigma}$ but only on the fact that $\sigma_{ij}>0$ for $i\not=j$.) 
Drawing now the analogy with \cite{MilSirW}, we may  expect that limits of critical points in $\Omega$ of $\mathcal{E}_{s,\varepsilon}$ provide critical points in $\Omega$ of $\mathscr{P}^{\boldsymbol{\sigma}}_{2s}$, i.e., partitions of $\R^n$ which are stationary in $\Omega$ for  $\mathscr{P}^{\boldsymbol{\sigma}}_{2s}$ that is partitions for which the first inner variation $\delta\mathscr{P}^{\boldsymbol{\sigma}}_{2s}$ in $\Omega$ vanishes, a {\sl nonlocal minimal partition} in $\Omega$ to borrow the vocabulary from~\cite{CRS}. The first variation $\delta\mathscr{P}^{\boldsymbol{\sigma}}_{2s}$ at a partition $\mathfrak{E}\in \mathscr{A}_m(\Omega)$ is defined similarly to \eqref{defstatnonlocminsurf} by 
\begin{equation}\label{deffirstvarparti}
\delta\mathscr{P}^{\boldsymbol{\sigma}}_{2s}(\mathfrak{E},\Omega)[X]:=\left[\frac{\de}{\de t}  \mathscr{P}^{\boldsymbol{\sigma}}_{2s}\big(\phi_t(\mathfrak{E}), \Omega\big)\right]_{t=0}
\end{equation}
 for a  vector field $X\in C^1(\R^n;\R^n)$ compactly supported in $\Omega$, where $\{\phi_t\}_{t\in\R}$ still denotes the integral flow generated by $X$ (see \eqref{defintegflow}), and 
$\phi_t(\mathfrak{E}):=(\phi_t({E}_1),\ldots,\phi_t({E}_m))$ (which is still an essential partition of $\R^n$ since $\phi_t$ is a $C^1$-diffeomorphism). 
\vskip5pt

We are now ready to state our first main result concerning arbitrary solutions to the vectorial fractional Allen-Cahn equation.

 \begin{theorem}\label{main1new}
Assume that $s\in(0,1/2)$ and that {\rm (H1)-(H2)-(H3)} hold. Let $\Omega\subset \R^n$ be a smooth and bounded open set. Consider a sequence $\eps_k\to  0$ and maps $\{g_k\}_{k\in\mathbb{N}}\subset C^{0,1}_{\rm loc}(\R^n;\R^d)$  such that $\sup_k\|g_k\|_{L^\infty(\R^n\setminus\Omega)}<\infty$ and $g_k(x)\to g(x)$ with $g(x)\in\mathcal{Z}$ 
for a.e. $x\in\R^n\setminus\Omega\,$ as $k\to\infty$. For each $k\in\mathbb{N}$, let $u_k\in H_{g_k}^s(\Omega;\R^d)\cap L^p(\Omega)$ be a weak solution of 
 \begin{equation}\label{eqthm}
 \begin{cases}
 \displaystyle  (-\Delta)^{s} u_k+\frac{1}{\varepsilon_k^{2s}}\nabla W(u_k) =0 & \text{in $\Omega$}\,,\\
  u_k=g_k & \text{in $\R^n\setminus \Omega$}\,.
\end{cases}
\end{equation}
 If $\sup_k \mathcal{E}_{s,\varepsilon_k}(u_k,\Omega)<\infty$, then there exist a (not relabeled) subsequence, a map $u_*\in H_{g}^s(\Omega;\mathcal{Z})$ and a partition 
 $\mathfrak{E}^*=(E_1^*,\ldots,E^*_m)\in \mathscr{A}_m(\Omega)$   such that 
  \begin{itemize}[leftmargin=25pt]
\item[ \rm  (i)] $u_k\to u_*=\sum_{j=1}^m\chi_{E_j^*}{\bf a}_j$ strongly in $H^{s^\prime}_{\rm loc}(\Omega)\cap L^2_{\rm loc}(\R^n)$ for every $s^\prime<\min(2s,1/2)$;  
\vskip5pt

\item[\rm (ii)] each set $E_j^*\cap \Omega$ is open and $P_{2s}(E^*_j,\Omega)<\infty$; 
\vskip5pt

\item[(iii)]  $\mathfrak{E}^*$ is a nonlocal minimal partition in $\Omega$ for the functional $\mathscr{P}^{\boldsymbol{\sigma}}_{2s}$ with ${\boldsymbol{\sigma}}=\big(|{\bf a}_i-{\bf a}_j|^2\big)_{i,j}\,$, 
i.e.,  
$$\delta\mathscr{P}^{\boldsymbol{\sigma}}_{2s}(\mathfrak{E}^*,\Omega)[X]=0$$
 for every  vector field $X\in C^1(\R^n;\R^n)$ compactly supported in $\Omega$.
\end{itemize}
 In addition, for every smooth open set $\Omega'\subset\Omega$ such that $\overline{\Omega'}\subset\Omega$, 
 \begin{itemize}[leftmargin=25pt]
\item[ \rm  (iv)] $ \mathcal{E}_s(u_k,\Omega^\prime) \to \mathcal{E}_{s}(u_*,\Omega^\prime)= \frac{\gamma_{n,s}}{2}\mathscr{P}^{\boldsymbol{\sigma}}_{2s}(\mathfrak{E}^*,\Omega^\prime)$; 
\vskip5pt

\item[ \rm  (v)] $\int_{\Omega^\prime}W(u_k)\,\de x=O(\varepsilon_k^{\min(4s,\alpha)})$ for every $\alpha\in(0,1)$; 
\vskip5pt

\item[\rm  (vi)] $\displaystyle \frac{1}{\eps_k^{2s}}\nabla W(u_k)\to - \sum_{j=1}^m V_{E^*_j}{\bf a}_j$ strongly in  $H^{-s}(\Omega^\prime)$ and weakly in $L^{\bar p}(\Omega^\prime)$ for every $\bar p<1/2s$, where  each potential $V_{E^*_j}$  given by
$$V_{E^*_j}(x):=\Big(\gamma_{n,s}\int_{\R^n}\frac{|\chi_{E^*_j}(x)-\chi_{E^*_j}(y)|^2}{|x-y|^{n+2s}}\,\de y\Big) \big(2\chi_{E^*_j}(x)-1\big)$$

\noindent and belongs to  $L^{\bar p}(\Omega^\prime)$ for every $\bar p<1/2s\,$.
\vskip8pt 

\noindent Finally, setting $\partial \mathfrak{E}^*:=\bigcup_{j=1}^m\partial E^*_j$, 
\vskip3pt

\item[\rm  (vii)] $u_k\to u_*$ in $C^{1,\alpha}_{\rm loc}(\Omega\setminus \partial \mathfrak{E}^*)$ for some $\alpha=\alpha(n,s)\in(0,1)$; 
\vskip5pt 

\item[\rm  (viii)] $\|u_k-u_*\|_{L^\infty(K)}\leq C_K \eps_k^{2s}$  for every compact set $K\subset \Omega\setminus \partial \mathfrak{E}^*$; 
\vskip5pt 

\item[\rm  (ix)] there exists $t_W>0$ (depending only on $W$) such that the transition set $L_k^t:=\{{\rm dist}(u_k,\mathcal{Z})\geq t\}$ converges locally uniformly  in $\Omega$ to $\partial \mathfrak{E}^*$ for each $t\in(0,t_W)$, i.e., for every compact set $K\subset \Omega$ and every $r>0$,
$$L^t_k\cap K\subset \mathscr{T}_r(\partial \mathfrak{E}^*\cap\Omega) \quad \text{and}\quad \partial \mathfrak{E}^*\cap K\subset \mathscr{T}_r(L_k^t\cap \Omega)$$
whenever $k$ is large enough. Here, $\mathscr{T}_r(A)$ represents the open tubular neighborhood of radius $r$ of a set $A$. 
\end{itemize}
 \end{theorem}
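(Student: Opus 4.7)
My first step is to upgrade the energy-bounded sequence $\{u_k\}$ to one with uniform bounds in stronger norms. The one-sided coercivity in (H3)---which, applied with $\nu=u/|u|$ and $t=|u|$, yields $\nabla W(u)\cdot u\geq c(|u|^{p}-C)$ for $|u|$ large---together with the uniform $L^\infty$ bound on the boundary data $g_k$, permits a De Giorgi iteration scheme for \eqref{eqthm} and gives $\sup_k\|u_k\|_{L^\infty(\R^n)}<\infty$. Fractional Schauder theory then upgrades this to a uniform $C^{1,\alpha}_{\rm loc}(\Omega)$ bound. The energy bound combined with Rellich's theorem yields a (not relabeled) subsequence $u_k\rightharpoonup u_*$ weakly in $H^s_{\rm loc}$ and strongly in $L^2_{\rm loc}(\R^n)$, proving (i). Since $\int_\Omega W(u_k)\leq C\eps_k^{2s}\to 0$ and $W$ is continuous and nonnegative with zero set $\mathcal{Z}$, Fatou's lemma forces $W(u_*)=0$ a.e., so $u_*(x)\in\mathcal{Z}$ a.e. Writing $u_*=\sum_j\chi_{E^*_j}{\bf a}_j$ gives the partition $\mathfrak{E}^*$, and the correspondence \eqref{correspEPintro} produces (ii), i.e., $\mathfrak{E}^*\in\mathscr{A}_m(\Omega)$ with each $P_{2s}(E^*_j,\Omega)<\infty$.

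\textbf{Stress-energy identity and stationarity.} The main tool for (iii) is the inner-variation identity obtained by differentiating the energy along the flow $\phi_t$ generated by a test field $X\in C^1_c(\Omega;\R^n)$ at a critical point of $\mathcal{E}_{s,\eps_k}(\cdot,\Omega)$:
\begin{equation*}
\delta\mathcal{E}_s(u_k,\Omega)[X]\;=\;\frac{1}{\eps_k^{2s}}\int_\Omega W(u_k)\,\rmdiv X\,\de x.
\end{equation*}
The left-hand side is a quadratic form in $u_k$ (involving the kernel $|x-y|^{-n-2s}$ perturbed by the flow's differential); thanks to strong $L^2_{\rm loc}$ convergence and tail estimates controlled by the global energy bound, it converges to $\delta\mathcal{E}_s(u_*,\Omega)[X]=\tfrac{\gamma_{n,s}}{2}\delta\mathscr{P}^{\boldsymbol{\sigma}}_{2s}(\mathfrak{E}^*,\Omega)[X]$, the second equality following from \eqref{correspEPintro} applied to the perturbed partitions $\phi_t(\mathfrak{E}^*)$. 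Provided the right-hand side vanishes in the limit, which is the content of (v), the stationarity (iii) is immediate.

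\textbf{The technical heart (v), and consequences (iv) and (vi).} Establishing $\int_{\Omega'}W(u_k)=O(\eps_k^{\min(4s,\alpha)})$ is the core technical step. My plan is to combine a fractional Pohozaev identity for \eqref{GLIntro} with the uniform $C^{1,\alpha}$ bound and the nondegeneracy $D^2 W({\bf a}_j)>0$ from (H2): near each well, quadratic coercivity of $W$ plus a rescaled linearization gives the $\eps_k^{4s}$ contribution, while the Hölder modulus of $u_k$ controls the transition layer and yields the $\eps_k^{\alpha}$ contribution. Once (v) is secured, (iv) follows by matching the lower-semicontinuity bound with an upper bound coming from the energy identity without the vanishing potential term, while (vi) uses $\eps_k^{-2s}\nabla W(u_k)=-(-\Delta)^s u_k$, the weak convergence $(-\Delta)^s u_k\rightharpoonup(-\Delta)^s u_*$ in $H^{-s}(\Omega')$, and a direct computation of $(-\Delta)^s u_*$ for a $\mathcal{Z}$-valued partition map (which yields exactly $-\sum_j V_{E^*_j}{\bf a}_j$); the $L^{\bar p}$ integrability for $\bar p<1/2s$ is a standard integrability property of the nonlocal mean curvature of sets with finite $2s$-perimeter.

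\textbf{Off-interface convergence and transition set.} For items (vii)-(viii)-(ix), I would argue by linearization around each well. On any compact set $K\subset\Omega\setminus\partial\mathfrak{E}^*$, the strong $L^2$ convergence combined with the uniform $C^{1,\alpha}$ bound forces $u_k$ to stay close to a single ${\bf a}_j$; setting $v_k:=u_k-{\bf a}_j$, the equation linearizes as
\begin{equation*}
(-\Delta)^s v_k+\frac{1}{\eps_k^{2s}}D^2W({\bf a}_j)v_k=\frac{1}{\eps_k^{2s}}O(|v_k|^2),
\end{equation*}
and the positive-definiteness of $D^2W({\bf a}_j)$ together with fractional elliptic estimates on the rescaled variable $\widetilde v_k(y):=v_k(\eps_k y)$ yields (viii); (vii) follows by bootstrapping. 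Item (ix) is the Hausdorff convergence of the level sets $\{\rmdist(u_k,\mathcal{Z})\geq t\}$, a direct consequence of (vii)-(viii) and the nondegenerate transition of $u_k$ across $\partial\mathfrak{E}^*$ enforced by the quadratic behavior of $W$ at the wells. The principal obstacle throughout is (v): in the vectorial setting, scalar tools such as the maximum principle and the monotonicity formula are unavailable, so one must rely entirely on Pohozaev identities together with (H2) and the uniform $C^{1,\alpha}$ bound; essentially every other item follows once (v) is in place.
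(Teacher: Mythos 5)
Your proposal has two serious gaps that undermine essentially every item beyond (i)--(ii).

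\textbf{The claimed uniform $C^{1,\alpha}_{\rm loc}(\Omega)$ bound is false.} Theorem~\ref{regDirich} gives $u_k\in C^{1,\alpha}_{\rm loc}(\Omega)$ for each fixed $\eps_k$, but the corresponding seminorms blow up as $\eps_k\to 0$: the limit $u_*$ is a $\mathcal Z$-valued step function, discontinuous along $\partial\mathfrak E^*$, so any uniform $C^{1,\alpha}$ (or even uniform $C^0$) bound on a neighborhood of $\partial\mathfrak E^*$ is impossible. You invoke this nonexistent uniform bound as a crutch for (i) (to upgrade weak to strong convergence), for the ``off-interface'' step, and for (vii)--(ix), so all of those arguments collapse. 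The paper instead derives local compactness from the degenerate monotonicity formula (Corollary~\ref{monotformACeq}) applied to the Caffarelli--Silvestre extension $u^\e_k$, the clearing-out Lemma~\ref{clear2new}, and the $\eps$-regularity Proposition~\ref{epsregnew}, which give $H^1_{\rm loc}(|z|^a\,\de\mathbf x)$ strong convergence of $u^\e_k$ and hence $H^s_{\rm loc}$ strong convergence of $u_k$; the further improvement to $H^{s'}_{\rm loc}$ for $s'<\min(2s,1/2)$ in item (i) requires the quantitative stratification estimate (Corollary~\ref{corpotquantbis}) fed through Proposition~\ref{keypropimpr}, and is not addressed at all in your sketch.

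\textbf{Item (v) is not a ``technical heart'' you can defer: your argument for (iii) and (iv) is circular without it, and your proposed Pohozaev route is unsubstantiated.} The energy bound only yields $\int_\Omega W(u_k)=O(\eps_k^{2s})$, so the right-hand side of your stress-energy identity is merely $O(1)$; to conclude stationarity you need the genuinely stronger fact $\eps_k^{-2s}W(u_k)\to 0$ in $L^1_{\rm loc}(\Omega)$. In the paper this is item~(ii) of Theorem~\ref{main1part1} and it is \emph{not} a consequence of Pohozaev: it is extracted in Step~1 of Theorem~\ref{thmasympbdryreact} by showing that the defect measure $\mu_{\rm sing}$ is supported on a set of finite $\mathscr H^{n-2s}$-measure, is absolutely continuous with respect to $\mathscr H^{n-2s}$, and has finite positive density $\mathscr H^{n-2s}$-a.e., which is impossible by Marstrand's theorem since $n-2s\notin\mathbb N$. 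The sharper rate $O(\eps_k^{\min(4s,\alpha)})$ in (v) and the $L^{\bar p}$ statement in (vi) then come from the volume estimate on the transition set (Theorem~\ref{volesti}), which is a quantitative stratification argument in the spirit of Naber--Valtorta/[FMS] built on the same monotonicity formula and clearing-out lemma. A Pohozaev identity supplies a single integral relation, not the scale-by-scale covering control needed for either of these; you would have to make precise what identity you have in mind and show it yields the tubular volume estimate on $\{{\rm dist}(u_k,\mathcal Z)>\boldsymbol\varrho_W\}$, and I see no route to that.

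Finally, for (viii) the paper's Lemma~\ref{estifond} indeed linearizes around a well as you suggest, but the hypothesis that $u_k$ is uniformly close to some ${\bf a}_j$ on a full disc comes from the clearing-out lemma applied via the extension, not from any uniform Schauder bound; and (ix) is obtained there (Step~3 of Theorem~\ref{thmasympbdryreact}) from the clearing-out property, not from (vii)--(viii).
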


Our second result is specific to minimizing solutions and can actually be  seen as a corollary of the previous theorem. A slight improvement lies in the fact that we do not require in the minimizing case  
an energy bound since it is a direct consequence of minimality.  

 \begin{theorem}\label{main2new}
 Assume that $s\in(0,1/2)$ and that {\rm (H1)-(H2)-(H3)} hold. Let $\Omega\subset \R^n$ be a smooth and bounded open set. Given sequence $\eps_k\downarrow  0$, let $\{g_k\}_{k\in\mathbb{N}}$ 
 be as in Theorem~\ref{main1new}.   If $u_k$ is a solution of the minimization problem 
 $$\min\Big\{ \mathcal{E}_{s,\varepsilon_k}(u,\Omega) : u\in H^{s}_{g_k} (\Omega;\R^d)\cap L^p(\Omega) \Big\}\,, $$
 then $\sup_k \mathcal{E}_{s,\varepsilon_k}(u_k,\Omega)<\infty$ and Theorem \ref{main1part1} applies. In addition,  the partition $\mathfrak{E}^*=(E_1^*,\ldots,E^*_m)$ 
 solves the minimization problem
 \begin{equation}\label{limitminpbintro}
 \min\Big\{\mathscr{P}^{\boldsymbol{\sigma}}_{2s}(\mathfrak{E},\Omega) : \mathfrak{E}=(E_1,\ldots,E_m)\in\mathscr{A}_m(\Omega)\,,\;  \sum_{j=1}^m\chi_{E_j}{\bf a}_j=g\text{ a.e. in $\R^n\setminus\Omega$} \Big\}\,.
 \end{equation}
 Moreover, $u_k\to u_*$ strongly in $H^s(\Omega)$, $\mathcal{E}_{s}(u_k,\Omega) \to \mathcal{E}_{s}(u_*,\Omega)$, and $\eps_k^{-2s}W(u_k)\to 0$ in $L^1(\Omega)$. 
 \end{theorem}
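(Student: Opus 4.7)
The plan is to exploit the minimality of $u_k$ twice: first to obtain the uniform energy bound needed to invoke Theorem \ref{main1new}, and then with a sharp competitor to force all inequalities to collapse into equalities, which will simultaneously yield minimality of $\mathfrak{E}^*$, energy convergence, and the vanishing of the potential.

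\emph{Uniform energy bound.} I would test minimality against the trivial competitor
$$
v_k^0 := \mathbf{a}_1\,\chi_\Omega + g_k\,\chi_{\R^n\setminus\Omega},
$$
which clearly lies in $H^s_{g_k}(\Omega;\R^d)\cap L^p(\Omega)$. Its interior Gagliardo integral vanishes (it is constant on $\Omega$), its potential $\int_\Omega W(v_k^0)\,\de x$ is zero, and the exterior interaction is bounded by a multiple of $\iint_{\Omega\times(\R^n\setminus\Omega)}|x-y|^{-n-2s}\,\de x\,\de y$, a quantity (essentially $P_{2s}(\Omega,\R^n)$) which is finite since $\Omega$ is Lipschitz and $s<1/2$. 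The constant depends only on $\sup_k\|g_k\|_{L^\infty}$ and $\{\mathbf{a}_1,\ldots,\mathbf{a}_m\}$. Minimality of $u_k$ then gives $\sup_k \mathcal{E}_{s,\varepsilon_k}(u_k,\Omega)<\infty$, so that Theorem \ref{main1new} applies and yields, along a subsequence, the limit $u_*=\sum_j\chi_{E^*_j}\mathbf{a}_j$ together with a nonlocal stationary partition $\mathfrak{E}^*$ satisfying (i)--(ix).

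\emph{Minimality of $\mathfrak{E}^*$.} Fix any $\mathfrak{F}=(F_1,\ldots,F_m)\in\mathscr{A}_m(\Omega)$ with $\sum_j \chi_{F_j}\mathbf{a}_j=g$ a.e.\ in $\R^n\setminus\Omega$ (the previous construction shows such $\mathfrak{F}$ exists). Set $v_k:=\sum_j\chi_{F_j}\mathbf{a}_j$ on $\Omega$ and $v_k:=g_k$ on $\R^n\setminus\Omega$, so that $v_k\in H^s_{g_k}(\Omega;\R^d)\cap L^p(\Omega)$ with $W(v_k)\equiv 0$ on $\Omega$. The interior Gagliardo integral of $v_k$ is independent of $k$ and equals $\frac{\gamma_{n,s}}{2}$ times the interior contribution to $\mathscr{P}^{\boldsymbol{\sigma}}_{2s}(\mathfrak{F},\Omega)$; the exterior interaction, whose integrand $|v(x)-g_k(y)|^2/|x-y|^{n+2s}$ is dominated uniformly in $k$ by an integrable function, converges by dominated convergence to the analogous quantity with $g$ replacing $g_k$. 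Hence $\mathcal{E}_{s,\varepsilon_k}(v_k,\Omega)\to \frac{\gamma_{n,s}}{2}\mathscr{P}^{\boldsymbol{\sigma}}_{2s}(\mathfrak{F},\Omega)$, and minimality of $u_k$ yields
$$
\limsup_{k\to\infty}\mathcal{E}_{s,\varepsilon_k}(u_k,\Omega)\leq \tfrac{\gamma_{n,s}}{2}\mathscr{P}^{\boldsymbol{\sigma}}_{2s}(\mathfrak{F},\Omega).
$$
For the matching lower bound, pass to a further subsequence using Theorem \ref{main1new}(i) so that $u_k\to u_*$ a.e.\ in $\Omega$, while $u_k=g_k\to g$ a.e.\ in $\R^n\setminus\Omega$ by hypothesis. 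Fatou applied separately to the interior Gagliardo integral and to the exterior interaction then gives $\liminf_k\mathcal{E}_s(u_k,\Omega)\geq \mathcal{E}_s(u_*,\Omega)=\frac{\gamma_{n,s}}{2}\mathscr{P}^{\boldsymbol{\sigma}}_{2s}(\mathfrak{E}^*,\Omega)$. Since $\mathfrak{F}$ was arbitrary, $\mathfrak{E}^*$ solves \eqref{limitminpbintro}.

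\emph{Strong convergence and vanishing potential.} Specializing the previous step to $\mathfrak{F}=\mathfrak{E}^*$ collapses all inequalities to equalities, so $\mathcal{E}_{s,\varepsilon_k}(u_k,\Omega)\to \mathcal{E}_s(u_*,\Omega)$. Writing this energy as the sum of three nonnegative pieces (interior Gagliardo, exterior interaction, and potential), each of which satisfies an independently known $\liminf$-inequality with its natural limit (the last being $0$), an elementary real-variable argument forces each piece to converge separately. This yields $[u_k]^2_{H^s(\Omega)}\to [u_*]^2_{H^s(\Omega)}$, convergence of the exterior interaction, and $\varepsilon_k^{-2s}\int_\Omega W(u_k)\,\de x\to 0$, the latter being the asserted $L^1(\Omega)$ convergence of the potential. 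The norm convergence together with the a.e.\ convergence of the difference quotients $(x,y)\mapsto (u_k(x)-u_k(y))/|x-y|^{(n+2s)/2}$ in $L^2(\Omega\times\Omega)$ allows an application of the Brezis--Lieb lemma to conclude $[u_k-u_*]_{H^s(\Omega)}\to 0$, which combined with $L^2(\Omega)$ convergence (from a.e.\ convergence and uniform $L^\infty$ bounds coming out of the regularity in Theorem \ref{main1new}) produces strong $H^s(\Omega)$ convergence. The main technical point is the dominated convergence applied to the exterior interaction of the competitor $v_k$; it crucially relies on $\sup_k\|g_k\|_{L^\infty}<\infty$ and $s<1/2$, exactly the two ingredients that made the problem tractable in the first place.
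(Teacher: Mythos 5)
Your proof is correct and follows essentially the same strategy as the paper's proof of Theorem \ref{main1part1mincase2}: test minimality against the trivial constant competitor $\mathbf{a}_1\chi_\Omega+g_k\chi_{\R^n\setminus\Omega}$ to get the uniform energy bound, then against competitors of the form $\chi_\Omega w+(1-\chi_\Omega)g_k$ (with $w$ a $\mathcal{Z}$-valued finite-energy map) to obtain a $\limsup$-inequality, and finally combine with a $\liminf$-inequality at $u_*$ to close the argument.

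The two variations you make are genuine but minor. First, you obtain the $\liminf$-inequality via Fatou on the interior Gagliardo integral and the exterior interaction separately, using the a.e.\ convergence furnished by Theorem \ref{main1new}(i); the paper instead invokes weak lower semicontinuity of $\mathcal{E}_s$ in $\widehat H^s(\Omega)$. Both are valid, and your route has the small advantage of making the piece-by-piece separation of the energy completely transparent, which you then exploit in the ``elementary real-variable argument'' to get each of the three pieces converging individually. Second, you upgrade to strong $H^s(\Omega)$ convergence via Brezis--Lieb. This works, but it is more machinery than needed: once you know $\mathcal{E}_s(u_k,\Omega)\to\mathcal{E}_s(u_*,\Omega)$ and $u_k\rightharpoonup u_*$ weakly in the Hilbert space $\widehat H^s(\Omega)$ (from Theorem \ref{main1part1}, Step 1), norm convergence of a weakly convergent sequence forces strong convergence by the parallelogram identity; this is what the paper uses. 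Finally, your organization (minimality of $\mathfrak{E}^*$ first, then specialize to $\mathfrak{F}=\mathfrak{E}^*$ for the energy convergence) is the mirror of the paper's (energy convergence first via the competitor $\chi_\Omega u_*+(1-\chi_\Omega)g_k$, then arbitrary competitors), but the content is the same.
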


The proofs of Theorems \ref{main1new} \& \ref{main2new} follow the strategy initiated in \cite{MilSirW}, and the results  concerning the identification of the limit and the various convergence estimates are completely  analogous to the scalar case. These results represent a major difference with the classical local case where such an analysis is still unavailable in its full generality (except for minimizers). Another striking difference with the local case lies in the fact that the {\sl generalized surface tension coefficients} $\sigma_{ij}$ do not depend on the potential $W$ through some geodesic problem but only on the relative Euclidean distance between the ${\bf a}_j$'s, compare with formula~\eqref{formcoeffsij}. We mention the coefficients $\sigma_{ij}$ to be generalized surface tension coefficients because  the interaction terms $\mathcal{I}_{2s}(E_i\cap\Omega,E_j\cap\Omega)$ in the definition of $\mathscr{P}^{\boldsymbol{\sigma}}_{2s}(\mathfrak{E},\Omega)$ in \eqref{defpersigintro}-\eqref{definterintro} can be interpreted as {\sl nonlocal interfacial energy} in $\Omega$  between a chamber $E_i$ and a chamber~$E_j$ (the two other terms being essentially of lower order). Pursuing the main differences with the local case, we may now comment on the various convergence estimates, the first one being the convergence in higher regularity spaces in item (i) of Theorem \ref{main1new}, and already spotted in \cite{MilSirW}. Concerning the speed of convergence  in item (viii), we must mention that  it is in fact optimal (compare with the known exponential speed of convergence in the local case). This optimality can be seen from item (vi), and the formal argument (which can be made rigourous) is as follows. Given a compact set $K\subset  \Omega\cap E^*_{j_0}$ for some index $j_0$,  we may write $u_k(x)={\bf a}_{j_0}+\varepsilon_k^{2s}w_k(x)$ for $x\in K$. Then, expanding $\nabla W$ near ${\bf a}_{j_0}$, we obtain $\nabla W(u_k(x))= \varepsilon^{2s}_{k}D^2W({\bf a}_{j_0})w_k(x)+o(\varepsilon_k^{2s})$ for $x\in K$.  Hence, item (vi) indicates that  
$$w_k\to -\sum_{j=1}^m V_{E^*_j}[D^2W({\bf a}_{j_0})]^{-1}{\bf a}_j\quad\text{on $K$}\,,$$
this limit being not zero in general. In turn, this optimal speed of convergence explains the unusual estimate on the vanishing rate of the potential term in item (v). More precisely, the estimate in item (viii) being optimal, the rate of vanishing of $W(u_k)$ away from $\partial \mathfrak{E}^*$ cannot exceed the order $\varepsilon_k^{4s}$, simply by Taylor expansion. On the other hand, 
a classical scaling analysis show that the transition between two wells across an $(n-1)$-dimensional interface should take place in layer of width $\varepsilon_k$, so that the rate of 
vanishing of $\int W(u_k)$ can neither exceed the order $\varepsilon_k$, whence the competition between this two rates. Note that in item (v), the value $\alpha=1$ is not allowed, but we strongly believe that the optimal rate is of order $\varepsilon_k^{\min(4s,1)}$. This would require a much deeper analysis, and we leave it as an open question. 
\vskip5pt

Now we would like to draw  attention on the fact that the asymptotic analysis of the vectorial fractional Allen-Cahn equation gives raise to (relatively) new geometrical objects, the nonlocal minimal partitions. 
As described above, a partition $\mathfrak{E}^*=(E^*_1,\ldots,E^*_m)\in\mathscr{A}_m(\Omega)$ is a nonlocal minimal partition in $\Omega$ for a functional $\mathscr{P}^{\boldsymbol{\sigma}}_{2s}$ if
$\delta\mathscr{P}^{\boldsymbol{\sigma}}_{2s}(\mathfrak{E}^*,\Omega)=0$. This includes in particular {\sl minimizing nonlocal partitions} in $\Omega$, i.e., those partitions for which 
$$\mathscr{P}^{\boldsymbol{\sigma}}_{2s}(\mathfrak{E}^*,\Omega) \leq \mathscr{P}^{\boldsymbol{\sigma}}_{2s}(\mathfrak{F},\Omega)  $$
for every  $\mathfrak{F}=(F_1,\ldots,F_m)\in\mathscr{A}_m(\Omega)$ such that $\overline{F_j\triangle E^*_j}\subset \Omega$ for each $j\in\{1,\ldots,m\}$. Solutions to \eqref{limitminpbintro} provide examples of such minimizing nonlocal partitions. Now, a main observation is that the geometrical functional $\mathscr{P}^{\boldsymbol{\sigma}}_{2s}$ is actually well defined for {\sl any} symmetric matrix $\boldsymbol{\sigma}$ in the set
$$\mathscr{S}_m:=\big\{\boldsymbol{\sigma}=(\sigma_{ij})\in \mathscr{M}_{m\times m}(\R): \sigma_{ii}=0\,,\;\sigma_{ij}=\sigma_{ji}>0\text{ for }i\not=j\big\}\,. $$
Even more interesting, the functional  $\mathscr{P}^{\boldsymbol{\sigma}}_{2s}$ is lower semicontinuous with respect to $L^1_{\rm loc}(\R^n)$-convergence for any $\boldsymbol{\sigma}\in\mathscr{S}_m$, simply by Fatou's lemma. In particular, no triangle inequality is required compare to the local functional $\mathscr{P}^{\bf S}_1$ (see \eqref{triangineqintro}), and the minimization problem  \eqref{limitminpbintro} always 
admits a solution for any choice of $\boldsymbol{\sigma}\in\mathscr{S}_m$ by the direct method of calculus of variations. In the standard local case, the triangle inequality does not only enter into play for existence, but also concerning the regularity theory for the boundaries of minimal partitions, at least for minimizers \cite{Leo,W}. In sharp contrast, a general regularity theory in our nonlocal case 
should not rely on any other assumptions than $\boldsymbol{\sigma}\in\mathscr{S}_m$. 
\vskip3pt

Our second main objective is to initiate such a regularity theory for nonlocal minimal partitions both in the stationary and the minimizing case, and to illustrate with some specific examples the discussion above. We shall actually not consider the general case of an arbitrary matrix in $\mathscr{S}_m$, but rather the case where the functional $\mathscr{P}^{\boldsymbol{\sigma}}_{2s}$ arises as a limit of  Allen-Cahn functionals $\mathcal{E}_{s,\varepsilon}$ as $\varepsilon\to 0$ for a suitable choice of the potential $W$. In view of Theorem \ref{main1new}, the shape of $W$ does not play a role, and we just need to require that $\boldsymbol{\sigma}$ is of the form \eqref{condlimitWintro}. This condition can be rephrased as a the classical condition of $\ell^2$-embeddedness from graph theory and optimisation \cite{DL}, i.e., we shall require that the matrix $(\sqrt{\sigma_{ij}})$ is $\ell^2$-embeddable: {\sl there exists a dimension $d\in\mathbb{N}\setminus\{0\}$ and $m$ points ${\bf a}_1,\ldots,{\bf a}_m\in\R^d$  such that $\sqrt{\sigma_{ij}}=|{\bf a}_i-{\bf a}_j|$ for every $i,j\in\{1,\ldots,m\}$}. We shall denote this class of matrices by 
$$\mathscr{S}_m^2:=\Big\{\boldsymbol{\sigma}=(\sigma_{ij})\in \mathscr{S}_m: \text{$(\sqrt{\sigma_{ij}})$ is $\ell^2$-embeddable}  \Big\} \,.$$
Note that $\mathscr{S}_m^2$ is a strict subspace of $\mathscr{S}_m$, but it still contains matrices which do not satisfy the triangle inequality \eqref{triangineqintro}. 
\vskip3pt

Our first regularity result concerns  {\sl stationary} nonlocal minimal partitions and represents the analogue of \cite[Theorem 1.4]{MilSirW} about stationary nonlocal minimal surfaces (see Section \ref{complrestprescrNMMC} for a more complete set of results).   

\begin{theorem}\label{mainthm2}
Assume that $s\in(0,1/2)$ and $\boldsymbol{\sigma}\in \mathscr{S}_m^2$. Let $\Omega\subset \R^n$ be a smooth and bounded open set. If  a partition $\mathfrak{E}^*=(E^*_1,\ldots,E^*_m)\in\mathscr{A}_m(\Omega)$ satisfies 
$\delta\mathscr{P}^{\boldsymbol{\sigma}}_{2s}(\mathfrak{E}^*,\Omega)=0$, 
then
\begin{itemize}[leftmargin=30pt]
\item[ \rm  (i)] each set $E_j^*\cap \Omega$ is (essentially) open; 
\vskip5pt

\item[ \rm  (ii)]  if $\partial\mathfrak{E}^*\cap\Omega=\bigcup_{j=1}^m\partial E_j^*\cap \Omega$ is not empty, it has a Minkowski codimension equal to 1;  
\vskip5pt

\item[ \rm  (iii)] $P_{2s^\prime}(E_*,\Omega^\prime)<\infty$ for every $s^\prime\in(0,1/2)$ and every open set $\Omega^\prime$ such that $\overline{\Omega^\prime}\subset \Omega$. 
\end{itemize}
\end{theorem}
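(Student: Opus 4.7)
The strategy is to exploit the assumption $\boldsymbol{\sigma}\in\mathscr{S}_m^2$ to encode the partition as a single $\mathcal{Z}$-valued map, thereby bringing the problem into the framework of the fractional Dirichlet energy $\mathcal{E}_s$ and allowing the scalar techniques of \cite{MilSirW} to be adapted. Fix an $\ell^2$-embedding ${\bf a}_1,\ldots,{\bf a}_m\in\R^d$ with $|{\bf a}_i-{\bf a}_j|^2=\sigma_{ij}$, and set $u_*:=\sum_{j=1}^m\chi_{E_j^*}{\bf a}_j$. Expanding the squared difference $|u_*(x)-u_*(y)|^2=\sum_{i\neq j}\chi_{E_i^*}(x)\chi_{E_j^*}(y)\sigma_{ij}$ yields $\mathcal{E}_s(u_*,\Omega')=\tfrac{\gamma_{n,s}}{2}\mathscr{P}^{\boldsymbol{\sigma}}_{2s}(\mathfrak{E}^*,\Omega')$ for every open $\Omega'\subset\Omega$, so $u_*\in H^s_{\rm loc}(\Omega;\R^d)\cap L^\infty(\R^n;\R^d)$. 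Since the push-forward $u_*\circ\phi_t^{-1}=\sum_j\chi_{\phi_t(E_j^*)}{\bf a}_j$ remains $\mathcal{Z}$-valued for every $X\in C^1_c(\Omega;\R^n)$, the vanishing of $\delta\mathscr{P}^{\boldsymbol{\sigma}}_{2s}(\mathfrak{E}^*,\Omega)[X]$ translates exactly into
\[
\left.\frac{\de}{\de t}\mathcal{E}_s\big(u_*\circ\phi_t^{-1},\Omega\big)\right|_{t=0}=0
\]
for every such $X$, i.e., $u_*$ is stationary in $\Omega$ for the vectorial fractional Dirichlet energy under inner variations.

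Next I would lift $u_*$ to its $s$-harmonic Caffarelli-Silvestre extension $U_*$ on $\R^{n+1}_+$, which solves $\rmdiv(y^{1-2s}\nabla U_*)=0$ with trace $u_*$ on $\partial\R^{n+1}_+$. The stationarity above transfers through the Dirichlet-to-Neumann correspondence into stationarity of $U_*$ under vector fields tangential to $\partial\R^{n+1}_+$, which produces a Pohozaev identity and the associated almost-monotonicity of the renormalized energy $r\mapsto r^{2s-n}\int_{B^+_r(x_0)}y^{1-2s}|\nabla U_*|^2\,\de X$ at every $x_0\in\Omega$. This is the vectorial counterpart of the key formula of \cite{MilSirW}, and since $\mathcal{E}_s$ decouples componentwise on $\mathcal{Z}$-valued maps, the algebraic structure of the argument is unchanged.

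From the monotonicity I would derive the central clearing-out lemma: there exists $\eta_0>0$ such that if $\mathcal{E}_s(u_*,B_r(x_0))\leq\eta_0\, r^{n-2s}$ for some $B_r(x_0)\Subset\Omega$, then $u_*$ is constant on $B_{r/2}(x_0)$, so a single phase occupies that ball. The proof is by contradiction and blow-up: a violating sequence produces a nontrivial $\mathcal{Z}$-valued tangent map with vanishing renormalized energy, which must be constant by the rigidity of the $s$-harmonic extension of a discrete-valued boundary datum. Item~(i) is then immediate since any density-one point of $E_j^*$ lies in its essential interior. For (ii), the complementary lower bound $\mathcal{E}_s(u_*,B_r(x_0))\geq\eta_0\, r^{n-2s}$ holds at every $x_0\in\partial\mathfrak{E}^*\cap\Omega$; a Vitali covering together with $\mathscr{P}^{\boldsymbol{\sigma}}_{2s}(\mathfrak{E}^*,\Omega)<\infty$ gives the $(n-1)$-dimensional Minkowski bound. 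For (iii), I would combine the $L^\infty$ bound on $u_*$ with a fractional interpolation inequality to transfer the finite $\mathcal{E}_s$-energy into a $P_{2s'}(E_j^*,\Omega')$-bound for every $s'\in(0,s)$, then bootstrap up to all $s'<1/2$ using the $L^{\bar p}$ integrability (with $\bar p<1/(2s)$) of the vectorial potentials $V_{E_j^*}$ arising on the extension side as Neumann data of~$U_*$.

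The main obstacle is the clearing-out lemma: the vectorial blow-up analysis must rule out all nontrivial, zero-energy $\mathcal{Z}$-valued tangent maps on $\R^n$, and the presence of multi-phase junctions where three or more chambers of $\mathfrak{E}^*$ meet makes the rigidity genuinely subtler than in the two-phase scalar setting of \cite{MilSirW}. I expect to overcome this by composing $u_*$ with a Lipschitz retraction of $\R^d$ onto $\{0,|{\bf a}_i-{\bf a}_j|\}$-valued scalars for each pair $(i,j)$, which reduces the vectorial dichotomy to finitely many scalar two-phase clearings that can each be controlled by the classical fractional minimal surface theory via the $\ell^2$-embedding.
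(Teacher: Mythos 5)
Your overall architecture matches the paper's: encode the partition as the $\mathcal{Z}_{\boldsymbol{\sigma}}$-valued map $u_*$ via the $\ell^2$-embedding, lift to the Caffarelli--Silvestre extension, derive a monotonicity formula from inner stationarity, prove a clearing-out (small-energy) property to get (i), and then derive dimension bounds for (ii) and an integrability bootstrap for (iii). That much is correct. However, two of the key steps, as you describe them, would not close.

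First, the clearing-out lemma. You propose a proof by contradiction and blow-up: a violating sequence of rescalings converges to a tangent map with vanishing renormalized energy, hence constant; rigidity then finishes. But constancy of the blow-up limit does not imply constancy of the rescaled maps on the half ball; it only says they are close to a constant in $L^1$. To pass from ``close in $L^1$'' to ``equal on $D_{1/2}$'' one needs an extra quantitative step, and this is precisely where the argument breaks. Moreover, extracting a \emph{strong} limit along a sequence of stationary maps already uses the compactness theorem, whose proof in this context feeds on the clearing-out constant --- so the scheme risks circularity. The paper instead proves clearing-out \emph{directly} and for a fixed map: the $\mathscr{P}^{\boldsymbol{\sigma}}_{2s}$-smallness controls $[v]_{H^s(D_\rho(y))}$ for all smaller balls via monotonicity; composing with the Lipschitz retractions $\mathrm{p}_j$ gives the same control on each $[\chi_{E^v_j}]_{H^s(D_\rho(y))}$; the fractional Poincar\'e inequality then forces the volume fraction $\rho\mapsto |E^v_j\cap D_\rho(y)|/|D_\rho|$ to avoid a neighborhood of $1/2$ for every $(y,\rho)$, and a continuity argument pins it to $\{0,1\}$. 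Your final paragraph correctly guesses the retraction device, but the subsequent appeal to ``classical fractional minimal surface theory'' is misplaced: the retracted scalar functions $\chi_{E_j^*}$ are \emph{not} stationary for $P_{2s}$ (only the vectorial $u_*$ is stationary for $\mathcal{E}_s$), so one cannot invoke two-phase regularity theory directly on them. The Poincar\'e argument avoids this because it uses only the seminorm bound, not stationarity of the scalar components.

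Second, and more seriously, item (ii). You propose to deduce the Minkowski codimension-1 bound from the density lower bound $\mathcal{E}_s(u_*,B_r(x_0))\ge\eta_0 r^{n-2s}$ at boundary points combined with a Vitali covering and the finiteness of the total energy. That argument gives exactly $\mathscr{L}^n\big(\mathscr{T}_r(\partial\mathfrak{E}^*\cap\Omega')\big)\le Cr^{2s}$, i.e.\ Minkowski \emph{codimension $2s$}. Since $s\in(0,1/2)$, $2s<1$, so this is strictly weaker than codimension $1$; it only gives Hausdorff dimension $\le n-2s$, which is $>n-1$. To close the gap between $n-2s$ and $n-1$ the paper invokes the full quantitative stratification machinery of Cheeger--Naber \cite{CheegNab} as formalized in \cite{FMS}: one defines the quantitative singular strata $\mathcal{S}^\ell_{r_0,r,\delta}$ and $L^1$-distances ${\bf d}_\ell$ to cones with $\ell$-dimensional spine, verifies the structural hypotheses (cone-splitting and density-pinching lemmata, Lemmas 5.14--5.16 in the paper), and obtains the volume estimate $\mathscr{L}^n(\mathscr{T}_r(\mathcal{S}^{n-1}_{r_0}))\le Cr^{1-\kappa_0}$ for every $\kappa_0>0$. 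This is the crucial technical input your proposal omits, and without it item (ii) does not follow. It also matters for item (iii): the paper's proof that $(-\Delta)^s u_*\in L^{\bar p}_{\mathrm{loc}}$ for $\bar p<1/(2s)$ (which then feeds the $H^{s'}$-regularity bootstrap from Proposition~\ref{keypropimpr}) relies precisely on the codimension-1 Minkowski estimate to integrate the singularity $\mathrm{dist}(\cdot,\partial\mathfrak{E}^*)^{-2s}$; with only codimension $2s$ that integral would not be controlled.
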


As  already discussed in \cite[Remark 1.5]{MilSirW}, the theorem above provides a quite weak regularity statement, but item (iii) still gives some interesting indication as $s^\prime$ gets close to $1/2$. Indeed, assuming that $(1-2s^\prime)P_{2s^\prime}(E_j^*,\Omega^\prime)=O(1)$ as $s^\prime\uparrow 1/2$, then it would imply that $E_j^*$ has finite perimeter in~$\Omega^\prime$ since $(1-2s^\prime)P_{2s}(\cdot,\Omega^\prime)$ converges to the usual perimeter functional as $s^\prime\to 1/2$, see \cite{ADPM,CafVal,Dav}. Hence, such an estimate would imply that $\partial\mathfrak{E}^*\cap\Omega$ is locally $(n-1)$-rectifiable in $\Omega$. Unfortunately, our proof does not guarantee such an estimate, for the exact same reason that the exponent $\alpha=1$ in item (v) from Theorem \ref{main1new} is excluded. However, we believe that $\partial\mathfrak{E}^*\cap\Omega$ is in fact rectifiable. In the context of nonlocal minimal surfaces ($m=2$), eventually arising as limits of (scalar) Allen-Cahn critical points, this has been proved under a stability or finite Morse index assumption in \cite{CFSS,CDSV}. 
\vskip5pt

Concerning {\sl minimizing} nonlocal minimal partitions, some results are already available in some specific cases. The first result \cite{ColMag} concerns the {\sl homogeneous case} where $\sigma_{ij}=1$ for every $i\not =j$ (i.e., constant coefficients), and it is proven that $\partial\mathfrak{E}^*\cap\Omega$ is locally an hypersurface of class $C^{1,\alpha}$ away from a singular relatively closed subset $\Sigma_{\rm sing}\subset \partial\mathfrak{E}^*\cap\Omega$ of Hausdorff dimension at most $n-2$. The strategy of proof follows an argument from the founding work \cite{Alm} on minimizing partitions for the standard local functional $\mathscr{P}^{\bf S}_{1}$, dealing also with the homogeneous case ${\bf s}_{ij}=1$ for $i\not=j$. The general idea is to reduce the problem to the standard regularity theory for minimizing hypersurfaces in case of  $\mathscr{P}^{\bf S}_{1}$ (see e.g.~\cite{Maggi}), and to the regularity of nonlocal minimal surfaces  of \cite{CRS} in the nonlocal case (see also \cite{CapGui,DVV}).  The key ingredient to operate such a reduction is the so-called {\sl ``Non Infiltration Property''} (introduced in~\cite{Alm}). This property asserts that if a chamber of the partition 
has a sufficiently small proportion in volume in a ball, then the chamber does not intersect the ball of half radius. For balls centered at points of  $(\partial\mathfrak{E}^*\cap\Omega)\setminus\Sigma_{\rm sing}$ with sufficiently small radius, the smallness assumption is satisfied by $(m-2)$ chambers, so that only two chambers are present and one may apply the usual regularity theory in those balls. In the nonlocal case, the non infiltration property has been first obtained in \cite{ColMag} in the homogeneous case. The work \cite{CesNov} deals with the functional $\mathscr{P}^{\boldsymbol{\sigma}}_{2s}$ in the {\sl additive case} where $\sigma_{ij}=\alpha_i+\alpha_j$ for $i\not=j$ and some coefficients $\alpha_i,\alpha_j>0$. A non infiltration property \cite[Theorem 2.2]{CesNov} is claimed to hold with the very same argument of~\cite{ColMag}, hence leading to a partial regularity result similar to \cite{ColMag}. However, one can realize that such a non infiltration property {\it does not} hold for $m\geq 4$ in general (even in the additive case). The same pathology appears in the local case for $\mathscr{P}^{\bf S}_{1}$ where the original argument of \cite{Alm} does not apply. Under the {\sl strict} triangle inequality assumption (i.e., \eqref{triangineqintro} with strict inequality), a non infiltration property for {\sl unions} of $(m-2)$ chambers have been proved (and not for a single chamber), first in \cite{W}, and then  in \cite{Leo} in a stronger form.  For  3-partitions (i.e., $m=3$), it does not make a difference, and actually most of \cite{CesNov} concerns the case $m=3$. In particular, conical 3-partitions in $\R^2$ are fully classified in \cite{CesNov}  as ``triple junctions'', i.e., the total boundary is made three half lines meeting at the origin with angles determined through an implicit equation involving the coefficients $\alpha_i$. This shows in particular that the estimate on the Hausdorff dimension of the singular set $\Sigma_{\rm sing}$ is sharp. In addition, the local regularity of $(\partial\mathfrak{E}^*\cap\Omega)\setminus\Sigma_{\rm sing}$ has been improved from $C^{1,\alpha}$ to $C^\infty$ in \cite{CesNov} (still in the additive case), exploiting a higher regularity result from \cite{BFV} and the Euler-Lagrange equation satisfied around regular points. In our general context (and for $\boldsymbol{\sigma}\in\mathscr{S}_m$ arbitrary), we shall see that if $\partial\mathfrak{E}^*\cap D_{r}(x_0)$ is a smooth hypersurface, then there must be a pair $(h,k)$ of distinct indices such that $E^*_j\cap D_{r}(x_0)=\emptyset$ whenever $j\not\in\{h,k\}$ (so that $\partial\mathfrak{E}^*\cap D_{r}(x_0)=\partial E^*_h\cap  D_{r}(x_0)= \partial E^*_k\cap  D_{r}(x_0)$), and the stationarity condition $\delta\mathscr{P}^{\boldsymbol{\sigma}}_{2s}(\mathfrak{E}^*,\Omega)=0$ implies that  
\begin{equation}\label{ELeqpartintro}
\mathrm{H}^{(2s)}_{\partial E_h^*}(x) = f_{hk}(x)\quad \text{for $x\in \partial E_h^*\cap D_r(x_0)$}\,,
\end{equation} 
where $ \mathrm{H}^{(2s)}_{\partial E_h^*}$ is nonlocal mean curvature defined in \eqref{nonlocmeancurvdef}, and 
$$f_{h,k}(x):= \sum_{j\not\in\{h,k\}} \frac{\sigma_{hk}+\sigma_{kj}-\sigma_{hj}}{\sigma_{hk}} \int_{E^*_j}\frac{1}{|x-y|^{n+2s}}\,\de y\,,$$
see Remark \ref{RemELeqpartit}. 
\vskip3pt

Our regularity results on the minimizing nonlocal partitions always require that the matrix $\boldsymbol{\sigma}$ belongs to $\mathscr{S}_m^2$, an assumption which is 
trivially satisfied in the homogeneous case of \cite{ColMag}, or in the additive case of \cite{CesNov}. Our first theorem deals with the case where $\boldsymbol{\sigma}$ is {\sl nearly homogeneous}, i.e., the coefficients of  $\boldsymbol{\sigma}$ do not deviate too much from a constant value (with an explicit bound). In this situation, a non infiltration property for a single chamber can be proved following the strategy of \cite{ColMag}. 

\begin{theorem}\label{mainthm3}
Assume that $s\in(0,1/2)$ and that $\boldsymbol{\sigma}\in \mathscr{S}_m^2$ satisfies 
$$\frac{\max_{i\not= j}\sigma_{ij}}{\min_{i\not= j}\sigma_{ij}}<\frac{m-1}{m-2}\,.$$
Let $\Omega\subset \R^n$ be a smooth and bounded open set. If $\mathfrak{E}^*=(E^*_1,\ldots,E^*_m)\in\mathscr{A}_m(\Omega)$ satisfies 
\begin{equation}\label{minpartintrocond}
\mathscr{P}^{\boldsymbol{\sigma}}_{2s}(\mathfrak{E}^*,\Omega) \leq \mathscr{P}^{\boldsymbol{\sigma}}_{2s}(\mathfrak{F},\Omega)
\end{equation}
for every  $\mathfrak{F}=(F_1,\ldots,F_m)\in\mathscr{A}_m(\Omega)$ such that $\overline{F_j\triangle E^*_j}\subset \Omega$ for each $j\in\{1,\ldots,m\}$, then there 
exists a relatively closed subset $\Sigma_{\rm sing}\subset \partial\mathfrak{E}^*\cap\Omega$ of Hausdorff dimension at most $n-2$, and locally finite in $\Omega$ if $n=2$, such that 
$(\partial\mathfrak{E}^*\cap\Omega)\setminus \Sigma_{\rm sing}$ is locally a $C^\infty$-hypersurface. 
\end{theorem}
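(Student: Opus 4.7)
The plan is to adapt the Almgren--Colombo--Maggi strategy \cite{Alm,ColMag}: first establish a single-chamber \emph{non infiltration property}, then use it to reduce the regularity analysis around points of $\partial\mathfrak{E}^*\cap\Omega$ to the two-phase case and invoke the regularity theory of minimizers of the $2s$-perimeter from \cite{CRS}, and finally bootstrap to $C^\infty$ via the Euler--Lagrange equation \eqref{ELeqpartintro} and the nonlocal Schauder theory of \cite{BFV}. The near-homogeneity hypothesis $\max_{i\neq j}\sigma_{ij}/\min_{i\neq j}\sigma_{ij}<(m-1)/(m-2)$ is tailored precisely to make the single-chamber non infiltration step go through.

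The first step is the non infiltration statement: there exist $\theta_0>0$ and $r_0>0$ such that for every $j\in\{1,\ldots,m\}$ and every ball $B_{2r}(x_0)\subset\Omega$ with $r\leq r_0$,
\begin{equation*}
|E^*_j\cap B_r(x_0)|\leq \theta_0\, r^n \;\Longrightarrow\; |E^*_j\cap B_{r/2}(x_0)|=0\,.
\end{equation*}
To prove it, given $A:=E^*_j\cap B_{r/2}(x_0)$ of small mass, I would build a competitor $\mathfrak{F}$ by removing $A$ from $E^*_j$ and adding it to a single neighbour $E^*_h$, with $h$ chosen so that $\sigma_{jh}=\min_{k\neq j}\sigma_{jk}$. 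A direct computation gives
\begin{equation*}
\mathscr{P}^{\boldsymbol{\sigma}}_{2s}(\mathfrak{F},\Omega)-\mathscr{P}^{\boldsymbol{\sigma}}_{2s}(\mathfrak{E}^*,\Omega)=\sigma_{jh}\,\mathcal{I}_{2s}(A,E^*_j\setminus A)-\sigma_{jh}\,\mathcal{I}_{2s}(A,E^*_h)+\sum_{k\not\in\{j,h\}}(\sigma_{hk}-\sigma_{jk})\,\mathcal{I}_{2s}(A,E^*_k).
\end{equation*}
The smallness of $E^*_j$ in $B_r$ controls the first term in terms of the far-field interaction $r^{-2s}|A|$, while the $h$-chamber, carrying a definite density in $B_r$ by a pigeonhole argument, provides a lower bound $\mathcal{I}_{2s}(A,E^*_h)\geq c(n,s)\,r^{-2s}|A|$; the correction terms are bounded in absolute value by $(m-2)(\sigma_{\max}-\sigma_{\min})\,c(n,s)\,r^{-2s}|A|$. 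Putting these together yields
\begin{equation*}
\mathscr{P}^{\boldsymbol{\sigma}}_{2s}(\mathfrak{F},\Omega)-\mathscr{P}^{\boldsymbol{\sigma}}_{2s}(\mathfrak{E}^*,\Omega)\leq-\bigl(\sigma_{\min}-(m-2)(\sigma_{\max}-\sigma_{\min})\bigr)\,c(n,s)\,r^{-2s}|A|+\text{l.o.t.},
\end{equation*}
which, because our ratio hypothesis rewrites as $(m-2)(\sigma_{\max}-\sigma_{\min})<\sigma_{\min}$, forces $|A|=0$ by minimality. This step is where I expect the main obstacle to lie: the bookkeeping of lost versus gained interactions has to be done carefully, with $(m-2)$ chambers potentially acting adversarially, which is precisely why the sharp ratio $(m-1)/(m-2)$ enters.

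Equipped with the non infiltration property, the remainder mirrors \cite{ColMag,CesNov}. At every $x_0\in\partial\mathfrak{E}^*\cap\Omega$ and all sufficiently small scales, at most two chambers, say $E^*_h$ and $E^*_k$, have positive Lebesgue density in $B_r(x_0)$. The set $\Sigma_{\rm mult}\subset \partial\mathfrak{E}^*\cap\Omega$ of points where three or more chambers are present is relatively closed; density estimates combined with a Besicovitch-type covering give $\mathcal{H}^{n-2}$-dimension bound and, when $n=2$, local finiteness, because two such points at distance $\rho$ produce disjoint balls each carrying a quantum of $\mathscr{P}^{\boldsymbol{\sigma}}_{2s}$-energy. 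Away from $\Sigma_{\rm mult}$, the partition reduces locally to the pair $(E^*_h,(E^*_h)^c)$ and condition \eqref{minpartintrocond} implies that $E^*_h$ minimizes its $2s$-perimeter in $B_r(x_0)$ against compactly supported perturbations; the regularity theory of \cite{CRS} then furnishes a further relatively closed singular set of Hausdorff dimension at most $n-2$ outside of which $\partial E^*_h$ is a $C^{1,\alpha}$-hypersurface. The union of these two singular sets is $\Sigma_{\rm sing}$, with the claimed dimension estimate.

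It remains to upgrade $C^{1,\alpha}$ to $C^\infty$ along $(\partial\mathfrak{E}^*\cap\Omega)\setminus\Sigma_{\rm sing}$. There the Euler--Lagrange relation \eqref{ELeqpartintro} holds classically, namely $\mathrm{H}^{(2s)}_{\partial E^*_h}(x)=f_{hk}(x)$. Since the right-hand side $f_{hk}$ involves only integrals of $|x-y|^{-n-2s}$ against $\chi_{E^*_j}$ for $j\notin\{h,k\}$, and by non infiltration these chambers stay at a positive distance from the regular part in a neighbourhood of $x_0$, $f_{hk}$ is smooth. The Schauder-type bootstrap for the prescribed nonlocal mean-curvature equation developed in \cite{BFV}, applied exactly as in \cite{CesNov}, then yields $C^\infty$-regularity. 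As emphasised, the whole argument funnels through the non infiltration estimate, and the sharpness of the constant $(m-1)/(m-2)$ in the statement is a direct algebraic reflection of the $(m-2)$ worst-case adversarial chambers appearing in that estimate.
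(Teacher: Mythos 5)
Your overall architecture --- non-infiltration, then reduction to a two-phase prescribed nonlocal mean curvature problem, then $C^{1,\alpha}$ via the almost-minimizer regularity theory, then $C^\infty$ via \cite{BFV} --- is the paper's strategy, and the second half of your outline is broadly right (modulo the fact that the paper obtains $C^{1,\alpha}$ from \cite{DVV} rather than \cite{CRS}, and identifies $\Sigma_{\rm sing}={\rm Sing}^{n-2}(u_*^\e)$ via the quantitative stratification of Section~5, which already gives the dimension bound and the characterization of regular points through tangent half-spaces). However, the non-infiltration step --- which you correctly identify as the crux --- does not work as you propose.

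You move $A:=E^*_j\cap B_{r/2}(x_0)$ into a single chamber $E^*_h$ with $\sigma_{jh}=\min_{k\neq j}\sigma_{jk}$, and then you bound the ``correction'' interactions $\mathcal{I}_{2s}(A,E^*_k)$ for $k\notin\{j,h\}$, as well as $\mathcal{I}_{2s}(A,E^*_j\setminus A)$, by $c(n,s)r^{-2s}|A|$, while claiming a lower bound $\mathcal{I}_{2s}(A,E^*_h)\geq c(n,s)r^{-2s}|A|$ from a pigeonhole density argument. Both halves are incorrect. For the upper bound: the quantities $\mathcal{I}_{2s}(A,E^*_k)$ and $\mathcal{I}_{2s}(A,E^*_j\setminus A)$ involve near-field interactions across the shared boundary of adjacent chambers and scale like a \emph{fractional perimeter}, not like a volume times $r^{-2s}$; by the isoperimetric inequality they are of order $\gtrsim|A|^{(n-2s)/n}$, which dominates $r^{-2s}|A|$ as $|A|\to 0$, so they cannot be absorbed as lower-order corrections. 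For the lower bound: the pigeonhole guarantees \emph{some} chamber has definite density in $B_r$, but it need not be the $h$ you chose to minimize $\sigma_{jh}$; if $E^*_h$ happens to be thin near $x_0$, your gain term vanishes. Finally, even with correct estimates, minimality does not force $|A|=0$ in one shot; extinction requires a differential inequality plus a De Giorgi-type iteration.

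The paper resolves all three difficulties simultaneously by testing against $m-1$ competitors at once: for each $k\neq h$ it moves $E^*_h\cap D_r$ into $E^*_k$, writes the resulting minimality inequality, and \emph{sums over $k$}. The problematic interactions $\mathcal{I}_{2s}(E^*_h\cap D_r,E^*_j\cap D_r)$ then appear on both sides of the summed inequality, with coefficient $(m-1)\sigma_{hj}$ on the left and $\sum_{k\neq h}\sigma_{kj}$ on the right; since $\sigma_{jj}=0$ the latter sum has only $m-2$ nonzero terms, each at most $\boldsymbol{\sigma}_{\rm max}$, while the left side is at least $(m-1)\boldsymbol{\sigma}_{\rm min}$. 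The hypothesis $\boldsymbol{\sigma}_{\rm max}/\boldsymbol{\sigma}_{\rm min}<(m-1)/(m-2)$ is exactly what makes $(m-1)\boldsymbol{\sigma}_{\rm min}>(m-2)\boldsymbol{\sigma}_{\rm max}$, so the near-field interactions survive on the left with a uniformly positive coefficient, and no density lower bound is needed. Coupling with the fractional isoperimetric inequality produces an integro-differential inequality for $f(r)=|E^*_h\cap D_r|$, and the extinction $f(r/2)=0$ follows from Lemma~\ref{integineqlemma}. This averaging over competitors is the idea missing from your argument, and it is qualitatively different from treating the extra chambers as small adversarial corrections.
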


In the remaining cases, we focus on the tri-partition problem $m=3$, and we shall distinguish whether or not a strict triangle inequality holds. If the strict inequality triangle holds, it is easy to see that $\boldsymbol{\sigma}\in \mathscr{S}^2_3$ since this case is additive (see Lemma \ref{lemmacoeffm=3}), and we have the following result recovering \cite{CesNov}. 

\begin{theorem}\label{mainthm4}
Assume that $s\in(0,1/2)$ and that $\boldsymbol{\sigma}\in \mathscr{S}_3$ satisfies 
$$\sigma_{ij}<\sigma_{ik}+\sigma_{kj} \quad \forall i,j,k\in\{1,2,3\}\,. $$
Let $\Omega\subset \R^n$ be a smooth and bounded open set. If $\mathfrak{E}^*\in\mathscr{A}_3(\Omega)$ satisfies \eqref{minpartintrocond}, then the conclusion of Theorem \ref{mainthm3} holds. 
\end{theorem}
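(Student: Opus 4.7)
The overall plan mirrors the classical strategy of Almgren \cite{Alm} for local minimal partitions, as adapted to the nonlocal additive setting in \cite{CesNov}: first establish a single-chamber non-infiltration property, then use it to localize to a two-phase configuration around generic points of $\partial\mathfrak{E}^*\cap\Omega$, and finally invoke the regularity theory for minimizing nonlocal minimal surfaces with smooth forcing. The case $m=3$ is convenient because, by Lemma~\ref{lemmacoeffm=3}, the strict triangle inequality forces $\boldsymbol{\sigma}$ to be additive, that is $\sigma_{ij}=\alpha_i+\alpha_j$ with $\alpha_i>0$ for each $i$. Taking ${\bf a}_i=\sqrt{\alpha_i}\,{\bf e}_i\in\R^3$ gives $|{\bf a}_i-{\bf a}_j|^2=\sigma_{ij}$, so that $\boldsymbol{\sigma}\in \mathscr{S}_3^2$ and Theorem~\ref{mainthm2} already provides qualitative control of $\partial\mathfrak{E}^*\cap\Omega$ (notably $P_{2s'}$-finiteness and Minkowski codimension one).

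The crucial step will be a non-infiltration property: there exists $\eta_0=\eta_0(n,s,\boldsymbol{\sigma})>0$ such that whenever $D_R(x_0)\subset \Omega$ and $|E^*_j\cap D_R(x_0)|\leq \eta_0 R^n$ for some index $j$, then $|E^*_j\cap D_{R/2}(x_0)|=0$. The argument compares $\mathfrak{E}^*$ with a competitor obtained by erasing $E^*_j\cap D_{R/2}(x_0)$ and assigning its volume to a single neighbouring chamber. The energy gained by removing all interactions involving $E^*_j$ in $D_{R/2}(x_0)$ is put against the cost of enlarging the interaction of the two surviving chambers; the strict triangle inequality $\sigma_{hk}<\sigma_{hj}+\sigma_{jk}$, i.e.\ $\alpha_j>0$, opens a fixed positive gap that must then dominate the fractional tails of the interactions with $\R^n\setminus D_R(x_0)$ once $\eta_0$ is small, as in \cite{ColMag} for the homogeneous case.

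Once non-infiltration holds, let $\Sigma_{\rm triple}$ denote the set of points of $\partial\mathfrak{E}^*\cap\Omega$ at which every chamber has positive upper density. At any $x_0\in (\partial\mathfrak{E}^*\cap\Omega)\setminus \Sigma_{\rm triple}$ only two chambers, say $E^*_h$ and $E^*_k$, are essentially present in some small ball $D_r(x_0)$; minimality of $\mathfrak{E}^*$ then reduces the local problem to minimizing the $2s$-perimeter of $E^*_h$ in $D_{r/2}(x_0)$ subject to fixed exterior data, up to a smooth lower-order forcing given by the third chamber through \eqref{ELeqpartintro} and Remark~\ref{RemELeqpartit}. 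The partial regularity theory of \cite{CRS}, extended to smooth prescribed-mean-curvature settings in \cite{CapGui,DVV}, yields $C^{1,\alpha}$-regularity outside a relatively closed set of Hausdorff dimension at most $n-3$; the higher-regularity bootstrap of \cite{BFV} then lifts the hypersurface to $C^\infty$ wherever \eqref{ELeqpartintro} holds pointwise. A blow-up and Federer dimension-reduction argument, combined with the planar classification of conical minimizing three-partitions from \cite{CesNov} (triple junctions with angles determined by the $\alpha_i$), bounds $\dim_{\mathcal{H}}\Sigma_{\rm triple}\leq n-2$, with $\Sigma_{\rm triple}$ locally finite in $\Omega$ when $n=2$.

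The main obstacle is the nonlocal non-infiltration step. The redistribution has to be chosen so that the erased volume flows to a single neighbour, in order to avoid inadvertently creating a cheap new $E^*_h$--$E^*_k$ interface, and the long-range contributions of the fractional interactions with $\R^n\setminus D_R(x_0)$ must be quantified uniformly so that the gap $\min_j \alpha_j>0$ coming from the strict triangle inequality dominates them. This is the most delicate technical piece of the argument, and it explains why the same scheme does not extend to $m\geq 4$: there, unions of $m-2$ chambers must be treated together, a strictly more involved construction in the spirit of \cite{W,Leo}.
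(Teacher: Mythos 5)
Your overall strategy -- observe that the strict triangle inequality makes $\boldsymbol{\sigma}$ additive so that $\boldsymbol{\sigma}\in\mathscr{S}_3^2$, prove a single-chamber non-infiltration estimate, use it to reduce to a two-phase configuration near generic boundary points, and invoke \cite{CapGui,DVV,BFV} -- is precisely what the paper does (Proposition~\ref{NIP3STI} plays the role of your non-infiltration step, and the proof of Theorem~\ref{mainthm4} is literally ``the proof of Theorem~\ref{mainthm3} with Proposition~\ref{NIPnearlyhom} replaced by Proposition~\ref{NIP3STI}''). The one genuine divergence is how the exceptional set and its dimension are obtained. The paper takes $\Sigma_{\rm sing}={\rm Sing}^{n-2}(u_*^\e)$, the tangent-map stratum where no blow-up is a two-phase half-space, and gets $\dim_{\mathscr{H}}\Sigma_{\rm sing}\le n-2$ (countable for $n=2$) from the quantitative stratification of Section~\ref{subsectstrat} via the abstract \cite{FMS}/\cite{CheegNab} framework; regularity near $\Sigma_{\rm reg}$ then follows because the half-space blow-up plus non-infiltration and Hausdorff convergence of the boundaries make the boundary flat at one scale, at which point \cite{DVV} and \cite{BFV} apply. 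You instead propose to split the bad set into $\Sigma_{\rm triple}$ plus the singular set of the residual two-phase boundary, and bound the first via a Federer dimension-reduction and the planar cone classification of \cite{CesNov}, and the second via \cite{CRS}. This is a legitimate alternative and carries more structural information about generic singularities; it does, however, require you to control two different kinds of bad points, whereas the stratification approach treats both uniformly as points with no flat tangent. Two small caveats in your write-up: (i) \cite{CRS} only gives Hausdorff dimension $\le n-2$ for the minimizing two-phase singular set (full regularity in $n=2$, hence the $n-3$ bound by dimension reduction, is due to \cite{SV1bis} and needs to be extended to the almost-minimizing / prescribed-curvature setting you end up in); this does not break your argument since $n-2$ suffices, but you should not attribute $n-3$ to \cite{CRS,CapGui,DVV} directly. (ii) Your non-infiltration sketch compares configurations at a single fixed scale; the actual proof in the paper (as in \cite{ColMag}) sets up an integro-differential inequality for $f(r)=|E^*_h\cap D_r|$ and runs the De Giorgi-type extinction Lemma~\ref{integineqlemma}, which is what really converts the fixed positive gap $\alpha_{\min}>0$ from the strict triangle inequality into vanishing of the chamber at half the radius; a one-scale comparison alone does not close the argument against the long-range tails.
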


Most surprisingly, it appears that we were able to prove the same regularity in case the triangle inequality fails, but it must fail strictly, at least for $s$ close enough to $1/2$. 

\begin{theorem}\label{mainthm5}
Assume that $s\in(0,1/2)$ and that $\boldsymbol{\sigma}\in \mathscr{S}^2_3$ satisfies 
\begin{equation}\label{invstrictineqintro}
\sigma_{i_0j_0}>\sigma_{i_0k_0}+\sigma_{k_0j_0}\quad\text{for some $\{i_0,j_0,k_0\}=\{1,2,3\}$}\,.
\end{equation}
Let $\Omega\subset \R^n$ be a smooth and bounded open set. There exists $s_*=s_*(\boldsymbol{\sigma},n)\in(0,1/2)$ such that for $s\in(s_*,1/2)$, 
if $\mathfrak{E}^*=(E^*_1,E^*_2,E^*_3)\in\mathscr{A}_3(\Omega)$ satisfies \eqref{minpartintrocond}, then the conclusion of Theorem \ref{mainthm3} holds. In addition, $\partial E^*_{i_0}\cap \partial E^*_{j_0}\cap\Omega\subset \Sigma_{\rm sing}$. In particular, $\partial E^*_{i_0}\cap \partial E^*_{j_0}\cap\Omega$ has Hausdorff dimension at most $n-2$, and it is locally finite in $\Omega$ for $n=2$. 
\end{theorem}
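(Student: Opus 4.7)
The proof splits into two stages: a partial regularity statement analogous to Theorems~\ref{mainthm3}--\ref{mainthm4}, and a new competitor argument ruling out regular points on the $i_0|j_0$ interface when $s$ is close to $1/2$. For Stage~1 I work with the $\mathcal{Z}$-valued representative $u^{*}=\sum_{j}\chi_{E^{*}_{j}}{\bf a}_{j}$ (well defined since $\boldsymbol{\sigma}\in\mathscr{S}_{3}^{2}$), so that minimality of $\mathfrak{E}^{*}$ is equivalent to minimality of $u^{*}$ for the fractional Dirichlet energy $\mathcal{E}_{s}$ among $\mathcal{Z}$-valued competitors. A blow-up scheme based on uniform upper/lower density estimates for $\partial\mathfrak{E}^{*}$ and on compactness in $H^{s'}_{\rm loc}$ produces, at any boundary point, minimizing conical $3$-partitions of $\R^{n}$. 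For $m=3$ these cones fall into two classes: (i) half-space partitions carrying only two chambers, and (ii) triple-junction-like singularities concentrated on sets of codimension at least~$2$. Around a class-(i) point the boundary locally satisfies the nonlocal minimal surface equation for a single pair of chambers, so the $\varepsilon$-regularity theory of \cite{CRS} together with the higher regularity bootstrap from \cite{BFV} yields $C^{\infty}$-smoothness of $\partial\mathfrak{E}^{*}$ outside a relatively closed set $\Sigma_{\rm sing}$ of Hausdorff dimension at most $n-2$, locally finite for $n=2$.

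For the additional assertion, suppose by contradiction that there exists $x_{0}\in(\partial E^{*}_{i_{0}}\cap\partial E^{*}_{j_{0}}\cap\Omega)\setminus\Sigma_{\rm sing}$. By Stage~1 we may pick $r>0$ with $\overline{D_{r}(x_{0})}\subset\Omega$ such that $\partial\mathfrak{E}^{*}\cap D_{r}(x_{0})$ is a smooth hypersurface separating $E^{*}_{i_{0}}$ from $E^{*}_{j_{0}}$, with $E^{*}_{k_{0}}\cap D_{r}(x_{0})=\emptyset$. After a $C^{\infty}$ straightening we may assume $\partial\mathfrak{E}^{*}\cap D_{r}(x_{0})\subset\{x_{n}=0\}$. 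For $0<\eta\ll r$ I define a competitor $\mathfrak{F}^{\eta}=(F^{\eta}_{1},F^{\eta}_{2},F^{\eta}_{3})$ by carving out the slab $S_{\eta}:=\{x\in D_{r/2}(x_{0}):|x_{n}|<\eta\}$ and reassigning it to chamber $k_{0}$, leaving $\mathfrak{E}^{*}$ unchanged outside $D_{r/2}(x_{0})$. Since the modification is compactly supported in $\Omega$, $\mathfrak{F}^{\eta}\in\mathscr{A}_{3}(\Omega)$ is an admissible competitor in~\eqref{minpartintrocond}.

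The energy difference $\Delta(s,\eta):=\mathscr{P}^{\boldsymbol{\sigma}}_{2s}(\mathfrak{F}^{\eta},\Omega)-\mathscr{P}^{\boldsymbol{\sigma}}_{2s}(\mathfrak{E}^{*},\Omega)$ is controlled only by pairwise interactions involving chamber $k_{0}$ and by the change of the $(i_{0},j_{0})$-interaction. Applying the Dávila-type limit $(1-2s)\,\mathcal{I}_{2s}(E_{i}\cap\Omega,E_{j}\cap\Omega)\to C_{n}\,\mathcal{H}^{n-1}(\partial E_{i}\cap\partial E_{j}\cap\Omega)$ from \cite{ADPM,Dav}, adapted to the piecewise-constant $\mathcal{Z}$-valued maps $u_{\mathfrak{E}^{*}}$ and $u_{\mathfrak{F}^{\eta}}$, I obtain
\begin{equation*}
\lim_{s\uparrow 1/2}(1-2s)\,\Delta(s,\eta)\,=\,C_{n}\bigl(\sigma_{i_{0}k_{0}}+\sigma_{k_{0}j_{0}}-\sigma_{i_{0}j_{0}}\bigr)\,\mathcal{H}^{n-1}\bigl(\{x_{n}=0\}\cap D_{r/2}(x_{0})\bigr)+\rho(\eta),
\end{equation*}
with $\rho(\eta)\to 0$ as $\eta\to 0$. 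By assumption~\eqref{invstrictineqintro} the leading coefficient is strictly negative, so fixing $\eta=\eta(\boldsymbol{\sigma},n)$ small enough ensures the right-hand side is $\leq-\delta<0$. Choosing $s_{*}=s_{*}(\boldsymbol{\sigma},n)<1/2$ close enough to $1/2$ then forces $\Delta(s,\eta)<0$ for all $s\in(s_{*},1/2)$, contradicting the minimality of $\mathfrak{E}^{*}$. This yields $\partial E^{*}_{i_{0}}\cap\partial E^{*}_{j_{0}}\cap\Omega\subset\Sigma_{\rm sing}$, and the dimension bound for this inclusion is inherited from Stage~1.

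The most delicate point will be making the above expansion quantitative with an error $\rho(\eta)$ independent of $s$ in a left-neighborhood of $1/2$. The obstruction is that the far-field interaction between $\{x_{n}>\eta\}\cap D_{r/2}(x_{0})$ and $\{x_{n}<-\eta\}\cap D_{r/2}(x_{0})$ is transmitted through the nonintegrable tail of the fractional kernel and must be compared with the corresponding interaction across the original planar interface; this requires a quantitative Dávila-type estimate that is uniform both in the slab parameter $\eta$ and in $s\in(s_{*},1/2)$. Once this uniform convergence is in place, the rest of the argument fits into the framework of Stage~1 and mirrors the partial-regularity strategy of Theorem~\ref{mainthm4}.
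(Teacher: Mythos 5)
Your overall mechanism — a slab reassigned to chamber $k_0$ combined with the Dávila-type asymptotics $(1-2s)P_{2s}\to$ perimeter, triggered by the strict reverse triangle inequality — is exactly the one used in the paper (compare Lemma~\ref{mintangmapclassm3}). However, there are two genuine gaps in the way you have assembled the argument.

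First, the logical order of your two stages must be reversed. In Stage~1 you assert that ``around a class-(i) point the boundary locally satisfies the nonlocal minimal surface equation for a single pair of chambers,'' but this presupposes that the third chamber is absent in a full neighborhood, which is precisely the non-infiltration property. Under condition \eqref{invstrictineqintro} non-infiltration fails for the chamber $E^{*}_{k_0}$ (Proposition~\ref{NIP3ISTI} only supplies it for chambers indexed by $i_0$ and $j_0$), so at a point whose tangent cone is the $\{i_0,j_0\}$-half-space you have no way to exclude $E^{*}_{k_0}$ from a small ball and cannot reduce to a two-phase problem. To run the $\varepsilon$-regularity theory of \cite{CRS,CapGui,DVV} you must therefore first know that tangent maps at regular points always involve $k_0$ — i.e., the Stage~2 classification is a prerequisite for Stage~1, not a corollary of it. The paper establishes this classification first and then applies non-infiltration to the non-$k_0$ chamber that is small in proportion near a regular point.

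Second, the competitor/Dávila argument must be applied to a tangent map, not to the actual minimizer. You construct $\mathfrak{F}^{\eta}$ by straightening $\partial\mathfrak{E}^{*}$ near $x_0$ and carving out a slab. But $\mathfrak{E}^{*}$, the straightening diffeomorphism, and the far field outside $D_{r/2}(x_0)$ all depend on $s$ and on the particular minimizer, so the quantity $(1-2s)\Delta(s,\eta)$ is being computed on $s$-dependent sets; the Dávila limit you invoke holds for a \emph{fixed} configuration, and making it uniform in the presence of $s$-dependent geometry — which you yourself flag at the end — is a structural obstruction, not merely a quantitative one. The resolution is to pass to the tangent map: Lemma~\ref{lemmeminimtangmap} shows tangent maps of $\mathcal{E}_s$-minimizers are $\mathcal{E}_s$-minimizers, and at a point of $\partial E^{*}_{i_0}\cap\partial E^{*}_{j_0}\cap\Sigma_{\rm reg}$ the tangent map is, up to rotation, the \emph{fixed} configuration $\chi_{\{x_n<0\}}{\bf a}_{i_0}+\chi_{\{x_n>0\}}{\bf a}_{j_0}$. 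The slab competitor $T$ and the Dávila limits \eqref{condmintangentsigmainv5}--\eqref{condmintangentsigmainv6} are then evaluated on explicit, $s$-independent sets, producing a uniform threshold $s_*(\boldsymbol{\sigma},n)$. Your displayed expansion is correct in form, but only once all quantities refer to the tangent half-space partition rather than to $\mathfrak{E}^{*}$ itself.
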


The main feature of condition \eqref{invstrictineqintro} is that the non infiltration property does not hold for all the chambers of the partition. More precisely, it fails for the chamber $E^*_{k_0}$, but holds for the remaining ones.  This turns out to be sufficient to obtain partial regularity combining a stratification argument of the singular set  together with a classification of tangent cones showing that an hyperplane separating a chamber of index $i_0$ from a chamber $j_0$ is not minimizing, at least for $s$ close enough to~$1/2$.  This latter fact rests on the known asymptotic analysis 
of the (normalized) fractional perimeter $(1-2s)P_{2s}$ which converges as $s\to 1/2$ to a multiple of the standard perimeter functional \cite{ADPM,CafVal,Dav}. For $\mathscr{P}^{\boldsymbol{\sigma}}_{1}$, it is elementary to prove that an hyperplane separating a chamber $i_0$ from a chamber $j_0$ is not minimizing if \eqref{invstrictineqintro} holds, and it crucially needs the inequality to be strict. As far as we know, there is no regularity theory for minimizers of $\mathscr{P}^{\boldsymbol{\sigma}}_{1}$ for a matrix $\boldsymbol{\sigma}$ which does not satisfy the triangle inequality, probably because existence is not even ensured through the direct method.  The existing regularity theory for minimizers of $\mathscr{P}^{\boldsymbol{\sigma}}_{1}$ does not even cover the case where the triangle inequality is allowed to be large (in full generality).  In this direction, the only contribution we are aware of  is the recent work \cite{Nov} in dimension $n=2$ in the additive case where the total boundary can present singular points with the geometry of a cusp. This suggests that the strict inequality in \eqref{invstrictineqintro} is truly necessary, at least for the method of proof. Finally, we would like to mention that the Hausdorff  dimension estimate on $\partial E^*_{i_0}\cap \partial E^*_{j_0}$ in dimension $n=2$ implies that there are no ``triple junctions"  under \eqref{invstrictineqintro} in comparison to \cite{CesNov}. Still in dimension $n=2$, we actually believe that  $\partial E^*_{i_0}\cap \partial E^*_{j_0}\cap\Omega=\emptyset$, which would imply that $\Sigma_{\rm sing}=\emptyset$ by the full regularity of planar  (minimizing) nonlocal minimal surfaces proved in  \cite{SV1bis}. This remains an open problem. 

\vskip5pt

As many works on equations involving the fractional Laplacion $(-\Delta)^s$, our approach relies on the extension of functions defined on $\R^n$ to the open upper half space $\R^{n+1}_+:=\R^n\times (0,\infty)$ known as {\sl ``Caffarelli-Silvestre extension''}~\cite{CaffSil} (see also \cite{MolOs}). By means of this extension, $(-\Delta)^s$ can be realized as the Dirichlet-to-Neumann operator induced by  the degenerate elliptic operator $L_s:=-{\rm div}(z^{1-2s}\nabla\cdot)$  on $\R^{n+1}_+$, where $z\in(0,\infty)$ denotes the extension variable. In this way,  one can rephrases the fractional Allen-Cahn equation or the nonlocal minimal partition equation as 
$L_s$-harmonic functions in $\R^{n+1}_+$ satisfying a nonlinear boundary condition. Exactly as in \cite{MilSirW}, this extension provides a fundamental monotonicity formula. All the preliminary material concerning functional spaces, the operator  $(-\Delta)^s$ and the Caffarelli-Silvestre extension are presented in Section \ref{prelim}.  
In Section \ref{FractAC}, we first provide some general regularity estimates on solutions to the fractional Allen-Cahn equation and $L_s$-harmonic functions with Allen-Cahn degenerate boundary reaction, and then prove some of  the main ingredients necessary to the asymptotic analysis $\varepsilon\to 0$ such as a fondamental {\sl clearing out} lemma.  A first part of the asymptotic analysis as $\eps\to0$ is performed in Section \ref{FGLasymp}. 
Section \ref{GHSNMC} is devoted to the analysis of nonlocal minimal partitions where we prove Theorem \ref{mainthm2}.  Based on the different results from Section \ref{GHSNMC}, we complete  in Section \ref{imprsect} the proof of Theorem \ref{main1new} (and Theorem \ref{main2new}). 
Finally, the partial regularity for minimizing nonlocal partitions is addressed in Section \ref{regminpartsect}.

\subsection*{{Notation}}
We write $\R^{n+1}_+:=\R^n\times(0,\infty)$. For simplicity, $\R^n$ is identified with $\partial  \mathbb{R}^{n+1}_+=\R^n\times\{0\}$. More generally, sets $A\subset\mathbb{R}^n$ are identified with $A\times\{0\}\subset\partial  \mathbb{R}^{n+1}_+$. Points in $\mathbb{R}^{n+1}$ are then written $\mathbf{x}=(x,z)$ with $x\in\mathbb{R}^n$ and $z\in\mathbb{R}$, and we shall often identify $(x,0)\in\R^{n+1}$ with $x\in\R^n$.  
To avoid confusion, we denote by $B_r(\mathbf{x})$ the open ball in $\mathbb{R}^{n+1}$ of radius $r$ centered at $\mathbf{x}=(x,z)$, while $D_r(x):= B_r(\mathbf{x})\cap\mathbb{R}^{n}$ is the open ball (or disc) in $\R^n$ centered at ${\bf x}=(x,0)$. For an arbitrary set $G\subset  \mathbb{R}^{n+1}$, we set  
$$G^+:=G\cap \mathbb{R}^{n+1}_+\quad\text{ and }\quad\partial^+ G:=\partial G\cap \mathbb{R}^{n+1}_+\,.$$
For a bounded open set $G\subset\R^{n+1}_+$, we shall say that $G$ is  {\bf admissible} whenever 
\begin{itemize}
\item $\partial G$ is Lipschitz regular;  
\vskip2pt
\item the (relative) open set $\partial^0G\subset\R^n$ defined by 
$$\partial^0G:=\Big\{\mathbf{x}\in\partial G\cap\partial\R^{n+1}_+ : B^+_{r}(\mathbf{x})\subset G \text{ for some $r>0$}\Big \}\,,$$
is non empty and has a Lipschitz regular boundary; 
\vskip2pt

\item $\partial G=\partial^+ G\cup\overline{\partial^0G}\,$.
\end{itemize}

Finally, we often denote by~$C$ a generic positive constant which may only depend on the dimension~$n$, and possibly changing from line to line. If a constant depends on additional given parameters, we may write those parameters using the subscript notation.

%%%%%%%%%%%%%%%%%%%%%%%%%%%%%%%%%%%%%%%%%%%%%%%%%%%%%%%
%%%%%%%%%%%%%%%%%%%%%%%%%%%%%%%%%%%%%%%%%%%%%%%%%%%%%%%
   								 
\section{Preliminaries} \label{prelim}   
								 
%%%%%%%%%%%%%%%%%%%%%%%%%%%%%%%%%%%%%%%%%%%%%%%%%%%%%%%
%%%%%%%%%%%%%%%%%%%%%%%%%%%%%%%%%%%%%%%%%%%%%%%%%%%%%%%

\subsection{$H^{s}$-spaces and the fractional Laplacian}\label{secHs}
 
For an open set $\Omega\subset \mathbb{R}^n$,  the fractional Sobolev space $H^{s}(\Omega;\R^d)$ is made of functions $u\in L^2(\Omega;\R^d)$ such that\footnote{The normalization constant $\gamma_{n,s}$ is chosen in such a way that $\displaystyle [v]^2_{H^{s}(\R^n)}=\int_{\R^n}(2\pi|\xi|)^{2s}|\widehat v|^2\,\de\xi\,$. }  
$$[u]^2_{H^{s}(\Omega)}:=\frac{\gamma_{n,s}}{2}\iint_{\Omega\times \Omega} \frac{|u(x)-u(y)|^2}{|x-y|^{n+2s}}\,\de x\de y<\infty\,,\quad \gamma_{n,s}:=s\,2^{2s}\pi^{-\frac{n}{2}}\frac{\Gamma\big(\frac{n+2s}{2}\big)}{\Gamma(1-s)} \,.$$ 
It is a separable Hilbert space normed by $\|\cdot\|^2_{H^{s}(\Omega)}:= \|\cdot\|^2_{L^2(\Omega)}+[\cdot]^2_{H^{s}(\Omega)}$. 
The space $ H^{s}_{\rm loc}(\Omega;\R^d)$ denotes the class of functions whose restriction to any relatively compact  open subset $\Omega'$ of $\Omega$ belongs to $H^{s}(\Omega';\R^d)$. 
The linear subspace $H^{s}_{00}(\Omega;\R^d) \subset H^{s}(\mathbb{R}^n;\R^d)$ is in turn defined by 
$$H^{s}_{00}(\Omega;\R^d):=\big\{u\in H^{s}(\mathbb{R}^n;\R^d) :  u=0 \text{ a.e. in } \R^n\setminus\Omega\big\}\,. $$
Endowed with the  induced norm,   
 $H^{s}_{00}(\Omega;\R^d)$ is also an Hilbert space, and 
$$[u]^2_{H^{s}(\mathbb{R}^n)}=2\mathcal{E}_{s}(u,\Omega) 
=[u]^2_{H^{s}(\Omega)} + \int_{\Omega} \boldsymbol{\rho}_\Omega(x) |u(x)|^2\,\de x\quad\forall u\in H^{s}_{00}(\Omega;\R^d)\,,$$
where $\mathcal{E}_{s}(\cdot,\Omega)$ is  defined in \eqref{defenergE}, and 
$ \boldsymbol{\rho}_\Omega(x) := \gamma_{n,s}\int_{\R^n\setminus \Omega}|x-y|^{-n-2s}\,\de y$. 
Since $s\in(0,1/2)$, if $\Omega$ is  bounded and has a Lipschitz boundary, then 
$ \int_{\Omega} \boldsymbol{\rho}_\Omega(x) |u(x)|^2\,\de x\leq C_\Omega \|u\|^2_{H^{s}(\Omega)}$  for all functions $u\in H^s(\Omega;\R^d)$, and for  
 a constant $C_\Omega=C_\Omega(s)>0$. As a consequence, if $u\in H^s(\Omega;\R^d)$ and $\widetilde u$ denotes the extension of $u$ by zero  outside $\Omega$, then  
$ \|u\|_{H^{s}(\Omega)}\leq \|\widetilde u\|_{H^{s}(\R^n)}\leq (C_\Omega+1)^{\frac{1}{2}}\|u\|_{H^{s}(\Omega)}$. 
In particular, if $\partial \Omega$ is smooth enough, then $H^{s}_{00}(\Omega)=\big\{\widetilde u : u\in  H^s(\Omega;\R^d)\big\}$ 
(see \cite[Corollary~1.4.4.5]{G}), and  (see \cite[Theorem~1.4.2.2]{G}) 
\begin{equation}\label{densitysmoothH1/200}
H^{s}_{00}(\Omega;\R^d)= \overline{\mathscr{D}(\Omega;\R^d)}^{\,\|\cdot\|_{H^{s}(\mathbb{R}^n)}}\,.
 \end{equation}
The topological dual  of the vector space space $H^{s}_{00}(\Omega;\R^d)$ is denoted by $H^{-s}(\Omega;\R^d)$.
\vskip5pt

For our purposes and following \cite{MilSirW}, we have to consider the further class of functions 
$$\widehat{H}^{s}(\Omega;\R^d):=\Big\{u\in L^{2}_{\rm loc}(\mathbb{R}^n;\R^d) : \mathcal{E}_s(u,\Omega)<\infty\Big\} \,.$$
It is a Hilbert space for the scalar product induced by the norm $u\mapsto \big(\|u\|^2_{L^2(\Omega)}+ \mathcal{E}_s(u,\Omega)\big)^{1/2}$. 
Finally, we set for $g\in \widehat{H}^{s}(\Omega;\R^d)$, 
$$H^{s}_g(\Omega;\R^d):= \left\{u\in \widehat{H}^{s}(\Omega;\mathbb{R}^d) : u=g\, \text{ a.e. in $\R^n\setminus\Omega$}\right\}=g+H^{s}_{00}(\Omega;\R^d)\,,$$ 
and we refer to \cite{MilPegSch,MilSirW} for further details about the space $\widehat{H}^{s}(\Omega;\R^d)$. 

\vskip5pt

In a bounded open set $\Omega\subset \mathbb{R}^n$,  the fractional  Laplacian is defined as the continuous linear operator $(-\Delta)^s: \widehat{H}^{s}(\Omega;\R^d)\to (\widehat{H}^{s}(\Omega;\R^d))^\prime$ induced by the quadratic form $\mathcal{E}_s(\cdot,\Omega)$. In other words,  for a function $u\in \widehat{H}^{s}(\Omega;\R^d)$, we define its {\it distributional fractional Laplacian} $ (-\Delta)^{s} u$ through 
its action on $ \widehat{H}^{s}(\Omega;\R^d)$ by setting 
$$\big\langle  (-\Delta)^{s} u, \varphi\big\rangle_\Omega:=\frac{\gamma_{n,s}}{2}\iint_{(\R^n\times\R^n)\setminus(\Omega^c\times\Omega^c)}  \frac{\big(u(x)-u(y)\big)\cdot\big(\varphi(x)-\varphi(y)\big)}{|x-y|^{n+2s}}\,\de x\de y\,.$$
In this way, $(-\Delta)^{s} u$ appears to be the first outer variation of  the quadratic form $\mathcal{E}_s(\cdot,\Omega)$ at $u$ with respect to perturbations supported in $\Omega$, i.e.,  
$$\big\langle  (-\Delta)^{s} u, \varphi\big\rangle_\Omega=\left[\frac{\de}{\de t} \mathcal E_s (u+t \varphi , \Omega)\right]_{t=0} $$
for all $\varphi \in H^{s}_{00}(\Omega)$. If $u$ is a smooth bounded function, then the distribution $ (-\Delta)^{s} u$ can be rewritten as a pointwise defined function which coincides with the one given by  formula~\eqref{formpvfraclap}. We refer again to \cite{MilPegSch,MilSirW} for further details.

\vskip5pt

We complete this subsection recalling a useful  Poincar\'e type inequality. The proof is fairly standard, but we provide it for completeness. 

\begin{lemma}\label{poincareHs}
There exists a constant $\boldsymbol{\lambda}_{n,s}>0$ depending only on $n$ and $s$ such that for every $r>0$ and every $u\in H^s(D_r)$, 
\begin{equation}\label{poincfracL1}
\big\|u-[u]_{r}\big\|_{L^1(D_r)}\leq \boldsymbol{\lambda}_{n,s}r^{\frac{n+2s}{2}}[u]_{H^s(D_r)}\,, 
\end{equation}
where $[u]_{r}$ denotes the average of $u$ over $D_r$. 
\end{lemma}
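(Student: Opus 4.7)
The natural strategy is to reduce to the unit scale $r=1$ by a dilation argument, and then to establish the inequality on $D_1$ by combining Jensen's inequality with the elementary observation that $|x-y|\leq 2$ on $D_1\times D_1$, which allows one to insert the $|x-y|^{-(n+2s)}$ weight at the cost of a dimensional constant.

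First I would perform the scaling step. Given $u\in H^s(D_r)$, define $u_r(y):=u(ry)$ on $D_1$. A routine change of variables shows that $[u_r]_1=[u]_r$ (averages are scale-invariant), that $\|u_r-[u_r]_1\|_{L^1(D_1)}=r^{-n}\|u-[u]_r\|_{L^1(D_r)}$, and that
\[
[u_r]^2_{H^s(D_1)}=\frac{\gamma_{n,s}}{2}\iint_{D_1\times D_1}\frac{|u(ry_1)-u(ry_2)|^2}{|y_1-y_2|^{n+2s}}\,\de y_1\de y_2=r^{2s-n}[u]^2_{H^s(D_r)}.
\]
Hence proving $\|v-[v]_1\|_{L^1(D_1)}\leq \boldsymbol{\lambda}_{n,s}[v]_{H^s(D_1)}$ for every $v\in H^s(D_1)$ yields
\[
r^{-n}\|u-[u]_r\|_{L^1(D_r)}\leq \boldsymbol{\lambda}_{n,s}\,r^{s-n/2}[u]_{H^s(D_r)},
\]
which is exactly \eqref{poincfracL1} after multiplying through by $r^n$.

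It remains to treat the case $r=1$. By Jensen's inequality applied to the identity $u(x)-[u]_1=|D_1|^{-1}\int_{D_1}(u(x)-u(y))\,\de y$, I get
\[
\|u-[u]_1\|_{L^1(D_1)}\leq \frac{1}{|D_1|}\iint_{D_1\times D_1}|u(x)-u(y)|\,\de x\de y.
\]
Applying Cauchy--Schwarz on $D_1\times D_1$ (of measure $|D_1|^2$) and then using the trivial pointwise bound $|x-y|^{n+2s}\leq 2^{n+2s}$ for $x,y\in D_1$, I obtain
\[
\frac{1}{|D_1|}\iint_{D_1\times D_1}|u(x)-u(y)|\,\de x\de y\leq \biggl(\iint_{D_1\times D_1}|u(x)-u(y)|^2\,\de x\de y\biggr)^{1/2}\leq 2^{(n+2s)/2}\sqrt{\frac{2}{\gamma_{n,s}}}\,[u]_{H^s(D_1)},
\]
since $|x-y|^{-(n+2s)}\geq 2^{-(n+2s)}$ on $D_1\times D_1$. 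This gives the desired constant $\boldsymbol{\lambda}_{n,s}$, and together with the scaling step, completes the proof.

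There is no real obstacle here; the only subtle point is keeping track of the correct power of $r$ under rescaling, since the fractional seminorm is homogeneous of degree $(n-2s)/2$ in $r^{-1}$, which is precisely what produces the exponent $(n+2s)/2$ after balancing with the factor $r^n$ coming from the $L^1$ rescaling.
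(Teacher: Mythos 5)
Your proof is correct and takes a genuinely different route from the paper. The paper first reduces \eqref{poincfracL1} via H\"older's inequality to an $L^2$ Poincar\'e inequality $\|u-[u]_r\|_{L^2(D_r)}\leq \widetilde{\boldsymbol{\lambda}}_{n,s}r^s[u]_{H^s(D_r)}$, which it then proves by a compactness--contradiction argument: if it failed, a normalized sequence with vanishing $H^s$-seminorm would converge (by the compact embedding $H^s(D_1)\hookrightarrow L^2(D_1)$ together with weak lower semicontinuity of the seminorm) to a nonzero constant with zero average, a contradiction. Your approach is purely constructive: after reducing to $r=1$ by the scaling identities (all of which you computed correctly, and which produce the exponent $\frac{n+2s}{2}$ exactly as you say), you combine Jensen's inequality, Cauchy--Schwarz on $D_1\times D_1$, and the trivial pointwise bound $|x-y|^{-(n+2s)}\geq 2^{-(n+2s)}$ on $D_1\times D_1$. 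This has two advantages: it gives an explicit constant $\boldsymbol{\lambda}_{n,s}=2^{(n+2s)/2}\sqrt{2/\gamma_{n,s}}$, whereas the compactness proof is non-constructive, and it requires no Rellich-type compactness or weak lower semicontinuity machinery. The only thing the paper's route buys is that it establishes the (slightly stronger) $L^2$ Poincar\'e inequality along the way; but your method yields that version too, with a trivial modification (apply Cauchy--Schwarz inside the inner $y$-integral rather than on the product). In short, your argument is a clean, self-contained, quantitative improvement of the paper's.
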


\begin{proof}
By H\"older's inequality, \eqref{poincfracL1} holds if we prove that there exists a constant $\widetilde{\boldsymbol{\lambda}}_{n,s}>0$ depending only on $n$ and $s$ such that for every $r>0$ and every $u\in H^s(D_r)$,
 \begin{equation}\label{poincfracL2}
 \big\|u-[u]_{r}\big\|_{L^2(D_r)}\leq \widetilde{\boldsymbol{\lambda}}_{n,s}r^{s}[u]_{H^s(D_r)}\,,
 \end{equation}
 By rescaling, it is enough to prove \eqref{poincfracL2} for $r=1$, and replacing $u$ by $u$ minus its average, it is enough to consider the case  $[u]_{1}=0$. 
We argue by contradiction assuming that there exists a sequence $\{u_n\}_{n\in\mathbb{N}}\subset H^s(D_1)$  such that $[u_n]_1=0$, $\|u_n\|_{L^2(D_1)}=1$, and $[u_n]_{H^s(D_1)}\to 0$ as $n\to\infty$. Then  $\{u_n\}_{n\in\mathbb{N}}$ is bounded in $H^s(D_1)$. Hence there exists a (non relabeled) subsequence and $u_*\in H^s(D_1)$ such that $u_n\rightharpoonup u_*$ weakly in $H^s(D_1)$. By the compact embedding $H^s(D_1)\hookrightarrow L^2(D_1)$, $u_n\to u_*$ strongly in $L^2(D_1)$ (and thus in $L^1(D_1)$), so that $\|u_*\|_{L^2(D_1)}=1$ and $[u_*]_1=0$. On the other hand, by lower semi-continuity of the $H^s$-seminorm, $[u_*]_{H^s(D_1)}\leq \lim_n [u_n]_{H^s(D_1)}=0$. Therefore, $u_*$ is constant, and since $[u_*]_1=0$, we have $u_*=0$ which contradicts $\|u_*\|_{L^2(D_1)}=1$. 
\end{proof}

%%%%%%%%%%%%%%%%%%%%%%%%%%%%%%%%%%%%%%%%%%%%%%%%%%%%%%%

 \subsection{Weighted spaces, fractional Poisson kernel, and the Dirichlet-to-Neumann operator}\label{secext}

%%%%%%%%%%%%%%%%%%%%%%%%%%%%%%%%%%%%%%%%%%%%%%%%%%%%%%%

For an open set $G\subset \R^{n+1}$, we define the weighted $L^2$-space  
$$L^2(G;\R^d,|z|^a\de \mathbf{x}):= \Big\{v\in L^1_{\rm loc}(G;\R^d) :  |z|^{\frac{a}{2}} v \in L^2(G;\R^d)\Big\} \quad \text{with }a:=1-2s\,,$$
normed by $\|v\|^2_{L^2(G,|z|^a\de \mathbf{x})}:=\int_G |z|^{a}|v|^2\,\de \mathbf{x}$. 
Accordingly, we introduce the weighted Sobolev space 
$$H^1(G;\R^d,|z|^a\de \mathbf{x}):= \Big\{v\in L^2(G;\R^d,|z|^a\de \mathbf{x}) : \nabla v \in L^2(G;\R^d,|z|^a\de \mathbf{x})\Big\} \,,$$ 
normed by $\|u\|_{H^1(G,|z|^a\de \mathbf{x})}:=\|u\|_{L^2(G,|z|^a\de \mathbf{x})}+\|\nabla u\|_{L^2(G,|z|^a\de \mathbf{x})}$. 
Both spaces $L^2(G;\R^d,|z|^a\de \mathbf{x})$ and $H^1(G;\R^d,|z|^a\de \mathbf{x})$ are separable Hilbert spaces when equipped with the scalar product induced by their respective Hilbertian norm. Throughout our analysis, we shall repeatedly use that, in case $G$ is a bounded admissible open set, 
\begin{equation}\label{compactembL1trace}
v\in  H^1(G;\R^d,|z|^a\de \mathbf{x})\mapsto v_{|\partial^0G}\in L^1(\partial^0 G;\R^d) \text{ is a compact linear operator}\,,
\end{equation}
see e.g. \cite[Remark 2.4]{MilSirW}.
\vskip3pt 

On $H^1(G;\R^d,|z|^a\de \mathbf{x})$, we define {\sl the weighted Dirichlet energy} $\mathbf{E}_s(\cdot,G)$ by setting
\begin{equation}\label{defEbold}
\mathbf{E}_s(v,G):=\frac{\boldsymbol{\delta}_s}{2}\int_G|z|^a|\nabla v|^2\,\de {\bf x}\quad\text{with } \boldsymbol{\delta}_s:=2^{2s-1}\frac{\Gamma(s)}{\Gamma(1-s)}\,,
\end{equation}
where the normalisation constant $\boldsymbol{\delta}_s>0$ is carefully chosen to relate $\mathbf{E}_s$ to the fractional energy $\mathcal{E}_s$ (through identity \eqref{identenergnonlocloc} below).  For a map  $v\in H^1(G;\R^d,|z|^a\de \mathbf{x})$, the trace of $v$ on $\partial^0G$ is well defined and belongs to $H^s_{\rm loc}(\partial^0G;\R^d)$ (see e.g. \cite[Lemma~2.8]{MilPegSch}). A converse property also holds for maps defined on $\R^n\simeq \partial \R^{n+1}_+$ when suitably extended. The appropriate extension is given by convolution with the so-called {\sl fractional Poisson kernel}. 

Introduced in  \cite{CaffSil} (see also \cite{MolOs}), the fractional Poisson kernel is the  function $\mathbf{P}_{n,s}:\R^{n+1}_+\to [0,\infty)$ defined by 
$$\mathbf{P}_{n,s}(\mathbf{x}):=\sigma_{n,s}\,\frac{z^{2s}}{|\mathbf{x}|^{n+2s}}\,, \qquad \sigma_{n,s}:=\pi^{-\frac{n}{2}}\frac{\Gamma(\frac{n+2s}{2})}{\Gamma(s)}\,,$$
where $\mathbf{x}:=(x,z)\in\mathbb{R}^{n+1}_+:=\R^n\times(0,\infty)$. The choice of the constant $\sigma_{n,s}$ is made in such a way that $\mathbf{P}_{n,s}(\cdot,z)$ is a probability measure for every $z>0$.  The function $\mathbf{P}_{n,s}$ solves 
$$\begin{cases}
{\rm div}(z^{a}\nabla \mathbf{P}_{n,s})= 0 & \text{in $\R^{n+1}_+$}\,,\\
\mathbf{P}_{n,s}=\delta_0 & \text{on $\partial\R^{n+1}_+$}\,,
\end{cases}$$
where $\delta_0$ is the Dirac distribution at the origin. 
From now on, for a measurable function $u$ defined over $\mathbb{R}^n$, we shall denote by 
$u^\e$ its extension to the half-space $\mathbb{R}^{n+1}_+$ given by the convolution (in the $x$-variables) of $u$ with the 
 fractional Poisson kernel $\mathbf{P}_{n,s}$,  i.e., 
\begin{equation}\label{poisson}
u^\e(x,z):=\mathbf{P}_{n,s}*u(x,z)= \sigma_{n,s}\int_{\mathbb{R}^n}\frac{z^{2s} u(y)}{(|x-y|^2+z^2)^{\frac{n+2s}{2}}}\,\de y\,.
\end{equation}

The extension $u^\e$ is well defined whenever $u$ belongs to the Lebesgue space $L^1$ over $\R^n$ with respect to the probability measure 
$\sigma_{n,s}(1+|y|^2)^{-(n+2s)/2}\,\de y$.   
In particular,  $u^\e$ can be defined  whenever $u\in \widehat{H}^{s}(\Omega;\R^d)$ for some bounded  open set $\Omega\subset\R^n$ (see e.g. \cite[Lemma 2.1]{MilPegSch}). 
For  such an admissible function $u$, the extension  $u^\e$ has a pointwise trace on $\partial\R^{n+1}_+=\R^n$ which is equal to $u$ at every Lebesgue point. 
 In addition, $u^\e$ solves the equation 
$$\begin{cases} 
{\rm div}(z^{a}\nabla u^\e) = 0 & \text{in $\mathbb{R}_+^{n+1}$}\,,\\
u^\e = u  & \text{on $\partial\R^{n+1}_+$}\,. 
\end{cases}$$
It has been proved in \cite{CaffSil} that $u^\e$  belongs to $H^1(\mathbb{R}_+^{n+1};\R^d,|z|^a\de\mathbf{x})$ whenever 
$u\in H^{s}(\mathbb{R}^n;\R^d)$, and  
\begin{equation}\label{identenergnonlocloc}
[u]^2_{H^{s}(\mathbb{R}^n)}=2\boldsymbol{\delta}_s\mathbf{E}_s(u^\e,\R^{n+1}_+)\,.
\end{equation}  
In addition, $u^\e$ provides the minimal $\mathbf{E}_s$-energy over all possible extensions of $u$ to  $\mathbb{R}_+^{n+1}$. 

For an open set $\Omega\subset\R^n$ and an admissible bounded  open set $G\subset\R^{n+1}_+$ satisfying $\overline{\partial^0G}\subset \Omega$, the extension $u\mapsto u^\e$ defines a continuous linear operator from $\widehat{H}^{s}(\Omega;\R^d)$ into $H^1(G;\R^d,|z|^a\de \mathbf{x})$ (see \cite[Corollary 2.10]{MilPegSch}). 

If $\Omega\subset\R^n$ is a bounded domain with Lipschitz boundary and $u\in \widehat H^s(\Omega;\R^d)$, 
the divergence free vector field $z^{a}\nabla u^\e$ admits a distributional normal trace on $\Omega$, that we denote by $\mathbf{\Lambda}^{(2s)}u$.  
More precisely, $\mathbf{\Lambda}^{(2s)} u$ is defined through its action on a test function $\varphi\in \mathscr{D}(\Omega;\R^d)$ by 
$$\left\langle \mathbf{\Lambda}^{(2s)} u, \varphi\right\rangle_\Omega := \int_{\mathbb{R}^{n+1}_+}z^{a}\nabla u^\e\cdot\nabla\Phi\,\de \mathbf{x}\,,$$
where $\Phi$ is any smooth extension of $\varphi$ compactly supported in  $\mathbb{R}_+^{n+1}\cup\Omega$. 
By the divergence theorem, 
this integral
 does not depend on the choice of the extension $\Phi$, and the linear operator
  $\mathbf{\Lambda}^{(2s)}:\widehat H^{s}(\Omega;\R^d)\to H^{-s}(\Omega;\R^d)$ is  continuous. 
  Moreover, whenever $u$ is smooth, the distribution $\mathbf{\Lambda}^{(2s)}u$ coincides the pointwise defined function 
$$\mathbf{\Lambda}^{(2s)} u(x)=-\lim_{z\downarrow0}z^{a}\partial_z u^\e(x,z)=2s\, \lim_{z\downarrow0} \frac{u^\e(x,0)-u^\e(x,z)}{z^{2s}}\,.$$  
It has been proved in \cite{CaffSil} that $\mathbf{\Lambda}^{(2s)}$ coincides with $ (-\Delta)^{s} $, up to a constant multiplicative factor. In our  setting, this identity still holds, see  \cite[Lemma 2.12]{MilSirW} and \cite[Lemma 2.9]{MilSir}, that is 
$$ (-\Delta)^{s} \equiv \boldsymbol{\delta}_s \mathbf{\Lambda}^{(2s)} \text{ on $\widehat H^{s}(\Omega;\R^d)$}\,.$$
Once again, we refer to  \cite{MilPegSch,MilSirW} for further details. 
\vskip3pt

Lastly, we recall from  \cite{MilPegSch,MilSirW} that the extension $u^\e$ of a map $u\in \widehat H^{s}(\Omega;\R^d)$ provides a way to compute the first inner variation of $\mathcal{E}_s$ at $u$ in terms of the first inner variation of $\mathbf{E}_s$ at $u^\e$. Given a vector field $X\in C^1(\R^n;\R^n)$ with compact support in $\Omega$, the first variation $\delta\mathcal{E}_s(u,\Omega)$ evaluated at $X$ is defined by means of the integral flow $\{\phi_t\}_{t\in\R}$ generated by $X$. More precisely, setting $t\mapsto \phi_t(x)$ to be the integral flow of $X$, i.e., the unique solution of the system of ODEs
\begin{equation}\label{defintegflow}
\begin{cases}
\displaystyle\frac{\de}{\de t} \phi_t(x)=  X\big(\phi_t(x)\big) \,,\\[5pt]
\phi_0(x)=x\,,
\end{cases}
\end{equation}
the first variation $\delta\mathcal{E}_s(u,\Omega)$ is defined by 
$$\delta\mathcal{E}_s(u,\Omega)[X] :=\left[ \frac{\de}{\de t} \,\mathcal{E}_s(u\circ\phi_{-t},\Omega)\right]_{t=0} \,.$$ 
In turn, for an admissible bounded open set $G\subset\R^{n+1}_+$ such that $\overline{\partial^0G}\subset\Omega$, the first inner variation  $\delta\mathbf{E}_s(\cdot,G)$ {\sl up to} $\partial^0G$ at a map $v\in H^1(G;\R^d,|z|^a\de \mathbf{x})$ is computed as follows. For a vector field ${\bf X}=(X,{\bf X}_{n+1})\in C^1(\overline G;\R^{n+1})$ compactly supported in $G\cup\partial^0G$ and satisfying  ${\bf X}_{n+1}=0$ on $\partial^0G$, we consider the integral flow $\{\boldsymbol{\Phi}_t\}_{t\in\R}$ generated by ${\bf X}$, i.e., the unique solution of 
$$
\begin{cases}
\displaystyle\frac{\de}{\de t} \boldsymbol{\Phi}_t(x)=  {\bf X}\big(\boldsymbol{\Phi}(x)\big) \,,\\[5pt]
\boldsymbol{\Phi}_0(x)=x\,. 
\end{cases}
$$
It satisfies $\boldsymbol{\Phi}_t=(\phi_t,0)$  on $\partial^0G$, and 
\begin{equation}\label{firstvarEsdef}
\delta\mathbf{E}_s(v,G)[{\bf X}] :=\left[ \frac{\de}{\de t} \,\mathbf{E}_s(v\circ\boldsymbol{\Phi}_{-t},G)\right]_{t=0} \,.
\end{equation}
Standard computations (see e.g. \cite[Chapter 2.2]{Sim}) show that 
\begin{multline}\label{firstvarEsformul}
\delta\mathbf{E}_s(v,G)[{\bf X}] =\frac{\boldsymbol{\delta}_s}{2}\int_Gz^a\bigg(|\nabla v|^2{\rm div}{\bf X} -2\sum_{i,j=1}^{n+1}(\partial_iv\cdot\partial_jv)\partial_j{\bf X}_i\bigg)\,\de {\bf x} \\
+\frac{\boldsymbol{\delta}_s a}{2}\int_G z^{a-1}|\nabla v|^2{\bf X}_{n+1}\,\de {\bf x}\,.
\end{multline}
It has been proved in \cite[Corollary 2.14]{MilSirW} that 
\begin{equation}\label{identfirstvarfracext}
\delta\mathcal{E}_s(u,\Omega)[X] = \delta\mathbf{E}_s(u^\e,G)[{\bf X}]
\end{equation}
for every ${\bf X}=(X,{\bf X}_{n+1})\in C^1(\overline G;\R^{n+1})$ compactly supported in $G\cup\partial^0G$ and satisfying  ${\bf X}_{n+1}=0$ on $\partial^0G$.

%%%%%%%%%%%%%%%%%%%%%%%%%%%%%%%%%%%%%%%%%%%%%%%%%%%%%%%%%%%%%
%%%%%%%%%%%%%%%%%%%%%%%%%%%%%%%%%%%%%%%%%%%%%%%%%%%%%%%%%%%%%
															 
\section{The vectorial fractional Allen-Cahn equation}\label{FractAC}    
															 
%%%%%%%%%%%%%%%%%%%%%%%%%%%%%%%%%%%%%%%%%%%%%%%%%%%%%%%%%%%%%
%%%%%%%%%%%%%%%%%%%%%%%%%%%%%%%%%%%%%%%%%%%%%%%%%%%%%%%%%%%%%

We consider throughout in this section a bounded open set $\Omega\subset \mathbb{R}^n$ with smooth boundary (at least Lipschitz regular). We are interested  
in weak solutions  $u_\varepsilon\in \widehat H^{s}(\Omega;\R^d)\cap L^p(\Omega)$ of the vectorial fractional Allen-Cahn  equation 
\begin{equation}\label{eqfractGL}
 (-\Delta)^{s} u_\varepsilon+\frac{1}{\varepsilon^{2s}}\nabla W(u_\varepsilon) =0\quad\text{in $\Omega$}\,.
\end{equation}
The notion of weak solution is understood in the duality sense according to the weak formulation of the fractional Laplacian, i.e., 
\begin{equation}\label{weakformeq}
\big\langle  (-\Delta)^{s} u_\varepsilon, \varphi\big\rangle_{\Omega} + \frac{1}{\varepsilon^{2s}}\int_{\Omega}\nabla W(u_\varepsilon)\cdot \varphi\,\de x=0 \qquad 
\forall \varphi\in H^{s}_{00}(\Omega;\R^d)\cap L^p(\Omega)\,.
\end{equation}
Such solutions correspond  to  critical points of 
the  fractional Allen-Cahn energy $\mathcal{E}_{s,\varepsilon}(\cdot,\Omega)$ defined  in~\eqref{defFGLenerg}. 
In other words, we are interested in maps $u_\varepsilon\in \widehat H^{s}(\Omega;\R^d)\cap L^p(\Omega)$ satisfying  
\begin{equation}\label{ptcritAC}
\left[\frac{\de}{\de t} \mathcal{E}_{s,\varepsilon}(u_\varepsilon+t\varphi,\Omega) \right]_{t=0} =0\qquad 
\forall \varphi\in H^{s}_{00}(\Omega;\R^d)\cap L^p(\Omega)\,.
\end{equation}
Note that for $u\in L^p(\Omega;\R^d)$, we have $\nabla W(u)\in L^{p^\prime}(\Omega;\R^d)$ with $1/p+1/p^\prime=1$ by our assumption $(H3)$ on the potential $W$, and thus  \eqref{weakformeq} and \eqref{ptcritAC} are well defined. 
\vskip3pt

Among the whole class of critical points of $\mathcal{E}_{s,\varepsilon}(\cdot,\Omega)$, there are the minimizing maps. 

\begin{definition}\label{defminimizerAC}
A map $u_\varepsilon\in \widehat H^{s}(\Omega;\R^d)\cap L^p(\Omega)$ is  said to be $\mathcal{E}_{s,\varepsilon}$-minimizing  in $\Omega$ if 
$$\mathcal{E}_{s,\varepsilon}(u_\eps,\Omega)\leq \mathcal{E}_{s,\varepsilon}(u,\Omega) $$
for all $u\in \widehat H^{s}(\Omega;\R^d)\cap L^p(\Omega)$ such that $u-u_\eps$ is compactly supported in $\Omega$. 
\end{definition}

Note that a map which is $\mathcal{E}_{s,\varepsilon}$-minimizing  in $\Omega$ is indeed a solution of \eqref{weakformeq} since the set of test functions $\varphi$ which are compactly supported in $\Omega$ are dense in $H^{s}_{00}(\Omega;\R^d)\cap L^p(\Omega)$ by \eqref{densitysmoothH1/200}. Minimizing maps arise for instance as solutions of the minimization problem 
\begin{equation}\label{GLminProb}
\min\Big\{ \mathcal{E}_{s,\varepsilon}(u,\Omega) : u\in H^{s}_{g} (\Omega;\R^d)\cap L^p(\Omega) \Big\}\,, 
\end{equation}
for a given exterior Dirichlet condition $g\in \widehat H^{s}(\Omega;\R^d)\cap L^p(\Omega)$. The resolution of \eqref{GLminProb} follows with no difficulties from the Direct Method of Calculus of Variations and provides  particular examples of weak solutions to \eqref{eqfractGL}.

%%%%%%%%%%%%%%%%%%%%%%%%%%%%%%%%%%%%%%%%%%%%%%%%%%%%%%%

\subsection{Degenerate Allen-Cahn boundary reactions}\label{subsectDegAllCahneq}

%%%%%%%%%%%%%%%%%%%%%%%%%%%%%%%%%%%%%%%%%%%%%%%%%%%%%%%

To obtain a priori estimates and energy identities on weak solutions of \eqref{eqfractGL}, we rely on the fractional harmonic extension to $\R^{n+1}_+$ 
introduced in Section~\ref{prelim}.  
 If $u_\varepsilon\in \widehat H^{s}(\Omega;\R^d)\cap L^p(\Omega)$ is a weak solution of \eqref{eqfractGL}, then its 
 extension $u^\e_\varepsilon$ given by \eqref{poisson}  satisfies 
\begin{equation}\label{varformbdgleq}
\boldsymbol{\delta}_s \int_{\R^{n+1}_+} z^{a}\, \nabla u^\e_\varepsilon \cdot\nabla\phi\,\de \mathbf{x}+ \frac{1}{\varepsilon^{2s}}\int_\Omega \nabla W(u^\e_\varepsilon)\cdot\phi\,\de x=0 
\end{equation}
for every smooth function $\phi :\overline{\mathbb{R}^{n+1}_+}\to\mathbb{R}^d$  compactly supported in $\R^{n+1}_+\cup\Omega$, or equivalently (by density, see \cite[Remark 2.6]{MilSirW}),   
for every $\phi\in H^1(\mathbb{R}_+^{n+1};\R^d,|z|^a\de\mathbf{x})\cap L^p(\Omega)$  compactly supported in $\R^{n+1}_+\cup\Omega$. 
In particular, given an admissible bounded open set $G\subset \R^{n+1}_+$ such that $\overline{\partial^0G}\subset\Omega$, the extension $u_\eps^\e$  satisfies  \eqref{varformbdgleq} 
 for every test function $\phi \in H^1(G;\R^d,|z|^a\de \mathbf{x})\cap L^p(\partial^0G)$ compactly supported in $G\cup\partial^0G$. In other words,   $u^\e_\varepsilon$ is a critical point of the functional $\mathbf{E}_{s,\eps}(\cdot,G)$ defined on the weighted space 
 $ H^1(G;\R^d,|z|^a\de \mathbf{x})\cap L^p(\partial^0G)$ by 
\begin{equation}\label{defEbfeps}
\mathbf{E}_{s,\varepsilon}(v,G):=\mathbf{E}_s(v,G)+ \frac{1}{\varepsilon^{2s}}\int_{\partial^0G} W(v)\,\de x\,, 
\end{equation}
where ${\bf E}_s(\cdot,G)$ is {\it the weighted Dirichlet energy} defined in \eqref{defEbold}.
\vskip3pt

If a function $v_\eps$ is a critical point of $\mathbf{E}_{s,\eps}(\cdot,G)$ such that both $v_\eps$ and $z^{a}\partial_z v_\eps$ are continuous in  $G$ up to $\partial^0G$, then $v_\eps$ satisfies in the classical sense the Euler-Lagrange equation 
\begin{equation}\label{eqext}
\begin{cases}
{\rm div}(z^{a}\nabla v_\eps)= 0 & \text{in $G$}\,,\\[10pt]
\displaystyle \boldsymbol{\delta}_s \boldsymbol{\partial}^{(2s)}_{z} v_\eps=\frac{1}{\varepsilon^{2s}} \nabla W (v_\eps) & \text{on $\partial^0G$}\,,
\end{cases}
\end{equation} 
where we have set for $\mathbf{x}=(x,0)\in\partial^0G$, 
$$\boldsymbol{\partial}^{(2s)}_{z} v_\eps(\mathbf{x}):=\lim_{z\downarrow0} z^{a}\partial_{z} v_\eps(x,z)\,.$$
We shall refer  to as {\it weak solution} of  equation \eqref{eqext} a critical point of $\mathbf{E}_{s,\eps}(\cdot,G)$, i.e., a map $v_\varepsilon\in H^1(G;\R^d,|z|^a\de\mathbf{x})\cap L^p(\partial^0G)$ satisfying 
\begin{equation}\label{defcritpointEexten}
\boldsymbol{\delta}_s \int_{G} z^{a}\, \nabla v_\varepsilon \cdot\nabla\phi\,\de \mathbf{x}+ \frac{1}{\varepsilon^{2s}}\int_{\partial^0G} \nabla W(v_\varepsilon)\cdot\phi\,\de x=0 
\end{equation}
 for every  $\phi \in H^1(G;\R^d,|z|^a\de \mathbf{x})\cap L^p(\partial^0G)$ compactly supported in $G\cup\partial^0G$.
\vskip3pt

If a weak solution $v_\eps$ is known a priori to be bounded, then the regularity theory for  \eqref{eqext} implies that $v_\eps$ is smooth in $G$ and partially  up to $\partial^0G$. In particular, bounded weak solutions are classical solutions. To be more precise, the following regularity result holds. Since its proof follows from line to line the proof of \cite[Theorem~3.3]{MilSirW}, we shall omit it.

\begin{theorem}\label{regint}
Let   $v_\varepsilon\in H^1(B_{R}^+;\R^d,|z|^a\de \mathbf{x})\cap L^\infty(B_{R}^+)$ be a bounded weak solution of 
\begin{equation}\label{pfff}
\begin{cases} 
{\rm div}(z^{a}\nabla v_\varepsilon)=0 & \text{in $B_{R}^+$}\,,\\[8pt]
\displaystyle \boldsymbol{\delta}_s\boldsymbol{\partial}^{(2s)}_z v_\varepsilon= \frac{1}{\varepsilon^{2s}}\nabla W(v_\varepsilon) & \text{on $D_{R}$}\,.
\end{cases}
\end{equation}
Then $v_\eps\in C^\infty(B_R^+)$, $v_\varepsilon\in C^{0,\alpha}_{\rm loc}\big(B_R^+\cup D_R\big)$, $\nabla_xv_\eps\in C^{0,\alpha}_{\rm loc}(B_R^+\cup D_R)$, and  $z^{a}\partial_{z}v_\eps\in C^{0,\alpha}_{\rm loc}( B_R^+\cup D_R)$ for some exponent $\alpha=\alpha(n,s)\in(0,1)$ depending only on $n$ and $s$.
\end{theorem}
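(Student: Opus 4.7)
The plan is to treat the degenerate equation with a Muckenhoupt weight $z^{a}$ (with $a=1-2s\in(0,1)$, which is in the class $A_2$) in two stages: interior regularity away from the reaction boundary, and then boundary regularity up to $D_R$ together with a bootstrap on the reaction term. Since $v_\varepsilon\in L^\infty$ and $W\in C^2$, the source $\frac{1}{\varepsilon^{2s}}\nabla W(v_\varepsilon)$ on $D_R$ is in $L^\infty(D_R)$, and $\nabla W(v_\varepsilon)$ will inherit the regularity of $v_\varepsilon$ through the chain rule.

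First I would establish interior smoothness. Away from $\{z=0\}$ the operator $\operatorname{div}(z^{a}\nabla\,\cdot\,)$ is uniformly elliptic with smooth coefficients, so by standard (linear, non-degenerate) elliptic Schauder theory applied on compact subsets of $B_R^+$ one gets $v_\varepsilon\in C^\infty(B_R^+)$.

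Next, for the boundary Hölder regularity, I would rely on the De~Giorgi--Nash--Moser theory for degenerate equations with $A_2$-weights (Fabes--Kenig--Serapioni). Since $z^{a}\in A_2$ and the boundary reaction $\frac{1}{\varepsilon^{2s}}\nabla W(v_\varepsilon)$ is bounded on $D_R$, the Moser/De~Giorgi machinery adapted to the Neumann-type weighted problem yields $v_\varepsilon\in C^{0,\alpha_0}_{\mathrm{loc}}(B_R^+\cup D_R)$ for some $\alpha_0=\alpha_0(n,s)\in(0,1)$. This can be obtained either directly by even reflection across $\{z=0\}$ (absorbing the bounded Neumann datum into a right-hand side in divergence form) and applying Fabes--Kenig--Serapioni to the reflected equation, or by the Cabré--Sire/Caffarelli--Silvestre blow-up and compactness scheme for fractional equations.

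The main step is now a bootstrap. With $v_\varepsilon\in C^{0,\alpha_0}_{\mathrm{loc}}$, the datum $\frac{1}{\varepsilon^{2s}}\nabla W(v_\varepsilon)$ is also $C^{0,\alpha_0}_{\mathrm{loc}}$ on $D_R$. Then I would invoke Schauder-type estimates for the linear degenerate Neumann problem
\[
\operatorname{div}(z^{a}\nabla w)=0\quad\text{in }B_r^+\,,\qquad \boldsymbol{\delta}_s\boldsymbol{\partial}^{(2s)}_z w=h\quad\text{on }D_r\,,
\]
in the spirit of Caffarelli--Silvestre and Jhaveri--Neumayer, to conclude that any solution with bounded $h\in C^{0,\alpha_0}$ lies in $C^{1,\alpha}$ in the tangential directions, while the conjugate quantity $z^{a}\partial_z w$ is $C^{0,\alpha}$ up to $D_r$. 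Applied to $w=v_\varepsilon$, this yields the claimed regularity of $\nabla_x v_\varepsilon$ and $z^a\partial_z v_\varepsilon$.

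The cleanest way to package the last step is probably to differentiate the equation in a tangential direction $e\in\R^n\times\{0\}$: the function $\partial_e v_\varepsilon$ satisfies
\[
\operatorname{div}(z^{a}\nabla \partial_e v_\varepsilon)=0\quad\text{in }B_R^+\,,\qquad
\boldsymbol{\delta}_s\boldsymbol{\partial}^{(2s)}_z\partial_e v_\varepsilon=\frac{1}{\varepsilon^{2s}}D^2W(v_\varepsilon)\cdot\partial_e v_\varepsilon\quad\text{on }D_R\,,
\]
with a $C^{0,\alpha_0}$ right-hand side, while the conjugate variable $\zeta:=z^{a}\partial_z v_\varepsilon$ satisfies the dual degenerate equation $\operatorname{div}(z^{-a}\nabla\zeta)=0$ in $B_R^+$ with the Dirichlet datum $\zeta=\boldsymbol{\delta}_s^{-1}\varepsilon^{-2s}\nabla W(v_\varepsilon)$ on $D_R$. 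Applying the degenerate $C^{0,\alpha}$ boundary regularity (for weights $z^{\pm a}\in A_2$) to these two problems separately gives $\nabla_x v_\varepsilon\in C^{0,\alpha}_{\mathrm{loc}}$ and $z^{a}\partial_z v_\varepsilon\in C^{0,\alpha}_{\mathrm{loc}}$ on $B_R^+\cup D_R$.

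The main obstacle is the boundary Schauder theory for the degenerate weighted Neumann problem; this has to be done carefully since the operator degenerates exactly at $\{z=0\}$ where the reaction lives. The shortcut referenced by the authors is that the full argument is already carried out line by line in \cite{MilSirW}, so one just needs to check that the vector-valued structure plays no role beyond the remark that $\nabla W(v_\varepsilon)\in L^\infty$ and that $D^2W(v_\varepsilon)$ acts linearly on the tangential derivatives.
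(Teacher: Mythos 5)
Your proposal takes essentially the same route as the paper: the authors simply cite \cite[Theorem 3.3]{MilSirW} (``follows from line to line''), and that proof indeed proceeds through the three layers you describe — interior smoothness by non-degenerate elliptic theory away from $\{z=0\}$, boundary $C^{0,\alpha}$ via the Fabes--Kenig--Serapioni/Cabr\'e--Sire machinery for the $A_2$-weight $z^a$, and a bootstrap on the tangential derivatives together with the conjugate variable $\zeta=z^a\partial_z v_\varepsilon$ (which solves $\operatorname{div}(z^{-a}\nabla\zeta)=0$ with Dirichlet data $-\boldsymbol{\delta}_s^{-1}\varepsilon^{-2s}\nabla W(v_\varepsilon)$ on $D_R$, up to sign). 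The only observation worth recording is the one you already make: the vector-valued structure is transparent because $\nabla W(v_\varepsilon)\in L^\infty$ and $D^2W(v_\varepsilon)$ acts linearly on $\partial_e v_\varepsilon$, so the scalar argument of \cite{MilSirW} carries over componentwise.
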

 
Again, exactly as in \cite[Corollary 3.5]{MilSirW}, the previous regularity result implies that bounded weak solutions of \eqref{eqext} are stationary points of $\mathbf{E}_{s,\eps}(\cdot,G)$, i.e., critical points with respect to inner variations up to $\partial^0G$. Recalling the definition and value of the first inner variation $\delta{\bf E}_s$ in \eqref{firstvarEsdef}-\eqref{firstvarEsformul}, we have

\begin{corollary}\label{statAC}
Let $G\subset \mathbb{R}^{n+1}_+$ be an  admissible bounded  open set. If  $v_\eps\in H^1(G;\R^d,|z|^a\de\mathbf{x})\cap L^\infty(G)$ is a bounded weak solution of  \eqref{eqext}, then 
\begin{equation}\label{statcondEseps}
\delta{\bf E}_s\big(v_\eps,G\cup\partial^0G\big)[{\bf X}]
+\frac{1}{\eps^{2s}}\int_{\partial^0 G}W(v_\eps)\,{\rm div}X\,\de x=0
\end{equation}
for every vector field $\mathbf{X}=(X,\mathbf{X}_{n+1})\in C^1(\overline G;\R^{n+1})$ compactly supported in $G\cup \partial^0G$ such that $\mathbf{X}_{n+1}=0$ on $\partial^0G$. 
\end{corollary}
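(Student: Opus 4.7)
The plan is to deduce \eqref{statcondEseps} from the weak formulation \eqref{defcritpointEexten} applied to the specific test function $\phi:=-\partial_{\mathbf{X}}v_\eps=-\mathbf{X}\cdot\nabla v_\eps$, which is the pointwise time-derivative at $t=0$ of the flowed map $v_\eps\circ\boldsymbol{\Phi}_{-t}$. Once $\phi$ is admitted into \eqref{defcritpointEexten}, I would rewrite each of the two resulting integrals by integration by parts: the bulk term should produce the inner variation $\delta\mathbf{E}_s(v_\eps,G)[\mathbf{X}]$, while the boundary term should produce $\eps^{-2s}\int_{\partial^0G}W(v_\eps)\,\mathrm{div}\,X\,\de x$, yielding exactly \eqref{statcondEseps}.

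The main obstacle, and the first task, is to justify that $\phi$ belongs to $H^1(G;\R^d,|z|^a\de\mathbf{x})\cap L^p(\partial^0G)$ with compact support in $G\cup\partial^0G$. The compact-support requirement is immediate, and the $L^p$-trace bound follows from Theorem~\ref{regint} combined with $\mathbf{X}_{n+1}=0$ on $\partial^0G$ (which forces the trace of $\phi$ to reduce to $-X\cdot\nabla_xv_\eps$, Hölder continuous by Theorem~\ref{regint}). Membership of $\phi$ itself in $L^2(|z|^a\de\mathbf{x})$ is handled via $|\mathbf{X}_{n+1}|\le Cz$ near $\partial^0G$ (from the vanishing of $\mathbf{X}_{n+1}$ there and $\mathbf{X}\in C^1$), so that the only potentially singular factor $\mathbf{X}_{n+1}\partial_zv_\eps$ is pointwise bounded by $Cz^{1-a}\|z^a\partial_zv_\eps\|_{L^\infty}$, giving $z^{a/2}\mathbf{X}_{n+1}\partial_zv_\eps\in L^2$ since $a<1$. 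The truly delicate control is on $\nabla\phi\in L^2(|z|^a\de\mathbf{x})$: this reduces to local weighted $H^2$-type estimates for $v_\eps$ up to $\partial^0G$, which can be obtained by a difference-quotient argument in the tangential directions (exploiting that each $\partial_{x_i}v_\eps$ solves the same degenerate equation $\mathrm{div}(z^a\nabla\cdot)=0$ in $G$) together with the $C^{0,\alpha}$-bound on $z^a\partial_zv_\eps$ provided by Theorem~\ref{regint}.

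Having validated the test function, I would plug $\phi$ into \eqref{defcritpointEexten} and expand the bulk integrand via the pointwise identity
$$\nabla v_\eps\cdot\nabla(\partial_{\mathbf{X}}v_\eps)=(\partial_j\mathbf{X}_k)(\partial_jv_\eps\cdot\partial_kv_\eps)+\tfrac{1}{2}\,\mathbf{X}\cdot\nabla|\nabla v_\eps|^2.$$
Integrating by parts the term involving $\nabla|\nabla v_\eps|^2$ against $z^a\mathbf{X}$ reproduces exactly the three terms on the right-hand side of \eqref{firstvarEsformul}: the boundary contribution on $\partial^+G$ drops by compact support of $\mathbf{X}$, while the one on $\partial^0G$ drops because $z^a\to 0$ at $z=0$ (using $a=1-2s>0$). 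This identifies the bulk integral with $\delta\mathbf{E}_s(v_\eps,G)[\mathbf{X}]$. For the boundary integral, since $\mathbf{X}_{n+1}=0$ on $\partial^0G$, one has $\nabla W(v_\eps)\cdot\partial_{\mathbf{X}}v_\eps=X\cdot\nabla_x(W(v_\eps))$ there; a tangential integration by parts on $\partial^0G$ (justified by the compact support of $X$ inside $\partial^0G$) then converts $-X\cdot\nabla_xW(v_\eps)$ into $W(v_\eps)\,\mathrm{div}\,X$. Assembling the two contributions yields \eqref{statcondEseps}.
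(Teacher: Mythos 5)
Your strategy — insert $\phi=-\mathbf{X}\cdot\nabla v_\eps$ in the weak form \eqref{defcritpointEexten}, use Theorem \ref{regint} to justify its admissibility, and integrate by parts to match \eqref{firstvarEsformul} — is exactly the route the paper has in mind when it defers to \cite[Corollary 3.5]{MilSirW}. Two points, however, deserve attention.

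First, a concrete error in the reasoning. You claim that the $\partial^0G$-boundary term produced by integrating by parts the quantity $\tfrac{1}{2}\int_G z^a\mathbf{X}\cdot\nabla|\nabla v_\eps|^2$ ``drops because $z^a\to 0$ at $z=0$''. That is not the reason, and as stated the argument is wrong: near $\partial^0 G$ one has $z^a\partial_z v_\eps\in C^{0,\alpha}_{\rm loc}$ bounded but $\partial_z v_\eps$ itself blows up like $z^{-a}$, so $z^a|\nabla v_\eps|^2\sim z^{-a}\to+\infty$. The boundary term actually reads $\tfrac{1}{2}\int_{\{z=\delta\}} z^a\,\mathbf{X}_{n+1}\,|\nabla v_\eps|^2\,\de x$ (up to sign), and it is the factor $\mathbf{X}_{n+1}$, which vanishes on $\partial^0 G$ and obeys $|\mathbf{X}_{n+1}|\leq Cz$ by $C^1$-regularity, that wins over the $z^{-a}$ blow-up: the integrand is $O(z^{1-a})=O(z^{2s})\to0$. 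You already used the bound $|\mathbf{X}_{n+1}|\le Cz$ elsewhere, so the fix is cheap, but the stated justification must be replaced.

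Second, the admissibility of the test function is the technical heart of the proof and is left at the level of an assertion. The decisive requirement is $\nabla\phi\in L^2(G;|z|^a\de\mathbf{x})$, i.e.\ a local weighted $H^2$-type bound for $v_\eps$ up to $\partial^0G$, and this is where all the degeneracy lives. Tangential second derivatives can indeed be handled by difference quotients (each $\partial_{x_i}v_\eps$ satisfies the same equation in $G$ and the differentiated Neumann condition $\boldsymbol{\delta}_s\boldsymbol{\partial}_z^{(2s)}(\partial_{x_i}v_\eps)=\eps^{-2s}D^2W(v_\eps)\partial_{x_i}v_\eps$, so a Caccioppoli-type estimate is available), but this must be said; it is not automatic from the translation invariance of the interior operator alone. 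For the purely normal term $\mathbf{X}_{n+1}\partial_z^2 v_\eps$ one should invoke the equation $\partial_z^2 v_\eps=-\Delta_x v_\eps-\tfrac{a}{z}\partial_z v_\eps$ and then again the bound $|\mathbf{X}_{n+1}|\le Cz$ to reduce to controlled quantities. None of this is conceptually hard, but it is the part of the proof that actually uses Theorem \ref{regint} (and the difference-quotient machinery), and a complete write-up would have to carry it out.
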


In turn the stationarity condition \eqref{statcondEseps} yields a crucial monotonicity formula, see \cite[Lemma~4.2]{MilSirW}. 

\begin{corollary}[\bf Monotonicity Formula]\label{monotformACeq}
Let $v_\eps \in  H^1(B_R^+;\R^d,|z|^a\de \mathbf{x})\cap L^\infty(B^+_R)$ 
 be a bounded weak solution  of~\eqref{pfff}. 
For every $\mathbf{x}_0=(x_0,0)\in D_R\times\{0\}$, the function $r\in(0,R-|\mathbf{x}_0|\,]\mapsto \boldsymbol{\Theta}_{s,\eps}(v_\eps,x_0,r)$ defined by 
$$ \boldsymbol{\Theta}_{s,\eps}(v_\eps,x_0,r):=\frac{1}{r^{n-2s}} \mathbf{E}_{s,\eps}\big(v_\eps,B_r^+(\mathbf{x}_0)\big)$$
 is non-decreasing.
\end{corollary}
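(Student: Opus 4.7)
The plan is to apply the stationarity identity \eqref{statcondEseps} from Corollary \ref{statAC} to a cutoff of the dilation vector field centered at $\mathbf{x}_0$, and then to concentrate the cutoff onto a sphere of radius $r$ so as to derive a differential inequality for $\mathbf{E}_{s,\eps}(v_\eps, B_r^+(\mathbf{x}_0))$ equivalent to the monotonicity of $\boldsymbol{\Theta}_{s,\eps}$.

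Concretely, for $r<R-|\mathbf{x}_0|$ and $\delta>0$ small, I would choose $\eta_{r,\delta}\in C^1_c([0,\infty))$ equal to $1$ on $[0,r-\delta]$, vanishing on $[r,\infty)$, and linear in between, and apply \eqref{statcondEseps} to the vector field
$$\mathbf{X}(\mathbf{x}):=\eta_{r,\delta}\big(|\mathbf{x}-\mathbf{x}_0|\big)(\mathbf{x}-\mathbf{x}_0).$$
Since $\mathbf{x}_0=(x_0,0)$, the last coordinate $\mathbf{X}_{n+1}=\eta_{r,\delta}(\rho)\,z$ vanishes on $\partial^0B_R^+$, so $\mathbf{X}$ is admissible. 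Substituting $\mathbf{X}$ into \eqref{firstvarEsformul} and using $z^{a-1}\mathbf{X}_{n+1}=z^a\eta_{r,\delta}(\rho)$, the $\eta_{r,\delta}$-terms combine with coefficient $n-1+a=n-2s$, and aside from that one is left with the ``flux'' contribution
$$\frac{\boldsymbol{\delta}_s}{2}\int_{B_R^+}z^a\eta_{r,\delta}'(\rho)\,\rho\,\big(|\nabla v_\eps|^2-2|\partial_\rho v_\eps|^2\big)\,\de\mathbf{x},\qquad \partial_\rho v_\eps:=\rho^{-1}(\mathbf{x}-\mathbf{x}_0)\cdot\nabla v_\eps.$$
The potential term in \eqref{statcondEseps} contributes analogously via ${\rm div}\,X=n\eta_{r,\delta}(\rho)+\eta_{r,\delta}'(\rho)\rho$ on $\partial^0B_R^+$.

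Letting $\delta\to 0$, dominated convergence handles the $\eta_{r,\delta}$-terms (yielding the full energies over $B_r^+(\mathbf{x}_0)$ and $D_r(x_0)$), whereas the co-area formula together with the Lebesgue differentiation theorem turns the two $\eta_{r,\delta}'$-terms, for a.e. $r$, into $-r$ times the corresponding surface integrals on $\partial^+B_r^+(\mathbf{x}_0)$ and $\partial D_r(x_0)$. Using $\frac{\de}{\de r}\mathbf{E}_s(v_\eps,B_r^+(\mathbf{x}_0))=\frac{\boldsymbol{\delta}_s}{2}\int_{\partial^+B_r^+(\mathbf{x}_0)}z^a|\nabla v_\eps|^2\,\de\mathcal{H}^n$ and the analogous identity for the potential, the stationarity relation rearranges into
$$r\frac{\de}{\de r}\mathbf{E}_{s,\eps}\big(v_\eps,B_r^+(\mathbf{x}_0)\big) - (n-2s)\mathbf{E}_{s,\eps}\big(v_\eps,B_r^+(\mathbf{x}_0)\big) = \frac{2s}{\eps^{2s}}\int_{D_r(x_0)}W(v_\eps)\,\de x + \boldsymbol{\delta}_s r\int_{\partial^+B_r^+(\mathbf{x}_0)}z^a|\partial_\rho v_\eps|^2\,\de\mathcal{H}^n.$$
The right-hand side is manifestly non-negative (the positivity of both $s$ and $W$ is essential), so dividing by $r^{n-2s+1}$ yields $\frac{\de}{\de r}\boldsymbol{\Theta}_{s,\eps}(v_\eps,x_0,r)\geq 0$ for a.e. $r\in(0,R-|\mathbf{x}_0|)$; absolute continuity of the radial energy promotes this to monotonicity on the open interval, and continuity extends the conclusion to the endpoint $r=R-|\mathbf{x}_0|$.

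The only delicate step is the passage to the limit in the bulk flux term carrying $\eta_{r,\delta}'$: the co-area formula isolates the surface integral of $|\nabla v_\eps|^2-2|\partial_\rho v_\eps|^2=|\nabla_{\rm tan}v_\eps|^2-|\partial_\rho v_\eps|^2$ on $\partial^+B_r^+(\mathbf{x}_0)$, which, combined with the corresponding derivative of the bulk integral of $|\nabla v_\eps|^2$, leaves only a favorable $|\partial_\rho v_\eps|^2$ piece. This structural cancellation, together with the $2s>0$ factor coming from the weighted scaling, is precisely what produces the correct sign and yields the monotonicity.
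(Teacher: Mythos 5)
Your proof is correct and takes essentially the same route as the paper, which defers to [MilSirW, Lemma 4.2]: plug the radial vector field $\mathbf{X}(\mathbf{x})=\eta_{r,\delta}(|\mathbf{x}-\mathbf{x}_0|)(\mathbf{x}-\mathbf{x}_0)$ into the stationarity identity \eqref{statcondEseps}, verify that $\mathbf{X}_{n+1}=\eta_{r,\delta}(\rho)z$ vanishes on $\partial^0 B_R^+$ since $\mathbf{x}_0$ lies on the boundary plate, combine the zeroth-order pieces (the coefficient $(n+1)-2+a=n-2s$ is exactly right), and pass to the limit in the $\eta'$-flux via coarea. The resulting identity $r\,\tfrac{\de}{\de r}\mathbf{E}_{s,\eps}(v_\eps,B_r^+(\mathbf{x}_0))-(n-2s)\mathbf{E}_{s,\eps}(v_\eps,B_r^+(\mathbf{x}_0))=\boldsymbol{\delta}_s r\int_{\partial^+B_r^+(\mathbf{x}_0)}z^a|\partial_\rho v_\eps|^2\,\de\mathcal{H}^n+\tfrac{2s}{\eps^{2s}}\int_{D_r(x_0)}W(v_\eps)\,\de x$ is exactly the Pohozaev-type relation used there, and your observation about the sign and the absolute-continuity upgrade is sound.
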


\subsection{Regularity and Maximum Principle}

In view of Section \ref{secext}, a bounded weak solution $u_\eps$ of the fractional equation \eqref{eqfractGL} provides after the extension procedure $u_\eps\mapsto u^\e_\eps$ a bounded weak solution of \eqref{eqext} in every admissible bounded open $G\subset\R^{n+1}_+$ satisfying $\overline{\partial^0G}\subset\Omega$. As a consequence, Theorem~\ref{regint}  yields  the following  interior regularity for the fractional equation. 

 \begin{corollary}\label{corintregfrac}
If $u_\varepsilon \in  \widehat H^{s}(\Omega;\R^d)\cap L^\infty(\R^n)$ is a bounded weak solution of \eqref{eqfractGL}, then  $u_\varepsilon \in C^{1,\alpha}_{\rm loc}(\Omega)$ for some $\alpha=\alpha(n,s)\in(0,1)$ depending only on $n$ and $s$. 
\end{corollary}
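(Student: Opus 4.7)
The plan is to reduce this to the regularity statement already established for the extended problem (Theorem~\ref{regint}) via the Caffarelli-Silvestre extension $u_\varepsilon\mapsto u^\e_\varepsilon$ recalled in Section~\ref{secext}. So the first step is to check that the extension $u^\e_\varepsilon$ fits into the hypotheses of Theorem~\ref{regint} on arbitrary half-balls $B_R^+(\mathbf{x}_0)$ with $\mathbf{x}_0=(x_0,0)$ and $\overline{D_R(x_0)}\subset\Omega$. Since $u_\varepsilon\in L^\infty(\R^n)$ and the fractional Poisson kernel $\mathbf{P}_{n,s}(\cdot,z)$ is a probability measure, the extension satisfies $\|u^\e_\varepsilon\|_{L^\infty(\R^{n+1}_+)}\leq\|u_\varepsilon\|_{L^\infty(\R^n)}$, which gives the required $L^\infty$-bound. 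Moreover, since $u_\varepsilon\in\widehat H^s(\Omega;\R^d)$, the extension operator from $\widehat H^s(\Omega;\R^d)$ into $H^1(B_R^+(\mathbf{x}_0);\R^d,|z|^a\de\mathbf{x})$ recalled in Section~\ref{secext} (from \cite{MilPegSch}) guarantees that $u^\e_\varepsilon$ has the required weighted Sobolev regularity locally near $D_R(x_0)\times\{0\}$.

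The second step is to verify that $u^\e_\varepsilon$ is a weak solution of the Allen-Cahn degenerate boundary reaction equation \eqref{eqext} in $B_R^+(\mathbf{x}_0)$. This is exactly the observation contained in the opening paragraph of Section~\ref{subsectDegAllCahneq}: a weak solution $u_\varepsilon$ of the fractional equation \eqref{eqfractGL} in $\Omega$ produces an extension $u^\e_\varepsilon$ satisfying the variational identity \eqref{varformbdgleq} against every test function in $H^1(B_R^+(\mathbf{x}_0);\R^d,|z|^a\de\mathbf{x})\cap L^p(D_R(x_0))$ compactly supported in $B_R^+(\mathbf{x}_0)\cup D_R(x_0)$, that is, $u^\e_\varepsilon$ is a bounded weak solution of \eqref{pfff} on $B_R^+(\mathbf{x}_0)$.

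The third step is then just to invoke Theorem~\ref{regint}, which yields that $u^\e_\varepsilon\in C^{0,\alpha}_{\rm loc}(B_R^+(\mathbf{x}_0)\cup D_R(x_0))$ and $\nabla_x u^\e_\varepsilon\in C^{0,\alpha}_{\rm loc}(B_R^+(\mathbf{x}_0)\cup D_R(x_0))$ for some exponent $\alpha=\alpha(n,s)\in(0,1)$. Finally, since the pointwise trace of $u^\e_\varepsilon$ on $D_R(x_0)$ coincides with $u_\varepsilon$ (as recalled in Section~\ref{secext}) and $\nabla_x u^\e_\varepsilon$ restricts on $D_R(x_0)$ to $\nabla u_\varepsilon$, we obtain $u_\varepsilon\in C^{1,\alpha}(D_R(x_0);\R^d)$. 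Since $x_0\in\Omega$ and $R>0$ with $\overline{D_R(x_0)}\subset\Omega$ are arbitrary, this yields $u_\varepsilon\in C^{1,\alpha}_{\rm loc}(\Omega)$, as claimed.

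There is essentially no obstacle here: all the substantive work has been done in Theorem~\ref{regint}, and the only technical points to check are the $L^\infty$-bound and the $H^1(\cdot,|z|^a\de\mathbf{x})$-regularity of the extension, both of which are available from the preliminaries in Section~\ref{secext}. The exponent $\alpha$ produced is the one delivered by Theorem~\ref{regint}, so it depends only on $n$ and $s$ as announced.
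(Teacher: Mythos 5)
Your proof is correct and follows the same route as the paper: extend via the Caffarelli--Silvestre map, observe that the extension is a bounded weak solution of the degenerate boundary-reaction equation on half-balls compactly contained over $\Omega$, invoke Theorem~\ref{regint}, and pass the H\"older regularity of $u^\e_\varepsilon$ and $\nabla_x u^\e_\varepsilon$ down to the trace. The paper states this as a one-sentence remark preceding the corollary; you have simply filled in the three routine checks (the $L^\infty$-bound from $\mathbf{P}_{n,s}(\cdot,z)$ being a probability measure, the weighted $H^1$-regularity from the continuity of the extension operator, and the weak-solution property recalled in Section~\ref{subsectDegAllCahneq}).
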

 
In the  case where \eqref{eqfractGL} is complemented with a smooth and bounded exterior Dirichlet condition $g$, one can prove an a priori $L^\infty$-bound exactly as in \cite[Lemma 3.7]{MilSirW}    (which rests on assumption ({\rm H3})), so that Corollary~\ref{corintregfrac} applies.

 \begin{lemma}\label{Linftybd}
 Let $g \in C^{0,1}_{\rm loc}(\R^n;\R^d)\cap L^{\infty}(\R^n)$. If $u_\varepsilon \in H_{g}^{s}(\Omega;\R^d)\cap L^p(\Omega)$ is
 a weak solution of \eqref{eqfractGL}, then $u_\varepsilon \in L^\infty(\R^n)$. 
 \end{lemma}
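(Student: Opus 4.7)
The plan is to obtain the $L^\infty$ bound by a projection-based truncation test-function argument, which is the natural vectorial analogue of testing with the scalar cut-off $(u_\varepsilon-M)_+$. Set $M:=\|g\|_{L^\infty(\R^n)}$, fix
$$M_*:=\max\big(M,\,2(\boldsymbol{c}^2_W)^{1/(p-1)}\big)\,,$$
and let $P:\R^d\to\overline{B_{M_*}}$ denote the nearest-point projection onto the closed ball of radius $M_*$; recall that $P$ is $1$-Lipschitz with $P(0)=0$. I would then test the weak formulation \eqref{weakformeq} against
$$\varphi:=u_\varepsilon-P(u_\varepsilon)\,.$$
The first step is to verify admissibility. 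Since $|g|\leq M\leq M_*$ on $\R^n\setminus\Omega$, one has $P(u_\varepsilon)=u_\varepsilon$ there, so $\varphi\equiv 0$ outside $\Omega$. The Lipschitz bound on $P$ gives $|\varphi(x)-\varphi(y)|\leq 2|u_\varepsilon(x)-u_\varepsilon(y)|$ and $|\varphi|\leq|u_\varepsilon|$, so $\varphi\in H^s_{00}(\Omega;\R^d)\cap L^p(\Omega)$.

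Next I would show that the nonlocal contribution is nonnegative. Expanding the integrand of $\langle(-\Delta)^s u_\varepsilon,\varphi\rangle_\Omega$ as
$$(u_\varepsilon(x)-u_\varepsilon(y))\cdot(\varphi(x)-\varphi(y))=|u_\varepsilon(x)-u_\varepsilon(y)|^2-(u_\varepsilon(x)-u_\varepsilon(y))\cdot(P(u_\varepsilon(x))-P(u_\varepsilon(y)))\,,$$
Cauchy--Schwarz together with the $1$-Lipschitz property of $P$ shows the right-hand side is $\geq 0$ pointwise, hence $\langle(-\Delta)^s u_\varepsilon,\varphi\rangle_\Omega\geq 0$.

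For the potential term I would observe that $\varphi=u_\varepsilon(1-M_*/|u_\varepsilon|)$ on $\{|u_\varepsilon|>M_*\}$ and vanishes elsewhere, so $\varphi$ is radial with respect to $u_\varepsilon$ on its support. Applying the lower bound in (H3) with $\nu=u_\varepsilon/|u_\varepsilon|$ and $t=|u_\varepsilon|$ gives $\nabla W(u_\varepsilon)\cdot u_\varepsilon\geq \boldsymbol{c}^1_W|u_\varepsilon|(|u_\varepsilon|^{p-1}-\boldsymbol{c}^2_W)$, and the choice of $M_*$ makes this quantity strictly positive on $\{|u_\varepsilon|>M_*\}$. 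Hence $\nabla W(u_\varepsilon)\cdot\varphi\geq 0$, with strict inequality wherever $|u_\varepsilon|>M_*$. Inserting both inequalities into \eqref{weakformeq} forces $\int_\Omega\nabla W(u_\varepsilon)\cdot\varphi\,\de x=0$, which in view of the strict positivity on $\{|u_\varepsilon|>M_*\}$ is only possible if this set has zero measure. Combined with $|u_\varepsilon|=|g|\leq M$ on $\R^n\setminus\Omega$, this yields $\|u_\varepsilon\|_{L^\infty(\R^n)}\leq M_*$.

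I do not expect serious conceptual difficulty here; the only mildly delicate step is the verification that the vectorial projection test function $\varphi$ actually lies in $H^s_{00}(\Omega;\R^d)\cap L^p(\Omega)$, which boils down to the Lipschitz-based estimate $|\varphi(x)-\varphi(y)|\leq 2|u_\varepsilon(x)-u_\varepsilon(y)|$ together with the inclusion $\widehat H^s(\Omega;\R^d)\subset L^2(\Omega)$ (ensuring $\varphi\in L^2(\R^n)$ because $\varphi$ is compactly supported in $\Omega$).
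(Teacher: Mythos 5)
Your proof is correct, and it is the expected argument: the paper does not spell it out but cites \cite[Lemma 3.7]{MilSirW}, whose scalar truncation argument your nearest-point projection $P$ onto $\overline{B_{M_*}}$ is precisely the vectorial analogue of. Two small remarks: the factor $2$ in the definition of $M_*$ is unnecessary ($M_*\ge(\boldsymbol{c}^2_W)^{1/(p-1)}$ already gives $\nabla W(u_\varepsilon)\cdot u_\varepsilon>0$ strictly on $\{|u_\varepsilon|>M_*\}$, and indeed the paper's Corollary \ref{modless1} records the sharper bound $\max((\boldsymbol{c}^2_W)^{1/(p-1)},\|g\|_{L^\infty})$), and the membership $\varphi\in H^s_{00}(\Omega;\R^d)$ rests not only on the $2$-Lipschitz bound and $\varphi\in L^2(\Omega)$ but also on the $s<1/2$ Hardy-type estimate $\int_\Omega\boldsymbol{\rho}_\Omega|\varphi|^2\le C_\Omega\|\varphi\|^2_{H^s(\Omega)}$ recalled in Section \ref{secHs}, which is worth stating explicitly.
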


 Continuity up to the boundary for the Dirichlet exterior problem can be obtained from~\cite[Theorem~2]{SerVal}. Combining this result with Corollay \ref{corintregfrac} and Lemma \ref{Linftybd}, we reach the following theorem. 

\begin{theorem}\label{regDirich}
Assume that $\partial\Omega$ is smooth and that $g\in   C^{0,1}_{\rm loc}(\R^n;\R^d)\cap L^\infty(\R^n)$. If $u_\varepsilon \in H_{g}^{s}(\Omega;\R^d)\cap L^p(\Omega)$ 
is a weak solution of \eqref{eqfractGL}, then 
$u_\eps \in C^{1,\alpha}_{\rm loc}(\Omega)\cap C^0(\R^n)$ for some $\alpha\in(0,1)$. 
\end{theorem}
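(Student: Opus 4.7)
The plan is to assemble three ingredients that have already been established or cited. First, I would apply Lemma~\ref{Linftybd} to conclude that $u_\eps \in L^\infty(\R^n)$; this is where assumption (H3) and the boundedness of $g$ enter. With $u_\eps$ now known to be bounded, Corollary~\ref{corintregfrac} immediately gives the interior regularity $u_\eps \in C^{1,\alpha}_{\rm loc}(\Omega)$ for some $\alpha = \alpha(n,s)\in(0,1)$. It therefore remains only to upgrade membership in $L^\infty(\R^n)$ to membership in $C^0(\R^n)$, i.e.\ to establish continuity across $\partial\Omega$ (continuity inside $\Omega$ follows from the interior regularity, and continuity on $\R^n\setminus\overline{\Omega}$ follows from $g \in C^{0,1}_{\rm loc}$).

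For the continuity across $\partial\Omega$, I would recast the problem as a linear fractional Dirichlet problem. Namely, setting
\[
f_\eps := -\frac{1}{\eps^{2s}}\nabla W(u_\eps),
\]
the assumption $W \in C^2$ together with $u_\eps \in L^\infty(\R^n)$ yields $f_\eps \in L^\infty(\Omega)$, and $u_\eps$ is a weak solution of
\[
\begin{cases}
(-\Delta)^s u_\eps = f_\eps & \text{in }\Omega,\\
u_\eps = g & \text{on }\R^n\setminus\Omega,
\end{cases}
\]
where $\partial\Omega$ is smooth and the exterior datum $g$ is locally Lipschitz and bounded. The boundary regularity result \cite[Theorem~2]{SerVal}, cited just above the theorem, applies in precisely this framework and yields Hölder continuity of $u_\eps$ up to $\partial\Omega$ from the inside. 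Combined with the continuity of $g$ on $\R^n\setminus\Omega$ and the trace identification $u_\eps = g$ a.e.\ outside $\Omega$, one obtains $u_\eps \in C^0(\R^n)$.

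The only real bookkeeping obstacle is to verify that the hypotheses of \cite[Theorem~2]{SerVal} are met by $u_\eps$, $g$, and $f_\eps$: smoothness of $\partial\Omega$ (given), local Lipschitz regularity and boundedness of $g$ (given), and boundedness of the right-hand side $f_\eps$ (a consequence of $(H1)$ and Lemma~\ref{Linftybd}). Everything else is routine, and combining the two regularity statements — interior $C^{1,\alpha}$ from Corollary~\ref{corintregfrac} and boundary $C^0$ from \cite[Theorem~2]{SerVal} — delivers $u_\eps \in C^{1,\alpha}_{\rm loc}(\Omega) \cap C^0(\R^n)$, as claimed.
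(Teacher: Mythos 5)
Your proposal follows exactly the route sketched in the paper: apply Lemma~\ref{Linftybd} to get the global $L^\infty$-bound, then Corollary~\ref{corintregfrac} for interior $C^{1,\alpha}$, and finally invoke \cite[Theorem~2]{SerVal} for continuity up to $\partial\Omega$, having first written the equation as $(-\Delta)^s u_\eps = f_\eps$ with $f_\eps$ bounded. The only difference is that you spell out the recasting as a linear Dirichlet problem, a step the paper leaves implicit.
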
  

With regularity in hands, we can now establish the following maximum principle for equation \eqref{eqfractGL}.  
 
\begin{corollary}\label{modless1}
Let $\Omega$ and $g$ be as in Theorem \ref{regDirich}. If $u_\varepsilon \in H_{g}^{s}(\Omega;\R^d)\cap L^p(\Omega)$ 
is a weak solution of~\eqref{eqfractGL}, then  
\begin{equation}\label{ptbd}
\|u_\varepsilon\|_{L^\infty(\R^n)}\leq \max\left(\big(\boldsymbol{c}^2_W\big)^{\frac{1}{p-1}},\|g\|_{L^\infty(\R^n\setminus\Omega)}\right)\,,
\end{equation}
where $\boldsymbol{c}^2_W>0$ is the second constant given in  assumption {\rm (H3)}.
\end{corollary}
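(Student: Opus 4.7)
The plan is to combine the $L^\infty$ bound of Lemma~\ref{Linftybd} (or equivalently the continuity from Theorem~\ref{regDirich}) with a truncation test function obtained by projecting $u_\varepsilon$ onto the closed ball of radius $M$. Concretely, set
\begin{equation*}
M:=\max\bigl((\boldsymbol{c}^2_W)^{1/(p-1)},\,\|g\|_{L^\infty(\R^n\setminus\Omega)}\bigr),
\end{equation*}
and let $T\colon\R^d\to\overline{B_M(0)}$ denote the orthogonal projection onto the closed ball of radius $M$ centered at the origin, which is $1$-Lipschitz and satisfies $T(z)=z$ for $|z|\leq M$. The test function will be $\varphi:=u_\varepsilon-T\circ u_\varepsilon$. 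Note that since $|g(x)|\leq M$ on $\R^n\setminus\Omega$, we have $T\circ g=g$ there, hence $\varphi\equiv 0$ on $\R^n\setminus\Omega$.

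First I would check that $\varphi$ is admissible, i.e., $\varphi\in H^s_{00}(\Omega;\R^d)\cap L^p(\Omega)$. Boundedness of $u_\varepsilon$ from Lemma~\ref{Linftybd} and of $T$ yield $\varphi\in L^\infty(\R^n)\subset L^p(\Omega)$, and $\varphi$ vanishes outside $\Omega$. For the $H^s$-norm, the pointwise estimate
\begin{equation*}
|\varphi(x)-\varphi(y)|\leq|u_\varepsilon(x)-u_\varepsilon(y)|+|T(u_\varepsilon(x))-T(u_\varepsilon(y))|\leq 2|u_\varepsilon(x)-u_\varepsilon(y)|,
\end{equation*}
combined with the observation that the integrand of $[\varphi]_{H^s(\R^n)}^2$ vanishes on $\Omega^c\times\Omega^c$ (where $\varphi=0$), gives $[\varphi]_{H^s(\R^n)}^2\leq 8\mathcal{E}_s(u_\varepsilon,\Omega)<\infty$.

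Plugging $\varphi$ into the weak formulation \eqref{weakformeq} of the equation yields
\begin{equation*}
\langle(-\Delta)^{s}u_\varepsilon,\varphi\rangle_\Omega+\frac{1}{\varepsilon^{2s}}\int_\Omega\nabla W(u_\varepsilon)\cdot\varphi\,\de x=0,
\end{equation*}
and the strategy is to show that both terms are nonnegative, which forces each of them to vanish. For the fractional-Laplacian term, I would expand the bilinear form as
\begin{equation*}
(u_\varepsilon(x)-u_\varepsilon(y))\cdot(\varphi(x)-\varphi(y))=|u_\varepsilon(x)-u_\varepsilon(y)|^{2}-(u_\varepsilon(x)-u_\varepsilon(y))\cdot(T(u_\varepsilon(x))-T(u_\varepsilon(y))),
\end{equation*}
and use Cauchy–Schwarz together with the $1$-Lipschitz property of $T$ to conclude that the integrand is nonnegative pointwise. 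For the potential term, $\varphi=0$ on $\{|u_\varepsilon|\leq M\}$, while on $\{|u_\varepsilon|>M\}$ we have $\varphi=(|u_\varepsilon|-M)\,u_\varepsilon/|u_\varepsilon|$, and assumption~(H3), evaluated at $\nu=u_\varepsilon/|u_\varepsilon|$ and $t=|u_\varepsilon|$, gives
\begin{equation*}
\nabla W(u_\varepsilon)\cdot\frac{u_\varepsilon}{|u_\varepsilon|}\geq\boldsymbol{c}^1_W\bigl(|u_\varepsilon|^{p-1}-\boldsymbol{c}^2_W\bigr)>0,
\end{equation*}
since $|u_\varepsilon|>M\geq(\boldsymbol{c}^2_W)^{1/(p-1)}$.

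From the vanishing of the potential integral and the strict positivity of its integrand on $\{|u_\varepsilon|>M\}$, I conclude that $|\{|u_\varepsilon|>M\}|=0$, so $|u_\varepsilon|\leq M$ a.e.\ in $\Omega$; combined with the exterior Dirichlet condition this gives the bound~\eqref{ptbd}. The only slightly delicate step of the plan is the admissibility check for $\varphi$, which reduces to the elementary pointwise Lipschitz estimate for $T$ together with the a priori $L^\infty$ bound of Lemma~\ref{Linftybd}; everything else is a direct testing argument that mirrors the scalar maximum principle, adapted to the vectorial setting via radial monotonicity of $W$ furnished by~(H3).
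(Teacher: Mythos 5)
Your proof is correct and takes a genuinely different route from the paper. The paper's argument passes to the extension $u_\varepsilon^\e$, sets $m_\varepsilon:=\lambda^2-|u_\varepsilon^\e|^2$, and rules out an interior minimum of $m_\varepsilon$ via the strong maximum principle for the degenerate operator (from \cite{FKS}) and the Hopf boundary lemma (from \cite{CS1}), which is why it invokes the continuity and boundary regularity supplied by Theorem~\ref{regDirich}. You instead test the weak formulation \eqref{weakformeq} directly with the truncation $\varphi:=u_\varepsilon-T(u_\varepsilon)$: the Dirichlet pairing has a pointwise nonnegative integrand because $\mathrm{Id}-T$ is monotone (a consequence of $T$ being the $1$-Lipschitz nearest-point projection onto a closed convex set), and the reaction term is nonnegative, strictly positive on $\{|u_\varepsilon|>M\}$, by the radial coercivity in (H3). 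Both terms therefore vanish, forcing $|\{|u_\varepsilon|>M\}|=0$. This is precisely the Euler--Lagrange version of the competitor argument the paper itself uses for minimizers in Remark~\ref{remarkLinftymin}, upgraded to arbitrary critical points. Your route is more elementary: it needs neither the extension nor any maximum-principle machinery, and the admissibility of $\varphi$ already follows from $u_\varepsilon\in \widehat H^{s}(\Omega;\R^d)\cap L^p(\Omega)$ together with the elementary bounds $|\varphi|\le|u_\varepsilon|$ and $|\varphi(x)-\varphi(y)|\le 2|u_\varepsilon(x)-u_\varepsilon(y)|$, so even the appeal to Lemma~\ref{Linftybd} is dispensable. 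What the paper's approach buys is a genuine pointwise maximum principle for $|u_\varepsilon^\e|$ on the whole half-space; for the stated $L^\infty$ bound, the two arguments reach the same conclusion, and yours is arguably the cleaner one.
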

 
\begin{proof}
Let $\lambda$ be the constant in the right hand side of \eqref{ptbd}.
By Theorem  \ref{regDirich},  $m_\varepsilon:=\lambda^2-|u^\e_\varepsilon|^2$ is continuous in $\overline\R^{n+1}_+$, and $z^a\partial_z m_\eps$ is continuous up to~$\Omega$. Moreover, 
$m_\varepsilon$ satisfies (in the pointwise sense)
$$\begin{cases} 
-{\rm div}\big(z^{a} \nabla m_\varepsilon\big)\geq 0 & \text{in $\R^{n+1}_+$}\,,\\[5pt]
\displaystyle \boldsymbol{\delta}_s\boldsymbol{\partial}^{(2s)}_z m_\varepsilon= -\frac{2}{\varepsilon^{2s}}u_\eps^\e\cdot\nabla W(u^\e_\eps) & \text{on $\Omega$}\,,\\
m_\varepsilon \geq 0 & \text{on $\R^n\setminus\Omega$}\,.
\end{cases}
$$
Assume that $m_\varepsilon$ achieves its minimum over $\R^n$ at a point $x_0\in\Omega$. Then $x_0$ is a point of maximum of $|u_\varepsilon|$, and thus $\mathbf{x}_0=(x_0,0)$  is a point of maximum of $|u_\eps^\e|$ since $\|u_\eps^\e\|_{L^\infty(\R^{n+1}_+)}\leq \|u_\eps\|_{L^\infty(\R^n)}$. Hence the point $\mathbf{x}_0$  is an absolute minima of 
$m_\varepsilon$ over $\overline\R^{n+1}_+$. If $m_\eps(\mathbf{x}_0)< 0$,  then $|u_\eps^\e(\mathbf{x}_0)|>\lambda$, and we infer from assumption {\rm (H3)} 
that 
$$\boldsymbol{\delta}_s\boldsymbol{\partial}^{(2s)}_z m_\varepsilon(\mathbf{x}_0)\leq -\frac{2}{\eps^{2s}}\boldsymbol{c}^1_W|u^\e_\eps({\bf x}_0)|\big(|u^\e_\eps({\bf x}_0)|^{p-1}-\boldsymbol{c}^2_W\big)< 0\,.$$ 
By the strong maximum maximum principle of \cite[Corollary 2.3.10]{FKS}, we have $m_\eps>m_\eps(\mathbf{x}_0)$ in $\mathbb{R}^{n+1}_+$. Then, the 
Hopf boundary lemma of \cite[Proposition 4.11]{CS1} yields $\boldsymbol{\partial}^{(2s)}_z m_\varepsilon(\mathbf{x}_0)>0$ which gives a contradiction.  
\end{proof} 
 
\begin{remark}\label{remarkLinftymin}
If $u_\eps$ is a solution of the minimization problem \eqref{GLminProb} for an arbitrary exterior condition $g\in \widehat H^s(\Omega;\R^d)\cap L^\infty(\R^n)$, then $u_\eps\in L^\infty(\R^n)$ and the a priori bound \eqref{ptbd} holds. Indeed, assume by contradiction that the set $A:=\{|u_\eps|>\lambda\}$ has positive measure with $\lambda$  
 the constant in the right hand side of \eqref{ptbd}. Since $u_\eps=g$ outside $\Omega$, we have $A\subset\Omega$. Setting ${\rm p}_\lambda:\R^d\to\R^d$ to be the Lipschitz mapping  $y\mapsto \min\{\lambda,|y|\}\frac{y}{|y|}$, we consider the competitor $w_\eps:={\rm p}_\lambda(u_\eps)\in  H_g^s(\Omega;\R^d)$. Since ${\rm p}_\lambda$ is $1$-Lipschitz (being the nearest point projection on the closed ball of radius $\lambda$ centered at the origin), we have $\mathcal{E}_s(w_\eps,\Omega)\leq \mathcal{E}_s(u_\eps,\Omega)$. By assumption {\rm (H3)}, we have  for $x\in A$, 
\begin{multline*}
W\big(u_\eps(x)\big)-W\big(w_\eps(x)\big) =\int_\lambda^{|u_\eps(x)|}\frac{u_\eps(x)}{|u_\eps(x)|}\cdot\nabla W\bigg(t\frac{u_\eps(x)}{|u_\eps(x)|}\bigg)\,\de t\\
\geq \boldsymbol{c}^1_W\int_\lambda^{|u_\eps(x)|}\big(t^{p-1}-\boldsymbol{c}^2_W\big)\,\de t>0\,.
\end{multline*}
Hence $\int_\Omega W(w_\eps)\,\de x<\int_\Omega W(u_\eps)\,\de x$, and it follows that $\mathcal{E}_{s,\eps}(w_\eps,\Omega)< \mathcal{E}_{s,\eps}(u_\eps,\Omega)$ contradicting the minimality of $u_\eps$. 
\end{remark}

%%%%%%%%%%%%%%%%%%%%%%%%%%%%%%%%%%%%%%%%%%%%%%%%%%%%%%%%%%%%%%%%%%%%%%%%
%%%%%%%%%%%%%%%%%%%%%%%%%%%%%%%%%%%%%%%%%%%%%%%%%%%%%%%%%%%%%%%%%%%%%%%%

  \subsection{The clearing-out property and consequences}
  The purpose of this subsection is to prove a fundamental {\sl clearing out property} for  a bounded weak solution $v_\varepsilon$ of \eqref{pfff}. 
  This property asserts that if the energy of $v_\varepsilon$ in $B_R^+$ is small enough, then $v_\varepsilon$ is very close to one of the wells of $W$ on $D_{R/2}$. Since the potential $W$ is convex near each of its well by assumption {\rm (H2)}, the clearing out property shall provide minimality of $v_\varepsilon$ in smaller balls, whence compactness in the energy space, as well as the convergence rates claimed in Theorem \ref{main1new}. 
  
This clearing   out property is the content  the following lemma which rests on the monotonicity formula in Corollary~\ref{monotformACeq} and the linear regularity estimates from \cite{CS1}. 

\begin{lemma}\label{clear2new}
Given $b\geq 1$, there exists a non-decreasing function ${\boldsymbol{\eta}}_{b}:(0,1]\to (0,\infty)$ 
depending only $n$, $s$, $b$, and $W$, such that the following holds. For $R>0$ and $\eps\in(0,R]$,  
if $v_\eps\in H^1(B_R^+;\R^d,|z|^a\de \mathbf{x})\cap L^\infty(B^+_R)$ is a bounded weak solution of  \eqref{pfff} satisfying $\|v_\eps\|_{L^\infty(B^+_R)}\leq b$, and for some $\delta\in(0,1]$, 
\begin{equation}\label{smallenerg}
\boldsymbol{\Theta}_{s,\eps}(v_\eps,0,R)\leq \boldsymbol{\eta}_{b}(\delta)\,,
\end{equation}
then there exists ${\bf a}\in\mathcal{Z}$ such that $|v_\eps-{\bf a}|\leq \delta$  on  ${D}_{R/2}$. 
\end{lemma}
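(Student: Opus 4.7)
The strategy is a contradiction argument based on a blow-up analysis, following the scalar case. The two main ingredients are the uniform local regularity of Theorem~\ref{regint} and the monotonicity formula of Corollary~\ref{monotformACeq}.

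Since $\boldsymbol{\eta}_b$ is only required to be non-decreasing, I would first reduce to $\delta\leq \delta_{\min}/4$ with $\delta_{\min}:=\min_{i\neq j}|{\bf a}_i-{\bf a}_j|$, extending $\boldsymbol{\eta}_b$ as a constant for larger values of $\delta$. The scaling $\tilde v({\bf x}):=v_\eps(R{\bf x})$ then reduces to $R=1$ with parameter $\tilde\eps:=\eps/R\in(0,1]$, since this rescaling identifies $\boldsymbol{\Theta}_{s,\eps}(v_\eps,0,R)$ with $\mathbf{E}_{s,\tilde\eps}(\tilde v,B_1^+)$. Arguing by contradiction, I would obtain for some $\delta_0\in(0,\delta_{\min}/4]$ a sequence of bounded weak solutions $v_k$ of \eqref{pfff} on $B_1^+$ with $\|v_k\|_\infty\leq b$, parameters $\eps_k\in(0,1]$, $\mathbf{E}_{s,\eps_k}(v_k,B_1^+)\to 0$, but $\|v_k-{\bf a}\|_{L^\infty(D_{1/2})}>\delta_0$ for every ${\bf a}\in\mathcal{Z}$. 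Each $v_k$ is continuous on $\overline{D_1}$ by Theorem~\ref{regint}, $D_{1/2}$ is connected, and the open balls $B({\bf a}_i,\delta_0)$ are pairwise disjoint (by choice of $\delta_0$). Hence the connected image $v_k(D_{1/2})$ cannot lie in their union without being contained in a single one, which would contradict the negation; extracting $x_k\in\overline{D_{1/2}}$,
\begin{equation}\label{planpt}
\mathrm{dist}\bigl(v_k(x_k),\mathcal{Z}\bigr)\geq \delta_0.
\end{equation}

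The core of the argument is a blow-up at $x_k$ under a dichotomy on $\eps_k$. Up to subsequence, either $\eps_k\to\eps_*>0$ or $\eps_k\to 0$. In the first case the reaction $\eps_k^{-2s}\nabla W(v_k)$ is uniformly bounded on $D_1$, so Theorem~\ref{regint} provides uniform $C^{0,\alpha}$ estimates on $\overline{B_{3/4}^+\cup D_{3/4}}$; a subsequential limit $v_*$ satisfies $\mathbf{E}_{s,\eps_*}(v_*,B_1^+)=0$ by lower semicontinuity, hence is constant and belongs to $\mathcal{Z}$, contradicting \eqref{planpt} via uniform convergence. In the degenerate case I would rescale at the intrinsic microscopic length by setting $\hat v_k({\bf x}):=v_k(x_k+\eps_k{\bf x})$, a bounded weak solution of \eqref{pfff} with parameter $1$ on $B_{r_k}^+$, where $r_k:=(1-|x_k|)/\eps_k\to\infty$ and $\|\hat v_k\|_\infty\leq b$. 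A direct change of variables combined with the monotonicity formula applied at $x_k$ up to radius $1-|x_k|\geq 1/2$ yields, for each fixed $R>0$ and $k$ large,
\begin{equation*}
\mathbf{E}_{s,1}(\hat v_k,B_R^+) \,=\, R^{n-2s}\,\boldsymbol{\Theta}_{s,\eps_k}(v_k,x_k,R\eps_k) \,\leq\, (2R)^{n-2s}\,\mathbf{E}_{s,\eps_k}(v_k,B_1^+) \,\longrightarrow\, 0.
\end{equation*}
Since $\hat v_k$ solves the $\eps=1$ equation, Theorem~\ref{regint} furnishes uniform local $C^{0,\alpha}$ estimates with universal constants, so a diagonal extraction produces a bounded limit $\hat v_*:\overline{\R^{n+1}_+}\to\R^d$ solving \eqref{pfff} with $\eps=1$ on all of $\R^{n+1}_+$ and satisfying $\mathbf{E}_{s,1}(\hat v_*,B_R^+)=0$ for every $R>0$. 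Thus $\hat v_*$ is a constant in $\mathcal{Z}$, which contradicts $\mathrm{dist}(\hat v_*(0),\mathcal{Z})\geq \delta_0$ inherited from \eqref{planpt} through uniform convergence.

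The main obstacle is the case $\eps_k\to 0$, where the a priori regularity of Theorem~\ref{regint} degenerates because the reaction $\eps_k^{-2s}\nabla W(v_k)$ blows up. The rescaling to the microscopic length normalizes this reaction to unit strength so that uniform regularity survives with universal constants, and it simultaneously makes the rescaled domains exhaust $\R^{n+1}_+$. The monotonicity formula is what permits the small global energy on $B_1^+$ to control the rescaled energies $\mathbf{E}_{s,1}(\hat v_k,B_R^+)$ at arbitrarily large (fixed) $R$; without it, the blown-up limit $\hat v_*$ could carry positive energy and fail to be a well.
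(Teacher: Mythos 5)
Your proof is correct and rests on the same two pillars as the paper's — uniform Hölder regularity (the quantitative version of Theorem \ref{regint}, which in the paper's own proof is cited from \cite[Lemma 4.5]{CS1}) and the monotonicity formula of Corollary \ref{monotformACeq} — but the packaging is genuinely different. The paper splits the proof into two steps: in Step 1 it proves, by compactness, a clearing-out statement only for the bounded ratio regime $\eps/R\in[1/2,1]$; in Step 2, for arbitrary $\eps\in(0,R]$ and each $x_0\in\overline D_{R/2}$ it uses the monotonicity formula to bound $\boldsymbol{\Theta}_{s,\eps}(v_\eps,x_0,\eps)$ by the hypothesis, and then simply applies Step 1 at the scale $R'=\eps$ with parameter $\eps$ (so $\eps/R'=1$), giving ${\rm dist}(v_\eps,\mathcal{Z})\leq\delta$ pointwise. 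You instead run a single contradiction argument from the start, dichotomize on whether $\eps_k$ stays bounded away from $0$ or vanishes, and in the degenerate case rescale to the intrinsic length $\eps_k$; the monotonicity formula plays the same role (controlling $\mathbf{E}_{s,1}(\hat v_k,B_R^+)$ by the small global density), but the conclusion goes through a diagonal extraction and a Liouville-type argument for entire bounded solutions with zero energy on $\R^{n+1}_+$. What the paper's route buys is avoiding entire-space limits and diagonalization — each compactness argument takes place in a fixed ball with a parameter bounded away from $0$. What your route buys is a more unified blow-up argument, and it also explicitly handles the final step of the statement (passing from the pointwise bound ${\rm dist}(v_\eps,\mathcal Z)\leq\delta$ to a single well ${\bf a}$ via connectivity, after reducing to $\delta\leq\delta_{\min}/4$), a step the paper's proof leaves implicit. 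One minor point: in Case 1 with $\eps_k\to\eps_*>0$ and in Case 2 with the rescaled $\hat v_k$ solving the $\eps=1$ equation, you should cite the quantitative Hölder estimate \cite[Lemma 4.5]{CS1} rather than Theorem \ref{regint} itself, which as stated is only a qualitative membership result.
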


\begin{proof}
{\it Step 1.} We first assume that $\varepsilon\geq R/2$, and we claim that there exists $\widetilde{\boldsymbol{\eta}}_{b}(\delta)>0$ depending only on $\delta$, $n$, $s$, $b$, and $W$, such that the condition 
$\boldsymbol{\Theta}_{s,\eps}(v_\eps,0,R) \leq \widetilde{\boldsymbol{\eta}}_{b}(\delta)$
implies $ {\rm dist}(v_\eps,\mathcal{Z})\leq \delta$ in $\overline B^+_{R/2}$. Indeed, the rescaled function $\widetilde v_\varepsilon(\mathbf{x}):=v_\varepsilon(R \mathbf{x}) $, 
 satisfies
$$\begin{cases}
{\rm div}(z^{a} \nabla \widetilde v_\varepsilon) = 0 & \text{in $B_1^+$}\,,\\[8pt]
\displaystyle \boldsymbol{\delta}_s\boldsymbol{\partial}_z^{(2s)} \widetilde v_\varepsilon =\frac{R^{2s}}{\eps^{2s}}\nabla W(\widetilde v_\eps) & \text{on $D_1$}\,,
\end{cases}$$
with $\eps/R\in[ 1/2,1]$. Since $\|\widetilde v_\eps\|_{L^\infty(B_1^+)}\leq b$, we infer from \cite[Lemma 4.5]{CS1} that 
\begin{equation}\label{holdbdseq}
\|\widetilde v_\eps\|_{ C^{0,\alpha}(\overline B^+_{1/2})}\leq C_{b}\,,
\end{equation} 
for an exponent $\alpha=\alpha(n,s)\in(0,1)$ and a constant $C_{b}$ depending only on $n$, $s$, $b$, and $W$.   

We now argue by contradiction assuming that for some sequences $\{R_k\}_{k\in\mathbb{N}}\subset(0,+\infty)$, $\{\varepsilon_k\}_{k\in\mathbb{N}}\subset[R_k/2,R_k]$,  $\{\mathbf{x}_k\}_{k\in\mathbb{N}}\subset \overline B^+_{1/2}$, 
the function $\widetilde v_k:=\widetilde v_{\eps_k}$ satisfies ${\rm dist}\big(\widetilde v_k(\mathbf{x}_k),\mathcal{Z}\big)>\delta$ for every $k$, 
and
$$ \mathbf{E}_{s,\varepsilon_k/R_k}(\widetilde v_k,B_1^+)=\frac{1}{R_k^{n-2s}}\mathbf{E}_{s,\eps_k}(v_{\eps_k},B_{R_k}^+)\leq \boldsymbol{\Theta}_{s,\eps_k}(v_{\eps_k},0,R_k)\to 0\quad \text{as $k\to\infty$}\,.$$
By \eqref{holdbdseq} and the Arzel\`a-Ascoli Theorem, we can find a (not relabeled) subsequence  such that $\widetilde u_k$ converges uniformly on $\overline B^+_{1/2}$. 
Since  $\mathbf{E}_{s,\varepsilon_k/R_k}(\widetilde v_k,B_1^+)\to 0$, the limit has to be a constant map belonging to the set $\mathcal{Z}$. In particular, ${\rm dist}(\widetilde v_k,\mathcal{Z})\to 0$  uniformly in $\overline B^+_{1/2}$, which contradicts our assumption  ${\rm dist}\big(\widetilde v_k(\mathbf{x}_k),\mathcal{Z}\big)>\delta$. 
\vskip3pt

\noindent{\it Step 2.} Define
$$\boldsymbol{\eta}_{b}(\delta):=2^{2s-n}\inf_{t\in[\delta,1]} \widetilde{\boldsymbol{\eta}}_{b}(t)\,.$$
Let $\delta\in(0,1]$ and assume that \eqref{smallenerg} holds for $R>0$ and $\eps\in(0,R]$. We fix an arbitrary point $\mathbf{x}_0=(x_0,0)\in \overline D_{R/2}\times\{0\}$. If $\eps\geq R/2$, then ${\rm dist}\big(v_\eps(\mathbf{x}_0),\mathcal{Z}\big)\leq \delta$ by Step~1. If $\eps<R/2$, then $\varepsilon<R-|\mathbf{x}_0|$ and  by Corollary~\ref{monotformACeq} we have 
$$\boldsymbol{\Theta}_{s,\eps}(v_\eps,x_0,\eps)\leq \boldsymbol{\Theta}_{s,\eps}\big(v_\eps,x_0,R-|x_0|\big)\leq 2^{n-2s}\boldsymbol{\Theta}_{s,\eps}(v_\eps,0,R)\,.$$
Our choice of $\boldsymbol{\eta}_{b}(\delta)$ then implies $\boldsymbol{\Theta}_{s,\eps}(v_\eps,x_0,\eps)\leq \widetilde{\boldsymbol{\eta}}_{b}(\delta)$,  
and  we infer from Step 1 that ${\rm dist}(v_\eps,\mathcal{Z})\leq \delta$ in $\overline B^+_{\eps/2}(\mathbf{x}_0)$.
\end{proof}

By our assumptions (H1)-(H2) on $W$, there exists a radius $\boldsymbol{\varrho}_W\in(0,1]$ and a constant $\boldsymbol{\kappa}_W>0$ such that 
\begin{equation}\label{lwbdhess}
D^2W(y)\geq \boldsymbol{\kappa}_W I_{d} \quad\text{whenever $y\in\R^d$ satisfies }{\rm dist}(y,\mathcal{Z})\leq \boldsymbol{\varrho}_W\,, 
\end{equation}
where the inequality above holds in the sense of quadratic forms.

  \begin{lemma}\label{estifond}
Let $R>0$ and $\eps>0$.   If $v_\eps\in H^1(B_R^+;\R^d,|z|^a\de \mathbf{x})\cap L^\infty(B^+_R)$ is a bounded weak solution of  \eqref{pfff} satisfying  $|v_\eps-{\bf a}|\leq \boldsymbol{\varrho}_W$  on ${D}_{R}$  for some  ${\bf a}\in\mathcal{Z}$, then 
\begin{equation}\label{estispeedconv}
|v_\eps-{\bf a}|\leq \boldsymbol{\kappa}^{-1}_W\|v_\eps-{\bf a}\|_{L^\infty(B_R^+)}\frac{\eps^{2s}}{R^{2s}}\quad \text{on $D_{R/2}$}\,. 
\end{equation}
In particular,
\begin{equation}\label{conseqestifond}
 \sqrt{W(v_\eps) }+ |\nabla W(v_\eps)| \leq C_W \|v_\eps-{\bf a}\|_{L^\infty(B_R^+)}\frac{\eps^{2s}}{R^{2s}}\quad \text{on $D_{R/2}$}\,, 
 \end{equation}
for a constant $C_W$ depending only on $W$. 
\end{lemma}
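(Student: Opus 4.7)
The approach is to reduce the vector-valued statement to a scalar comparison principle on the Caffarelli--Silvestre extension, comparing $\phi_\eps:=|v_\eps-{\bf a}|^2$ against an explicit supersolution of a linear Robin-type mixed boundary value problem. Setting $w_\eps:=v_\eps-{\bf a}$ and $M:=\|w_\eps\|_{L^\infty(B_R^+)}$, the hypothesis $|w_\eps|\leq\boldsymbol{\varrho}_W$ on $D_R$ combined with $\nabla W({\bf a})=0$ (since ${\bf a}\in\mathcal{Z}$ is a critical point of $W$) and the lower Hessian bound \eqref{lwbdhess} give via Taylor's formula
$$w_\eps\cdot\nabla W({\bf a}+w_\eps)=\int_0^1 w_\eps\cdot D^2W({\bf a}+tw_\eps)w_\eps\,\de t\geq\boldsymbol{\kappa}_W|w_\eps|^2\quad\text{on } D_R.$$
By Theorem~\ref{regint}, $w_\eps$ is smooth in $B_R^+$ and continuous up to $D_R$ together with $z^a\partial_z w_\eps$, so $\phi_\eps$ is classically differentiable and satisfies $\mathrm{div}(z^a\nabla\phi_\eps)=2z^a|\nabla w_\eps|^2\geq 0$ in $B_R^+$ along with
$$\boldsymbol{\delta}_s\boldsymbol{\partial}^{(2s)}_z\phi_\eps=\frac{2}{\eps^{2s}}w_\eps\cdot\nabla W({\bf a}+w_\eps)\geq\mu\,\phi_\eps\quad\text{on } D_R,\ \text{with }\mu:=\frac{2\boldsymbol{\kappa}_W}{\eps^{2s}}.$$
Hence $\phi_\eps$ is a subsolution of the linear mixed problem ``$L_s u=0$ in $B_R^+$, $\boldsymbol{\delta}_s\boldsymbol{\partial}^{(2s)}_z u=\mu u$ on $D_R$'' with large parameter $\mu\gg 1$.

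After rescaling to $R=1$, I construct an explicit nonnegative supersolution $\Psi\in C(\overline{B_1^+})$ of this linear problem of the form
$$\Psi(x,z)=\alpha+\beta z^{2s}+\gamma\,\Phi(x,z),$$
where $\Phi$ denotes the $L_s$-harmonic extension to $\R^{n+1}_+$ of a smooth nonnegative cutoff $\chi:\R^n\to[0,1]$ with $\chi\equiv 0$ on $D_{1/2}$ and $\chi\equiv 1$ outside $D_{3/4}$, and the constants $\alpha,\beta,\gamma\geq 0$ are tuned in terms of $\mu$ so that: (i) $-\mathrm{div}(z^a\nabla\Psi)\geq 0$ in $B_1^+$; (ii) $\boldsymbol{\delta}_s\boldsymbol{\partial}^{(2s)}_z\Psi\leq\mu\Psi$ on $D_1$; (iii) $\Psi\geq M^2$ on all of $\overline{\partial^+ B_1^+}$, including the equator $\{|x|=1,z=0\}$; (iv) $\Psi(x,0)\leq\boldsymbol{\kappa}_W^{-2}M^2\mu^{-2}$ on $D_{1/2}$. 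The functions $1$ and $z^{2s}$ both belong to $\ker L_s$ with $\boldsymbol{\partial}^{(2s)}_z 1=0$ and $\boldsymbol{\partial}^{(2s)}_z(z^{2s})=2s$; in addition $\Phi\equiv 0$ and $\boldsymbol{\partial}^{(2s)}_z\Phi\geq 0$ on $D_{1/2}$ by the Hopf lemma, while $\Phi\geq c_0(n,s)>0$ near the equator by $L_s$-positivity. Choosing $\alpha\sim M^2\mu^{-2}$, $\beta\sim M^2\mu^{-1}$, $\gamma\sim M^2$ then realizes (i)--(iv) in the regime $\mu\gg 1$.

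The maximum principle next yields $\phi_\eps\leq\Psi$ on $\overline{B_R^+}$. Indeed, $\Psi-\phi_\eps$ is a supersolution of $-\mathrm{div}(z^a\nabla\cdot)=0$ in $B_R^+$, is nonnegative on $\partial^+ B_R^+$ by (iii), and satisfies $\boldsymbol{\delta}_s\boldsymbol{\partial}^{(2s)}_z(\Psi-\phi_\eps)\leq\mu(\Psi-\phi_\eps)$ on $D_R$. If $\Psi-\phi_\eps$ were to attain a strictly negative minimum, it would necessarily do so at a point $\mathbf{x}_0\in D_R$, and the strong maximum principle of \cite[Corollary 2.3.10]{FKS} together with the Hopf boundary lemma of \cite[Proposition 4.11]{CS1} would yield $\boldsymbol{\partial}^{(2s)}_z(\Psi-\phi_\eps)(\mathbf{x}_0)>0$, contradicting the Robin-type inequality $\boldsymbol{\delta}_s\boldsymbol{\partial}^{(2s)}_z(\Psi-\phi_\eps)(\mathbf{x}_0)\leq\mu(\Psi-\phi_\eps)(\mathbf{x}_0)<0$. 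Undoing the rescaling and taking square roots of (iv) then gives $|w_\eps|\leq\boldsymbol{\kappa}_W^{-1}M\eps^{2s}/R^{2s}$ on $D_{R/2}$, which is \eqref{estispeedconv}. The second bound \eqref{conseqestifond} follows at once from a second-order Taylor expansion of $W$ and $\nabla W$ at ${\bf a}$: using $W({\bf a})=0=\nabla W({\bf a})$ and boundedness of $D^2W$ on a fixed neighbourhood of $\mathcal{Z}$, one obtains $\sqrt{W({\bf a}+w_\eps)}+|\nabla W({\bf a}+w_\eps)|\leq C_W|w_\eps|$ there.

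The principal difficulty is the quantitative barrier construction described above: one must simultaneously achieve the sharp $\mu^{-2}$ decay (iv) on $D_{1/2}$ and the uniform Robin inequality (ii) over all of $D_1$ (in particular in the transition zone $D_{3/4}\setminus D_{1/2}$ where $\boldsymbol{\partial}^{(2s)}_z\Phi$ is nonzero and couples $\gamma$ to $\beta$), while preserving the Dirichlet-type bound (iii) up to the singular equator where $\partial^+ B_R^+$ meets $\overline{D_R}$. An analogous construction in the scalar binary case $m=2$ is carried out in \cite{MilSirW}, whose strategy adapts here because the nonlinearity has been entirely absorbed into the linear lower bound $\mu\phi_\eps$ at the scalarization step.
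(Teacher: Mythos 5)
There is a genuine gap, and it is precisely the step you flag as ``the principal difficulty.'' Your scalarization via $\phi_\eps = |v_\eps-{\bf a}|^2$ and the resulting boundary inequality $\boldsymbol{\delta}_s\boldsymbol{\partial}^{(2s)}_z\phi_\eps\geq\mu\phi_\eps$ with $\mu = 2\boldsymbol{\kappa}_W/\eps^{2s}$ are correct, but the barrier you propose cannot achieve the decay rate (iv). With your ansatz $\Psi = \alpha+\beta z^{2s}+\gamma\Phi$, on $D_{1/2}$ one has $\Psi(x,0)=\alpha$ and
$\boldsymbol{\delta}_s\boldsymbol{\partial}^{(2s)}_z\Psi = 2s\boldsymbol{\delta}_s\beta + \gamma\boldsymbol{\delta}_s\boldsymbol{\partial}^{(2s)}_z\Phi$.
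The very Hopf-type positivity you invoke works against you here: since $\chi\geq 0$ is not identically zero and vanishes on $D_{1/2}$, one has $\boldsymbol{\delta}_s\boldsymbol{\partial}^{(2s)}_z\Phi = -(-\Delta)^s\chi \geq c_* > 0$ on $\overline{D_{1/2}}$, with $c_*$ a fixed constant independent of $\mu$. Hence constraint (ii) forces $\mu\alpha\geq\gamma c_*$, and since (iii) pins $\gamma\sim M^2$, this gives $\alpha\gtrsim M^2\mu^{-1}$, not $M^2\mu^{-2}$. With your choices $\alpha\sim M^2\mu^{-2}$, $\gamma\sim M^2$ the Robin inequality on $D_{1/2}$ reads $c_*M^2\lesssim M^2\mu^{-1}$, which fails for $\mu$ large. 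The best one can extract from this barrier is $\phi_\eps\leq\alpha\sim M^2\mu^{-1}$ on $D_{1/2}$, i.e.\ $|v_\eps-{\bf a}|\lesssim M\,(\eps/R)^{s}$, losing a factor of $\eps^s$ compared with the statement. This loss is intrinsic to squaring: a scalar Robin comparison yields $O(\mu^{-1})$ decay of the nonnegative subsolution, and taking a square root then costs you half the exponent.

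The paper avoids this by never squaring. It applies the convex-smoothing trick to the modulus itself: set $w_\eta:=\psi_\eta(v_\eps-{\bf a})$ with $\psi_\eta(y)=\sqrt{|y|^2+\eta^2}-\eta$. Convexity of $\psi_\eta$ gives $\mathrm{div}(z^a\nabla w_\eta)\geq 0$, and the Hessian bound \eqref{lwbdhess} (in its monotonicity form $(y-z)\cdot(\nabla W(y)-\nabla W(z))\geq\boldsymbol{\kappa}_W|y-z|^2$) gives $\boldsymbol{\delta}_s\boldsymbol{\partial}^{(2s)}_z w_\eta\geq\frac{\boldsymbol{\kappa}_W}{\eps^{2s}}w_\eta$ on $D_R$, a Robin constant of $\boldsymbol{\kappa}_W/\eps^{2s}$. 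Since $w_\eta\approx|v_\eps-{\bf a}|$ is \emph{linear} in the quantity to estimate, the $O(\mu^{-1})$ decay from the scalar comparison result \cite[Lemma 3.5]{TVZ} directly yields $|v_\eps-{\bf a}|\lesssim\boldsymbol{\kappa}_W^{-1}\|v_\eps-{\bf a}\|_{L^\infty(B_R^+)}(\eps/R)^{2s}$ on $D_{R/2}$ after $\eta\to 0$ --- exactly \eqref{estispeedconv}. In short: you should compare $|v_\eps-{\bf a}|$, not $|v_\eps-{\bf a}|^2$; and rather than reconstructing the Robin barrier from scratch, cite the scalar comparison lemma, whose decay rate is $\mu^{-1}$ and cannot be improved to $\mu^{-2}$ by the construction you outline.
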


\begin{proof}
By rescaling equation  \eqref{pfff}, it is enough to consider the case $R=1$. First, we notice that \eqref{conseqestifond} follows from \eqref{estispeedconv} expanding $W$ near the point ${\bf a}$. 
To prove \eqref{estispeedconv}, we proceed as follows. Fix an arbitrary parameter $\eta\in(0,1)$, and consider the nonnegative smooth convex function $\psi_\eta:\R^d\to\R$ given by 
$$\psi_\eta(y):=\sqrt{|y|^2+\eta^2}-\eta\,. $$  
Set $w_\eta:=\psi_\eta(v_\eps-{\bf a})\in H^1(B^+_{R},|z|^a\de{\bf x})\cap L^\infty(B^+_{R})$, and we observe that $w_\eta$ satisfies in the weak sense
$$ \begin{cases}
\displaystyle{\rm div}(z^{a}\nabla w_\eta)= z^a\sum_{j=1}^{n+1}(D^2\psi_\eta(v_\eps-{\bf a})\partial_jv_\eps)\cdot \partial_j v_\eps 
& \text{in $B_{R}^+$}\,,\\[12pt]
\displaystyle \boldsymbol{\delta}_s\boldsymbol{\partial}_z^{(2s)} w_\eta=\frac{1}{\varepsilon^{2s}}\nabla\psi_\eta(v_\eps-{\bf a})\cdot\nabla W(v_\eps) & \text{on $D_{R}$}\,.
\end{cases}$$
On the other hand, \eqref{lwbdhess} implies that 
$$(y-z)\cdot\big(\nabla W(y)-\nabla W(z)\big)\geq  \boldsymbol{\kappa}_W|y-z|^2\quad\forall y,z\in \overline B({\bf a},\boldsymbol{\varrho}_W)\,.$$
 Consequently, 
$$\nabla\psi_\eta(v_\eps-{\bf a})\cdot\nabla W(v_\eps) =  \frac{(v_\eps-{\bf a})\cdot\big(\nabla W(v_\eps)-\nabla W({\bf a})\big)}{\sqrt{|v_\eps-{\bf a}|^2+\eta^2}}\geq  \boldsymbol{\kappa}_W \frac{|v_\eps-{\bf a}|^2}{\sqrt{|v_\eps-{\bf a}|^2+\eta^2}} \geq  \boldsymbol{\kappa}_W w_\eta\quad\text{on $D_R$}\,.$$
Therefore $w_\eta$ satisfies 
$$ \begin{cases}
{\rm div}(z^{a}\nabla w_\eta)\geq 0 & \text{in $B_{R}^+$}\,,\\[5pt]
\displaystyle \boldsymbol{\delta}_s\boldsymbol{\partial}_z^{(2s)} w_\eta\geq \frac{\boldsymbol{\kappa}_W}{\varepsilon_k^{2s}}\,w_\eta  & \text{on $D_{R}$}\,.
\end{cases}$$
By \cite[Lemma 3.5]{TVZ} it implies that 
$$\|w_\eta\|_{L^\infty(D_{R/2})}\leq \frac{\eps_k^{2s}}{\boldsymbol{\kappa}_W} \|w_\eta\|_{L^\infty(B^+_{R})} \leq \frac{\eps_k^{2s}}{\boldsymbol{\kappa}_W} \sqrt{\|v_k-{\bf a}\|_{L^\infty(B_R^+)}^2+\eta^2}\,. $$
Letting $\eta\to 0$, we deduce that \eqref{estispeedconv} holds.
\end{proof}

For a fixed ${\bf a}\in\mathcal{Z}$, we consider the modified potential $\widetilde W_{\bf a}:\R^n\to[0,+\infty[$ defined by 
\begin{equation}\label{defmodifpotconv}
\widetilde W_{{\bf a}}(x) := \sup_{y\in \overline B({\bf a},\boldsymbol{\varrho}_W)}\Big\{ W(y)+(x-y)\cdot \nabla W(y)\Big\}\,.
\end{equation}
By construction $\widetilde W_{\bf a}$ is Lipschitz continuous, $ \widetilde W_{\bf a}$  is convex as a supremum over a family of affine functions, and $\widetilde W_{{\bf a}}(x)=W(x)$ for all $x\in \overline B({\bf a},\boldsymbol{\varrho}_W)$ thanks to \eqref{lwbdhess}.

\begin{lemma}\label{minimalitynew}
Let $R>0$ and $\eps>0$.   If $v_\eps\in H^1(B_R^+;\R^d,|z|^a\de \mathbf{x})\cap L^\infty(B^+_R)$ is a weak solution of  \eqref{pfff} satisfying  $|v_\eps-{\bf a}|\leq \boldsymbol{\varrho}_W$  on ${D}_{R}$  for some  ${\bf a}\in\mathcal{Z}$, then 
$${\bf E}_s(v_\eps,B_R^+)+\frac{1}{\varepsilon^{2s}}\int_{D_{R}}\widetilde W_{\bf a}(v_\varepsilon)\,\de x
\leq {\bf E}_s(w,B_R^+)+\frac{1}{\varepsilon^{2s}}\int_{D_{R}}\widetilde W_{\bf a}(w)\,\de x \,,$$
for every $w\in H^1(B^+_R;\R^d,|z|^a\de\mathbf{x})\cap L^p(D_R)$ such that $w-v_\varepsilon$ is compactly supported in $B^+_{R}\cup D_{R}$. 
\end{lemma}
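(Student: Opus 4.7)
\medskip

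\noindent\textbf{Proof plan.} The key insight is that the modified functional
$$F(w):=\mathbf{E}_s(w,B_R^+)+\frac{1}{\varepsilon^{2s}}\int_{D_R}\widetilde W_{\bf a}(w)\,\de x$$
is \emph{convex} on the natural affine space of competitors (since $\mathbf{E}_s$ is quadratic and $\widetilde W_{\bf a}$ is convex by construction), so the whole statement reduces to verifying that $v_\eps$ is a critical point of $F$. I would then conclude by the standard fact that a critical point of a convex functional is a minimizer — the information that $v_\eps$ lies in $\overline B({\bf a},\boldsymbol{\varrho}_W)$ on $D_R$ is exactly what is needed to identify the Euler--Lagrange equation of $F$ at $v_\eps$ with the Euler--Lagrange equation \eqref{pfff} that $v_\eps$ solves.

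I would first record two elementary facts about $\widetilde W_{\bf a}$ stemming from \eqref{lwbdhess}: since $D^2W\geq \boldsymbol{\kappa}_WI_d$ on $\overline B({\bf a},\boldsymbol{\varrho}_W)$ and $\nabla W({\bf a})=0$, the function $W$ is convex on this ball; consequently, for every $y\in \overline B({\bf a},\boldsymbol{\varrho}_W)$ the supremum in \eqref{defmodifpotconv} is attained at $z=y$, so that $\widetilde W_{\bf a}(y)=W(y)$, and for every $y\in\R^d$ and every $z_0\in \overline B({\bf a},\boldsymbol{\varrho}_W)$ the affine minorant inherent to the definition gives
$$\widetilde W_{\bf a}(y)\geq W(z_0)+(y-z_0)\cdot\nabla W(z_0).$$
Applying this inequality pointwise with $z_0=v_\eps(x)\in \overline B({\bf a},\boldsymbol{\varrho}_W)$ and $y=w(x)$, and integrating over $D_R$, yields
$$\int_{D_R}\widetilde W_{\bf a}(w)\,\de x\geq \int_{D_R}\widetilde W_{\bf a}(v_\eps)\,\de x+\int_{D_R}\nabla W(v_\eps)\cdot(w-v_\eps)\,\de x.$$

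Next, since $\mathbf{E}_s(\cdot,B_R^+)$ is a nonnegative quadratic form, writing $\phi:=w-v_\eps$ and expanding produces the convexity inequality
$$\mathbf{E}_s(w,B_R^+)\geq \mathbf{E}_s(v_\eps,B_R^+)+\boldsymbol{\delta}_s\int_{B_R^+}z^a\nabla v_\eps\cdot\nabla\phi\,\de\mathbf{x}.$$
Summing the last two displays multiplied by the appropriate factors gives
$$F(w)\geq F(v_\eps)+\boldsymbol{\delta}_s\int_{B_R^+}z^a\nabla v_\eps\cdot\nabla\phi\,\de\mathbf{x}+\frac{1}{\varepsilon^{2s}}\int_{D_R}\nabla W(v_\eps)\cdot\phi\,\de x.$$
Since by hypothesis $\phi=w-v_\eps\in H^1(B_R^+;\R^d,|z|^a\de\mathbf{x})\cap L^p(D_R)$ is compactly supported in $B_R^+\cup D_R$, it is an admissible test function in the weak formulation \eqref{defcritpointEexten} of \eqref{pfff}, so the sum of the last two terms vanishes, which yields $F(w)\geq F(v_\eps)$ and closes the argument.

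The only subtle point — the ``obstacle'' to keep an eye on — is to ensure that the subgradient of the non-smooth convex function $\widetilde W_{\bf a}$ at the point $v_\eps(x)$ contains $\nabla W(v_\eps(x))$, so that the critical-point condition for $F$ matches the equation actually satisfied by $v_\eps$; this is precisely where the hypothesis $|v_\eps-{\bf a}|\leq \boldsymbol{\varrho}_W$ on $D_R$ is used (it is what guarantees that the supremum in \eqref{defmodifpotconv} is attained at $z=v_\eps(x)$). All integrability issues are harmless: $\widetilde W_{\bf a}$ is globally Lipschitz with at most linear growth (since $W$ and $\nabla W$ are bounded on $\overline B({\bf a},\boldsymbol{\varrho}_W)$), while $w\in L^p(D_R)$ and $\nabla W(v_\eps)\in L^\infty(D_R)$ by the $L^\infty$-bound on $v_\eps$ coming from the hypothesis.
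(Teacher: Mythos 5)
Your proposal is correct and follows essentially the same path as the paper's proof: expand the quadratic form $\mathbf{E}_s$ to get the linearized inequality, use the definition of $\widetilde W_{\bf a}$ as a supremum of affine minorants (together with $|v_\eps-{\bf a}|\leq\boldsymbol{\varrho}_W$ on $D_R$, which pins $\widetilde W_{\bf a}(v_\eps)=W(v_\eps)$) to get the corresponding inequality for the potential term, and then kill the first-order terms by invoking the weak formulation \eqref{defcritpointEexten} with test function $\phi=w-v_\eps$. The framing in terms of convexity of the composite functional $F$ is a clean way to say the same thing, and your extra remarks (subgradient interpretation, integrability checks) are correct but not needed beyond what the paper records.
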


\begin{proof}
Let $w\in H^1(B^+_R,|z|^a\de\mathbf{x})\cap L^p(D_R)$ such that $w-v_\varepsilon$ is compactly supported in $B^+_{R}\cup D_{R}$. Setting $\phi:=w-v_\varepsilon$, then $\phi$ is compactly supported in $B^+_{R}\cup D_{R}$, and since $w=v_\varepsilon+\phi$, 
$${\bf E}_s(w,B_R^+) \geq {\bf E}_s(v_\varepsilon,B_R^+)+ \boldsymbol{\delta}_s \int_{B_R^+} z^{a}\, \nabla v_\varepsilon \cdot\nabla\phi\,\de \mathbf{x}\,.$$
On the other hand, $|v_\eps-{\bf a}|\leq \boldsymbol{\varrho}_W$  on ${D}_{R}$, so that the very definition of $\widetilde W_{{\bf a}}$ implies that 
$$\int_{D_{R}}\widetilde W_{\bf a}(w)\,\de x \geq \int_{D_{R}} W(v_\varepsilon)\,\de x+\int_{D_R}\nabla W(v_\varepsilon)\cdot \phi\,\de x=  \int_{D_{R}} \widetilde W_{\bf a}(v_\varepsilon)\,\de x+\int_{D_R}\nabla W(v_\varepsilon)\cdot \phi\,\de x\,.$$
Combining the two inequalities above and the fact that $v_\varepsilon$ satisfies \eqref{defcritpointEexten} (with $G=B_R^+$), we obtain the announced minimality. 
\end{proof}

 %%%%%%%%%%%%%%%%%%%%%%%%%%%%%%%%%%%%%%%%%%%%%%%%%%%%%%%
 %%%%%%%%%%%%%%%%%%%%%%%%%%%%%%%%%%%%%%%%%%%%%%%%%%%%%%%
 														 
\section{Asymptotics for the fractional Allen-Cahn equation}\label{FGLasymp}   		 
														 
%%%%%%%%%%%%%%%%%%%%%%%%%%%%%%%%%%%%%%%%%%%%%%%%%%%%%%%%
 %%%%%%%%%%%%%%%%%%%%%%%%%%%%%%%%%%%%%%%%%%%%%%%%%%%%%%%%
 
 In this section, our main goal is to prove a first part of Theorems \ref{main1new} and \ref{main2new}, the complete proofs of Theorems \ref{main1new} and \ref{main2new} being a combination of 
 Theorem \ref{main1part1}, Remark \ref{smoothbdrycondrem}, Theorem \ref{main1part1mincase}, and Theorem~\ref{main1part1mincase2} below, together with additional estimates that we shall obtain in Theorem \ref{thmlastone} from Section \ref{imprsect}. As a matter of facts, Theorems  \ref{main1part1} and \ref{main1part1mincase} deal with a more general case since no specific exterior boundary condition is required, and only an a priori $L^\infty$-bound on the solutions is needed (in replacement). 
 \vskip3pt
 
 We start with the case of general critical points.

 \begin{theorem}\label{main1part1}
 Let $\Omega\subset\R^n$ be a smooth bounded open set, and $\eps_k\downarrow 0$ a given sequence. 
 Let $\{u_k\}_{k\in\N}\subset \widehat H^s(\Omega;\R^d)\cap L^p(\Omega)$ be a sequence such that $u_k$ weakly solves
 \begin{equation}\label{fracalllcahnf}
 \displaystyle  (-\Delta)^{s} u_k+\frac{1}{\varepsilon_k^{2s}}\nabla W(u_k) =0 \quad \text{in $\Omega$}\,,
\end{equation}
Assume that $\sup_k\|u_k\|_{L^\infty(\R^n)}<\infty$ and that $u_k(x)$ converges to an element of $\mathcal{Z}$ for a.e. $x\in\R^n\setminus\Omega$ as $k\to\infty$.   If $\sup_k \mathcal{E}_{s,\varepsilon_k}(u_k,\Omega)<\infty$, then there exist a (not relabeled) subsequence,  $u_*\in \widehat H^s(\Omega;\mathcal{Z})$ and a partition $\mathfrak{E}=(E^*_1,\ldots,E^*_m)\in\mathscr{A}_m(\Omega)$ 
such that 
\begin{equation}\label{ustarpartcond}
u_*=\sum_{j=1}^m\chi_{E_j^*}{\bf a}_j\quad \text{a.e. in $\R^n$,}
\end{equation}
$u_k\to u_*$ strongly in $H^s_{\rm loc}(\Omega)$ and $L^2_{\rm loc}(\R^n)$. Moreover, each $E^*_j\cap \Omega$ is an open set, $P_{2s}(E_j^*,\Omega)<\infty$, and setting $\boldsymbol{\sigma}:=(|{\bf a}_i-{\bf a}_j|^2)_{i,j}\in\mathscr{S}^2_m$, we have 
 \begin{equation}\label{meancurveq}
\delta \mathscr{P}^{\boldsymbol{\sigma}}_{2s}(\mathfrak{E}^*,\Omega)[X]=0 \quad\text{for every $X\in C_c^1(\Omega;\R^n)$}\,.
 \end{equation}
In addition, for every smooth open subset $\Omega'\subset\Omega$ such that $\overline{\Omega'}\subset\Omega$, 
 \begin{itemize}[leftmargin=22pt]
\item[ \rm  (i)] $ \mathcal{E}_s(u_k,\Omega') \to \mathcal{E}_{s}(u_*,\Omega')=\frac{\gamma_{n,s}}{2}\mathscr{P}^{\boldsymbol{\sigma}}_{2s}(\mathfrak{E}^*,\Omega^\prime)$; 
\vskip5pt

\item[ \rm  (ii)] $\displaystyle\frac{1}{\varepsilon_k^{2s}}W(u_k)\to 0$ in $L^1(\Omega^\prime)$; 
\vskip5pt

\item[\rm  (iii)] $\displaystyle \frac{1}{\varepsilon_k^{2s}}\nabla W(u_k)\to - \sum_{j=1}^m V_{E^*_j}{\bf a}_j$ strongly in  $H^{-s}(\Omega^\prime)$ where  each potential $V_{E^*_j}$ belongs to  $L^1(\Omega)$ and  given by
$$V_{E^*_j}(x):=\Big(\gamma_{n,s}\int_{\R^n}\frac{|\chi_{E^*_j}(x)-\chi_{E^*_j}(y)|^2}{|x-y|^{n+2s}}\,\de y\Big) \big(2\chi_{E^*_j}(x)-1\big) \,.$$
\vskip5pt 

\noindent Setting $\partial\mathfrak{E}^*:=\bigcup_j\partial E_j^*$, 

\item[\rm  (iv)] $u_k\to u_*$ in $C^{1,\alpha}_{\rm loc}(\Omega\setminus\partial\mathfrak{E}^*)$;
\vskip5pt

\item[\rm  (v)] $\|u_k-u_*\|_{L^\infty(K)}\leq C_K\eps_k^{2s}$ for every compact set $K\subset \Omega\setminus\partial\mathfrak{E}^*$; 
\vskip5pt 

\item[\rm  (vi)] there exists $t_W>0$ (depending only on $W$) such that the superlevel set $L_k^t:=\{{\rm dist}(u_k,\mathcal{Z})\geq t\}$ converges locally uniformly  in $\Omega$ to $\partial\mathfrak{E}^*$ for each $t\in(0,t_W)$. 
\end{itemize}
 \end{theorem}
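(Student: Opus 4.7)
The proof follows the strategy initiated in \cite{MilSirW} for the scalar binary case, adapted to the vectorial multi-well setting by exploiting the convexification of $W$ near each well. Throughout, the backbone is the Caffarelli-Silvestre extension $u_k^\e$ on $\R^{n+1}_+$ together with the monotonicity formula (Corollary \ref{monotformACeq}), the clearing out Lemma \ref{clear2new}, the fundamental estimate Lemma \ref{estifond}, and the local minimality Lemma \ref{minimalitynew}.

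\textbf{Step 1 (extraction and identification of the limit).} The uniform $L^\infty$ bound and the uniform energy bound yield, upon passing to a subsequence, $u_k \rightharpoonup u_*$ weakly in $H^s_{\rm loc}(\Omega)$, and $u_k \to u_*$ in $L^2_{\rm loc}(\R^n)$ by Rellich in $\Omega$ and dominated convergence outside. Since $\int_\Omega W(u_k)\,\de x \leq \varepsilon_k^{2s}\mathcal{E}_{s,\varepsilon_k}(u_k,\Omega)\to 0$, Fatou and nonnegativity of $W$ give $W(u_*)=0$ a.e., so $u_* \in \mathcal{Z}$ a.e.\ on $\R^n$. Writing $E_j^*:=\{u_*={\bf a}_j\}$ produces \eqref{ustarpartcond}, and the polarization identity $|u_*(x)-u_*(y)|^2=\sum_{i,j}\sigma_{ij}\chi_{E_i^*}(x)\chi_{E_j^*}(y)$ with $\sigma_{ij}=|{\bf a}_i-{\bf a}_j|^2$ gives directly
\begin{equation*}
\mathcal{E}_s(u_*,\Omega') = \tfrac{\gamma_{n,s}}{2}\,\mathscr{P}^{\boldsymbol{\sigma}}_{2s}(\mathfrak{E}^*,\Omega').
\end{equation*}
Weak lower semicontinuity of $\mathcal{E}_s$ bounds this limit quantity by $\liminf_k \mathcal{E}_s(u_k,\Omega)$, hence $\mathfrak{E}^*\in\mathscr{A}_m(\Omega)$ and $P_{2s}(E_j^*,\Omega)<\infty$.

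\textbf{Step 2 (singular set and convergence away from it).} Set $b:=\sup_k\|u_k\|_{L^\infty(\R^n)}$ and define the singular set
\begin{equation*}
\Sigma:=\Big\{x_0\in\overline\Omega : \liminf_{k\to\infty}\boldsymbol{\Theta}_{s,\varepsilon_k}\big(u_k^\e,x_0,r\big) \geq \boldsymbol{\eta}_b(\boldsymbol{\varrho}_W) \text{ for every } r\in(0,\mathrm{dist}(x_0,\partial\Omega))\Big\}.
\end{equation*}
A standard Vitali covering argument based on the monotonicity formula and the uniform energy bound gives that $\Sigma$ has locally finite $\mathcal{H}^{n-2s}$-measure in $\Omega$, in particular empty interior. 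Fix $x_0\in\Omega\setminus\Sigma$ and $r>0$ with $B_{2r}^+(\mathbf{x}_0)\subset \R^{n+1}_+\cup\Omega$ and $\boldsymbol{\Theta}_{s,\varepsilon_{k_\ell}}(u_{k_\ell}^\e,x_0,r)<\boldsymbol{\eta}_b(\boldsymbol{\varrho}_W)$ along some subsequence. Lemma \ref{clear2new} (with $\delta=\boldsymbol{\varrho}_W$) produces ${\bf a}={\bf a}(x_0)\in\mathcal{Z}$ with $|u_{k_\ell}-{\bf a}|\leq\boldsymbol{\varrho}_W$ on $D_{r/2}(x_0)$; by connectedness this well is unique, so $u_*={\bf a}$ on $D_{r/2}(x_0)$ and $x_0\in\mathrm{int}(E_{j_0}^*)$ for the matching $j_0$. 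Lemma \ref{estifond} then yields $\|u_{k_\ell}-{\bf a}\|_{L^\infty(D_{r/4}(x_0))}\leq C\varepsilon_{k_\ell}^{2s}$, and bootstrapping with the Schauder-type estimates of Theorem \ref{regint} upgrades this to $C^{1,\alpha}_{\rm loc}$ convergence. This proves that each $E_j^*\cap\Omega$ is essentially open, $\partial\mathfrak{E}^*\cap\Omega\subset\Sigma$, and items (iv)--(vi).

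\textbf{Step 3 (strong $H^s$-convergence and energy identification).} This is the main technical step. For $\Omega'\Subset\Omega$, split into the bad region $\Omega'\cap\mathscr{T}_\rho(\Sigma)$ and the good region $\Omega'\setminus\mathscr{T}_\rho(\Sigma)$. On the good region, Step 2 gives $C^{1,\alpha}$ convergence and $W(u_k)/\varepsilon_k^{2s}=O(\varepsilon_k^{2s})$ pointwise (by the second estimate of Lemma \ref{estifond}), hence the contribution to $\mathcal{E}_s$ and to $\int W(u_k)/\varepsilon_k^{2s}$ converges and vanishes, respectively. For the bad region, one combines two ingredients. First, Lemma \ref{minimalitynew} shows that on each half-ball where clearing out applies, $u_k^\e$ is a minimizer of the \emph{convex} modified functional built with $\widetilde W_{\bf a}$; this quasi-minimality, transported to the $u_k$ side via the extension, allows one to compare $\mathcal{E}_{s,\varepsilon_k}(u_k,\cdot)$ with competitors built by freezing $u_k$ equal to $u_*$ on a thickened tubular neighborhood, producing a lim-sup inequality $\limsup_k\mathcal{E}_s(u_k,\Omega')\leq \mathcal{E}_s(u_*,\Omega')$. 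Second, $\mathscr{P}^{\boldsymbol{\sigma}}_{2s}(\mathfrak{E}^*,\cdot)$ is absolutely continuous with respect to the volume as a measure on $\Omega$, so letting $\rho\downarrow 0$ after sending $k\to\infty$ recovers the full energy in the limit. Combining with the weak lower-semicontinuity bound of Step 1 gives $\mathcal{E}_s(u_k,\Omega')\to\mathcal{E}_s(u_*,\Omega')$, hence strong $H^s_{\rm loc}$-convergence by the parallelogram identity in the Hilbert space $H^s$, and $\varepsilon_k^{-2s}W(u_k)\to 0$ in $L^1_{\rm loc}(\Omega)$.

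\textbf{Step 4 (stationarity of the partition and Euler-Lagrange limit).} Combining Corollary~\ref{statAC} applied to $u_k^\e$ with the extension identity \eqref{identfirstvarfracext}, every $u_k$ satisfies
\begin{equation*}
\delta\mathcal{E}_s(u_k,\Omega)[X] + \frac{1}{\varepsilon_k^{2s}}\int_\Omega W(u_k)\,\mathrm{div}\,X\,\de x = 0, \qquad X\in C^1_c(\Omega;\R^n).
\end{equation*}
The potential term vanishes by Step 3(ii). For the Dirichlet term, the explicit bilinear expression of $\delta\mathcal{E}_s(u,\Omega)[X]$ has a kernel bounded by $C\|X\|_{C^1}|x-y|^{-n-2s}$, so strong $H^s_{\rm loc}$-convergence and weak convergence in $\R^n\setminus\Omega$ pass to the limit, giving $\delta\mathcal{E}_s(u_*,\Omega)[X]=0$; together with the identification of Step 1 applied to the deformed partition $\phi_t(\mathfrak{E}^*)$, this is precisely \eqref{meancurveq}. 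For item (iii), rewrite the Allen-Cahn equation as $\varepsilon_k^{-2s}\nabla W(u_k)=-(-\Delta)^s u_k$; strong $H^s_{\rm loc}$-convergence gives convergence of the right-hand side in $H^{-s}(\Omega')$, and a direct computation for $u_*=\sum_j\chi_{E_j^*}{\bf a}_j$ identifies $(-\Delta)^s u_*=\sum_j V_{E_j^*}{\bf a}_j$ with $V_{E_j^*}\in L^1(\Omega')$ since $P_{2s}(E_j^*,\Omega)<\infty$.

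\textbf{Main obstacle.} The delicate point is Step 3: the energy convergence across the potentially singular boundary $\partial\mathfrak{E}^*$. The difficulty is that in general critical points may develop defect measures supported on $\partial\mathfrak{E}^*$, so a mere weak compactness plus lower semicontinuity is insufficient. The route to a matching lim-sup goes through Lemma \ref{minimalitynew}, which converts local closeness to a single well (itself obtained from clearing out) into a \emph{local minimality for a convexified problem}; without such a reduction one cannot upgrade the convergence to strong, and without strong convergence the passage to the limit in the inner variation identity (hence the stationarity of the partition) would fail.
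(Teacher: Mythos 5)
Your Steps 1, 2 and 4 track the paper's structure faithfully, and the identification $\mathcal{E}_s(u_*,\Omega')=\tfrac{\gamma_{n,s}}{2}\mathscr{P}^{\boldsymbol{\sigma}}_{2s}(\mathfrak{E}^*,\Omega')$ via polarization, the use of the Lipschitz projections ${\rm p}_j$, and the passage to the limit in the inner variation identity through \eqref{identfirstvarfracext} are all correct and essentially what the paper does. The gap is in Step 3, which you rightly flag as the delicate point but whose resolution you get wrong.

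Your proposed route to the matching limsup fails for two reasons. First, Lemma \ref{minimalitynew} gives local minimality of the convexified problem \emph{only} on balls where clearing-out already applies, i.e.\ where the rescaled energy density is below $\boldsymbol{\eta}_b(\boldsymbol{\varrho}_W)$ — that is, in the ``good region'' away from $\Sigma$. It therefore says nothing about the energy of $u_k$ in a tubular neighborhood $\mathscr{T}_\rho(\Sigma)$, which is precisely where concentration can occur. One cannot ``transport'' this minimality to build a competitor that is frozen near $\Sigma$: the comparison would have to be made on a region containing part of $\Sigma$, where the hypothesis $|v_\eps-{\bf a}|\leq\boldsymbol{\varrho}_W$ is unavailable. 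Second, the ``absolute continuity of $\mathscr{P}^{\boldsymbol{\sigma}}_{2s}(\mathfrak{E}^*,\cdot)$ with respect to volume'' controls the \emph{limit} energy $\mathcal{E}_s(u_*,\mathscr{T}_\rho(\Sigma))$ as $\rho\downarrow 0$, but not $\limsup_k\mathcal{E}_s(u_k,\mathscr{T}_\rho(\Sigma))$. Ruling out a defect measure supported on $\Sigma$ requires more than that. The paper's actual mechanism is entirely different: after establishing (via the $\varepsilon$-regularity Proposition \ref{epsregnew}, which is where Lemma \ref{minimalitynew} enters) that the defect measure $\mu_{\rm sing}$ of the extensions is supported on $\Sigma$ and is absolutely continuous with respect to $\mathscr{H}^{n-2s}\LL\Sigma$, one uses the monotonicity formula to show that $\Theta^{n-2s}(\mu_{\rm sing},\cdot)$ exists and lies in $(0,\infty)$ on a set of positive $\mathscr{H}^{n-2s}$-measure; Marstrand's theorem then forces $n-2s\in\mathbb{Z}$, a contradiction since $s\in(0,1/2)$, hence $\mu_{\rm sing}\equiv 0$. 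This is the step your proposal is missing, and no combination of quasi-minimality on good balls plus a volume estimate on $\Sigma$ substitutes for it.
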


\begin{remark}[\bf Smooth Dirichlet condition]\label{smoothbdrycondrem}
If \eqref{fracalllcahnf} supplemented with a smooth and uniformly bounded Dirichlet condition, then the condition $\sup_k\|u_k\|_{L^\infty(\R^n)}<\infty$ is satisfied. Indeed, consider  $\{g_k\}_{k\in\N}\subset C^{0,1}_{\rm loc}(\R^n;\R^d)$    such that  $\sup_k\|g_k\|_{L^\infty(\R^n\setminus\Omega)}<\infty$. If, in addition to \eqref{fracalllcahnf}, $u_k$ satisfies the Dirichlet condition $u_k=g_k$ in $\R^n\setminus\Omega$, then Corollary \ref{modless1} applies. 
\end{remark}

Then we consider the case of minimizers in sense of Definition \ref{defminimizerAC}. 
 
 \begin{theorem}\label{main1part1mincase}
 In addition to Theorem \ref{main1part1}, if $u_k$ is assumed to be $\mathcal{E}_{s,\eps_k}$-minimizing in $\Omega$, then $u_*$ is $\mathcal{E}_{s}$-minimizing in $\Omega$ among $\mathcal{Z}$-valued maps, that is $\mathcal{E}_{s}(u_*,\Omega)\leq  \mathcal{E}_{s}(u,\Omega)$ 
 for every $u\in \widehat H^s(\Omega;\mathcal{Z})$ such that $u-u_*$ is compactly supported in $\Omega$. In other words, the partition $\mathfrak{E}^*$  is 
 $\mathscr{P}^{\boldsymbol{\sigma}}_{2s}$-minimizing in $\Omega$ in the sense of Definition \ref{defminpart}. 
 \end{theorem}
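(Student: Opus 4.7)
The plan is the classical ``minimizers pass to minimizers'' scheme: given a $\mathcal{Z}$-valued competitor $u$ with $u-u_*$ compactly supported in $\Omega$, construct a recovery competitor $w_k$ for $u_k$ whose energy tends to $\mathcal{E}_s(u,\Omega)$, then combine the minimality inequality $\mathcal{E}_{s,\eps_k}(u_k,\Omega)\leq\mathcal{E}_{s,\eps_k}(w_k,\Omega)$ with a Fatou-type lower semicontinuity along $u_k\to u_*$. As a preliminary, the minimality of $u_k$ (Definition~\ref{defminimizerAC}) extends from perturbations compactly supported in $\Omega$ to perturbations in $H^{s}_{00}(\Omega;\R^d)\cap L^p(\Omega)$ by density and continuity of $\mathcal{E}_{s,\eps_k}$: this is exactly the density argument already invoked just after Definition~\ref{defminimizerAC} to derive the weak Euler--Lagrange formulation from pointwise minimality, via \eqref{densitysmoothH1/200} and the growth bound (H3).

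Given $u$ as above, I set
$$w_k := u\,\chi_{\Omega}+u_k\,\chi_{\R^n\setminus\Omega}.$$
The map $w_k-u_k=(u-u_k)\chi_\Omega$ vanishes outside $\Omega$; thanks to the $L^\infty$-bounds on $u$ and $u_k$, the finiteness of $\mathcal{E}_s(u,\Omega)$ and $\mathcal{E}_s(u_k,\Omega)$, and the characterization of $H^{s}_{00}(\Omega;\R^d)$ as zero-extensions of $H^s(\Omega;\R^d)$-maps (valid for smooth $\Omega$ and $s<1/2$, recalled in Section~\ref{secHs}), it belongs to $H^{s}_{00}(\Omega;\R^d)\cap L^p(\Omega)$. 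The extended minimality therefore yields $\mathcal{E}_{s,\eps_k}(u_k,\Omega)\leq\mathcal{E}_{s,\eps_k}(w_k,\Omega)$.

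Since $u$ takes values in $\mathcal{Z}=\{W=0\}$, the potential part of $\mathcal{E}_{s,\eps_k}(w_k,\Omega)$ vanishes identically; since $u=u_*$ on $\R^n\setminus\Omega$ (recall $u_*=g$ there under the hypothesis of Theorem~\ref{main1part1}), the fractional part expands as
\begin{equation*}
\mathcal{E}_s(w_k,\Omega)-\mathcal{E}_s(u,\Omega)=\frac{\gamma_{n,s}}{2}\iint_{\Omega\times(\R^n\setminus\Omega)}\frac{|u(x)-u_k(y)|^2-|u(x)-u_*(y)|^2}{|x-y|^{n+2s}}\,\de x\,\de y.
\end{equation*}
The integrand is pointwise bounded by $C\,|u_k(y)-u_*(y)|$ thanks to the uniform $L^\infty$-bounds, so by Fubini the estimate reduces to $\int_{\R^n\setminus\Omega}|u_k-u_*|(y)\,\rho_\Omega(y)\,\de y$ with $\rho_\Omega(y):=\int_\Omega|x-y|^{-n-2s}\,\de x$. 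Since $s<1/2$ and $\partial\Omega$ is smooth, $\rho_\Omega\in L^1(\R^n\setminus\Omega)$: near $\partial\Omega$ it scales as $\mathrm{dist}(y,\partial\Omega)^{-2s}$ (integrable because $2s<1$), and it decays like $|y|^{-n-2s}$ at infinity. Dominated convergence combined with $u_k\to u_*$ a.e.\ on $\R^n\setminus\Omega$ (by hypothesis) then gives $\mathcal{E}_{s,\eps_k}(w_k,\Omega)\to\mathcal{E}_s(u,\Omega)$.

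To conclude, Fatou's lemma applied to the nonnegative integrand defining $\mathcal{E}_s$, together with a.e.\ convergence $u_k\to u_*$ on $\R^n$ (extracted from the $L^2_{\rm loc}$-convergence in Theorem~\ref{main1part1} up to a further subsequence), delivers
$$\mathcal{E}_s(u_*,\Omega)\leq\liminf_k\mathcal{E}_s(u_k,\Omega)\leq\liminf_k\mathcal{E}_{s,\eps_k}(u_k,\Omega)\leq\lim_k\mathcal{E}_{s,\eps_k}(w_k,\Omega)=\mathcal{E}_s(u,\Omega),$$
which is the asserted minimality. The main technical hurdle I foresee is the cross-term estimate above: its control hinges on the integrability of $\rho_\Omega$ near $\partial\Omega$, which is precisely what $s<1/2$ provides and which mirrors the fact that $\mathcal{Z}$-valued maps can lie in $\widehat H^s(\Omega;\R^d)$ only in this subcritical range.
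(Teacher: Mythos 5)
Your argument is correct, and the general scheme (build a competitor by gluing $u$ inside and $u_k$ outside, invoke minimality, pass to the limit) is the same as the paper's, but you make two choices that differ from the paper's proof. First, the paper localizes to a smooth $\Omega'\Subset\Omega$ containing $\mathrm{spt}(u-u_*)$ and sets $\bar u_k:=\chi_{\Omega'}u+(1-\chi_{\Omega'})u_k$, so that $\bar u_k-u_k$ is \emph{compactly} supported in $\Omega$ and Definition~\ref{defminimizerAC} applies as stated; your competitor $w_k:=\chi_{\Omega}u+\chi_{\Omega^c}u_k$ differs from $u_k$ on all of $\Omega$, so you must first extend the minimality inequality from compactly supported perturbations to all of $H^s_{00}(\Omega;\R^d)\cap L^p(\Omega)$ by density and continuity of $\mathcal{E}_{s,\varepsilon}$ --- a valid extra step (the quadratic Dirichlet form is norm-continuous, and the $p$-growth in (H3) makes the potential term $L^p$-continuous), but one the paper avoids entirely by localizing. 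Second, for the lower bound on the limit of $\mathcal{E}_{s,\eps_k}(u_k,\cdot)$ you use only Fatou's lemma and a.e.\ convergence, whereas the paper invokes items (i)--(ii) of Theorem~\ref{main1part1} (strong convergence $\mathcal{E}_{s}(u_k,\Omega')\to\mathcal{E}_s(u_*,\Omega')$ and $\eps_k^{-2s}\int_{\Omega'}W(u_k)\to0$) to get an equality in the limit. Your route is more elementary at this step --- lower semicontinuity suffices --- while the paper's choice is more aligned with the equality $\mathcal{E}_s(u_k,\Omega')\to\mathcal{E}_s(u_*,\Omega')$ that it has already established and wants to exploit. Both routes give the same conclusion; yours trades the localization technicality for a density argument and needs strictly less of Theorem~\ref{main1part1}. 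One small remark: your pointwise bound $\bigl||u(x)-u_k(y)|^2-|u(x)-u_*(y)|^2\bigr|\leq C|u_k(y)-u_*(y)|$ with the subsequent Fubini reduction to $\rho_\Omega\in L^1(\Omega^c)$ is slightly finer than what is needed --- the paper simply dominates by $C|x-y|^{-n-2s}\in L^1(\Omega'\times(\Omega')^c)$ --- but it is correct, and your observation that its integrability is exactly the $s<1/2$ condition is a good one.
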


And lastly, we address the minimization problem with prescribed exterior condition. 
 
 \begin{theorem}\label{main1part1mincase2}
 Let $\Omega\subset\R^n$ be a smooth bounded open set, and $\eps_k\downarrow 0$ a given sequence.  Let $\{g_k\}_{k\in\N}\subset \widehat H^s(\Omega;\R^d)\cap L^\infty(\R^n)$ satisfying $\sup_k\|g_k\|_{L^\infty(\R^n\setminus\Omega)}<\infty$ and such  that $g_k(x)\to g(x)$ with $g(x)\in\mathcal{Z}$ for a.e. $x\in\R^n\setminus\Omega$ as $k\to\infty$.  If $u_k$ is a solution of the minimization problem 
 $$\min\Big\{ \mathcal{E}_{s,\varepsilon_k}(u,\Omega) : u\in H^{s}_{g_k} (\Omega;\R^d)\cap L^p(\Omega) \Big\}\,, $$
 then $\sup_k\|u_k\|_{L^\infty(\R^n)}<\infty$ and $\sup_k \mathcal{E}_{s,\varepsilon_k}(u_k,\Omega)<\infty$. In particular, Theorem \ref{main1part1} applies and in addition to it, 
 the limiting partition $\mathfrak{E}^*$  solves the minimization problem
 \begin{equation}\label{limitminpb}
 \min\bigg\{ \mathscr{P}^{\boldsymbol{\sigma}}_{2s}(\mathfrak{E},\Omega) : \mathfrak{E}\in\mathscr{A}_m(\Omega)\,,\;\sum_j\chi_{E_j}{\bf a}_{j=1}^m=g \text{ a.e. in }\R^n\setminus\Omega
  \bigg\}\,.
 \end{equation}
 Moreover, $u_k\to u_*$ strongly in $H^s(\Omega)$, $\mathcal{E}_{s}(u_k,\Omega) \to \mathcal{E}_{s}(u_*,\Omega)$, and $\eps_k^{-2s}W(u_k)\to 0$ in $L^1(\Omega)$. 
 \end{theorem}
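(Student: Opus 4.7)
My first move is to secure a uniform $L^\infty$-bound and a uniform energy bound for $\{u_k\}$ in order to apply Theorem \ref{main1part1}. The $L^\infty$-bound follows directly from Remark \ref{remarkLinftymin} combined with $\sup_k\|g_k\|_{L^\infty(\R^n\setminus\Omega)}<\infty$. For the energy bound, I build an explicit $\mathcal{Z}$-valued competitor: pick any partition $\mathfrak{F}=(F_1,\ldots,F_m)\in\mathscr{A}_m(\Omega)$ with $\sum_j\chi_{F_j}{\bf a}_j=g$ a.e. on $\R^n\setminus\Omega$ (the trivial choice $F_1\cap\Omega=\Omega$, $F_j\cap\Omega=\emptyset$ for $j\geq 2$ works since $s<1/2$ and $\Omega$ is bounded Lipschitz, making $\mathscr{P}^{\boldsymbol{\sigma}}_{2s}(\mathfrak{F},\Omega)<\infty$), and set $\tilde u_k:=v_F:=\sum_j\chi_{F_j}{\bf a}_j$ on $\Omega$ and $\tilde u_k:=g_k$ on $\R^n\setminus\Omega$. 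Then $\tilde u_k\in H^s_{g_k}(\Omega;\R^d)\cap L^p(\Omega)$, the potential $W(\tilde u_k)$ vanishes on $\Omega$ since $v_F$ is $\mathcal{Z}$-valued, and the cross term $\iint_{\Omega\times\Omega^c}|v_F(x)-g_k(y)|^2/|x-y|^{n+2s}\,\de x\de y$ is bounded uniformly in $k$ by $\sup_k\|g_k\|_{L^\infty}<\infty$ together with $\iint_{\Omega\times\Omega^c}|x-y|^{-n-2s}\,\de x\de y<\infty$. Minimality of $u_k$ then gives $\sup_k\mathcal{E}_{s,\eps_k}(u_k,\Omega)<\infty$, and Theorem \ref{main1part1} extracts along a subsequence a limit $u_*=\sum_j\chi_{E_j^*}{\bf a}_j$ with partition $\mathfrak{E}^*\in\mathscr{A}_m(\Omega)$. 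Because $u_k=g_k\to g$ a.e. on $\R^n\setminus\Omega$ and $u_k\to u_*$ in $L^2_{\rm loc}(\R^n)$ (hence a.e. up to further extraction), $u_*=g$ a.e. outside $\Omega$, so $\mathfrak{E}^*$ is admissible for \eqref{limitminpb}.

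Next, I establish minimality of $\mathfrak{E}^*$ by repeating the recovery-sequence construction with an arbitrary admissible competitor $\mathfrak{F}$. Defining $w_k$ exactly as $\tilde u_k$ above (with the chosen $\mathfrak{F}$ in place of the fixed one), a dominated-convergence argument on the cross term (using $g_k\to g$ a.e., the uniform $L^\infty$-bound, and $s<1/2$) gives $\mathcal{E}_{s,\eps_k}(w_k,\Omega)\to\mathcal{E}_s(v_F,\Omega)=\frac{\gamma_{n,s}}{2}\mathscr{P}^{\boldsymbol{\sigma}}_{2s}(\mathfrak{F},\Omega)$. Combining with the minimality of $u_k$ and Fatou-type lower semicontinuity yields
\[
\tfrac{\gamma_{n,s}}{2}\mathscr{P}^{\boldsymbol{\sigma}}_{2s}(\mathfrak{E}^*,\Omega)=\mathcal{E}_s(u_*,\Omega)\leq\liminf_k\mathcal{E}_s(u_k,\Omega)\leq\liminf_k\mathcal{E}_{s,\eps_k}(u_k,\Omega)\leq\mathcal{E}_s(v_F,\Omega)\,,
\]
which proves \eqref{limitminpb}. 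Specialising to $\mathfrak{F}=\mathfrak{E}^*$ forces equality throughout, delivering simultaneously $\mathcal{E}_s(u_k,\Omega)\to\mathcal{E}_s(u_*,\Omega)$ and $\eps_k^{-2s}\int_\Omega W(u_k)\,\de x\to 0$; since $W\geq 0$, the latter means $\eps_k^{-2s}W(u_k)\to 0$ in $L^1(\Omega)$.

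For the strong $H^s(\Omega)$-convergence, I note that $u_k\to u_*$ strongly in $L^2(\R^n)$ by the uniform $L^\infty$-bound and dominated convergence. Splitting $\mathcal{E}_s(u,\Omega)$ into its $\Omega\times\Omega$ self-interaction part and its $\Omega\times\Omega^c$ cross part, both are separately lower semicontinuous under a.e. convergence via Fatou's lemma, so convergence of the sum (previous paragraph) forces convergence of each piece individually. In particular $[u_k]_{H^s(\Omega)}\to[u_*]_{H^s(\Omega)}$, and together with strong $L^2(\Omega)$-convergence this yields strong $H^s(\Omega)$-convergence. I do not anticipate a serious obstruction in this plan; the most delicate point is arguing that $\mathcal{E}_{s,\eps_k}(w_k,\Omega)\to\mathcal{E}_s(v_F,\Omega)$ despite the $k$-dependence of the exterior datum $g_k$, but this reduces to a routine dominated-convergence argument thanks to the uniform $L^\infty$-bound on $g_k$ and $s<1/2$.
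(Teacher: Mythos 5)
Your proof is correct and follows essentially the same route as the paper's: Remark~\ref{remarkLinftymin} for the $L^\infty$-bound, a $\mathcal{Z}$-valued competitor glued to $g_k$ outside $\Omega$ for the energy bound, dominated convergence of the cross term, and lower semicontinuity. The ordering is slightly different — you first establish the $\mathscr{P}^{\boldsymbol{\sigma}}_{2s}$-minimality via general recovery sequences and then obtain $\mathcal{E}_s(u_k,\Omega)\to\mathcal{E}_s(u_*,\Omega)$ and $\eps_k^{-2s}\int_\Omega W(u_k)\to 0$ by specialising to $\mathfrak{F}=\mathfrak{E}^*$, whereas the paper first runs the recovery sequence with $u_*$ itself to get strong $\widehat H^s$-convergence and the vanishing of the potential, then separately proves minimality with an arbitrary competitor — but both steps rest on the same inequality chain. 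One small imprecision: you assert $u_k\to u_*$ strongly in $L^2(\R^n)$; since $\R^n$ has infinite measure and $u_k-u_*$ is merely bounded there, this is not true, only $L^2_{\rm loc}(\R^n)$ (hence $L^2(\Omega)$) holds, which is all you use and all you need. Also, to go from $[u_k]_{H^s(\Omega)}\to[u_*]_{H^s(\Omega)}$ plus $L^2(\Omega)$-convergence to strong $H^s(\Omega)$-convergence you should note that $u_k\rightharpoonup u_*$ weakly in $H^s(\Omega)$ (which you have from the energy bound), so that weak convergence plus norm convergence in a Hilbert space gives strong convergence; the paper instead argues directly in $\widehat H^s(\Omega)$ and avoids your energy-splitting step, but both routes are valid.
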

 
  \subsection{Compactness for boundary reactions}\label{subsecbdryreac}

As in \cite{MilSirW}, the proof of Theorem~\ref{main1part1} relies on a slightly more general theorem for the local equation with boundary reactions. We shall essentially derive Theorem~\ref{main1part1} 
from Theorem \ref{thmasympbdryreact} below, applying the extension procedure \eqref{poisson} and  the results from Section~\ref{FractAC}.   

\begin{theorem}\label{thmasympbdryreact}
Let $G\subset\R^{n+1}_+$ be an admissible bounded  open set, and $\varepsilon_k\downarrow0$  a given sequence.  
Let  $\{v_k\}_{k\in\mathbb{N}}\subset H^1(G;\R^d,|z|^a\de\mathbf{x})\cap L^\infty(G)$ satisfying $\sup_k\|v_k\|_{L^\infty(G)}< \infty$, and  such that each $v_k$ weakly solves  
\begin{equation}\label{equinGnew}
 \begin{cases}
{\rm div}\big(z^{a}\nabla v_k)= 0 & \text{in $G$}\,,\\[8pt]
\displaystyle \boldsymbol{\delta}_s\boldsymbol{\partial}^{(2s)}_z v_k=\frac{1}{\varepsilon^{2s}_k}\nabla W(v_k)  & \text{on $\partial^0G$}\,. 
\end{cases}
\end{equation}
If  $\sup_k \mathbf{E}_{s,\varepsilon_k}(v_k,G)<\infty$, then there exist a (not relabeled) subsequence, $v_*\in H^1(G;\R^d,|z|^a\de\mathbf{x})$ and $m$ disjoint open subsets  $E_1^*,\ldots,E^*_m\subset\partial^0G$ such that 
\begin{equation}\label{vstaraubord}
\sum_{j=1}^m\chi_{E^*_j}=1 \text{ and  }v_*=\sum_{j=1}^m\chi_{E_j^*}{\bf a}_j\; \text{ a.e. on $\partial^0G$,}
\end{equation}
 $v_k\rightharpoonup v_*$ weakly in $H^1(G,|z|^a\de\mathbf{x})$, and $v_k\to v_*$ strongly in $H^1_{{\rm loc}}(G\cup\partial^0G,|z|^a\de\mathbf{x})$ as $k\to\infty$. In addition, 
 setting $\Sigma:=\bigcup_j\partial E_j^*\cap\partial^0 G$, 
\vskip5pt

\begin{itemize}[leftmargin=22pt]
\item[ \rm  (i)] $\varepsilon^{-2s}_kW(v_k)\to 0$ in $L^1_{\rm loc}(\partial^0G)$; 
\vskip5pt

\item[\rm  (ii)]  $v_k\to v_*$ in $C^{1,\alpha}_{\rm loc}(\partial^0G\setminus\Sigma)$ for some $\alpha=\alpha(n,s)\in(0,1)$;
\vskip5pt

\item[\rm  (iii)] $\|v_k-v_*\|_{L^\infty(K)}\leq C_K \eps_k^{2s}$ for every compact set $K\subset \partial^0G\setminus\Sigma$,
\vskip5pt

\item[\rm  (iv)] 
there exists $t_W>0$ (depending only on $W$) such that the superlevel set $L_k^t:=\{{\rm dist}(v_k,\mathcal{Z})\geq t\}$ converges locally uniformly  in $\partial^0G$ to $\Sigma$ for each $t\in(0,t_W)$.
\vskip5pt

\item[\rm  (v)] the function $v_*$ satisfies 
$$\delta{\bf E}_s\big(v_*,G\cup\partial^0G\big)[{\bf X}]=0$$ 
for every vector field $\mathbf{X}=(X,\mathbf{X}_{n+1})\in C^1(\overline G;\R^{n+1})$ compactly supported in $G\cup \partial^0G$ such that $\mathbf{X}_{n+1}=0$ on $\partial^0G$. 
\end{itemize}
\end{theorem}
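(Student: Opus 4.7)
The plan is to adapt the strategy from \cite{MilSirW}, proceeding in four steps and using the tools built up in Sections~\ref{FractAC}: clearing-out (Lemma~\ref{clear2new}), the pointwise speed estimate (Lemma~\ref{estifond}), the comparison principle with the convexified potential (Lemma~\ref{minimalitynew}), and the monotonicity formula (Corollary~\ref{monotformACeq}).

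First I would extract a subsequence converging weakly to some $v_*\in H^1(G;\R^d,|z|^a\de\mathbf{x})$, using the uniform energy bound. The $L^\infty$ bound together with the weak Euler--Lagrange equation and the compact trace embedding \eqref{compactembL1trace} allow me to pass to a subsequence with $v_k\to v_*$ a.e.\ on $\partial^0G$. From $\int_{\partial^0G} W(v_k)\,\de x \leq \eps_k^{2s}\, \mathbf{E}_{s,\eps_k}(v_k,G)\to 0$ and Fatou, I conclude $W(v_*)=0$ a.e.\ on $\partial^0G$, hence $v_*(x)\in\mathcal{Z}$ a.e.\ on $\partial^0G$.

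Next I would introduce the singular set
\[
\Sigma := \partial^0G \setminus \bigl\{x_0\in\partial^0G : \exists\, r>0\text{ s.t.\ } \boldsymbol{\Theta}_{s,\eps_k}(v_k,x_0,r)\leq \boldsymbol{\eta}_b(\boldsymbol{\varrho}_W)\text{ for all large }k\bigr\},
\]
with $b:=\sup_k\|v_k\|_{L^\infty(G)}$. At any good point $x_0\notin\Sigma$, Lemma~\ref{clear2new} yields $|v_k-\mathbf{a}|\leq\boldsymbol{\varrho}_W$ on $D_{r/2}(x_0)$ for some $\mathbf{a}\in\mathcal{Z}$ and $k$ large, whence Lemma~\ref{estifond} gives (iii) on $D_{r/4}(x_0)$; upgrading this pointwise estimate through the boundary regularity of Theorem~\ref{regint} yields (ii), and (iv) is immediate from (iii). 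The monotonicity formula combined with a Vitali covering shows $\mathcal{H}^{n-2s}(\Sigma)<\infty$, so in particular $|\Sigma|=0$. Defining $E_j^*:=\{x\in\partial^0G\setminus\Sigma : v_k(x)\to \mathbf{a}_j\}$ gives open, pairwise disjoint sets partitioning $\partial^0G$ modulo a null set, and $\Sigma=\bigcup_j\partial E_j^*\cap\partial^0 G$, so \eqref{vstaraubord} holds.

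For strong convergence I would handle three regions separately. In the interior of $G$, interior $C^{1,\alpha}$ regularity for $L_s$-harmonic functions provides uniform local bounds, so strong $H^1(|z|^a\de\mathbf{x})$ convergence is immediate. On a compact $K\subset (G\cup\partial^0G)\setminus\Sigma$, item (ii) together with elliptic regularity away from $\Sigma$ likewise gives local uniform $C^{1,\alpha}$ bounds on $K$, hence strong convergence. The delicate step is near points of $\Sigma$: there one would apply Lemma~\ref{minimalitynew} on small half-balls centered at good points $x_0$ close to $\Sigma$ with the appropriate modified potential $\widetilde{W}_{\mathbf{a}}$; comparing $v_k$ with its own Poisson extension (a competitor with the same trace) and using weak convergence of the gradients together with the convergence of traces allows one to conclude that the defect measure of $|\nabla v_k|^2 z^a$ is supported on $\Sigma$ and vanishes there, because $\mathcal{H}^{n-2s}(\Sigma)<\infty$ gives $\Sigma$ zero weighted $H^1$-capacity in $\overline{\R^{n+1}_+}$. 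This simultaneously yields (i), since $\eps_k^{-2s}W(v_k)$ is uniformly bounded in $L^1_{\rm loc}(\partial^0G)$ and converges pointwise to $0$ off $\Sigma$ by (iii)--(v) (via \eqref{conseqestifond}), so Vitali convergence completes the argument. Finally (v) is obtained by writing the stationarity condition of Corollary~\ref{statAC} for each $v_k$ and passing to the limit: the weighted Dirichlet term converges by the strong $H^1_{\rm loc}(G\cup\partial^0G)$ convergence, and the potential term disappears by (i).

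The main obstacle is the last piece of Step~3, namely ruling out concentration of the weighted Dirichlet energy on $\Sigma$. The Hausdorff dimension estimate $\dim_{\mathcal{H}}\Sigma\leq n-2s$ is critical but not by itself sufficient; the argument needs the quantitative speed bound \eqref{estispeedconv} to localize the potential mass away from $\Sigma$, combined with the convex-comparison Lemma~\ref{minimalitynew} to ensure that the only possible concentration is controlled by an $L_s$-capacity that a set of finite $\mathcal{H}^{n-2s}$ measure cannot carry. Everything else is a clean adaptation of the scalar argument in \cite{MilSirW}, with the labels ${\bf a}\in\mathcal{Z}$ replacing the binary choice $\pm 1$.
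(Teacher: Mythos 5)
Your outline follows the paper's strategy closely: weak compactness, the clearing-out/$\varepsilon$-regularity mechanism (the paper packages Lemma~\ref{clear2new}, Lemma~\ref{estifond}, and Lemma~\ref{minimalitynew} into Proposition~\ref{epsregnew}, but invoking them separately as you do is equivalent), the definition of a concentration set $\Sigma$ with $\mathscr{H}^{n-2s}(\Sigma)<\infty$, the reduction of the defect measure to a measure supported on $\Sigma$, and passage to the limit in the stationarity identity. Items (ii)--(v) and the structure of the $E_j^*$ are handled correctly.

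However, the crucial step where you rule out the singular part of the defect measure contains a genuine gap. You argue that $\mu_{\rm sing}\equiv 0$ because $\Sigma$ has zero weighted $H^1$-capacity (being of finite $\mathscr{H}^{n-2s}$ measure), and that a set of zero capacity ``cannot carry'' the concentration. This inference is false: a finite Radon measure can perfectly well be supported on a set of zero $H^1$-capacity — a Dirac mass at a single point of $\partial^0 G$ is the canonical example, and a point has zero capacity in every dimension $n\geq 1$ for this weighted form. Capacity controls whether functions can be modified on a set, not whether gradient energy of a weakly convergent sequence can concentrate there; concentration on zero-capacity sets is precisely the generic obstruction to strong convergence. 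What the paper does instead (following \cite{MilSirW}) is exploit that the monotonicity formula gives $\mu_{\rm sing}$ a finite and \emph{positive} $(n-2s)$-dimensional density at $\mathscr{H}^{n-2s}$-a.e.\ point of $\Sigma$ (after removing a null set on which the absolutely continuous part has nonzero density, via \cite[Cor.~3.2.3]{Zi}). Marstrand's theorem then forces $n-2s\in\mathbb{Z}$, contradicting $s\in(0,1/2)$, so $\mu_{\rm sing}\equiv 0$. Your capacity route has no replacement for this density rigidity and does not close the argument. A smaller secondary issue: your Vitali deduction of (i) from pointwise convergence off $\Sigma$ and a uniform $L^1$ bound also needs equi-integrability, which is not supplied; but once $\mu_{\rm sing}\equiv 0$ is established correctly, (i) follows for free from $\mathbf{E}_{s,\eps_k}(v_k,G')\to\mathbf{E}_s(v_*,G')$ combined with lower semicontinuity of $\mathbf{E}_s$, as the paper observes.
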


The proof of this theorem will be divided into different steps. Classically, one of the main ingredients is a ``small energy regularity'' property stating essentially that, if the ${\bf E}_{s,\eps_k}$-energy of $v_k$ is small enough in a (half) ball, then the sequence $\{v_k\}$ is compact in a smaller (half) ball, in the energy space and beyond. This is the purpose of the following proposition. 

\begin{proposition}\label{epsregnew}
Given $b\geq 1$, there exist a constant 
$\boldsymbol{\theta}_{b}>0$ 
(depending only on  $n$, $s$, $b$, and $W$) such that the following holds. Let $\varepsilon_k\downarrow0$ be a given sequence and 
 $\{v_k\}_{k\in\mathbb{N}} \subset H^1(B_R^+;\R^d,|z|^a\de\mathbf{x})\cap L^\infty(B_R^+)$  such that $\|v_k\|_{L^\infty(B_R^+)}\leq b$, and $v_k$ solves in the weak sense
 \begin{equation}\label{tim1439}
\begin{cases}
{\rm div}(z^{a}\nabla v_k)= 0 & \text{in $B_R^+$}\,,\\[5pt]
\displaystyle \boldsymbol{\delta}_s\boldsymbol{\partial}_z^{(2s)} v_k=\frac{1}{\varepsilon_k^{2s}}\nabla W(v_k)  & \text{on $D_R$}\,.
\end{cases}
\end{equation}
Assume that $v_k\to v_*$ in $L^1(B_R^+)$. If 
\begin{equation}\label{condliminfnew} 
\liminf_{k\to\infty} {\bf E}_{s,\eps_k}(v_k,B_R^+)< \boldsymbol{\theta}_{b}R^{n-2s} \,,
\end{equation}
then $v_*\in H^1(B_R^+;\R^d,|z|^a\de\mathbf{x})$ and it satisfies $v_*={\bf a}$ on $D_{R/2}$ for some ${\bf a}\in\mathcal{Z}$. Moreover, up to a subsequence, 
\vskip3pt
\begin{enumerate}
\item[\rm (i)] $v_k\to v_*$ strongly in $H^1(B_{R/4}^+,|z|^a\de\mathbf{x})\,$; 
\vskip3pt
\item[\rm (ii)] $\eps_k^{-2s}\int_{D_{R/4}} W(v_k)\,\de x \to 0\,$;
\vskip3pt
\item[\rm (iii)] $v_k\to {\bf a}$  in $C^{1,\alpha}(D_{R/16})\,$ for some $\alpha=\alpha(n,s)\in(0,1)$.
 \end{enumerate} 
\end{proposition}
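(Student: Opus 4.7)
The strategy combines the monotonicity formula (Corollary~\ref{monotformACeq}), the clearing-out lemma (Lemma~\ref{clear2new}), the pointwise decay of Lemma~\ref{estifond}, and a standard testing argument for strong $H^1$-compactness. Let $\boldsymbol{\varrho}_W\in(0,1]$ be the convexity radius from \eqref{lwbdhess}, chosen moreover strictly smaller than half the minimal distance between distinct wells of $\mathcal{Z}$, and set $\boldsymbol{\theta}_{b}:=\boldsymbol{\eta}_{b}(\boldsymbol{\varrho}_W)$ with $\boldsymbol{\eta}_{b}$ the clearing-out threshold. Extracting a subsequence realizing \eqref{condliminfnew}, Lemma~\ref{clear2new} applied at the origin at scale $R$ (valid for $k$ large since $\eps_k\to0$) yields some ${\bf a}_k\in\mathcal{Z}$ with $|v_k-{\bf a}_k|\leq\boldsymbol{\varrho}_W$ on $D_{R/2}$; finiteness of $\mathcal{Z}$ lets me pass to a further subsequence with ${\bf a}_k\equiv{\bf a}$ fixed.

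For each interior point $x_0\in D_{R/2}$, applying Lemma~\ref{estifond} on $B_r^+(\mathbf{x}_0)$ with $r=\mathrm{dist}(x_0,\partial D_{R/2})$ gives $|v_k(x_0)-{\bf a}|\leq C_b(\eps_k/r)^{2s}$. Thus $v_k\to{\bf a}$ pointwise a.e.\ on $D_{R/2}$, and combined with the given $L^1$ convergence this forces $v_*={\bf a}$ on $D_{R/2}$. Applying the same estimate uniformly on $D_{R/4}$ gives $W(v_k)\leq C_b\eps_k^{4s}R^{-4s}$ there, from which conclusion (ii) follows at once. The $L^\infty$ bound together with the uniform energy bound shows $\{v_k\}$ is bounded in $H^1(B_R^+;|z|^a\de\mathbf{x})$, so (up to a further subsequence) $v_k\rightharpoonup v_*$ weakly in this space, and in particular $v_*\in H^1(B_R^+;\R^d,|z|^a\de\mathbf{x})$.

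For the strong $H^1$ convergence on $B_{R/4}^+$, I would test the weak form of \eqref{tim1439} against $\phi=\eta(v_k-v_*)$ with $\eta\in C_c^\infty(B_{R/2}^+\cup D_{R/2})$ equal to $1$ on $B_{R/4}^+$. The cross-term $\int z^a\nabla v_k\cdot\nabla v_*\,\eta$ passes to the limit by weak convergence, the term $\int z^a\nabla v_k\cdot(v_k-v_*)\nabla\eta$ vanishes by weak convergence combined with the Rellich compactness in the weighted space, and crucially the boundary reaction
$\eps_k^{-2s}\int\eta\,\nabla W(v_k)\cdot(v_k-v_*)\,\de x$ vanishes because $\eps_k^{-2s}|\nabla W(v_k)|$ is uniformly bounded on $D_{R/2}\cap\operatorname{supp}\eta$ by Lemma~\ref{estifond} while $v_k-v_*\to0$ in $L^1$ there (dominated convergence applied to the pointwise bound). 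Rearranging gives $\int\eta z^a|\nabla v_k|^2\to\int\eta z^a|\nabla v_*|^2$, upgrading weak to strong $H^1$ convergence on $B_{R/4}^+$.

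Finally, conclusion (iii) follows from the uniform boundedness of $\eps_k^{-2s}\nabla W(v_k)$ on $D_{R/8}$: the $v_k$ solve a linear degenerate-elliptic problem ${\rm div}(z^a\nabla v_k)=0$ with uniformly bounded Neumann-type data, so the Caffarelli-Silvestre-type estimates underlying Theorem~\ref{regint} yield uniform $C^{1,\alpha}$ bounds on $\nabla_x v_k$ on $D_{R/16}$. Arzelà-Ascoli together with the uniform $L^\infty$-convergence $v_k\to{\bf a}$ and interpolation then gives $v_k\to{\bf a}$ in $C^{1,\alpha}(D_{R/16})$. The principal obstacle is the $H^1$-strong step: one must arrange $\eta$ so that the test function is admissible for the weak equation \eqref{defcritpointEexten} and genuinely use Lemma~\ref{estifond} to produce the uniform pointwise smallness of $\eps_k^{-2s}\nabla W(v_k)$ that kills the boundary contribution, a point where the vectorial structure enters unproblematically only because the local convexity around a single well propagates to the modified potential $\widetilde W_{\bf a}$ of Lemma~\ref{minimalitynew}.
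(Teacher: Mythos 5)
Your argument is correct in its essential structure but follows a genuinely different route from the paper for parts (i) and (ii). The paper proves the strong $H^1$ convergence via a cut-and-compare (Schoen--Uhlenbeck type) argument: after the clearing-out step, it invokes Lemma~\ref{minimalitynew} -- local minimality of $v_\eps$ with respect to the convexified potential $\widetilde W_{\bf a}$ -- performs a dyadic annulus selection to find a slice of small energy, and glues $v_*$ to $v_{k_j}$ with a cutoff to conclude $\limsup_j \mathbf{E}_{s,\eps_{k_j}}(v_{k_j},B_r^+)\leq {\bf E}_s(v_*,B_r^+)$, hence strong convergence plus vanishing of the potential term. You instead exploit the pointwise decay of Lemma~\ref{estifond} more heavily: first to obtain (ii) immediately from $W(v_k)\lesssim\eps_k^{4s}R^{-4s}$ on $D_{R/4}$, and then, for (i), to test the weak equation against a cut-off of $v_k-v_*$, using the uniform bound $\eps_k^{-2s}|\nabla W(v_k)|\lesssim R^{-2s}$ to kill the boundary reaction term. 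Your route is shorter and avoids the dyadic slicing entirely; it does however need the pointwise estimate of Lemma~\ref{estifond} as a precondition for the compactness step, whereas the paper's comparison argument only needs the clearing-out and the local convexity of $\widetilde W_{\bf a}$, and consequently is more robust (it is the same mechanism used for minimizers with no equation at all). Both buy the same conclusion.

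Two technical points need tightening. First, with $\eta$ supported only in $B_{R/2}^+\cup D_{R/2}$, the bound $\eps_k^{-2s}|\nabla W(v_k)|\leq C$ from Lemma~\ref{estifond} degenerates as one approaches $\partial D_{R/2}$ (the available radius $r=R/2-|x_0|\to 0$), so the claimed uniform bound on $D_{R/2}\cap\operatorname{supp}\eta$ fails; you must take $\eta$ supported in a strictly smaller half-ball, say $B_{3R/8}^+\cup D_{3R/8}$, which still contains $B_{R/4}^+$ and on whose trace the bound is genuinely uniform. Second, in (iii) you write "uniform $C^{1,\alpha}$ bounds on $\nabla_x v_k$", which would be $C^{2,\alpha}$ regularity of $v_k$; what is needed (and what the paper establishes, by iterating the TVZ-type decay on $\partial_i v_k$ before invoking the linear Schauder estimates from \cite[Lemma 4.5]{CS1}) is a uniform $C^{1,\alpha}$ bound on $v_k$ itself. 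The $L^\infty$ bound on $\eps_k^{-2s}\nabla W(v_k)$ alone gives only $C^{0,\alpha}$ for $v_k$; the passage to $C^{1,\alpha}$ requires differentiating the boundary condition and re-running the decay argument on $w_k=\partial_i v_k$, which your sketch omits.
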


\begin{proof}
Let $\boldsymbol{\theta}_{b}:={\boldsymbol{\eta}}_{b}(\boldsymbol{\rho}_W)$ where the constant $\boldsymbol{\rho}_W$ is given by \eqref{lwbdhess}, and ${\boldsymbol{\eta}}_{b}$ is given by Lemma~\ref{clear2new}. We select a (not relabeled) subsequence which achieves the $\liminf$ in \eqref{condliminfnew}.  
By the uniform energy bound, we have $v_k\rightharpoonup v_*$ weakly in $H^1(B_R^+,|z|^a\de\mathbf{x})$. By assumption \eqref{condliminfnew}, we then have  $\boldsymbol{\Theta}_{s,\eps_k}(v_k,0,R)\leq {\boldsymbol{\eta}}_{b}(\boldsymbol{\rho}_W)$ for $k$ large enough. Up to a further subsequence, Lemma   \ref{clear2new} implies that there exists ${\bf a}\in\mathcal{Z}$ such that $|v_k-{\bf a}|\leq \boldsymbol{\rho}_W$ on $D_{R/2}$ for $k$ large enough. From the uniform energy bound and the continuity of the trace operator \eqref{compactembL1trace}, we infer that $v_*={\bf a}$ on $D_{R/2}$. 

To prove that that {\rm (i)} and {\rm (ii)} hold, we shall use the argument in  \cite[Corollary 4.7]{MilSirW} together with Lemma~\ref{minimalitynew}. We postpone the proof {\rm (i)} and {\rm (ii)} below, and complete the proof of the proposition. Since $|v_k-{\bf a}|\leq \boldsymbol{\rho}_W$ on $D_{R/2}$,  we have $|\nabla W(v_k)|\leq CR^{-2s}\eps_k^{2s}$ on $D_{R/4}$  by Lemma~\ref{estifond}. Applying  the a priori estimates in \cite[Lemma 4.5]{CS1}, we deduce that $v_k$ is bounded in $C^{0,\alpha}(D_{R/8})$ for some $\alpha=\alpha(n,s)\in(0,1)$. Then, arguing exactly as the proof of \cite[Proposition 4.10]{MilSirW} (based again on  \cite[Lemma 4.5]{CS1} and following the computations in the proof of Lemma \ref{estifond}), we conclude that $v_k$ is bounded in $C^{1, \alpha}(D_{R/16})$, and {\rm (iii)} follows. 
\vskip5pt

\noindent{\it Proof of {\rm (i)} and {\rm (ii)}.}
Let us set 
$$M:=\sup_k \big\{\mathbf{E}_{s,\varepsilon_k}(v_k,B_R^+) +\|v_k\|_{L^\infty(B_R^+)}\big\}<\infty\,.$$
We fix some $r\in(0,R/2)$ arbitrary, and we select a subsequence $\{v_{k_j}\}_{j\in\mathbb{N}}$ such that 
$$\limsup_{k\to+\infty}  \mathbf{E}_{s,\eps_{k}}(v_{k},B_r^+)=\lim_{j\to+\infty}  \mathbf{E}_{s,\eps_{k_j}}(v_{k_j},B_r^+)\,.$$
For $\theta\in(0,1)$, we set $r_\theta:=(1-\theta)(R/2)+\theta r \in(r, R/2)$ and $L_\theta:= r_\theta -r $.  Given an arbitrary integer $\ell\geq 1$, we define $r_i:=r+i\delta_m$ where $i\in\{0,\ldots, \ell\}$ and $\delta_\ell:=L_\theta/\ell$. 
Since 
$$\sum_{i=0}^{\ell-1} \mathbf{E}_{s,\eps_{k_j}}(v_{k_j},B^+_{r_{i+1}}\setminus B^+_{r_i})\leq M\,,  $$
we can find a good index  $i_\ell\in \{0,\ldots,\ell-1\}$ and a (not relabeled)  further subsequence of $\{v_{k_j}\}_{j\in\mathbb{N}}$ such that 
\begin{equation}\label{tranche1}
\mathbf{E}_{s,\eps_{k_j}}(v_{k_j},B^+_{r_{i_\ell+1}}\setminus B^+_{r_{i_\ell}}) \leq \frac{M+1}{\ell}\quad \forall j\in\N\,.
\end{equation}
From the weak convergence of $v_{k_j}$ to $v_*$ and the lower semicontinuity of ${\bf E}_s$, we deduce that 
\begin{equation}\label{tranche2}
{\bf E}_s\big(v_*,B^+_{r_{i_\ell+1}}\setminus B^+_{r_{i_\ell}}\big) \leq \frac{M+1}{\ell}\,.
\end{equation}
Now consider a smooth cut-off function $\chi\in C_c^\infty(B_{R/2};[0,1])$ such that $\chi=1$ in $B_{r_{i_\ell}}$, $\chi=0$ in $B_{R/2}\setminus B_{r_{i_\ell+1}}$, and satisfying 
$|\nabla\chi|\leq C\delta_\ell^{-1}$ for a constant $C$ only depending on~$n$. Then, we define 
$$w_j:=\chi v_* +(1-\chi)v_{k_j}\,, $$
so that $w_j\in H^1(B_{R/2}^+;\R^d,|z|^a\de\mathbf{x})$ and $w_j-v_{k_j}$ is compactly supported in $B^+_{R/2}\cup D_{R/2}$. 

Since $|v_{k_j}-{\bf a}|\leq \boldsymbol{\rho}_W$  and $|w_j-1|\leq  \boldsymbol{\rho}_W$ on $D_{R/2}$, we have $\widetilde W_{\bf a}(v_{k_j})=W(v_{k_j})$ and  
$\widetilde W_{\bf a}(w_{j})=W(w_{j})$ on $D_{R/2}$, where $\widetilde W_{\bf a}$ is the potential defined in \eqref{defmodifpotconv}. Therefore,  Lemma \ref{minimalitynew} yields 
$$ \mathbf{E}_{s,\varepsilon_{k_j}}(v_{k_j},B_{R/2}^+)\leq \mathbf{E}_{s,\varepsilon_{k_j}}(w_j,B_{R/2}^+)\,,$$
which implies that 
 \begin{equation}\label{conseqtranch}
 \mathbf{E}_{s,\varepsilon_{k_j}}(v_{k_j},B_{r}^+)\leq {\bf E}_s(v_*,B^+_{r_\theta}) + \mathbf{E}_{\varepsilon_{k_j}}(w_j,B_{r_{i_{\ell}+1}}^+\setminus B_{r_{i_{\ell}}}^+)\,.
 \end{equation}
Recalling again that $W=\widetilde W_{\bf a}$ in $\overline B({\bf a},\boldsymbol{\rho}_W)$, the potential $W$ is convex on the set $\overline B({\bf a},\boldsymbol{\rho}_W)$, so that 
$$W(w_j)=W\big(\chi {\bf a}+(1-\chi)v_{k_j}\big)\leq (1-\chi)W(v_{k_j})\quad\text{on $D_{R/2}$}\,. $$
Therefore, 
\begin{multline}\label{tranche3}
\mathbf{E}_{s,\varepsilon_{k_j}}(w_j,B_{r_{i_{\ell}+1}}^+\setminus B_{r_{i_{\ell}}}^+)\leq  {\bf E}_s\big(v_*,B_{r_{i_{\ell}+1}}^+\setminus B_{r_{i_{\ell}}}^+\big)\\
+\mathbf{E}_{s,\varepsilon_{k_j}}\big(v_{k_j},B_{r_{i_{\ell}+1}}^+\setminus B_{r_{i_{\ell}}}^+\big)
+C\delta_\ell^{-2}\int_{B_{r_{i_{\ell}+1}}^+\setminus B_{r_{i_{\ell}}}^+} z^{a}|v_{k_j}-v_*|^2\,\de \mathbf{x}\,.
\end{multline}
Since $v_{k_j}\to v_*$ in $L^1(B_R^+)$ and  $|v_{k_j}|\leq M$ in $B_R^+$, we have 
 $v_{k_j}\to v_*$ strongly in $L^2(B_{R}^+,|z|^a\de\mathbf{x})$.  Consequently,  gathering \eqref{tranche1}-\eqref{tranche2}-\eqref{tranche3} leads to 
$$\limsup_{j\to\infty} \mathbf{E}_{s,\varepsilon_{k_j}}(w_j,B_{r_{i_{\ell+1}}}^+\setminus B_{r_{i_{\ell}}}^+)\leq \frac{2(M+1)}{\ell}\,.$$
In view of \eqref{conseqtranch}, we have thus obtained
$$ \lim_{j\to\infty}\mathbf{E}_{s,\varepsilon_{k_j}}(v_{k_j},B_{r}^+)\leq {\bf E}_s(v_*,B_{r_\theta}^+) + \frac{2(M+1)}{\ell}\,.$$
Finally, letting first $\ell\to\infty$ and then $\theta\to 1$, we conclude that 
$$\lim_{j\to+\infty} \mathbf{E}_{s,\varepsilon_{k_j}}(v_{k_j},B_{r}^+)\leq    {\bf E}_s(v_*,B_{r}^+)\,.$$
On the other hand, $\liminf_j  \mathbf{E}_s(v_{k_j},B_{r}^+)\geq {\bf E}_s(v_*,B_{r}^+)$ by lower semicontinuity, and consequently, 
$$\lim_{j\to\infty} \mathbf{E}_s(v_{k_j},B_{r}^+)= {\bf E}_s(v_*,B_{r}^+)\quad\text{and}\quad \lim_{j\to\infty} \frac{1}{\eps_{k_j}^{2s}}\int_{D_r}W(v_{k_j})\,\de x=0\,. $$
From the weak convergence of $v_{k_j}$, it classically follows that the sequence $\{v_{k_j}\}_{j\in\mathbb{N}}$ converges strongly in $H^1(B_r^+,|z|^a\de\mathbf{x})$ towards $v_*$.
\end{proof}

We are now ready to prove Theorem \ref{thmasympbdryreact}. With Proposition \ref{epsregnew} in hands, the strategy follows  \cite[Proof of Theorem 4.1]{MilSirW}. We shall provide all details for completeness.  

\begin{proof}[Proof of Theorem \ref{thmasympbdryreact}] 
{\it Step 1.} Set  $b:= 1+\sup_k\|v_k\|_{L^\infty(G)}$. By the energy bound assumption,  $\{v_k\}$ is bounded in $H^1(G,|z|^a\de \mathbf{x})$. 
Hence there is a (not relabeled) subsequence  such that 
$v_k\rightharpoonup v_*$ weakly in $H^1(G,|z|^a\de \mathbf{x})$. By the compact embedding  $H^1(G,|z|^a\de\mathbf{x})\hookrightarrow L^1(G)$  (see e.g. \cite[Section 2.3]{MilPegSch}), we also have $v_k\to v_*$ strongly in $L^1(G)$. 
Since $|v_k|\leq b$, it implies that $|v_*|\leq b$ in $G$, and  $v_k\to v_*$ strongly in $L^2(G,|z|^a\de\mathbf{x})$. By equation \eqref{equinGnew} and standard elliptic regularity, $v_k\to v_*$ in $ C^\ell_{\rm loc}(G)$ for all $\ell\in\mathbb{N}$. As a consequence, ${\rm div}\big(z^{a}\nabla v_*)= 0$  in $G$.  On the other hand, the uniform energy bound also implies that $W(v_k)\to 0$ in $L^1(\partial^0G)$, and we infer from the continuity of the trace operator that $v_*$ takes values in $\mathcal{Z}$  on $\partial^0G$. 

Next we introduce the sequence of measures  
$$\mu_k:= \frac{\boldsymbol{\delta}_s}{2}z^{a}|\nabla v_k|^2\mathscr{L}^{n+1}\LL G +\frac{1}{\varepsilon^{2s}_k}W(v_k)\mathscr{H}^n \LL\partial^0G\,.$$ 
By the energy bound assumption, the total masses of the $\mu_k$'s are bounded. Hence we can find a further subsequence such that 
\begin{equation}\label{tim2253}
\mu_k\rightharpoonup \mu:=\frac{\boldsymbol{\delta}_s}{2}z^{a}|\nabla v_*|^2\mathscr{L}^{n+1}\LL G+\mu_{\rm sing}\,,
\end{equation}
weakly* as Radon measures on $G\cup\partial^0G$ for some finite nonnegative measure $\mu_{\rm sing}$\footnote{Indeed, writing 
$|\nabla v_k|^2=|\nabla v_*|^2+2\nabla v_*\cdot\nabla(v_k-v_*)+|\nabla(v_k-v_*)|^2$ and using the weak convergence of $v_k$ to $v_*$, we obtain the defect  measure $\mu_{\rm sing}$ as the weak* limit of $\frac{\boldsymbol{\delta}_s}{2}z^{a}|\nabla(v_k-v_*)|^2\mathscr{L}^{n+1}\LL G +\frac{1}{\varepsilon^{2s}_k}W(v_k)\mathscr{H}^n \LL\partial^0G\,.$}. 
From the monotonicity formula in Corollary \ref{monotformACeq}, we infer that 
\begin{equation}\label{tim1213new}
\rho^{2s-n}\mu(B_\rho(\mathbf{x}))\leq r^{2s-n}\mu(B_r(\mathbf{x}))
\end{equation}
for every $\mathbf{x}\in\partial^0G$ and every $0<\rho<r<\min\big(1,{\rm dist}(\mathbf{x},\partial^+G)\big)$. As a consequence, the $(n-2s)$-dimensional density 
\begin{equation}\label{existdens}
\Theta^{n-2s}(\mu,\mathbf{x}):=\lim_{r\downarrow 0}\, \frac{\mu(B_r(\mathbf{x}))}{{\omega}_{n-2s}r^{n-2s}}
\end{equation}
exists\footnote{Here we have set $\displaystyle\omega_{n-2s}:=\frac{\pi^{\frac{n-2s}{2}}}{\Gamma(1+\frac{n-2s}{2})}$.} 
and is finite at every point $\mathbf{x}\in\partial^0G$. 
Note that \eqref{tim1213new} also yields
\begin{equation}\label{upbddensity}
\Theta^{n-2s}(\mu,\mathbf{x})\leq \frac{C}{\big({\rm dist}(\mathbf{x},\partial^+G)\big)^{n-2s}} \,\sup_{k} \mathbf{E}_{s,\eps_k}(v_k,G)<\infty 
\end{equation}
for all $\mathbf{x}\in\partial^0G$. On the other hand, by the smooth convergence of $v_k$ toward $v_*$ in $G$,  $\Theta^{n-2s}(\mu,\mathbf{x})=0$ for all $x\in G$. 
In addition, we observe that $\mathbf{x}\in\partial^0G\mapsto \Theta^{n-2s}(\mu,\mathbf{x})$ is upper semicontinuous \footnote{Indeed, assume that ${\bf x}_j\to {\bf x}\in \partial^0G$, and choose a sequence $r_m\downarrow0$ such that $\mu(\partial B_{r_m}({\bf x}))=0$. By  \eqref{tim1213new}, we have $\limsup_j\Theta^{n-2s}(\mu,\mathbf{x}_j)\leq \omega^{-1}_{n-2s}r_m^{n-2s}\mu(B_{r_m}({\bf x}))$, and the conclusion follows letting $r_m\to0$.}.
\vskip3pt

We define the concentration set  as
\begin{multline}\label{defconcentrset}
\Sigma:=\bigg\{\mathbf{x}\in \partial^0G :  \inf_r\Big\{ \liminf_{k\to\infty}\, r^{2s-n}\mu_k\big(B_r(\mathbf{x})\big) : \\
0<r<\min\big(1,{\rm dist}(\mathbf{x},\partial^+G)\big)\Big\}\geq\boldsymbol{\theta}_{b}\bigg\}\,, 
\end{multline}
where $\boldsymbol{\theta}_{b}>0$ is the constant given by Proposition~\ref{epsregnew}. From Corollary \ref{monotformACeq} and \eqref{tim1213new}, we infer that 
\begin{multline*}
\Sigma =\bigg\{\mathbf{x}\in \partial^0G :  \lim_{r\downarrow 0} \,\liminf_{k\to\infty}\, r^{2s-n}\mu_k(B_r(\mathbf{x})) \geq\boldsymbol{\theta}_{b}\bigg\} \\
 = \bigg\{\mathbf{x}\in \partial^0G :  \lim_{r\downarrow 0}\, r^{2s-n}\mu(B_r(\mathbf{x})) \geq \boldsymbol{\theta}_{b}\bigg\} \,,$$
\end{multline*} 
and consequently, 
\begin{equation}\label{tim1721}
\Sigma=\bigg\{\mathbf{x}\in \partial^0G: \Theta^{n-2s}(\mu,\mathbf{x})\geq \frac{\boldsymbol{\theta}_{b}}{{\omega}_{n-2s}} \bigg\} \,. 
\end{equation}
In particular, $\Sigma$ is a relatively closed subset of $\partial^0G$ since $\Theta^{n-2s}(\mu,\cdot)$ is upper semicontinuous. 
Moreover, by a well known property of  densities (see e.g. \cite[Theorem~2.56]{AFP}), we have
\begin{equation}\label{train}
 \frac{\boldsymbol{\theta}_{b}}{{\omega}_{n-2s}}\mathscr{H}^{n-2s}(\Sigma)  \leq \mu(\Sigma)<\infty\,.
 \end{equation}
 On the other hand, it follows from \eqref{upbddensity} and \cite[Theorem~2.56]{AFP} that $\mu\LL \Sigma=\mu_{\rm sing}\LL \Sigma$ is absolutely continuous with respect to $\mathscr{H}^{n-2s}\LL\Sigma$. 

We now claim that ${\rm spt}(\mu_{\rm sing})\subset\Sigma$. Indeed, for an arbitrary $\mathbf{x}_0\in \partial^0G\setminus\Sigma$, we can find a radius $0<r <{\rm dist}(\mathbf{x}_0,\partial^+G\cup\Sigma)$ 
 such  that $r^{2s-n}\mu(B_r(\mathbf{x}_0))< \boldsymbol{\theta}_{b}$ and $\mu(\partial B_r(\mathbf{x}_0))=0$. Then $\lim_{k} \mathbf{E}_{s,\eps_k}(v_k,B^+_r(\mathbf{x}_0))=\mu(B_r(\mathbf{x}_0))< \boldsymbol{\theta}_{b}r^{n-2s}$, 
and $\mu_{\rm sing}(B_{r/4}(\mathbf{x}_0))=0$ by Proposition \ref{epsregnew}. Hence 
$\mu_{\rm sing}(\partial^0G\setminus \Sigma)=0$,  
and thus $\mu_{\rm sing}$ is indeed supported by $\Sigma$. As a consequence, $\mu_{\rm sing}$ is absolutely continuous with respect to $\mathscr{H}^{n-2s}\LL\Sigma$. 
\vskip3pt

We  now claim that $\mu_{\rm sing}\equiv 0$. We argue by contradiction assuming that $\mu_{\rm sing}(\Sigma)>0$. By \cite[Corollary 3.2.3]{Zi}, we can find a Borel subset $\widetilde \Sigma\subset\Sigma$ such that $\mathscr{H}^{n-2s}(\Sigma\setminus\widetilde\Sigma)=0$ and 
$$\lim_{r\downarrow 0} \frac{1}{r^{n-2s}}{\bf E}_s\big(v_*, B^+_r(\mathbf{x}_0)\big)=0\quad\text{for every $\mathbf{x}_0\in\widetilde\Sigma$}\,.$$
Then $\mu_{\rm sing}(\widetilde\Sigma)=\mu_{\rm sing}(\Sigma)>0$. 
Moreover, by our choice of $\widetilde\Sigma$, the density 
$$\Theta^{n-2s}(\mu_{\rm sing},\mathbf{x}_0):=\lim_{r\downarrow 0}\, \frac{\mu_{\rm sing}(B_r(\mathbf{x}_0))}{{\omega}_{n-2s}r^{n-2s}}$$ 
exists at every $\mathbf{x}_0\in \widetilde\Sigma$, and $\Theta^{n-2s}(\mu_{\rm sing},\mathbf{x}_0)=\Theta^{n-2s}(\mu,\mathbf{x}_0)\in(0,\infty)$. 
By Marstrand's Theorem (see e.g. \cite[Theorem 14.10]{Matti}), it implies that $(n-2s)$ is an integer, which is an obvious contradiction. Hence $\mu_{\rm sing}\equiv 0$. 

Note that \eqref{train}  now yields $\mathscr{H}^{n-2s}(\Sigma)=0$. 
Moreover, we infer from \eqref{tim2253} that for every admissible open set  $G'$ such that  $\overline{G'}\subset G\cup\partial^0G$, 
$${\bf E}_s(v_*,G^\prime) 
\leq \liminf_{k\to\infty} {\bf E}_s(v_k,G^\prime) \\
\leq \lim_{k\to\infty}\mathbf{E}_{s,\eps_k}(v_k,G')= {\bf E}_s(v_*,G^\prime)\,.$$
Therefore $v_k\to v_*$ strongly in $H^1_{{\rm loc}}(G\cup\partial^0G,|z|^a\de\mathbf{x})$, and $\eps_k^{-2s}W(v_k)\to 0$ in $L^1_{\rm loc}(\partial^0G)$. 
\vskip5pt

\noindent{\it Step 2.}
We define for each $j\in\{1,\ldots,m\}$ the set 
$$E^*_j:=\Big\{\mathbf{x}=(x,0)\in\partial^0G : \text{$v_*={\bf a}_j$  a.e. on $D_r(x)$ for some $r\in(0,{\rm dist}(\mathbf{x},\partial^+G))$}\Big \} \,.$$
By construction, the $E^*_j$'s are disjoint relatively open subsets of $\partial^0G$. 

We claim that $E^*_j\cap \Sigma=\emptyset$ for every $j=1,\ldots,m$. Indeed, assume for instance that  $\mathbf{x}_0=(x_0,0)\in E^*_j$ some $j$. Then we can find $r>0$ such that $v_*={\bf a}_j$   on $D_r(x_0)$.  According to \cite[Lemma 4.8]{MilSirW} and Step 1, we have 
$$\Theta^{n-2s}(\mu,\mathbf{x}_0)=\lim_{\rho\to 0} \frac{1}{\rho^{n-2s}}{\bf E}_s\big(v_*,  B^+_\rho(\mathbf{x}_0)\big)=0\,,$$
whence $\mathbf{x}_0\not \in \Sigma$. 

Next we claim that $\partial^0G=\Sigma\cup\bigcup_j E^*_j$. Indeed, if $\mathbf{x}_0=(x_0,0)\in\partial^0G\setminus \Sigma$, then we can find a radius $r>0$ such that  
$\lim_k \mathbf{E}_{s,\eps_k}(v_k,B^+_{r}(\mathbf{x}_0))< \boldsymbol{\theta}_{b}r^{n-2s}$. By Proposition~\ref{epsregnew},  $v_k\to {\bf a}_j$ for some $j$ uniformly in $D_{r/4}(x_0)$. Therefore,  $v_*={\bf a}_j$  on  $D_{r/4}(x_0)$. Hence $\mathbf{x}_0\in E^*_j$. 

Since $\mathscr{L}^n(\Sigma)=0$, it implies in particular that \eqref{vstaraubord} holds. Next we show that $\Sigma=\bigcup_{j}\partial E^*_j\cap\partial^0G$.  
Indeed, if $\mathbf{x}_0=(x_0,0)\in \partial E^*_j\cap\partial^0G$ for some $j$, then $D_r(x_0)\cap E^*_j\not=\emptyset$ for every $r>0$. Since $E^*_j$ is open, $D_r(x_0)\cap E^*_j$ contains a small disc for every $r>0$. Thus $D_r(x_0)\not\subset E^*_l$ for every $l\not=j$ and every $r>0$, and thus  $x_0\in\Sigma$.  This shows that $\bigcup_j\partial E^*_j\cap\partial^0G\subset\Sigma$.  The other way around, if $x_0\in\Sigma$, then $x_0\not\in \bigcup_j E^*_j$.  
Since $\mathscr{L}^n(\Sigma)=0$, we easily deduce that for every $r>0$,  $ \bigcup_j E^*_j\cap D_r(x_0)\not=\emptyset$. Hence $\Sigma\subset \partial (\bigcup_jE^*_j)\cap\partial^0G\subset \bigcup_j\partial E^*_j\cap\partial^0G$. 
\vskip3pt

We now prove claims (ii) and (iii).  We fix an arbitrary compact set $K\subset \partial^0G\setminus \Sigma$, and notice that for every $\bar{\mathbf{x}}=(\bar x,0)\in K\times\{0\}$ and $0<r<\min\{{\rm dist}(K,\Sigma), {\rm dist}(K,\partial^+G)\}$, there exists $j_{\bar{\mathbf{x}}}\in\{1,\ldots,m\}$ such that $D_r(\bar x)\subset E^*_{j_{\bar{\mathbf{x}}}}$, that is $v_*={\bf a}_{j_{\bar{\mathbf{x}}}}$ on $D_r(\bar x)$.  By Lemma \cite[Lemma 4.8]{MilSirW}, we can find a radius $r_K<\min\{{\rm dist}(K,\Sigma), {\rm dist}(K,\partial^+G)\}$ such that 
$${\bf E}_s\big(v_*,B^+_{r_K}(\bar{\mathbf{x}})\big) <\boldsymbol{\theta}_{b}r_K^{n-2s}$$
for every $\bar{\mathbf{x}}\in K\times\{0\}$. Then we deduce from Step 1 that 
$$\lim_{k\to\infty} \mathbf{E}_{s,\eps_k}\big(v_k,B^+_{r_K}(\bar{\mathbf{x}})\big) < \boldsymbol{\theta}_{b}r_K^{n-2s}$$ 
for every $\bar{\mathbf{x}}\in K\times\{0\}$. Then (ii) follows  Proposition \ref{epsregnew} and a standard covering argument. In turn, (iii) follows from  Lemma \ref{estifond} and again a covering argument. 
\vskip5pt

\noindent{\it Step 3: Convergence of superlevel sets.} We now prove (iv)  choosing $t_W:=\boldsymbol{\varrho}_W>0$  (given by \eqref{lwbdhess}). 
We fix $t\in(0,t_W)$, a compact set $K\subset\partial^0G$, and a radius $r>0$. First, from (ii) or (iii) we deduce that ${\rm dist}(v_k,\mathcal{Z})\to 0$ uniformly on $K\setminus \mathscr{T}_r(\Sigma)$. Therefore, $L_k^t\cap K\subset \mathscr{T}_r(\Sigma)$ for $k$ large enough. Then we consider a covering of $\Sigma\cap K$ made by finitely many discs $D_{r/2}(x_1),\ldots, D_{r/2}(x_J)$ centered at points of $\Sigma$ (and included in $\partial^0G$, choosing a smaller radius if necessary). We claim that for $k$ large enough, each disc 
$ D_{r/2}(x_j)$ contains at least one point $z^k_j\in L_k^t$.  We argue by contradiction assuming that for some (not relabeled) subsequence, there is a disc $ D_{r/2}(x_{j_0})$ such that 
${\rm dist}(v_k,\mathcal{Z})<t<\boldsymbol{\varrho}_W$ on  $ D_{r/2}(x_{j_0})$  for every $k$. Noticing that \eqref{lwbdhess} implies that $|{\bf a}-{\bf a}^\prime|> \boldsymbol{\varrho}_W$ for ${\bf a}, {\bf a}^\prime\in\mathcal{Z}$ with ${\bf a}\not={\bf a}^\prime$, we infer that $|v_k-{\bf a}|\leq \boldsymbol{\varrho}_W$ on  $ D_{r/2}(x_{j_0})$ for some ${\bf a}\in\mathcal{Z}$ for every $k$ (choosing a further subsequence if necessary). As a consequence, $v_*={\bf a}$ on  $ D_{r/2}(x_{j_0})$, which implies that $x_{j_0}\not\in\Sigma$ by Step 2, a contradiction. 

Now if $x$ is an arbitrary point in $\Sigma\cap K$, then $x\in D_{r/2}(x_{j_x})$ for some $j_x\in\{1,\ldots,J\}$, and thus $|x-z^k_{j_x}|<r$ for $k$ large enough. 
Hence $\Sigma\cap K\subset \mathscr{T}_r(L_k^t)$ whenever $k$ is sufficiently large.
\vskip5pt

\noindent{\it Step 4: Proof of {\rm (v)}.} Let $\mathbf{X}=(X,\mathbf{X}_{n+1})\in C^1(\overline G;\R^{n+1})$ be a compactly supported  vector field in $G\cup \partial^0G$ such that $\mathbf{X}_{n+1}=0$ on $\partial^0G$.
By Corollary \ref{statAC}, we have 
$$\delta{\bf E}_s\big(v_k,G\cup\partial^0G\big)[{\bf X}]
+\frac{1}{\eps_k^{2s}}\int_{\partial^0 G}W(v_k)\,{\rm div}X\,\de x=0\,.$$
From formula \eqref{firstvarEsformul} and the convergences established in Step 1, we can pass to the limit $k\to\infty$ in this identity to infer that 
$\delta{\bf E}_s\big(v_*,G\cup\partial^0G\big)[{\bf X}]=0$, and the proof is complete. 
\end{proof}

\subsection{Proof of Theorem \ref{main1part1}, Theorem \ref{main1part1mincase}, and Theorem \ref{main1part1mincase2}}

\begin{proof}[Proof  of Theorem \ref{main1part1}]
{\it Step 1.} By the energy bound assumption, the sequence $\{u_k\}$ is bounded in $\widehat H^s(\Omega)$. According to \cite[Remark 2.2 \& Lemma 2.1]{MilPegSch}, we can  find a (not relabeled) subsequence and $u_*\in \widehat H^s(\Omega;\R^d)$ such that $u_k\rightharpoonup u_*$ weakly in $\widehat H^s(\Omega)$ and weakly in $L^2_{\rm loc}(\R^n)$.  By the compact embedding $H^s(\Omega)\hookrightarrow L^2(\Omega)$, we first deduce that $u_k\to u_*$ strongly in $L^2(\Omega)$. On the other hand, $\{u_k\}$ being uniformly bounded and a.e. convergent in $\R^n\setminus\Omega$, we infer that $u_k\to u_*$ strongly in $L^2_{\rm loc}(\R^n\setminus\Omega)$. Hence,  $u_k\to u_*$ strongly in $L^2_{\rm loc}(\R^n)$, and (up to a further subsequence if necessary) $u_k\to u_*$ a.e. in $\R^n$. By assumption, $u_*(x)\in\mathcal{Z}$ for a.e. $x\in \R^n\setminus\Omega$, and from the energy bound we deduce that $u_*(x)\in\mathcal{Z}$ for a.e. $x\in\Omega$. Therefore $u_*\in \widehat H^s(\Omega;\mathcal{Z})$. Setting for $j=1,\ldots, m$, 
$$E^*_j:=\big\{x\in\R^n: \text{ $x$ is Lebesgue point of $u_*$ and $u_*(x)={\bf a}_j$}\big\}\,, $$
the family $E^*_1,\ldots, E_m^*$ is a family of Borel  subsets of $\R^n$ such that $\sum_j\chi_{E^*_j}=1$ a.e. in $\R^n$, and \eqref{ustarpartcond} holds. 
Moreover, setting $\mathfrak{E}^*:=(E_1^*,\ldots,E^*_m)$, we  have 
$$\frac{\gamma_{n,s}}{2}\mathscr{P}_{2s}^{\boldsymbol{\sigma}}(\mathfrak{E}^*,\Omega)=\mathcal{E}_s(u_*,\Omega)<\infty\,,$$ 
so that 
$\mathfrak{E}^*$ belongs to $\mathscr{A}_m(\Omega)$. 
\vskip3pt

For each $j\in\{1,\ldots,m\}$, we now consider the Lipschitz function ${\rm p}_j:\R^d\to \R$ defined by 
\begin{equation}\label{projtrickEj}
{\rm p}_j(y):=\frac{{\rm dist}(y,\mathcal{Z}\setminus\{{\bf a}_j\})}{{\rm dist}({\bf a}_j,\mathcal{Z}\setminus\{{\bf a}_j\})} \,.
\end{equation}
Since $\chi_{E^*_j}={\rm p}_j(u_*)$ and $u_*\in \widehat H^s(\Omega;\R^d)$, we have $\chi_{E^*_j}\in \widehat H^s(\Omega)$, which rephrases as $\mathcal{E}_s(\chi_{E^*_j},\Omega)=\gamma_{n,s}P_{2s}(E^*_j,\Omega)<\infty$, i.e., $E^*_j$ has finite $2s$-perimeter in $\Omega$. 
\vskip3pt

Finally, recalling that the linear operator $u\mapsto u^\e$ is a continuous from $\widehat{H}^{s}(\Omega;\R^d)$ into $H^1(G,|z|^a\de \mathbf{x})$ for every admissible bounded  open set $G\subset\R^{n+1}_+$ satisfying $\overline{\partial^0G}\subset \Omega$ (see \cite[Corollary 2.10]{MilPegSch}), we infer that  $u_k^\e\rightharpoonup u_*^\e$ weakly in $H^1_{{\rm loc}}(\R^{n+1}_+\cup\Omega,|z|^a\de \mathbf{x})$.

\vskip3pt

\noindent{\it Step 2.} Let us now consider an increasing  sequence $\{G_l\}_{l\in\N}$ of  bounded admissible open sets such that  $\overline{\partial^0G_l}\subset\Omega$ for every $l\in\N$, $\cup_l G_l=\R^{n+1}_+$, and $\cup_l\partial^0G_l=\Omega$. Since $\|u_k^\e\|_{L^\infty(\R^{n+1}_+)}\leq \|u_k\|_{L^\infty(\R^n)}$, the sequence $\{u_k^\e\}$ is bounded in $L^\infty(G_l)$ for each $l\in\N$. According to Section \ref{subsectDegAllCahneq}, $u_k^\e\in H^1(G_l,|z|^a\de \mathbf{x})\cap L^\infty(G_l)$ solves 
$$ \begin{cases}
{\rm div}(z^a\nabla u^\e_k)= 0 & \text{in $G_l$}\,,\\[8pt]
\displaystyle \boldsymbol{\delta}_s\boldsymbol{\partial}^{(2s)}_z u^\e_k=\frac{1}{\varepsilon^{2s}_k}\nabla W(u^\e_k) & \text{on $\partial^0 G_l$}\,,  
\end{cases}
$$
for every $l\in\N$. In addition, by the energy bound assumption on $u_k$ and \cite[Corollary 2.10]{MilPegSch}), we have $\sup_k {\bf E}_{s,\eps_k}(u_k^\e,G_l)<\infty$ for every $l\in\N$. Therefore, we can apply Theorem~\ref{thmasympbdryreact}  in each $G_l$, and since  $u_k^\e\rightharpoonup u_*^\e$ weakly in $H^1_{{\rm loc}}(\R^{n+1}_+\cup\Omega,|z|^a\de \mathbf{x})$ by Step 1,  we conclude that $u_k^\e\to u_*^\e$ strongly in $H^1_{{\rm loc}}(\R^{n+1}_+\cup\Omega,|z|^a\de \mathbf{x})$.  By  \cite[Lemma 2.8]{MilPegSch}) and a standard covering argument, it implies that $u_k\to u_*$ strongly in $H^s_{\rm loc}(\Omega)$.

For each $l\in\mathbb{N}$ and $j\in\{1,\ldots,m\}$, we denote by $\widetilde E^*_{j,l}$ the limiting open subset of $\partial^0G_l$ provided by Theorem~\ref{thmasympbdryreact}, and we observe that $\widetilde E^*_{j,l}=\widetilde E^*_{j,l+1}\cap \partial^0G_l$ for every $l\in\N$ (see Step 2 in the proof of Theorem~\ref{thmasympbdryreact}). Then we define $\widetilde E^*_{j,\Omega}:=\cup_l \widetilde E^*_{j,l}$, so that $\widetilde E^*_{j,\Omega}$ is an open subset of $\Omega$,   
$\widetilde E^*_{j,l}=\widetilde E^*_{j,\Omega}\cap  \partial^0G_l$ for every $l\in\N$, and $u_*=\sum_j\chi_{\widetilde E^*_{j,\Omega}}{\bf a}_j$ a.e. in $\Omega$.  Therefore 
$\widetilde E^*_{j,\Omega}=E^*_j\cap \Omega$ up to a set of vanishing measure for every $j\in\{1,\ldots,m\}$. With a slight abuse of notation, we may then write $E^*_j\cap \Omega=\widetilde E^*_{j,\Omega}$, so that $E^*_j\cap \Omega$ is indeed an open set for each $j$. 
Then the conclusions of  Theorem~\ref{thmasympbdryreact} in each $G_l$ clearly imply the announced results stated in (ii), (iv), (v), and (vi). 
\vskip3pt

To show that $\mathfrak{E}*=(E^*_1,\ldots,E_m^*)$ satisfies \eqref{meancurveq}, we consider a vector field $X\in C^1(\R^n;\R^n)$ compactly supported in $\Omega$, and $\mathbf{X}=(\mathbf{X}_1,\ldots,\mathbf{X}_{n+1})\in C^1(\overline{\R^{n+1}_+};\mathbb{R}^{n+1})$ compactly supported in $\R^{n+1}_+\cup\Omega$ satisfying $\mathbf{X}=(X,0)$ on $\Omega$. 
Since the support of $\mathbf{X}$ is contained in $G_l\cup\partial^0G_l$ for $l$ large enough, we can apply (v) in Theorem \ref{thmasympbdryreact}. In view of  \eqref{identfirstvarfracext}, we obtain 
$$\delta\mathcal{E}(u_*,\Omega)[X]
=\delta{\bf E}\big(u^\e_*,G_l\cup\partial^0G_l\big)[{\bf X}]= 0\,.$$
On the other hand, if $\{\phi_t\}_{t\in\R}$ denotes the flow generated by $X$, we observe that for every $t\in\R$, 
\begin{equation}\label{deformupart}
u_*\circ\phi_{-t}=\sum_{j=1}^m\chi_{\phi_t(E_j^*)} {\bf a}_j\,,\text{ and }\;\mathcal{E}(u_*\circ\phi_{-t},\Omega)=\frac{\gamma_{n,s}}{2}\mathscr{P}_{2s}^{\boldsymbol{\sigma}}\big(\phi_t(\mathfrak{E}^*),\Omega\big)\,,
\end{equation}
so that 
\begin{equation}\label{deformupart2}
\delta\mathscr{P}_{2s}^{\boldsymbol{\sigma}}(\mathfrak{E}^*,\Omega)[X]= \frac{2}{\gamma_{n,s}}\delta\mathcal{E}(u_*,\Omega)[X]=0\,.
\end{equation}
\vskip3pt

\noindent{\it Step 3.} Now we show items (i) and (iii). Let $\Omega^\prime\subset\Omega$ be a smooth open subset compactly included in $\Omega$. By Step 2, $u_k\to u_*$ strongly in $H^s(\Omega^\prime)$. On the other hand, since $\partial \Omega^\prime$ is smooth, 
$$\int_{\R^n\setminus\Omega^\prime}\frac{|u_k(x)-u_k(y)|^2}{|x-y|^{n+2s}}\,\de y\leq \big(4\sup_k\|u_k\|^2_{L^\infty(\R^n)}\big) \int_{\R^n\setminus\Omega^\prime}\frac{1}{|x-y|^{n+2s}}\,\de y\in L^1(\Omega^\prime)\,,$$
so that 
$$\iint_{\Omega^\prime\times(\R^n\setminus\Omega^\prime)}\frac{|u_k(x)-u_k(y)|^2}{|x-y|^{n+2s}}\,\de y\de x\to \iint_{\Omega^\prime\times(\R^n\setminus\Omega^\prime)}\frac{|u_*(x)-u_*(y)|^2}{|x-y|^{n+2s}}\,\de y\de x$$
by dominated convergence, and (i) follows. Observe that, since $u_k\rightharpoonup u_*$ in $\widehat H^s(\Omega^\prime)$, it implies that $\mathcal{E}_s(u_k-u_*,\Omega^\prime)\to 0$ as $k\to \infty$. 

To prove (iii), we first recall from \cite[Lemma 6.35]{MilSirW} (and the fact that $E^*_j$ has finite $2s$-perimeter in~$\Omega$) that the function $w_j:=\chi_{E^*_j}-\chi_{( E^*_j)^c}\in \widehat H^s(\Omega)$ satisfies $(-\Delta)^sw_j\in L^1(\Omega)$ with 
$$(-\Delta)^sw_j(x)=\Big(\frac{\gamma_{n,s}}{2}\int_{\R^n}\frac{|w_j(x)-w_j(y)|^2}{|x-y|^{n+2s}}\,\de y\Big)w_j(x) \quad\text{a.e. in $\Omega$}\,,$$
for each $j=1,\ldots,m$. Writing $w_j=2\chi_{E^*_j}-1$, we deduce that  $(-\Delta)^s\chi_{E^*_j}\in L^1(\Omega)$ with 
$$(-\Delta)^s\chi_{E^*_j}(x)=\Big(\gamma_{n,s}\int_{\R^n}\frac{|\chi_{E^*_j}(x)-\chi_{E^*_j}(y)|^2}{|x-y|^{n+2s}}\,\de y\Big) \big(2\chi_{E^*_j}(x)-1\big)= V_{E^*_j}(x)$$
for a.e. $x\in \Omega$. Therefore, $(-\Delta)^su_*\in L^1(\Omega)$ with 
\begin{equation}\label{eqpartquelq1}
 (-\Delta)^su_*=\sum_{j=1}^m V_{E^*_j}{\bf a}_j\quad\text{a.e. in $\Omega$}\,.
 \end{equation}
Since $\|(-\Delta)^s(u_k-u_*)\|_{H^{-s}(\Omega^\prime)}\leq 2\mathcal{E}_s(u_k-u_*,\Omega^\prime)\to 0$, we have $(-\Delta)^su_k\to (-\Delta)^su_*$ strongly in $H^{-s}(\Omega^\prime)$, and (iii) follows from equations \eqref{fracalllcahnf} and \eqref{eqpartquelq1}. 
\end{proof}

 \begin{proof}[Proof of Theorem \ref{main1part1mincase}] 
 Let $u\in \widehat H^s(\Omega;\mathcal{Z})$ be such that $u-u_*$ is compactly supported in $\Omega$. Since $\partial \Omega$ is assumed to be smooth, we can find an open subset $\Omega^\prime$ with smooth boundary such that $\overline{\Omega^\prime}\subset \Omega$ and $u-u_*$ is supported in $\Omega^\prime$. Then $\chi_{\Omega^\prime}$ has finite $2s$-perimeter in $\R^n$, and consequently, setting $\bar u_k:=\chi_{\Omega^\prime}u+(1-\chi_{\Omega^\prime})u_k$, 
 $$\mathcal{E}_s(\bar u_k,\Omega)\leq \frac{1}{2} [u]^2_{H^s(\Omega^\prime)}+ \mathcal{E}_s(u_k,\Omega)+CP_{2s}(\Omega^\prime,\R^n)<\infty\,,$$
 for a constant $C$ depending on $\|u\|_{L^\infty(\R^n)}$ and $\sup_k\|u_k\|_{L^\infty(\R^n)}$. In other words, $\bar u_k\in \widehat H^s(\Omega;\R^d)$, and by construction $\bar u_k-u_k$ is compactly supported in $\Omega^\prime$ (hence in $\Omega$). By minimality of $u_k$, we have 
\begin{equation}\label{eqmistak}
0\leq \mathcal{E}_{s,\eps_k}(\bar u_k,\Omega)-\mathcal{E}_{s,\eps_k}(u_k,\Omega)
 = \mathcal{E}_{s,\eps_k}(\bar u_k,\Omega^\prime)-\mathcal{E}_{s,\eps_k}(u_k,\Omega^\prime) \,.
 \end{equation}
Since $\bar u_k$ is $\mathcal{Z}$-valued in $\Omega^\prime$, we have
 $$ \mathcal{E}_{s,\eps_k}(\bar u_k,\Omega^\prime)= \mathcal{E}_{s}(\bar u_k,\Omega^\prime)=\frac{1}{2}[u]^2_{H^s(\Omega^\prime)} + \frac{\gamma_{n,s}}{2}\iint_{\Omega^\prime\times(\Omega^\prime)^c}\frac{|u(x)-u_k(y)|^2}{|x-y|^{n+2s}}\,\de x\de y\,.$$
 On the other hand,  
 $$ \frac{|u(x)-u_k(y)|^2}{|x-y|^{n+2s}}\leq \frac{C}{|x-y|^{n+2s}} \in L^1(\Omega^\prime\times(\Omega^\prime)^c)\,,$$
and $u_k\to u_*=u$ a.e. in $\R^n\setminus\Omega^\prime$. Hence, 
$$ \mathcal{E}_{s,\eps_k}(\bar u_k,\Omega^\prime)\to  \mathcal{E}_{s}(u,\Omega^\prime) \quad\text{as $k\to\infty$}\,.$$
 In view of (i) and (ii) in Theorem \ref{main1part1}, we also have  $\mathcal{E}_{s,\eps_k}(u_k,\Omega^\prime)\to  \mathcal{E}_{s}(u_*,\Omega^\prime)$. 
Therefore, letting $k\to\infty$ in \eqref{eqmistak}  yields 
 $$0\leq \mathcal{E}_{s}(u,\Omega^\prime)-\mathcal{E}_{s}(u_*,\Omega^\prime)= \mathcal{E}_{s}(u,\Omega)-\mathcal{E}_{s}(u_*,\Omega)\,,$$
 since $u-u_*$ is supported in $\Omega^\prime$. 
 
 Finally, if $\mathfrak{E}=(E_1,\ldots,E_m)\in\mathscr{A}_m(\Omega)$ is such that each $E_j\triangle E^*_j$ is compactly included in $\Omega$ , 
 then the map $u:=\sum_j\chi_{F_j}{\bf a}_j$ belongs to $\widehat H^s(\Omega;\mathcal{Z})$ and $u-u_*$ is compactly supported in $\Omega$. Hence, by minimality  of $u_*$, we have 
 \begin{equation}\label{minimPproof}
 \mathscr{P}_{2s}^{\boldsymbol{\sigma}}(\mathfrak{E}^*,\Omega)
 = \frac{2}{\gamma_{n,s}}\mathcal{E}(u_*,\Omega)\leq \frac{2}{\gamma_{n,s}}\mathcal{E}(u,\Omega)= \mathscr{P}_{2s}^{\boldsymbol{\sigma}}(\mathfrak{E},\Omega)\,,
 \end{equation}
 and the minimality of $\mathfrak{E}^*$ is proved. 
 \end{proof}

 \begin{proof}[Proof of Theorem \ref{main1part1mincase2}] 
According to Remark \ref{remarkLinftymin}, $\sup_k\|u_k\|_{L^\infty(\R^n)}\leq C$ for a constant $C$ depending on $W$ and $\sup_k\|g_k\|_{L^\infty(\R^n\setminus\Omega)}$. As in the previous proof, since $\Omega$ has a smooth boundary, $\Omega$ has finite $2s$-perimeter in $\R^n$. Setting $\bar u_k:=\chi_\Omega{\bf a}_1+(1-\chi_\Omega)g_k$, we have $\bar u_k\in H_{g_k}^s(\Omega;\R^d)$ with  
$$ \mathcal{E}_{s,\eps_k}(\bar u_k,\Omega)=\frac{\gamma_{n,s}}{2}\iint_{\Omega\times\Omega^c}\frac{|{\bf a}_1-g_k(y)|^2}{|x-y|^{n+2s}}\,\de x\de y\leq C P_{2s}(\Omega,\R^n)\,.$$
for a constant $C$ depending on $\sup_k\|g_k\|_{L^\infty(\R^n\setminus\Omega)}$. By minimality of $u_k$, we have $\mathcal{E}_{s,\eps_k}(u_k,\Omega)\leq \mathcal{E}_{s,\eps_k}(\bar u_k,\Omega)\leq \widetilde C$ for a constant $\widetilde C$ independent of $k$. Therefore Theorem \ref{main1part1} applies, and recalling the Step 1 of its proof, we have $u_k\rightharpoonup u_*$ weakly in $\widehat H^s(\Omega)$ with $ u_*\in \widehat H_g^s(\Omega;\mathcal{Z})$. Then we consider the competitor $\widetilde u_k:=\chi_\Omega u_*+(1-\chi_\Omega)g_k\in H_{g_k}^s(\Omega;\R^d)$ which satisfies
$$ \mathcal{E}_{s,\eps_k}(\widetilde u_k,\Omega)=\frac{1}{2}[u_*]^2_{H^s(\Omega)}+\frac{\gamma_{n,s}}{2}\iint_{\Omega\times\Omega^c}\frac{|u_*(x)-g_k(y)|^2}{|x-y|^{n+2s}}\,\de x\de y \,.$$
We have 
$$\frac{|u_*(x)-g_k(y)|^2}{|x-y|^{n+2s}}\leq \frac{\widehat C}{|x-y|^{n+2s}}\in L^1(\Omega\times\Omega^c)\text{ for a.e. }(x,y)\in  \Omega\times\Omega^c\,,$$
For a constant $\widehat C$ depending on $\mathcal{Z}$ and  $\sup_k\|g_k\|_{L^\infty(\R^n\setminus\Omega)}$. Since $g_k\to g$ a.e. in $\Omega^c$, we have 
$  \mathcal{E}_{s,\eps_k}(\widetilde u_k,\Omega)\to  \mathcal{E}_{s}(u_*,\Omega)$ by dominated convergence. On the other hand, by lower semi-continuity of $  \mathcal{E}_{s}$ with respect to weak convergence in $\widehat H^s$ (see \cite[Remark 2.2]{MilPegSch}), we have 
$$ \mathcal{E}_{s}(u_*,\Omega)\leq \liminf_{k}  \mathcal{E}_{s}(u_k,\Omega)\,.$$  
However, by minimality of $u_k$, 
$$ \limsup_{k}  \mathcal{E}_{s,\eps_k}( u_k,\Omega)\leq \lim_{k}  \mathcal{E}_{s,\eps_k}(\widetilde u_k,\Omega)=  \mathcal{E}_{s}(u_*,\Omega)\,.$$
Consequently, $\eps_k^{2s}\int_\Omega W(u_k)\,\de x\to 0$ and  $\mathcal{E}_{s}( u_k,\Omega)\to  \mathcal{E}_{s}( u_*,\Omega)$. Since $u_k\rightharpoonup u_*$ weakly in $\widehat H^s(\Omega)$, it classically implies that  $u_k\to u_*$ strongly in $\widehat H^s(\Omega)$, and in particular strongly in $H^s(\Omega)$. 

Finally, if $\widehat u\in \widehat H_g^s(\Omega;\mathcal{Z})$ is arbitrary, we set $\widehat u_k:=\chi_\Omega \widehat u+(1-\chi_\Omega)g_k$. Arguing as above, we have  $\widehat u_k\in H_{g_k}^s(\Omega;\R^d)$, and 
$$ \mathcal{E}_{s}( u_*,\Omega)\mathop{\longleftarrow}\limits_{k\to\infty}  \mathcal{E}_{s,\eps_k}( u_k,\Omega) \leq \mathcal{E}_{s,\eps_k}(\widehat u_k,\Omega)\mathop{\longrightarrow}\limits_{k\to\infty} \mathcal{E}_{s}( \widehat u,\Omega)\,.$$
Arguing once again as above, for $\mathfrak{E}=(E_1,\ldots,E_m)\in\mathscr{A}_m(\Omega)$ satisfying $u:=\sum_j\chi_{F_j}{\bf a}_j = g$ a.e. in $\R^n\setminus\Omega$, we have 
$u\in \widehat H_g^s(\Omega;\mathcal{Z})$, and thus  \eqref{minimPproof} holds,  i.e.,  $\mathfrak{E}*$ solves the minimization problem~\eqref{limitminpb}. 
 \end{proof}

%%%%%%%%%%%%%%%%%%%%%%%%%%%%%%%%%%%%%%%%%%%%%%%%%%%%%%%%
%%%%%%%%%%%%%%%%%%%%%%%%%%%%%%%%%%%%%%%%%%%%%%%%%%%%%%%%

\section{Nonlocal minimal partitions}\label{GHSNMC}

%%%%%%%%%%%%%%%%%%%%%%%%%%%%%%%%%%%%%%%%%%%%%%%%%%%%%%%%
%%%%%%%%%%%%%%%%%%%%%%%%%%%%%%%%%%%%%%%%%%%%%%%%%%%%%%%%

In this section, we investigate regularity properties in a smooth bounded open set $\Omega\subset\R^n$ of a partition $\mathfrak{E}^*=(E^*_1,\ldots,E^*_m)\in\mathscr{A}_m(\Omega)$ 
which is stationary in $\Omega$ for the functional $\mathscr{P}^{\boldsymbol{\sigma}}_{2s}(\cdot,\Omega)$, that is $\mathscr{P}^{\boldsymbol{\sigma}}_{2s}\big(\cdot,\Omega\big)$ has a vanishing first inner variation at  $\mathfrak{E}$, i.e., 
\begin{equation}\label{SectParteq1}
\delta \mathscr{P}^{\boldsymbol{\sigma}}_{2s}(\mathfrak{E}^*,\Omega)[X]=0 \qquad\forall X\in C_c^1(\Omega;\R^n)\,,
\end{equation}
where $\delta \mathscr{P}^{\boldsymbol{\sigma}}_{2s}$  is the first variation defined by \eqref{deffirstvarparti}. 
Recall that the set of admissible partitions $\mathscr{A}_m(\Omega)$  is made of finite energy, 
 or equivalently made of partitions $\mathfrak{E}=(E_1,\ldots,E_m)$ such that each $E_j$ has finite $2s$-perimeter in $\Omega$ since 
$$\frac{\boldsymbol{\sigma}_{\rm min}}{2}\sum_{j=1}^mP_{2s}(E_j,\Omega) \leq \mathscr{P}^{\boldsymbol{\sigma}}_{2s}(\mathfrak{E},\Omega)\leq \frac{\boldsymbol{\sigma}_{\rm max}}{2}\sum_{j=1}^mP_{2s}(E_j,\Omega) \,,$$
where $\boldsymbol{\sigma}_{\rm min}:=\min_{i\not=j}|\sigma_{ij}|>0$ and $\boldsymbol{\sigma}_{\rm max}:=\max_{i\not=j}|\sigma_{ij}|>0$. 
\vskip3pt
As we have seen in the previous section,  stationary partitions arise as limits of critical points of the fractional Allen-Cahn energy with multiple wells. 
Among the whole class of  stationary partitions  in $\Omega$,
 one has the family of minimizing partitions  in $\Omega$.  

\begin{definition}\label{defminpart}
A partition $\mathfrak{E}\in\mathscr{A}_m(\Omega)$  is said to be $\mathscr{P}^{\boldsymbol{\sigma}}_{2s}$-minimizing in $\Omega$ if 
$$ \mathscr{P}^{\boldsymbol{\sigma}}_{2s}(\mathfrak{E},\Omega)\leq  \mathscr{P}^{\boldsymbol{\sigma}}_{2s}\big(\mathfrak{F},\Omega\big)$$
for every  $\mathfrak{F}=(F_1,\ldots,F_m)\in\mathscr{A}_m(\Omega)$ such that each $\overline{F_j\triangle E_j}\subset\Omega$,  $j\in\{1,\ldots,m\}$.  
\end{definition}

Minimizing partitions arise for instance as solutions of the minimization problem 
\begin{equation}\label{PartminProb}
\min\Big\{\mathscr{P}^{\boldsymbol{\sigma}}_{2s}(\mathfrak{E},\Omega) : \text{$\mathfrak{E}=(E_1,\ldots,E_m) \in\mathscr{A}_m(\Omega)$, $E_j\setminus\Omega=\bar E_j\setminus\Omega$ $\forall j$} \Big\}\,, 
\end{equation}
for a given exterior data   $\bar{\mathfrak{E}}=(\bar E_1,\ldots,\bar E_m) \in\mathscr{A}_m(\Omega)$.  
Once again the resolution of \eqref{PartminProb} follows from the Direct Method of Calculus of Variations and provides examples of  stationary partitions in $\Omega$, i.e., solutions to \eqref{SectParteq1}. Solutions of \eqref{PartminProb} can also be obtained as singular limits of solutions of minimization problems for the Allen-Cahn energy, as seen in Theorem~\ref{main1part1mincase2}. However, both in the minimizing and in the stationary case, this ``Allen-Cahn approximation'' holds if each coefficient of the matrix $\boldsymbol{\sigma}$ corresponds to the Euclidean distance squared between points in some finite dimensional linear space. We shall also use this assumption here.  
\vskip5pt

Throughout this section,  we shall assume that the matrix of coefficients $\boldsymbol{\sigma}$ belongs to $\mathscr{S}_m^2$ which means that the matrix $(\sqrt{\sigma_{ij}})_{1\leq i,j\leq m}$ is {\sl $\ell^2$-embeddable}: there exists a dimension  $d\in\N\setminus\{0\}$ and a finite set $\mathcal{Z}_{\boldsymbol{\sigma}}=\{{\bf a}_1,\ldots,{\bf a}_m\}\subset\R^d$ such that 
$$\sqrt{{\sigma}_{ij}}=|{\bf a}_i-{\bf a}_j|\quad\forall i,j\in\{1,\ldots,m\}\,. $$
In this case, introducing for a partition $\mathfrak{E}=(E_1,\ldots,E_m)\in\mathscr{A}_m(\Omega)$ the {\sl ``phase indicator function"} 
\begin{equation}\label{basdec1}
u:=\sum_{j=1}^m\chi_{E_j}{\bf a}_j\in\widehat H^s(\Omega;\mathcal{Z}_{\boldsymbol{\sigma}})\,,
\end{equation}
the energy rewrites 
\begin{equation}\label{basdec2}
\frac{\gamma_{n,s}}{2}\mathscr{P}^{\boldsymbol{\sigma}}_{2s}(\mathfrak{E},\Omega)=\mathcal{E}_s(u,\Omega)\,,
\end{equation}
and condition  \eqref{SectParteq1} is equivalent to the stationarity of the indicator function $u_*$ of $\mathfrak{E}_*$, i.e., 
\begin{equation}\label{campdev1847}
\delta\mathcal{E}_s(u_*,\Omega)[X]=0\qquad\forall X\in C_c^1(\Omega;\R^n)\,,
\end{equation}
as we have seen in \eqref{deformupart}-\eqref{deformupart2}.    
\vskip3pt

Similarly, the partition $\mathfrak{E}^*$ is $\mathscr{P}^{\boldsymbol{\sigma}}_{2s}$-minimizing in $\Omega$ if and only if $u_*$  
is a $\mathcal{E}_s$-minimizing map into $\mathcal{Z}_{\boldsymbol{\sigma}}$ in $\Omega$ (by \eqref{minimPproof}) according to the following definition. 

\begin{definition}
A map $u\in \widehat H^s(\Omega;\mathcal{Z}_{\boldsymbol{\sigma}})$ is said to be a $\mathcal{E}_s$-minimizing map into $\mathcal{Z}_{\boldsymbol{\sigma}}$ in $\Omega$ if 
$$\mathcal{E}_s(u,\Omega)\leq \mathcal{E}_s(\widetilde u,\Omega) $$
for all $\widetilde u\in \widehat H^s(\Omega;\mathcal{Z}_{\boldsymbol{\sigma}})$ such that $\widetilde u-u$ is compactly supported in $\Omega$. 
\end{definition}
\vskip5pt

As we already did for the fractional Allen-Cahn equation, our analysis rely on the fractional harmonic extension $u_*^\e$ defined in  \eqref{poisson} of  $u_*$  which satisfies 
\begin{equation}\label{campdev2}
\begin{cases}
{\rm div}(z^a\nabla u_*^\e)=0 & \text{in $\R^{n+1}_+$}\,,\\
u_*^\e\in\mathcal{Z}_{\boldsymbol{\sigma}} & \text{on $\R^n$}\,,\\
|u_*^\e|\leq |{\bf a}|_{\rm max} & \text{in $\R^{n+1}_+$}\,,
\end{cases}
\end{equation}
where we have set $|{\bf a}|_{\rm max}:=\max_j|{\bf a}_j|$. According to \eqref{identfirstvarfracext}, equation \eqref{campdev1847} is in turn equivalent to 
\begin{equation}\label{stateeq1458}
\delta{\bf E}_s\big(u_*^\e,G\cup\partial^0G\big)[{\bf X}]= 0
\end{equation}
for every vector field ${\bf X}=(X,{\bf X}_{n+1})\in C^1(\overline G;\R^{n+1})$ compactly supported in $G\cup\partial^0G$ satisfying ${\bf X}_{n+1}=0$ on $\partial^0 G$, whenever $G\subset\R^{n+1}_+$ is an admissible bounded open set such that $\overline{\partial^0G}\subset \Omega$. 
\vskip5pt

By our assumption $\boldsymbol{\sigma}\in \mathscr{S}_m^2$, our aim to investigate the regularity properties in $\Omega$ of stationary or minimizing partitions  rephrases as a regularity problem for the discontinuity set of the indicator function $u_*$. In turn this can be further rephrased  as a boundary regularity problem for solutions of \eqref{campdev2}-\eqref{stateeq1458}. Following the approach in \cite{MilSirW}, and similarly to Sections \ref{FractAC}  \& \ref{FGLasymp}, we shall deal with the following more general situation. 
 We consider an admissible bounded  open set $G\subset \R^{n+1}_+$ and a  map $v\in H^{1}(G;\R^d,|z|^a\de{\bf x})\cap L^\infty(G)$ satisfying 
\begin{equation}\label{startequsurfnew}
\begin{cases}
{\rm div}(z^a\nabla v)=0 & \text{in $G$}\,,\\
v\in\mathcal{Z}_{\boldsymbol{\sigma}} & \text{on  $\partial^0 G$}\,,\\
|v|\leq b & \text{in $G$}\,,
\end{cases}
\end{equation}
for a given parameter $b\geq 1$, and 
\begin{equation}\label{critnonlocexteqnew}
\delta{\bf E}_s\big(v,G\big)[{\bf X}]= 0\,,
\end{equation}
for every vector field ${\bf X}=(X,{\bf X}_{n+1})\in C^1(\overline G;\R^{n+1})$ compactly supported in $G\cup\partial^0G$ satisfying ${\bf X}_{n+1}=0$ on $\partial^0 G$. 
\vskip3pt

Boundary regularity estimates at $\partial^0 G$ on the function $v$ satisfying \eqref{startequsurfnew}-\eqref{stateeq1458}  will be our main concern in this section (note that $v$ is smooth inside $G$ by classical elliptic theory). The application to solutions of \eqref{campdev1847} is the object of the very last subsection with some specific results. Improvements in the minimizing case will be the object of Section \ref{regminpartsect} with partial regularity results for the boundaries. In the sequel, many arguments are borrowed from~\cite{MilSirW}, but we often prefer to include full proofs and details for completeness.

%%%%%%%%%%%%%%%%%%%%%%%%%%%%%%%%%%%%%%%%%%%%%%%%%%%%%%%%

\subsection{Energy monotonicity and clearing-out}\label{subres1} 

%%%%%%%%%%%%%%%%%%%%%%%%%%%%%%%%%%%%%%%%%%%%%%%%%%%%%%%%

In this subsection, we consider a given admissible bounded  open set $G\subset \R^{n+1}_+$ and 
an arbitrary solution $v\in H^{1}(G;\R^d,|z|^a\de{\bf x})\cap L^\infty(G)$ to \eqref{startequsurfnew}-\eqref{critnonlocexteqnew}. Since the trace of $v$ on $\partial^0 G$ is $\mathcal{Z}_{\boldsymbol{\sigma}}$-valued, we can find $m$ disjoint measurable subsets $E^v_j\subset \partial^0 G$ such that 
\begin{equation}\label{partitonthebdry}
\sum_{j=1}^m \chi_{E^v_j}=1\quad\text{and}\quad v=\sum_{j=1}^m \chi_{E^v_j}{\bf a}_j\quad\text{a.e. on $\partial^0G$}\,.
\end{equation}
We begin with the fundamental monotonicity formula involving the following density function:  
for a point $\mathbf{x}_0=(x_0,0)\in\partial^0 G$ and $r>0$ such that $B^+_r(x_0)\subset G$, we set  
$$\boldsymbol{\Theta}_s(v,x_0,r):=\frac{1}{r^{n-2s}}\mathbf{E}_s\big(v,B^+_r({\bf x}_0)\big)\,.$$ 
Arguing exactly as in the proof of \cite[Lemma 6.2]{MilSirW}, equation 
 \eqref{critnonlocexteqnew} combined with formula \eqref{firstvarEsformul} yields the following monotonicity formula. 

\begin{lemma}\label{monotformsurf}
For every $\mathbf{x}_0=(x_0,0)\in  \partial^0 G$ and $r>\rho>0$ such that $B^+_r(\mathbf{x}_0)\subset G$,  
$$\boldsymbol{\Theta}_s(v,x_0,r)- \boldsymbol{\Theta}_s(v,x_0,\rho)= 
\boldsymbol{\delta}_s\int_{B^+_r({\bf x}_0)\setminus B^+_\rho({\bf x}_0)} z^a\frac{|(\mathbf{x}-{\bf x}_0)\cdot \nabla v|^2}{|\mathbf{x}-{\bf x}_0|^{n+2-2s}}\,\de{\bf x}\,.$$
\end{lemma}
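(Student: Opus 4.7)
The plan is to derive the formula by testing the stationarity condition \eqref{critnonlocexteqnew} against the radial-type vector field
$${\bf X}({\bf x}):=\eta\big(|{\bf x}-{\bf x}_0|\big)\,({\bf x}-{\bf x}_0)\,,$$
where $\eta\in C^\infty_c\big([0,{\rm dist}({\bf x}_0,\partial^+G))\big)$ is a smooth cutoff (soon to be chosen). This ${\bf X}$ is compactly supported in $G\cup\partial^0 G$, and since $({\bf x}-{\bf x}_0)_{n+1}=z$, the last component ${\bf X}_{n+1}=\eta(|{\bf x}-{\bf x}_0|)\,z$ vanishes on $\partial^0G$. Hence ${\bf X}$ is admissible.

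Next I would plug this choice into formula \eqref{firstvarEsformul}. Using
$\partial_j{\bf X}_i=\eta\,\delta_{ij}+\eta'\,\frac{({\bf x}-{\bf x}_0)_i({\bf x}-{\bf x}_0)_j}{|{\bf x}-{\bf x}_0|}$,
$\mathrm{div}\,{\bf X}=(n+1)\eta+\eta'|{\bf x}-{\bf x}_0|$, and ${\bf X}_{n+1}=\eta\,z$, a direct computation (with $\eta$ and $\eta'$ evaluated at $|{\bf x}-{\bf x}_0|$) combined with the identity $n-1+a=n-2s$ yields
\begin{equation*}
\frac{2}{\boldsymbol{\delta}_s}\delta{\bf E}_s(v,G)[{\bf X}]
=\int_G z^a\Big[(n-2s)\,\eta\,|\nabla v|^2+\eta'|{\bf x}-{\bf x}_0|\,|\nabla v|^2
-\frac{2\eta'}{|{\bf x}-{\bf x}_0|}\big|({\bf x}-{\bf x}_0)\cdot\nabla v\big|^2\Big]\,\de{\bf x}\,.
\end{equation*}
The stationarity condition \eqref{critnonlocexteqnew} forces this to vanish for every such $\eta$. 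Choosing a sequence of cutoffs $\eta_\delta$ approximating the indicator $\chi_{[0,r)}$ (so that $\eta_\delta'$ converges, as a distribution, to $-\delta_r$), passing to the limit by standard coarea/slicing arguments (applicable for a.e.\ $r$), yields the identity
\begin{equation*}
(n-2s)\!\!\int_{B_r^+({\bf x}_0)}\!\!z^a|\nabla v|^2\,\de{\bf x}
=r\!\!\int_{\partial^+ B_r^+({\bf x}_0)}\!\! z^a|\nabla v|^2\,\de\mathscr{H}^n
-\frac{2}{r}\!\!\int_{\partial^+ B_r^+({\bf x}_0)}\!\! z^a\big|({\bf x}-{\bf x}_0)\cdot\nabla v\big|^2\,\de\mathscr{H}^n\,.
\end{equation*}

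Finally, I would set $\varphi(r):=\boldsymbol{\Theta}_s(v,x_0,r)=\tfrac{\boldsymbol{\delta}_s}{2}r^{2s-n}\int_{B_r^+({\bf x}_0)}z^a|\nabla v|^2\,\de{\bf x}$, differentiate it in $r$ via the coarea formula, and substitute the identity above to eliminate the volume integral. After the cancellation between the term $(2s-n)r^{2s-n-1}\int_{B_r^+}z^a|\nabla v|^2$ and the first boundary term, one is left precisely with
$$\varphi'(r)=\boldsymbol{\delta}_s\int_{\partial^+ B_r^+({\bf x}_0)}z^a\,\frac{\big|({\bf x}-{\bf x}_0)\cdot\nabla v\big|^2}{|{\bf x}-{\bf x}_0|^{n+2-2s}}\,\de\mathscr{H}^n\,,$$
and integrating from $\rho$ to $r$ (using coarea once more in the right-hand side) gives the stated formula.

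The only delicate point is the passage to the limit $\eta_\delta\to\chi_{[0,r)}$, which has to be justified for a.e.\ $r\in(0,{\rm dist}({\bf x}_0,\partial^+G))$ since $|\nabla v|^2 z^a$ is only in $L^1$; however the resulting formula, being continuous in $r$ on both sides, then extends to every such $r$ by a standard density argument. Everything else is purely algebraic manipulation, so I expect the approximation/slicing step to be the main (though entirely routine) technicality.
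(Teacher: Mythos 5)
Your proposal is correct and follows essentially the same route as the paper: the paper cites \cite[Lemma 6.2]{MilSirW}, which derives the monotonicity formula precisely by testing the stationarity identity \eqref{critnonlocexteqnew} (via \eqref{firstvarEsformul}) against the radial vector field $\mathbf{X}(\mathbf{x})=\eta(|\mathbf{x}-\mathbf{x}_0|)(\mathbf{x}-\mathbf{x}_0)$, approximating $\eta\to\chi_{[0,r)}$, and then integrating the resulting Pohozaev-type identity in the radius. Your algebra, in particular the use of $n-1+a=n-2s$, the cancellation in $\varphi'(r)$, and the remark that the limiting identity first holds for a.e.\ $r$ and then for every $r$ by continuity of both sides, is all accurate.
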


In turn, the monotonicity above allows to define to so-called density function, a main tool in the regularity theory as illustrated in the following Lemma \ref{epsregminsurf}. 

\begin{corollary}\label{directcorlmonot}
For every $\mathbf{x}=(x,0)\in  \partial^0 G$, the limits 
$$\boldsymbol{\Theta}_s(v,x):=\lim_{r\downarrow 0} \boldsymbol{\Theta}_s(v,x,r)=\lim_{r\downarrow 0}  \frac{1}{r^{n-2s}}\mathbf{E}_s\big(v,B^+_r({\bf x}_0)\big)$$
exist, and the function $\boldsymbol{\Theta}_s(v,\cdot):\partial^0 G \to[0,\infty)$ is upper semicontinuous. In addition, 
\begin{equation}\label{cdv1117}
\boldsymbol{\Theta}_s(v,x_0,r)- \boldsymbol{\Theta}_s(v,x_0)= \boldsymbol{\delta}_s\int_{B^+_r({\bf x}_0)} z^a\frac{|(\mathbf{x}-{\bf x}_0)\cdot \nabla v|^2}{|\mathbf{x}-{\bf x}_0|^{n+2-2s}}\,\de{\bf x}\,.
\end{equation}
\end{corollary}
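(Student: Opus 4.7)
The plan is to read everything off the monotonicity identity in Lemma \ref{monotformsurf}. Since the right-hand side of that identity is nonnegative, the map $r\mapsto \boldsymbol{\Theta}_s(v,x_0,r)$ is nondecreasing on the interval where $B^+_r({\bf x}_0)\subset G$, and since it is bounded below by $0$, the limit as $r\downarrow 0$ exists and defines $\boldsymbol{\Theta}_s(v,x_0)$. Letting $\rho\downarrow 0$ in the identity of Lemma~\ref{monotformsurf} and invoking monotone convergence on the right-hand side (the integrand is nonnegative) yields precisely \eqref{cdv1117}.

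For the upper semicontinuity of $\boldsymbol{\Theta}_s(v,\cdot)$, I would fix $x_0\in\partial^0G$ and a sequence $x_k\to x_0$ in $\partial^0G$, and pick any radius $r>0$ small enough so that $\overline{B_r^+({\bf x}_0)}\subset G\cup\partial^0G$. For $k$ large enough, $B_r^+({\bf x}_k)\subset G$, so by monotonicity
\begin{equation*}
\boldsymbol{\Theta}_s(v,x_k)\leq \boldsymbol{\Theta}_s(v,x_k,r)=\frac{1}{r^{n-2s}}\mathbf{E}_s\big(v,B_r^+({\bf x}_k)\big).
\end{equation*}
The map $y\mapsto \mathbf{E}_s(v,B_r^+({\bf y}))$ is continuous at ${\bf x}_0$ by dominated convergence: the integrand $|z|^a|\nabla v|^2$ lies in $L^1(G)$, and $\chi_{B_r^+({\bf x}_k)}\to \chi_{B_r^+({\bf x}_0)}$ pointwise $\mathscr{L}^{n+1}$-a.e. (the boundary sphere has Lebesgue measure zero). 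Passing to the $\limsup$ in $k$ therefore gives
\begin{equation*}
\limsup_{k\to\infty}\boldsymbol{\Theta}_s(v,x_k)\leq \boldsymbol{\Theta}_s(v,x_0,r).
\end{equation*}
Finally letting $r\downarrow 0$ yields $\limsup_k\boldsymbol{\Theta}_s(v,x_k)\leq \boldsymbol{\Theta}_s(v,x_0)$, which is the claimed upper semicontinuity.

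I do not expect any genuine obstacle here: both assertions are purely a consequence of the monotonicity formula together with standard measure-theoretic convergence arguments. The only mild point to check carefully is that for $k$ large the half-balls $B_r^+({\bf x}_k)$ remain contained in $G$, which follows from $\overline{\partial^0 G}$ being relatively open in $\partial\mathbb{R}^{n+1}_+$ and the admissibility of $G$, and that $v$ restricted to each such half-ball has the right integrability to justify dominated convergence, which is immediate from $v\in H^1(G;\R^d,|z|^a\de{\bf x})$.
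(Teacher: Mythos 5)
Your proof is correct and takes essentially the same route as the paper: existence of the limit and identity \eqref{cdv1117} both come directly from the monotonicity formula of Lemma~\ref{monotformsurf}, and the upper semicontinuity is obtained by noting that for each fixed $r$ the map $x\mapsto \boldsymbol{\Theta}_s(v,x,r)$ is continuous and $\boldsymbol{\Theta}_s(v,x)$ is the decreasing limit of these as $r\downarrow 0$. The only difference is cosmetic: the paper invokes the general principle that a decreasing pointwise limit of continuous functions is upper semicontinuous, whereas you unwind that argument explicitly (including the dominated-convergence step justifying continuity of $y\mapsto\mathbf{E}_s(v,B^+_r({\bf y}))$), which is fine.
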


\begin{proof}
The existence defining $\boldsymbol{\Theta}_s(v,x)$ is of course a direct consequence of the monotonicity of the density function stated in Lemma \ref{monotformsurf}.  
Then $\boldsymbol{\Theta}_s(v,\cdot)$ is upper semicontinuous as a pointwise limit of a decreasing family of continuous functions. Finally, letting $\rho\to 0$ in Lemma \ref{monotformsurf} yields  \eqref{cdv1117}.  
\end{proof}

As briefly mentioned above, the following clearing-out property can be seen as a {\sl small-energy} regularity result. 

\begin{lemma}[\bf Clearing-out]\label{epsregminsurf}
There exists  a constant ${\boldsymbol{\eta}}_0>0$ (depending only on ${\boldsymbol{\sigma}}_{\rm min}$, $n$, and $s$)  such that the following holds. For 
$\mathbf{x}_0=(x_0,0)\in \partial^0 G$ and $r >0$ such that $B^+_r(\mathbf{x}_0)\subset G$,  the condition 
$$\boldsymbol{\Theta}_s(v,x_0,r)\leq {\boldsymbol{\eta}}_0$$ 
implies that   $v={\bf a}$ a.e. on $D_{r/2}(x_0)$ for some ${\bf a}\in\mathcal{Z}_{\boldsymbol{\sigma}}$ (i.e., $D_{r/2}(x_0)\subset E^v_j$ up to a negligeable set for some $j\in\{1,\ldots,m\}$). 
\end{lemma}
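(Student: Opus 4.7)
The approach is a contradiction/compactness argument built on the monotonicity formula of Lemma~\ref{monotformsurf}. By the scale invariance of equations \eqref{startequsurfnew}--\eqref{critnonlocexteqnew} and the density $\boldsymbol{\Theta}_s$, it suffices to prove the statement for $x_0=0$ and $r=1$. Suppose to the contrary that there exist admissible $G_k\supset B_1^+$ and solutions $v_k$ of \eqref{startequsurfnew}--\eqref{critnonlocexteqnew} with $\mathbf{E}_s(v_k,B_1^+)\to 0$, yet each $v_k|_{D_{1/2}}$ differs on a positive-measure set from every element of $\mathcal{Z}_{\boldsymbol{\sigma}}$. The maximum principle for $L_a$-harmonic functions, applied with the trace bound $|v_k|\leq|{\bf a}|_{\max}$ on $\partial^0 G_k$, keeps $v_k$ uniformly bounded on $B_1^+$, so that $\{v_k\}$ is bounded in $H^1(B_1^+;\R^d,|z|^a\,\de\mathbf{x})$.

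Along a subsequence $v_k\rightharpoonup v_*$ weakly; lower semicontinuity yields $\mathbf{E}_s(v_*,B_1^+)=0$, so $v_*$ is constant in $B_1^+$. The compact trace embedding \eqref{compactembL1trace} gives strong $L^1(D_1)$ trace convergence; a.e.\ convergence along a further subsequence plus discreteness of $\mathcal{Z}_{\boldsymbol{\sigma}}$ then forces $v_*={\bf a}$ for some ${\bf a}\in\mathcal{Z}_{\boldsymbol{\sigma}}$. However, this yields only $|\{v_k\neq{\bf a}\}\cap D_{1/2}|\to 0$, which is not immediately incompatible with the contradiction hypothesis: small-but-nonzero measure of the ``wrong'' chambers is not automatically ruled out.

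The closing step, which I expect to be the main obstacle, relies on a universal positive density lower bound at \emph{jump points}: there is $\boldsymbol{\eta}_*=\boldsymbol{\eta}_*(n,s,\boldsymbol{\sigma}_{\rm min})>0$ such that whenever $y\in\partial^0G$ lies in the essential boundary of at least two chambers $E^v_j$, one has $\boldsymbol{\Theta}_s(v,y)\geq\boldsymbol{\eta}_*$. This is proved by a tangent-map analysis: blowups at $y$ produce non-constant $L_a$-harmonic maps on $\R^{n+1}_+$ whose $\mathcal{Z}_{\boldsymbol{\sigma}}$-valued trace partitions $\R^n$ into at least two positive-density chambers, and the $\ell^2$-embeddability $|{\bf a}_i-{\bf a}_j|^2=\sigma_{ij}\geq\boldsymbol{\sigma}_{\rm min}$ combined with the fractional trace/Poincar\'e inequality (Lemma~\ref{poincareHs}) on $B_1^+$ provides a positive lower bound on the Dirichlet energy of any such tangent, depending only on $n,s,\boldsymbol{\sigma}_{\rm min}$. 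Lemma~\ref{monotformsurf} then gives $\boldsymbol{\Theta}_s(v,y,1/2)\leq 2^{n-2s}\boldsymbol{\Theta}_s(v,0,1)$ for every $y\in D_{1/2}$, so choosing $\boldsymbol{\eta}_0<2^{2s-n}\boldsymbol{\eta}_*$ eliminates all jump points from $D_{1/2}$; by connectedness of $D_{1/2}$ and discreteness of $\mathcal{Z}_{\boldsymbol{\sigma}}$, the trace $v_k|_{D_{1/2}}$ is a.e.\ equal to a single element of $\mathcal{Z}_{\boldsymbol{\sigma}}$ for large $k$, yielding the desired contradiction.
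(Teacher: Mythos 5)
Your approach is genuinely different from the paper's, and it has a circularity problem that would need to be resolved before the argument could close. The paper proves Lemma~\ref{epsregminsurf} by a direct, non-compactness argument: using Lemma~\ref{monotformsurf} to bound $\boldsymbol{\Theta}_s(v,y,2\rho)$ uniformly over \emph{all} $y\in D_{r/2}(x_0)$ and \emph{all} $\rho\in(0,r/4)$, transferring this to an $H^s$ bound on the trace via~\cite[Lemma~2.8]{MilPegSch}, pulling it down to each $\chi_{E^v_j}={\rm p}_j(v)$ by the $\boldsymbol{\sigma}_{\rm min}^{-1/2}$-Lipschitz bound on ${\rm p}_j$, and then applying the Poincar\'e inequality (Lemma~\ref{poincareHs}) at every scale and centre. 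This forces the continuous quantity $(y,\rho)\mapsto |E^v_j\cap D_\rho(y)|/|D_\rho|$ to live in $[0,1/4]\cup[3/4,1]$ on the connected set $D_{r/2}(x_0)\times(0,r/4)$, hence to be locked into one of the two intervals, and Lebesgue differentiation finishes. Notice the ingredients you single out (the $\ell^2$-embedding, Lipschitz projections, Poincar\'e, monotonicity) are indeed the right ones; but the paper applies them to $v$ directly and uniformly in $(y,\rho)$, with no blow-up, no compactness, and no contradiction.

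The gap in your plan is the ``closing step.'' The jump-point density gap $\boldsymbol{\Theta}_s(v,y)\in\{0\}\cup[\boldsymbol{\eta}_*,\infty)$ that you invoke is precisely Corollary~\ref{openset} of the paper, and in the paper that dichotomy is a \emph{consequence} of the clearing-out lemma you are trying to prove. Your route to it via tangent maps is circular inside the paper's architecture: the tangent-map machinery of Section~\ref{secttangmap} rests on the strong compactness Theorem~\ref{compactminsurf}, whose proof defines the defect set $\Sigma_{\rm sing}$ using the very constant $\boldsymbol{\eta}_0$ from Lemma~\ref{epsregminsurf}. Even setting circularity aside, the assertion ``blowups at $y$ produce non-constant traces with two positive-density chambers'' is exactly the statement $\boldsymbol{\Theta}_s(v,y)>0$ at jump points that you are trying to establish, not something that follows for free: a point lying in the measure-theoretic boundary of two chambers is compatible a priori with $\boldsymbol{\Theta}_s(v,y)=0$, in which case every tangent is constant. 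Finally, even if you had the density gap, the conclusion ``no jump points in $D_{1/2}$ implies $v$ is a.e.\ a single $\mathbf{a}$ on $D_{1/2}$'' is not automatic from a pointwise density statement at each $y$: you would still need the uniform-in-$(y,\rho)$ control and a continuity/connectedness argument of the kind the paper uses, because density $0$ or $1$ at every point does not by itself make the chamber indicator locally constant. In short: keep the Poincar\'e/$\ell^2$-embedding idea, drop the compactness setup and the tangent-map detour, and apply those estimates directly to $v$ across all scales and centres.
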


\begin{proof}
By Lemma \ref{monotformsurf}, for $0<\rho<r/4$ and $y\in D_{r/2}(x_0)$, we have
\begin{equation}\label{clearouteq1}
\boldsymbol{\Theta}_s(v,y,2\rho) \leq \boldsymbol{\Theta}_s(v,y,r/2)\leq 2^{n-2s} \boldsymbol{\Theta}_s(v,x_0,r)\leq 2^{n-2s}{\boldsymbol{\eta}}_0\,.
\end{equation}
On the other hand, according to \cite[Lemma 2.8]{MilPegSch}, 
\begin{equation}\label{clearouteq2}
[v]^2_{H^s(D_{\rho}(y))}\leq {\rm c}_{n,s}  \mathbf{E}_s\big(v,B^+_{2\rho}({\bf y})\big) \,,
\end{equation}
where ${\bf y}=(y,0)$ and ${\rm c}_{n,s}$ is a constant depending only on $n$ and $s$. 
Then, considering for each $j\in\{1,\ldots,m\}$, the Lipschitz function ${\rm p}_j:\R^d\to \R$ defined by \eqref{projtrickEj}, we have 
$$\chi_{E^v_j}={\rm p}_j\big(v_{|\partial^0G}\big)\,.$$ 
Noticing that ${\rm p}_j$ is ${\boldsymbol{\sigma}}^{-1/2}_{\rm min}$-Lipschitz, we infer from \eqref{clearouteq1} and   \eqref{clearouteq2}  that
\begin{equation}\label{clearouteq3}
  \frac{1}{\rho^{n-2s}}[\chi_{E^v_j}]^2_{H^s(D_{\rho}(y))}\leq \frac{1}{\boldsymbol{\sigma}_{\rm min}\rho^{n-2s}}[v]^2_{H^s(D_{\rho}(y))}\leq \frac{2^{2n-4s}{\rm c}_{n,s}}{\boldsymbol{\sigma}_{\rm min}}\, {\boldsymbol{\eta}}_0\quad \text{for $j=1,\ldots,m$}\,,
  \end{equation}
for every $y\in  D_{r/2}(x_0)$ and $0<\rho<r/4$. 

Setting for $j=1,\ldots,m$, $[v]_{y,\rho}$ to be the average of $v$ over $D_\rho(y)$, and 
$$A^j_\rho(y):=\frac{1}{\rho^n}\int_{D_\rho(y)}\big|\chi_{E^v_j}-[\chi_{E^v_j}]_{y,\rho}\big|\,\de x\,,$$ 
we deduce from \eqref{clearouteq3} and the  Poincar\'e inequality in Lemma \ref{poincareHs} that 
$$\sup_{y\in D_{r/2}(x_0)\,,\;\rho\in(0,r/4)}A^j_\rho(y)\leq  \frac{2^{n-2s}\boldsymbol{\lambda}_{n,s}\sqrt{{\rm c}_{n,s}}}{\boldsymbol{\sigma}^{1/2}_{\rm min}}\sqrt{{\boldsymbol{\eta}}_0}\qquad\forall j\in\{0,\ldots,m\}\,.$$
Then we notice that for each $j\in\{1,\ldots,m\}$, 
$$A^j_\rho(y)= 2\omega_n\left(1-\frac{|E^v_j\cap D_\rho(y)|}{|D_\rho|}\right)\frac{|E^v_j\cap D_\rho(y)|}{|D_\rho|}\,.$$
Choosing 
$${\boldsymbol{\eta}}_0:=\frac{9\omega_n^2\boldsymbol{\sigma}_{\rm min}}{2^{2n+6-4s}\boldsymbol{\lambda}^2_{n,s}} $$
leads to  $A^j_\rho(y)\leq 3\omega_n/8$ for each $j\in\{1,\ldots,m\}$.  In turn, this inequality implies that for each $j\in\{1,\ldots,m\}$, we have 
$$\frac{|E^v_j\cap D_\rho(y)|}{|D_\rho|} \in [0,1/4]\cup[3/4,1]\,.$$ 
Since each function $(y,\rho)\in D_{r/2}(x_0)\times(0,r/4)\mapsto |E^v_j\cap D_\rho(y)|/|D_\rho|$ is continuous, we infer that for  $j=1,\ldots,m$, either 
\begin{equation}\label{density0}
\frac{|E^v_j\cap D_\rho(y)|}{|D_\rho|} \in [0,1/4] \quad\text{for every $y\in D_{r/2}(x_0)$ and every $0<\rho<r/2$}\,,
\end{equation}
or 
\begin{equation}\label{density1}
\frac{|E^v_j\cap D_\rho(y)|}{|D_\rho|} \in [3/4,1] \quad\text{for every $y\in D_{r/2}(x_0)$ and every $0<\rho<r/2$}\,.
\end{equation}
Given $j\in\{1,\ldots,m\}$, assume that \eqref{density1} holds (the other case being analogous). Then, by the Lebesgue differentiation theorem, we deduce that a.e. $y\in D_{r/2}(x_0)$ is a point of density $1$ for the set~$E^v_j$. Consequently, $\chi_{E^v_j}=1$ a.e. in $D_{r/2}(x_0)$. On the contrary, if  \eqref{density0} holds, then $\chi_{E^v_j}=0$ a.e. in $D_{r/2}(x_0)$.`

We have thus proved that for each $j\in\{1,\ldots,m\}$, either $\chi_{E^v_j}=1$ a.e. in $D_{r/2}(x_0)$ or $\chi_{E^v_j}=0$ a.e. in $D_{r/2}(x_0)$. Since $\sum_j\chi_{E^v_j}=1$, it implies that there exists  $j_0\in\{1,\ldots,m\}$ such that $\chi_{E^v_{j_0}}=1$ a.e. in $D_{r/2}(x_0)$ and  $\chi_{E^v_{k}}=0$ a.e. in $D_{r/2}(x_0)$ for $k\not=j_0$. In other words, $v={\bf a}_{j_0}$ on $D_{r/2}(x_0)$. 
\end{proof}

\begin{corollary}\label{openset}
For every ${\bf x}=(x,0)\in\partial^0G$, either $\boldsymbol{\Theta}_s(v,x)=0$ or $\boldsymbol{\Theta}_s(v,x)\geq {\boldsymbol{\eta}}_0$. As a consequence,  each $E^v_j \subset \partial^0G$ is essentially (relatively) open, i.e.,  there are $m$ disjoint open subsets $\widetilde E^v_1,\ldots, \widetilde E^v_m$ of $\partial^0G$ such that $|\widetilde E^v_j\triangle E^v_j |=0$ for $j=1,\ldots,m$. 
In addition, setting $\Sigma(v):=\big\{\boldsymbol{\Theta}_s(v,\cdot)\geq {\boldsymbol{\eta}}_0\big\}$, we have $\Sigma(v)=\bigcup_j(\partial\widetilde E^v_j\cap\partial^0G)$. 
\end{corollary}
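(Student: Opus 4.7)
The plan is to first extract the density dichotomy from the clearing-out lemma plus a previously established zero-density result, and then read off the open partition and the structure of $\Sigma(v)$ from it.

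\textbf{Dichotomy.} Fix ${\bf x} = (x, 0) \in \partial^0 G$ and suppose $\boldsymbol{\Theta}_s(v,x) < {\boldsymbol{\eta}}_0$. By Lemma~\ref{monotformsurf} the map $r \mapsto \boldsymbol{\Theta}_s(v,x,r)$ is non-decreasing and converges down to $\boldsymbol{\Theta}_s(v,x)$ as $r \downarrow 0$, so I can select $r_0 > 0$ with $B^+_{r_0}({\bf x}) \subset G$ and $\boldsymbol{\Theta}_s(v, x, r_0) < {\boldsymbol{\eta}}_0$. Lemma~\ref{epsregminsurf} then yields $v = {\bf a}_j$ a.e.\ on $D_{r_0/2}(x)$ for some $j$. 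Applying \cite[Lemma~4.8]{MilSirW} (exactly as in Step~2 of the proof of Theorem~\ref{thmasympbdryreact}) to $v - {\bf a}_j$ gives $\boldsymbol{\Theta}_s(v,x) = 0$, which is the claimed dichotomy.

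\textbf{Opening of the chambers.} For each $j$ define
\[
\widetilde E^v_j := \bigl\{x \in \partial^0 G : \exists\, r > 0 \text{ with } D_r(x) \subset \partial^0 G \text{ and } v = {\bf a}_j \text{ a.e.\ on } D_r(x)\bigr\}.
\]
Each $\widetilde E^v_j$ is manifestly open in $\partial^0 G$, and the distinctness of the ${\bf a}_j$'s forces them to be pairwise disjoint. The dichotomy implies that $\partial^0 G \setminus \Sigma(v) \subset \bigsqcup_j \widetilde E^v_j$, while by construction $\widetilde E^v_j \subset E^v_j$ up to a null set. To conclude that $|E^v_j \triangle \widetilde E^v_j| = 0$, I will show $|\Sigma(v)|_n = 0$: for every compact $K \subset \partial^0 G$ the monotonicity formula (applied at some fixed scale $r_K>0$ with $B_{r_K}({\bf x}) \subset G$ for all ${\bf x}\in K$) gives a uniform bound $\boldsymbol{\Theta}_s(v, \cdot) \leq M_K$ on $K$, hence by the standard density/Hausdorff comparison \cite[Theorem~2.56]{AFP} (used in the same way in the proof of Theorem~\ref{thmasympbdryreact}) one has $\mathcal{H}^{n-2s}(\Sigma(v) \cap K) < \infty$, and since $n - 2s < n$ this forces $\mathcal{H}^n(\Sigma(v) \cap K) = 0$. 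Combined with the a.e.\ partition identity $\sum_j \chi_{E^v_j} = 1$ and the essential inclusion $\widetilde E^v_j \subset E^v_j$, this gives the desired $|\widetilde E^v_j \triangle E^v_j| = 0$.

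\textbf{Identification of $\Sigma(v)$.} For the last identity: if $x \in \partial \widetilde E^v_j \cap \partial^0 G$ then $x \notin \widetilde E^v_j$ (it is open) and $x$ cannot belong to another $\widetilde E^v_k$ either (otherwise that open set would be disjoint from $\widetilde E^v_j$, contradicting $x \in \partial \widetilde E^v_j$); the dichotomy therefore places $x$ in $\Sigma(v)$. Conversely, for $x \in \Sigma(v)$, every disc $D_r(x) \subset \partial^0 G$ has positive measure and satisfies $|D_r(x) \cap \Sigma(v)| = 0$ by the previous paragraph, so at least one $\widetilde E^v_j$ meets $D_r(x)$ for every small $r$, forcing $x \in \partial \widetilde E^v_j$ for some $j$.

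The only mildly delicate step is the measure-zero argument for $\Sigma(v)$, which relies on the density/Hausdorff measure machinery from \cite{AFP}; everything else reduces to bookkeeping once the dichotomy in Step~1 is secured.
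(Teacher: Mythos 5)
Your proof is correct and follows essentially the same route as the paper's: the same dichotomy via Corollary~\ref{directcorlmonot}, Lemma~\ref{epsregminsurf}, and \cite[Lemma~4.8]{MilSirW}; the same definition of the open sets $\widetilde E^v_j$; and the same boundary identification. The only divergence is in how you show $\mathscr{L}^n(\Sigma(v))=0$: you invoke the density--Hausdorff comparison \cite[Theorem~2.56]{AFP} to get $\mathscr{H}^{n-2s}(\Sigma(v)\cap K)<\infty$ on compacta (hence Lebesgue null since $n-2s<n$), whereas the paper cites \cite[Corollary~3.2.3]{Zi}, which exploits the absolute continuity of the underlying measure to get the sharper statement $\mathscr{H}^{n-2s}(\Sigma(v))=0$ outright; both suffice for the Lebesgue-null conclusion needed here. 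A very minor stylistic point: in the final ``conversely'' step you implicitly use a pigeonhole among the finitely many $j$'s to pin down a single index; the paper avoids this by first placing $x$ in $\partial\big(\bigcup_j\widetilde E^v_j\big)$ and then using $\partial(\bigcup_j A_j)\subset\bigcup_j\partial A_j$, which is a slightly cleaner bookkeeping but not a different argument.
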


\begin{proof}
Let ${\bf x}=(x,0)\in\partial^0G$ be such that $\boldsymbol{\Theta}_s(v,x)< {\boldsymbol{\eta}}_0$. By Corollary \ref{directcorlmonot}, we can find $r>0$ such that $B^+_r({\bf x})\subset G$ and $\boldsymbol{\Theta}_s(v,x,r)\leq {\boldsymbol{\eta}}_0$. Then Lemma \ref{epsregminsurf} implies that $v={\bf a}$ a.e. on $D_{r/2}(x)$ for some ${\bf a}\in\mathcal{Z}_{\boldsymbol{\sigma}}$. Consequently, $v\in H^1(B^+_{r/2}({\bf x}))\cap L^\infty(B^+_{r/2}({\bf x}))$ satisfies
$$\begin{cases} 
{\rm div}(z^{a}\nabla v) = 0 & \text{in $B^+_{r/2}({\bf x})$}\,,\\
v = {\bf a}  & \text{on $D_{r/2}(x)$}\,.
\end{cases}$$
By \cite[Lemma 4.8]{MilSirW}, we have 
$$\boldsymbol{\Theta}_s(v,x)=\lim_{\rho\to 0}  \frac{1}{\rho^{n-2s}}\mathbf{E}_s\big(v,B^+_\rho({\bf x}_0)\big)=0\,.$$
Consequently, for every ${\bf x}=(x,0)\in\partial^0G$, either $\boldsymbol{\Theta}_s(v,x)=0$ or $\boldsymbol{\Theta}_s(v,x)\geq {\boldsymbol{\eta}}_0$. 

By upper semicontinuity of $\boldsymbol{\Theta}_s(v,\cdot)$, the set $\Sigma(v):=\{\boldsymbol{\Theta}_s(v,\cdot)\geq {\boldsymbol{\eta}}_0\}$ is relatively closed in $\partial^0G$. By \cite[Corollary 3.2.3]{Zi}, we  have $\mathscr{H}^{n-2s}(\Sigma(v))=0$, and in particular $\mathscr{L}^n(\Sigma(v))=0$.

Next we set for $j=1,\ldots,m$, 
$$\widetilde E^v_j :=\Big\{{\bf x}=(x,0)\in\partial^0G : \text{$v={\bf a}_j$  a.e. in $D_r(x)$ for some $r\in(0,{\rm dist}(x,\partial^+B_r({\bf x})))$}\Big \}\,.$$
By construction, the  $\widetilde E^v_j$'s are disjoint open subsets of $\partial^0G$, and each $\widetilde E^v_j$ is disjoint from $\Sigma(v)$ by the argument above. 
Arguing as in the proof of Theorem \ref{thmasympbdryreact}, Step 2, we obtain $\partial^0G=\bigcup_j\widetilde E^v_j\cup\Sigma(v)$. Since each point of  $\widetilde E^v_j$ is obviously a point of density $1$ of $E^v_j$, and $\mathscr{L}^n(\Sigma(v))=0$, we infer that $|\widetilde E^v_j\triangle E_j|=0$ for each $j=1,\ldots,m$. 
Arguing again as in the proof of Theorem \ref{thmasympbdryreact}, we finally derive that $\Sigma(v):=\bigcup_j(\partial\widetilde E^v_j\cap\partial^0G)$. 
\end{proof}

\noindent{\bf Convention.} In what follows, we may and we shall identify the open sets $\widetilde E^v_j$ with $E^v_j$. In other words, in the statement of Corollary \ref{openset}, we can read ``$E^v_j$ is an open subset of $\partial^0G$ for every $j\in\{1,\ldots,m\}$'', and we have 
\begin{equation}\label{unionofbdeqsigm}
\Sigma(v)=\bigcup_{j=1}^m(\partial E^v_j\cap\partial^0G)\,.
\end{equation}

%%%%%%%%%%%%%%%%%%%%%%%%%%%%%%%%%%%%%%%%%%%%%%%%%%%%%%%%

\subsection{Compactness}\label{subsectcompsurf}

%%%%%%%%%%%%%%%%%%%%%%%%%%%%%%%%%%%%%%%%%%%%%%%%%%%%%%%%

In this subsection, we are interested with the compactness issues of sequences  $\{v_k\}_{k\in\mathbb{N}}\subset H^{1}(G;\R^d,|z|^a\de{\bf x})\cap L^\infty(G)$ satisfying for each $k\in\mathbb{N}$, 
$$\begin{cases}
{\rm div}(z^a\nabla v_k) =0 & \text{in $G$}\,,\\
v_k\in\mathcal{Z}_{\boldsymbol{\sigma}} & \text{on $\partial^0 G$}\,,\\
|v_k|\leq b & \text{in $G$}\,,
\end{cases}$$
and 
\begin{equation}\label{critnonlocexteqcoucou}
\delta\mathbf{E}_s(v_k,G)[X]= 0
\end{equation}
for every vector field ${\bf X}=(X,{\bf X}_{n+1})\in C^1(\overline G;\R^{n+1})$ compactly supported in $G\cup\partial^0G$ satisfying ${\bf X}_{n+1}=0$ on $\partial^0 G$, 
and for some parameter $b\geq 1$ independent of $k$. 

\begin{theorem}\label{compactminsurf}
If $\sup_k\mathbf{E}_s(v_k,G)<\infty$,  then there exist a (not relabeled) subsequence and a map $v\in H^{1}(G;\R^d,|z|^a\de{\bf x})\cap L^\infty(G)$ satisfying \eqref{startequsurfnew}-\eqref{critnonlocexteqnew} such that $v_k\rightharpoonup v $  weakly in $ H^{1}(G,|z|^a\de{\bf x})$ and strongly in $ H_{\rm loc}^{1}(G\cup\partial^0G ,|z|^a\de{\bf x})$.
\end{theorem}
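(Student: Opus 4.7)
The plan is to follow closely the defect-measure strategy employed in the proof of Theorem~\ref{thmasympbdryreact}, simplified by the absence of the Allen-Cahn potential term. First, from the uniform energy bound $\sup_k \mathbf{E}_s(v_k,G)<\infty$ and the compact embedding $H^1(G,|z|^a\de\mathbf{x})\hookrightarrow L^2(G,|z|^a\de\mathbf{x})$, I extract a subsequence such that $v_k\rightharpoonup v$ weakly in $H^1(G,|z|^a\de\mathbf{x})$, strongly in $L^2(G,|z|^a\de\mathbf{x})$, and a.e. in $G$. The $L^\infty$-bound $|v_k|\leq b$ passes to $v$ immediately. Standard elliptic theory applied to the equation ${\rm div}(z^a\nabla v_k)=0$ gives $C^\infty_{\mathrm{loc}}(G)$ convergence, so $v$ is $L_s$-harmonic inside $G$. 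By the trace compactness \eqref{compactembL1trace}, the boundary traces converge in $L^1(\partial^0G)$; combined with a.e.\ convergence and the fact that $v_k$ takes values in the finite set $\mathcal{Z}_{\boldsymbol\sigma}$ on $\partial^0G$, this forces $v\in\mathcal{Z}_{\boldsymbol\sigma}$ a.e.\ on $\partial^0G$.

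The main step is upgrading weak to strong convergence in $H^1_{\mathrm{loc}}(G\cup\partial^0G,|z|^a\de\mathbf{x})$. I would introduce the energy measures
$$\mu_k:=\frac{\boldsymbol{\delta}_s}{2}z^a|\nabla v_k|^2\,\mathscr{L}^{n+1}\LL G$$
and extract a further subsequence so that $\mu_k\rightharpoonup^\ast \mu=\frac{\boldsymbol{\delta}_s}{2}z^a|\nabla v|^2\mathscr{L}^{n+1}\LL G+\mu_{\mathrm{sing}}$ weakly-$\ast$ as Radon measures on $G\cup\partial^0G$, with $\mu_{\mathrm{sing}}\geq 0$ the defect measure. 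The monotonicity formula of Lemma~\ref{monotformsurf}, which is available for each $v_k$ (each $v_k$ satisfies the assumptions of Section~\ref{subres1}), passes to $\mu$ in the standard way, yielding
$$\rho^{2s-n}\mu(B_\rho(\mathbf{x}))\leq r^{2s-n}\mu(B_r(\mathbf{x}))$$
for every $\mathbf{x}\in\partial^0G$ and $0<\rho<r<{\rm dist}(\mathbf{x},\partial^+G)$. Hence the $(n-2s)$-density $\Theta^{n-2s}(\mu,\mathbf{x})$ exists and is locally bounded on $\partial^0G$, and it vanishes at every interior point of $G$ by the smooth interior convergence.

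Next I introduce the concentration set
$$\Sigma:=\Big\{\mathbf{x}\in\partial^0G:\Theta^{n-2s}(\mu,\mathbf{x})\geq \omega_{n-2s}^{-1}\boldsymbol{\eta}_0\Big\}$$
with $\boldsymbol{\eta}_0$ from the clearing-out lemma (Lemma~\ref{epsregminsurf}). I claim ${\rm spt}(\mu_{\mathrm{sing}})\subset\Sigma$: indeed, if $\mathbf{x}_0\in\partial^0G\setminus\Sigma$, one may choose $r>0$ with $\mu(\partial B_r(\mathbf{x}_0))=0$ and $r^{2s-n}\mu(B_r(\mathbf{x}_0))<\boldsymbol{\eta}_0$; then $\mathbf{E}_s(v_k,B_r^+(\mathbf{x}_0))\to\mu(B_r(\mathbf{x}_0))$, so for $k$ large the clearing-out lemma forces $v_k$ to be constant (and equal to the same element of $\mathcal{Z}_{\boldsymbol\sigma}$, up to a further subsequence, by pointwise a.e. convergence) on $D_{r/2}(x_0)$. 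By \cite[Lemma~4.8]{MilSirW} the density $\Theta^{n-2s}$ vanishes at $\mathbf{x}_0$ in a neighborhood, and one obtains $\mu_{\mathrm{sing}}=0$ on a smaller half-ball centered at $\mathbf{x}_0$ via the same slicing/cutoff competitor argument used in the proof of Proposition~\ref{epsregnew}. Thus $\mu_{\mathrm{sing}}$ is supported in $\Sigma$ and, by the upper density bound \eqref{tim1213new}, absolutely continuous with respect to $\mathscr{H}^{n-2s}\LL\Sigma$.

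The core difficulty, and where I expect the main obstacle, is showing $\mu_{\mathrm{sing}}\equiv 0$. I would argue by contradiction: if $\mu_{\mathrm{sing}}(\Sigma)>0$, then on a Borel subset $\widetilde\Sigma\subset\Sigma$ of full $\mu_{\mathrm{sing}}$-measure the density $\Theta^{n-2s}(\mu_{\mathrm{sing}},\mathbf{x}_0)$ exists and agrees with $\Theta^{n-2s}(\mu,\mathbf{x}_0)\in(0,\infty)$ (using \cite[Corollary~3.2.3]{Zi} to absorb the absolutely continuous part $\tfrac{\boldsymbol{\delta}_s}{2}z^a|\nabla v|^2$). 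By Marstrand's theorem \cite[Theorem~14.10]{Matti}, existence of a positive finite $(n-2s)$-density on a set of positive $\mathscr{H}^{n-2s}$-measure forces $n-2s$ to be an integer, contradicting $s\in(0,1/2)$. Therefore $\mu_{\mathrm{sing}}\equiv 0$, which yields $\mathbf{E}_s(v_k,G')\to\mathbf{E}_s(v,G')$ for every admissible $G'\Subset G\cup\partial^0G$, and consequently the strong $H^1_{\mathrm{loc}}(G\cup\partial^0G,|z|^a\de\mathbf{x})$-convergence. Finally, the stationarity \eqref{critnonlocexteqnew} of $v$ follows by passing to the limit in \eqref{critnonlocexteqcoucou} using the explicit formula \eqref{firstvarEsformul} for $\delta\mathbf{E}_s$, whose quadratic dependence on $\nabla v_k$ is handled by the strong local $H^1$-convergence just obtained.
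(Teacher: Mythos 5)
Your proposal is correct and matches the paper's proof, which likewise sketches the argument by following the defect-measure strategy of Theorem~\ref{thmasympbdryreact} with the Allen--Cahn potential removed: weak $H^1$ compactness and interior elliptic regularity, trace compactness to preserve the $\mathcal{Z}_{\boldsymbol\sigma}$-constraint, the clearing-out lemma together with the monotonicity formula to localize the defect measure on a set where $\Theta^{n-2s}(\mu,\cdot)\geq\boldsymbol{\eta}_0/\omega_{n-2s}$, Marstrand's theorem to kill the defect, and passage to the limit in \eqref{critnonlocexteqcoucou} via the explicit first-variation formula. The only remark is that your step ``$\mu_{\rm sing}=0$ near $\mathbf{x}_0$ via the slicing/cutoff competitor argument of Proposition~\ref{epsregnew}'' is actually simpler here, since once $v_k\equiv\mathbf{a}$ on $D_{r/2}(x_0)$ the maps $v_k$ and $v$ are local $\mathbf{E}_s$-minimizers (being $L_s$-harmonic with the same constant Dirichlet trace), so the comparison argument goes through with the potential term absent—equivalently one may invoke Caccioppoli after odd reflection.
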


\begin{proof}
The argument  follows the proof of Theorem \ref{thmasympbdryreact} and this reason, we only sketch the main points. 
First, by assumption on the energy and standard elliptic theory,  we can find a (not relabelled) subsequence and $v\in H^{1}(G;\R^d,|z|^a\de{\bf x})\cap L^\infty(G)$ 
satisfying \eqref{startequsurfnew} such that $v_k\rightharpoonup v$ weakly in  $H^{1}(G,|z|^a\de{\bf x})$ and strongly in $ H_{\rm loc}^{1}(G ,|z|^a\de{\bf x})$. 
Then we consider the sequence of measures $\mu_k:=\frac{\boldsymbol{\delta}_s}{2}z^a|\nabla v_k|^2 \mathscr{L}^{n+1}\LL G$. Up to a further subsequence, the sequence $\{\mu_k\}$  
 weakly* converges towards a limiting measure $\mu:= \frac{d_s}{2}z^a|\nabla v|^2 \mathscr{L}^{n+1}\LL G +\mu_{\rm sing}$ with ${\rm spt}(\mu_{\rm sing})\subset\partial^0G$. From Lemma \ref{monotformsurf}, we infer that $\mu$ satisfies the monotonicity inequality \eqref{tim1213new}. As a consequence, the density $\Theta^{n-2s}(\mu,{\bf x})$  (as defined in \eqref{existdens}) exists and is finite for every ${\bf x}\in\partial^0G$. It also defines an upper semicontinuous function on $\partial^0G$. We define the concentration set $\Sigma_{\rm sing}$ as in \eqref{defconcentrset} with $\boldsymbol{\theta}_{b}$ replaced by $\boldsymbol{\eta}_0$.  
Then $\Sigma_{\rm sing}=\big\{\Theta^{n-2s}(\mu,\cdot)\geq \boldsymbol{\eta}_0/\omega_{n-2s} \big\}\subset\partial^0G$, and $\mathscr{H}^{n-2s}(\Sigma_{\rm sing})$ is finite. We continue exactly as in the proof of Theorem \ref{thmasympbdryreact} to show that $\mu_{\rm sing}$ is absolutely continuous with respect $\mathscr{H}^{n-2s}\res\Sigma_{\rm sing}$, and that $\Theta^{n-2s}(\mu_{\rm sing},{\bf x})\in[0,\infty)$ exists at $\mathscr{H}^{n-2s}$-a.e. ${\bf x}\in \Sigma_{\rm sing}$. By Marstrand's Theorem, we must have $\mu_{\rm sing}\equiv 0$. In other words, $v_k\to v$ strongly in $H^1_{\rm loc}(G\cup\partial^0G,|z|^a\de{\bf x})$. In view of \eqref{firstvarEsformul},  this strong convergence allows us to pass to the limit $k\to\infty$ in \eqref{critnonlocexteqcoucou} and obtain \eqref{critnonlocexteqnew}. 
\end{proof}

\begin{remark}
If $v_k\to v$ strongly in $ H_{\rm loc}^{1}(G\cup\partial^0G ,|z|^a\de{\bf x})$, ${\bf x}_k=(x_k,0)\to {\bf x}=(x,0)$  
 and $r_k\to r>0$ with $ \overline B^+_r({\bf x})\subset G\cup\partial^0G$, then $\boldsymbol{\Theta}_s(v_k,x_k,r_k)\to\boldsymbol{\Theta}_{s}(v,x,r)$. 
\end{remark}

\begin{lemma}\label{updenstheta}
In addition to the conclusion of Theorem \ref{compactminsurf}, if $\{(x_k,0)\}_{k\in\mathbb{N}}\subset\partial^0G$ is a sequence converging to $(x,0)\in\partial^0G$, then 
$$ \limsup_{k\to\infty} \,\boldsymbol{\Theta}_s(v_k,x_k)\leq \boldsymbol{\Theta}_s(v,x)\,.$$
\end{lemma}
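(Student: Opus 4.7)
The plan is to exploit the monotonicity formula (Lemma \ref{monotformsurf}) together with the local strong $H^1$-convergence established in Theorem \ref{compactminsurf}, through a standard two-parameter limit argument.

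First, I would fix an arbitrary radius $r > 0$ small enough so that $\overline{B^+_r({\bf x})} \subset G \cup \partial^0 G$; this is possible since $\partial^0 G$ is (relatively) open in $\partial \R^{n+1}_+$ and $G$ is admissible. Since ${\bf x}_k = (x_k, 0) \to {\bf x} = (x, 0)$, for all sufficiently large $k$ the quantity $r_k := r - |x_k - x|$ is positive, and we have the inclusion $B^+_{r_k}({\bf x}_k) \subset B^+_r({\bf x}) \subset G$. Moreover, $r_k \to r$ as $k \to \infty$.

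Next I would combine monotonicity with this inclusion: by Corollary \ref{directcorlmonot} (or Lemma \ref{monotformsurf} letting $\rho \downarrow 0$),
$$\boldsymbol{\Theta}_s(v_k, x_k) \leq \boldsymbol{\Theta}_s(v_k, x_k, r_k) = \frac{1}{r_k^{n-2s}} \mathbf{E}_s\big(v_k, B^+_{r_k}({\bf x}_k)\big) \leq \frac{1}{r_k^{n-2s}} \mathbf{E}_s\big(v_k, B^+_r({\bf x})\big).$$
Now $\overline{B^+_r({\bf x})} \subset G \cup \partial^0 G$, so the strong convergence $v_k \to v$ in $H^1_{\mathrm{loc}}(G \cup \partial^0 G, |z|^a d{\bf x})$ provided by Theorem~\ref{compactminsurf} yields $\mathbf{E}_s(v_k, B^+_r({\bf x})) \to \mathbf{E}_s(v, B^+_r({\bf x}))$. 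Since $r_k \to r > 0$, passing to the $\limsup$ gives
$$\limsup_{k \to \infty} \boldsymbol{\Theta}_s(v_k, x_k) \leq \frac{1}{r^{n-2s}} \mathbf{E}_s\big(v, B^+_r({\bf x})\big) = \boldsymbol{\Theta}_s(v, x, r).$$

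Finally, letting $r \downarrow 0$ on the right-hand side and invoking the very definition $\boldsymbol{\Theta}_s(v,x) = \lim_{r \downarrow 0} \boldsymbol{\Theta}_s(v, x, r)$ (Corollary \ref{directcorlmonot}) yields the conclusion. There is no real obstacle here; the only subtlety is ensuring the inclusion $B^+_{r_k}({\bf x}_k) \subset B^+_r({\bf x})$ to get a uniform domain on which the strong $H^1$-convergence applies, which is exactly what the choice $r_k = r - |x_k - x|$ accomplishes.
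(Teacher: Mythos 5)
Your proof is correct and takes essentially the same approach as the paper: monotonicity of $\boldsymbol{\Theta}_s$ (Corollary \ref{directcorlmonot}), strong $H^1_{\rm loc}$-convergence from Theorem \ref{compactminsurf}, and a final passage $r\downarrow 0$. The only difference is cosmetic: the paper (normalizing $x=0$) bounds $\boldsymbol{\Theta}_s(v_k,x_k,r)$ above by $r^{-(n-2s)}\mathbf{E}_s(v_k,B^+_{r+|x_k|})$ using the \emph{enlarging} inclusion $B^+_r({\bf x}_k)\subset B^+_{r+|x_k|}$, whereas you keep the ball $B^+_r({\bf x})$ \emph{fixed} and use the shrinking inclusion $B^+_{r_k}({\bf x}_k)\subset B^+_r({\bf x})$ with $r_k=r-|x_k-x|$. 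Your version has a mild advantage: the energy converges on a fixed domain, so you only need $r_k\to r$ in the prefactor, whereas the paper's shrinking-domain argument $\mathbf{E}_s(v_k,B^+_{r+r_k})\to \mathbf{E}_s(v,B^+_r)$ implicitly invokes absolute continuity of the measure $z^a|\nabla v|^2$ on the vanishing annulus $B^+_{r+r_k}\setminus B^+_r$ in addition to strong convergence on $B^+_{2r}$.
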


\begin{proof}
Assume for simplicity that $x=0$. Applying Corollary \ref{directcorlmonot}, we obtain for $r>0$ sufficiently small and $r_k:=|x_k|<r$, 
$$ \boldsymbol{\Theta}_s(v_k,x_k) \leq  \boldsymbol{\Theta}_s(v_k,x_k,r) \leq \frac{1}{r^{n-2s}} {\bf E}_s(v_k,B^+_{r+r_k}) \,. $$
Since $r_k\to0$ and $v_k$ converges strongly to $v$ in $H^1(B^+_{2r},|z|^a\de{\bf x})$, we have $ {\bf E}_s(v_k,B^+_{r+r_k}) \to  {\bf E}_s(v,B^+_{r}) $. Hence
$$\limsup_{k\to\infty} \, \boldsymbol{\Theta}_s(v_k,x_k) \leq   \frac{1}{r^{n-2s}} {\bf E}_s(v,B^+_{r}) \,.$$
Letting $r\downarrow0$ now leads to the conclusion.
\end{proof}

\begin{corollary}\label{Hausdcvbd}
In addition to the conclusion of Theorem \ref{compactminsurf} and according to \eqref{unionofbdeqsigm}, the sequence of sets $\Sigma(v_k)$ converges locally uniformly in $\partial^0G$ to $ \Sigma(v)$, i.e., for every compact subset $K\subset\partial^0G$ and every $r>0$, 
$$\Sigma(v_k)\cap K \subset\mathscr{T}_r(\Sigma(v)) \quad\text{and}\quad \Sigma(v)\cap K\subset \mathscr{T}_r(\Sigma(v_k)) $$
for $k$ large enough. 
\end{corollary}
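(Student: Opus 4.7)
The strategy is to argue each inclusion by contradiction, exploiting the characterization $\Sigma(v_k)=\{\boldsymbol{\Theta}_s(v_k,\cdot)\geq {\boldsymbol{\eta}}_0\}$ from Corollary \ref{openset} together with the upper semicontinuity of the density under the strong $H^1_{\rm loc}$-convergence (Lemma \ref{updenstheta}).

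\textbf{First inclusion.} Assume by contradiction that there exist $r>0$, a compact set $K\subset\partial^0 G$, and a (not relabeled) subsequence along which one can find ${\bf x}_k=(x_k,0)\in \Sigma(v_k)\cap K$ with $\mathrm{dist}(x_k,\Sigma(v))\geq r$. By compactness of $K$, pass to a further subsequence so that $x_k\to x_\infty\in K$ and $\mathrm{dist}(x_\infty,\Sigma(v))\geq r$; in particular $x_\infty\notin \Sigma(v)$, whence $\boldsymbol{\Theta}_s(v,x_\infty)=0$ by Corollary \ref{openset}. On the other hand, $\boldsymbol{\Theta}_s(v_k,x_k)\geq \boldsymbol{\eta}_0$ for each $k$, so Lemma \ref{updenstheta} yields $\boldsymbol{\eta}_0\leq \limsup_k\boldsymbol{\Theta}_s(v_k,x_k)\leq \boldsymbol{\Theta}_s(v,x_\infty)=0$, a contradiction.

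\textbf{Second inclusion.} Again argue by contradiction and suppose the existence of $r>0$ (which we may take small enough that $D_{2r}(y)\subset\partial^0 G$ for every $y\in K$) and a (not relabeled) subsequence together with points ${\bf x}_k=(x_k,0)\in \Sigma(v)\cap K$ such that $D_r(x_k)\cap \Sigma(v_k)=\emptyset$. By compactness, we may assume $x_k\to x_\infty\in \Sigma(v)\cap K$, and then $D_{r/2}(x_\infty)\cap \Sigma(v_k)=\emptyset$ for $k$ large. By Corollary \ref{openset}, $\partial^0 G\setminus \Sigma(v_k)$ is the disjoint union of the open chambers $E^{v_k}_1,\ldots, E^{v_k}_m$; since $D_{r/2}(x_\infty)$ is connected and meets no $\partial E^{v_k}_j$, there exists an index $j_k\in\{1,\ldots,m\}$ with $D_{r/2}(x_\infty)\subset E^{v_k}_{j_k}$, i.e., $v_k\equiv {\bf a}_{j_k}$ a.e.\ on $D_{r/2}(x_\infty)$. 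Extracting a further subsequence, we may assume $j_k=j$ is constant.

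\textbf{Passage to the limit and conclusion.} Pick an admissible bounded open set $G'\subset \R^{n+1}_+$ with $\overline{G'}\subset G\cup\partial^0G$ and $D_{r/2}(x_\infty)\subset \partial^0 G'$. By Theorem \ref{compactminsurf}, $v_k\to v$ strongly in $H^1(G',|z|^a\de{\bf x})$, and the compact trace operator \eqref{compactembL1trace} yields $v_k\to v$ in $L^1(\partial^0G';\R^d)$. Since $v_k={\bf a}_j$ on $D_{r/2}(x_\infty)$, we conclude that $v={\bf a}_j$ a.e.\ on $D_{r/2}(x_\infty)$. Then $v\in H^1(B^+_{r/2}({\bf x}_\infty);\R^d,|z|^a\de{\bf x})$ solves ${\rm div}(z^a\nabla v)=0$ in $B^+_{r/2}({\bf x}_\infty)$ with boundary value ${\bf a}_j$ on $D_{r/2}(x_\infty)$, and \cite[Lemma~4.8]{MilSirW} gives $\boldsymbol{\Theta}_s(v,x_\infty)=0$, contradicting $x_\infty\in \Sigma(v)$.

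The only delicate point is the second inclusion, where one must convert the geometric statement ``$\Sigma(v_k)$ avoids a disc'' into a constancy property for $v_k$; this uses in an essential way the openness of the chambers established in Corollary \ref{openset} together with the connectedness of the disc, after which the strong $H^1_{\rm loc}$-convergence and trace continuity produce the contradiction.
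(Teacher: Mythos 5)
Your argument is correct. For the first inclusion you follow essentially the same path as the paper (negate, extract a convergent subsequence of points, apply the upper semicontinuity of $\boldsymbol{\Theta}_s$ along sequences from Lemma \ref{updenstheta} and the gap property $\boldsymbol{\Theta}_s\in\{0\}\cup[\boldsymbol{\eta}_0,\infty)$ from Corollary \ref{openset}); the paper formulates it via a supremum over $K\setminus\mathscr{T}_r(\Sigma(v))$, which is logically equivalent.

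For the second inclusion your route is genuinely different and somewhat more economical. The paper covers $\Sigma(v)\cap K$ by finitely many discs $D_{r/2}(x_\ell)$, picks two distinct chambers $E^v_{i_\ell}$, $E^v_{j_\ell}$ meeting each disc, pins down two smaller discs $D_\varrho(x_\ell^\pm)$ strictly inside those chambers, uses Lemma \ref{updenstheta} together with the dichotomy of Corollary \ref{openset} to force $\boldsymbol{\Theta}_s(v_k,\cdot)=0$ there, and then uses connectedness and $L^1$-trace convergence to conclude that $v_k$ equals ${\bf a}_{i_\ell}$ and ${\bf a}_{j_\ell}$ respectively on those smaller discs, so that $\Sigma(v_k)$ must cross $D_{r/2}(x_\ell)$. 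You instead argue directly by contradiction: one bad sequence $x_k\to x_\infty\in\Sigma(v)\cap K$ with $\Sigma(v_k)$ avoiding $D_{r/2}(x_\infty)$ forces $v_k$ to be a single constant ${\bf a}_j$ on that disc (connectedness plus the open-chamber decomposition from Corollary \ref{openset}), and strong $H^1_{\rm loc}$ convergence combined with trace continuity then makes $v\equiv{\bf a}_j$ there, so $\boldsymbol{\Theta}_s(v,x_\infty)=0$ and $x_\infty\notin\Sigma(v)$. This sidesteps both the finite cover and Lemma \ref{updenstheta} for the second inclusion, at the small cost of needing to justify that a suitable admissible $G'$ exists so that $D_{r/2}(x_\infty)\subset\partial^0 G'$ and $\overline{G'}\subset G\cup\partial^0 G$; this is immediate from the definition of $\partial^0 G$ and your assumption $D_{2r}(y)\subset\partial^0 G$ for $y\in K$, but is worth stating. (Alternatively, once $v\equiv{\bf a}_j$ a.e.\ on a disc around $x_\infty$, one can simply note $x_\infty\in E^v_j$ and recall from Corollary \ref{openset} that $E^v_j$ is disjoint from $\Sigma(v)$, avoiding the citation of \cite[Lemma~4.8]{MilSirW} altogether.)
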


\begin{proof}
We start proving the first inclusion. By Corollary \ref{openset}, $\boldsymbol{\Theta}_{s}(v,x)=0$ for every point $x\in K\setminus \mathscr{T}_r(\Sigma(v))$. Since $\boldsymbol{\Theta}_s(v_k,\cdot)$ is upper semicontinuous, we can find  a point $x_k\in  K\setminus \mathscr{T}_r(\Sigma(v))$ such that 
$$\boldsymbol{\Theta}_s(v_k,x_k)= \sup_{x\in K\setminus \mathscr{T}_r(\Sigma_v)}\boldsymbol{\Theta}_s(v_k,x)\,.$$
Then select a subsequence $\{k_j\}_{j\in\mathbb{N}}$ such that $\lim_j\boldsymbol{\Theta}_s(v_{k_j},x_{k_j})=\limsup_k\boldsymbol{\Theta}_s(v_k,x_k)$. Extracting a further subsequence if necessary, we can assume that $x_{k_j}\to x_*\in K\setminus \mathscr{T}_r(\Sigma(v))$. Since $\boldsymbol{\Theta}_{s}(v,x_*)=0$, we infer from Lemma \ref{updenstheta} that 
$\limsup_k\boldsymbol{\Theta}_s(v_k,x_k)=0$. Consequently, $\boldsymbol{\Theta}_s(v_k,x_k)< {\boldsymbol{\eta}}_0$ for $k$ large enough, and for such integers $k$, $(\Sigma(v_k)\cap K)\setminus \mathscr{T}_r(\Sigma(v))=\emptyset$.  

To prove the second inclusion, we consider a covering of $\Sigma(v)\cap K$ made by finitely many discs $D_{r/2}(x_1),\ldots, D_{r/2}(x_L)$ (included in $\partial^0G$, choosing a smaller radius if necessary). In view of \eqref{unionofbdeqsigm} and \eqref{partitonthebdry}, for each $\ell$ there exist $i_\ell,j_\ell\in\{1,\ldots,m\}$ with $i_\ell\neq j_\ell$ such that the open sets 
$D_{r/2}(x_\ell)\cap E^v_{i_\ell}$ and $D_{r/2}(x_\ell)\cap E^v_{j_\ell}$ are not empty. Hence, for each $\ell$, we can find a point $x^+_\ell\in D_{r/2}(x_\ell)\cap E^v_{i_\ell}$ and a point $x^-_\ell\in D_{r/2}(x_\ell)\cap E^v_{j_\ell}$. The sets $D_{r/2}(x_\ell)\cap E^v_{i_\ell}$ and $D_{r/2}(x_\ell)\cap E^v_{j_\ell}$ being open, we can find a radius $\varrho>0$ such that $D_{2\varrho}(x_\ell^+)\subset D_{r/2}(x_\ell)\cap E^v_{i_\ell}$ and $D_{2\varrho}(x_\ell^-)\subset  D_{r/2}(x_\ell)\cap  E^v_{j_\ell}$ for each $\ell\in\{1,\ldots,L\}$. Consequently, $v={\bf a}_{i_\ell}$ on $D_{2\varrho}(x_\ell^+)$ and $v={\bf a}_{j_\ell}$ on $D_{2\varrho}(x_\ell^-)$  for each $\ell\in\{1,\ldots,L\}$.
In particular, $\boldsymbol{\Theta}_s(v,x)=0$ for every $x\in \overline D_{\varrho}(x_\ell^\pm)$ and each $\ell\in\{1,\ldots,L\}$. Arguing as before (for the first inclusion), 
we infer from Lemma \ref{updenstheta} that 
$$\lim_{k\to\infty}\Big(\sup_{x\in \overline D_{\varrho}(x_\ell^\pm)}\boldsymbol{\Theta}_s(v_k,x)\Big)=0 \qquad\forall \ell\in\{1,\ldots,L\}\,. $$
Then Corollary  \ref{openset} implies that $\boldsymbol{\Theta}_s(v_k,x)=0$ for every $x\in  \overline D_{\varrho}(x_\ell^\pm)$ and $\ell\in\{1,\ldots,L\}$, whenever $k$ is large enough. Since each $D_{\varrho}(x_\ell^\pm)$ is connected, the map $v_k$ is constant ($\mathcal{Z}_{\boldsymbol{\sigma}}$-valued) on each $D_{\varrho}(x_\ell^\pm)$.
 On the other hand, $v_k\to v$ in $L^1(D_{\varrho}(x_\ell^\pm))$ by \eqref{compactembL1trace}, and we conclude that 
 $v_k=v={\bf a}_{i_\ell}$ on $D_{\varrho}(x_\ell^+)$ and  $v_k=v={\bf a}_{j_\ell}$ on $D_{\varrho}(x_\ell^-)$ for each  $\ell\in\{1,\ldots,L\}$, whenever $k$ is large enough. Hence, $D_{r/2}(x_{\ell})\cap E^{v_k}_{i_\ell}\not=\emptyset$ and $D_{r/2}(x_\ell)\cap  E^{v_k}_{j_\ell}\not=\emptyset$ for $k$ large, which implies  that $\Sigma(v_k)\cap D_{r/2}(x_\ell)\not=\emptyset$  for each  $\ell\in\{1,\ldots,L\}$, whenever $k$ is large enough. Therefore, $\Sigma(v)\cap K\subset \bigcup_\ell D_{r/2}(x_\ell)\subset \mathscr{T}_r(\Sigma(v_k))$ for $k$  sufficiently large. 
\end{proof}

%%%%%%%%%%%%%%%%%%%%%%%%%%%%%%%%%%%%%%%%%%%%%%%%%%%%%%%%

\subsection{Tangent maps}\label{secttangmap}

%%%%%%%%%%%%%%%%%%%%%%%%%%%%%%%%%%%%%%%%%%%%%%%%%%%%%%%%

We now return to the analysis of a given solution $v\in H^1(G;\R^d,|z|^a\de {\bf x})\cap L^\infty(G)$ to  \eqref{startequsurfnew}-\eqref{critnonlocexteqnew}, and we apply the results of Subsection \ref{subsectcompsurf} to define the so-called ``tangent maps" of $v$ at a given point. To this purpose, we 
fix the point of study ${\bf x_0}=(x_0,0)\in\partial^0 G$ and a reference radius $\rho_0>0$ such that $B^+_{\rho_0}({\bf x}_0)\subset G$. We introduce the rescaled functions 
\begin{equation}\label{defrescmap}
v_{x_0,\rho}({\bf x}):=v({\bf x}_0+\rho{\bf x})\,,
\end{equation}
which are defined for $0<\rho<\rho_0/r$ and ${\bf x}\in \partial^0B^+_r$. Changing variables, we observe that 
\begin{equation}\label{1912tang}
\boldsymbol{\Theta}_s(v_{x_0,\rho}, 0, r)
=\boldsymbol{\Theta}_{s}(v,x_0,\rho r)\,.
\end{equation}
This identity, together with Lemma \ref{monotformsurf}, leads to
\begin{equation}\label{1912tangbd}
\frac{1}{r^{n-2s}}{\bf E}_s(v_{x_0,\rho},B_r^+) 
\leq \boldsymbol{\Theta}_{s}(v,x_0,\rho r)\leq \boldsymbol{\Theta}_s(v,x_0,\rho_0)
\leq \frac{1}{\rho_0^{n-2s}}{\bf E}_s(v,G)\,.
\end{equation}
Given a sequence $\rho_k\to0$, we deduce that  
\begin{equation}\label{enbdresc}
\limsup_{k\to\infty}{\bf E}_s(v_{x_0,\rho_k},B_r^+) <\infty \quad\text{for every $r>0$}\,.
\end{equation} 
As a consequence of Theorem \ref{compactminsurf}, we have the following 

\begin{lemma}\label{tangmapstat}
Every sequence $\rho_k\to0$ admits a subsequence $\{\rho^\prime_k\}_{k\in\mathbb{N}}$ such that  $v_{x_0,\rho^\prime_k}\to \varphi$ strongly in $H^1(B_r^+,|z|^a\de{\bf x})$ for every $r>0$, where $\varphi$ satisfies 
\begin{equation}\label{eqtangmap}
\begin{cases}
{\rm div}(z^a\nabla\varphi)=0 & \text{in $\R^{n+1}_+$}\,,\\
\varphi\in\mathcal{Z}_{\boldsymbol{\sigma}}  & \text{on $\R^n$}\,,\\
|\varphi|\leq b & \text{in $\R^{n+1}_+$}\,,
\end{cases}
\end{equation}
and for each $r>0$, 
\begin{equation}\label{stattangmap}
\delta{\bf E}_s(\varphi,B_r^+)[{\bf X}]=0
\end{equation}
for every vector field ${\bf X}=(X,{\bf X}_{n+1})\in C^1(\overline B_r^+,\R^{n+1})$ compactly supported in $B_R^+\cup D_r$ such that ${\bf X}_{n+1}=0$ on $D_r$. 
\end{lemma}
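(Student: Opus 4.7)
The plan is to view each rescaled map $v_{x_0,\rho_k}$ as a solution of the same type of problem \eqref{startequsurfnew}--\eqref{critnonlocexteqnew} on increasingly large half-balls, then apply the compactness result of Theorem \ref{compactminsurf} together with a diagonal extraction in $r$.

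First I would check that rescaling preserves all the structural assumptions. A direct change of variables shows that $L_s$-harmonicity in $G$ transfers to $L_s$-harmonicity of $v_{x_0,\rho}$ on any $B_r^+$ with $r<\rho_0/\rho$, since the operator $\mathrm{div}(z^a\nabla\cdot)$ is invariant under the anisotropic scaling ${\bf x}\mapsto{\bf x}_0+\rho{\bf x}$ up to a multiplicative factor (which vanishes after testing). The boundary trace of $v_{x_0,\rho}$ remains $\mathcal{Z}_{\boldsymbol{\sigma}}$-valued on $D_r$, and $\|v_{x_0,\rho}\|_{L^\infty(B_r^+)}\leq b$. The stationarity condition \eqref{critnonlocexteqnew} is also preserved: if ${\bf X}\in C^1(\overline B_r^+;\R^{n+1})$ is compactly supported in $B_r^+\cup D_r$ with vanishing last component on $D_r$, then its pull-back ${\bf X}_\rho({\bf x}):=\rho^{-1}{\bf X}(({\bf x}-{\bf x}_0)/\rho)$ is admissible for \eqref{critnonlocexteqnew} at the scale ${\bf x}_0+\rho B_r^+$, and a direct computation using \eqref{firstvarEsformul} (which is homogeneous in the correct way) shows that $\delta{\bf E}_s(v_{x_0,\rho},B_r^+)[{\bf X}]=0$.

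Next, the uniform energy bound \eqref{1912tangbd} gives $\sup_k{\bf E}_s(v_{x_0,\rho_k},B_r^+)\leq r^{n-2s}\boldsymbol{\Theta}_s(v,x_0,\rho_0)<\infty$ for every fixed $r>0$ and every sufficiently large $k$. Fix an increasing sequence $r_\ell\uparrow\infty$. For $r=r_1$, Theorem \ref{compactminsurf} applied in $G=B_{r_1}^+$ provides a subsequence $\{v_{x_0,\rho_k^{(1)}}\}$ converging weakly in $H^1(B_{r_1}^+,|z|^a\de{\bf x})$ and strongly in $H^1_{\rm loc}(B_{r_1}^+\cup D_{r_1},|z|^a\de{\bf x})$ to some limit $\varphi^{(1)}$ satisfying \eqref{startequsurfnew}--\eqref{critnonlocexteqnew} in $B_{r_1}^+$. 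Iterating on $r_\ell$ and extracting successive subsequences, the standard Cantor diagonal argument yields a single subsequence $\{\rho_k'\}$ along which $v_{x_0,\rho_k'}$ converges strongly in $H^1(B_r^+,|z|^a\de{\bf x})$ for every $r>0$ (strong convergence on the whole half-ball, not just locally, follows from Theorem \ref{compactminsurf} applied on a slightly larger half-ball $B_{r'}^+$ with $r'>r$). The limits are compatible on overlapping balls, so they glue to a single map $\varphi\in H^1_{\rm loc}(\overline{\R^{n+1}_+};\R^d,|z|^a\de{\bf x})$.

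It remains to transfer the properties to $\varphi$. The $L^\infty$ bound $|\varphi|\leq b$ and the $L_s$-harmonicity on $\R^{n+1}_+$ pass to the limit immediately. The strong $H^1$ convergence on each $B_r^+$ combined with the compactness of the trace operator \eqref{compactembL1trace} gives that the trace of $\varphi$ on $D_r$ takes values in $\mathcal{Z}_{\boldsymbol{\sigma}}$, establishing \eqref{eqtangmap}. Finally, given any admissible ${\bf X}$ compactly supported in $B_r^+\cup D_r$, the explicit formula \eqref{firstvarEsformul} depends continuously on $\nabla v$ through a quadratic expression, so the strong $H^1$ convergence on $B_r^+$ allows us to pass to the limit in $\delta{\bf E}_s(v_{x_0,\rho_k'},B_r^+)[{\bf X}]=0$ and conclude \eqref{stattangmap}. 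The only mildly delicate point in the argument is ensuring strong (not merely local) $H^1$ convergence on each fixed $B_r^+$, which I would handle by invoking Theorem \ref{compactminsurf} on a slightly larger half-ball at each step of the diagonal extraction.
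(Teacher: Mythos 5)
Your proposal is correct and takes essentially the same route as the paper: rescale, use the uniform energy bound from the monotonicity formula, invoke Theorem \ref{compactminsurf}, and pass to the limit in the stationarity identity. The paper states all of this in three sentences; you simply make explicit the diagonal extraction over increasing radii and the slight-enlargement trick to upgrade $H^1_{\rm loc}$ convergence to strong $H^1$ convergence on each fixed half-ball, both of which are implicit in the paper's appeal to Theorem \ref{compactminsurf}.
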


\begin{proof}
In view of \eqref{enbdresc}, Theorem \ref{compactminsurf} yields the announced convergence and \eqref{eqtangmap}. Then we observe that $v_{x_0,\rho}$ also satisfies 
$\delta{\bf E}_s(v_{x_0,\rho},B_r^+)[{\bf X}]=0$. This identity persists under strong convergence, and thus \eqref{stattangmap} holds. 
\end{proof}

\begin{definition}
Every function $\varphi$ obtained by this process will be referred to as {\it tangent map of $v$ at the point $x_0$}.  The family of all tangent maps of $v$ at $x_0$ will be denoted by 
$T_{x_0}(v)$. 
\end{definition}

\begin{lemma}\label{homo2203}
If $\varphi\in T_{x_0}(v)$, then  
$$\boldsymbol{\Theta}_s(\varphi,0,r)=\boldsymbol{\Theta}_s(\varphi,0)= \boldsymbol{\Theta}_s(v,x_0)\quad\forall r>0\,,$$
and $\varphi$ is $0$-homogeneous, i.e., $\varphi(\lambda {\bf x})=\varphi({\bf x})$ for every $\lambda>0$ and every ${\bf x}\in\R^{n+1}_+$. 
\end{lemma}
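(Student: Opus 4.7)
The plan is to combine the rescaling identity \eqref{1912tang} with the strong convergence of the tangent blow-up sequence, and then to exploit the rigidity built into the monotonicity formula from Corollary~\ref{directcorlmonot}.

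First, pick a sequence $\rho_k'\to 0$ along which $v_{x_0,\rho_k'}\to\varphi$ strongly in $H^1(B_r^+,|z|^a\de{\bf x})$ for every $r>0$ (as provided by Lemma~\ref{tangmapstat}). The strong convergence yields $\mathbf{E}_s(v_{x_0,\rho_k'},B_r^+)\to\mathbf{E}_s(\varphi,B_r^+)$, hence
\[
\boldsymbol{\Theta}_s(v_{x_0,\rho_k'},0,r)\longrightarrow \boldsymbol{\Theta}_s(\varphi,0,r)\quad\text{for every }r>0.
\]
On the other hand, the scaling identity \eqref{1912tang} gives $\boldsymbol{\Theta}_s(v_{x_0,\rho_k'},0,r)=\boldsymbol{\Theta}_s(v,x_0,\rho_k'r)$, and as $\rho_k'r\downarrow 0$ the right-hand side converges to $\boldsymbol{\Theta}_s(v,x_0)$ by definition of the density (Corollary~\ref{directcorlmonot}). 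Identifying the two limits, we obtain $\boldsymbol{\Theta}_s(\varphi,0,r)=\boldsymbol{\Theta}_s(v,x_0)$ for every $r>0$, and letting $r\downarrow 0$ yields in particular $\boldsymbol{\Theta}_s(\varphi,0)=\boldsymbol{\Theta}_s(v,x_0)$. This already proves the first statement.

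Next, since $\varphi$ satisfies \eqref{eqtangmap}--\eqref{stattangmap} on every half-ball $B_r^+$, Corollary~\ref{directcorlmonot} applies to $\varphi$ at the origin and gives
\[
\boldsymbol{\Theta}_s(\varphi,0,r)-\boldsymbol{\Theta}_s(\varphi,0)=\boldsymbol{\delta}_s\int_{B_r^+}z^{a}\,\frac{|\mathbf{x}\cdot\nabla\varphi|^{2}}{|\mathbf{x}|^{n+2-2s}}\,\de{\bf x}.
\]
But the left-hand side vanishes for every $r>0$ by the first part of the proof; consequently the integrand must vanish a.e., and since $z^a>0$ in $\R^{n+1}_+$ we conclude $\mathbf{x}\cdot\nabla\varphi(\mathbf{x})=0$ for a.e.\ $\mathbf{x}\in\R^{n+1}_+$.

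Finally, $0$-homogeneity follows by a standard polar-coordinate argument: writing $\mathbf{x}=r\omega$ with $r=|\mathbf{x}|>0$ and $\omega\in\mathbb{S}^n\cap\overline{\R^{n+1}_+}$, the identity $\mathbf{x}\cdot\nabla\varphi(\mathbf{x})=r\partial_r\varphi(r\omega)$ combined with $\varphi\in H^1_{\mathrm{loc}}(\R^{n+1}_+,|z|^a\de{\bf x})$ and Fubini shows that, for $\mathscr{H}^n$-a.e.\ $\omega$, the function $r\mapsto\varphi(r\omega)$ has a.e.\ vanishing derivative on $(0,\infty)$, hence is constant. Therefore $\varphi(\lambda\mathbf{x})=\varphi(\mathbf{x})$ for every $\lambda>0$ and a.e.\ $\mathbf{x}\in\R^{n+1}_+$, as claimed. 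The only subtle point in this plan is the passage from the a.e.\ identity $\mathbf{x}\cdot\nabla\varphi=0$ to pointwise $0$-homogeneity, which is handled cleanly by the Fubini/slicing argument in polar coordinates on account of the $H^1_{\rm loc}$ regularity of $\varphi$.
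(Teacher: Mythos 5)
Your proof is correct and follows the paper's line of reasoning almost verbatim: the identification of the three density quantities via strong convergence plus the scaling identity \eqref{1912tang}, followed by the rigidity of the monotonicity formula forcing $\mathbf{x}\cdot\nabla\varphi=0$. The only place you diverge is the very last step: you promote the a.e.\ vanishing radial derivative to homogeneity by a Fubini/slicing argument in polar coordinates, carefully using the $W^{1,1}_{\rm loc}$ regularity of a.e.\ radial slice. That is perfectly valid, but slightly heavier than needed. Since $\varphi$ solves $\mathrm{div}(z^a\nabla\varphi)=0$ in the open half-space (where the weight $z^a$ is smooth and nondegenerate), interior elliptic regularity makes $\varphi$ smooth in $\R^{n+1}_+$; then $\mathbf{x}\cdot\nabla\varphi(\mathbf{x})=0$ holds pointwise as the vanishing of a continuous nonnegative integrand, and $0$-homogeneity (for every $\mathbf x$ and $\lambda$, not just a.e.) is immediate by integrating along radial lines. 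Worth noting this, since the lemma's statement is pointwise.
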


\begin{proof}
From the strong convergence of $v_{x_0,\rho^\prime_k}$ toward $\varphi$ and the identity in \eqref{1912tang}, we first infer that
$$ \boldsymbol{\Theta}_\varphi(0,0,r)=\lim_{k\to\infty} \boldsymbol{\Theta}_s(v_{x_0,\rho^\prime_k},0,r)=\boldsymbol{\Theta}_s(v,x_0)\quad\forall r>0\,.$$
Then the monotonicity formula in Lemma \ref{monotformsurf} applied to $\varphi$ implies that ${\bf x}\cdot \nabla\varphi({\bf x})=0$ for every ${\bf x}\in\R^{n+1}_+$, and the conclusion  follows.
\end{proof}

%%%%%%%%%%%%%%%%%%%%%%%%%%%%%%%%%%%%%%%%%%%%%%%%%%%%%%%%

\subsection{Homogeneous solutions}  

%%%%%%%%%%%%%%%%%%%%%%%%%%%%%%%%%%%%%%%%%%%%%%%%%%%%%%%%

The previous lemma indicates that the study of tangent maps leads to  the study of $0$-homogeneous solutions, which is the purpose of this subsection. 

\begin{lemma}\label{lemhomogsol}
Let $\varphi\in H^1(B_1^+;\R^d,|z|^a\de {\bf x})\cap L^\infty(B_1^+)$  be a solution of 
\begin{equation}\label{eqcone}
\begin{cases}
{\rm div}(z^a\nabla\varphi)=0 & \text{in $B_1^+$}\,,\\
\varphi \in  \mathcal{Z}_{\boldsymbol{\sigma}}   & \text{on $D_1$}\,,\\
|\varphi|\leq b & \text{in $B_1^+$}\,,
\end{cases}
\end{equation}
for some constant $b\geq 1$. Assume that 
\begin{equation}\label{statinB1}
\delta{\bf E}_s\big(\varphi,B_1^+\big)[{\bf X}]=0\,,
\end{equation}
for every vector field ${\bf X}=(X,{\bf X}_{n+1})\in C^1(\overline B_1^+,\R^{n+1})$ compactly supported in $B_1^+\cup D_1$ such that ${\bf X}_{n+1}=0$ on $D_1$. 
If $\boldsymbol{\Theta}_s(\varphi,0,1)=\boldsymbol{\Theta}_s(\varphi,0) $, then $\varphi$ is $0$-homogeneous. 
\end{lemma}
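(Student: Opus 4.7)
The strategy is to exploit the monotonicity identity established in Corollary~\ref{directcorlmonot}, which precisely measures the defect between $\boldsymbol{\Theta}_s(\varphi,0,r)$ and $\boldsymbol{\Theta}_s(\varphi,0)$ in terms of the radial derivative $\mathbf{x}\cdot\nabla\varphi$. Since $B_1^+$ is admissible and $\varphi$ satisfies exactly \eqref{startequsurfnew}--\eqref{critnonlocexteqnew} in $G=B_1^+$, that corollary is applicable: for every $r\in(0,1)$,
\begin{equation*}
\boldsymbol{\Theta}_s(\varphi,0,r)-\boldsymbol{\Theta}_s(\varphi,0)=\boldsymbol{\delta}_s\int_{B_r^+} z^a\frac{|\mathbf{x}\cdot\nabla\varphi|^2}{|\mathbf{x}|^{n+2-2s}}\,\de\mathbf{x}.
\end{equation*}
Meanwhile, $r\mapsto \boldsymbol{\Theta}_s(\varphi,0,r)=r^{2s-n}\mathbf{E}_s(\varphi,B_r^+)$ is non-decreasing by Lemma~\ref{monotformsurf} and continuous in $r$ on $(0,1]$ since the weighted Dirichlet energy is absolutely continuous with respect to Lebesgue measure of the half-ball.

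The hypothesis $\boldsymbol{\Theta}_s(\varphi,0,1)=\boldsymbol{\Theta}_s(\varphi,0)$ then forces the monotone function $r\mapsto \boldsymbol{\Theta}_s(\varphi,0,r)$ to be constantly equal to $\boldsymbol{\Theta}_s(\varphi,0)$ on all of $(0,1]$. Letting $r\uparrow 1$ in the identity above, or just taking any $r<1$, the right-hand side vanishes identically. Since $z^a$ and $|\mathbf{x}|^{-(n+2-2s)}$ are positive a.e.\ on $B_1^+$, we deduce
\begin{equation*}
\mathbf{x}\cdot\nabla\varphi(\mathbf{x})=0\quad\text{for a.e.\ $\mathbf{x}\in B_1^+$.}
\end{equation*}

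It remains to translate this pointwise radial condition into genuine $0$-homogeneity. Classical elliptic regularity applied to ${\rm div}(z^a\nabla\varphi)=0$ shows that $\varphi$ is smooth inside $\R^{n+1}_+\cap B_1$, so on that open set the vanishing of $\mathbf{x}\cdot\nabla\varphi$ is pointwise and implies that $\varphi$ is constant along each open ray emanating from the origin (this is an immediate integration of the ODE $\frac{\de}{\de r}\varphi(r\omega)=r^{-1}(r\omega)\cdot\nabla\varphi(r\omega)=0$). To handle points on $D_1\subset\partial\R^{n+1}_+$, one uses that $\varphi\in H^1(B_1^+,|z|^a\de\mathbf{x})$ admits an ACL representative along almost every ray through the origin (by passing to spherical coordinates and applying Fubini, as in the proof of the standard ACL characterization of Sobolev functions), and the same argument gives constancy along a.e.\ such ray. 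Combining both, $\varphi(\lambda\mathbf{x})=\varphi(\mathbf{x})$ whenever $\lambda>0$ and both $\mathbf{x}$, $\lambda\mathbf{x}\in B_1^+$, i.e., $\varphi$ is $0$-homogeneous.

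The only genuine subtlety in this argument is the continuity of $r\mapsto \boldsymbol{\Theta}_s(\varphi,0,r)$ up to $r=1$ (so that the endpoint condition really forces constancy on the full interval) and the ACL step on the degenerate boundary $D_1$; both are standard for the weighted space $H^1(B_1^+,|z|^a\de\mathbf{x})$ since the weight $|z|^a$ belongs to the Muckenhoupt class $A_2$. Everything else is a direct consequence of the monotonicity identity in Corollary~\ref{directcorlmonot}, and in particular the lemma does not require any use of the stationarity condition \eqref{statinB1} beyond what was already incorporated into that monotonicity.
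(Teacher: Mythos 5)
The proposal is correct, and its core step coincides with the paper's: both apply Corollary~\ref{directcorlmonot} at the origin, use monotonicity to force $\boldsymbol{\Theta}_s(\varphi,0,\cdot)$ to be constant on $(0,1]$, and conclude that the defect integral $\boldsymbol{\delta}_s\int_{B_1^+}z^a|{\bf x}\cdot\nabla\varphi|^2|{\bf x}|^{-(n+2-2s)}\,\de{\bf x}$ vanishes, hence ${\bf x}\cdot\nabla\varphi=0$ a.e.\ in $B_1^+$. Where the two diverge slightly is in how the last implication ``vanishing radial derivative $\Rightarrow$ $0$-homogeneity'' is closed: you spell it out directly, combining interior elliptic smoothness for $z>0$ (so that the ODE along rays is classical) with the ACL representative for the weighted Sobolev space, correctly flagging that $|z|^a\in A_2$ makes this routine and that continuity of $r\mapsto\boldsymbol{\Theta}_s(\varphi,0,r)$ up to $r=1$ is what lets one use the hypothesis at the endpoint. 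The paper instead folds this step into the tangent-map framework: once $\boldsymbol{\Theta}_s(\varphi,0,\cdot)$ is constant, it notes that $T_0(\varphi)$ reduces to $\varphi$ itself and invokes Lemma~\ref{tangmapstat} together with the homogeneity of tangent maps (Lemma~\ref{homo2203}); but that lemma's own proof performs exactly your deduction, so the paper's appeal to tangent maps is really just a repackaging. Your version is a bit more self-contained and, since it avoids the detour through the tangent-map lemmata that already rely on the same monotonicity identity, arguably cleaner; neither route has a gap.
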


\begin{proof}
Applying Corollary \ref{directcorlmonot} at $x_0=0$ leads to the homogeneity of $\varphi$ in $B_1^+$. In turn, the homogeneity of $\varphi$ implies that 
$T_0(\varphi)=\{\varphi\}$, and the conclusion  follows from Lemma~\ref{tangmapstat}. 
\end{proof}

\begin{definition}
We say that a function $\varphi\in L^1_{\rm loc}(\R^{n+1}_+;\R^d)$ is a {\it nonlocal stationary $\mathcal{Z}_{\boldsymbol{\sigma}}$-cone} if $\varphi$ is $0$-homogeneous, $\varphi\in H^1(B_1^+;\R^d,|z|^a\de {\bf x})\cap L^\infty(B_1^+)$, and $\varphi$ satisfies \eqref{eqtangmap}-\eqref{stattangmap} (for some constant $b\geq 1$). 
\end{definition}

In view of the previous subsection, tangent maps to a solution of \eqref{startequsurfnew}-\eqref{critnonlocexteqnew} are thus nonlocal stationary $\mathcal{Z}_{\boldsymbol{\sigma}}$-cones. We shall present in details the main properties of those ``cones''. We start with the following lemma explaining somehow the terminology. 

\begin{lemma}\label{relattangmap}
If $\varphi$ is a nonlocal stationary $\mathcal{Z}_{\boldsymbol{\sigma}}$-cone, then there are $m$ open cones $\mathcal{C}_1^\varphi,\ldots,  \mathcal{C}_m^\varphi\subset \R^n$ such that 
$$\sum_{j=1}^m\chi_{\mathcal{C}_j^\varphi}=1\text{ a.e. on $\R^n$ and }  \varphi=\sum_{j=1}^m\big(\chi_{\mathcal{C}_j^\varphi}\big)^\e{\bf a}_j\,, $$
as defined in \eqref{poisson}. In particular, $|\varphi|\leq |{\bf a}|_{\rm max}$ in $\R^{n+1}_+$. 
\end{lemma}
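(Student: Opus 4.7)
The plan is to read off the open sets $\mathcal{C}_j^\varphi$ from the boundary structure provided by Corollary \ref{openset}, to promote them to cones using the $0$-homogeneity of $\varphi$, and finally to identify $\varphi$ as the Poisson extension of its trace by a uniqueness argument.

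First, I would fix an arbitrary $R > 0$ and regard $\varphi$ as a solution of \eqref{startequsurfnew}-\eqref{critnonlocexteqnew} on the admissible bounded open set $B_R^+$ (so that $\partial^0 B_R^+ = D_R$); the hypotheses of Corollary \ref{openset} are met, since \eqref{stattangmap} supplies the required stationarity condition in every such ball. This produces $m$ disjoint open subsets $\mathcal{C}_{j,R} \subset D_R$ partitioning $D_R$ up to a null set, with $\varphi = {\bf a}_j$ a.e.\ on $\mathcal{C}_{j,R}$. Uniqueness of the essentially-open representative ensures $\mathcal{C}_{j,R} = \mathcal{C}_{j,R'} \cap D_R$ whenever $R' > R$, so setting $\mathcal{C}_j^\varphi := \bigcup_{R>0} \mathcal{C}_{j,R}$ defines $m$ disjoint open subsets of $\R^n$ whose characteristic functions sum to $1$ almost everywhere and which satisfy $\varphi = \sum_j \chi_{\mathcal{C}_j^\varphi} {\bf a}_j$ a.e.\ on $\R^n$.

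Because $\varphi$ is $0$-homogeneous in $\R^{n+1}_+$, its trace on $\R^n$ inherits $0$-homogeneity: $\varphi(\lambda x, 0) = \varphi(x, 0)$ for almost every $x \in \R^n$ and every $\lambda > 0$. Hence the level set $\{\varphi(\cdot,0) = {\bf a}_j\}$ is dilation-invariant up to a null set, which forces $|\lambda \mathcal{C}_j^\varphi \triangle \mathcal{C}_j^\varphi| = 0$. Since two open subsets of $\R^n$ coinciding almost everywhere must be equal, $\lambda \mathcal{C}_j^\varphi = \mathcal{C}_j^\varphi$ for every $\lambda > 0$, so each $\mathcal{C}_j^\varphi$ is an open cone.

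It remains to show $\varphi = u^\e$ with $u := \sum_j \chi_{\mathcal{C}_j^\varphi} {\bf a}_j$. Since $u$ is bounded by $|{\bf a}|_{\rm max}$ and $\mathbf{P}_{n,s}(\cdot, z)$ is a probability density for each $z > 0$, the Poisson extension $u^\e$ is well defined, bounded by $|{\bf a}|_{\rm max}$, solves ${\rm div}(z^a \nabla u^\e) = 0$ in $\R^{n+1}_+$, and has trace $u$ on $\R^n$. Consequently, $\psi := \varphi - u^\e$ is a bounded weak solution of ${\rm div}(z^a \nabla \psi) = 0$ in $\R^{n+1}_+$ with zero trace on $\R^n$. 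Extending $\psi$ by odd reflection across $\{z=0\}$ yields a bounded weak solution $\widetilde{\psi}$ of ${\rm div}(|z|^a \nabla \widetilde{\psi}) = 0$ on the whole $\R^{n+1}$, and the $A_2$-weighted Liouville theorem (after Fabes-Kenig-Serapioni) forces $\widetilde{\psi}$ to be constant; the vanishing trace then gives $\widetilde{\psi} \equiv 0$, whence $\varphi = u^\e$ and the final bound $|\varphi| \leq |{\bf a}|_{\rm max}$ is immediate. The main obstacle lies precisely in this last identification: one must carefully justify that odd reflection produces a genuine global weak solution (no distributional mass on $\{z=0\}$, which relies on the vanishing trace of $\psi$), and cite the appropriate Liouville-type statement for the $A_2$-weighted degenerate operator; an alternative, avoiding Liouville altogether, would be to invoke directly a Poisson-type uniqueness theorem for bounded $L_s$-harmonic functions in the half-space with $L^\infty$ trace.
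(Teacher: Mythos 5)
Your approach follows the paper's strategy closely: both proofs extract the open sets $\mathcal{C}_j^\varphi$ from Corollary~\ref{openset}, promote them to cones using the $0$-homogeneity of $\varphi$, and then identify $\varphi$ with the Poisson extension of its trace by showing the difference is a bounded, zero-trace, $L_s$-harmonic function on $\R^{n+1}_+$ which must vanish. The one genuine variation is the final uniqueness step: the paper sets $w := \varphi - \sum_j(\chi_{\mathcal{C}_j^\varphi})^\e {\bf a}_j$, notes its boundedness, and applies the half-space Liouville theorem of Cabr\'e--Sire \cite[Corollary 3.5]{CS1} directly; you instead perform an odd reflection across $\{z=0\}$ and appeal to an interior Liouville-type consequence of the Fabes--Kenig--Serapioni Harnack inequality for $A_2$-weighted equations. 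Both routes are legitimate and require comparable care (the odd reflection does produce a genuine global weak solution precisely because the trace vanishes, making odd test functions admissible in the half-space formulation, and even test functions contribute zero by parity), so the difference is a matter of which black box you prefer to invoke.

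One step in your write-up is wrong as stated: you claim that ``two open subsets of $\R^n$ coinciding almost everywhere must be equal,'' which is false (e.g.\ $(0,1)$ and $(0,\tfrac12)\cup(\tfrac12,1)$). The conclusion $\lambda\mathcal{C}_j^\varphi = \mathcal{C}_j^\varphi$ is nonetheless correct, but it should be derived from the specific representative built in the proof of Corollary~\ref{openset}: there $\widetilde E^v_j$ is defined as the set of points $x$ such that $v \equiv {\bf a}_j$ a.e.\ in a neighborhood of $x$, i.e.\ it is the essential interior of the phase. For that canonical representative, $0$-homogeneity of $\varphi$ on $\R^{n+1}_+$ immediately gives dilation invariance of $\mathcal{C}_j^\varphi$ pointwise, with no need to pass through the incorrect a.e.\ rigidity claim.
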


\begin{proof}
By Corollary \ref{openset}, there are $m$ open sets $\mathcal{C}_1^\varphi,\ldots, \mathcal{C}_m^\varphi\subset \R^n$ such that $\sum_{j}\chi_{\mathcal{C}_j^\varphi}=1$ a.e. in $\R^n$ and 
$\varphi=\sum_{j}\chi_{\mathcal{C}_j^\varphi}{\bf a}_j$ a.e. on $\R^n$. Since  $\varphi$ is $0$-homogeneous, we easily infer that  $\mathcal{C}_\varphi $ is an open cone. We  set 
$$w:=\varphi- \sum_{j=1}^m\big(\chi_{\mathcal{C}_j^\varphi}\big)^\e{\bf a}_j\,.$$ 
The map $w$ being $0$-homogeneous, it belongs to $\in H^1_{\rm loc}\big(\overline{\mathbb{R}^{n+1}_+};\R^d,|z|^a\de{\bf x}\big)\cap L^\infty(\mathbb{R}^{n+1}_+)$ with the upper bound  $\|w\|_{L^\infty(\R^{n+1}_+)}\leq |{\bf a}|_{\rm max}+\|\varphi\|_{L^\infty(\R^{n+1}_+)}$, and it satisfies
$$\begin{cases} 
{\rm div}(z^a\nabla w)=0& \text{in $\R^{n+1}_+$}\,,\\
w=0 & \text{on $\partial \R^{n+1}_+$}\,. 
\end{cases}$$
Then $w$ and $z^a\partial_z w$ are H\"older continuous up to $\partial\R^{n+1}_+$ (see e.g. \cite[Proof of Lemma 4.8]{MilSirW}), and smooth in $\R^{n+1}_+$ by  elliptic regularity. Since $w$ is bounded, the Liouville type theorem in \cite[Corollary 3.5]{CS1} tells us that $w\equiv 0$. 
\end{proof}

\begin{remark}
If  $\varphi$ is a nonlocal stationary $\mathcal{Z}_{\boldsymbol{\sigma}}$-cone, then $ \boldsymbol{\Theta}_s(\varphi,\lambda y)=\boldsymbol{\Theta}_s(\varphi,y)$ for every $y\in\R^n\setminus\{0\}$ and $\lambda>0$. Indeed, by homogeneity of $\varphi $  we have for  each $\rho>0$, 
$$\boldsymbol{\Theta}_s(\varphi,\lambda y,\rho) = \boldsymbol{\Theta}_s(\varphi,y,\rho/\lambda)\,,$$
and the assertion follows letting $\rho\to 0$. 
\end{remark}

\begin{lemma}\label{lemSphi}
Let $\varphi$ be a nonlocal stationary $\mathcal{Z}_{\boldsymbol{\sigma}}$-cone. Then, 
$$ \boldsymbol{\Theta}_s(\varphi,y)\leq \boldsymbol{\Theta}_s(\varphi,0)\quad \forall y\in\R^n\,.$$
In addition, the set 
$$S(\varphi):=\Big\{y\in\R^n:  \boldsymbol{\Theta}_s(\varphi,y)=\boldsymbol{\Theta}_s(\varphi,0)\Big\} $$
is a linear subspace of $\R^n$, and $\varphi({\bf x}+{\bf y})=\varphi({\bf x})$ for every ${\bf y}\in S(\varphi)\times\{0\}$ and ${\bf x}\in\R^{n+1}_+$. 
\end{lemma}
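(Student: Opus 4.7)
The plan is to follow the classical ``spine'' construction from the regularity theory for minimal cones and stationary harmonic maps, adapted to the extension setting via the monotonicity formula of Lemma \ref{monotformsurf} and its infinitesimal form in Corollary \ref{directcorlmonot}.

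First I would establish the density upper bound $\boldsymbol{\Theta}_s(\varphi,y)\le\boldsymbol{\Theta}_s(\varphi,0)$. The key observation is that $0$-homogeneity of $\varphi$ gives $\nabla\varphi(\lambda\mathbf{x})=\lambda^{-1}\nabla\varphi(\mathbf{x})$, and a direct change of variables yields $\mathbf{E}_s(\varphi,B_R^+)=R^{n-2s}\mathbf{E}_s(\varphi,B_1^+)$, so $\boldsymbol{\Theta}_s(\varphi,0,R)\equiv\boldsymbol{\Theta}_s(\varphi,0)$ is independent of $R$. For $R>|y|$ the inclusion $B_R^+(\mathbf{y})\subset B_{R+|y|}^+(0)$ gives
$$\boldsymbol{\Theta}_s(\varphi,y,R)\le\bigg(\frac{R+|y|}{R}\bigg)^{n-2s}\boldsymbol{\Theta}_s(\varphi,0)\,.$$
By the monotonicity of $r\mapsto\boldsymbol{\Theta}_s(\varphi,y,r)$ we have $\boldsymbol{\Theta}_s(\varphi,y)\le\boldsymbol{\Theta}_s(\varphi,y,R)$ for every $R>0$, and letting $R\to\infty$ concludes.

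Second, for $y\in S(\varphi)$, the equality $\boldsymbol{\Theta}_s(\varphi,y)=\boldsymbol{\Theta}_s(\varphi,0)$ combined with the first step and the monotonicity of $r\mapsto\boldsymbol{\Theta}_s(\varphi,y,r)$ forces $\boldsymbol{\Theta}_s(\varphi,y,r)\equiv\boldsymbol{\Theta}_s(\varphi,y)$ for all $r>0$. Applying Corollary \ref{directcorlmonot} at the base point $y$ and letting $r\to\infty$, the nonnegative defect integral must vanish identically, which yields $(\mathbf{x}-\mathbf{y})\cdot\nabla\varphi(\mathbf{x})=0$ a.e. in $\mathbb{R}^{n+1}_+$. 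Combined with the homogeneity identity $\mathbf{x}\cdot\nabla\varphi(\mathbf{x})=0$ (from Lemma \ref{homo2203}), this gives $\mathbf{y}\cdot\nabla\varphi\equiv 0$ in the weak sense, where $\mathbf{y}=(y,0)$. Integrating this weak identity along the direction $\mathbf{y}$ (which is tangential to $\partial\mathbb{R}^{n+1}_+$, so the half-space is invariant under these translations) yields $\varphi(\mathbf{x}+t\mathbf{y})=\varphi(\mathbf{x})$ a.e., and hence pointwise everywhere by the smoothness of $\varphi$ in the open half-space together with the structure of $\varphi$ given by Lemma~\ref{relattangmap}.

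Third, to see that $S(\varphi)$ is a linear subspace, closure under positive scalings is immediate from the remark following Lemma \ref{relattangmap}, namely $\boldsymbol{\Theta}_s(\varphi,\lambda y)=\boldsymbol{\Theta}_s(\varphi,y)$ for every $\lambda>0$. Closure under arbitrary scalar multiplication and under addition is then obtained by exploiting the translation invariance established in the second step: if $y_1,y_2\in S(\varphi)$ and $\lambda\in\mathbb{R}$, translating by $-\lambda\mathbf{y}_1-\mathbf{y}_2$ yields
$$\mathbf{E}_s\big(\varphi,B_r^+(\lambda\mathbf{y}_1+\mathbf{y}_2)\big)=\mathbf{E}_s(\varphi,B_r^+(0))=r^{n-2s}\boldsymbol{\Theta}_s(\varphi,0)\,,$$
so that $\boldsymbol{\Theta}_s(\varphi,\lambda y_1+y_2,r)\equiv\boldsymbol{\Theta}_s(\varphi,0)$, and letting $r\to 0$ gives $\lambda y_1+y_2\in S(\varphi)$.

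The main delicate point is the second step: one must carefully argue that $\mathbf{y}\cdot\nabla\varphi=0$ (initially obtained only as an a.e. identity in $\mathbb{R}^{n+1}_+$ from the vanishing of a nonnegative weighted integrand) genuinely encodes translation invariance of $\varphi$. This is where the tangential nature of $\mathbf{y}$ to $\partial\mathbb{R}^{n+1}_+$ matters, and where Lemma \ref{relattangmap} (guaranteeing that $\varphi$ is the Poisson extension of a piecewise constant trace) is useful to propagate the invariance from the interior down to the boundary trace, and thus to the chambers $\mathcal{C}_j^\varphi$.
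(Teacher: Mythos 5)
Your proposal is correct and follows essentially the same route as the paper: both rest on Corollary \ref{directcorlmonot} applied at the base point $\mathbf{y}=(y,0)$, together with the rescaling $\boldsymbol{\Theta}_s(\varphi,y,\rho)\leq\big(\tfrac{\rho+|y|}{\rho}\big)^{n-2s}\boldsymbol{\Theta}_s(\varphi,0)$ from homogeneity, to let $\rho\to\infty$ and obtain the density inequality together with vanishing of the defect integral when equality holds, after which $(\mathbf{x}-\mathbf{y})\cdot\nabla\varphi\equiv0$ plus $\mathbf{x}\cdot\nabla\varphi\equiv0$ gives translation invariance and hence linearity of $S(\varphi)$. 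The only differences are cosmetic (you isolate the upper density bound as a preliminary step and you are more explicit about upgrading the a.e.\ identity to an everywhere identity, which in fact is automatic from interior smoothness of $\varphi$).
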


\begin{proof}
By Corollary \ref{directcorlmonot}, we have for every ${\bf y}=(y,0)\in\partial\R^{n+1}_+$ and every $\rho>0$, 
\begin{equation}\label{2124}
\boldsymbol{\Theta}_s(\varphi,y)+d_s\int_{B_\rho^+({\bf y})} z^a\frac{|(\mathbf{x}-{\bf y})\cdot \nabla \varphi({\bf x})|^2}{|\mathbf{x}-{\bf y}|^{n+2-2s}}\,\de{\bf x} = \boldsymbol{\Theta}_s(\varphi,y,\rho)\,.
\end{equation}
On the other hand, by homogeneity of $\varphi$, 
$$\boldsymbol{\Theta}_s(\varphi,y,\rho) \leq \frac{(\rho+|z|)^{n-2s}}{\rho^{n-2s}}\,\boldsymbol{\Theta}_s(\varphi,0,\rho+|y|) =  \frac{(\rho+|y|)^{n-2s}}{\rho^{n-2s}}\,\boldsymbol{\Theta}_s(\varphi,0)\,.  $$
Inserting this inequality in \eqref{2124} and letting $\rho\to\infty$, we deduce that 
$$ \boldsymbol{\Theta}_s(\varphi,y)+d_s\int_{\R^{n+1}_+} z^a\frac{|(\mathbf{x}-{\bf y})\cdot \nabla \varphi({\bf x})|^2}{|\mathbf{x}-{\bf y}|^{n+2-2s}}\,\de{\bf x} \leq \boldsymbol{\Theta}_s(\varphi,0)\,.$$
Next, if $ \boldsymbol{\Theta}_s(\varphi,y)= \boldsymbol{\Theta}_s(\varphi,0)$, then $(\mathbf{x}-{\bf y})\cdot \nabla \varphi({\bf x})=0$ for every ${\bf x}\in\R^{n+1}_+$. By homogeneity of $\varphi$, we deduce that ${\bf y}\cdot \nabla \varphi({\bf x})=0$ for every ${\bf x}\in\R^{n+1}_+$, i.e, 
\begin{equation}\label{transinv}
\varphi({\bf x}+{\bf y})=\varphi({\bf x}) \quad \forall {\bf x}\in\R^{n+1}_+\,.
\end{equation}
The other way around, if ${\bf y}=(y,0)$ satisfies \eqref{transinv}, then $(\mathbf{x}-{\bf y})\cdot \nabla \varphi({\bf x})=0$ for every ${\bf x}\in\R^{n+1}_+$ (using again the homogeneity of $\varphi$). By \eqref{2124} and \eqref{transinv},  it implies that for each radius $\rho>0$, 
$$\boldsymbol{\Theta}_s(\varphi,y)= \boldsymbol{\Theta}_s(\varphi,y,\rho)= \boldsymbol{\Theta}_s(\varphi,0,\rho)=\boldsymbol{\Theta}_s(\varphi,0)\,,$$
and thus $y\in S(\varphi)$. From \eqref{transinv} it now follows that $S(\varphi)$ is a linear subspace of $\R^n$. 
\end{proof}

\begin{remark}\label{remSphisubbound}
If  $\varphi$ is a non constant nonlocal stationary $\mathcal{Z}_{\boldsymbol{\sigma}}$-cone, then $ \boldsymbol{\Theta}_s(\varphi,0)\geq \boldsymbol{\eta}_0>0$ by Lemma~\ref{epsregminsurf}. In turn, we infer from Corollary \ref{openset} that  $S(\varphi)\subset \Sigma(\varphi)$.
\end{remark}

\begin{remark}\label{spinedimn}
If  $\varphi$ is a nonlocal stationary $\mathcal{Z}_{\boldsymbol{\sigma}}$-cone such that ${\rm dim}\,S(\varphi)=n$, then  $\mathcal{C}_{j_0}^\varphi=\R^n$ for some $j_0\in\{1,\ldots,m\}$ and 
 $\varphi={\bf a}_{j_0}$. As a consequence, if $\varphi\in T_{x_0}(v)$ for some solution $u$  of \eqref{startequsurfnew}-\eqref{critnonlocexteqnew}, then $ \boldsymbol{\Theta}_s(v,x_0)=\boldsymbol{\Theta}_s(\varphi,0)=0$, and Corollary \ref{openset} yields $x_0\not\in \Sigma(v)$. In other words,
$$x_0\in\Sigma(v) \Longleftrightarrow\,{\rm dim}\,S(\varphi)\leq n-1 \text{ for {\sl all} $\varphi\in T_{x_0}(v)$}\,.$$
\end{remark}

\begin{remark}\label{remhalfspa}
If  $\varphi$ is a nonlocal stationary $\mathcal{Z}_{\boldsymbol{\sigma}}$-cone such that ${\rm dim}\,S(\varphi)=n-1$, then there are two distinct indices $i_0,j_0\in\{1,\ldots,m\}$ and a half-space $H\subset\R^n$ such that $\varphi=\chi_{{\rm int}(H)}{\bf a}_{i_0}+\chi_{{\rm int}(H^c)}{\bf a}_{j_0}$ on $\R^n$.  
 Indeed, up to a rotation, we may assume that $S(\varphi)=\{0\}\times \R^{n-1}$, and Lemma~\ref{lemSphi} yields $\varphi({\bf x})=\varphi(x_1,z)$ for all ${\bf x}=(x_1,\ldots,x_n,z)\in\R^{n+1}_+$. Since 
 $\varphi$ is $0$-homogeneous, the conclusion follows.
\end{remark}
\vskip5pt

For a constant $\Lambda\geq 0$ and $\ell\in\{0,\ldots,n\}$, we now introduce the following class of nonlocal stationary $\mathcal{Z}_{\boldsymbol{\sigma}}$-cones,  
$$\mathscr{C}_\ell(\Lambda):=\Big\{\text{nonlocal stationary $\mathcal{Z}_{\boldsymbol{\sigma}}$-cone $\varphi$ such that ${\rm dim}\,S(\varphi)\geq \ell$ and $\boldsymbol{\Theta}_s(\varphi,0)\leq\Lambda$}\Big\}\,. $$
Note that $\mathscr{C}_{\ell+1}(\Lambda)\subset \mathscr{C}_\ell(\Lambda)$, and $\mathscr{C}_n(\Lambda)=\mathcal{Z}_{\boldsymbol{\sigma}}$ by Remark \ref{spinedimn}.

\begin{lemma}\label{compactcone}
For each $\ell\in\{0,\ldots,n\}$ and $r>0$, the set $\big\{\varphi_{| B_r^+}: \varphi\in\mathscr{C}_\ell(\Lambda)\big\} $ is strongly compact in $H^1(B_r^+,|z|^a\de{\bf x})$. 
\end{lemma}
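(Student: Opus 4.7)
The plan is to apply Theorem~\ref{compactminsurf} on a sequence of growing half-balls and to use compactness of the Grassmannian for the spine.

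Let $\{\varphi_k\}\subset\mathscr{C}_\ell(\Lambda)$. By Lemma~\ref{relattangmap}, $\|\varphi_k\|_{L^\infty(\R^{n+1}_+)}\leq |{\bf a}|_{\rm max}$, so the sequence is uniformly bounded with $b:=|{\bf a}|_{\rm max}$. Since each $\varphi_k$ is $0$-homogeneous, the density is constant in $r$, hence
$$\frac{1}{R^{n-2s}}{\bf E}_s(\varphi_k,B_R^+)=\boldsymbol{\Theta}_s(\varphi_k,0,R)=\boldsymbol{\Theta}_s(\varphi_k,0)\leq \Lambda$$
for every $R>0$. Thus $\sup_k{\bf E}_s(\varphi_k,B_R^+)<\infty$ for each $R>0$. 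Fix an increasing sequence $R_j\uparrow\infty$ with $R_1>r$; applying Theorem~\ref{compactminsurf} on $G=B_{R_j}^+$ successively and extracting by a standard diagonal argument, one obtains a (not relabeled) subsequence and a map $\varphi_*\in H^1_{\rm loc}(\overline{\R^{n+1}_+};\R^d,|z|^a\de{\bf x})\cap L^\infty(\R^{n+1}_+)$ with $\|\varphi_*\|_{L^\infty}\leq b$ satisfying \eqref{startequsurfnew}--\eqref{critnonlocexteqnew} on every $B_{R_j}^+$, such that $\varphi_k\to\varphi_*$ strongly in $H^1(B_{R}^+,|z|^a\de{\bf x})$ for every $R>0$. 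In particular, strong convergence holds on $B_r^+$, which is what we aim at proving.

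It remains to check that $\varphi_*\in\mathscr{C}_\ell(\Lambda)$. First, passing to the limit in $\varphi_k(\lambda{\bf x})=\varphi_k({\bf x})$ (for $\lambda>0$ fixed) using $L^2_{\rm loc}(\R^{n+1}_+,|z|^a\de{\bf x})$-convergence shows that $\varphi_*$ is itself $0$-homogeneous. Consequently, by Lemma~\ref{homo2203} applied to $\varphi_*$ (or directly from homogeneity),
$$\boldsymbol{\Theta}_s(\varphi_*,0)=\frac{1}{R^{n-2s}}{\bf E}_s(\varphi_*,B_R^+)=\lim_{k\to\infty}\frac{1}{R^{n-2s}}{\bf E}_s(\varphi_k,B_R^+)\leq \Lambda$$
by the strong $H^1$ convergence on $B_R^+$.

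The remaining—and most delicate—point is that ${\rm dim}\,S(\varphi_*)\geq\ell$. For each $k$, pick an $\ell$-dimensional linear subspace $V_k\subset S(\varphi_k)$. The Grassmannian of $\ell$-planes in $\R^n$ being compact, up to a further subsequence we may assume $V_k\to V_*$ for some $\ell$-dimensional subspace $V_*\subset\R^n$ (in the sense that any orthonormal basis of $V_k$ converges to one of $V_*$). Fix $y\in V_*$ and choose $y_k\in V_k$ with $y_k\to y$. By Lemma~\ref{lemSphi}, $\varphi_k(\,\cdot\,+(y_k,0))=\varphi_k(\,\cdot\,)$ on $\R^{n+1}_+$. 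Testing against any $\psi\in C_c^\infty(\R^{n+1}_+\cup\R^n)$ and using the strong $L^2_{\rm loc}(|z|^a\de{\bf x})$ convergence together with the continuity of translations in $L^2_{\rm loc}(|z|^a\de{\bf x})$ (valid since the weight $|z|^a$ is a Muckenhoupt $A_2$ weight with respect to translations in the $x$-variables), we pass to the limit to obtain $\varphi_*(\,\cdot\,+(y,0))=\varphi_*(\,\cdot\,)$ on $\R^{n+1}_+$. By Lemma~\ref{lemSphi} again, this means $y\in S(\varphi_*)$. Hence $V_*\subset S(\varphi_*)$, so ${\rm dim}\,S(\varphi_*)\geq\ell$, which proves $\varphi_*\in\mathscr{C}_\ell(\Lambda)$ and completes the proof.
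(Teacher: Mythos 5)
Your proof is correct and arrives at the same conclusion as the paper's, but takes a partly different route for the two nontrivial verifications. The skeleton is the same: use $0$-homogeneity and the density bound to get a uniform ${\bf E}_s$-bound, invoke Theorem~\ref{compactminsurf} to extract a strong $H^1_{\rm loc}$-limit $\varphi_*$ solving \eqref{startequsurfnew}--\eqref{critnonlocexteqnew}, and then check $\varphi_*\in\mathscr{C}_\ell(\Lambda)$. (The paper first reduces to $r=1$ and applies Theorem~\ref{compactminsurf} once on $B_2^+$; your diagonalization over $R_j\uparrow\infty$ is equivalent.) Where you diverge is in how you transfer the two structural properties to the limit. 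For homogeneity, the paper uses the density machinery: Lemma~\ref{updenstheta} gives $\boldsymbol{\Theta}_s(\varphi_*,0)\geq\lim_k\boldsymbol{\Theta}_s(\varphi_k,0)=\boldsymbol{\Theta}_s(\varphi_*,0,1)$, which forces equality by Corollary~\ref{directcorlmonot} and then homogeneity via Lemma~\ref{lemhomogsol}. You instead pass to the limit in the pointwise relation $\varphi_k(\lambda\cdot)=\varphi_k(\cdot)$ using $L^2_{\rm loc}(|z|^a\,\de{\bf x})$-convergence and a change of variables; this is cleaner and more elementary, and sidesteps the semicontinuity argument entirely. Similarly, for the spine bound the paper shows $y\in S(\varphi_*)$ by proving $\boldsymbol{\Theta}_s(\varphi_*,y)\geq\boldsymbol{\Theta}_s(\varphi_*,0)$ through Lemma~\ref{updenstheta} along a sequence $y_k\to y$ with $y_k\in S(\varphi_k)$; you instead pass to the limit directly in the translation-invariance $\varphi_k(\cdot+(y_k,0))=\varphi_k(\cdot)$ (legitimate because the translation is purely horizontal, so the weight $z^a$ is unaffected and $x$-translations act strongly continuously on $L^2_{\rm loc}(|z|^a\,\de{\bf x})$), then apply the characterization in Lemma~\ref{lemSphi}. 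Both approaches buy you the same result; yours is arguably more transparent and leans less on the density-function apparatus, whereas the paper's is more uniform with the rest of Section~\ref{GHSNMC}, which systematically exploits Lemma~\ref{updenstheta} and Corollary~\ref{directcorlmonot}.
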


\begin{proof}
By homogeneity, it is enough to consider the case $r=1$. Let $\{\varphi_k\}_{k\in\mathbb{N}}\subset \mathscr{C}_\ell(\Lambda)$ be an arbitrary sequence. Still by homogeneity, we have $\boldsymbol{\Theta}_s(\varphi_k,0,2)=\boldsymbol{\Theta}_s(\varphi_k,0)\leq \Lambda$, so that 
$${\bf E}_s(\varphi_k,B^+_{2})\leq 2^{n-2s}\Lambda\,. $$
Since $|\varphi_k|\leq |{\bf a}|_{\rm max}$ by Lemma \ref{relattangmap}, we can apply Theorem \ref{compactminsurf} to find a (not relabeled) subsequence   such that $\varphi_k\to\psi$ strongly in $H^1(B_1^+,|z|^a\de{\bf x})$ for a function $\psi$ satisfying \eqref{eqcone}-\eqref{statinB1} with $b={\rm max}(1,|{\bf a}|_{\rm max})$. Then we deduce from Lemma \ref{updenstheta} that 
$$\boldsymbol{\Theta}_s(\psi,0)\geq \lim_{k\to\infty}\boldsymbol{\Theta}_s(\varphi_k,0)=\lim_{k\to\infty}\boldsymbol{\Theta}_s(\varphi_k,0,1) =\boldsymbol{\Theta}_s(\psi,0,1)\,.$$
In turn, Corollary \ref{directcorlmonot} shows that $\boldsymbol{\Theta}_s(\psi,0)=\boldsymbol{\Theta}_s(\psi,0,1)$, and thus $\psi$ is $0$-homogeneous by Lemma~\ref{lemhomogsol}, and $\boldsymbol{\Theta}_s(\psi,0)=\lim_k\boldsymbol{\Theta}_s(\varphi_k,0)\leq \Lambda$. Consequently, $\psi$ is a nonlocal stationary $\mathcal{Z}_{\boldsymbol{\sigma}}$-cone, and it remains to show that  ${\rm dim}\,S(\psi)\geq \ell$. 

Extracting a further subsequence if necessary, we may assume that ${\rm dim}\,S(\varphi_k)$ is a constant integer $\bar d\geq \ell$, and $S(\varphi_k)\to V$ in the Grassmannian  $G(\bar d,n)$ of all $\bar d$-dimensional linear subspaces of $\mathbb{R}^n$. For an arbitrary $y\in V\cap D_1$, there exists a sequence $\{y_k\}_{k\in\mathbb{N}}\subset D_1$ such that $y_k\in S(\varphi_k)$ and $y_k\to y$.  By Lemma \ref{updenstheta}, we have 
$$\boldsymbol{\Theta}_s(\psi,y)\geq \lim_{k\to\infty} \boldsymbol{\Theta}_s(\varphi_k,y_k)=\lim_{k\to\infty} \boldsymbol{\Theta}_s(\varphi_k,0)=\boldsymbol{\Theta}_s(\psi,0)\,,$$
and we deduce from Lemma \ref{lemSphi} that $y\in S(\psi)$. Therefore $V\subset S(\psi)$ and  ${\rm dim}\,S(\psi)\geq \bar d\geq\ell$. 
\end{proof}

%%%%%%%%%%%%%%%%%%%%%%%%%%%%%%%%%%%%%%%%%%%%%%%%%%%%%%%%

\subsection{Quantitative stratification}\label{subsectstrat}  

%%%%%%%%%%%%%%%%%%%%%%%%%%%%%%%%%%%%%%%%%%%%%%%%%%%%%%%%

In this subsection, we are back again to the analysis  of the map  $v\in H^1(G;\R^d,|z|^a\de{\bf x})\cap L^\infty(G)$ solving \eqref{startequsurfnew}-\eqref{critnonlocexteqnew}.  
We are interested in some regularity properties of the set  $\Sigma(v)\subset \partial^0G$ (provided by Corollary~\ref{openset}). To this purpose, we introduced the following (standard)  stratification of the singular set of $v$, 
$${\rm Sing}^\ell(v):=\Big\{{\bf x}=(x,0)\in\partial^0G: {\rm dim}\,S(\varphi)\leq \ell \text{ for all $\varphi\in T_x(v)$}\Big\}\,,\quad \ell=0,\ldots,n-1\,.$$ 
Obviously, 
$${\rm Sing}^\ell(v)\subset {\rm Sing}^{\ell+1}(v)\,, $$
and by Remark \ref{spinedimn}, 
\begin{equation}\label{identboundtopstrat}
\Sigma(v) =  {\rm Sing}^{n-1}(v)\,.
\end{equation}
We also introduce the ``regular part'' $\Sigma_{\rm reg}(v)$ of $\Sigma(v)$, 
\begin{equation}\label{defsigmareg}
\Sigma_{\rm reg}(v):= \Sigma(v)\setminus  {\rm Sing}^{n-2}(v)\,.
\end{equation}
The terminology {\sl regular part} is motivated by the fact that at least one blow-up limit of $\Sigma(v)$ is an hyperplane at every point of $\Sigma_{\rm reg}(v)$.

\begin{proposition}\label{regblowupset}
For every $x\in \Sigma_{\rm reg}(v)$, there exists $\varphi\in T_x(v)$ such that ${\rm dim}\,S(\varphi)=n-1$. In particular, if $x\in \Sigma_{\rm reg}(v)$, then there exists a sequence $\rho_k\downarrow 0$, two distinct indices $i_0,j_0\in\{1,\ldots,m\}$,  and a half space $H\subset\mathbb{R}^n$, with $0\in \partial H$, such that  
\begin{equation}\label{conL1halfsp}
v_{x,\rho_k} \to \chi_{{\rm int}(H)}{\bf a}_{i_0}+\chi_{{\rm int}(H^c)}{\bf a}_{j_0}\quad \text{in $L^1_{\rm loc}(\R^n)$}\,,
\end{equation}
and 
\begin{equation}\label{cnvhaushyperpl}
\Sigma_k:=\Sigma(v_{x,\rho_k}) =(\Sigma(v) -x)/ \rho_k
\end{equation}
converge locally uniformly to the hyperplane $\partial H$, i.e., for every compact set $K\subset \R^n$ and every $r>0$,
$$\Sigma_k\cap K\subset \mathscr{T}_r(\partial H) \quad\text{and}\quad   \partial H\cap K\subset  \mathscr{T}_r(\Sigma_k)$$
whenever $k$ is large enough.
\end{proposition}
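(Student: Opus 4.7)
The plan is as follows. Since $x\in\Sigma_{\rm reg}(v)=\Sigma(v)\setminus{\rm Sing}^{n-2}(v)$, the very definition of ${\rm Sing}^{n-2}(v)$ produces some tangent map $\varphi\in T_x(v)$ with ${\rm dim}\,S(\varphi)\geq n-1$. On the other hand, by Remark~\ref{spinedimn}, the fact that $x\in\Sigma(v)$ forces ${\rm dim}\,S(\varphi)\leq n-1$ for every element of $T_x(v)$. Hence ${\rm dim}\,S(\varphi)=n-1$ exactly. Invoking Remark~\ref{remhalfspa}, this cone $\varphi$ must then be of the form $\chi_{{\rm int}(H)}{\bf a}_{i_0}+\chi_{{\rm int}(H^c)}{\bf a}_{j_0}$ on $\R^n$ for some half-space $H\subset\R^n$ with $0\in\partial H$ and a pair of distinct indices $i_0,j_0$. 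By construction, $\varphi$ arises as the strong $H^1(B^+_r,|z|^a\de{\bf x})$-limit, for every $r>0$, of some sequence $v_{x,\rho_k}$ with $\rho_k\downarrow 0$ provided by Lemma~\ref{tangmapstat}.

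I would then deduce the $L^1_{\rm loc}(\R^n)$ convergence \eqref{conL1halfsp} from the strong $H^1$-convergence of the extensions combined with the compactness of the trace operator \eqref{compactembL1trace}, applied on arbitrarily large half-balls $B^+_R$ (through a diagonal extraction if needed).

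The main remaining step is to establish the local uniform convergence \eqref{cnvhaushyperpl} of $\Sigma_k$ to $\partial H$. First, the identity $\Sigma(v_{x,\rho_k})=(\Sigma(v)-x)/\rho_k$ is a direct consequence of the change of variables ${\bf y}\mapsto x+\rho_k{\bf y}$, which transforms the partition covariantly and rescales the density $\boldsymbol{\Theta}_s$. Second, each $v_{x,\rho_k}$ is a bounded solution of \eqref{startequsurfnew}--\eqref{critnonlocexteqnew} on $B^+_R$ as soon as $\rho_k R\leq\rho_0$, with uniform $L^\infty$-bound $b$ and uniform energy bound on bounded sets by \eqref{1912tangbd}. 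Applying Corollary~\ref{Hausdcvbd} inside each such ball (combined with a standard diagonal exhaustion of $\R^{n+1}_+\cup\R^n$) then yields that $\Sigma(v_{x,\rho_k})$ converges locally uniformly in $\R^n$ to $\Sigma(\varphi)$. Finally, I would identify $\Sigma(\varphi)=\partial H$: since the trace of $\varphi$ equals ${\bf a}_{i_0}$ on ${\rm int}(H)$ and ${\bf a}_{j_0}\neq{\bf a}_{i_0}$ on ${\rm int}(H^c)$, the open chambers read $E^\varphi_{i_0}={\rm int}(H)$, $E^\varphi_{j_0}={\rm int}(H^c)$, and $E^\varphi_{k}=\emptyset$ for $k\not\in\{i_0,j_0\}$, so formula \eqref{unionofbdeqsigm} delivers $\Sigma(\varphi)=\partial H$.

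The (mild) technical obstacle will be the fact that Corollary~\ref{Hausdcvbd} is phrased for a fixed admissible set $G$, so one must justify the passage from per-ball convergence to genuine local uniform convergence throughout $\R^n$. This is handled by a covering/exhaustion argument which only requires that the energy and $L^\infty$-bounds on $v_{x,\rho_k}$ remain uniform on any prescribed bounded set as $k\to\infty$, which is precisely what \eqref{1912tangbd} and the pointwise bound $|v|\leq b$ provide.
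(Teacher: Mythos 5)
Your proof is correct and follows essentially the same route as the paper: obtain a tangent map $\varphi$ with ${\rm dim}\,S(\varphi)=n-1$ from the definitions of $\Sigma_{\rm reg}(v)$ and $\Sigma(v)={\rm Sing}^{n-1}(v)$, identify it via Remark~\ref{remhalfspa}, deduce the $L^1_{\rm loc}$ convergence from the trace embedding, and get local uniform convergence of the singular sets from Corollary~\ref{Hausdcvbd}. The only difference is cosmetic — you spell out the exhaustion argument behind Corollary~\ref{Hausdcvbd} that the paper leaves implicit.
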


\begin{proof}
By the very  definition of $\Sigma_{\rm reg}(v)$ and \eqref{identboundtopstrat}, if $x\in \Sigma_{\rm reg}(v)$, then there exists $\varphi_0\in  T_x(v)$ such that ${\rm dim}\,S(\varphi_0)=n-1$.  By  Remark \ref{remhalfspa}, we have  $\varphi_0=\chi_{{\rm int}(H)}{\bf a}_{i_0}+\chi_{{\rm int}(H^c)}{\bf a}_{j_0}$ on $\R^n$ for some distinct indices $i_0,j_0\in\{1,\ldots,m\}$  and a half space $H\subset\mathbb{R}^n$, with $0\in \partial H$. Then $\Sigma(\varphi_0)=S(\varphi_0)=\partial H$. By definition of $\varphi_0$, there exists a sequence 
 $\rho_k\downarrow 0$  
  such that $v_{x,\rho_k}\to \varphi_0$ strongly in $H^1(B_r^+,|z|^a\de{\bf x})$ for every $r>0$.  Since $H^1(B_r^+,|z|^a\de{\bf x})$ embeds into $L^1(D_r)$ by \eqref{compactembL1trace}, \eqref{conL1halfsp} follows, 
  and \eqref{cnvhaushyperpl} is implied by Corollary \ref{Hausdcvbd}. 
\end{proof}

We are now ready to prove one of the main result of this section: the optimal estimate for the dimension of $\Sigma(v)$ (here ${\rm dim}_{\mathscr{M}} $ denotes the Minkowski dimension, or box-counting dimension, see \cite[Section~3.1]{Falc}). 

\begin{theorem}\label{dimMink}
We have ${\rm dim}_{\mathscr{M}} (\Sigma(v)\cap\Omega^\prime) = n-1$ for every open subset $\Omega^\prime\subset \partial^0G$ such that $\overline{\Omega^\prime}\subset\partial^0G$ and $\Sigma(v)\cap\Omega^\prime\neq\emptyset$. In addition, ${\rm dim}_{\mathscr{H}}{\rm Sing}^\ell(v)\leq \ell$ for each $\ell\in\{1,\ldots,n-2\}$, and ${\rm Sing}^0(v)$ is countable. 
\end{theorem}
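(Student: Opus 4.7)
The plan is to perform a Federer-style dimension reduction, leveraging the structural results of Sections~\ref{subsectcompsurf}--\ref{subsectstrat}. The key ingredients, all already in place, are: the upper semicontinuity of $\boldsymbol{\Theta}_s$ (Lemma~\ref{updenstheta}); the Hausdorff convergence of singular sets under blow-ups (Corollary~\ref{Hausdcvbd}); the translation invariance $\Sigma(\varphi)=S(\varphi)+\Sigma(\varphi)$ for nonlocal stationary $\mathcal{Z}_{\boldsymbol{\sigma}}$-cones (Lemma~\ref{lemSphi} combined with Remark~\ref{remSphisubbound}); and the compactness of the families $\mathscr{C}_\ell(\Lambda)$ (Lemma~\ref{compactcone}). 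Here $\Lambda := \sup_K \boldsymbol{\Theta}_s(v,\cdot)<\infty$ on each relatively compact subset $K$ of $\partial^0G$, thanks to the monotonicity formula of Lemma~\ref{monotformsurf}.

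For the bound $\dim_{\mathscr{H}}\,\mathrm{Sing}^\ell(v)\leq\ell$ with $\ell\in\{1,\ldots,n-2\}$, I would apply the classical Federer reduction. If $\mathscr{H}^{\ell+\delta}(\mathrm{Sing}^\ell(v))>0$ for some $\delta>0$, Frostman produces a Radon measure supported on $\mathrm{Sing}^\ell(v)$ of dimension exceeding $\ell$; blowing up at a point of positive upper density and using Corollary~\ref{Hausdcvbd}, the support of the rescaled measure concentrates onto $\Sigma(\varphi)$ for some tangent map $\varphi$ with $\dim S(\varphi)\leq\ell$. Iterating the procedure (inside $\varphi$, at a point of $\Sigma(\varphi)\setminus S(\varphi)$) and combining with the compactness of $\mathscr{C}_\ell(\Lambda)$, one eventually extracts a cone in $\mathscr{C}_{\ell+1}(\Lambda)$ arising as an iterated blow-up limit of $\Sigma(v)$ at the original base point, which contradicts the defining property of the $\ell$-th stratum via a diagonal argument on scales.

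The countability of $\mathrm{Sing}^0(v)$ follows from a local discreteness statement obtained by a variant of the same argument. If $x_k\in\mathrm{Sing}^0(v)$ accumulates at $x\in\mathrm{Sing}^0(v)$, one rescales by $\rho_k:=2|x_k-x|$, passes to a subsequence with $(x_k-x)/\rho_k\to y$ (with $|y|=1/2$) and $v_{x,\rho_k}\to\varphi\in T_x(v)$. By Corollary~\ref{Hausdcvbd}, $y\in\Sigma(\varphi)\setminus\{0\}$. Choosing the $x_k$ to maximize $\boldsymbol{\Theta}_s(v,\cdot)$ within a shrinking neighborhood so as to saturate the value $\boldsymbol{\Theta}_s(v,x)=\boldsymbol{\Theta}_s(\varphi,0)$, Lemma~\ref{updenstheta} forces $\boldsymbol{\Theta}_s(\varphi,y)=\boldsymbol{\Theta}_s(\varphi,0)$, hence $y\in S(\varphi)\setminus\{0\}$ by Lemma~\ref{lemSphi}, contradicting $\dim S(\varphi)=0$. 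Stratifying $\mathrm{Sing}^0(v)$ by density level and using separability then yields that $\mathrm{Sing}^0(v)$ has no self-accumulation point, whence it is countable.

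For the Minkowski dimension equality, the lower bound relies on the existence of a regular point in $\Sigma(v)\cap\Omega'$. Iterating the above Federer reduction starting at an arbitrary point of $\Sigma(v)\cap\Omega'$ produces a tangent map with spine of dimension exactly $n-1$ at some iterated blow-up point (Remark~\ref{remhalfspa}); combined with the decomposition $\Sigma(v)=\Sigma_{\mathrm{reg}}(v)\cup\mathrm{Sing}^{n-2}(v)$ and the bound $\dim_{\mathscr{H}}\mathrm{Sing}^{n-2}(v)\leq n-2$, this forces $\Sigma_{\mathrm{reg}}(v)\cap\Omega'\neq\emptyset$. At any $x_0\in\Sigma_{\mathrm{reg}}(v)\cap\Omega'$, Proposition~\ref{regblowupset} gives Hausdorff convergence of $(\Sigma(v)-x_0)/\rho_k$ to an $(n-1)$-hyperplane $\partial H$; a standard packing argument then produces at least $c\eta^{-(n-1)}$ disjoint $\eta$-balls centered on $\Sigma(v)\cap D_{C\rho_k}(x_0)$ for all small $\eta$, yielding $\dim_{\mathscr{M}}\geq n-1$. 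The main obstacle is the matching upper bound $\dim_{\mathscr{M}}(\Sigma(v)\cap\Omega')\leq n-1$, which does not follow from the Hausdorff estimates alone since Minkowski dimension can strictly exceed Hausdorff dimension on Cantor-like sets. I would attack it via a Naber--Valtorta-style quantitative stratification: introduce quantitative strata $\mathcal{S}^\ell_\eta$ of points not $\eta$-close at any scale to a cone in $\mathscr{C}_{\ell+1}(\Lambda)$, prove an $\ell$-dimensional Minkowski content bound on each $\mathcal{S}^\ell_\eta$ via an $L^2$-cone-splitting / energy-drop lemma read off from Lemma~\ref{monotformsurf}, and treat the complement $\Sigma(v)\setminus\mathcal{S}^{n-2}_\eta$ through the uniform hyperplane approximation near $\Sigma_{\mathrm{reg}}$. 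Summing the contributions and optimizing over $\eta$ then yields the $(n-1)$-Minkowski bound.
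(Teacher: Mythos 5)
Your plan to obtain the Hausdorff bounds on ${\rm Sing}^\ell(v)$ and the countability of ${\rm Sing}^0(v)$ via classical Federer dimension reduction is a legitimate alternative to the paper's route (which deduces both from the quantitative stratification machinery of \cite{FMS} through Theorem~\ref{volsingsrat}), and it would work, modulo some sharpening of the countability step: rather than ``choosing $x_k$ to saturate the density,'' one should stratify ${\rm Sing}^0(v)$ by density level and show that on each stratum points are uniformly isolated, using Lemma~\ref{updenstheta} together with Lemma~\ref{lemSphi} to force the limit $y$ into $S(\varphi)$. Your lower Minkowski bound, while correct, is also considerably overbuilt: the paper simply observes that $\Sigma(v)\cap\Omega'\neq\emptyset$ forces some $E^v_{j_0}\cap\Omega'$ to be a nonempty proper open subset of $\Omega'$, whose topological boundary automatically has Hausdorff dimension at least $n-1$; no iterated blow-up, decomposition $\Sigma(v)=\Sigma_{\rm reg}(v)\cup{\rm Sing}^{n-2}(v)$, or packing count at a regular point is needed.

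The genuine gap is the Minkowski upper bound, which you rightly identify as the main obstacle but leave as a plan. In particular, the assertion that the required cone-splitting / energy-drop input ``can be read off from Lemma~\ref{monotformsurf}'' is where the argument would fail: the monotonicity formula alone does not yield quantitative rigidity. What is needed — and what the paper establishes in Lemmas~\ref{gapdistlemma}, \ref{lemme2stratsurf}, \ref{condiistrat} and Corollary~\ref{corstrat3surf} before it can invoke \cite{FMS} — are three quantitative statements: that small $L^1$-distance of $v_{x,\rho}$ to the constants $\mathcal{Z}_{\boldsymbol{\sigma}}$ forces vanishing density (quantitative clearing-out); that small density drop between two scales forces $L^1$-proximity to a $0$-homogeneous cone; and that proximity to a cone in $\mathscr{C}_\ell$ but not $\mathscr{C}_{\ell+1}$ forces high-density points to concentrate near an $\ell$-dimensional affine subspace (cone-splitting). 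These are proved by contradiction-and-compactness arguments invoking Theorem~\ref{compactminsurf} and Lemma~\ref{compactcone}, not by inspection of the monotonicity formula, and they are the substance of the quantitative stratification — in the Cheeger--Naber / Focardi--Marchese--Spadaro form the paper uses, or in the Naber--Valtorta variant you suggest (which additionally demands an $L^2$ best-approximation estimate and a discrete Reifenberg theorem, neither immediate here). Without establishing these, the bound $\overline{\rm dim}_{\mathscr{M}}(\Sigma(v)\cap\Omega')\leq n-1$ remains unproven.
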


We will prove Theorem \ref{dimMink} using the abstract stratification principle of \cite{FMS}, originally introduced in~\cite{CheegNab}. To fit the setting of \cite{FMS}, we first need to introduce some notations. 

For a radius $r>0$, we set 
\begin{equation}\label{defOmegar}
\Omega^{r}:=\big\{x\in\R^n: B^+_{2r}((x,0))\subset G\big\}\,.
\end{equation}
In what follows, we {\it fix  two constants} $r_0> 0$ and $\Lambda_0\geq 0$ such that 
\begin{equation}\label{controldensit}
\sup\Big\{\boldsymbol{\Theta}_s(v,x,\rho) : x\in\Omega^{r_0}\,,\;0<\rho\leq r_0\Big\}\leq \Lambda_0\,. 
\end{equation}
Note that the supremum above is indeed finite  by \eqref{1912tangbd}, and for $0<\rho\leq r_0$, 
$$\boldsymbol{\Theta}_s(v,x,\rho)\leq \frac{1}{r_0^{n-2s}}{\bf E}_s(v,G)
\quad\forall x\in\Omega^{r_0} \,.$$

For each $\ell\in \{0,\ldots,n\}$, $\rho\in(0,r_0)$, $x_0\in\Omega^{r_0}$ and ${\bf x}_0=(x_0,0)$, we now introduce the function ${\bf d}_\ell(\cdot,x_0,\rho): H^1(B^+_\rho({\bf x}_0);\R^d,|z|^a\de{\bf x})\to [0,\infty)$
defined by
$${\bf d}_\ell(v,x_0,\rho):=\inf\Big\{\|v_{x_0,\rho}-\varphi\|_{L^1(B_1^+)}: \varphi\in\mathscr{C}_\ell(\Lambda_0)\Big\} \,,$$
where $v_{x_0,\rho}({\bf x}):=v({\bf x}_0+\rho {\bf x})$. Note that the infimum above is well defined since $H^1(B^+_1({\bf x}_0),|z|^a\de{\bf x})$ compactly embeds into $L^1(B_1^+)$ (see e.g. \cite[Remark 2.4]{MilSirW}), and it is always achieved by Lemma~\ref{compactcone}. Moreover, 
$${\bf d}_0(\cdot,x_0,\rho)\leq {\bf d}_1(\cdot,x_0,\rho)\leq \ldots\leq {\bf d}_{n}(\cdot,x_0,\rho)\,,$$
and
$${\bf d}_n(v,x_0,\rho):=\min\Big\{ \|v_{x_0,\rho}-{\bf a}\|_{L^1(B_1^+)} : {\bf a}\in \mathcal{Z}_{\boldsymbol{\sigma}} \Big\}\,. $$
Observe that each functional ${\bf d}_\ell(\cdot,x_0,\rho)$ is a (rescaled) $L^1$-distance function, and consequently they are $\rho^{-n}$-Lipschitz functions with respect to the $L^1(B_\rho^+({\bf x}_0))$-norm. In particular, each functional ${\bf d}_\ell(\cdot,x_0,\rho)$ is continuous with respect to strong convergence in $L^1(B_\rho^+({\bf x}_0))$. 
\vskip3pt

In the terminology of \cite[Section 2.1]{FMS}, the functions $\boldsymbol{\Theta}_s(v,\cdot,\rho)$ and ${\bf d}_\ell(v,\cdot,\cdot)$ will play the roles of {\sl density function} and {\sl control functions},  respectively (thanks to Lemma \ref{monotformsurf}). We need to show that they satisfy the structural assumptions of  \cite[Section 2.2]{FMS}. This is the purpose of the following lemmas.

\begin{lemma}\label{gapdistlemma}
There exists a constant 
$$\delta_0(r_0)=\delta_0(r_0,\Lambda_0,\mathcal{Z}_{\boldsymbol{\sigma}},b,n,s)\in(0,1)$$ 
(independent of $v$) such that for every for every $x\in\Omega^{r_0}$ and $\rho\in(0,r_0)$, 
$$\boldsymbol{\Theta}_s(v,x)>0\quad\Longrightarrow\quad  {\bf d}_n(v,x,\rho) \geq\delta_0 \,.$$
\end{lemma}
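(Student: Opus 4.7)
The plan is to argue by contradiction and exploit the compactness result of Theorem \ref{compactminsurf} together with the upper semicontinuity of the density recorded in Lemma \ref{updenstheta}. Suppose no such constant $\delta_0$ exists. Then one can produce a sequence of maps $v_k \in H^1(G_k;\R^d,|z|^a \de{\bf x}) \cap L^\infty(G_k)$ (for suitable admissible open sets $G_k$) satisfying \eqref{startequsurfnew}-\eqref{critnonlocexteqnew} with bound $b$, points $x_k \in \Omega_k^{r_0}$, radii $\rho_k \in (0, r_0)$, and an upper density bound $\Lambda_0$ on the scale $r_0$, such that $\boldsymbol{\Theta}_s(v_k, x_k) > 0$ while ${\bf d}_n(v_k, x_k, \rho_k) \to 0$.

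The natural step is then to rescale and set $w_k := (v_k)_{x_k, \rho_k}$. Since $x_k \in \Omega_k^{r_0}$ and $\rho_k \leq r_0$, the map $w_k$ satisfies \eqref{startequsurfnew}-\eqref{critnonlocexteqnew} (with the same constant $b$) on a domain containing $B_2^+$, and the assumption \eqref{controldensit} combined with the rescaling identity \eqref{1912tang} gives
$$ {\bf E}_s(w_k, B_1^+) = \boldsymbol{\Theta}_s(w_k, 0, 1) = \boldsymbol{\Theta}_s(v_k, x_k, \rho_k) \leq \Lambda_0. $$
On the other hand, by definition of ${\bf d}_n$, there exists ${\bf a}_k \in \mathcal{Z}_{\boldsymbol{\sigma}}$ with $\|w_k - {\bf a}_k\|_{L^1(B_1^+)} \to 0$; since $\mathcal{Z}_{\boldsymbol{\sigma}}$ is finite, we may pass to a subsequence and assume ${\bf a}_k = {\bf a}$ is constant. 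Applying Theorem \ref{compactminsurf} on the admissible bounded set $B_1^+$ to the sequence $\{w_k\}$ yields (up to a further subsequence) a limit $w_* \in H^1(B_1^+;\R^d,|z|^a \de{\bf x}) \cap L^\infty(B_1^+)$ solving \eqref{startequsurfnew}-\eqref{critnonlocexteqnew} on $B_1^+$, with strong $H^1_{\mathrm{loc}}(B_1^+ \cup D_1, |z|^a \de{\bf x})$ convergence. The strong $L^1$ convergence of $w_k$ to ${\bf a}$ forces $w_* \equiv {\bf a}$ on $B_1^+$.

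Finally, since $w_*$ is constant, Corollary \ref{openset} gives $\Sigma(w_*) = \emptyset$ and in particular $\boldsymbol{\Theta}_s(w_*, 0) = 0$. Invoking Lemma \ref{updenstheta} with the constant sequence $x_k = 0$ yields
$$ \limsup_{k \to \infty} \boldsymbol{\Theta}_s(w_k, 0) \leq \boldsymbol{\Theta}_s(w_*, 0) = 0. $$
However, by rescaling invariance of the density at a point and Corollary \ref{openset}, the hypothesis $\boldsymbol{\Theta}_s(v_k, x_k) > 0$ forces $\boldsymbol{\Theta}_s(w_k, 0) = \boldsymbol{\Theta}_s(v_k, x_k) \geq \boldsymbol{\eta}_0 > 0$, a contradiction.

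The argument is essentially routine once the right sequence is selected; the only mild subtlety is to ensure that all the structural hypotheses of Theorem \ref{compactminsurf} survive under rescaling, which is guaranteed by the definition \eqref{defOmegar} of $\Omega^{r_0}$ together with the uniform density bound \eqref{controldensit}. The dependence of $\delta_0$ on $(r_0, \Lambda_0, \mathcal{Z}_{\boldsymbol{\sigma}}, b, n, s)$ is then read off from the parameters that enter the compactness and clearing-out steps.
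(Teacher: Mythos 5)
Your proof is correct and takes essentially the same contradiction-compactness route as the paper: rescale to $w_k := (v_k)_{x_k,\rho_k}$, invoke Theorem \ref{compactminsurf}, identify the limit as a constant, and obtain the contradiction from Corollary \ref{openset} (which gives $\boldsymbol{\Theta}_s(w_k,0)=\boldsymbol{\Theta}_s(v_k,x_k)\geq\boldsymbol{\eta}_0$) together with Lemma \ref{updenstheta}. The only cosmetic difference is that you extract the constant ${\bf a}$ directly via finiteness of $\mathcal{Z}_{\boldsymbol{\sigma}}$, whereas the paper uses $L^1$-continuity of ${\bf d}_n$ to conclude ${\bf d}_n(v_*,0,1)=0$; the two steps are equivalent.
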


\begin{proof}
Assume by contradiction that there exist a sequence $\{v_k\}_{k\in\mathbb{N}}$ solving \eqref{startequsurfnew}-\eqref{critnonlocexteqnew} and satisfying \eqref{controldensit}, points $\{x_k\}_{k\in\mathbb{N}}\subset\Omega^{r_0}$, and radii $\{\rho_k\}_{k\in\mathbb{N}}\subset (0,r_0)$ such that $\boldsymbol{\Theta}_s(v_k,x_k)>0$ and ${\bf d}_n(v_k,x_k,\rho_k) \leq 2^{-k}$. 
We continue with a general first step that we shall use again in the sequel. 
\vskip3pt
 
\noindent{\it Step 1, general compactness.} We consider the rescaled maps $\widetilde v_k:=(v_k)_{x_k,\rho_k}$ 
as defined in \eqref{defrescmap}. By our choice of $\Lambda_0$, a simple change of variables yields $\boldsymbol{\Theta}_s(\widetilde v_k,0,1)\leq \Lambda_0$. 
By Theorem \ref{compactminsurf}, we can find a (not relabeled) subsequence such that $\widetilde v_k\to v_*$ weakly in $H^1(B_1^+,|z|^a\de {\bf x})$ and strongly in $H^1(B_r^+,|z|^a\de {\bf x})$ for every $0<r<1$, 
where $u_*$ satisfies \eqref{eqcone}-\eqref{statinB1}. Then $\widetilde v_k\to v_*$ strongly in $L^1(B_1^+)$, so that 
$${\bf d}_\ell(\widetilde v_k,0,1)\to {\bf d}_\ell(v_*,0,1)\quad\text{for each $\ell\in\{0,\ldots,n\}$}\,.$$ 
In addition, by lower semicontinuity of ${\bf E}_s(\cdot, B_1^+)$, we have 
 \begin{equation}\label{lscthetw1q}
 \boldsymbol{\Theta}_{s}(v_*,0,1)\leq \liminf_{k\to\infty}\boldsymbol{\Theta}_s(\widetilde v_k,0,1)\leq\Lambda_0\,.
 \end{equation}
\vskip3pt

\noindent{\it Step 2, conclusion.} Since ${\bf d}_n(\widetilde v_k,0,1)\leq 2^{-k}$, we have ${\bf d}_n(v_*,0,1)=0$. In other words,  $v_*={\bf a}$ for some ${\bf a}\in\mathcal{Z}_{\boldsymbol{\sigma}}$, and consequently 
$\boldsymbol{\Theta}_s(v_*,0)=0$. On the other hand, by Corollary~\ref{openset}, $\boldsymbol{\Theta}_s(\widetilde v_k,0)=\boldsymbol{\Theta}_s(v_k,0)\geq {\boldsymbol{\eta}}_0>0$. 
Then Lemma \ref{updenstheta} yields $\boldsymbol{\Theta}_s(v_*,0)\geq \limsup_k \boldsymbol{\Theta}_s(v_k,0)>0$, which contradicts $\boldsymbol{\Theta}_s(v_*,0)=0$. 
\end{proof}

\begin{lemma}\label{lemme2stratsurf}
For every $\delta>0$, there exist constants 
$$\eta_1(\delta,r_0)=\eta_1(\delta,r_0,\Lambda_0,\mathcal{Z}_{\boldsymbol{\sigma}},b,n,s)\in (0,1/4)$$ 
and   
$$\lambda_1(\delta,r_0)=\lambda_1(\delta,r_0,\Lambda_0,\mathcal{Z}_{\boldsymbol{\sigma}},b,n,s)\in (0,1/4)$$ 
(independent of $v$) such that for every $x\in\Omega^{r_0}$ and $\rho\in(0,r_0)$,   
$$\boldsymbol{\Theta}_s(v,x,\rho)-\boldsymbol{\Theta}_s(v,x,\lambda_1\rho) \leq \eta_1 \quad \Longrightarrow\quad {\bf d}_0(v,x,\rho) \leq\delta \,.$$ 
\end{lemma}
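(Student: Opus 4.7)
The natural approach is by contradiction and compactness, closely following the Step~1 of Lemma~\ref{gapdistlemma}. Assume the claim fails: there exists $\delta>0$ and sequences $\eta_k,\lambda_k\downarrow 0$, maps $v_k$ satisfying \eqref{startequsurfnew}--\eqref{critnonlocexteqnew} and the uniform bound \eqref{controldensit}, together with points $x_k\in\Omega^{r_0}$ and radii $\rho_k\in(0,r_0)$, such that
$$\boldsymbol{\Theta}_s(v_k,x_k,\rho_k)-\boldsymbol{\Theta}_s(v_k,x_k,\lambda_k\rho_k)\leq \eta_k\,,\qquad {\bf d}_0(v_k,x_k,\rho_k)>\delta\,.$$
Consider the rescaled maps $\widetilde v_k:=(v_k)_{x_k,\rho_k}$ defined by \eqref{defrescmap}. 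By the change of variables formula \eqref{1912tang}, the assumption \eqref{controldensit} yields $\boldsymbol{\Theta}_s(\widetilde v_k,0,1)\leq \Lambda_0$. Applying Theorem~\ref{compactminsurf} exactly as in Step~1 of the proof of Lemma~\ref{gapdistlemma}, we extract a (not relabeled) subsequence such that $\widetilde v_k\rightharpoonup v_*$ weakly in $H^1(B_1^+,|z|^a\de{\bf x})$ and strongly in $H^1(B_r^+,|z|^a\de{\bf x})$ for every $r\in(0,1)$, where $v_*$ satisfies \eqref{eqcone}--\eqref{statinB1} on $B_1^+$, and $\boldsymbol{\Theta}_s(v_*,0,1)\leq \Lambda_0$ by lower semicontinuity.

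The crucial step is to pass the near-constancy of the density to the limit. For each fixed $r\in(0,1)$, since $\lambda_k<r$ for $k$ large, the monotonicity formula (Lemma~\ref{monotformsurf}) applied to $\widetilde v_k$ gives
$$\boldsymbol{\Theta}_s(\widetilde v_k,0,1)-\boldsymbol{\Theta}_s(\widetilde v_k,0,r) \leq \boldsymbol{\Theta}_s(\widetilde v_k,0,1)-\boldsymbol{\Theta}_s(\widetilde v_k,0,\lambda_k)= \boldsymbol{\Theta}_s(v_k,x_k,\rho_k)-\boldsymbol{\Theta}_s(v_k,x_k,\lambda_k\rho_k)\leq \eta_k\,.$$
The strong $H^1$-convergence on $B_r^+$ implies $\boldsymbol{\Theta}_s(\widetilde v_k,0,r)\to \boldsymbol{\Theta}_s(v_*,0,r)$ for every $r\in(0,1)$, hence passing to the limit $k\to\infty$ yields $\boldsymbol{\Theta}_s(v_*,0,1)\leq \boldsymbol{\Theta}_s(v_*,0,r)$ for every $r\in(0,1)$. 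Combined with the reverse inequality from monotonicity, we deduce $\boldsymbol{\Theta}_s(v_*,0,r)=\boldsymbol{\Theta}_s(v_*,0,1)$ for all $r\in(0,1)$, and letting $r\downarrow 0$ gives $\boldsymbol{\Theta}_s(v_*,0)=\boldsymbol{\Theta}_s(v_*,0,1)$. By Lemma~\ref{lemhomogsol}, $v_*$ is therefore $0$-homogeneous, hence a nonlocal stationary $\mathcal{Z}_{\boldsymbol{\sigma}}$-cone belonging to $\mathscr{C}_0(\Lambda_0)$.

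To derive the contradiction, we use the compact embedding $H^1(B_1^+,|z|^a\de{\bf x})\hookrightarrow L^1(B_1^+)$ (which holds for the admissible domain $B_1^+$), so that the weak $H^1$-convergence on $B_1^+$ improves to strong convergence in $L^1(B_1^+)$: $\widetilde v_k\to v_*$ in $L^1(B_1^+)$. Since $v_*\in\mathscr{C}_0(\Lambda_0)$, the very definition of ${\bf d}_0$ yields
$${\bf d}_0(v_k,x_k,\rho_k)={\bf d}_0(\widetilde v_k,0,1)\leq \|\widetilde v_k-v_*\|_{L^1(B_1^+)}\longrightarrow 0\,,$$
which contradicts ${\bf d}_0(v_k,x_k,\rho_k)>\delta$. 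The main (and only) obstacle is verifying that the limit $v_*$ is indeed a nonlocal stationary cone of controlled density, which is the content of the monotonicity-passing argument above and relies on the strong $H^1_{\rm loc}$-convergence provided by Theorem~\ref{compactminsurf}; the rest is direct compactness.
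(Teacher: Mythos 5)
Your proof is correct and follows essentially the same contradiction-and-compactness route as the paper: rescale, apply Theorem~\ref{compactminsurf} as in Step~1 of Lemma~\ref{gapdistlemma} to get a limit $v_*$, use the near-constancy of the density together with the monotonicity formula and lower semicontinuity at $r=1$ to deduce $\boldsymbol{\Theta}_s(v_*,0,1)=\boldsymbol{\Theta}_s(v_*,0)$, invoke Lemma~\ref{lemhomogsol} to conclude $v_*\in\mathscr{C}_0(\Lambda_0)$, and then contradict ${\bf d}_0(v_k,x_k,\rho_k)>\delta$ via the $L^1(B_1^+)$-convergence. The only cosmetic difference is that you bound ${\bf d}_0(\widetilde v_k,0,1)\le\|\widetilde v_k-v_*\|_{L^1(B_1^+)}$ directly rather than passing ${\bf d}_0(\widetilde v_k,0,1)\to{\bf d}_0(v_*,0,1)$ and noting the limit is zero; this is immaterial.
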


\begin{proof}
Assume by contradiction that for some $\delta>0$, there exist a sequence  $\{v_k\}_{k\in\mathbb{N}}$ solving \eqref{startequsurfnew}-\eqref{critnonlocexteqnew} and satisfying \eqref{controldensit}, points $\{x_k\}_{k\in\mathbb{N}}\subset\Omega^{r_0}$, and radii $\{\rho_k\}_{k\in\mathbb{N}}\subset (0,r_0)$ such that  
$$\boldsymbol{\Theta}_s(v_k,x_k,\rho_k)-\boldsymbol{\Theta}_s(v_k,x_k,\lambda_k\rho_k) \leq 2^{-k} \quad \text{and}\quad {\bf d}_0(v_k,x_k,\rho_k) \geq\delta \,,$$
where $\lambda_k\to0$ as $k\to\infty$. 
We consider the rescaled map $\widetilde v_k:=(v_k)_{x_k,\rho_k}$ as defined in \eqref{defrescmap}, so that 
$$\boldsymbol{\Theta}_s(\widetilde v_k,0,1)-\boldsymbol{\Theta}_s(\widetilde v_k,0,\lambda_k) \leq 2^{-k} \quad \text{and}\quad {\bf d}_0(\widetilde v_k,0,1) \geq\delta \,. $$ 
Then we apply Step 1 in the proof of Lemma \ref{gapdistlemma} to find a (not relabeled) sequence along which $\widetilde v_k$  converges to $v_*$. As a consequence of the established convergences, we  deduce that   ${\bf d}_0(v_*,0,1)\geq \delta$. 

On the other hand, by Lemma \ref{monotformsurf} we can estimate for $0<r<1$ and $k$ large enough  (in such a way that $\lambda_k<r$), 
$$\boldsymbol{\Theta}_s(\widetilde v_k,0,1)-\boldsymbol{\Theta}_s(\widetilde v_k,0,r) \leq 2^{-k}\,. $$
Using \eqref{lscthetw1q} and the strong convergence of $\widetilde v_k$ in $H^1(B_r^+,|z|^a\de{\bf x})$, we can let $k\to\infty$ to deduce that 
$$  \boldsymbol{\Theta}_s(v_*,0,1)-  \boldsymbol{\Theta}_s(v_*,0,r)\leq 0\,. $$
Letting $r\to 0$, we infer from  Corollary \ref{directcorlmonot} that  $\boldsymbol{\Theta}_s(v_*,0,1)=\boldsymbol{\Theta}_s(v_*,0)$. From Lemma~\ref{lemhomogsol}, we infer that $v_*$ is a nonlocal  stationary $\mathcal{Z}_{\boldsymbol{\sigma}}$-cone. Moreover, \eqref{lscthetw1q} yields the estimate $\boldsymbol{\Theta}_s(v_*,0)\leq \Lambda_0$, so that $v_*\in\mathscr{C}_0(\Lambda_0)$. Hence ${\bf d}_0(v_*,0,1)=0$, which contradicts the previous estimate ${\bf d}_0(v_*,0,1)\geq \delta$. 
\end{proof}

\begin{lemma}\label{condiistrat}
For every $\delta,\tau\in(0,1)$, there exists a constant 
$$\eta_2(\delta,\tau,r_0)=\eta_2(\delta,\tau,r_0,\Lambda_0,\mathcal{Z}_{\boldsymbol{\sigma}},b,n,s)\in (0,\delta\,]$$
(independent of $v$) such that the following holds for every $\rho\in(0,r_0/5)$ and $x\in\Omega^{r_0}$. If 
$${\bf d}_\ell(v,x,4\rho) \leq\eta_2\quad \text{and}\quad {\bf d}_{\ell+1}(v,x,4\rho)\geq \delta\,,$$
hold for some $\ell\in\{0,\ldots,n-1\}$, then there exists a $\ell$-dimensional linear subspace $V\subset\R^n$ for which
$${\bf d}_0(v,y,4\rho)> \eta_2 \quad \forall y\in D_{\rho}(x)\setminus \mathscr{T}_{\tau \rho}(x+V)\,.$$ 
\end{lemma}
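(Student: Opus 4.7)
The plan is to argue by contradiction using a blow-up and cone-splitting argument, in the spirit of Cheeger--Naber and the quantitative stratification literature (e.g.\ \cite{FMS}).

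First, I would assume the conclusion fails and produce sequences $\{v_k\}$ of solutions to \eqref{startequsurfnew}-\eqref{critnonlocexteqnew} satisfying \eqref{controldensit}, points $x_k\in\Omega^{r_0}$, and radii $\rho_k\in(0,r_0/5)$, for which ${\bf d}_\ell(v_k,x_k,4\rho_k)\leq 1/k$, ${\bf d}_{\ell+1}(v_k,x_k,4\rho_k)\geq\delta$, and for every $\ell$-dimensional linear subspace $V\subset\R^n$ there exists $y_k^V\in D_{\rho_k}(x_k)\setminus\mathscr{T}_{\tau\rho_k}(x_k+V)$ with ${\bf d}_0(v_k,y_k^V,4\rho_k)\leq 1/k$. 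Rescaling $\widetilde v_k:=(v_k)_{x_k,4\rho_k}$, the condition $\rho_k<r_0/5$ ensures $\widetilde v_k$ is defined on $B_{5/2}^+$ with uniform $L^\infty$-bound $b$ and uniform $\mathbf{E}_s$-bound (via \eqref{controldensit} and the monotonicity of Lemma \ref{monotformsurf}). Theorem \ref{compactminsurf} then yields a subsequence converging strongly in $H^1_{\rm loc}(B_{5/2}^+\cup D_{5/2},|z|^a\de{\bf x})$ to a limit $v_*$ satisfying the same system. Since each ${\bf d}_j(\cdot,0,1)$ is Lipschitz in $L^1(B_1^+)$, passing to the limit gives ${\bf d}_\ell(v_*,0,1)=0$ and ${\bf d}_{\ell+1}(v_*,0,1)\geq\delta$. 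Thus $v_*=\varphi$ a.e.\ on $B_1^+$ for some $\varphi\in\mathscr{C}_\ell(\Lambda_0)$, and necessarily $\dim S(\varphi)=\ell$ exactly (otherwise $\varphi\in\mathscr{C}_{\ell+1}(\Lambda_0)$ would force ${\bf d}_{\ell+1}(v_*,0,1)=0$, contradicting the lower bound).

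Next I would feed this $\ell$-dimensional subspace $V_0:=S(\varphi)$ into the failed conclusion. Setting $\widetilde y_k:=(y_k^{V_0}-x_k)/(4\rho_k)\in D_{1/4}$, one has $\mathrm{dist}(\widetilde y_k,V_0)\geq\tau/4$ and ${\bf d}_0(\widetilde v_k,\widetilde y_k,1)\leq 1/k$. Up to a further subsequence $\widetilde y_k\to\widetilde y_*\in\overline D_{1/4}$ with $\mathrm{dist}(\widetilde y_*,V_0)\geq\tau/4>0$. Combining the $L^1$-continuity of translations on $B_{5/4}^+$ with the strong convergence of $\widetilde v_k$ passes to the limit to give ${\bf d}_0(v_*,\widetilde y_*,1)=0$, hence $\varphi(\widetilde{\bf y}_*+{\bf x})=\psi({\bf x})$ on $B_1^+$ for some $\psi\in\mathscr{C}_0(\Lambda_0)$ (where $\widetilde{\bf y}_*=(\widetilde y_*,0)$). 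Since both $\varphi$ and $\psi$ are $0$-homogeneous from the origin, $\varphi$ is simultaneously $0$-homogeneous about $0$ (globally) and about $\widetilde{\bf y}_*$ on the open overlap $B_1^+\cap(\widetilde{\bf y}_*+B_1^+)$. The classical cone-splitting algebra then forces $\varphi$ to be invariant under all translations in the direction $\widetilde y_*$, i.e., $\widetilde y_*\in S(\varphi)=V_0$, contradicting $\mathrm{dist}(\widetilde y_*,V_0)\geq\tau/4>0$. The extra constraint $\eta_2\leq\delta$ is enforced at the end by replacing the constructed $\eta_2$ with $\min(\eta_2,\delta)$.

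The hard part will be the cone-splitting step. Explicitly, from $\varphi(\mu{\bf x})=\varphi({\bf x})$ and $\varphi(\widetilde{\bf y}_*+\lambda({\bf x}-\widetilde{\bf y}_*))=\varphi({\bf x})$ (both valid on suitable subdomains), one derives the identity $\varphi({\bf z}+\mu(1-\lambda)\lambda^{-1}\widetilde{\bf y}_*)=\varphi({\bf z})$ for ${\bf z}$ and the parameters $\lambda,\mu>0$ in appropriate ranges; letting these parameters vary produces invariance of $\varphi$ under a whole interval of translations $t\widetilde{\bf y}_*$. One must then use the global $0$-homogeneity of $\varphi$ to propagate this local invariance to all of $\R^{n+1}_+$, so that $S(\varphi)$, being a closed linear subspace, indeed contains $\widetilde y_*$. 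Verifying that the overlap region is nonempty open (automatic since $|\widetilde y_*|\leq 1/4<1$) and that the parameter ranges permit nontrivial values of $t$ is the only technical point, but it is by now quite standard.
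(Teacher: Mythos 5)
Your argument follows the same contradiction/blow-up strategy as the paper: rescale, use compactness to extract a limit $v_*$ which coincides with a cone $\varphi\in\mathscr{C}_\ell(\Lambda_0)$ with $\dim S(\varphi)=\ell$, feed $V_0=S(\varphi)$ back into the negated conclusion to obtain bad points $\widetilde y_k\to\widetilde y_*$ away from $V_0$, and then show $\widetilde y_*\in S(\varphi)$ by combining the two homogeneities. The only cosmetic differences are that the paper approximates $\widetilde v_k$ by cones $\varphi_k$ and passes to the limit in the cones (rather than identifying $v_*$ as a cone directly via ${\bf d}_\ell(v_*,0,1)=0$ as you do), and it concludes the cone-splitting step by differentiating $\varphi({\bf x}+t(\widetilde{\bf y}_*-{\bf x}))=\psi({\bf x}-\widetilde{\bf y}_*)$ in $t$ rather than via the scaling algebra — also note the relation $\varphi(\widetilde{\bf y}_*+\cdot)=\psi(\cdot)$ is valid only on $B^+_{3/4}$, not all of $B_1^+$, though this does not affect the argument.
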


\begin{proof}
The proof is again by contradiction. Assume that for some $\delta,\tau\in(0,1)$ and some index $\ell\in\{0,\ldots,n-1\}$, 
there exist a sequence $\{v_k\}_{k\in\mathbb{N}}$ solving \eqref{startequsurfnew}-\eqref{critnonlocexteqnew} and satisfying \eqref{controldensit}, points $\{x_k\}_{k\in\mathbb{N}}\subset\Omega^{r_0}$, and radii $\{\rho_k\}_{k\in\mathbb{N}}\subset (0,r_0/5)$ such that 
$${\bf d}_\ell(v_k,x_k,4\rho_k) \leq 2^{-k}\quad \text{and}\quad {\bf d}_{\ell+1}(v_k,x_k,4\rho_k)\geq \delta\,,$$
and such that the conclusion of the lemma does not hold. 

We consider  the rescaled map $\widetilde v_k:=(v_k)_{x_k,4\rho_k}$.
\vskip3pt

\noindent{\it Step 1.} For each $k$, we select $\varphi_k\in\mathscr{C}_\ell(\Lambda_0)$ such that $\|\widetilde v_k-\varphi_k\|_{L^1(B_1^+)}\leq 2^{-k}$ (which is possible by Lemma~\ref{compactcone}). Since 
\begin{equation}\label{20147fin}
{\bf d}_{\ell+1}(\varphi_k,0,1)\geq {\bf d}_{\ell+1}(\widetilde v_k,0,1)-\|\widetilde v_k-\varphi_k\|_{L^1(B_1^+)}\geq \delta-2^{-k}\,, 
\end{equation}
we infer that ${\rm dim}\,S(\varphi_k)=\ell$ for $k$ large enough. Extracting a (not relabeled) subsequence and rotating coordinates if necessary, we may assume that $S(\varphi_k)=V$ for some fixed linear subspace $V\subset\R^n$ of dimension $\ell$.  Then, by Lemma \ref{compactcone} we can find a further (not relabeled) subsequence such that $\varphi_k\to \varphi$ strongly in $H^1(B_r^+,|z|^a\de {\bf x})$ for every $r>0$ and some $\varphi\in \mathscr{C}_\ell(\Lambda_0)$. In particular,  
$$\boldsymbol{\Theta}_s(\varphi,0)=\boldsymbol{\Theta}_s(\varphi,0,1)=\lim_{k\to\infty} \boldsymbol{\Theta}_s(\varphi_k,0,1)=\lim_{k\to\infty} \boldsymbol{\Theta}_s(\varphi_k,0)\,. $$
On the other hand, by Lemma \ref{updenstheta},
$$\boldsymbol{\Theta}_s(\varphi,y)\geq \lim_{k\to\infty} \boldsymbol{\Theta}_s(\varphi_k,y)=\lim_{k\to\infty} \boldsymbol{\Theta}_s(\varphi_k,0)= \boldsymbol{\Theta}_s(\varphi,0)\quad\forall y\in V\,. $$
Therefore, $V\subset S(\varphi)$ by Lemma \ref{lemSphi}. But letting $k\to \infty$ in \eqref{20147fin}, we deduce that  ${\bf d}_{\ell+1}(\varphi,0,1)\geq \delta$, and thus $S(\varphi)=V$. 
Since the conclusion of the lemma does not hold, for each $k$ we can find a point $y_k\in D_{1/4}\setminus \mathscr{T}_{\tau/4}(V)$ such that 
${\bf d}_0(\widetilde v_k,y_k,1)\to 0$ as $k\to\infty$.  
\vskip3pt

\noindent{\it Step 2.} Consider the translated function $\widehat v_k:=(\widetilde v_k)_{y_k,1}$, and select 
 $\psi_k\in\mathscr{C}_0(\Lambda_0)$ such that 
 $$\|\widehat v_k-\psi_k\|_{L^1(B_1^+)}= {\bf d}_0(\widetilde v_k,y_k,1)\to 0\,.$$ 
 By Lemma \ref{compactcone} we can find a further (not relabeled) subsequence such that $\psi_k\to \psi$ strongly in $L^1(B_1^+)$ for  some $\psi\in \mathscr{C}_0(\Lambda_0)$. Then $\widehat u_k\to \psi$ strongly in $L^1(B_1^+)$. Now we extract a further (not relabeled) subsequence such that $y_k\to y_*$ for some $y_*\in \overline D_{1/4}\setminus \mathscr{T}_{\tau/4}(V)$. Observe that 
\begin{multline*}
\|\psi-\varphi_{y_k,1}\|_{L^1(B^+_{3/4})} \leq \|\psi-\widehat v_k\|_{L^1(B^+_{3/4})}+ \|(\widetilde v_k)_{y_k,1}-\varphi_{y_k,1}\|_{L^1(B^+_{3/4})}\\
 \leq  \|\psi-\widehat v_k\|_{L^1(B^+_{1})}+ \|\widetilde v_k-\varphi\|_{L^1(B^+_{1})}\,.
\end{multline*}
By continuity of translations in $L^1$, and since $\widetilde v_k\to \varphi$, we infer that 
$$\|\psi-\varphi_{y_*,1}\|_{L^1(B^+_{3/4})}=\lim_{k\to\infty} \|\psi-\varphi_{y_k,1}\|_{L^1(B^+_{3/4})} =0\,.$$
In other words, $\psi=\varphi_{y_*,1}$ on $B^+_{3/4}$.  As a consequence, setting ${\bf y}_*:=(y_*,0)$, for every ${\bf x}\in B^+_{1/2}$ and $t\in(0,1)$, 
$$\varphi\big({\bf x}+t({\bf y}_*-{\bf x})\big) =\psi\big((1-t){\bf x}+(t-1){\bf y}_*\big)=\psi({\bf x}-{\bf y}_*)\,.$$ 
Differentiating first this identity with respect to $t$, and then letting $t\to 0$, we discover that 
$$0=({\bf y}_*-{\bf x})\cdot\nabla\varphi({\bf x})={\bf y}_*\cdot\nabla\varphi({\bf x}) \text{ for every ${\bf x}\in B^+_{1/2}$}\,. $$
By homogeneity of $\varphi$, it implies that ${\bf y}_*\cdot\nabla\varphi({\bf x})=0$ for every ${\bf x}\in\R^{n+1}_+$. Arguing as in the proof of Lemma \ref{lemSphi}, we deduce that 
$y_*\in S(\varphi)=V$, which contradicts the fact that  $y_*\in \overline D_{1/4}\setminus \mathscr{T}_{\tau/4}(V)$. 
\end{proof}

We finally prove the following corollary whose importance will be revealed in Section \ref{imprsect}. 

\begin{corollary}\label{corstrat3surf}
For every $\delta,\tau\in(0,1)$, there exists a constant 
$$\eta_3(\delta,\tau,r_0)=\eta_3(\delta,\tau,r_0,\Lambda_0,\mathcal{Z}_{\boldsymbol{\sigma}},b,n,s)\in (0,\delta]$$
(independent of $v$) such that for every $\rho\in(0,r_0/5]$ and $x\in\Omega^{r_0}$, the conditions
$${\bf d}_0(v,x,4\rho) \leq\eta_3\quad \text{and}\quad {\bf d}_{n}(v,x,4\rho)\geq \delta\,,$$
imply the existence of a linear subspace $V\subset\R^n$, with ${\rm dim}\,V\leq n-1$, for which
$${\bf d}_0(v,y,4\rho)> \eta_3 \quad \forall y\in D_{\rho}(x)\setminus \mathscr{T}_{\tau \rho}(x+V)\,.$$ 
\end{corollary}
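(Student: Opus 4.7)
The plan is to reduce the statement to an iterated application of Lemma \ref{condiistrat}. The difficulty is that Lemma \ref{condiistrat} requires the paired hypothesis ${\bf d}_\ell \leq \eta_2(\delta',\tau,r_0)$ together with ${\bf d}_{\ell+1} \geq \delta'$ for some matched $\delta'$, whereas here we only control ${\bf d}_0$ and ${\bf d}_n$ at the endpoints of the chain. To bridge the gap between indices $0$ and $n$, I will introduce a geometrically chosen decreasing sequence of thresholds and locate, via a telescoping pigeonhole argument, an intermediate index $\ell$ at which the hypotheses of Lemma \ref{condiistrat} simultaneously hold.

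Concretely, set $\delta_0 := \delta$ and define inductively $\delta_{k+1} := \eta_2(\delta_k,\tau,r_0)$ for $k=0,\ldots,n-1$, where $\eta_2$ is the constant furnished by Lemma \ref{condiistrat}. Since $\eta_2(\delta_k,\tau,r_0) \in (0,\delta_k]$, the sequence is non-increasing, $\delta_n \leq \delta_{n-1}\leq\cdots\leq \delta_0 = \delta$, and all $\delta_k$ lie in $(0,\delta]$. I will then define $\eta_3 := \delta_n$, which automatically belongs to $(0,\delta]$ and depends only on the parameters $\delta,\tau,r_0,\Lambda_0,\mathcal{Z}_{\boldsymbol{\sigma}},b,n,s$.

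The combinatorial core is the following claim: whenever ${\bf d}_0(v,x,4\rho) \leq \delta_n$ and ${\bf d}_n(v,x,4\rho) \geq \delta_0$, there exists some index $\ell \in \{0,\ldots,n-1\}$ such that
$${\bf d}_\ell(v,x,4\rho) \leq \delta_{n-\ell} \quad \text{and} \quad {\bf d}_{\ell+1}(v,x,4\rho) \geq \delta_{n-\ell-1}\,.$$
Assume by contradiction that the claim fails. Then for each $\ell$ one has ${\bf d}_\ell(v,x,4\rho) > \delta_{n-\ell}$ or ${\bf d}_{\ell+1}(v,x,4\rho) < \delta_{n-\ell-1}$. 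At $\ell=0$, the hypothesis ${\bf d}_0\leq\delta_n$ rules out the first alternative, forcing ${\bf d}_1 < \delta_{n-1}$. Then at $\ell=1$ the first alternative is again excluded, forcing ${\bf d}_2 < \delta_{n-2}$, and an obvious induction on $\ell$ yields ${\bf d}_n < \delta_0 = \delta$, contradicting the standing assumption ${\bf d}_n \geq \delta$.

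Given such an $\ell$, Lemma \ref{condiistrat} applied with this index and with the parameter $\delta' := \delta_{n-\ell-1}$ produces an $\ell$-dimensional linear subspace $V\subset\R^n$ (so $\dim V\leq n-1$) such that
$${\bf d}_0(v,y,4\rho) > \eta_2(\delta_{n-\ell-1},\tau,r_0) = \delta_{n-\ell} \geq \delta_n = \eta_3$$
for every $y\in D_\rho(x)\setminus\mathscr{T}_{\tau\rho}(x+V)$, which is the desired conclusion. The only genuine obstacle is the telescoping step used to locate $\ell$; once that is done, the rest is a direct invocation of Lemma \ref{condiistrat} together with the monotonicity $\delta_{n-\ell}\geq\delta_n$.
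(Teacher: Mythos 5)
Your proof is correct, and it is organized differently from the paper's. The paper proves the corollary by induction on the index $\ell$, with an auxiliary statement parametrized by $\ell$ and a two-case analysis at each step (either ${\bf d}_\ell \leq \eta_{*,\ell}$, in which case Lemma \ref{condiistrat} applies directly, or ${\bf d}_\ell > \eta_{*,\ell}$, in which case the induction hypothesis applies), and with the constants defined by the doubling recursion $\eta_{*,\ell+1}(\delta)=\eta_{*,\ell}(\eta_{*,\ell}(\delta))$. You instead set up the single iterated chain $\delta_{k+1}=\eta_2(\delta_k,\tau,r_0)$ and locate, via the pigeonhole/telescoping argument, one intermediate index $\ell$ at which both hypotheses of Lemma \ref{condiistrat} hold simultaneously, then invoke the lemma exactly once. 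The two approaches are morally the same (both ultimately fire Lemma \ref{condiistrat} once at the right index, and both rely on the crucial fact that $\eta_2$ in Lemma \ref{condiistrat} is uniform in $\ell$), but yours avoids the inductive proposition and the case split, is a bit more transparent, and yields a less aggressive constant, namely $\eta_2^{\circ n}(\delta)$ rather than the paper's $\eta_2^{\circ 2^{n-1}}(\delta)$ — irrelevant for the statement as phrased, but cleaner.
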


\begin{proof}
We argue by induction on the dimension $\ell\in\{1,\ldots,n\}$ assuming that there exists a constant $\eta_{*,\ell}(\delta,\tau,r_0)\in (0,\delta]$ 
such that for every $\rho\in(0,r_0/5]$ and $x\in\Omega^{r_0}$, the conditions
$${\bf d}_0(v,x,4\rho) \leq\eta_{*,\ell}\quad \text{and}\quad {\bf d}_{\ell}(v,x,4\rho)\geq \delta\,,$$
imply the existence of a linear subspace $V$, with ${\rm dim}\,V\leq \ell-1$, for which
$${\bf d}_0(v,y,4\rho)> \eta_{*,\ell} \quad \forall y\in D_{\rho}(x)\setminus \mathscr{T}_{\tau \rho}(x+V)\,.$$ 
By Lemma \ref{condiistrat} this property holds for $\ell=1$ with $\eta_{*,1}(\delta,\tau)=\eta_2(\delta,\tau)$. 

Now we assume that the property holds at step $\ell$, and we prove that it also holds at step $\ell+1$. To this purpose, we choose 
$$ \eta_{*,\ell+1}(\delta,\tau,r_0):=\eta_{*,\ell}\big( \eta_{*,\ell}(\delta,\tau,r_0) ,\tau,r_0\big)\in \big(0, \eta_{*,\ell}(\delta,\tau,r_0)\big]\subset (0,\delta] \,.$$
Then we distinguish two cases. 

\noindent {\it Case 1).} If ${\bf d}_{\ell}(v,x,4\rho)\leq \eta_{*,\ell}$, then ${\bf d}_{\ell}(v,x,4\rho)\leq \eta_{2}$ and 
we can apply Lemma \ref{condiistrat}  to find the required linear subspace $V$ of dimension $\ell=(\ell+1)-1$. 

\noindent {\it Case 2).} If ${\bf d}_{\ell}(v,x,4\rho)>\eta_{*,\ell}$, then we apply the induction hypothesis to find the required linear subspace $V$ of dimension less than $\ell-1$. 

Now the conclusion follows for $\eta_3(\delta,\tau,r_0):= \eta_{*,n}(\delta,\tau,r_0)$.
\end{proof}

We now introduce the so-called {\sl singular strata} of $v$. For parameters $\delta\in(0,1)$, $0<r\leq r_0$, and an index $\ell\in\{0,\ldots,n-1\}$, we set 
$$\mathcal{S}^\ell_{r_0,r,\delta}(v):=\Big\{x\in\Omega^{r_0}: \boldsymbol{\Theta}_s(v,x)>0\text{ and }{\bf d}_{\ell+1}(v,x,\rho)\geq\delta\text{ for all } r\leq\rho\leq r_0\Big\}\,,$$
$$\mathcal{S}^\ell_{r_0,\delta}(v):=\bigcap_{0<r\leq r_0} \mathcal{S}^\ell_{r_0,r,\delta}(v)\quad\text{and}\quad \mathcal{S}^\ell_{r_0}(v):=\bigcup_{0<\delta<1}\mathcal{S}^\ell_{r_0,\delta}(v)\,.$$
According to \cite{FMS}, we have the following result. 

\begin{theorem}\label{volsingsrat}
For every $\kappa_0\in(0,1)$, there exists a constant 
$$C=C(\kappa_0,r_0,\Lambda_0,\mathcal{Z}_{\boldsymbol{\sigma}},b,n,s)>0$$ 
such that 
\begin{equation}\label{pff1710}
\mathscr{L}^n\Big(\mathscr{T}_r\big(\mathcal{S}^{n-1}_{r_0}(v)\big)\Big)\leq Cr^{1-\kappa_0}  \quad\forall r\in(0,r_0)\,. 
\end{equation}
In addition, ${\rm dim}_{\mathscr{H}}\big( \mathcal{S}^\ell_{r_0}(v)\big)\leq \ell$ for each $\ell\in\{1,\ldots,n-2\}$, and $ \mathcal{S}^0_{r_0}(v)$ is countable. 
\end{theorem}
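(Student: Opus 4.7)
The plan is to invoke the abstract quantitative stratification of Focardi-Marchese-Spadaro \cite{FMS} (an adaptation of Cheeger-Naber) in a black-box fashion, since Subsection \ref{subsectstrat} was precisely set up to provide all the structural ingredients required by that framework. More concretely, the role of the density function in \cite{FMS} will be played by $\rho\mapsto \boldsymbol{\Theta}_s(v,\cdot,\rho)$, which is monotone non-decreasing (Lemma \ref{monotformsurf}) and bounded by $\Lambda_0$ on $\Omega^{r_0}\times(0,r_0)$ by \eqref{controldensit}; the control functions will be the family $\{{\bf d}_\ell(v,\cdot,\cdot)\}_{\ell=0,\ldots,n}$, which are monotone in $\ell$ and continuous with respect to strong $L^1$ convergence on balls. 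First I would check that the strata $\mathcal{S}^\ell_{r_0}(v)$ as defined coincide with the strata used in \cite{FMS} (the standard $(\ell+1)$-th control $\geq \delta$ on the whole scale $[r,r_0]$ convention).

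The three structural hypotheses of the abstract theorem are exactly Lemma \ref{gapdistlemma} (gap between zero-density points and all $n$-cones, here the constants ${\bf a}\in\mathcal{Z}_{\boldsymbol{\sigma}}$), Lemma \ref{lemme2stratsurf} (small density drop at two scales forces closeness to a homogeneous cone), and Lemma \ref{condiistrat} (the cone-splitting/$(\ell,\delta)$-invariance statement), all of whose constants depend only on $r_0,\Lambda_0,\mathcal{Z}_{\boldsymbol{\sigma}},b,n,s$. Granted these, the Minkowski estimate \eqref{pff1710} for the top stratum $\mathcal{S}^{n-1}_{r_0}(v)$ follows from the dyadic covering argument of \cite{FMS}: at each dyadic scale $2^{-k}r_0$ one covers $\mathcal{S}^{n-1}_{r_0,\delta}(v)$ by balls whose number grows at most like $(N r_0/r)^{n-1}\cdot C^{k\kappa_0}$, where $\kappa_0$ absorbs the loss coming from the finite number of ``bad'' scales at which the density may drop by $\eta_1$, and the cone-splitting lemma forces the centers at each step to lie in $\tau$-tubes around an $(n-1)$-dimensional plane.

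Then I would deduce the higher-codimension estimates $\dim_{\mathscr{H}}\mathcal{S}^\ell_{r_0}(v)\leq \ell$ by the very same covering scheme applied at stratum $\ell$: the same iteration, carried out with Lemma \ref{condiistrat} producing an $\ell$-dimensional plane instead of an $(n-1)$-dimensional one, yields a covering of $\mathcal{S}^\ell_{r_0,\delta}(v)$ by $C(r_0/r)^{\ell+\kappa_0}$ balls of radius $r$ for every $r\in(0,r_0)$ and every $\kappa_0\in(0,1)$, which bounds the $(\ell+\kappa_0)$-dimensional Hausdorff measure; letting $\kappa_0\to 0$ and taking a countable union over $\delta=1/j$ gives $\dim_{\mathscr{H}} \mathcal{S}^\ell_{r_0}(v)\leq \ell$. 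The countability of $\mathcal{S}^0_{r_0}(v)$ is obtained analogously at $\ell=0$: the covering reduces to a single ball at each step, so $\mathcal{S}^0_{r_0,\delta}(v)$ is discrete (hence at most countable by separability), and a countable union in $\delta$ concludes.

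The main obstacle, as I see it, is not any individual ingredient but rather the careful bookkeeping needed to ensure that the quantitative output of Corollary \ref{corstrat3surf} (which chains the $\ell=1$ version of Lemma \ref{condiistrat} up to $\ell=n$) matches the inductive hypothesis used in the FMS covering lemma at each scale, and to track that all constants depend only on the admissible parameters $(r_0,\Lambda_0,\mathcal{Z}_{\boldsymbol{\sigma}},b,n,s)$ and not on $v$ itself. Since this is essentially a verbatim application of \cite[Theorem~2.4 and Corollary~2.5]{FMS} once the three structural lemmas are in place, the proof can be written in a compact form, reducing to quoting the abstract statement and citing the relevant lemmas of this section as the verification of its hypotheses.
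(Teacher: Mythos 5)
Your proposal takes essentially the same route as the paper: verify the structural hypotheses of the abstract Focardi--Marchese--Spadaro stratification framework via Lemmas~\ref{gapdistlemma}, \ref{lemme2stratsurf}, and \ref{condiistrat}, with $\boldsymbol{\Theta}_s(v,\cdot,\cdot)$ as the density function and the ${\bf d}_\ell(v,\cdot,\cdot)$ as control functions, then invoke the black-box covering theorems. The paper cites the same abstract machinery and the three lemmas you list, so in that sense you are on target.

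There is one small but genuine hole in your write-up of the top-stratum Minkowski estimate. Your covering argument (correctly) controls $\mathscr{L}^n\big(\mathscr{T}_r(\mathcal{S}^{n-1}_{r_0,\delta}(v))\big)$ for a \emph{fixed} $\delta$, with a constant depending on $\delta$. But the statement \eqref{pff1710} concerns $\mathcal{S}^{n-1}_{r_0}(v)=\bigcup_{\delta>0}\mathcal{S}^{n-1}_{r_0,\delta}(v)$. For the Hausdorff-dimension estimates on the lower strata, you dispose of the union by the usual countable-union argument in $\delta$; but for the Minkowski-type volume bound this does not work, since Minkowski content does not behave well under countable unions and the constant would a priori blow up as $\delta\downarrow 0$. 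The paper's fix is to observe, as a consequence of Lemma~\ref{gapdistlemma}, that $\mathcal{S}^{n-1}_{r_0,\delta}(v)=\mathcal{S}^{n-1}_{r_0,\delta_0(r_0)}(v)$ for every $\delta\in(0,\delta_0(r_0)]$, and hence $\mathcal{S}^{n-1}_{r_0}(v)=\mathcal{S}^{n-1}_{r_0,\delta_0(r_0)}(v)$ with $\delta_0(r_0)$ depending only on the admissible parameters. This identifies the full top stratum with a single $\delta$-stratum, so the FMS volume estimate at $\delta=\delta_0(r_0)$ gives \eqref{pff1710} directly with a $v$-independent constant. You list Lemma~\ref{gapdistlemma} among the structural hypotheses, but you do not draw this consequence; without it, the covering argument you describe only gives a bound for each $\delta$-stratum separately, which is strictly weaker than the claim. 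Once this is added, the proposal is complete.
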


\begin{proof}
By Lemma \ref{lemme2stratsurf} and Lemma \ref{condiistrat}, the functions $\boldsymbol{\Theta}_s(v,\cdot,\cdot)$ and ${\bf d}_\ell(v,\cdot,\cdot)$ satisfy the assumptions in \cite[Section 2.2]{FMS}. Then the dimension estimate on  $\mathcal{S}^\ell_{r_0}(v)$ for each $\ell\in\{1,\ldots,n-2\}$, and the fact that $ \mathcal{S}^0_{r_0}(v)$ is countable, follow from \cite[Theorem 2.3]{FMS}. 

According to Lemma \ref{gapdistlemma}, $\mathcal{S}^{n-1}_{r_0,\delta}(v)=\mathcal{S}^{n-1}_{r_0,\delta_0(r_0)}(v)$ for every $\delta\in(0,\delta_0(r_0)]$. Since the sets $\mathcal{S}^{n-1}_{r_0,\delta}(v)$ are decreasing with respect to $\delta$, we deduce that $\mathcal{S}^{n-1}_{r_0}(v)=\mathcal{S}^{n-1}_{r_0,\delta_0(r_0)}(v)$. Then, estimate \eqref{pff1710} follows from \cite[Theorem 2.2]{FMS}. 
\end{proof}

\begin{proof}[Proof of Theorem \ref{dimMink}]
We  choose $r_0>0$ in such a way that $\Omega^\prime\subset\Omega^{r_0}$. By Corollary~\ref{openset} and Lemma~\ref{gapdistlemma}, we have $\Sigma(v)\cap\Omega^\prime\subset \mathcal{S}^{n-1}_{r_0}(v)$. According to estimate \eqref{pff1710}, for every $\alpha\in(0,1)$ there exists a constant $C=C(\alpha,r_0)$ such that 
\begin{equation}
\mathscr{L}^n\big(\mathscr{T}_r(\Sigma(v)\cap\Omega^\prime)\big)\leq Cr^\alpha \quad\forall r\in(0,r_0)\,.
\end{equation}
Hence, 
$$\limsup_{r\downarrow 0}\left( n-\frac{\log\Big(\mathscr{L}^n\big(\mathscr{T}_r(\Sigma(v)\cap\Omega^\prime)\big)\Big)}{\log r}\right)\leq n-\alpha\quad\forall\alpha\in(0,1)\,, $$
and we obtain that the upper Minkowski dimension $\overline{\rm dim}_{\mathscr{M}}(\Sigma(v)\cap\Omega^\prime)$ is less than $n-1$. On the other hand, 
since $\Sigma(v)\cap \Omega^\prime\not=\emptyset$, we can find $j_0\in\{1,\ldots,m\}$ such that $\partial E^v_{j_0}\cap \Omega^\prime\not=\emptyset$. Hence  $E^v_{j_0}\cap \Omega^\prime$ 
is a not empty open subset of $\Omega^\prime$, distinct from $\Omega^\prime$.  Since $\Sigma(v)\cap \Omega^\prime\subset \partial E^v_{j_0}\cap \Omega^\prime$, we derive that 
$${\rm dim}_{\mathscr{H}}(\Sigma(v)\cap \Omega^\prime)\geq {\rm dim}_{\mathscr{H}}(\partial E^v_{j_0}\cap \Omega^\prime)\geq n-1\,.$$ 
On the other hand, the lower Minkowski dimension $\underline{\rm dim}_{\mathscr{M}}(\Sigma(v))$ is greater than than the Hausdorff dimension, and we conclude that ${\rm dim}_{\mathscr{M}}(\Sigma(v)\cap\Omega^\prime)=n-1$. 

To complete the proof, we show that 
\begin{equation}\label{identstrata}
{\rm Sing}^\ell(v)\cap\Omega^{r_0}\subset \mathcal{S}^\ell_{r_0}(v)\quad \text{for each } \ell\in\{0,\ldots,n-2\}\,,
\end{equation}
so that the conclusion follows from Theorem \ref{volsingsrat} (letting $r_0\to0$ along a decreasing sequence). To prove \eqref{identstrata}, 
we argue by contradiction assuming that there exists a point $x\in {\rm Sing}^\ell(v)\cap\Omega^{r_0}\setminus \mathcal{S}^\ell_{r_0}(v)$. Then, $x\not\in \mathcal{S}^{\ell}_{r_0,2^{-k}}(v)$ for every $k\in\mathbb{N}$. Hence, for each $k\in\mathbb{N}$, there exists a radius $r_k\in(0,r_0]$ such that $x\not\in  \mathcal{S}^{\ell}_{r_0,r_k,2^{-k}}(v)$, and therefore a radius $\rho_k\in[r_k,r_0]$ such that ${\bf d}_{\ell+1}(v,x,\rho_k)<2^{-k}$. Now we extract a (not relabeled) subsequence such that $\rho_k\to \rho_*$ for some $\rho_*\in[0,r_0]$. We distinguish the two following cases: 
\vskip3pt

\noindent{\it Case 1).} If $\rho_*=0$, then we can extract a further subsequence such that $v_{x,\rho_k}\to\varphi$ strongly in $H^1(B_1^+,|z|^a\de{\bf x})$ for some $\varphi\in T_x(v)$ (by Lemma \ref{tangmapstat}). In addition, 
$$ {\bf d}_{\ell+1}(\varphi,0,1)=\lim_{k\to\infty} {\bf d}_{\ell+1}(v_{x,\rho_k},0,1)=\lim_{k\to\infty} {\bf d}_{\ell+1}(v,x,\rho_k)=0\,,$$
so that $\varphi\in\mathscr{C}_{\ell+1}(\Lambda_0)$. Then ${\rm dim}\,S(\varphi)\geq \ell+1$ which contradicts $x\in {\rm Sing}^\ell(v)$. 
\vskip3pt

\noindent{\it Case 2).} If $\rho_*>0$,  then 
$${\bf d}_{\ell+1}(v_{x,\rho_*},0,1)=\lim_{k\to\infty} {\bf d}_{\ell+1}(v_{x,\rho_k},0,1)=\lim_{k\to\infty} {\bf d}_{\ell+1}(v,x,\rho_k)=0\,.$$
Hence there exists $\varphi\in\mathscr{C}_{\ell+1}(\Lambda_0)$ such that $v_{x,\rho_*}=\varphi$ on  $B^+_{1}$. Clearly, it implies that $T_x(v)=\{\varphi\}$, which contradicts  $x\in {\rm Sing}^\ell(v)$ as in Case 1). 
\end{proof}

%%%%%%%%%%%%%%%%%%%%%%%%%%%%%%%%%%%%%%%%%%%%%%%%%%%%%%%%

\subsection{Application to stationary partitions}\label{complrestprescrNMMC}

%%%%%%%%%%%%%%%%%%%%%%%%%%%%%%%%%%%%%%%%%%%%%%%%%%%%%%%%

In this subsection, we apply the previous results to our stationary point $\mathfrak{E}^*=(E^*_1,\ldots,E^*_m)\in\mathscr{A}_m(\Omega)$ of $\mathscr{P}^{\boldsymbol{\sigma}}_{2s}$ in $\Omega$, i.e., to a solution to \eqref{SectParteq1}. Recalling our assumption $\boldsymbol{\sigma}\in\mathscr{S}^2_m$, we introduce its indicator function 
$u_*\in H^s(\Omega;\mathcal{Z}_{\boldsymbol{\sigma}})$ given by \eqref{basdec1} and satisfying \eqref{campdev1847}, and then its extended function $u_*^\e$ which satisfies \eqref{campdev2}-\eqref{stateeq1458}. 

We  consider an increasing sequence of admissible bounded open sets $\{G_l\}_{l\in\mathbb{N}}$ such that $\overline{\partial^0G_l}\subset \Omega$, $\bigcup_l G_l=\R^{n+1}_+$, and $\bigcup_l\partial^0G_l=\Omega$. In view of \eqref{campdev2}-\eqref{stateeq1458}, we can apply to $u_*^\e$ the different results from Subsection \ref{subres1} to Subsection \ref{subsectstrat} in each $G_l$ to reach the following main conclusions. 
\vskip3pt

According to our previous notations, we have 
$$E^*_j=\bigcup_{l\in\mathbb{N}} E^{{u_*^\e}_{|G_l}}_j\quad \text{and}\quad \partial\mathfrak{E}^*\cap\Omega=\Sigma(u^\e_*)\cap\Omega:=\bigcup_{l\in\mathbb{N}}\Sigma({u^\e_*}_{|G_l})\,,$$ 
so that 
\begin{enumerate}
\item each set $E^*_j\cap\Omega$ is open (when identified with its set of density $1$ points); 
\vskip5pt

\item  
${\rm dim}_{\mathscr{M}}(\partial\mathfrak{E}^*\cap\Omega^\prime)\leq n-1$ for every open subset $\Omega^\prime$ such that $\overline{\Omega^\prime}\subset\Omega$ with equality  if and only if $\partial\mathfrak{E}^*\cap\Omega^\prime\not=\emptyset$;
\vskip5pt
\item There is a subset $\Sigma_{\rm sing}\subset\partial\mathfrak{E}^*\cap \Omega$ with ${\rm dim}_{\mathscr{H}}\Sigma_{\rm sing}\leq n-2$ (countable if $n=2$) such that the following holds: 

If $x_0\in  (\partial\mathfrak{E}^*\cap\Omega) \setminus \Sigma_{\rm sing}$, then there exist a sequence $\rho_k\downarrow 0$, distinct indices $i,j\in\{1,\ldots,m\}$, and a half space $H\subset \R^n$ with $0\in\partial H$ such that 
\begin{itemize}
\item $(u_*)_{x_0,\rho_k}\to \chi_{{\rm int}(H)}{\bf a}_i+\chi_{{\rm int}(H^c)}{\bf a}_j$ 
in $L^1_{\rm loc}(\R^n)$;
\vskip3pt

\item $\Sigma_k:=(\Sigma(u^\e_*)-x_0)/\rho_k$ converges locally uniformly to the hyperplane $\partial H$, i.e.,  for every compact set $K\subset \R^n$ and every $r>0$, $\Sigma_k\cap K\subset \mathscr{T}_r(\partial H)$ and $ \partial H\cap K\subset  \mathscr{T}_r(\Sigma_k)$ whenever $k$ is large enough.
\end{itemize}
\end{enumerate}

Our objective for the rest of this subsection is to show that the Minkowski dimension estimate on $\partial\mathfrak{E}^*\cap\Omega=\Sigma(u^\e_*)\cap\Omega$ leads to the following higher regularity result. 

\begin{theorem}\label{improvperim}
For every $s^\prime\in(0,1/2)$ and every open subset $\Omega^\prime\subset\Omega$ such that $\overline{\Omega^\prime}\subset \Omega$, we have  $u_*\in H^{s^\prime}(\Omega^\prime;\R^d)$ and 
$$P_{2s^\prime}(E^*_j,\Omega^\prime)<\infty\quad\forall j\in\{1,\ldots,m\}\,.$$
\end{theorem}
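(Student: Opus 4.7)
The plan is to reduce the statement to the single scalar estimate
\begin{equation*}
\int_{\Omega'} {\rm dist}(x,\partial\mathfrak{E}^*)^{-2s'}\,\de x<\infty,
\end{equation*}
and then to establish this using the quantitative volume control on $\partial\mathfrak{E}^*$ provided by Theorem~\ref{volsingsrat}. The reduction rests on the following elementary observation: since each $E^*_j$ is open and the family $\{E^*_j\}_j$ partitions $\R^n$, for $x\in E^*_j$ and $y\in (E^*_j)^c$ the segment $[x,y]$ must cross $\partial E^*_j$, hence $|x-y|\geq {\rm dist}(x,\partial E^*_j)\geq {\rm dist}(x,\partial\mathfrak{E}^*)$ (the last inequality since $\partial E^*_j\subset\partial\mathfrak{E}^*$). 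Integrating over spherical shells and then over $x\in\Omega'$ yields both
\begin{equation*}
[u_*]_{H^{s'}(\Omega')}^{\,2}\,\leq\, C_{n,s'}\int_{\Omega'}{\rm dist}(x,\partial\mathfrak{E}^*)^{-2s'}\,\de x,
\end{equation*}
and, via the decomposition $P_{2s'}(E^*_j,\Omega')\leq \mathcal{I}_{2s'}(E^*_j\cap\Omega',(E^*_j)^c)+\mathcal{I}_{2s'}((E^*_j)^c\cap\Omega',E^*_j)$, the same bound for the $2s'$-perimeter of each $E^*_j$ in $\Omega'$.

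To establish the above scalar estimate, I would first enlarge $\Omega'$ to a smooth open set $\Omega''$ with $\overline{\Omega'}\subset\Omega''\subset\overline{\Omega''}\subset\Omega$, and choose an element $G_l$ of the exhausting sequence so that $\overline{\Omega''}\subset \partial^0 G_l$ and $\Omega''\subset \Omega^{r_0}$ for some $r_0>0$ (in the notation of \eqref{defOmegar}). Applying Theorem~\ref{volsingsrat} to $v:=u_*^\e{}_{|G_l}$, and combining it with the inclusion $\Sigma(v)\subset \mathcal{S}^{n-1}_{r_0}(v)$ established in the proof of Theorem~\ref{dimMink} together with the identification $\Sigma(v)\cap\Omega''=\partial\mathfrak{E}^*\cap\Omega''$, yields: for every $\kappa_0\in(0,1)$, there exists $C>0$ such that
\begin{equation*}
\mathscr{L}^n\big(\mathscr{T}_r(\partial\mathfrak{E}^*\cap\Omega'')\big)\leq Cr^{1-\kappa_0}\qquad\forall r\in(0,r_0).
\end{equation*}

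The last step is the layer-cake formula:
\begin{equation*}
\int_{\Omega'}{\rm dist}(x,\partial\mathfrak{E}^*)^{-2s'}\,\de x=2s'\int_0^\infty \mathscr{L}^n\big(\{x\in\Omega' : {\rm dist}(x,\partial\mathfrak{E}^*)<r\}\big)\,r^{-2s'-1}\,\de r.
\end{equation*}
Setting $d_0:={\rm dist}(\overline{\Omega'},\partial\Omega'')>0$, for $r<d_0$ the closest point of $\partial\mathfrak{E}^*$ to any $x\in\Omega'$ within distance $r$ lies in $\partial\mathfrak{E}^*\cap\Omega''$, so the relevant superlevel set is contained in $\mathscr{T}_r(\partial\mathfrak{E}^*\cap\Omega'')$. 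Splitting the $r$-integral at $r_*=\min(d_0,r_0)$, the tail $r\geq r_*$ contributes at most $|\Omega'|\,r_*^{-2s'}/(2s')$, while the piece $0<r<r_*$ is bounded by $C\int_0^{r_*}r^{-\kappa_0-2s'}\,\de r$, which converges as soon as $\kappa_0<1-2s'$; such a choice exists because $2s'<1$. This completes the proof. The main work lies entirely in the quantitative stratification of Section~\ref{subsectstrat}; here the only delicate point is the bookkeeping between the intrinsic boundary $\partial\mathfrak{E}^*\subset\R^n$ and its extended counterpart $\Sigma(u_*^\e{}_{|G_l})\subset\partial^0 G_l$ when invoking Theorem~\ref{volsingsrat}.
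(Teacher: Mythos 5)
Your proof is correct, and it follows a genuinely more direct route than the paper's. Both arguments ultimately draw their strength from the same source, namely the Minkowski codimension-one estimate on $\partial\mathfrak{E}^*$ coming from the quantitative stratification (Theorem~\ref{volsingsrat} and its consequence ${\rm dim}_{\mathscr{M}}(\Sigma(u_*^\e)\cap\Omega'')\leq n-1$ in Theorem~\ref{dimMink}). Where you diverge from the paper is in how that information is converted into $H^{s'}$-regularity and finiteness of $P_{2s'}$. The paper first uses Lemma~\ref{lemeqharmmap} to write $(-\Delta)^s u_*$ as a pointwise-defined potential, bounds it by $C\,{\rm dist}(x,\Sigma(u_*^\e)\cap\Omega')^{-2s}$, integrates over annuli to obtain $(-\Delta)^s u_*\in L^{\bar p}_{\rm loc}(\Omega)$ for $\bar p<1/2s$ (Proposition~\ref{imprintsharp}), and then invokes the bootstrap-type estimate Proposition~\ref{keypropimpr} (quoted from \cite{MilSirW}) to pass from $L^{\bar p}$ control of $(-\Delta)^s u_*$ to $H^{s'}$ control of $u_*$. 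You instead bound the $H^{s'}$-seminorm and the fractional perimeter directly: for a.e.\ $x\in E^*_i\cap\Omega'$ the openness of $E^*_i\cap\Omega$ gives $|x-y|\geq{\rm dist}(x,\partial\mathfrak{E}^*)$ whenever $u_*(y)\neq u_*(x)$, so a spherical-shell integration reduces both quantities to $\int_{\Omega'}{\rm dist}(x,\partial\mathfrak{E}^*)^{-2s'}\,\de x$, which the layer-cake formula combined with $\mathscr{L}^n(\mathscr{T}_r(\partial\mathfrak{E}^*\cap\Omega''))\leq Cr^{1-\kappa_0}$ (choosing $\kappa_0<1-2s'$, possible since $s'<1/2$) shows to be finite. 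Your route is shorter and more elementary, dispensing entirely with Lemma~\ref{lemeqharmmap}, Proposition~\ref{imprintsharp}, and the imported estimate Proposition~\ref{keypropimpr}, and it handles $s'\leq s$ and $s'>s$ on the same footing (whereas the paper's argument only makes sense for $s'>s$ and relegates $s'\leq s$ to triviality). What the paper's longer route buys in return is the intermediate result $(-\Delta)^s u_*\in L^{\bar p}_{\rm loc}(\Omega)$ itself, which is reused independently (in Corollary~\ref{corintegrpotentE} and in the proof of Theorem~\ref{thmlastone}), so for the structure of the paper it is not redundant — but for proving Theorem~\ref{improvperim} in isolation, your approach is arguably preferable.
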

\vskip3pt

The proof of Theorem \ref{improvperim} (postponed to this end of the subsection) rests on the following regularity estimate taken from \cite[Proposition 6.34]{MilSirW}. 

\begin{proposition}[\cite{MilSirW}]\label{keypropimpr}
Let $w\in \widehat H^s(\Omega)$ be such that $w\in L^{\infty}_{\rm loc}(\Omega)$ and $(-\Delta)^s w\in L^{\bar p}_{\rm loc}(\Omega)$ for  some exponent $\bar p\in(1,\infty)$. Then, for every $s^\prime\in(0,s)$ and every open subsets $\Omega^\prime,\Omega^{\prime\prime}$ of $\Omega$ such that $\overline{\Omega^{\prime\prime}}\subset \Omega^\prime$ and $\overline{\Omega^\prime}\subset \Omega$, 
\begin{equation}\label{sameditroptard}
\left(\iint_{\Omega^{\prime\prime}\times \Omega^{\prime\prime}} \frac{|w(x)-w(y)|^{\bar p}}{|x-y|^{n+2s^\prime\bar p}}\,\de x\de y\right)^{1/\bar p}\leq C\Big(\|(-\Delta)^s w\|_{L^{\bar p}(\Omega^\prime)}+\|w\|_{L^{\infty}(\Omega^\prime)}\Big)\,,
\end{equation}
for some constant $C=C(n,s,s^\prime,\bar p,\Omega^{\prime},\Omega^{\prime\prime})$ independent of $w$.  
\end{proposition}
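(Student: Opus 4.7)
Since $u_*=\sum_{j=1}^m\chi_{E^*_j}\mathbf a_j$ is a finite $\mathcal Z_{\boldsymbol\sigma}$-valued combination, the two assertions reduce to one: it suffices to show that $\chi_{E^*_j}\in H^{s^\prime}(\Omega^\prime)$ for every $j\in\{1,\ldots,m\}$ and every $s^\prime\in(0,1/2)$. This in turn amounts to controlling the Gagliardo seminorm of $\chi_{E^*_j}$, and the strategy is to apply Proposition~\ref{keypropimpr} to $w=\chi_{E^*_j}$ and then exploit the elementary identity $|\chi_{E^*_j}(x)-\chi_{E^*_j}(y)|^{\bar p}=|\chi_{E^*_j}(x)-\chi_{E^*_j}(y)|^2$ valid for every $\bar p\geq 1$ to convert an $L^{\bar p}$-Gagliardo bound at exponent $\tilde s<s$ into a genuine $H^{s^{\prime\prime}}$ bound with $s^{\prime\prime}=\tilde s\bar p$.

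The main ingredient to be verified is that $(-\Delta)^s\chi_{E^*_j}\in L^{\bar p}_{\rm loc}(\Omega)$ for every $\bar p<1/(2s)$. From the computation in the proof of Theorem~\ref{main1part1}(iii), one has $(-\Delta)^s\chi_{E^*_j}=V_{E^*_j}$ a.e.\ in $\Omega$. The key pointwise bound is the trivial estimate
$$|V_{E^*_j}(x)|\leq C\int_{|x-y|\geq {\rm dist}(x,\partial E^*_j)}\frac{\de y}{|x-y|^{n+2s}}\leq C\,{\rm dist}(x,\partial E^*_j)^{-2s}\,,$$
valid for every $x\in\Omega$ since $E^*_j\cap\Omega$ is open and $|\chi_{E^*_j}|\leq 1$. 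Combining this with the Minkowski-dimension estimate of Theorem~\ref{volsingsrat} applied to $v=u_*^\e$ (recall $\partial E^*_j\cap\Omega\subset\Sigma(u_*^\e)$ and $\mathcal S^{n-1}_{r_0}(u_*^\e)\supset\Sigma(u_*^\e)\cap\Omega^{r_0}$ by Corollary~\ref{openset} and Lemma~\ref{gapdistlemma}), one obtains, for any $\kappa_0\in(0,1)$ and any open $\Omega^*$ compactly contained in $\Omega$,
$$\mathscr L^n\big(\mathscr T_r(\partial E^*_j)\cap\Omega^*\big)\leq C\,r^{1-\kappa_0}\qquad\forall r\in(0,r_0)\,.$$
A standard dyadic layer-cake decomposition of $\Omega^*$ into the shells $\{2^{-k-1}\leq{\rm dist}(\cdot,\partial E^*_j)<2^{-k}\}$ then gives
$$\int_{\Omega^*}|V_{E^*_j}|^{\bar p}\,\de x\leq C\sum_{k\geq 0}2^{k(2s\bar p)}\cdot 2^{-k(1-\kappa_0)}\,,$$
which converges provided $2s\bar p+\kappa_0<1$, i.e.\ for every $\bar p<1/(2s)$ (choosing $\kappa_0$ small enough).

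Given $s^{\prime\prime}\in(0,1/2)$ as targeted, it remains to pick $\bar p\in(1,1/(2s))$ and $\tilde s\in(0,s)$ with $\tilde s\bar p=s^{\prime\prime}$ (when $s^{\prime\prime}\leq s$ take $\bar p$ slightly larger than $1$; when $s^{\prime\prime}\in[s,1/2)$ take $\bar p$ close to $1/(2s)$). Inserting these exponents into Proposition~\ref{keypropimpr} applied to $w=\chi_{E^*_j}$ on a nested triple $\overline{\Omega^\prime}\subset\Omega^{\prime\prime\prime}\subset\overline{\Omega^{\prime\prime\prime}}\subset\Omega^*\subset\overline{\Omega^*}\subset\Omega$, and using $|\chi_{E^*_j}|^{\bar p}=|\chi_{E^*_j}|$, yields
$$\iint_{\Omega^{\prime\prime\prime}\times\Omega^{\prime\prime\prime}}\frac{|\chi_{E^*_j}(x)-\chi_{E^*_j}(y)|^2}{|x-y|^{n+2s^{\prime\prime}}}\,\de x\de y<\infty\,.$$
To conclude finiteness of $P_{2s^{\prime\prime}}(E^*_j,\Omega^\prime)$, the remaining cross terms appearing in \eqref{P2s} are split using the intermediate set $\Omega^{\prime\prime\prime}$: integrals over $\Omega^\prime\times(\Omega^{\prime\prime\prime}\setminus\Omega^\prime)$ are absorbed in the bound above, while integrals over $\Omega^\prime\times(\mathbb R^n\setminus\Omega^{\prime\prime\prime})$ are bounded by the universal constant $\|\chi_{E^*_j}\|_{L^\infty}^2\cdot|\Omega^\prime|\cdot\sup_{x\in\Omega^\prime}\int_{\mathbb R^n\setminus\Omega^{\prime\prime\prime}}|x-y|^{-n-2s^{\prime\prime}}\de y<\infty$ since ${\rm dist}(\Omega^\prime,\mathbb R^n\setminus\Omega^{\prime\prime\prime})>0$.

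The main obstacle is the $L^{\bar p}$ bound for $V_{E^*_j}$, which is precisely where the quantitative stratification enters: without a Minkowski-type (volumetric) control on the $r$-neighborhoods of $\partial E^*_j$—rather than mere Hausdorff dimension—the layer-cake integration would fail to give any exponent $\bar p>1$. The upper bound $\bar p<1/(2s)$ forces $s^{\prime\prime}<1/2$, which is exactly the natural limitation of the method and the reason $s^{\prime\prime}=1/2$ (equivalently, locally finite classical perimeter) is not reached by this argument, as mentioned in the discussion following the statement of Theorem~\ref{mainthm2}.
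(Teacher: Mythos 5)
Your proposal does not prove the assigned statement. The statement is Proposition~\ref{keypropimpr} itself — the general regularity estimate asserting that $w\in L^\infty_{\rm loc}(\Omega)$ together with $(-\Delta)^s w\in L^{\bar p}_{\rm loc}(\Omega)$ implies the local $W^{s',\bar p}$-type Gagliardo bound \eqref{sameditroptard} for every $s'<s$. What you wrote is instead a proof of Theorem~\ref{improvperim} (together with the content of Proposition~\ref{imprintsharp}, i.e., the $L^{\bar p}$ bound on $V_{E_j^*}$ via the Minkowski-dimension estimate and a dyadic layer-cake sum), and in that argument you \emph{invoke} Proposition~\ref{keypropimpr} as a black box. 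As a proof of the proposition this is circular; as a response to the task it is off target. For the record, the paper does not reprove the proposition either — it is quoted from the earlier work of Millot--Sire--Wang (Proposition 6.34 there) — but nothing in your text supplies, or even sketches, the analytic content of \eqref{sameditroptard}.

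Concretely, the missing idea is the mechanism by which an $L^{\bar p}$ bound on $(-\Delta)^s w$ plus an $L^\infty$ bound on $w$ is upgraded to a fractional Sobolev seminorm of order $s'<s$ at integrability $\bar p$. Any genuine proof has to pass through something like: a localization $w=\eta w$ with control of the commutator with $(-\Delta)^s$; a representation of the localized function as a Riesz potential $I_{2s}\big((-\Delta)^s(\eta w)\big)$ (or, equivalently, estimates for the Caffarelli--Silvestre extension of $w$ near $\partial\R^{n+1}_+$); and then mapping properties of $I_{2s}$ from $L^{\bar p}$ into $W^{s',\bar p}$, interpolated against the $L^\infty$ bound to cover the low-frequency/far-field contributions. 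None of these steps appears in your writeup, so the estimate \eqref{sameditroptard} remains entirely unjustified. (Separately, your argument for Theorem~\ref{improvperim} does track the paper's: the identification $(-\Delta)^s\chi_{E_j^*}=V_{E_j^*}$, the pointwise bound $|V_{E_j^*}(x)|\lesssim{\rm dist}(x,\partial\mathfrak{E}^*)^{-2s}$, the Minkowski-volume estimate from the quantitative stratification, and the dyadic summation with $2s\bar p+\kappa_0<1$ are exactly the paper's Proposition~\ref{imprintsharp}; but that is a different statement.)
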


We also recall the following observation from  \cite[Lemma 6.35]{MilSirW}. 

\begin{lemma}[\cite{MilSirW}]\label{lemeqharmmap}
Let $F\subset\R^n$ be a Borel set such that $P_{2s}(F,\Omega)<\infty$. Then the function $w_F:=\chi_F-\chi_{F^c}$ belongs to $\widehat H^s(\Omega)$, and $(-\Delta)^sw_F\in L^1(\Omega)$ with 
$$ (-\Delta)^sw_F(x)=\left(\frac{\gamma_{n,s}}{2}\int_{\R^n}\frac{|w_F(x)-w_F(y)|^2}{|x-y|^{n+2s}}\,\de y\right)w_F(x)\quad \text{for a.e. $x\in \Omega$}\,.$$
\end{lemma}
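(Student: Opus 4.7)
Here is my plan.

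The heart of the lemma is a simple algebraic observation that I would put front and center: since $w_F$ takes only the values $\pm 1$, the identity
\[
w_F(x)-w_F(y)=\tfrac{1}{2}\,|w_F(x)-w_F(y)|^{2}\,w_F(x)
\]
holds pointwise for every $x,y\in\R^n$ (just check the four cases). Equivalently, $|w_F(x)-w_F(y)|^{2}=4|\chi_F(x)-\chi_F(y)|=4|\chi_F(x)-\chi_F(y)|^{2}$. First I would use the second form to see that
\[
\mathcal{E}_s(w_F,\Omega)=\tfrac{\gamma_{n,s}}{2}\iint_{(\R^n\times\R^n)\setminus(\Omega^c\times\Omega^c)}\tfrac{|w_F(x)-w_F(y)|^{2}}{|x-y|^{n+2s}}\,\de x\de y=2\gamma_{n,s}P_{2s}(F,\Omega)<\infty,
\]
which gives $w_F\in\widehat H^s(\Omega)$.

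Next I would show that the candidate potential $V_F(x):=\tfrac{\gamma_{n,s}}{2}\int_{\R^n}\tfrac{|w_F(x)-w_F(y)|^{2}}{|x-y|^{n+2s}}\de y$ (a priori a nonnegative measurable function, possibly $+\infty$) is actually in $L^1(\Omega)$. Unfolding the symmetric integrand via Fubini and splitting $\R^n=\Omega\sqcup\Omega^c$, one sees that $\int_\Omega V_F\,\de x$ is bounded by a linear combination of the three cross terms in the definition \eqref{P2s} of $P_{2s}(F,\Omega)$, and an elementary accounting yields $\int_\Omega V_F\,\de x\leq 4\gamma_{n,s}P_{2s}(F,\Omega)$. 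In particular, $V_F w_F\in L^1(\Omega)$.

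The remaining task is to identify the distribution $(-\Delta)^s w_F$ with the pointwise function $V_F w_F$ on $\Omega$. Given a test function $\varphi\in\mathscr{D}(\Omega)$ with compact support $K\Subset\Omega$, I would first justify that the integrand defining $\langle(-\Delta)^sw_F,\varphi\rangle_\Omega$ may be extended by $0$ onto $\Omega^c\times\Omega^c$ (where $\varphi(x)-\varphi(y)=0$) and then \emph{split} as
\[
(w_F(x)-w_F(y))(\varphi(x)-\varphi(y))=(w_F(x)-w_F(y))\varphi(x)-(w_F(x)-w_F(y))\varphi(y),
\]
with each piece absolutely integrable on $\R^n\times\R^n$. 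The compact support of $\varphi$ in $\Omega$ is what ensures absolute integrability of $(w_F(x)-w_F(y))\varphi(x)/|x-y|^{n+2s}$: on $K\times\R^n$ the Fubini computation from the previous step together with $\|\varphi\|_\infty<\infty$ gives the bound $\tfrac{\|\varphi\|_\infty}{\gamma_{n,s}}\int_K V_F<\infty$ (using $|w_F(x)-w_F(y)|=\tfrac12|w_F(x)-w_F(y)|^2$). The antisymmetry of $w_F(x)-w_F(y)$ in $(x,y)$ then turns the split into $2\iint_{\R^n\times\R^n}\tfrac{(w_F(x)-w_F(y))\varphi(x)}{|x-y|^{n+2s}}\de x\de y$. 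Applying the algebraic identity from the first paragraph inside this integral and using Fubini produces exactly $\tfrac{2}{\gamma_{n,s}}\int_\Omega V_F(x)w_F(x)\varphi(x)\de x$, so that $\langle(-\Delta)^sw_F,\varphi\rangle_\Omega=\int_\Omega V_F w_F\,\varphi\,\de x$. Since this holds for every $\varphi\in\mathscr{D}(\Omega)$, the distribution $(-\Delta)^s w_F$ coincides with the $L^1(\Omega)$ function $V_F w_F$.

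The main delicate point is the symmetrization step: one has to verify absolute integrability of the one-sided integrand $(w_F(x)-w_F(y))\varphi(x)/|x-y|^{n+2s}$ on $\R^n\times\R^n$ (not on $\Omega\times\Omega$, where $P_{2s}(F,\Omega)<\infty$ would suffice trivially), which is where the $L^1$-bound on $V_F$ over any compact subset of $\Omega$ is essential. Everything else is bookkeeping with Fubini and the cases $\{w_F=\pm1\}$.
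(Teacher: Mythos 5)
Your proof is correct, and it is essentially the argument behind \cite[Lemma 6.35]{MilSirW}, which the present paper cites without reproving: everything reduces to the pointwise identity $w_F(x)-w_F(y)=\tfrac12|w_F(x)-w_F(y)|^2\,w_F(x)$ for $\pm1$-valued functions, combined with symmetrizing the bilinear form defining $\langle(-\Delta)^sw_F,\varphi\rangle_\Omega$, and your integrability check $\int_\Omega V_F\,\de x\leq 4\gamma_{n,s}P_{2s}(F,\Omega)$ is exactly what legitimizes the splitting and the use of Fubini. One cosmetic slip: when you rewrite $\mathcal{E}_s(w_F,\Omega)$ as a single integral over $(\R^n\times\R^n)\setminus(\Omega^c\times\Omega^c)$ the prefactor should be $\gamma_{n,s}/4$, not $\gamma_{n,s}/2$; with that correction the chain indeed gives $2\gamma_{n,s}P_{2s}(F,\Omega)$, consistent with \eqref{idDirPersharpint}.
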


Back to our map $u_*$, we combine Lemma \ref{lemeqharmmap} with the estimate on the Minkowski dimension to obtain 

\begin{proposition}\label{imprintsharp}
We have $(-\Delta)^su_*\in L^{\bar p}_{\rm loc}(\Omega;\R^d)$ for every $\bar p<1/2s$.
\end{proposition}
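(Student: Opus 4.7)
The starting point is the pointwise identity
\begin{equation*}
(-\Delta)^s u_* = \sum_{j=1}^m V_{E^*_j}\, {\bf a}_j \quad \text{a.e. in } \Omega,
\end{equation*}
which was already established in Step~3 of the proof of Theorem~\ref{main1part1} by applying Lemma~\ref{lemeqharmmap} to each $\chi_{E^*_j}$ (recall $P_{2s}(E^*_j,\Omega)<\infty$). It therefore suffices to show that $V_{E^*_j}\in L^{\bar p}_{\rm loc}(\Omega)$ for each $j$ and each $\bar p<1/(2s)$.

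First I would record a pointwise control of $V_{E^*_j}$ by the distance to $\partial E^*_j$. If $x\in E^*_j$ and $r:=\mathrm{dist}(x,\partial E^*_j)>0$, then $B(x,r)\subset E^*_j$, so
\begin{equation*}
0\leq V_{E^*_j}(x)=\gamma_{n,s}\int_{(E^*_j)^c}\frac{\de y}{|x-y|^{n+2s}}\leq \gamma_{n,s}\int_{\{|x-y|\geq r\}}\frac{\de y}{|x-y|^{n+2s}}=\frac{C_{n,s}}{r^{2s}},
\end{equation*}
and the symmetric argument handles $x\in\mathrm{int}((E^*_j)^c)$, yielding $|V_{E^*_j}(x)|\leq C_{n,s}\,\mathrm{dist}(x,\partial E^*_j)^{-2s}$ almost everywhere.

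Second, I would localise the distance function. Fix an open subset $\Omega'\subset\Omega$ with $\overline{\Omega'}\subset\Omega$, choose an intermediate open set $\Omega''$ with $\overline{\Omega'}\subset\Omega''$ and $\overline{\Omega''}\subset\Omega$, and set $r_0:=\frac{1}{2}\mathrm{dist}(\overline{\Omega'},\R^n\setminus\Omega'')>0$. For $x\in\Omega'$ with $\mathrm{dist}(x,\partial E^*_j)<r_0$, any nearest point of $\partial E^*_j$ necessarily lies in $\Omega''$, so $\mathrm{dist}(x,\partial E^*_j)=\mathrm{dist}(x,\partial E^*_j\cap\Omega'')$, while for $x\in \Omega'$ with $\mathrm{dist}(x,\partial E^*_j)\geq r_0$ one has $|V_{E^*_j}(x)|\leq C_{n,s}r_0^{-2s}$. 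Combining the two cases, for every $x\in\Omega'$,
\begin{equation*}
|V_{E^*_j}(x)|\leq C\bigl(1+\mathrm{dist}(x,\partial\mathfrak{E}^*\cap\Omega'')^{-2s}\bigr),
\end{equation*}
for a constant $C=C(\Omega',\Omega'',n,s)$ (using $\partial E^*_j\subset\partial\mathfrak{E}^*$).

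Third, I would invoke the quantitative Minkowski estimate obtained in the proof of Theorem~\ref{dimMink} (via Theorem~\ref{volsingsrat} and Lemma~\ref{gapdistlemma}): for every $\alpha\in(0,1)$ there exists $C_\alpha=C_\alpha(\Omega',\Omega'',\boldsymbol{\sigma})$ such that
\begin{equation*}
\mathscr{L}^n\bigl(\mathscr{T}_r(\partial\mathfrak{E}^*\cap\Omega'')\bigr)\leq C_\alpha\, r^{\alpha}\quad\forall r\in(0,r_0).
\end{equation*}
By the layer-cake formula applied to $f(x):=\mathrm{dist}(x,\partial\mathfrak{E}^*\cap\Omega'')^{-2s}$, the level set $\{f>t\}\cap\Omega'$ is contained in $\mathscr{T}_{t^{-1/(2s)}}(\partial\mathfrak{E}^*\cap\Omega'')$, hence
\begin{equation*}
\int_{\Omega'}f^{\bar p}\,\de x=\bar p\int_0^\infty t^{\bar p-1}\mathscr{L}^n(\{f>t\}\cap\Omega')\,\de t\leq |\Omega'|r_0^{-2s\bar p}+C_\alpha\,\bar p\int_{r_0^{-2s}}^\infty t^{\bar p-1-\alpha/(2s)}\,\de t.
\end{equation*}
The remaining integral converges precisely when $2s\bar p<\alpha$. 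Since $\alpha\in(0,1)$ is arbitrary, this covers every $\bar p<1/(2s)$ and delivers $V_{E^*_j}\in L^{\bar p}(\Omega')$, whence $V_{E^*_j}\in L^{\bar p}_{\rm loc}(\Omega)$ as $\Omega'$ was arbitrary.

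There is no real obstacle beyond the bookkeeping around the intermediate set $\Omega''$ (needed so that the Minkowski bound, which holds for $\partial\mathfrak{E}^*$ restricted to relatively compact subsets of $\Omega$, applies to the distance function governing $V_{E^*_j}$ on $\Omega'$). The crucial structural input is precisely the quantitative strengthening of the Minkowski-dimension bound $n-1$ for $\partial\mathfrak{E}^*$ delivered by Theorem~\ref{volsingsrat}, which produces exponents arbitrarily close to $1$ and thus the sharp threshold $1/(2s)$.
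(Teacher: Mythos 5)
Your proof is correct and follows essentially the same route as the paper's. Both arguments reduce to the pointwise estimate $|V_{E^*_j}(x)|\lesssim \mathrm{dist}(x,\partial\mathfrak{E}^*\cap\Omega'')^{-2s}$ away from the boundary, together with the quantitative Minkowski estimate $\mathscr{L}^n(\mathscr{T}_r(\partial\mathfrak{E}^*\cap\Omega''))\leq C_\alpha r^\alpha$ for every $\alpha\in(0,1)$ from the proof of Theorem~\ref{dimMink}; you finish with the layer-cake formula, whereas the paper sums over the dyadic annuli $\mathscr{T}_{2^{-j}R_\alpha}\setminus\mathscr{T}_{2^{-(j+1)}R_\alpha}$, which is the same computation in a slightly less streamlined form.
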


\begin{proof}
Let us fix two open subsets $\Omega^\prime,\Omega^{\prime\prime}$ of $\Omega$ such that  $\overline{\Omega^{\prime\prime}}\subset\Omega^{\prime}$ and $\overline{\Omega^\prime}\subset\Omega$. Setting  $w_j:=\chi_{E_j^*}-\chi_{(E_j^*)^c}$, we have $u_*=\frac{1}{2}\sum_{j}(w_j+1){\bf a}_j$. Hence 
\begin{equation}\label{decomplaplsu}
(-\Delta)^su_*=\frac{1}{2}\sum_{j=1}^m \big((-\Delta)^sw_j\big){\bf a}_j\,.
\end{equation}
On the other hand, 
by Lemma~\ref{lemeqharmmap} we have $(-\Delta)^sw_j\in L^{1}(\Omega^\prime)$ for each $j\in\{1,\ldots,m\}$.  
\vskip3pt

We claim that for each $j\in\{1,\ldots,m\}$, 
\begin{equation}\label{dim1819tard}
\big|(-\Delta)^sw_j(x)|\leq \frac{C(\Omega^\prime,\Omega^{\prime\prime})}{{\rm dist}(x,\Sigma(u^\e_*)\cap\Omega^\prime)^{2s}}\quad\text{for a.e. $x\in\Omega^{\prime\prime}\setminus\Sigma(u_*^\e)$}\,, 
\end{equation}
for some constant $C(\Omega^\prime,\Omega^{\prime\prime})$ independent of $u$. For $x\in \Omega^{\prime\prime}\setminus \Sigma(u_*^\e)$, we set 
$$r_x:=\frac{1}{2}\min\left({\rm dist}(x,\Sigma(u_*)\cap\Omega^\prime), \min\Big\{|z-y|: z\in\overline{\Omega^{\prime\prime}}\,,\;y\in\R^n\setminus\Omega^\prime\Big\}\right)\,.$$
Since $D_{r_x}(x)\subset \Omega^\prime\setminus \Sigma(u^\e_*)$, there exists $j_x\in\{1,\ldots,m\}$ such that $D_{r_x}(x)\subset E^*_{j_x}$ and $D_{r_x}(x)\subset (E^*_{j})^c$ for $j\not=j_x$. 
Then we can deduce from Lemma~\ref{lemeqharmmap} that
$$\big|(-\Delta)^sw_j(x)|\leq 2\gamma_{n,s}\int_{\R^n\setminus D_{r_x}(x)}\frac{1}{|x-y|^{n+2s}}\,\de y =\frac{C_{n,s}}{(r_x)^{2s}}\quad\forall j\in\{1,\ldots,m\}\,,$$
and \eqref{dim1819tard} follows.

Let us now fix an exponent $\alpha\in(2s\bar p, 1)$. Since ${\rm dim}_{\mathscr{M}}(\Sigma(u^\e_*)\cap\Omega^\prime)\leq n-1$, we can find a radius $R_\alpha\in(0,1)$ such that $\mathscr{L}^n(\mathscr{T}_r(\Sigma(u^\e_*)\cap \Omega^\prime))\leq r^\alpha$ for every $r\in(0,2R_\alpha)$. Then, we estimate for $j\in\{1,\ldots,m\}$ and an arbitrary integer $k\geq 1$, 
\begin{multline*}
\int_{\Omega^{\prime\prime}\setminus\mathscr{T}_{2^{-k}R_\alpha}(\Sigma(u_*^\e)\cap\Omega^\prime)}\big|(-\Delta)^sw_j|^{\bar p}\,\de x\leq 
\int_{\Omega^{\prime\prime}\setminus\mathscr{T}_{R_\alpha}(\Sigma(u_*^\e)\cap\Omega^\prime)}\big|(-\Delta)^sw_j|^{\bar p}\,\de x\\
+\sum_{j=0}^{k-1}\int_{\Omega^{\prime\prime}\cap\mathscr{A}_j}\big|(-\Delta)^sw_j|^{\bar p}\,\de x\,.
\end{multline*}
where we have set $\mathscr{A}_j:=\mathscr{T}_{2^{-j}R_\alpha}(\Sigma(u_*^\e)\cap\Omega^\prime)\setminus\mathscr{T}_{2^{-(j+1)}R_\alpha}(\Sigma(u_*^\e)\cap\Omega^\prime)$. 
Inserting \eqref{dim1819tard}, we derive 
$$\int_{\Omega^{\prime\prime}\setminus\mathscr{T}_{2^{-k}R_\alpha}(\Sigma(u_*^\e)\cap\Omega^\prime)}\big|(-\Delta)^sw_j|^{\bar p}\,\de x\leq CR_\alpha^{-2s\bar p}\left(1+\sum_{j=0}^{\infty}\frac{1}{2^{(\alpha-2s\bar p)j}} \right) <\infty\,.$$
Letting $k\to\infty$, we can now conclude by dominated convergence that $(-\Delta)^sw_j \in L^{\bar p}(\Omega^{\prime\prime})$ for each $j\in\{1,\ldots,m\}$. In view of  \eqref{decomplaplsu}, we then have $(-\Delta)^su_* \in L^{\bar p}(\Omega^{\prime\prime})$. 
\end{proof}

\begin{proof}[Proof of Theorem \ref{improvperim}]
Obviously, it is enough to consider the case $s^\prime\in(s,1/2)$. Fix two open subsets $\Omega^\prime,\Omega^{\prime\prime}$ of $\Omega$ such that  $\overline{\Omega^{\prime\prime}}\subset\Omega^{\prime}$ and $\overline{\Omega^\prime}\subset\Omega$. We choose a number $\theta>2$ such that $s^\prime<1/\theta$. We set 
$\bar p:=1/(\theta s)\in(s^\prime/s,1/2s)$, and $\bar s:=s^\prime/\bar p<s$. By Proposition \ref{imprintsharp}, we have $(-\Delta)^su_*\in L^{\bar p}_{\rm loc}(\Omega)$. Recalling that 
$$\chi_{E^*_j}={\rm p}_j(u_*)\quad \forall j\in\{1,\ldots,m\}\,,$$
where ${\rm p}_j:\R^d\to \R$ is the $\boldsymbol{\sigma}^{-1/2}_{\rm min}$-Lipschitz function given by \eqref{projtrickEj}, 
we infer from Proposition \ref{keypropimpr} that  for each $ j\in\{1,\ldots,m\}$, 
\begin{multline*}
\iint_{\Omega^\prime\times\Omega^\prime}\frac{|\chi_{E^*_j}(x)-\chi_{E^*_j}(y)|^2}{|x-y|^{n+2s^\prime}}\,\de x\de y=\iint_{\Omega^\prime\times\Omega^\prime}\frac{|\chi_{E^*_j}(x)-\chi_{E^*_j}(y)|^{\bar p}}{|x-y|^{n+2\bar s\bar p}}\,\de x\de y\\
\leq \frac{1}{(\boldsymbol{\sigma}_{\rm min})^{\bar p/2}} \iint_{\Omega^\prime\times\Omega^\prime}\frac{|u_*(x)-u_*(y)|^{\bar p}}{|x-y|^{n+2\bar s\bar p}}\,\de x\de y<\infty
\end{multline*}
 In other words, $\chi_{E^*_j}\in H^{s^\prime}(\Omega^\prime)$,  so that $u_*\in H^{s^\prime}(\Omega^\prime;\R^d)$ since $u_*=\sum_j\chi_{E^*_j}{\bf a}_j$.

Finally, we observe that for each $ j\in\{1,\ldots,m\}$, 
$$P_{2s^\prime}(E^*_j,\Omega^{\prime\prime})\leq \frac{1}{2}\iint_{\Omega^\prime\times\Omega^\prime}\frac{|\chi_{E^*_j}(x)-\chi_{E^*_j}(y)|^2}{|x-y|^{n+2s^\prime}}\,\de x\de y+C\,,$$
for a constant $C$ depending only $\Omega^\prime$ and $\Omega^{\prime\prime}$, $n$, and $s^\prime$. 
\end{proof}

We conclude this section with a corollary of Theorem~\ref{improvperim}. 

\begin{corollary}\label{corintegrpotentE}
In addition to Theorem~\ref{improvperim}, for each $j\in\{1,\ldots,m\}$, the potential 
$$V_{E_j^*}(x):= \Big(\gamma_{n,s}\int_{\R^n}\frac{|\chi_{E^*_j}(x)-\chi_{E^*_j}(y)|^2}{|x-y|^{n+2s}}\,\de y\Big) \big(2\chi_{E^*_j}(x)-1\big) $$
belongs to $L^{\bar p}_{\rm loc}(\Omega)$ for every $\bar p<1/2s$. 
\end{corollary}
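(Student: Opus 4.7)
The proof is essentially already contained in the argument establishing Proposition~\ref{imprintsharp}, and just needs to be repackaged via Lemma~\ref{lemeqharmmap}. My plan is as follows.

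First, I would fix an index $j\in\{1,\ldots,m\}$ and observe that, by Theorem~\ref{improvperim} applied with $s^\prime = s$, the set $E_j^*$ has finite $2s$-perimeter in every open set $\Omega^\prime$ compactly contained in $\Omega$. Hence $E_j^*$ has locally finite $2s$-perimeter in $\Omega$. Setting $w_j := \chi_{E_j^*}-\chi_{(E_j^*)^c} = 2\chi_{E_j^*}-1$, Lemma~\ref{lemeqharmmap} applies (on any such $\Omega^\prime$) and yields $(-\Delta)^s w_j \in L^1_{\rm loc}(\Omega)$ together with the pointwise identification
\[
(-\Delta)^s w_j(x) = V_{E_j^*}(x) \quad\text{for a.e. } x\in\Omega\,.
\]
Thus the claim reduces to showing that $(-\Delta)^s w_j \in L^{\bar p}_{\rm loc}(\Omega)$ for every $\bar p<1/2s$.

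Next, I would simply rerun the argument of Proposition~\ref{imprintsharp}, noting that the proof there actually established the $L^{\bar p}$-bound for each summand $(-\Delta)^s w_j$ separately (before combining them via the decomposition of $u_*$). Concretely, one fixes $\Omega^{\prime\prime}\Subset\Omega^\prime\Subset\Omega$ and uses the pointwise estimate
\[
|(-\Delta)^s w_j(x)| \leq \frac{C(\Omega^\prime,\Omega^{\prime\prime})}{{\rm dist}(x,\Sigma(u_*^\e)\cap\Omega^\prime)^{2s}} \quad\text{for a.e. } x\in\Omega^{\prime\prime}\setminus\Sigma(u_*^\e)\,,
\]
which follows from Lemma~\ref{lemeqharmmap} and the fact that, away from $\Sigma(u_*^\e)$, the function $u_*$ is locally constant (so $w_j$ is either $+1$ or $-1$ on a full disc around $x$ of radius comparable to the distance to $\Sigma(u_*^\e)$).

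Finally, combining this pointwise bound with the Minkowski dimension estimate ${\rm dim}_{\mathscr{M}}(\Sigma(u_*^\e)\cap\Omega^\prime)\leq n-1$ from the main results of Subsection~\ref{subsectstrat} (which allows one to control $\mathscr{L}^n(\mathscr{T}_r(\Sigma(u_*^\e)\cap\Omega^\prime))\leq r^{\alpha}$ for any $\alpha<1$), a dyadic decomposition of $\Omega^{\prime\prime}$ into annuli around $\Sigma(u_*^\e)$ yields the convergent geometric series
\[
\int_{\Omega^{\prime\prime}}|(-\Delta)^s w_j|^{\bar p}\,\de x \leq C\,R_\alpha^{-2s\bar p}\sum_{k=0}^\infty 2^{-(\alpha-2s\bar p)k}<\infty
\]
whenever $\alpha>2s\bar p$, which is achievable for any $\bar p<1/2s$. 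No new difficulty arises beyond what was already handled in Proposition~\ref{imprintsharp}; the only point worth spelling out is that the pointwise bound holds with a constant independent of $j$, so the identification $V_{E_j^*}=(-\Delta)^s w_j$ transfers the integrability claim to each potential.
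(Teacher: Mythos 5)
Your proof is correct, but it takes a genuinely different route from the paper's. You observe that the estimate $(-\Delta)^s w_j \in L^{\bar p}_{\rm loc}(\Omega)$ for each fixed $j$ is already contained in the proof of Proposition~\ref{imprintsharp} (where it is derived from the pointwise bound $|(-\Delta)^s w_j(x)| \lesssim {\rm dist}(x,\Sigma(u_*^\e))^{-2s}$ combined with the Minkowski-dimension estimate), and then simply invoke Lemma~\ref{lemeqharmmap} to identify $V_{E_j^*}$ with $(-\Delta)^s \chi_{E_j^*}$. This is a clean shortcut — and it is an \emph{important} shortcut, because the \emph{statement} of Proposition~\ref{imprintsharp} (that $(-\Delta)^s u_* = \sum_j V_{E_j^*}\mathbf a_j \in L^{\bar p}_{\rm loc}$) does not directly yield the integrability of each $V_{E_j^*}$ separately, since the vectors $\mathbf a_j$ need not be linearly independent; one must dig into the proof, exactly as you do. The paper instead proves the corollary afresh from the \emph{statement} of Theorem~\ref{improvperim}: it bounds the integral defining $V_{E_j^*}$ by a local term plus a constant, then applies H\"older's inequality with a carefully tuned auxiliary exponent $\delta$ to trade the gain in fractional regularity ($s'>s$) for the loss of an $L^{\bar p}$ power. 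Your approach is shorter and avoids the H\"older computation; the paper's is self-contained given only the stated theorems, without re-opening the proof of Proposition~\ref{imprintsharp}. One small slip: you write $(-\Delta)^s w_j = V_{E_j^*}$, but with $w_j = 2\chi_{E_j^*}-1$ the correct identity is $(-\Delta)^s \chi_{E_j^*} = V_{E_j^*}$, equivalently $(-\Delta)^s w_j = 2V_{E_j^*}$; this factor of $2$ is of course immaterial for the integrability claim.
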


\begin{proof}
We fix $j\in\{1,\ldots,m\}$ and two open subsets $\Omega^\prime,\Omega^{\prime\prime}$ of $\Omega$ such that  $\overline{\Omega^{\prime\prime}}\subset\Omega^{\prime}$ and $\overline{\Omega^\prime}\subset\Omega$.  We also fix an exponent $\bar p\in(1,1/2s)$, and  a constant 
$0<\delta<\frac{1-2s\bar p}{\bar p -1}$. We define $s^\prime$ by the relation
$$\frac{2s^\prime+\delta}{2s+\delta}=\bar p\,. $$
Note that our choice of $\delta$ implies $s^\prime\in(s,1/2)$. According to Theorem~\ref{improvperim}, we have 
\begin{equation}\label{integpotprf1}
\iint_{\Omega^\prime\times\Omega^\prime}\frac{|\chi_{E^*_j}(x)-\chi_{E^*_j}(y)|^2}{|x-y|^{n+2s^\prime}}\,\de x\de y<\infty\,.
\end{equation}
On the other hand, we have 
\begin{equation}\label{integpotprf2}
|V_{E_j^*}(x)|= \gamma_{n,s}\int_{\R^n}\frac{|\chi_{E^*_j}(x)-\chi_{E^*_j}(y)|^2}{|x-y|^{n+2s}}\,\de y
\leq \gamma_{n,s}\int_{\Omega^\prime}\frac{|\chi_{E^*_j}(x)-\chi_{E^*_j}(y)|^2}{|x-y|^{n+2s}}\,\de y+C
\end{equation}
for a.e. $x\in \Omega^{\prime\prime}$ and a constant $C$ depending only on $n$, $s$, $\Omega^{\prime}$, and $\Omega^{\prime\prime}$. Then we apply H\"older inequality to infer that 
\begin{align}
\nonumber\int_{\Omega^\prime}\frac{|\chi_{E^*_j}(x)-\chi_{E^*_j}(y)|^2}{|x-y|^{n+2s}}\,\de y & \leq \Big(\int_{\Omega^\prime}\frac{|\chi_{E^*_j}(x)-\chi_{E^*_j}(y)|^{2\bar p}}{|x-y|^{n+2s^\prime}}\,\de y\Big)^{\frac{1}{\bar p}}\Big(\int_{\Omega^\prime}\frac{dy}{|x-y|^{n-\delta}}\Big)^{1-\frac{1}{\bar p}}\\
&\leq  C  \Big(\int_{\Omega^\prime}\frac{|\chi_{E^*_j}(x)-\chi_{E^*_j}(y)|^{2}}{|x-y|^{n+2s^\prime}}\,\de y\Big)^{\frac{1}{\bar p}} \label{integpotprf3}
\end{align}
for a.e. $x\in \Omega^{\prime\prime}$ and a constant $C$ depending on $\delta$ and $\bar p$.  Combining \eqref{integpotprf1}, \eqref{integpotprf2} and \eqref{integpotprf3}, we conclude that 
$$\int_{\Omega^{\prime\prime}}|V_{E_j^*}(x)|^{\bar p}\,\de x\leq C\Big(\iint_{\Omega^\prime\times\Omega^\prime}\frac{|\chi_{E^*_j}(x)-\chi_{E^*_j}(y)|^2}{|x-y|^{n+2s^\prime}}\,\de x\de y+1\Big)<\infty\,, $$
which completes the proof. 
\end{proof}

%%%%%%%%%%%%%%%%%%%%%%%%%%%%%%%%%%%%%%%%%%%%%%%%%%%%%%%%
%%%%%%%%%%%%%%%%%%%%%%%%%%%%%%%%%%%%%%%%%%%%%%%%%%%%%%%%

\section{Volume of transition sets and  improved estimates}\label{imprsect}

%%%%%%%%%%%%%%%%%%%%%%%%%%%%%%%%%%%%%%%%%%%%%%%%%%%%%%%%
%%%%%%%%%%%%%%%%%%%%%%%%%%%%%%%%%%%%%%%%%%%%%%%%%%%%%%%%

In this section, our goal is to apply the quantitative stratification principle to improve the convergence results of  Theorem \ref{main1part1}. Following \cite{MilSirW},  it allows to obtain a quantitative volume estimate on the transition set (i.e., where a solution remains far from the set of wells $\mathcal{Z}$ of the potential $W$). This estimate, combined with Lemma~\ref{estifond}, provides improved estimates on the potential part of the energy and on the reaction term in the equation.   
 As performed in Section~\ref{FGLasymp}, we start with estimates on the Allen-Cahn  boundary  reactions equation. 

%%%%%%%%%%%%%%%%%%%%%%%%%%%%%%%%%%%%%%%%%%%%%%%%%%%%%%%%

\subsection{Quantitative estimates for boundary reactions}\label{subsectstratAC}

%%%%%%%%%%%%%%%%%%%%%%%%%%%%%%%%%%%%%%%%%%%%%%%%%%%%%%%%

In this subsection, we are back to the analysis initiated in Section~\ref{subsecbdryreac}. We  consider a bounded admissible open set $G\subset \R^{n+1}_+$,  $\eps\in(0,1)$, and a weak solution $v_\eps\in H^{1}(G;\R^d,|z|^a\de{\bf x})\cap L^\infty(G)$ of  
\begin{equation}\label{eqACagain}
\begin{cases}
{\rm div}(z^a\nabla v_\eps) =0 & \text{in $G$}\,,\\[8pt]
\displaystyle \boldsymbol{\delta}_s\boldsymbol{\partial}^{(2s)}_z v_\eps=\frac{1}{\varepsilon^{2s}}\nabla W(v_\eps)  & \text{on $\partial^0 G$}\,.
\end{cases}
\end{equation}
We shall {\it fix constants} $r_0> 0$, $b\geq 1$, and $\Lambda_0\geq 0$ such that 
\begin{equation}\label{controlLinftyueps}
 \|v_\eps\|_{L^\infty(G)}\leq b\,, 
 \end{equation}
 and 
\begin{equation}\label{controldensiteps}
\sup\Big\{\boldsymbol{\Theta}_{s,\eps}(v_\eps,x,\rho) : x\in\Omega^{r_0}\,,\;0<\rho\leq r_0\Big\}\leq \Lambda_0\,, 
\end{equation}
where the domain $\Omega^{r_0}$ is defined in \eqref{defOmegar}. 
\vskip3pt

The following volume estimate is the key result of this section.

\begin{theorem}\label{volesti}
For each $\alpha\in(0,1)$, there exist  two constants ${\bf k_*}={\bf k_*}(\alpha,r_0,\Lambda_0,W,b,n,s)>0$  
and 
$C=C(\alpha,r_0,\Lambda_0,W,{\rm diam}(\partial^0G),b,n,s)$ such that  
\begin{equation}\label{volestieq}
\mathscr{L}^n\Big(\mathscr{T}_r\big(\{{\rm dist}(v_\eps,\mathcal{Z})>\boldsymbol{\varrho}_W\}\cap \Omega^{r_0}\big)\Big)\leq Cr^\alpha\qquad\forall r\in({\bf k_*}\eps,r_0)\,, 
\end{equation}
where $\boldsymbol{\varrho}_W\in(0,1]$ is given by \eqref{lwbdhess}.
\end{theorem}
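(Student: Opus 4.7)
The plan is to adapt the quantitative stratification scheme of Subsection \ref{subsectstrat} to the $\varepsilon$-dependent Allen--Cahn setting and to identify the transition set with a truncated top singular stratum at scale $r = {\bf k}_*\varepsilon$. Since the monotone quantity $\boldsymbol{\Theta}_{s,\varepsilon}(v_\varepsilon,\cdot,\cdot)$ from Corollary \ref{monotformACeq} and the compact families $\mathscr{C}_\ell(\Lambda_0)$ of nonlocal stationary $\mathcal{Z}$-cones from Subsection \ref{subres1} are already at our disposal, the key is to introduce control functions
$${\bf d}_\ell^\varepsilon(v_\varepsilon,x_0,\rho) := \inf\Big\{\|(v_\varepsilon)_{x_0,\rho} - \varphi\|_{L^1(B_1^+)} : \varphi \in \mathscr{C}_\ell(\Lambda_0)\Big\}$$
defined at the admissible scales $\rho \geq {\bf k}_*\varepsilon$, and then to check the structural hypotheses of \cite{FMS} at those scales.

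The first core step is a \emph{quantitative clearing-out}: there exist constants $\delta_0,{\bf k}_* > 0$ depending only on the declared parameters such that whenever ${\rm dist}(v_\varepsilon(x),\mathcal{Z}) > \boldsymbol{\varrho}_W$, one has ${\bf d}_n^\varepsilon(v_\varepsilon,x,\rho) \geq \delta_0$ for every $\rho \in [{\bf k}_*\varepsilon,r_0]$. The proof is by contradiction: sequences $v_{\varepsilon_k}, x_k, \rho_k$ with $\varepsilon_k/\rho_k \to 0$ and ${\bf d}_n^{\varepsilon_k} \to 0$ produce, via Theorem \ref{thmasympbdryreact}, a stationary $\mathcal{Z}$-valued limit $v_*$ with $(v_{\varepsilon_k})_{x_k,\rho_k} \to v_*$ strongly in $L^1(B_1^+)$ and locally uniformly on $D_1\setminus\Sigma(v_*)$; the condition ${\bf d}_n^{\varepsilon_k}\to 0$ forces $v_* \equiv {\bf a}$ for some ${\bf a}\in\mathcal{Z}$, hence items (iii)--(iv) of Theorem \ref{thmasympbdryreact} eventually place $v_{\varepsilon_k}(x_k)$ within $\boldsymbol{\varrho}_W$ of ${\bf a}$, contradicting the transition condition. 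The complementary regime $\rho_k \asymp \varepsilon_k$ is covered directly by the classical clearing-out Lemma \ref{clear2new}; the constant ${\bf k}_*$ is chosen large enough to ensure both regimes are handled simultaneously.

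Reproducing almost verbatim the contradiction-compactness arguments of Lemmas \ref{lemme2stratsurf} and \ref{condiistrat}, now for the pair $(\boldsymbol{\Theta}_{s,\varepsilon},{\bf d}_\ell^\varepsilon)$ restricted to scales $\rho \geq {\bf k}_*\varepsilon$, one then establishes: (a) \emph{cone pinching}, i.e., for every $\delta > 0$ there exist $\eta_1,\lambda_1 > 0$ so that $\boldsymbol{\Theta}_{s,\varepsilon}(v_\varepsilon,x,\rho) - \boldsymbol{\Theta}_{s,\varepsilon}(v_\varepsilon,x,\lambda_1\rho) \leq \eta_1$ implies ${\bf d}_0^\varepsilon(v_\varepsilon,x,\rho) \leq \delta$; and (b) \emph{splitting}, i.e., for every $\delta,\tau\in(0,1)$ there exists $\eta_2 \in (0,\delta]$ such that ${\bf d}_\ell^\varepsilon \leq \eta_2$ together with ${\bf d}_{\ell+1}^\varepsilon \geq \delta$ at scale $4\rho$ forces the existence of an $\ell$-dimensional affine subspace off of whose $\tau\rho$-tubular neighborhood ${\bf d}_0^\varepsilon(v_\varepsilon,\cdot,4\rho) > \eta_2$. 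Both proofs pass to the $\varepsilon=0$ limit via Theorem \ref{thmasympbdryreact} and reduce to the analogous statements already proved in Subsection \ref{subsectstrat}.

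With these three structural ingredients, the abstract stratification theorem of \cite{FMS} applies to the truncated top stratum
$$\mathcal{S}^{n-1}_{r_0,r,\delta_0}(v_\varepsilon) := \Big\{ x \in \Omega^{r_0} : {\bf d}_n^\varepsilon(v_\varepsilon,x,\rho) \geq \delta_0 \text{ for all } r \leq \rho \leq r_0 \Big\},$$
yielding $\mathscr{L}^n\big(\mathscr{T}_r(\mathcal{S}^{n-1}_{r_0,r,\delta_0}(v_\varepsilon))\big) \leq C r^{\alpha}$ for every $\alpha\in(0,1)$ and every $r \in [{\bf k}_*\varepsilon,r_0)$. The clearing-out step then gives the inclusion $\{{\rm dist}(v_\varepsilon,\mathcal{Z}) > \boldsymbol{\varrho}_W\} \cap \Omega^{r_0} \subset \mathcal{S}^{n-1}_{r_0,{\bf k}_*\varepsilon,\delta_0}(v_\varepsilon)$, and \eqref{volestieq} follows. \emph{The main obstacle} is the uniform-in-$\varepsilon$ bookkeeping: the stratification framework of \cite{FMS} is phrased for control functions defined down to scale zero, while here ${\bf d}_\ell^\varepsilon$ is only meaningful above the cut-off ${\bf k}_*\varepsilon$. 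One must therefore verify that the covering/Reifenberg-type iteration in \cite{FMS} is robust under such a lower cut-off (equivalently, that ${\bf d}_\ell^\varepsilon$ can be harmlessly extended below ${\bf k}_*\varepsilon$ while preserving the structural hypotheses), and track all constants to ensure independence of $\varepsilon$ once $b$, $\Lambda_0$, and $r_0$ are fixed.
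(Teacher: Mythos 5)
Your proposal follows essentially the same approach as the paper: a quantitative clearing-out statement ensuring that transition points have ${\bf d}_n \geq \delta_0$ at all admissible scales $\rho\geq {\bf k}_*\eps$ (this is the paper's Lemma~\ref{lemme1strateps}), plus $\varepsilon$-uniform versions of the cone-pinching and splitting lemmas from Subsection~\ref{subsectstrat} (the paper's Lemmas~\ref{lemme2strateps} and~\ref{lemme3strateps}), all proved by the same contradiction-compactness route through Theorem~\ref{thmasympbdryreact}. Your final caveat about the lower cut-off is exactly what the paper addresses: rather than citing \cite{FMS} as a black box, it reproduces the $\tau$-adic covering iteration of \cite{FMS} by hand and simply terminates the iteration at the integer $p_1(\eps)$ corresponding to the scale $\sim {\bf k}_*\eps$, which is the only modification needed to accommodate the $\eps$-cutoff.
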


The proof of Theorem \ref{volesti} is once again  close to \cite[Proof of Theorem 7.1]{MilSirW} and inspired from \cite[Theorem 2.2]{FMS}.  
The solution $v_\varepsilon$ being a smooth map, there is of  course no singular set $\Sigma$ in this setting. Instead, 
the idea is to consider $v_\varepsilon$ at scales of order larger than $\eps$ and consider the transition set  $\{{\rm dist}(v_\eps,\mathcal{Z})>\boldsymbol{\varrho}_W\}$ in place of a singular set, knowing that, formally at least, this set should have a thickness of order $\eps$. This consideration explains the  lower bound on the admissible radii appearing in the theorem above and in the following ``preparatory lemmae".

\begin{lemma}\label{lemme1strateps}
There exist constants 
$$\widetilde\delta_0(r_0)=\delta_0(r_0,\Lambda_0,W,b,n,s)\in(0,1)$$ 
and 
$${\bf k}_0(r_0)={\bf k}_0(r_0,\Lambda_0,W,b,n,s) \geq 1$$ 
(independent of $\eps$ and $v_\eps$)  such that for every $x\in\Omega^{r_0}$ and 
$\rho\in(0,r_0)$,
$$ {\rm dist}(v_\eps(x,0),\mathcal{Z})>\boldsymbol{\varrho}_W \quad\text{and}\quad {\bf k}_0\eps\leq \rho\quad\Longrightarrow\quad {\bf d}_n(v_\eps,x,\rho) \geq\widetilde\delta_0 \,.$$
\end{lemma}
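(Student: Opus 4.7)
The strategy is a compactness-contradiction argument in the spirit of Lemma \ref{gapdistlemma}, with the lower bound $\rho \geq \mathbf{k}_0 \varepsilon$ inserted precisely to produce an Allen--Cahn limit with vanishing $\varepsilon$-parameter. Suppose no such constants $\widetilde\delta_0(r_0)$ and $\mathbf{k}_0(r_0)$ exist. Then one can extract sequences $\varepsilon_k \in (0,1)$, solutions $v_k := v_{\varepsilon_k}$ of \eqref{eqACagain} satisfying \eqref{controlLinftyueps}--\eqref{controldensiteps}, points $x_k \in \Omega^{r_0}$ and radii $\rho_k \in (0, r_0)$ with $\rho_k / \varepsilon_k \to \infty$, such that $\mathrm{dist}(v_k(x_k, 0), \mathcal{Z}) > \boldsymbol{\varrho}_W$ and $\mathbf{d}_n(v_k, x_k, \rho_k) \leq 2^{-k}$ for every $k$.

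Next I would rescale: setting $\widetilde v_k(\mathbf{x}) := v_k(\mathbf{x}_k + \rho_k \mathbf{x})$ for $\mathbf{x} \in B_1^+$ (where $\mathbf{x}_k := (x_k, 0)$) and $\widetilde\varepsilon_k := \varepsilon_k / \rho_k \to 0$, a straightforward change of variables using $a = 1 - 2s$ shows that $\widetilde v_k$ is a bounded weak solution of
\begin{equation*}
\begin{cases}
\mathrm{div}(z^a \nabla \widetilde v_k) = 0 & \text{in } B_1^+, \\
\boldsymbol{\delta}_s \boldsymbol{\partial}_z^{(2s)} \widetilde v_k = \widetilde\varepsilon_k^{-2s}\, \nabla W(\widetilde v_k) & \text{on } D_1,
\end{cases}
\end{equation*}
satisfying $\|\widetilde v_k\|_{L^\infty(B_1^+)} \leq b$ and the uniform energy estimate $\mathbf{E}_{s, \widetilde\varepsilon_k}(\widetilde v_k, B_1^+) = \boldsymbol{\Theta}_{s, \varepsilon_k}(v_k, x_k, \rho_k) \leq \Lambda_0$ thanks to \eqref{controldensiteps}. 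Hence Theorem \ref{thmasympbdryreact} applies with $G = B_1^+$: up to a (not relabeled) subsequence, $\widetilde v_k \to v_*$ weakly in $H^1(B_1^+, |z|^a\,\de\mathbf{x})$ and strongly in $H^1_{\mathrm{loc}}(B_1^+ \cup D_1, |z|^a\,\de\mathbf{x})$, and the trace of $v_*$ on $D_1$ has the form $\sum_j \chi_{E_j^*} \mathbf{a}_j$ for a partition $(E_1^*, \ldots, E_m^*)$ of $D_1$.

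Finally I would exploit the control $\mathbf{d}_n(\widetilde v_k, 0, 1) \leq 2^{-k}$: selecting $\mathbf{a}_k \in \mathcal{Z}$ realizing this infimum and passing to a further subsequence with $\mathbf{a}_k \equiv \mathbf{a}$, one gets $\widetilde v_k \to \mathbf{a}$ in $L^1(B_1^+)$, so $v_* \equiv \mathbf{a}$ almost everywhere in $B_1^+$. The limiting partition on $D_1$ therefore reduces to a single cell, the interface $\Sigma = \bigcup_j \partial E_j^* \cap D_1$ is empty, and assertion (ii) of Theorem \ref{thmasympbdryreact} yields $\widetilde v_k \to \mathbf{a}$ in $C^{1,\alpha}_{\mathrm{loc}}(D_1)$ for some $\alpha = \alpha(n, s) \in (0, 1)$. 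Evaluating at the origin gives $\widetilde v_k(0) = v_k(x_k, 0) \to \mathbf{a} \in \mathcal{Z}$, contradicting the standing bound $\mathrm{dist}(v_k(x_k, 0), \mathcal{Z}) > \boldsymbol{\varrho}_W > 0$. The only delicate point, which also explains the necessity of the coupling $\rho \geq \mathbf{k}_0 \varepsilon$, is that without $\rho_k / \varepsilon_k \to \infty$ the rescaled parameter $\widetilde\varepsilon_k$ would not vanish, the limit $v_*$ would not be forced to be $\mathcal{Z}$-valued on $D_1$, and the required $C^{1,\alpha}$ convergence at the origin would break down.
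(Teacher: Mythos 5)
Your proof is correct and follows essentially the same compactness-contradiction route as the paper: rescale by $\rho_k$ so that $\widetilde\varepsilon_k = \varepsilon_k/\rho_k \to 0$, invoke Theorem~\ref{thmasympbdryreact} on $B_1^+$, use the vanishing of $\mathbf{d}_n$ to force the limit to be a constant $\mathbf{a}\in\mathcal{Z}$ (so the interface $\Sigma$ is empty), and then use the locally uniform convergence away from $\Sigma$ provided by Theorem~\ref{thmasympbdryreact} to contradict $\mathrm{dist}(v_k(x_k,0),\mathcal{Z})>\boldsymbol{\varrho}_W$ at the origin. The only cosmetic difference is that you make explicit the step of choosing the minimizing $\mathbf{a}_k\in\mathcal{Z}$ and passing to a constant subsequence, which the paper leaves implicit.
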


\begin{proof}
Assume by contradiction that there exist sequences $\{\eps_k\}_{k\in\mathbb{N}}\subset(0,1)$, $\{v_k\}_{k\in\mathbb{N}}$ satisfying \eqref{eqACagain}-\eqref{controlLinftyueps}-\eqref{controldensiteps}, points $\{x_k\}_{k\in\mathbb{N}}\subset\Omega^{r_0}$, and radii $\{\rho_k\}_{k\in\mathbb{N}}\subset(0,r_0)$ such that  $ {\rm dist}(v_k(x_k,0),\mathcal{Z})>\boldsymbol{\varrho}_W$,  
${\bf d}_n(v_k,x_k,\rho_k)\to 0$, and $\eps_k/\rho_k\leq 2^{-k}$. 

Setting $\widetilde \eps_k:=\eps_k/\rho_k$,  consider the rescaled map $\widetilde v_k:=(v_k)_{x_k,\rho_k}$ as defined in \eqref{defrescmap}. Rescaling variables, we derive that 
\begin{equation}\label{rescACstrat} 
\begin{cases}
{\rm div}(z^a\nabla \widetilde v_k) =0 & \text{in $B_1^+$}\,,\\[8pt]
\displaystyle \boldsymbol{\delta}_s\boldsymbol{\partial}^{(2s)}_z \widetilde v_k=\frac{1}{(\widetilde\varepsilon_k)^{2s}}W^\prime(\widetilde v_k)  & \text{on $D_1$}\,,
\end{cases}
\end{equation}
and
\begin{equation}\label{contrrescACstrat}
\| \widetilde v_k\|_{L^\infty(B_1^+)}\leq b\,,
\end{equation}
as well as
\begin{equation}\label{contrdensrescACstrat}
\boldsymbol{\Theta}_{s,\widetilde \eps_k}(\widetilde v_k,0,1) =\boldsymbol{\Theta}_{s,\eps_k}(v_k,x_k,\rho_k)\leq \Lambda_0\,. 
\end{equation}
By Theorem \ref{thmasympbdryreact}, we can find a (not relabeled) subsequence such that $\widetilde v_k\to v_*$ weakly in $H^1(B_1^+,|z|^a\de{\bf x})$ (and strongly in $H^1(B_r^+,|z|^a\de{\bf x})$ for every $r\in(0,1)$). Hence $\widetilde v_k\to v_*$  in $L^1(B^+_1)$ (see e.g. \cite[Remark 2.4]{MilSirW}).  
On the other hand, ${\bf d}_n(\widetilde v_k,0,1)={\bf d}_n(v_k,x_k,\rho_k)\to 0$, so that ${\bf d}_n(v_*,0,1)=0$, i.e., $v_*={\bf a}$ for some ${\bf a}\in \mathcal{Z}$.  
Then Theorem \ref{thmasympbdryreact} tells us that $\widetilde v_k\to {\bf a}$ uniformly on $D_r$ for every $r\in(0,1)$. In particular ${\rm dist}(\widetilde v_k(0),\mathcal{Z})\to 0$ which contradicts ${\rm dist}(\widetilde v_k(0),\mathcal{Z})={\rm dist}(v_k(x,0),\mathcal{Z})>\boldsymbol{\varrho}_W$. 
\end{proof}

\begin{lemma}\label{lemme2strateps}
For every $\delta>0$, there exist constants 
$$\widetilde\eta_1(\delta)=\widetilde\eta_1(\delta,\Lambda_0,\mathcal{Z},b,n,s)\in (0,1/4)\,,$$ 
$$\widetilde \lambda_1(\delta,r_0)=\widetilde\lambda_1(\delta,r_0,\Lambda_0,W,b,n,s)\in (0,1/4)\,,$$ 
and 
$${\bf k}_1(\delta,r_0)={\bf k}_1(\delta,r_0,\Lambda_0,W,b,n,s) \geq 1$$ 
(independent of $\eps$ and $v_\eps$) such that for every 
$\rho\in(0,r_0/5)$ and $x\in\Omega^{r_0}$,   
$$\boldsymbol{\Theta}_{s,\eps}(v_\eps,x,\rho)-\boldsymbol{\Theta}_{s,\eps}(v_\eps,x,\widetilde\lambda_1\rho) \leq \widetilde\eta_1\;\text{ and }\; {\bf k}_1\eps\leq \rho \quad \Longrightarrow\quad {\bf d}_0(v_\eps,x,\rho) \leq\delta \,.$$ 
\end{lemma}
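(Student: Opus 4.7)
The plan is to argue by contradiction along the same lines as Lemma~\ref{lemme2stratsurf}, using Theorem~\ref{thmasympbdryreact} (rather than Theorem~\ref{compactminsurf}) to absorb the $\eps\to 0$ limit. Suppose the conclusion fails for some $\delta>0$: then for each $k\in\mathbb{N}$ there would exist $\eps_k\in(0,1)$, a weak solution $v_k$ of \eqref{eqACagain} satisfying \eqref{controlLinftyueps}-\eqref{controldensiteps}, a point $x_k\in\Omega^{r_0}$, and a radius $\rho_k\in(0,r_0/5)$ for which
\[
\boldsymbol{\Theta}_{s,\eps_k}(v_k,x_k,\rho_k)-\boldsymbol{\Theta}_{s,\eps_k}(v_k,x_k,\lambda_k\rho_k)\leq 2^{-k},\quad \eps_k/\rho_k\leq 1/k,\quad {\bf d}_0(v_k,x_k,\rho_k)\geq\delta,
\]
for some $\lambda_k\downarrow 0$. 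I would then rescale by setting $\widetilde v_k:=(v_k)_{x_k,\rho_k}$ and $\widetilde\eps_k:=\eps_k/\rho_k\downarrow 0$, so that $\widetilde v_k$ solves \eqref{eqACagain} on $B_1^+$ with parameter $\widetilde\eps_k$, satisfies $\|\widetilde v_k\|_{L^\infty(B_1^+)}\leq b$ and $\mathbf{E}_{s,\widetilde\eps_k}(\widetilde v_k,B_1^+)=\boldsymbol{\Theta}_{s,\eps_k}(v_k,x_k,\rho_k)\leq\Lambda_0$, and ${\bf d}_0(\widetilde v_k,0,1)\geq\delta$ by scaling invariance.

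Since $\widetilde\eps_k\downarrow 0$, Theorem~\ref{thmasympbdryreact} applied on $G=B_1^+$ would produce (up to a subsequence) a limit $v_*$ with $\widetilde v_k\rightharpoonup v_*$ weakly in $H^1(B_1^+;\R^d,|z|^a\de{\bf x})$ and strongly in $H^1_{\rm loc}(B_1^+\cup D_1,|z|^a\de{\bf x})$, with $v_*\in\mathcal{Z}_{\boldsymbol{\sigma}}$ a.e. on $D_1$, $\widetilde\eps_k^{-2s}W(\widetilde v_k)\to 0$ in $L^1_{\rm loc}(D_1)$, and (by item~(v) of that theorem) the stationarity identity $\delta\mathbf{E}_s(v_*,B_r^+)[{\bf X}]=0$ holding on every $B_r^+$ with $r<1$. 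The compact embedding $H^1(B_1^+,|z|^a\de{\bf x})\hookrightarrow L^1(B_1^+)$ moreover delivers $\widetilde v_k\to v_*$ strongly in $L^1(B_1^+)$, and thus ${\bf d}_0(v_*,0,1)=\lim_k{\bf d}_0(\widetilde v_k,0,1)\geq\delta$; this is the lower bound I will eventually contradict by proving $v_*\in\mathscr{C}_0(\Lambda_0)$.

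The crux is to pass to the limit in the rescaled monotonicity inequality
\[
\boldsymbol{\Theta}_{s,\widetilde\eps_k}(\widetilde v_k,0,1)-\boldsymbol{\Theta}_{s,\widetilde\eps_k}(\widetilde v_k,0,r)\leq 2^{-k},
\]
which holds for each fixed $r\in(0,1)$ and $k$ large enough that $\lambda_k<r$ thanks to Corollary~\ref{monotformACeq}. On $B_r^+$ the strong $H^1$-convergence and the vanishing of the potential yield $\boldsymbol{\Theta}_{s,\widetilde\eps_k}(\widetilde v_k,0,r)\to\boldsymbol{\Theta}_s(v_*,0,r)$, while on $B_1^+$ only weak convergence is available, so I would invoke weak lower semicontinuity of $\mathbf{E}_s$ together with $\widetilde\eps_k^{-2s}W\geq 0$ to get $\boldsymbol{\Theta}_s(v_*,0,1)\leq\liminf_k\boldsymbol{\Theta}_{s,\widetilde\eps_k}(\widetilde v_k,0,1)\leq\Lambda_0$. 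Combining these gives $\boldsymbol{\Theta}_s(v_*,0,1)\leq\boldsymbol{\Theta}_s(v_*,0,r)$, which, against the monotonicity statement in Lemma~\ref{monotformsurf} and Corollary~\ref{directcorlmonot}, forces equality and, upon letting $r\downarrow 0$, the identity $\boldsymbol{\Theta}_s(v_*,0,1)=\boldsymbol{\Theta}_s(v_*,0)$. Lemma~\ref{lemhomogsol} then shows that $v_*$ is a nonlocal stationary $\mathcal{Z}_{\boldsymbol{\sigma}}$-cone, and since $\boldsymbol{\Theta}_s(v_*,0)\leq\Lambda_0$, in fact $v_*\in\mathscr{C}_0(\Lambda_0)$, yielding ${\bf d}_0(v_*,0,1)=0$ and the desired contradiction. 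The step I expect to be the most delicate is precisely this asymmetric passage to the limit: the strong convergence is only available on compact subsets of $B_1^+\cup D_1$, while at the outer scale $r=1$ one has only the one-sided lower semicontinuity bound on the energy, and it must be carefully paired with the monotonicity of $\boldsymbol{\Theta}_s$ for $v_*$ itself to force the equality of densities.
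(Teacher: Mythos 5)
Your proposal is correct, but it takes a genuinely different route from the paper's proof. The paper fixes a \emph{specific positive} threshold $\widetilde\eta_1(\delta,r_0):=\eta_1(\delta/2,2/5,\Lambda_0,\mathcal{Z},b,n,s)$ at the outset (the constant from Lemma~\ref{lemme2stratsurf}), rescales by $5\rho_k$ so that the limit $v_*$ inherits the density control \eqref{peuplusdutotu} over $D_{1/5}\times(0,2/5]$, passes to the limit to obtain $\boldsymbol{\Theta}_s(v_*,0,1/5)-\boldsymbol{\Theta}_s(v_*,0,\lambda_1/5)\leq\widetilde\eta_1$ (a \emph{nonzero} bound — here $v_*$ is not shown to be a cone), and then \emph{applies} Lemma~\ref{lemme2stratsurf} as a black box to conclude ${\bf d}_0(v_*,0,1/5)\leq\delta/2<\delta$. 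In contrast, you drive the energy drop all the way to zero by taking $\widetilde\eta_1^{(k)}=2^{-k}$ along the contradiction sequence and then replay, at the $\eps$-level, the cone-classification argument from the proof of Lemma~\ref{lemme2stratsurf}: lower semicontinuity at the outer scale combined with strong convergence at inner scales gives $\boldsymbol{\Theta}_s(v_*,0,1)\leq\boldsymbol{\Theta}_s(v_*,0,r)$, Lemma~\ref{monotformsurf} forces equality, Lemma~\ref{lemhomogsol} then identifies $v_*$ as a nonlocal stationary $\mathcal{Z}_{\boldsymbol{\sigma}}$-cone, and since $\boldsymbol{\Theta}_s(v_*,0)\leq\Lambda_0$ you get $v_*\in\mathscr{C}_0(\Lambda_0)$, so ${\bf d}_0(v_*,0,1)=0$, contradicting ${\bf d}_0(v_*,0,1)\geq\delta$. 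Both arguments are sound. The paper's reduction is the structurally tidier choice (it avoids re-deriving the cone identification and produces a $\widetilde\eta_1$ inherited from the limit lemma), while your approach is more self-contained and needs the less elaborate rescaling by $\rho_k$ instead of $5\rho_k$ since you never need to match the hypotheses of Lemma~\ref{lemme2stratsurf} on the limit.
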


\begin{proof}
We choose 
$$\widetilde\eta_1(\delta,r_0):=\eta_1(\delta/2,2/5,\Lambda_0,\mathcal{Z},b,n,s)\,, $$
where $\eta_1$ is given by Lemma \ref{lemme2stratsurf}. Once again we argue by contradiction assuming that for some constant $\delta>0$, there exist sequences $\{\eps_k\}_{k\in\mathbb{N}}\subset(0,1)$, $\{v_k)\}_{k\in\mathbb{N}}$ satisfying \eqref{eqACagain}-\eqref{controlLinftyueps}-\eqref{controldensiteps}, points $\{x_k\}_{k\in\mathbb{N}}\subset\Omega^{r_0}$, radii $\{\rho_k\}_{k\in\mathbb{N}}\subset(0,r_0/5)$, and $\lambda_k\to 0$  such that $\eps_k/\rho_k\leq 2^{-k}$, 
$$\boldsymbol{\Theta}_{s,\eps_k}(v_k,x_k,\rho_k)-\boldsymbol{\Theta}_{s,\eps_k}(v_k,x_k,\lambda_k\rho_k) \leq \widetilde\eta_1\,,\quad\text{and}\quad {\bf d}_0(v_k,x_k,\rho_k) >\delta\,.$$ 
We proceed as in the proof of Lemma \ref{lemme1strateps} rescaling variables as $\widetilde \eps_k:=\eps_k/(5\rho_k)$, $\widetilde u_k:=(u_k)_{x_k,5\rho_k}$.
Then, \eqref{rescACstrat}, \eqref{contrrescACstrat}, and \eqref{contrdensrescACstrat}  hold, as well as 
\begin{equation}\label{1451jhu}
 \sup\Big\{\boldsymbol{\Theta}_{s,\widetilde \eps_k}(\widetilde v_k,x,\rho) : x\in D_{1/5}\,,\;0<\rho\leq 2/5\Big\}\leq \Lambda_0\,.
 \end{equation}
Now our assumptions lead to 
$$\boldsymbol{\Theta}_{s,\widetilde \eps_k}(\widetilde v_k,0,1/5)-\boldsymbol{\Theta}_{s,\widetilde \eps_k}(\widetilde v_k,0,\lambda_k/5) \leq \widetilde\eta_1\,,\quad\text{and}\quad {\bf d}_0(\widetilde v_k,0,1/5) >\delta\,.$$ 
By Theorem \ref{thmasympbdryreact}, there is a (not relabeled) subsequence such that $\widetilde v_k\to v_*$ strongly in $H^1(B_r^+,|z|^a\de{\bf x})$  for every $r\in(0,1)$ (and weaky for $r=1$),  
where $v_*$ solves  \eqref{eqcone}-\eqref{statinB1}. 
In addition, by Theorem   \ref{thmasympbdryreact}, we can deduce from \eqref{1451jhu} that
\begin{equation}\label{peuplusdutotu}
\sup\Big\{\boldsymbol{\Theta}_s(v_*,x,\rho) : x\in D_{1/5}\,,\;0<\rho\leq 2/5\Big\}\leq \Lambda_0\,.
\end{equation}
Thanks to Corollary \ref{monotformACeq}, for $0<r<1/5$ and $k$ large enough  (in such a way that $\lambda_k<r$), we have 
$$\boldsymbol{\Theta}_{s,\widetilde \eps_k}(\widetilde v_k,0,1/5)-\boldsymbol{\Theta}_{s,\widetilde \eps_k}(\widetilde v_k,0,r) \leq \widetilde\eta_1\,. $$
Using Theorem  \ref{thmasympbdryreact} again, we can let $k\to\infty$ in this inequality to derive
$$\boldsymbol{\Theta}_s(v_*,0,1/5)- \boldsymbol{\Theta}_s(v_*,0,r)\leq  \widetilde\eta_1\,.$$
Choosing $r$ small enough in such a way that 
$$r\leq \frac{1}{5} \lambda_1(\delta/2,2/5,\Lambda_0,b,n,s)\,, $$
where $\lambda_1$ is given Lemma \ref{lemme2stratsurf}, we infer from Lemma \ref{monotformsurf} that
$$\boldsymbol{\Theta}_s(v_*,0,1/5)- \boldsymbol{\Theta}_s(v_*,0,\lambda_1/5)\leq \widetilde\eta_1\,.$$
Then Lemma \ref{lemme2stratsurf} yields $ {\bf d}_0(v_*,0,1/5) \leq\delta/2$.  On the other hand, $\widetilde v_k\to v_*$ in $L^1(B_1^+)$, and thus 
$ {\bf d}_0(v_*,0,1/5) =\lim_k{\bf d}_0(\widetilde v_k,0,1/5) \geq\delta$, a contradiction. 
\end{proof}

\begin{lemma}\label{lemme3strateps}
For every $\delta,\tau\in(0,1)$, there exist two constants 
$$\widetilde \eta_2(\delta,\tau)=\widetilde \eta_2(\delta,\tau,\Lambda_0,\mathcal{Z},b,n,s)\in (0,\delta]$$
and 
$${\bf k}_2(\delta,\tau,r_0)= {\bf k}_2(\delta,\tau,r_0,\Lambda_0,W,b,n,s)\geq 1$$
(independent of $\eps$ and $v_\eps$) such that for every $\rho\in(0,r_0/25)$ and $x\in\Omega^{r_0}$, the conditions
$${\bf k}_2\eps\leq \rho\,,\quad{\bf d}_0(v_\eps,x,4\rho) \leq\widetilde \eta_2\quad \text{and}\quad {\bf d}_{n}(v_\eps,x,4\rho)\geq \delta\,,$$
imply the existence of a linear subspace $V\subset\R^n$, with ${\rm dim}\,V\leq n-1$, for which
$${\bf d}_0(v_\eps,y,4\rho)> \widetilde\eta_2 \quad \forall y\in D_{\rho}(x)\setminus \mathscr{T}_{\tau \rho}(x+V)\,.$$ 
\end{lemma}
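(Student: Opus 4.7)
The plan is to obtain the result by contradiction and compactness, reducing to the purely stationary version Corollary \ref{corstrat3surf} applied to a blow-up limit.
Fix $\delta,\tau\in(0,1)$. Set
$$\eta^*_3:=\eta_3\big(\delta,\tau,5/4,\Lambda_0,\mathcal{Z}_{\boldsymbol{\sigma}},b,n,s\big)\in(0,\delta]\,,$$
where $\eta_3$ is the constant from Corollary \ref{corstrat3surf} (the radius $5/4$ is chosen so that $\rho=1/4$ is an admissible scale), and define $\widetilde\eta_2(\delta,\tau):=\eta^*_3/2$, which depends only on the parameters advertised in the statement. To produce a contradiction, assume that the conclusion of the lemma fails for this choice of $\widetilde\eta_2$ and for every integer ${\bf k}_2=k$. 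Then one obtains sequences $\varepsilon_k\in(0,1)$, $v_k$ solving \eqref{eqACagain}--\eqref{controlLinftyueps}--\eqref{controldensiteps}, $x_k\in\Omega^{r_0}$, and $\rho_k\in(0,r_0/25)$ with $\varepsilon_k/\rho_k\leq 1/k$,
$${\bf d}_0(v_k,x_k,4\rho_k)\leq \widetilde\eta_2\,,\qquad {\bf d}_n(v_k,x_k,4\rho_k)\geq\delta\,,$$
and such that for every linear subspace $V\subset\R^n$ with $\dim V\leq n-1$, there exists $y_k^V\in D_{\rho_k}(x_k)\setminus\mathscr{T}_{\tau\rho_k}(x_k+V)$ satisfying ${\bf d}_0(v_k,y_k^V,4\rho_k)\leq\widetilde\eta_2$.

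Rescale by setting $\widetilde v_k:=(v_k)_{x_k,4\rho_k}$ and $\widetilde\eps_k:=\varepsilon_k/(4\rho_k)$; these maps satisfy a rescaled form of \eqref{eqACagain} on a ball $B^+_{R_k}$ with $R_k\to\infty$, with $\|\widetilde v_k\|_{L^\infty}\leq b$, and (by changing variables in \eqref{controldensiteps}) with density bound $\Lambda_0$ on any fixed $\Omega^R$ for $k$ large. Since $\widetilde\eps_k\to 0$, Theorem \ref{thmasympbdryreact} applies on $G:=B^+_5$: up to a subsequence, $\widetilde v_k\to v_*$ strongly in $H^1_{\rm loc}(B^+_5\cup D_5,|z|^a\de{\bf x})$ and in $L^1(B^+_2)$, the limit $v_*$ satisfies \eqref{startequsurfnew}--\eqref{critnonlocexteqnew} on $B^+_5$, and the density bound passes to the limit so that $\boldsymbol{\Theta}_s(v_*,x,\rho)\leq\Lambda_0$ for all $(x,0)\in\Omega^{5/4}$ and $\rho\leq 5/4$. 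By continuity of translations in $L^1$, ${\bf d}_0(v_*,0,1)\leq\widetilde\eta_2<\eta^*_3$ and ${\bf d}_n(v_*,0,1)\geq\delta$. Corollary \ref{corstrat3surf} applied to $v_*$ with $\rho=1/4$ and radius parameter $5/4$ therefore produces a linear subspace $V^*\subset\R^n$, with $\dim V^*\leq n-1$, such that
$${\bf d}_0(v_*,y,1)>\eta^*_3 \qquad\forall y\in D_{1/4}(0)\setminus\mathscr{T}_{\tau/4}(V^*)\,.$$

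Since $v\mapsto {\bf d}_0(v,y,1)$ is $1$-Lipschitz for the $L^1(B^+_1(y))$-norm, uniformly in $y\in\overline D_{1/4}$, the functions ${\bf d}_0(\widetilde v_k,\cdot,1)$ converge uniformly on $\overline D_{1/4}$ to the continuous function ${\bf d}_0(v_*,\cdot,1)$. In particular, by continuity of the limit, the strict inequality above extends to the closure: ${\bf d}_0(v_*,y,1)\geq\eta^*_3$ for every $y\in\overline D_{1/4}(0)\setminus\mathscr{T}_{\tau/4}(V^*)$. The uniform convergence then yields, for $k$ large enough,
$${\bf d}_0(\widetilde v_k,y,1)\geq 3\eta^*_3/4>\widetilde\eta_2\qquad \forall y\in \overline D_{1/4}(0)\setminus \mathscr{T}_{\tau/4}(V^*)\,.$$
Applying the counter-example with the subspace $V^*$, we obtain rescaled points $\widetilde y_k:=(y_k^{V^*}-x_k)/(4\rho_k)\in D_{1/4}(0)\setminus\mathscr{T}_{\tau/4}(V^*)$ with ${\bf d}_0(\widetilde v_k,\widetilde y_k,1)\leq\widetilde\eta_2$, contradicting the previous lower bound. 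The main subtlety, which is precisely why we use the closure in the uniform-convergence step, is the possibility that the blown-up points $\widetilde y_k$ accumulate at $\partial D_{1/4}(0)$; this is handled precisely by passing the strict inequality to the closed set via continuity and the uniform convergence provided by $L^1$-compactness.
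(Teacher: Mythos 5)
Your proof takes essentially the same route as the paper's: a contradiction/compactness argument that rescales so the scale $\rho_k$ becomes $O(1)$, uses $\widetilde\eps_k=\eps_k/(4\rho_k)\to 0$ together with Theorem~\ref{thmasympbdryreact} to pass to a stationary limit $v_*$, applies Corollary~\ref{corstrat3surf} to $v_*$ to produce the subspace, and then returns the lower bound to $\widetilde v_k$ for $k$ large. The two cosmetic differences are the normalization of the rescaling (you dilate by $4\rho_k$ so $4\rho=1$; the paper dilates by $25\rho_k$ so $\rho=1/25$, which keeps the density bound \eqref{1451jhu} on $D_{1/5}\times(0,2/5]$ after the change of variables) and the way the final contradiction is drawn: you appeal to uniform convergence of $y\mapsto{\bf d}_0(\widetilde v_k,y,1)$ on $\overline D_{1/4}$ via the Lipschitz bound $|{\bf d}_0(\widetilde v_k,y,1)-{\bf d}_0(v_*,y,1)|\leq\|\widetilde v_k-v_*\|_{L^1(B^+_{5/4})}$, while the paper extracts a convergent subsequence $y_k\to y_*$ of bad points and compares the two sides at $y_*$. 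Both variants are correct; the only points you might tighten for rigor are (i) the choice $\rho=1/4=r_0'/5$ sits exactly at the edge of the allowed range $(0,r_0'/5]$ in Corollary~\ref{corstrat3surf} (Lemma~\ref{condiistrat}, which underlies it, is stated on the open interval), so a slightly larger $r_0'$ would give a cushion; and (ii) the extension of the strict inequality ${\bf d}_0(v_*,y,1)>\eta^*_3$ from $D_{1/4}\setminus\mathscr{T}_{\tau/4}(V^*)$ to the closed set with ``$\geq$'' uses that $D_{1/4}\setminus\mathscr{T}_{\tau/4}(V^*)$ is dense in $\overline D_{1/4}\setminus\mathscr{T}_{\tau/4}(V^*)$, which holds because $\tau<1$ (the paper has the identical closure subtlety and also does not spell it out).
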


\begin{proof}
We choose 
$$\widetilde\eta_2(\delta,\tau):=\frac{1}{2}\eta_3(\delta,\tau,2/5,\Lambda_0,\mathcal{Z}, b,n,s)\,, $$
where $\eta_3$ is given by Corollary \ref{corstrat3surf}. We still argue by contradiction assuming that for some constants $\delta,\tau\in(0,1)$, there exist sequences $\{\eps_k\}_{k\in\mathbb{N}}\subset(0,1)$, $\{v_k\}_{k\in\mathbb{N}}$ satisfying \eqref{eqACagain}-\eqref{controlLinftyueps}-\eqref{controldensiteps}, points $\{x_k\}_{k\in\mathbb{N}}\subset\Omega^{r_0}$, and radii $\{\rho_k\}_{k\in\mathbb{N}}\subset(0,r_0/25)$  such that 
$$\eps_k/\rho_k\leq 2^{-k}\,,\quad{\bf d}_0(v_k,x_k,4\rho_k) \leq\widetilde \eta_2\quad \text{and}\quad {\bf d}_{n}(v_k,x_k,4\rho_k)\geq \delta\,,$$
and such that the conclusion of the lemma fails. 

As before, we rescale variables setting $\widetilde \eps_k:=\eps_k/(25\rho_k)$, $\widetilde v_k:=(v_k)_{x_k,25\rho_k}$,  
so that \eqref{rescACstrat}, \eqref{contrrescACstrat}, \eqref{contrdensrescACstrat}, and \eqref{1451jhu} hold. Then we reproduce the proof of Lemma \ref{lemme2strateps} to find a (not relabeled) subsequence along which $\widetilde v_k$ converges to some $v_*$ solving \eqref{eqcone}-\eqref{statinB1}, and satisfying \eqref{contrrescACstrat}-\eqref{contrdensrescACstrat}-\eqref{peuplusdutotu}. In particular, $\widetilde v_k\to v_*$  in $L^1(B^+_{1/5})$, and as a consequence,  
$${\bf d}_0(v_*,0,4/25) \leq\widetilde \eta_2\quad\text{and}\quad {\bf d}_{n}(v_*,0,4/25)\geq \delta\,.$$ 
By Corollary \ref{corstrat3surf}, there exists a linear subspace $V\subset \R^n$, with ${\rm dim}\,V\leq n-1$, such that 
\begin{equation}\label{1825Yanhouse}
{\bf d}_0(v_*,y,4/25)> \eta_3 \quad \forall y\in D_{1/25}\setminus \mathscr{T}_{\tau /25}(V)\,.
\end{equation}
Since the conclusion of the lemma does not hold, we can find for each $k$ a point $y_k\in  D_{1/25}\setminus \mathscr{T}_{\tau /25}(V)$ such that 
${\bf d}_0(\widetilde v_k,y_k,4/25)\leq\widetilde\eta_2$.  Then extract a further subsequence such that $y_k\to y_*$ for some $y_*\in \overline D_{1/25}\setminus \mathscr{T}_{\tau /25}(V)$. Noticing that 
$$\|(v_*)_{y_*,1}-(\widetilde v_k)_{y_k,1} \|_{L^1(B^+_{4/25})} \leq \|(v_*)_{y_*,1}-(v_*)_{y_k,1} \|_{L^1(B^+_{4/25})}+ \|v_*-\widetilde v_k \|_{L^1(B^+_{1/5})}\,,$$
by continuity of translations in $L^1$, we have  $\|(v_*)_{y_*,1}-(\widetilde v_k)_{y_k,1} \|_{L^1(B^+_{4/25})}\to 0$.  Consequently, 
$${\bf d}_0(v_*,y_*,4/25)={\bf d}_0\big((v_*)_{y_*,1},0,4/25\big)
=\lim_{k\to\infty}{\bf d}_0\big((\widetilde v_k)_{y_k,1},0,4/25)=\lim_{k\to\infty}{\bf d}_0(\widetilde v_k,y_k,4/25) \,, $$
and thus ${\bf d}_0(u_*,y_*,4/25)\leq \widetilde\eta_2$.  However \eqref{1825Yanhouse} (together with the continuity of translations in $L^1$) yields  ${\bf d}_0(v_*,y_*,4/25)\geq \eta_3=2 \widetilde\eta_2$, a contradiction.
\end{proof}

We are now ready to prove Theorem \ref{volesti}. Here again, we follow a proof in \cite{MilSirW}, which itself deeply rests on \cite{FMS}, in particular concerning certain covering lemmae. 

\begin{proof}[Proof of Theorem \ref{volesti}]
For $0<r\leq r_0$, we consider the set 
$$\mathcal{S}^\eps_{r_0,r}:=\bigg\{x\in\Omega^{r_0} : {\bf d}_n(v_\eps,x,\rho)\geq \widetilde\delta_0(r_0)\;\;\forall \,r\leq \rho\leq r_0\bigg\} \,,$$
where  $ \widetilde\delta_0(r_0)>0$ is given by Lemma \ref{lemme1strateps}. We fix  $\alpha\in(0,1)$, and we set $\kappa_0:=1-\alpha\in(0,1)$. 
\vskip3pt

We shall prove that there exist two constants ${\bf k_*}={\bf k_*}(\kappa_0,r_0,\Lambda_0,W,b,n,s)\geq {\bf k}_0(r_0)\geq 1$  
and $C=C(\kappa_0,r_0,\Lambda_0,W,b,n,s)$ such that  
\begin{equation}\label{estvolstarteps}
\mathscr{L}^n\big(\mathscr{T}_r(\mathcal{S}^\eps_{r_0,r})\big)\leq C r^{1-\kappa_0} \quad\forall r\in({\bf k}_*\eps,r_0)\,,
\end{equation}
where $ {\bf k}_0(r_0)$ is  given by Lemma  \ref{lemme1strateps}. Note that, since ${\bf k_*}\geq {\bf k}_0(r_0)$, we have 
$$\big\{{\rm dist}(v_\eps,\mathcal{Z})>\boldsymbol{\varrho}_W\big\}\cap\Omega^{r_0}\subset \mathcal{S}^\eps_{r_0,r} \quad\forall r\in ({\bf k}_*\eps,r_0)\,,$$
by Lemma  \ref{lemme1strateps}. In other words, estimates \eqref{estvolstarteps} implies Theorem \ref{volesti}. 
\vskip3pt

Now the proof follows closely the arguments in \cite[proof of Theorem~2.2]{FMS} once adjusted to our setting, but  for the sake of clarity we partially reproduce it.  

We start {\sl fixing} a number $\tau=\tau(\kappa_0,n)\in(0,1)$ such that $\tau^{\kappa_0/2}\leq 20^{-n}$. 
We consider the following constants according to Lemma \ref{lemme1strateps}, Lemma \ref{lemme2strateps}, and Lemma~\ref{lemme3strateps}:
\begin{enumerate}
\item[(i)] $\widetilde\eta_2:=\widetilde\eta_2\big(\widetilde\delta_0(r_0),\tau\big)$ and ${\bf k}_2:={\bf k}_2\big(\widetilde\delta_0(r_0),\tau,r_0\big)$; 
\vskip3pt

\item[(ii)] $\widetilde\eta_1:=\widetilde\eta_1\big(\widetilde\eta_2\big)$, $\widetilde\lambda_1:=\widetilde\lambda_1\big(\widetilde\eta_2,r_0\big)$, and ${\bf k}_1:={\bf k}_1\big(\widetilde\eta_2,r_0\big)$; 
\vskip3pt

\item[(iii)] ${\bf k}_3:=\max\{{\bf k}_0(r_0), {\bf k}_1, {\bf k}_2\}$.   
\end{enumerate}
Next we fix an integer $q_0\in\mathbb{N}$ such that $\tau^{q_0}\leq \widetilde\lambda_1$, and we set $M:=\lfloor q_0\Lambda_0/ \widetilde\eta_1\rfloor$ (the integer part of).  Set 
$p_0:=q_0+M+1$ and define 
$$\boldsymbol{\eps}_0:=\min\left\{1,\frac{r_0\tau^{p_0+1}}{25{\bf k}_3}\right\}\,,\quad {\bf k}_*:=\frac{{\bf k}_3}{\tau}\,.$$
Without loss of generality, we may assume that $\eps\in(0,\boldsymbol{\eps}_0)$ (since \eqref{estvolstarteps} is straightforward for $\eps\geq \boldsymbol{\eps}_0$). Let ${\bf k}_4={\bf k}_4(\eps)$ be defined by the relation 
$$r_0\tau^{{\bf k}_4|\log\eps|}=25{\bf k}_3\eps\,,$$ 
and set 
$$p_1=p_1(\eps):=\lfloor {\bf k}_4|\log\eps|\rfloor$$ 
(the integer part of).  Note that our choice of $\boldsymbol{\eps}_0$ and ${\bf k}_*$ insures that 
$$p_1\geq p_0+1 \;\text{ and }\;{\bf k}_3\eps\leq\frac{r_0\tau^{p_1}}{25}\leq {\bf k}_*\eps\,.$$
\vskip3pt

\noindent{\it Step 1. Reduction to $\tau$-adic radii.} We argue exactly as in \cite[Proof of Theorem 2.2, Step~1]{FMS} to show that  it suffices to prove \eqref{estvolstarteps} for each radius $r$ of the form  $r=\frac{r_0\tau^k}{25}$ for an integer $k$ satisfying $p_0\leq k\leq p_1$. 
\vskip3pt

\noindent{\it Step 2. Selection of good scales.} We fix an integer $k$ with $p_0\leq k\leq p_1$ and set $r:=\frac{r_0\tau^k}{25}$.  
For an arbitrary $x\in\Omega^{r_0}$, we have 
\begin{multline*}
\sum_{l=q_0}^k \boldsymbol{\Theta}_{s,\eps}(v_\eps,x,4r_0\tau^{l})-\boldsymbol{\Theta}_{s,\eps}(v_\eps,x,4r_0\tau^{l+q_0})\\  
=\sum_{l=q_0}^k \sum_{i=l}^{l+q_0-1}\boldsymbol{\Theta}_{s,\eps}(v_\eps,x,4r_0\tau^{i})-\boldsymbol{\Theta}_{s,\eps}(v_\eps,x,4r_0\tau^{i+1})  \\
\leq q_0 \sum_{l=q_0}^{k+q_0-1}\boldsymbol{\Theta}_{s,\eps}(v_\eps,x,4r_0\tau^{l})-\boldsymbol{\Theta}_{s,\eps}(v_\eps,x,4r_0\tau^{l+1})\,,
\end{multline*}
and thus
$$\sum_{l=q_0}^k \boldsymbol{\Theta}_{s,\eps}(v_\eps,x,4r_0\tau^{l})-\boldsymbol{\Theta}_{s,\eps}(v_\eps,x,4r_0\tau^{l+q_0}) \leq q_0 \boldsymbol{\Theta}_{s,\eps}(v_\eps,x,4r_0\tau^{q_0}) \leq  q_0\Lambda_0\,. $$
Hence there exists a (possibly empty) subset $A(x)\subset\{q_0,\ldots,k\}$ with ${\rm Card}(A(x))\leq M$ such that  for every $l\in\{q_0,\ldots,k\}\setminus A(x)$, 
\begin{equation}\label{jhu1333}
\boldsymbol{\Theta}_{s,\eps}(v_\eps,x,4r_0\tau^{l})-\boldsymbol{\Theta}_{s,\eps}(v_\eps,x,4r_0\tau^{l+q_0})\leq \widetilde\eta_1\,.
\end{equation}
Next define $\mathfrak{A}:=\{A\subset\{q_0,\ldots,k\} : {\rm Card}(A)= M\}$, and set for $A\in\mathfrak{A}$,  
$$\mathcal{S}_A:= \bigg\{x\in\mathcal{S}^\eps_{r_0,r}:  \text{ \eqref{jhu1333} holds for each } l\in\{q_0,\ldots,k\}\setminus A\bigg\}\,.$$
As previously observed, we have $\mathcal{S}^\eps_{r_0,r}\subset \bigcup_{A\in\mathfrak{A}} \mathcal{S}_A$. 

In the next step, we shall prove that for any $A\in\mathfrak{A}$, 
\begin{equation}\label{Yanikoumouk}
\mathscr{L}^n\big(\mathscr{T}_r(\mathcal{S}_A)\big) \leq C r^{1-\kappa_0/2}\,.
\end{equation}
Since ${\rm Card}(\mathfrak{A})  \leq k^M\leq C|\log r|^M$, the conclusion follows from this estimate, i.e., 
$$\mathscr{L}^n\big( \mathscr{T}_r(\mathcal{S}^\eps_{r_0,r})\big) \leq\sum_{A\in\mathfrak{A}} \mathscr{L}^n\big(\mathscr{T}_r(\mathcal{S}_A)\big)\leq  C |\log r|^M  r^{1-\kappa_0/2}\leq C  r^{1-\kappa_0}\,,$$
for some constants $C=C(\kappa_0,r_0,\Lambda_0,W, {\rm diam}(\partial^0G),b,n,s)$. 
\vskip3pt

\noindent{\it Step 3. Proof of \eqref{Yanikoumouk}.} Again we follow \cite[Proof of Theorem 2.2, Step~3]{FMS}. We first consider a finite cover of $\mathscr{T}_{r_0\tau^{q_0}/25}(\mathcal{S}_A)$ made of discs $\{D_{r_0\tau^{q_0}}(x_{i,q_0})\}_{i\in I_{q_0}}$ with $x_{i,q_0}\in\mathcal{S}_A$, and 
$${\rm Card}(I_{q_0})\leq 5^n\tau^{-nq_0}r_0^{-n}({\rm diam}(\partial^0G)+1)^n\,. $$
We  argue now by iteration on the integer $j\in\{q_0+1,\ldots,k\}$, assuming that  we already have a cover $\{D_{r_0\tau^{j-1}}(x_{i,j-1})\}_{i\in I_{j-1}}$ of $\mathscr{T}_{r_0\tau^{j-1}/25}(\mathcal{S}_A)$ such that $x_{i,j-1}\in\mathcal{S}_A$. 
We select the next cover $\{D_{r_0\tau^{j}}(x_{i,j})\}_{i\in I_{j}}$ (still centered at points of $\mathcal{S}_A$) of $\mathscr{T}_{r_0\tau^{j}/25}(\mathcal{S}_A)$ according to the following two cases: $j-1\in A$ or $j-1\not\in A$. 

\noindent {\it Case 1)} If $j-1\in A$, then we proceed exactly as in  \cite[Proof of Theorem 2.2, Step~3, Case~(a)]{FMS} to produce the new cover $\{D_{r_0\tau^{j}}(x_{i,j})\}_{i\in I_{j}}$  in such a way that 
$${\rm Card}(I_{j})\leq 20^n{\rm Card}(I_{j-1})\tau^{-n}\,.  $$

\noindent {\it Case 2)} If $j-1\not\in A$, then \eqref{jhu1333} holds with $l=j-1$. By our choice of $q_0$ and Corollary~\ref{monotformACeq}, we infer that 
\begin{multline*}
\boldsymbol{\Theta}_{s,\eps}(v_\eps,x_i,4r_0\tau^{j-1})-\boldsymbol{\Theta}_{s,\eps}(v_\eps,x_i,4r_0\widetilde\lambda_1\tau^{j-1})\\
\leq \boldsymbol{\Theta}_{s,\eps}(v_\eps,x_i,4r_0\tau^{j-1})-\boldsymbol{\Theta}_{s,\eps}(v_\eps,x_i,4r_0\tau^{j-1+q_0})\leq \widetilde\eta_1\quad\forall x\in\mathcal{S}_A\,.
\end{multline*}
Then Lemma \ref{lemme2strateps} yields ${\bf d}_0(v_\eps,x,4r_0\tau^{j-1}) \leq \widetilde\eta_2$ for every $x\in\mathcal{S}_A$. On the other hand, by the  definition of $\mathcal{S}_A$ we have ${\bf d}_n(v_\eps,x,4r_0\tau^{j-1}) \geq \widetilde\delta_0$ for every $x\in\mathcal{S}_A$. Applying Lemma~\ref{lemme3strateps} at each point $x_{i,j-1}$, we infer that for each $i\in I_{j-1}$, there is a linear subspace $V_i$, with ${\rm dim}\,V_i\leq n-1$, such that  $\mathcal{S}_A\cap D_{r_0\tau^{j-1}}(x_{i,j-1})\subset \mathscr{T}_{r_0\tau^j}(x_{i,j-1}+V_i)$. 
From this inclusion, we estimate for each $i\in I_{j-1}$, 
$$\mathscr{L}^n\bigg(\mathscr{T}_{r_0\tau^j}\big(\mathcal{S}_A\cap D_{r_0\tau^{j-1}}(x_{i,j-1})\big)\bigg)\leq 2^{n+1}\omega_{n-1}r^n_0\tau^{nj-n+1} \,.$$
By the covering lemma in \cite[Lemma 3.2]{FMS}), we can find a cover of $\mathscr{T}_{r_0\tau^j/25}(\mathcal{S}_A)$ by discs 
$\{D_{r_0\tau^{j}}(x_{i,j})\}_{i\in I_{j}}$ centered on $\mathcal{S}_A$  such that 
$$ {\rm Card}(I_{j})\leq 10^n\frac{2\omega_{n-1}}{\omega_n}{\rm Card}(I_{j-1})  \tau^{-(n-1)}\leq  20^n{\rm Card}(I_{j-1})  \tau^{-(n-1)} \,.$$
The iteration procedure stops at $j=k$, and it yields a cover $\{D_{r_0\tau^{k}}(x_{i,k})\}_{i\in I_{k}}$ of $\mathscr{T}_{r}(\mathcal{S}_A)$. Collecting the estimates from Case 1 and Case 2 (and using ${\rm Card}\,A=M$), we derive
\begin{multline*}
 {\rm Card}(I_{k})\leq   5^n\tau^{-nq_0}r_0^{-n}({\rm diam}(\partial^0G)+1)^n(20^n\tau^{-n})^M\left(20^n\tau^{-(n-1)}\right)^{k-q_0-M} \\
 \leq C\tau^{-k(n-1+\kappa_0/2)}\,,
 \end{multline*}
where $C$ depends on the announced parameters (recall that $\tau^{\kappa_0/2}\leq 20^{-n}$). Consequently, 
$$\mathscr{L}^n\big(\mathscr{T}_r(\mathcal{S}_A)\big) \leq\omega_n {\rm Card}(I_{k})r^n\leq C \tau^{k(1-\kappa_0/2)}\leq C r^{1-\kappa_0/2}\,, $$
and the proof is complete.  
\end{proof}

\begin{corollary}\label{corpotquant}
For  every $\alpha\in(0,1)$,  
$$\int_{\Omega^{2r_0}}W(v_\eps)\,\de x\leq C \eps^{\min(4s,\alpha)} \,,$$
for some constant $C=C(\alpha,r_0,\Lambda_0,W,b,n,s)$.
\end{corollary}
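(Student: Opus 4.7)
Set $T:=\{{\rm dist}(v_\eps,\mathcal{Z})>\boldsymbol{\varrho}_W\}\cap\Omega^{r_0}$. The strategy is to combine the volume estimate of Theorem~\ref{volesti} with the pointwise decay of Lemma~\ref{estifond} away from $T$, via a dyadic decomposition of $\Omega^{2r_0}$ according to the distance to $T$. As a preliminary, shrinking $\boldsymbol{\varrho}_W$ if necessary so that the closed balls $\overline B({\bf a}_j,\boldsymbol{\varrho}_W)$ are pairwise disjoint (this only strengthens both Theorem~\ref{volesti} and Lemma~\ref{estifond}), continuity of $v_\eps$ on $\partial^0G$ (Theorem~\ref{regint}) forces $v_\eps$ to take values in a single ball $\overline B({\bf a},\boldsymbol{\varrho}_W)$ on every connected open subset of $\partial^0G$ disjoint from $T$. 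Thus, for $x\in\Omega^{2r_0}\setminus T$ with $d(x):=\min(r_0,{\rm dist}(x,T))$, the disc $D_{d(x)}(x)$ is disjoint from $T$ and $B^+_{d(x)}((x,0))\subset G$; Lemma~\ref{estifond} applied at $(x,0)$ with $R=d(x)$ yields, via \eqref{conseqestifond}, the pointwise bound
\[
W\bigl(v_\eps(x)\bigr)\leq C\,\frac{\eps^{4s}}{d(x)^{4s}},
\]
for a constant $C=C(W,b)$.

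Fix $\beta\in(0,1)$ and $r\in({\bf k}_*\eps,r_0)$, with ${\bf k}_*={\bf k}_*(\beta)$ from Theorem~\ref{volesti}. Let $K$ be the largest integer with $2^Kr<r_0$, and decompose $\Omega^{2r_0}=(\mathscr{T}_r(T)\cap\Omega^{2r_0})\sqcup\bigsqcup_{k=0}^{K}A_k\sqcup A_\infty$, where $A_k:=\{2^kr\leq{\rm dist}(\cdot,T)<2^{k+1}r\}\cap\Omega^{2r_0}$ and $A_\infty:=\{{\rm dist}(\cdot,T)\geq 2^{K+1}r\}\cap\Omega^{2r_0}$. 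On $\mathscr{T}_r(T)\cap\Omega^{2r_0}$, bound $W(v_\eps)$ by $M:=\max_{|y|\leq b}W(y)$ and invoke Theorem~\ref{volesti} to control the volume by $Cr^\beta$. On each $A_k$ with $k\leq K$, the above pointwise estimate gives $W(v_\eps)\leq C\eps^{4s}(2^kr)^{-4s}$, while $A_k\subset\mathscr{T}_{2^{k+1}r}(T)$ has volume at most $C(2^{k+1}r)^\beta$. On $A_\infty$ the pointwise estimate (together with $2^{K+1}r\geq r_0$) yields $W(v_\eps)\leq C(\eps/r_0)^{4s}\leq C\eps^{4s}$, and the volume is bounded by $|\Omega^{2r_0}|$. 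Collecting,
\[
\int_{\Omega^{2r_0}}W(v_\eps)\,\de x\;\leq\;Cr^\beta+C\eps^{4s}\sum_{k=0}^{K}(2^kr)^{\beta-4s}+C\eps^{4s}.
\]

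The geometric series is dominated by its smallest or largest term depending on the sign of $\beta-4s$. When $\beta<4s$ it is bounded by $Cr^{\beta-4s}$, and the choice $r=\lambda\eps$ with $\lambda>{\bf k}_*$ gives $\int W(v_\eps)\leq C\eps^\beta$; when $\beta>4s$ it is bounded by a constant and the choice $r=\eps^{4s/\beta}$ (which lies in $({\bf k}_*\eps,r_0)$ for small $\eps$, since $4s/\beta<1$) yields $\int W(v_\eps)\leq C\eps^{4s}$. Given $\alpha\in(0,1)$, taking $\beta=\alpha$ when $\alpha\leq 4s$ and any $\beta\in(4s,1)$ when $\alpha>4s$ (which forces $4s<1$, so this interval is nonempty) produces the required bound $C\eps^{\min(4s,\alpha)}$. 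The main obstacle lies precisely in this two-regime optimization: a single-scale splitting $r^\beta+(\eps/r)^{4s}$ optimizes only to the strictly worse exponent $4s\beta/(\beta+4s)<\min(4s,\beta)$, and it is the dyadic annular decomposition that cleanly separates the two regimes and recovers the sharp exponent; the secondary technical point is the continuity-plus-separation argument guaranteeing that Lemma~\ref{estifond} can be applied with a single well ${\bf a}$ throughout the whole disc $D_{d(x)}(x)$.
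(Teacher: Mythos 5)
Your argument is correct and follows essentially the same route as the paper: combine the tubular-neighborhood volume estimate of Theorem~\ref{volesti} with the pointwise decay of Lemma~\ref{estifond}, integrated over a geometric decomposition of $\Omega^{2r_0}$ by distance to the transition set (dyadic annuli $\{2^k r\leq{\rm dist}(\cdot,T)<2^{k+1}r\}$ in your version, $\tau$-adic radii $\rho_k=r_0\tau^k/25$ in the paper's), and finish by case-splitting on the sign of the exponent in the resulting geometric series. One small correction: when $\alpha=4s<1$ your prescription $\beta=\alpha$ makes $\sum_k(2^kr)^{\beta-4s}$ grow like $\log(r_0/r)$, costing a logarithm; instead take any $\beta\in(4s,1)$ for $\alpha\geq 4s$ — equivalently, assume WLOG $\alpha\neq 4s$ as the paper explicitly does, which is harmless since $\min(4s,\cdot)$ is constant on $[4s,1)$.
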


\begin{proof} Without loss of generality, we may assume that $\alpha\not=4s$. 
We use the notation of the proof of Theorem  \ref{volesti}, and we assume (without loss of generality) that $\eps\in(0,\boldsymbol{\eps}_0)$.  

Let us write $\mathcal{V}_\eps:=\{{\rm dist}(v_\eps,\mathcal{Z})>\boldsymbol{\varrho}_W\}$, and $\rho_k:=\frac{r_0\tau^{k}}{25}$ for $k\in\mathbb{N}$. Notice that 
$$\rho_{p_1(\eps)-1}\in({\bf k}_*\varepsilon,{\bf k}_*\tau^{-1}\eps)\,. $$
Hence, by Theorem  \ref{volesti}, we have 
\begin{equation}\label{olala2251}
\mathscr{L}^n\left(\mathscr{T}_{\rho_k}(\mathcal{V}_\eps\cap\Omega^{r_0})\right)\leq C \rho_k^{\alpha}\leq C \tau^{\alpha k}\quad\text{for $k=0,\ldots,p_1(\eps)-1$}\,,
\end{equation}
where the constant $C$ may depend on the announced parameters. In particular, 
\begin{equation}\label{toto2253}
\int_{\mathscr{T}_{\rho_{p_1(\eps)-1}}(\mathcal{V}_\eps\cap\Omega^{r_0})}W(v_\eps)\,\de x\leq C\|W\|_{L^\infty(-b,b)}\rho_{p_1(\eps)-1}^\alpha\leq C\eps^\alpha\,.
\end{equation}
On the other hand, by Lemma \ref{estifond}, we have 
\begin{equation}\label{titi2254}
W\big(v_\eps(x,0)\big)\leq \frac{C\eps^{4s}}{\big({\rm dist}(x,\mathcal{V}_\eps)\big)^{4s}}\leq \frac{C\eps^{4s}}{\big({\rm dist}(x,\mathcal{V}_\eps\cap\Omega^{r_0})\big)^{4s}} \quad\text{for $x\in\Omega^{2r_0}\setminus \mathcal{V}_\eps$}\,.
\end{equation}
Writing $\mathscr{A}_k:=\big(\mathscr{T}_{\rho_{k-1}}(\mathcal{V}_\eps\cap\Omega^{r_0})\setminus\big(\mathscr{T}_{\rho_k}(\mathcal{V}_\eps\cap\Omega^{r_0})\big)$, 
we have
\begin{multline*}
\int_{\Omega^{2r_0}}W(v_\eps)\,\de x  = 
\int_{\Omega^{2r_0}\setminus\mathscr{T}_{\rho_0}(\mathcal{V}_\eps)}W(v_\eps)\,\de x+\int_{\mathscr{T}_{\rho_{p_1(\eps)-1}}(\mathcal{V}_\eps\cap\Omega^{r_0})\cap\Omega^{2r_0}}W(v_\eps)\,\de x \\
+\sum_{k=1}^{p_1(\eps)-1}\int_{\mathscr{A}_k\cap\Omega^{2r_0}}W(v_\eps)\,\de x\,,
 \end{multline*}
 We may now estimate by \eqref{olala2251}, \eqref{toto2253}, and \eqref{titi2254}, 
 \begin{equation}\label{tata2256}
\int_{\Omega^{2r_0}}W(v_\eps)\,\de x \leq C\left(\eps^{4s}+ \eps^\alpha+\eps^{4s}\sum_{k=1}^{p_1(\eps)-1}\tau^{k(\alpha -4s)}\right)\,.
\end{equation}
If $\alpha>4s$, then $\sum_{k\geq 1}\tau^{k(\alpha-4s)}<\infty$, and the result is proved. If $\alpha<4s$, then 
$$ \sum_{k=1}^{p_1(\eps)-1}\tau^{k(\alpha -4s)}\leq C\tau^{p_1(\eps)(\alpha-4s)}\leq C\eps^{\alpha-4s}\,.$$
Inserting this estimate in \eqref{tata2256} still yields the announced result. 
\end{proof}

\begin{corollary}\label{corpotquantbis}
For every $\bar p <1/2s$, 
$$\left\|\nabla W(v_\eps)\right\|_{L^{\bar p}(\Omega^{2r_0})}\leq C\eps^{2s} \,,$$
for some constant $C=C(\bar p,r_0,\Lambda_0,W,b,n,s)$. 
\end{corollary}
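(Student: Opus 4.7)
The plan is to mimic the proof of Corollary \ref{corpotquant}, replacing the estimate $W(v_\eps) \leq C\eps^{4s}/{\rm dist}(x,\mathcal{V}_\eps)^{4s}$ by the analogous pointwise bound on $\nabla W(v_\eps)$ coming from Lemma \ref{estifond}. Concretely, setting $\mathcal{V}_\eps := \{{\rm dist}(v_\eps,\mathcal{Z}) > \boldsymbol{\varrho}_W\}$, the inequality \eqref{conseqestifond} applied at balls of radius $2\,{\rm dist}(x,\mathcal{V}_\eps)$ yields
$$
|\nabla W(v_\eps(x,0))| \leq \frac{C\eps^{2s}}{\big({\rm dist}(x,\mathcal{V}_\eps\cap\Omega^{r_0})\big)^{2s}}
\quad\text{for every $x\in\Omega^{2r_0}\setminus\mathcal{V}_\eps$}\,,
$$
while on $\mathcal{V}_\eps$ itself $\nabla W(v_\eps)$ is bounded by a constant depending only on $W$ and $b$.

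Fix $\bar p\in(1,1/2s)$ and choose $\alpha\in(2s\bar p,1)$, which is possible precisely because $\bar p<1/2s$. Without loss of generality assume $\eps\in(0,\boldsymbol{\eps}_0)$ (as in the proof of Corollary \ref{corpotquant}), and set $\rho_k:=r_0\tau^k/25$ for the same $\tau$ as in the proof of Theorem \ref{volesti}, so that $\rho_{p_1(\eps)-1}\sim \eps$. Theorem \ref{volesti} gives
$$
\mathscr{L}^n\!\left(\mathscr{T}_{\rho_k}(\mathcal{V}_\eps\cap\Omega^{r_0})\right) \leq C\,\rho_k^{\alpha}
\quad\text{for } k=0,\ldots,p_1(\eps)-1\,.
$$
Splitting
$$
\int_{\Omega^{2r_0}}|\nabla W(v_\eps)|^{\bar p}\,\de x = \int_{\Omega^{2r_0}\setminus\mathscr{T}_{\rho_0}(\mathcal{V}_\eps)}+\int_{\mathscr{T}_{\rho_{p_1(\eps)-1}}(\mathcal{V}_\eps\cap\Omega^{r_0})\cap\Omega^{2r_0}}+\sum_{k=1}^{p_1(\eps)-1}\int_{\mathscr{A}_k\cap\Omega^{2r_0}}
$$
with $\mathscr{A}_k:=\mathscr{T}_{\rho_{k-1}}(\mathcal{V}_\eps\cap\Omega^{r_0})\setminus\mathscr{T}_{\rho_k}(\mathcal{V}_\eps\cap\Omega^{r_0})$, the inner term is controlled by $C\eps^{\alpha}\leq C\eps^{2s\bar p}$ (using $\|\nabla W\|_{L^\infty(-b,b)}<\infty$ and the volume estimate at scale $\rho_{p_1(\eps)-1}\sim\eps$), the outer term by $C\eps^{2s\bar p}$ (using ${\rm dist}(x,\mathcal{V}_\eps)\geq \rho_0$ there), and each annular term by
$$
\int_{\mathscr{A}_k}|\nabla W(v_\eps)|^{\bar p}\,\de x \leq \frac{C\eps^{2s\bar p}}{\rho_k^{2s\bar p}}\cdot C\rho_{k-1}^{\alpha} \leq C\eps^{2s\bar p}\tau^{k(\alpha-2s\bar p)}\,.
$$

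Summing over $k$ the geometric series $\sum_k \tau^{k(\alpha-2s\bar p)}$ converges precisely because $\alpha>2s\bar p$, giving
$$
\int_{\Omega^{2r_0}}|\nabla W(v_\eps)|^{\bar p}\,\de x \leq C\eps^{2s\bar p}\,,
$$
and taking the $\bar p$-th root yields the claim. The one delicate point — and the only genuine obstacle — is the choice of $\alpha$: the strict inequality $\bar p < 1/2s$ is exactly what makes room to pick $\alpha\in(2s\bar p,1)$ so that both the volume bound from Theorem \ref{volesti} (which requires $\alpha<1$) and the geometric summability (which requires $\alpha>2s\bar p$) are compatible; the critical exponent $\bar p=1/2s$ is unreachable by this argument, consistently with the sharp regularity threshold for $V_{E^*_j}$ appearing in Corollary \ref{corintegrpotentE}.
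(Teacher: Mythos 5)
Your proof is correct and follows exactly the same approach as the paper: you invoke the pointwise bound $|\nabla W(v_\eps)| \leq C\eps^{2s}/{\rm dist}(x,\mathcal{V}_\eps\cap\Omega^{r_0})^{2s}$ from Lemma \ref{estifond}, combine it with the volume estimate of Theorem \ref{volesti} at the scales $\rho_k$, decompose $\Omega^{2r_0}$ into the same annular regions $\mathscr{A}_k$, and sum the resulting geometric series using $\alpha\in(2s\bar p,1)$. The paper's proof of Corollary~\ref{corpotquantbis} explicitly says ``we proceed as in the proof of Corollary~\ref{corpotquant}'' and carries out precisely the computation you describe.
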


\begin{proof}
We proceed as in the proof Corollary \ref{corpotquant}, using $\alpha\in(2s\bar p,1)$. Keeping the same notation, we first derive as in \eqref{toto2253}, 
\begin{equation}\label{toto2253bis}
\int_{\mathscr{T}_{\rho_{p_1(\eps)-1}}(\mathcal{V}_\eps\cap\Omega^{r_0})}\big|\nabla W(v_\eps)\big|^{\bar p}\,\de x \leq C\eps^\alpha\,.
\end{equation}
Then Lemma \ref{estifond} yields, 
\begin{equation}\label{titi2254bis}
\big|\nabla W\big(v_\eps(x,0)\big)\big|\leq \frac{C\eps^{2s}}{\big({\rm dist}(x,\mathcal{V}_\eps\cap\Omega^{r_0})\big)^{2s}} \quad\text{in $\Omega^{2r_0}\setminus \mathcal{V}_\eps$}\,.
\end{equation}
Writing 
\begin{multline*}
\int_{\Omega^{2r_0}}\big|\nabla W (v_\eps)\big|^{\bar p}\,\de x  = 
\int_{\Omega^{2r_0}\setminus\mathscr{T}_{\rho_0}(\mathcal{V}_\eps)}\big|\nabla W(v_\eps)\big|^{\bar p}\,\de x\\
+\int_{\mathscr{T}_{\rho_{p_1(\eps)-1}}(\mathcal{V}_\eps\cap\Omega^{r_0})\cap\Omega^{2r_0}}\big|\nabla W(v_\eps)\big|^{\bar p}\,\de x 
+\sum_{k=1}^{p_1(\eps)-1}\int_{\mathscr{A}_k\cap\Omega^{2r_0}}\big|\nabla W(v_\eps)\big|^{\bar p}\,\de x\,,
 \end{multline*}
 we estimate by means of \eqref{olala2251}, \eqref{toto2253bis}, and \eqref{titi2254bis}, 
 $$\int_{\Omega^{2r_0}}\big|\nabla W (v_\eps)\big|^{\bar p}\,\de x \leq  C\left(\eps^{2s\bar p}+ \eps^\alpha+\eps^{2s\bar p}\sum_{k=1}^{p_1(\eps)-1}\tau^{k(\alpha -2s\bar p)}\right)\leq C\eps^{2s\bar p}\,,$$
 and the proof is complete. 
\end{proof}

%%%%%%%%%%%%%%%%%%%%%%%%%%%%%%%%%%%%%%%%%%%%%%%%%%%%%%%%

\subsection{Application to the fractional Allen-Cahn equation}

%%%%%%%%%%%%%%%%%%%%%%%%%%%%%%%%%%%%%%%%%%%%%%%%%%%%%%%%

Applying the estimates obtained in the previous section to the fractional Allen-Cahn equation, we obtain the following improvement of Theorem~\ref{main1part1}. Combined with Theorem \ref{main1part1}, Remark \ref{smoothbdrycondrem}, and Theorem \ref{main1part1mincase2}, it completes the proof of Theorems \ref{main1new} \& \ref{main2new}. 

\begin{theorem}\label{thmlastone}
In addition to Theorem \ref{main1part1},  
for every open subset $\Omega^\prime\subset \Omega$ such that $\overline{\Omega^\prime}\subset \Omega$, 
\begin{enumerate}
\item[\rm (i)]  $u_k\to u_*$ strongly in $H^{s^\prime}(\Omega^\prime)$ for every $s^\prime\in\big(0,\min(2s,1/2)\big)$;
\vskip5pt

\item[\rm (ii)] $\int_{\Omega^\prime}W(v_k)\,\de x=O\big(\eps_k^{\min(4s,\alpha)}\big)$ for every $\alpha\in(0,1)$;  
\vskip5pt

\item[\rm (iii)] $\frac{1}{\eps_k^{2s}}\nabla W(v_k(x))\rightharpoonup  - \sum_{j=1}^m V_{E^*_j}{\bf a}_j$ 
weakly in $L^{\bar p}(\Omega^\prime)$ where each $V_{E^*_j}$ belongs to $L^{\bar p}(\Omega^\prime)$ 
\vskip3pt

\noindent for every $\bar p<1/2s$.  
\end{enumerate}
\end{theorem}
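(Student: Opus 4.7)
The strategy is to derive (ii) and (iii) by applying the boundary reaction estimates of the previous subsection to the Caffarelli-Silvestre extensions $u_k^\e$, and to obtain (i) by combining Proposition \ref{keypropimpr} with a standard interpolation.

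For (ii) and (iii), I would fix $\Omega'$ with $\overline{\Omega'}\subset\Omega$ and select an admissible bounded open set $G\subset\R^{n+1}_+$ such that $\overline{\partial^0G}\subset\Omega$ and $\Omega'$ is contained in the interior set $\{x\in\R^n:B^+_{4r_0}((x,0))\subset G\}$ (compare with \eqref{defOmegar}) for some small $r_0>0$. The extension $u_k^\e$ weakly solves \eqref{eqACagain} on $G$, is uniformly bounded in $L^\infty(G)$ by the hypothesis $\sup_k\|u_k\|_{L^\infty(\R^n)}<\infty$, and continuity of the extension operator combined with the uniform bound on $\mathcal{E}_{s,\eps_k}(u_k,\Omega)$ yields a uniform bound on $\mathbf{E}_{s,\eps_k}(u_k^\e,G)$; by the monotonicity formula of Corollary \ref{monotformACeq}, this provides a uniform density bound of the form \eqref{controldensiteps}. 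Corollary \ref{corpotquant} then yields (ii) since $W(u_k^\e)$ restricted to $\partial^0G$ equals $W(u_k)$. Likewise, Corollary \ref{corpotquantbis} gives $\|\eps_k^{-2s}\nabla W(u_k)\|_{L^{\bar p}(\Omega')}\leq C$ for every $\bar p<1/2s$, hence weak sequential compactness in $L^{\bar p}(\Omega')$. The limit is identified through the strong $H^{-s}$ convergence already established in Theorem \ref{main1part1}(iii): both weak $L^{\bar p}$ and strong $H^{-s}$ convergences imply convergence in $\mathscr{D}'(\Omega')$, so the weak $L^{\bar p}$-limit must coincide with $-\sum_j V_{E_j^*}{\bf a}_j$, which belongs to $L^{\bar p}(\Omega')$ by Corollary \ref{corintegrpotentE}. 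Uniqueness of the limit upgrades subsequential weak convergence to convergence of the full sequence.

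For (i), equation \eqref{eqthm} rewrites as $(-\Delta)^s u_k=-\eps_k^{-2s}\nabla W(u_k)$, so by the bound just used for (iii), $(-\Delta)^s u_k$ is uniformly bounded in $L^{\bar p}(\Omega')$ for every $\bar p<1/2s$. Proposition \ref{keypropimpr} then yields, for every $\bar s<s$ and every such $\bar p$, a uniform bound on $\iint_{\Omega'\times\Omega'}|u_k(x)-u_k(y)|^{\bar p}/|x-y|^{n+2\bar s\bar p}\,dx\,dy$, i.e., a uniform bound on the standard Slobodeckij seminorm $[u_k]^{\bar p}_{W^{\alpha,\bar p}(\Omega')}$ with $\alpha=2\bar s$; thus $u_k$ is bounded in $W^{\alpha,\bar p}_{\rm loc}(\Omega)$ for every $\alpha<2s$ and $\bar p<1/2s$. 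To upgrade this to a uniform $H^{s''}$-bound with $s''$ arbitrarily close to $\min(2s,1/2)$, combine with the $L^\infty$ bound on $u_k$. When $s<1/4$, choosing $\bar p=2<1/2s$ gives directly $u_k$ bounded in $H^\alpha(\Omega')$ for every $\alpha<2s$. When $s\geq 1/4$, one has $\bar p<1/2s\leq 2$; using the pointwise inequality $|u_k(x)-u_k(y)|^2\leq(2b)^{2-\bar p}|u_k(x)-u_k(y)|^{\bar p}$ and splitting the $H^{s''}$-seminorm integral between $\{|x-y|\leq 1\}$ (where $|x-y|^{-(n+2s'')}\leq|x-y|^{-(n+\alpha\bar p)}$ whenever $s''\leq\alpha\bar p/2$) and $\{|x-y|>1\}$ (handled by the $L^\infty$-bound and the boundedness of $\Omega'$) yields $[u_k]_{H^{s''}(\Omega')}^2\leq C\bigl(1+[u_k]^{\bar p}_{W^{\alpha,\bar p}(\Omega')}\bigr)$ for every $s''\leq\alpha\bar p/2$. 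Letting $\alpha\to 2s$ and $\bar p\to 1/2s$, the product $\alpha\bar p/2$ approaches $1/2$, so every $s''<1/2$ is covered.

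Finally, given $s'\in(0,\min(2s,1/2))$, fix $s''\in(s',\min(2s,1/2))$. The classical interpolation inequality provides
$$\|u_k-u_*\|_{H^{s'}(\Omega')}\leq C\,\|u_k-u_*\|_{L^2(\Omega')}^{1-s'/s''}\|u_k-u_*\|_{H^{s''}(\Omega')}^{s'/s''}\,.$$
The first factor tends to zero by Theorem \ref{main1part1}, while the second is uniformly bounded: $u_k$ is bounded in $H^{s''}(\Omega')$ by the preceding step, and $u_*\in H^{s''}_{\rm loc}(\Omega)$ by Theorem \ref{improvperim}. The main technical obstacle lies precisely in the $s\geq 1/4$ regime where the choice $\bar p=2$ is forbidden in Proposition \ref{keypropimpr}; the H\"older manipulation described above circumvents this, but it is crucial that $\bar p$ in Corollary \ref{corpotquantbis} may be taken arbitrarily close to the critical threshold $1/2s$.
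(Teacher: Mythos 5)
Your proposal is correct and follows essentially the same route as the paper: items (ii)--(iii) come from applying Corollaries \ref{corpotquant}, \ref{corpotquantbis}, and \ref{corintegrpotentE} to the extension $u_k^\e$ and identifying the weak $L^{\bar p}$-limit through the distributional limit already established in Theorem \ref{main1part1}(iii), while item (i) combines Proposition \ref{keypropimpr} with the $L^\infty$-bound and the inequality $|u_k(x)-u_k(y)|^2\leq(2b)^{2-\bar p}|u_k(x)-u_k(y)|^{\bar p}$ for $\bar p<2$ to pass from a $W^{\alpha,\bar p}$-bound to an $H^{s'}$-bound, then concludes by compactness. The only cosmetic departures are that the paper chooses $\bar s=s'/\bar p$ so that the Slobodeckij and $H^{s'}$ exponents coincide exactly (avoiding your near/far split of $\{|x-y|\leq 1\}$ vs.\ $\{|x-y|>1\}$, and also avoiding your case distinction $s<1/4$ vs.\ $s\geq 1/4$ by always taking $\bar p<\min(2,1/2s)$), and that it invokes the compact embedding $H^{s'}(\Omega'')\hookrightarrow H^{s''}(\Omega'')$ rather than the interpolation inequality; both are equivalent ways to upgrade the uniform bound to strong convergence.
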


\begin{proof}
The proof departs from the end of the proof of Theorem \ref{main1part1}. We apply the results of Subsection~\ref{subsectstratAC} to the extended function $u_k^\e$. Then items (ii) and the weak convergence in $L^{\bar p}(\Omega^\prime)$ in (iii) are straightforward consequences of Corollaries \ref{corpotquant} and \ref{corpotquantbis} (together with item (iii) in Theorem \ref{main1part1}).  The fact that each potential $V_{E^*_j}$ belongs to $L^{\bar p}(\Omega^\prime)$ is given by Corollary \ref{corintegrpotentE}. 

Let us now fix an open subset $\Omega^{\prime\prime}\subset \Omega^\prime$ with  Lipschitz boundary such that $\overline{\Omega^{\prime\prime}}\subset\Omega^\prime$. 
Given $s^\prime$ as in~${\rm (i)}$, we can find  $\theta>\max(2,1/2s)$ such that $ \max(s,s^\prime)<1/\theta$. We set 
$\bar p:=1/(\theta s)<\min(1/2s,2)$, and $\bar s:=s^\prime/\bar p<s$. 

Then it also follows from Corollary \ref{corpotquantbis} and equation \eqref{fracalllcahnf}
that $\{(-\Delta)^su_k\}_{k\in\mathbb{N}}$ is bounded in $L^{\bar p}(\Omega^\prime)$, which,  
combined with  Proposition~\ref{keypropimpr}, yields 
\begin{multline}\label{verylastruc}
\iint_{\Omega^{\prime\prime}\times\Omega^{\prime\prime}}\frac{|u_k(x)-u_k(y)|^{2}}{|x-y|^{n+2s^\prime}}\,\de x\de y\\
\leq 2^{2-\bar p}\|u_k\|^{2-\bar p}_{L^\infty(\R^n)} \iint_{\Omega^{\prime\prime}\times\Omega^{\prime\prime}}\frac{|u_k(x)-u_k(y)|^{\bar p}}{|x-y|^{n+2\bar s\bar p}}\,\de x\de y\leq C\,, 
\end{multline}
for some constant $C$ independent of $k$. The  sequence $\{u_k\}_{k\in\mathbb{N}}$ is thus bounded in $H^{s^\prime}(\Omega^{\prime\prime})$. Finally, for an arbitrary $s^{\prime\prime}\in(0,s^\prime)$, the embedding $H^{s^{\prime\prime}}(\Omega^{\prime\prime})\subset H^{s^\prime}(\Omega^{\prime})$ is compact, and consequently $\{u_k\}_{k\in\mathbb{N}}$ is strongly relatively compact in 
$H^{s^{\prime\prime}}(\Omega^{\prime\prime})$ which proves (i). 
\end{proof}

%%%%%%%%%%%%%%%%%%%%%%%%%%%%%%%%%%%%%%%%%%%%%%%%%%%%%%%%
%%%%%%%%%%%%%%%%%%%%%%%%%%%%%%%%%%%%%%%%%%%%%%%%%%%%%%%%

\section{Regularity for minimizing nonlocal partitions} \label{regminpartsect}   
								 
%%%%%%%%%%%%%%%%%%%%%%%%%%%%%%%%%%%%%%%%%%%%%%%%%%%%%%%
%%%%%%%%%%%%%%%%%%%%%%%%%%%%%%%%%%%%%%%%%%%%%%%%%%%%%%%

In this subsection, we aim to provide a partial regularity result  for the boundaries of a partition which is minimizing the functional $\mathscr{P}^{\boldsymbol{\sigma}}_{2s}$ 
in $\Omega$, still in case where $\boldsymbol{\sigma}\in\mathscr{S}^2_m$. We shall rely on the results from Section \ref{GHSNMC} and our discussion departs from Subsection \ref{complrestprescrNMMC}. In other words, throughout this section, we assume that  $\mathfrak{E}^*=(E^*_1,\ldots,E^*_m)\in\mathscr{A}_m(\Omega)$ is given and it is $\mathscr{P}^{\boldsymbol{\sigma}}_{2s}$- minimizing on $\Omega$ in the sense of Definition \ref{defminpart}. In particular,  $\mathfrak{E}^*$ satisfies the stationarity condition \eqref{SectParteq1}, 
and the results from Subsection~\ref{complrestprescrNMMC}~hold.

\subsection{The Non Infiltration Property}\label{NIPsect}

The strategy to prove a partial regularity for $\partial \mathfrak{E}^*\cap \Omega$ is to show that in a small neighborhood of every point in the set $\Sigma_{\rm reg}(u_*^\e)\cap\Omega$, 
there are only two phases of the partition  $\mathfrak{E}^*$. In turn, it will imply that these two phases are almost minimizers of the $2s$-perimeter $P_{2s}$ in a smaller neighborhood, and existing regularity results from \cite{CRS,CapGui,DVV} will apply.  By Proposition~\ref{regblowupset}, the feature of points in $\Sigma_{\rm reg}(u_*^\e)\cap\Omega$ is that only two phases are dominant {\sl in proportion} in small neighborhood of the point. Our aim is thus to prove that, in a smaller neighborhood, those two phases are the only one. This property is often referred to as {\it Non Infiltration Property} of the remaining phases. In our context, it has been first proved by \cite{ColMag} in the homogeneous case where $\sigma_{ij}=1$ for every $i\not= j$, and stated without proof in \cite{CesNov}
in the so-called additive case where $\sigma_{ij}=\alpha_i+\alpha_j$ for every $i\not= j$ with $\alpha_i,\alpha_j>0$ (as a matter of of facts, it is claimed without proof that the argument in \cite{ColMag} can be applied verbatim in the additive case, but there is some evidence that it is not the case, at least for $m\geq 4$). 
\vskip3pt

Here, we shall prove a non infiltration property  in three different cases for the matrix  $\boldsymbol{\sigma}$. The first case is the nearly homogeneous case where the coefficients of $\boldsymbol{\sigma}$ are nearly identical, with an explicit condition on their admissible deviation. The second case is restricted to three phases ($m=3$) and requires a {\sl strict triangle inequality} between the coefficient of $\boldsymbol{\sigma}$ as in the classical regularity theory for the standard perimeter, see \cite{Leo,W}. The third case, the most surprising one, is still for three phases, but requires that the triangle inequality fails {\sl strictly}. In this latter situation, we shall prove that non infiltration holds for two of the phases, and this would be enough to conclude regularity thanks to a ``simple'' classification argument of tangent maps. 

We may now underline that, in the three cases, the non infiltration property does not require the matrix $\boldsymbol{\sigma}$ to be in the set $\mathscr{S}^2_m$. Let us start with the nearly homogeneous case. 

\begin{proposition}[\bf Nearly homogeneous case]\label{NIPnearlyhom}
Assume that $\boldsymbol{\sigma}\in\mathscr{S}_m$ satisfies
\begin{equation}\label{CondNearHomCoeff}
q:=\frac{(m-2)\boldsymbol{\sigma}_{\rm max}}{(m-1)\boldsymbol{\sigma}_{\rm \min}}<1\,.
\end{equation}
There exists a constant $\mathfrak{p}_1=\mathfrak{p}_1(q,s,n)>0$ (depending only on $q$, $s$, and $n$) such  that for every ball $\overline D_{r}(x_0)\subset\Omega$ and 
every $h\in\{1,\ldots,m\}$,  the following implication holds: 
$$|\{E^*_h\cap D_r(x_0)\}|\leq  \mathfrak{p}_1r^n\quad\Longrightarrow\quad |\{E^*_h\cap D_{r/2}(x_0)\}|=0\,.$$
\end{proposition}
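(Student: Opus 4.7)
The plan is to adapt the non-infiltration argument of \cite{ColMag} (which handles the homogeneous case $\sigma_{ij}\equiv 1$) to our weighted setting. The conceptually new ingredient is an averaging over the choice of ``receiving'' phase that converts the hypothesis $q<1$ into a positive energy gain.

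\textbf{Averaged competitor inequality.} Fix $\rho \in (r/2, r)$ and, for each $k \in \{1,\ldots,m\}\setminus\{h\}$, form the competitor $\mathfrak{F}^k = (F_1^k,\ldots,F_m^k)$ by transferring $A := E_h^* \cap D_\rho(x_0)$ entirely into the $k$-th phase: $F_h^k := E_h^* \setminus D_\rho(x_0)$, $F_k^k := E_k^* \cup A$, and $F_j^k := E_j^*$ for $j \notin \{h,k\}$. Since $\mathfrak{F}^k$ coincides with $\mathfrak{E}^*$ outside $D_\rho(x_0)$ and $\overline{D_\rho(x_0)}\subset\Omega$, it is an admissible competitor in Definition~\ref{defminpart}. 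Setting $B := E_h^* \setminus D_\rho(x_0)$, a direct bookkeeping of pair interactions gives
\[
0 \leq \mathscr{P}^{\boldsymbol{\sigma}}_{2s}(\mathfrak{F}^k,\Omega) - \mathscr{P}^{\boldsymbol{\sigma}}_{2s}(\mathfrak{E}^*,\Omega) = \sigma_{hk}\mathcal{I}_{2s}(A,B) - \sum_{j\neq h}\sigma_{hj}\mathcal{I}_{2s}(A,E_j^*) + \sum_{j\neq h,k}\sigma_{kj}\mathcal{I}_{2s}(A,E_j^*).
\]
Summing over $k \neq h$, the left-hand side is $\leq (m-1)\boldsymbol{\sigma}_{\rm max}\mathcal{I}_{2s}(A,B)$, while after swapping summation the right-hand side rewrites as $\sum_{j\neq h}\bigl[(m-1)\sigma_{hj} - \sum_{k\neq h,j}\sigma_{kj}\bigr]\mathcal{I}_{2s}(A,E_j^*)$, whose bracketed coefficient is $\geq (m-1)\boldsymbol{\sigma}_{\rm min} - (m-2)\boldsymbol{\sigma}_{\rm max} = (m-1)\boldsymbol{\sigma}_{\rm min}(1-q)>0$. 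This is the precise point at which \eqref{CondNearHomCoeff} enters. Combining with $B\subseteq D_\rho(x_0)^c$ and the identity $\mathcal{I}_{2s}(A,A^c) = \mathcal{I}_{2s}(A,\R^n\setminus E_h^*) + \mathcal{I}_{2s}(A,B)$, one arrives at the key \emph{interior-versus-exterior} estimate
\[
P_{2s}(A,\R^n) = \mathcal{I}_{2s}(A,A^c) \leq C_q\, \mathcal{I}_{2s}\bigl(A, D_\rho(x_0)^c\bigr), \qquad C_q := 1 + \frac{\boldsymbol{\sigma}_{\rm max}}{\boldsymbol{\sigma}_{\rm min}(1-q)}.
\]

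\textbf{De~Giorgi iteration.} This is exactly the scaling input required by the non-infiltration lemma of \cite{CRS}: combined with the fractional isoperimetric inequality $P_{2s}(A,\R^n) \geq c_{n,s}|A|^{(n-2s)/n}$, the function $V(\rho) := |E_h^*\cap D_\rho(x_0)|$ satisfies, for every $\rho\in(r/2,r)$,
\[
V(\rho)^{(n-2s)/n} \leq C(q,\boldsymbol{\sigma})\, \mathcal{I}_{2s}\bigl(E_h^*\cap D_\rho(x_0), D_\rho(x_0)^c\bigr).
\]
I would then choose a dyadic sequence $r_k := r/2 + 2^{-k}(r/2) \downarrow r/2$, set $V_k := V(r_k)$, and control $\mathcal{I}_{2s}(E_h^*\cap D_{r_{k+1}}(x_0), D_{r_{k+1}}(x_0)^c)$ via the elementary bound $\int_{D_\rho(x_0)^c}|x-y|^{-n-2s}\de y\leq C_{n,s}(\rho-\rho')^{-2s}$, valid for $x\in D_{\rho'}(x_0)$, combined with a Fubini treatment of the annular remainder. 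This produces a recursion of the form $V_{k+1} \leq C\, 2^{\alpha k}r^{-\beta} V_k^{n/(n-2s)}$ with superlinear exponent $n/(n-2s)>1$, and a standard De~Giorgi-type iteration then shows that for a suitably small $\mathfrak{p}_1 = \mathfrak{p}_1(q,s,n)>0$ the smallness assumption $V_0 \leq \mathfrak{p}_1 r^n$ forces $V_k \to 0$, hence $V(r/2)=0$.

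\textbf{Main obstacle.} The genuinely new point is the averaging over $k$ in the competitor comparison, where \eqref{CondNearHomCoeff} is used in a sharp way ($q\to 1^-$ makes $C_q$ blow up and the constant $\mathfrak{p}_1$ degenerate). The remaining iteration is technically standard but computationally delicate: to close the recursion one must avoid $k$-independent remainder terms when bounding $\mathcal{I}_{2s}(E_h^*\cap D_{r_{k+1}}(x_0), D_{r_{k+1}}(x_0)^c)$, exploiting that the interaction kernel respects dyadic scaling modulo a harmless tail over thin annuli. This is the analytic heart of \cite{ColMag}, and no new idea beyond the averaged comparison of the first step appears necessary to transplant it to the weighted case.
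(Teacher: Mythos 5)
Your proposal is correct and follows essentially the same strategy as the paper: the same family of competitors $\mathfrak{F}^k$, the same averaging over the receiving index $k$ to exploit $q<1$, the fractional isoperimetric inequality, and a De Giorgi--type iteration on the volume function $f(\rho)=|E_h^*\cap D_\rho(x_0)|$. The only cosmetic difference is that the paper localizes the minimality via Lemma~\ref{minimlaityinset} before expanding, whereas you work with the global difference of energies directly; both lead to the same integro-differential inequality $f(\rho)^{1-2s/n}\le K\int_0^\rho f'(t)(\rho-t)^{-2s}\,\mathrm{d}t$, which the paper then packages once and for all in Lemma~\ref{integineqlemma}.
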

\vskip5pt

The proof of Proposition \ref{NIPnearlyhom} is inspired from \cite{ColMag}, and follows its general strategy.  However, we tried to made the argument more transparent to handle different cases.  
To this purpose, we need some preliminary lemmata, starting with the following observation. 

\begin{lemma}\label{minimlaityinset}
If $\mathfrak{F}=(F_1,\ldots,F_m)\in\mathscr{A}_m(\Omega)$ satisfies $F_j\triangle E^*_j\subset A$ for each $j\in\{1,\ldots,m\}$ and  some measurable subset $A\subset\Omega$, then 
$$\mathscr{P}^{\boldsymbol{\sigma}}_{2s}(\mathfrak{E}^*,A)\leq \mathscr{P}^{\boldsymbol{\sigma}}_{2s}(\mathfrak{F},A)\,.$$
\end{lemma}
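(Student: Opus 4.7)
The strategy is simply to decompose the energy according to whether the two interacting points lie in $A$ or in $A^c$, and then to use the global minimality of $\mathfrak{E}^*$ to cancel the interactions that depend only on the common restriction of $\mathfrak{E}^*$ and $\mathfrak{F}$ to $A^c$.

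Concretely, I would first use, for any $\mathfrak{E}=(E_1,\ldots,E_m)\in\mathscr{A}_m(\Omega)$, the three-region decomposition $\R^n=A\sqcup(\Omega\setminus A)\sqcup\Omega^c$ (up to a negligible set) to split each of the three cross-interactions appearing in \eqref{defpersigintro}. Grouping the resulting terms according to whether at least one of the two integration variables lies in $A$ produces the identity
$$\mathscr{P}^{\boldsymbol{\sigma}}_{2s}(\mathfrak{E},\Omega)=\mathscr{P}^{\boldsymbol{\sigma}}_{2s}(\mathfrak{E},A)+R(\mathfrak{E},A),$$
where $R(\mathfrak{E},A)$ collects precisely those $\mathcal{I}_{2s}$-interactions between points that both lie in $A^c$. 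In particular at least one of the two points is forced to sit in $\Omega\setminus A$, since pairs entirely contained in $\Omega^c\times\Omega^c$ do not contribute to $\mathscr{P}^{\boldsymbol{\sigma}}_{2s}(\mathfrak{E},\Omega)$. The key point is then that $R(\mathfrak{E},A)$ depends only on the traces of the $E_i$'s on $A^c$.

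Since $F_j\triangle E^*_j\subset A$ for every $j$, we have $F_j\setminus A=E^*_j\setminus A$ for every $j$, and consequently $R(\mathfrak{F},A)=R(\mathfrak{E}^*,A)$; moreover both quantities are finite, being bounded above by $\mathscr{P}^{\boldsymbol{\sigma}}_{2s}(\mathfrak{E}^*,\Omega)<\infty$ and $\mathscr{P}^{\boldsymbol{\sigma}}_{2s}(\mathfrak{F},\Omega)<\infty$ respectively. The competitor $\mathfrak{F}$ is admissible in Definition~\ref{defminpart} (in the applications $A$ will be a ball compactly contained in $\Omega$, whence $\overline{F_j\triangle E^*_j}\subset\overline{A}\subset\Omega$; the general case follows by an exhaustion of $A$ by subsets compactly contained in $\Omega$, using that all terms are nonnegative and the dominated/monotone convergence theorem). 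Writing the resulting global inequality $\mathscr{P}^{\boldsymbol{\sigma}}_{2s}(\mathfrak{E}^*,\Omega)\leq\mathscr{P}^{\boldsymbol{\sigma}}_{2s}(\mathfrak{F},\Omega)$ in the decomposed form and cancelling the finite common term $R(\mathfrak{E}^*,A)=R(\mathfrak{F},A)$ yields the claim. The only non-trivial aspect is the careful bookkeeping in the energy decomposition; everything else is a formal manipulation, and I do not anticipate any real obstacle.
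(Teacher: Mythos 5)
Your proposal is correct and follows essentially the same route as the paper: decompose $\mathscr{P}^{\boldsymbol{\sigma}}_{2s}(\cdot,\Omega)=\mathscr{P}^{\boldsymbol{\sigma}}_{2s}(\cdot,A)+R(\cdot,A)$ with $R$ collecting the interactions whose two points both lie in $A^c$, note that $R$ sees only $E_i\cap A^c=F_i\cap A^c$, and cancel the finite common remainder against the global minimality inequality. The parenthetical caveat you add about $\overline{F_j\triangle E^*_j}\subset\Omega$ is a reasonable observation that the paper glosses over (in all applications $A$ is indeed contained in a ball compactly included in $\Omega$), but it does not change the argument.
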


\begin{proof}
For an admissible competitor $\mathfrak{F}=(F_1,\ldots,F_m)$, we observe that 
\begin{multline*}
\mathcal{I}_{2s}(F_i\cap\Omega,F_j\cap\Omega)= \mathcal{I}_{2s}(F_i\cap A,F_j\cap A)+ \mathcal{I}_{2s}\big(F_i\cap A,F_j\cap (\Omega\setminus A)\big)\\
+ \mathcal{I}_{2s}\big(F_i\cap (\Omega\setminus A),F_j\cap A\big) +  \mathcal{I}_{2s}\big(E^*_i\cap (\Omega\setminus A),E^*_j\cap  (\Omega\setminus A)\big)\,,
\end{multline*}
and 
$$\mathcal{I}_{2s}(F_i\cap\Omega,F_j\cap\Omega^c)= \mathcal{I}_{2s}(F_i\cap A,F_j\cap \Omega^c)+ \mathcal{I}_{2s}\big(E^*_i\cap (\Omega\setminus A),E^*_j\cap \Omega^c\big)\,,
$$
and 
$$\mathcal{I}_{2s}(F_i\cap\Omega^c,F_j\cap\Omega)= \mathcal{I}_{2s}(F_i\cap \Omega^c,F_j\cap A)+ \mathcal{I}_{2s}\big(E^*_i\cap \Omega^c,E^*_j\cap (\Omega\setminus A)\big)\,.
$$
Summing these identities, we obtain
\begin{multline*}
\mathscr{P}^{\boldsymbol{\sigma}}_{2s}(\mathfrak{F},\Omega)=\mathscr{P}^{\boldsymbol{\sigma}}_{2s}(\mathfrak{F},A)\\
 + \frac{1}{2}\sum^m_{i,j=1}\sigma_{ij}\Big[\mathcal{I}_{2s}\big(E^*_i\cap(\Omega\setminus A),E^*_j\cap(\Omega\setminus A)\big)+\mathcal{I}_{2s}\big(E^*_i\cap(\Omega\setminus A),E^*_j\cap\Omega^c\big)+\mathcal{I}_{2s}(E^*_i\cap\Omega^c,E^*_j\cap\big(\Omega\setminus A)\big)\Big]\,.
 \end{multline*}
Obviously, the same identity holds for  $\mathfrak{E}^*$, and the conclusion follows from the minimality of $\mathfrak{E}^*$ in~$\Omega$. 
\end{proof}

We shall make use of the following classical property, very well known as {\sl De Giorgi iteration}. Its proof is elementary and simply goes by induction. 

\begin{lemma}\label{DGiter}
Let $\alpha\in(0,1)$ and $M>0$. If $\{c_k\}_{k\in\mathbb{N}}$ is a sequence of non negative numbers satisfying
$$c_{k+1}^{1-\alpha}\leq 2^kM c_k \qquad\forall k\in\mathbb{N}\,,$$
with 
\begin{equation}\label{condIterDG}
c_0\leq  \frac{M^{-1/\alpha}}{2^{(1-\alpha)/\alpha^2}}\,,
\end{equation}
then $c_k\leq 2^{-k\alpha}c_0$ for every $k\in\mathbb{N}$. In particular, $c_k\to 0$ as $k\to\infty$.  
\end{lemma}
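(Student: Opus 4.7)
The strategy is the classical proof of a De Giorgi iteration lemma: induction on $k$. The base case $k=0$ reduces to $c_0\leq c_0$ and is trivial, so the entire argument is contained in the inductive step.

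For the induction, I would assume $c_k\leq 2^{-k\alpha}c_0$ and substitute into the recursion $c_{k+1}^{1-\alpha}\leq 2^kMc_k$ to obtain
\[
c_{k+1}^{1-\alpha}\leq 2^k M\cdot 2^{-k\alpha}c_0=2^{k(1-\alpha)}\,Mc_0.
\]
Raising both sides to the power $1/(1-\alpha)$ yields a bound of the form $c_{k+1}\leq 2^k\,M^{1/(1-\alpha)}\,c_0^{1/(1-\alpha)}$. Comparing with the target $c_{k+1}\leq 2^{-(k+1)\alpha}c_0$ and factoring $c_0$, the induction closes precisely when $M^{1/(1-\alpha)}c_0^{\alpha/(1-\alpha)}$ is dominated by a fixed (negative) power of $2$; raising that inequality to the power $(1-\alpha)/\alpha$ reproduces exactly the smallness hypothesis \eqref{condIterDG} on~$c_0$. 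Thus the induction propagates uniformly in $k$.

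Once the bound $c_k\leq 2^{-k\alpha}c_0$ is established for every $k\in\mathbb{N}$, the ``in particular'' assertion $c_k\to 0$ follows immediately from the geometric decay of $2^{-k\alpha}$.

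I expect no genuine obstacle: the proof is a one-line induction, and its only real content is the algebraic verification that the smallness threshold \eqref{condIterDG} is exactly the critical level at which the inductive step propagates (rather than merely holding at $k=0$). I would perform this exponent-balancing algebra once explicitly at the outset, carefully tracking the factors $1/(1-\alpha)$ and $(1-\alpha)/\alpha^2$ coming from inverting the recursion and from \eqref{condIterDG} respectively, and then invoke the induction to conclude.
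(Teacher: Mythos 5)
There is a genuine gap in your inductive step: the argument as written does not close. From the inductive hypothesis $c_k\leq 2^{-k\alpha}c_0$ you correctly obtain $c_{k+1}\leq 2^k\,(Mc_0)^{1/(1-\alpha)}$, but when you compare this to the target $c_{k+1}\leq 2^{-(k+1)\alpha}c_0$, the required inequality is
$2^{\,k(1+\alpha)+\alpha}\,M^{1/(1-\alpha)}c_0^{\alpha/(1-\alpha)}\leq 1$,
in which the prefactor $2^{k(1+\alpha)+\alpha}$ grows unboundedly with $k$. You silently dropped the $2^{k}$ when you "factored $c_0$," which is why you mistakenly concluded the induction propagates under a $k$-independent smallness condition on $c_0$. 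In fact, with the weak inductive hypothesis the threshold on $c_0$ would have to shrink geometrically with $k$, so the induction fails.

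The repair is to strengthen the inductive hypothesis to $c_k\leq 2^{-k/\alpha}c_0$ (which is stronger than the stated conclusion since $1/\alpha>\alpha$, hence implies it). With this choice the $2^{k}$ in the recursion is exactly absorbed: one finds $c_{k+1}\leq 2^{-k/\alpha}(Mc_0)^{1/(1-\alpha)}$, and the inductive step closes precisely when $(Mc_0)^{1/(1-\alpha)}\leq 2^{-1/\alpha}c_0$, i.e.\ $M^{1/(1-\alpha)}c_0^{\alpha/(1-\alpha)}\leq 2^{-1/\alpha}$. Raising to the power $(1-\alpha)/\alpha$ yields exactly $c_0\leq M^{-1/\alpha}2^{-(1-\alpha)/\alpha^2}$, which is \eqref{condIterDG}. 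Once you prove $c_k\leq 2^{-k/\alpha}c_0$ by this induction, the stated bound $c_k\leq 2^{-k\alpha}c_0$ and the convergence $c_k\to 0$ follow immediately.
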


The previous iteration lemma allows us to show one of the key tools which  is  the following extinction property for an integro-differential inequation, inspired from \cite{FFMMM}. 

\begin{lemma}\label{integineqlemma}
Let $\alpha,\gamma\in(0,1)$, $R>0$, $K>0$, and $f:[0,R]\to [0,\infty]$  an absolutely continuous non decreasing function satisfying 
\begin{equation}\label{ineqinteg}
f(r)^{1-\alpha}\leq K \int_0^r\frac{f^\prime(t)}{(r-t)^{\gamma}}\,\de t\quad\text{for every $r\in(0,R]$}\,.
\end{equation}
Setting
$$\mathfrak{p}:=  
\bigg(\frac{1-\gamma}{2^{\frac{1+\alpha}{\alpha}}K}\bigg)^{\frac{1}{\alpha}}\,,$$
if $f(R)\leq \mathfrak{p}  R^{\gamma/\alpha}$, then $f(R/2)=0$. 
\end{lemma}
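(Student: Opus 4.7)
The plan is to run a De Giorgi iteration along a dyadic sequence of radii shrinking to $R/2$. Set $r_k := R/2 + R\cdot 2^{-k-1}$ for $k \geq 0$, so that $r_0 = R$, $r_k \downarrow R/2$, and $r_k - r_{k+1} = R\cdot 2^{-(k+2)}$. Let $c_k := f(r_k)$. Since $f$ is non-decreasing, $\{c_k\}$ is non-increasing and $f(R/2) \leq c_k$ for every $k$, so it suffices to show $c_k \to 0$.

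The core step is to derive a recursion of the form $c_{k+1}^{1-\alpha} \leq 2^k M c_k$ that fits the hypotheses of Lemma~\ref{DGiter}. I will obtain it by integrating the hypothesis \eqref{ineqinteg} over $s \in [r_{k+1}, r_k]$. On the left, since $1 - \alpha > 0$, the monotonicity of $f$ gives $f^{1-\alpha}(s) \geq c_{k+1}^{1-\alpha}$ for $s \geq r_{k+1}$, whence
$$
\int_{r_{k+1}}^{r_k} f^{1-\alpha}(s)\,\de s \;\geq\; (r_k - r_{k+1})\,c_{k+1}^{1-\alpha} \;=\; \frac{R}{2^{k+2}}\, c_{k+1}^{1-\alpha}.
$$
For the right-hand side, Fubini's theorem followed by the crude bound $(r_k - t)^{1-\gamma} \leq R^{1-\gamma}$ and $\int_0^{r_k}f'(t)\,\de t \leq f(r_k) = c_k$ yields
$$
\int_{r_{k+1}}^{r_k}\!\int_0^s \frac{f'(t)}{(s-t)^\gamma}\,\de t\,\de s \;=\; \int_0^{r_k} f'(t)\!\int_{\max(t, r_{k+1})}^{r_k}\!(s-t)^{-\gamma}\,\de s\,\de t \;\leq\; \frac{R^{1-\gamma}}{1-\gamma}\,c_k.
$$
Combining the two bounds produces precisely $c_{k+1}^{1-\alpha} \leq 2^k\, M\, c_k$ with $M := \tfrac{4K}{(1-\gamma)R^{\gamma}}$.

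With this recursion, Lemma~\ref{DGiter} applies provided $c_0 = f(R)$ satisfies \eqref{condIterDG}, i.e.\ $c_0 \leq M^{-1/\alpha}\cdot 2^{-(1-\alpha)/\alpha^2}$. A short algebraic simplification of the exponents of $2$ (observing that $-2/\alpha - (1-\alpha)/\alpha^2 = -(1+\alpha)/\alpha^2$) shows that this threshold is exactly $\mathfrak{p}\, R^{\gamma/\alpha}$, so the standing hypothesis $f(R) \leq \mathfrak{p}\, R^{\gamma/\alpha}$ coincides with \eqref{condIterDG}. The iteration then gives $c_k \to 0$, and monotonicity of $f$ forces $f(R/2) = 0$.

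No serious obstacle is expected: the argument is essentially careful bookkeeping. The only delicate point is that one must use the crude bound $(r_k - t)^{1-\gamma} \leq R^{1-\gamma}$ in order to produce exactly the factor $2^k$ demanded by Lemma~\ref{DGiter} with its sharp threshold; a finer estimate exploiting $r_k - r_{k+1} \sim R\cdot 2^{-k}$ would replace $2^k$ by $2^{\gamma k}$ and fall outside the standard form of the iteration lemma, thereby missing the stated value of $\mathfrak{p}$.
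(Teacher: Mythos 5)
Your proof is correct and follows essentially the same route as the paper's: integrate \eqref{ineqinteg} over the dyadic interval, apply Fubini, bound $(r_k-t)^{1-\gamma}$ by $R^{1-\gamma}$ to produce the recursion $c_{k+1}^{1-\alpha}\leq 2^k M c_k$ with $M=4K/((1-\gamma)R^\gamma)$, and invoke Lemma~\ref{DGiter}. The only cosmetic difference is that the paper first integrates over $(0,\ell)$ to derive the general inequality $\int_0^\ell f^{1-\alpha}\leq \tfrac{K}{1-\gamma}\ell^{1-\gamma}f(\ell)$ and then uses the crude bound $\int_{\ell_{k+1}}^{\ell_k}\leq\int_0^{\ell_k}$, whereas you integrate directly over $[r_{k+1},r_k]$; both yield the identical recursion and threshold $\mathfrak{p}$.
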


\begin{proof}
First we integrate \eqref{ineqinteg} on $(0,\ell)\subset (0,R]$ and apply Fubini's theorem to obtain 
$$\int_{0}^\ell f(r)^{1-\alpha}\,\de r\leq \frac{K}{1-\gamma}\ell^{1-\gamma}f(\ell)\qquad\forall\ell\in(0,R]\,.$$
For $k\in\mathbb{N}$, we set $\ell_k:=\frac{R}{2}+2^{-(k+1)}R$ and $c_k:=f(\ell_k)$, so that $\ell_0=R$ and $c_0=f(R)$.  
Then, 
$$ 2^{-(k+2)}R\,c^{1-\alpha}_{k+1} \leq \int_{\ell_{k+1}}^{\ell_k} f(r)^{1-\alpha}\,\de r\leq \int_{0}^{\ell_k} f(r)^{1-\alpha}\,\de r\leq \frac{K}{1-\gamma}\ell^{1-\gamma}_kc_k
\leq \frac{KR^{1-\gamma}}{1-\gamma} c_k\,.$$
Hence, 
$$ c^{1-\alpha}_{k+1} \leq 2^kM c_k\quad\text{with}\quad M:=\frac{4K}{(1-\gamma)}\, R^{-\gamma}\,.$$
By our choice of $\mathfrak{p}$, condition \eqref{condIterDG} holds and Lemma \ref{DGiter} applies. It shows that  $c_k\to 0$, and since $c_k\to f(R/2)$, the conclusion follows. 
\end{proof}

\begin{proof}[Proof of Proposition~\ref{NIPnearlyhom}]
We  fix an arbitrary point $x_0\in\Omega$ and we set $r_0:={\rm dist}(x_0,\partial\Omega)$ so that $D_{r_0}(x_0)\subset\Omega$. Without loss of generality, we may assume that $x_0=0$ . Then we consider the non decreasing Lipschitz function $f:[0,r_0]\to[0,\infty)$ given by 
$$f(r):=|E^*_h\cap  D_r| \,, $$
which satisfies  $f^\prime(r)=\mathcal{H}^{n-1}(E^*_h\cap \partial D_r)$ for a.e. $r\in(0,r_0)$. 
\vskip5pt

\noindent{\it Step 1.} We fix for the moment an index $k\in\{1,\ldots,m\}\setminus\{h\}$ and $r\in(0,r_0)$. We consider the competitor $\mathfrak{F}^k=(F^k_1,\ldots,F^k_m)$ given by 
$$F^k_j:=\begin{cases}
E^*_j & \text{if $j\not\in\{k,h\}$}\,,\\
E^*_h\setminus D_r & \text{if $j=h$}\,,\\
E^*_k\cup(E^*_h\cap D_r) & \text{if $j=k$}\,.
\end{cases}$$
By construction,  $F^k_j\triangle E^*_j $ is either empty or $F^k_j\triangle E^*_j\subset E^*_h\cap D_r$ which is compactly included in $\Omega$. By minimality of $\mathfrak{E}^*$ and Lemma~\ref{minimlaityinset} (applied with $A=E^*_h\cap D_r$), we have 
\begin{equation}\label{ineqminmalinsetinfiltr}
\mathscr{P}^{\boldsymbol{\sigma}}_{2s}(\mathfrak{E}^*,E^*_h\cap D_r)\leq \mathscr{P}^{\boldsymbol{\sigma}}_{2s}(\mathfrak{F}^k,E^*_h\cap D_r)\,.
\end{equation}
On the other hand, elementary manipulations yield 
\begin{align}
\nonumber \mathscr{P}^{\boldsymbol{\sigma}}_{2s}(\mathfrak{E}^*,E^*_h\cap D_r)&=\sum_{j\not=h}\sigma_{hj}\,\mathcal{I}_{2s}(E^*_h\cap D_r,E^*_j\cap D_r) 
+ \sum_{j=1}^m\sigma_{hj}\,\mathcal{I}_{2s}(E^*_h\cap D_r,E^*_j\cap D^c_r)\\
&\geq \sum_{j\not=h}\sigma_{hj}\,\mathcal{I}_{2s}(E^*_h\cap D_r,E^*_j\cap D_r) \,, \label{ineqperpartnoinfil}
\end{align}
where we use $E^*_h\cap D_r=((E^*_h)^c\cap D_r)\cup D_r^c$, the symmetry of the matrix $\boldsymbol{\sigma}$, and the disjointness between the~$E^*_j$'s. Similarly, we obtain 
\begin{align}
\nonumber \mathscr{P}^{\boldsymbol{\sigma}}_{2s}(\mathfrak{F}^k,E^*_h\cap D_r)& = \sum_{j\not=h}\sigma_{kj}\,\mathcal{I}_{2s}(E^*_h\cap D_r,E^*_j\cap D_r) 
+ \sum_{j=1}^m\sigma_{kj}\,\mathcal{I}_{2s}(E^*_h\cap D_r,E^*_j\cap D^c_r)\\
&\leq \sum_{j\not=h}\sigma_{kj}\,\mathcal{I}_{2s}(E^*_h\cap D_r,E^*_j\cap D_r)\! +\boldsymbol{\sigma}_{\rm max} \,\mathcal{I}_{2s}(E^*_h\cap D_r,D^c_r)\,, \label{ineqperpartnoinfil2}
\end{align}
using the fact that $F^k_i\cap (E^*_h\cap D_r)=E^*_h\cap D_r$ for $i=k$, and $F^k_i\cap (E^*_h\cap D_r)=\emptyset$ for $i\not=k$. 

Moreover, using the coarea formula, we estimate  
\begin{multline}\label{coareainfiltr}
\mathcal{I}_{2s}(E^*_h\cap D_r,D^c_r)\leq \int_{E^*_h\cap D_r}\Big(\int_{D^c_{r-|x|}(x)}\frac{1}{|x-y|^{n+2s}}\,\de y \Big)\de x\\
 =\frac{n\omega_n}{2s}\int_{E^*_h\cap D_r}\frac{1}{(r-|x|)^{2s}}\,\de x=\frac{n\omega_n}{2s}\int_0^r \frac{\mathcal{H}^{n-1}(E^*_h\cap \partial D_t)}{(r-t)^{2s}}\,\de t \,.
\end{multline}
Combining \eqref{ineqminmalinsetinfiltr} with \eqref{ineqperpartnoinfil}-\eqref{ineqperpartnoinfil2}-\eqref{coareainfiltr}, we conclude that for every $k\in\{1,\ldots,m\}\setminus\{h\}$, 
\begin{multline}\label{generineqinfiltrprop}
 \sum_{j\not=h}\sigma_{hj}\,\mathcal{I}_{2s}(E^*_h\cap D_r,E^*_j\cap D_r) \leq  \sum_{j\not=h}\sigma_{kj}\,\mathcal{I}_{2s}(E^*_h\cap D_r,E^*_j\cap D_r) \\
 +\frac{n\omega_n\boldsymbol{\sigma}_{\rm max}}{2s}\int_0^r \frac{f^\prime(t)}{(r-t)^{2s}}\,\de t \,.
 \end{multline}

\noindent{\it Step 2.}  
Summing up over $k\not=h$ the inequalities in \eqref{generineqinfiltrprop}, we infer that 
\begin{multline}\label{ineqfondnearhomoginfiltr}
(m-1)\sum_{j\not=h}\sigma_{hj}\,\mathcal{I}_{2s}(E^*_h\cap D_r,E^*_j\cap D_r) \leq  \sum_{j\not=h}\Big(\sum_{k\not=h}\sigma_{kj}\Big)\mathcal{I}_{2s}(E^*_h\cap D_r,E^*_j\cap D_r) \\
 +(m-1)\frac{n\omega_n\boldsymbol{\sigma}_{\rm max}}{2s}\int_0^r \frac{f^\prime(t)}{(r-t)^{2s}}\,\de t \,.
 \end{multline}
 On one hand, we have 
\begin{multline}\label{ineqfondnearhomoginfiltr2}
(m-1)\sum_{j\not=h}\sigma_{hj}\,\mathcal{I}_{2s}(E^*_h\cap D_r,E^*_j\cap D_r)\geq (m-1)\boldsymbol{\sigma}_{\rm min} \sum_{j\not=h}\mathcal{I}_{2s}(E^*_h\cap D_r,E^*_j\cap D_r) \\
=(m-1)\boldsymbol{\sigma}_{\rm min}\, \mathcal{I}_{2s}(E^*_h\cap D_r,(E^*_h)^c\cap D_r)\,,
 \end{multline}
while 
\begin{multline}\label{ineqfondnearhomoginfiltr3}
\sum_{j\not=h}\Big(\sum_{k\not=h}\sigma_{kj}\Big)\mathcal{I}_{2s}(E^*_h\cap D_r,E^*_j\cap D_r) \leq (m-2) \boldsymbol{\sigma}_{\rm max}\, \mathcal{I}_{2s}(E^*_h\cap D_r,(E^*_h)^c\cap D_r)\\
=(m-1) \boldsymbol{\sigma}_{\rm min}q\, \mathcal{I}_{2s}(E^*_h\cap D_r,(E^*_h)^c\cap D_r)\,,
 \end{multline}
where we used $\sigma_{jj}=0$ to obtain the factor $(m-2)$ (instead of $(m-1)$). 

Gathering \eqref{ineqfondnearhomoginfiltr}, \eqref{ineqfondnearhomoginfiltr2}, and \eqref{ineqfondnearhomoginfiltr3} leads to  
$$ \mathcal{I}_{2s}(E^*_h\cap D_r,(E^*_h)^c\cap D_r)\leq \frac{n\omega_n}{2s}\frac{(m-1)q}{(m-2)(1-q)}\int_0^r \frac{f^\prime(t)}{(r-t)^{2s}}\,\de t\leq \frac{n\omega_n}{s(1-q)}\int_0^r \frac{f^\prime(t)}{(r-t)^{2s}}\,\de t\,.$$
Since $(E^*_h\cap D_r)^c=((E_h^*)^c\cap D_r)\cup D_r^c$, we derive from \eqref{coareainfiltr} again that 
$$P_{2s}(E^*_h\cap D_r)= \mathcal{I}_{2s}\big(E^*_h\cap D_r,(E^*_h\cap D_r)^c\big)\leq \frac{3n\omega_n}{2s(1-q)} \int_0^r \frac{f^\prime(t)}{(r-t)^{2s}}\,\de t\,.$$
By the fractional isoperimetric inequality (see e.g. \cite{FLS,FFMMM}), we have 
\begin{equation}\label{isoperinieq}
P_{2s}(E^*_h\cap D_r)\geq\frac{P_{2s}(D_1)}{|D_1|^{\frac{n-2s}{n}}}|E^*_h\cap D_r|^{\frac{n-2s}{n}}=\frac{P_{2s}(D_1)}{\omega_n^{1-\frac{2s}{n}}} f(r)^{1-\frac{2s}{n}}\,.
\end{equation}
Therefore, 
$$  f(r)^{1-\frac{2s}{n}} \leq K \int_0^r \frac{f^\prime(t)}{(r-t)^{2s}}\,\de t\quad\text{with}\quad K:= \frac{3n\omega_n^{2-\frac{2s}{n}}}{2s(1-q)P_{2s}(D_1)}\,.$$
Choosing 
$$\mathfrak{p}_1:=\bigg(\frac{s(1-q)(1-2s)P_{2s}(D_1)}{2^{n/2s}3n\,\omega_n^{2-2s/n}}\bigg)^{n/2s} \,,$$
if $f(r)\leq \mathfrak{p}_1 r^n$, then Lemma  \ref{integineqlemma} applies and it implies that $f(r/2)=0$. 
\end{proof}

As already mentioned, the two remaining cases concerns three phases problems, that is $m=3$. We shall need the following elementary algebraic property. 

\begin{lemma}\label{lemmacoeffm=3}
For every $\boldsymbol{\sigma}\in \mathscr{S}_3$, the disjoint following properties hold. 
\begin{itemize}
\item[\bf (STI):]{\sl{Strict Triangle Inequality}}. If $\sigma_{ij}<\sigma_{ik}+\sigma_{kj}$  for every triplet $\{i,j,k\}=\{1,2,3\}$, then there exists $\alpha_1>0$, $\alpha_2>0$, and $\alpha_3>0$ such that 
$\sigma_{ij}=\alpha_i+\alpha_j$ for $i\not=j$. 
\vskip3pt

\item[\bf (SITI):]{\sl{Strict Inverse Triangle Inequality}}.  If $\sigma_{ij}>\sigma_{ik}+\sigma_{kj}$ for some pair $(i,j)$ of distinct indices, then  $\sigma_{ik}<\sigma_{ij}+\sigma_{jk}$ and $\sigma_{kj}<\sigma_{ki}+\sigma_{ij}$ for $k\not\in\{i,j\}$. 
\end{itemize}
\end{lemma}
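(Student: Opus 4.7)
Both parts are purely algebraic manipulations on the six entries of the symmetric matrix $\boldsymbol{\sigma}\in\mathscr{S}_3$; no analytic input is needed, and I expect no serious obstacle.

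For \textbf{(STI)}, the plan is to solve the linear system $\sigma_{ij}=\alpha_i+\alpha_j$ for $1\leq i<j\leq 3$ directly. Summing the three equations gives $\alpha_1+\alpha_2+\alpha_3=\frac{1}{2}(\sigma_{12}+\sigma_{13}+\sigma_{23})$, from which one reads off
\[
\alpha_k:=\frac{\sigma_{ik}+\sigma_{jk}-\sigma_{ij}}{2}\qquad\text{for }\{i,j,k\}=\{1,2,3\}.
\]
Each strict triangle inequality $\sigma_{ij}<\sigma_{ik}+\sigma_{kj}$ is exactly the statement that the corresponding $\alpha_k$ is strictly positive, and a direct check verifies that $\alpha_i+\alpha_j=\sigma_{ij}$ for every pair. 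This proves (STI).

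For \textbf{(SITI)}, assume $\sigma_{ij}>\sigma_{ik}+\sigma_{kj}$ for some distinct pair $(i,j)$ with $k$ the remaining index. Since $\sigma_{ab}>0$ whenever $a\neq b$ (by the very definition of $\mathscr{S}_3$), the hypothesis yields $\sigma_{ik}<\sigma_{ij}-\sigma_{kj}<\sigma_{ij}$, and hence a fortiori $\sigma_{ik}<\sigma_{ij}+\sigma_{jk}$. The other inequality $\sigma_{kj}<\sigma_{ki}+\sigma_{ij}$ follows by the same one-line argument, swapping the roles of $i$ and $j$ (and using the symmetry $\sigma_{kj}=\sigma_{jk}$, $\sigma_{ki}=\sigma_{ik}$). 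The disjointness of (STI) and (SITI) is immediate since they impose mutually exclusive strict inequalities on the same triplet, so nothing further needs to be verified.
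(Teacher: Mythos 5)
Your proof is correct and follows essentially the same approach as the paper. For (STI) you write down exactly the same explicit formulas $\alpha_k=\tfrac{1}{2}(\sigma_{ik}+\sigma_{jk}-\sigma_{ij})$ that the paper records in its equation \eqref{coeffalpham=3}. For (SITI) the paper argues by contradiction (assuming, say, $\sigma_{12}\geq\sigma_{23}+\sigma_{13}$ and deriving $\sigma_{13}<0$), whereas you argue directly from $\sigma_{ij}>\sigma_{ik}+\sigma_{kj}$ and the positivity of the off-diagonal entries to get $\sigma_{ik}<\sigma_{ij}<\sigma_{ij}+\sigma_{jk}$; this is the same one-step inequality chain read in the forward direction, and if anything slightly cleaner.
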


\begin{proof}
{\it Case (i)}. Resolving explicitly the associated linear system yields
\begin{equation}\label{coeffalpham=3}
\alpha_1=\frac{1}{2}\big(\sigma_{12}+\sigma_{13}-\sigma_{23}\big)\,,\; \alpha_2=\frac{1}{2}\big(\sigma_{12}+\sigma_{23}-\sigma_{13}\big)\,,\;\alpha_3=\frac{1}{2}\big(\sigma_{13}+\sigma_{23}-\sigma_{12}\big)\,.
\end{equation}

\noindent {\it Case (ii)}. Without loss of generality, we may assume that $\sigma_{23}>\sigma_{12}+\sigma_{13}$. Assume by contradiction (and without loss of generality) that $\sigma_{12}\geq \sigma_{23}+\sigma_{13}$. Then,  $\sigma_{12}> \sigma_{12}+2\sigma_{13}$ so that $\sigma_{13}<0$, a contradiction. 
\end{proof}

\begin{remark}
Notice that, in the case $m=3$, condition \eqref{CondNearHomCoeff} implies the {\bf (STI)}-condition above, but not the converse.  Hence, for $m=3$,  the {\bf (STI)}-condition is strictly weaker than \eqref{CondNearHomCoeff}.  
\end{remark}

\begin{proposition}[\bf $m=3$ with (STI)]\label{NIP3STI}
Assume that $m=3$ and that $\sigma_{ij}<\sigma_{ik}+\sigma_{kj}$  for every triplet $\{i,j,k\}=\{1,2,3\}$. There exists a constant $\mathfrak{p}_2=\mathfrak{p}_2(\boldsymbol{\sigma},s,n)>0$ (depending only on $\boldsymbol{\sigma}$, $s$, and $n$) such  that for every ball $\overline D_{r}(x_0)\subset\Omega$ and 
every $h\in\{1,2,3\}$,  
$$|\{E^*_h\cap D_r(x_0)\}|\leq  \mathfrak{p}_2r^n\quad\Longrightarrow\quad |\{E^*_h\cap D_{r/2}(x_0)\}|=0\,.$$
\end{proposition}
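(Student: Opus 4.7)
The plan is to follow the blueprint of the proof of Proposition~\ref{NIPnearlyhom} up through Step~1, and then to replace the brute summation over $k\neq h$ (which exploited the near-homogeneity bound~\eqref{CondNearHomCoeff}) by a carefully weighted linear combination dictated by the additive structure of $\boldsymbol{\sigma}$ that the {\bf (STI)}-condition guarantees via Lemma~\ref{lemmacoeffm=3}. Concretely, set $x_0=0$ without loss of generality, write $\{i,j,h\}=\{1,2,3\}$, denote $f(r):=|E^*_h\cap D_r|$, and set
$$I_i:=\mathcal{I}_{2s}(E^*_h\cap D_r,E^*_i\cap D_r)\,,\quad I_j:=\mathcal{I}_{2s}(E^*_h\cap D_r,E^*_j\cap D_r)\,,\quad \Phi(r):=\int_0^r\frac{f'(t)}{(r-t)^{2s}}\,\de t\,.$$
Applying Step~1 of the proof of Proposition~\ref{NIPnearlyhom} once with $k=i$ and once with $k=j$ (and using $\sigma_{kk}=0$ in each case), I would obtain the two inequalities
\begin{align*}
\sigma_{hi}I_i+\sigma_{hj}I_j&\leq \sigma_{ij}I_j+C\Phi(r)\,,\\
\sigma_{hi}I_i+\sigma_{hj}I_j&\leq \sigma_{ij}I_i+C\Phi(r)\,,
\end{align*}
with $C=\tfrac{n\omega_n\boldsymbol{\sigma}_{\rm max}}{2s}$.

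Next, Lemma~\ref{lemmacoeffm=3} under {\bf (STI)} gives $\alpha_1,\alpha_2,\alpha_3>0$ with $\sigma_{pq}=\alpha_p+\alpha_q$, so the two inequalities above rewrite as
$$(\alpha_h+\alpha_i)I_i+(\alpha_h-\alpha_i)I_j\leq C\Phi(r)\,,\qquad (\alpha_h-\alpha_j)I_i+(\alpha_h+\alpha_j)I_j\leq C\Phi(r)\,.$$
The key algebraic observation is that the combination $\alpha_j\cdot(\text{first})+\alpha_i\cdot(\text{second})$ produces the \emph{same} coefficient $\alpha_h(\alpha_i+\alpha_j)>0$ in front of both $I_i$ and $I_j$, yielding
$$\alpha_h\,\mathcal{I}_{2s}\bigl(E^*_h\cap D_r,(E^*_h)^c\cap D_r\bigr)=\alpha_h(I_i+I_j)\leq C\Phi(r)\,.$$
Since $\alpha_h>0$ (a direct consequence of {\bf (STI)} via \eqref{coeffalpham=3}), dividing by $\alpha_h$ absorbs the constant into one depending only on $\boldsymbol{\sigma}$, $n$, $s$.

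To conclude, I would proceed exactly as in Step~2 of Proposition~\ref{NIPnearlyhom}: using that $(E^*_h\cap D_r)^c=((E^*_h)^c\cap D_r)\cup D_r^c$ together with the spherical bound \eqref{coareainfiltr} applied to $\mathcal{I}_{2s}(E^*_h\cap D_r,D_r^c)$, one gets $P_{2s}(E^*_h\cap D_r)\leq K\Phi(r)$ for a constant $K=K(\boldsymbol{\sigma},n,s)$. The fractional isoperimetric inequality \eqref{isoperinieq} then yields $f(r)^{1-2s/n}\leq K'\Phi(r)$ for all $r\in(0,r_0]$, and Lemma~\ref{integineqlemma} (with $\alpha=2s/n$ and $\gamma=2s$) provides the threshold $\mathfrak{p}_2$ such that if $f(r)\leq\mathfrak{p}_2 r^n$, then $f(r/2)=0$. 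The main subtlety, and the step on which the whole argument hinges, is the selection of the weights $(\alpha_j,\alpha_i)$ that symmetrize the two competitor inequalities: this is precisely what requires the \emph{strict} inequalities of {\bf (STI)} (to ensure all $\alpha_p$ are strictly positive, in particular so that the final division by $\alpha_h$ is harmless), and is also what fails in the {\bf (SITI)} case of Proposition~\ref{mainthm5}, where one of the $\alpha_p$ would become negative.
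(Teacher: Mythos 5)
Your proof is correct and follows the same overall architecture as the paper: the minimality inequality from Step~1 of Proposition~\ref{NIPnearlyhom}, the additive decomposition $\sigma_{pq}=\alpha_p+\alpha_q$ from Lemma~\ref{lemmacoeffm=3}, the isoperimetric inequality~\eqref{isoperinieq}, and closure via Lemma~\ref{integineqlemma}. The one place where you genuinely depart from the paper is the algebraic step used to isolate $\mathcal{I}_{2s}(E^*_h\cap D_r,(E^*_h)^c\cap D_r)$: the paper writes \eqref{ineqm3infiltr} for a single competitor index $k=k(h)$ chosen to maximize $\mathcal{I}_{2s}(E^*_h\cap D_r,E^*_k\cap D_r)$, so that the two $\alpha_{k(h)}$--terms cancel favorably; you instead form the convex combination $\alpha_j\cdot(\text{inequality with }k=i)+\alpha_i\cdot(\text{inequality with }k=j)$ and observe that the coefficients of $I_i$ and $I_j$ both collapse to $\alpha_h(\alpha_i+\alpha_j)$. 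Both manipulations produce the same bound $\alpha_h(I_i+I_j)\leq C\Phi(r)$; your averaging trick avoids the case distinction on which $k$ realizes the maximum, at the price of a slightly less transparent weight choice, and yields a comparable (though not identical) explicit constant. Your remark on why this fails under {\bf (SITI)} — one of the $\alpha_p$'s becomes negative, so the weighted combination is no longer a valid inequality and the division by $\alpha_h$ may be undefined or reverse sign — is accurate and matches the reason the paper switches to Proposition~\ref{NIP3ISTI} in that regime.
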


\begin{proof}
The proof departs from \eqref{generineqinfiltrprop} at the end of Step 1, in the proof of Proposition \ref{NIPnearlyhom}. Using Lemma~\ref{lemmacoeffm=3} in \eqref{generineqinfiltrprop}, we obtain (since $m=3$)
\begin{multline*}
(\alpha_h+\alpha_k)\mathcal{I}_{2s}(E^*_h\cap D_r,E^*_k\cap D_r) + (\alpha_h+\alpha_{j(h,k)})\mathcal{I}_{2s}(E^*_h\cap D_r,E^*_{j(h,k)}\cap D_r)\\
\leq (\alpha_k+\alpha_{j(h,k)})\mathcal{I}_{2s}(E^*_h\cap D_r,E^*_{j(h,k)}\cap D_r) +\frac{n\omega_n\boldsymbol{\sigma}_{\rm max}}{2s}\int_0^r \frac{f^\prime(t)}{(r-t)^{2s}}\,\de t \,,
\end{multline*}
where $j(h,k)$ denotes the unique element of $\{1,2,3\}\setminus\{h,k\}$. 
Hence, 
\begin{multline}\label{ineqm3infiltr}
 \alpha_h\,\mathcal{I}_{2s}(E^*_h\cap D_r,(E^*_{h})^c\cap D_r)+ \alpha_k\,\mathcal{I}_{2s}(E^*_h\cap D_r,E^*_k\cap D_r) \\
\leq \alpha_k\,\mathcal{I}_{2s}(E^*_h\cap D_r,E^*_{j(h,k)}\cap D_r) +\frac{n\omega_n\boldsymbol{\sigma}_{\rm max}}{2s}\int_0^r \frac{f^\prime(t)}{(r-t)^{2s}}\,\de t \,.
\end{multline}
Let $k(h)\in\{1,2,3\}\setminus\{h\}$ be the index for which the value of $\mathcal{I}_{2s}(E^*_h\cap D_r,E^*_{k}\cap D_r)$ is maximal (among all $k\in\{1,2,3\}\setminus\{h\}$), so that 
$\mathcal{I}_{2s}(E^*_h\cap D_r,E^*_{j(h,k(h))}\cap D_r)\leq \mathcal{I}_{2s}(E^*_h\cap D_r,E^*_{k(h)}\cap D_r)$. Choosing $k=k(h)$ in \eqref{ineqm3infiltr} leads to 
$$\mathcal{I}_{2s}(E^*_h\cap D_r,(E^*_{h})^c\cap D_r) \leq \frac{n\omega_n\boldsymbol{\sigma}_{\rm max}}{2s \alpha_h}\int_0^r \frac{f^\prime(t)}{(r-t)^{2s}}\,\de t\,. $$
Using \eqref{coareainfiltr} as in Step 2 of the proof of Proposition \ref{NIPnearlyhom}, we deduce that  
$$P_{2s} (E^*_h\cap D_r)\leq \frac{n\omega_n(\alpha_h+\boldsymbol{\sigma}_{\rm max})}{2s \alpha_h}\int_0^r \frac{f^\prime(t)}{(r-t)^{2s}}\,\de t\leq \frac{n\omega_n \boldsymbol{\sigma}_{\rm max}}{s \alpha_{\rm min}}\int_0^r \frac{f^\prime(t)}{(r-t)^{2s}}\,\de t \,,$$
where $\alpha_{\rm min}:=\min_j\alpha_j$. Then, we infer from \eqref{isoperinieq} that
$$  f(r)^{1-\frac{2s}{n}} \leq K \int_0^r \frac{f^\prime(t)}{(r-t)^{2s}}\,\de t\quad\text{with}\quad K:= \frac{n\omega_n^{2-\frac{2s}{n}}\boldsymbol{\sigma}_{\rm max}}{s\alpha_{\rm min}P_{2s}(D_1)}\,.$$
Choosing
$$\mathfrak{p}_2:=\bigg(\frac{s\alpha_{\rm min}(1-2s)P_{2s}(D_1)}{2^{1+n/2s}n\,\omega_n^{2-2s/n}\boldsymbol{\sigma}_{\rm max}}\bigg)^{n/2s} \,,$$
then $f(r)\leq \mathfrak{p}_2 r^n$ implies $f(r/2)=0$, again by Lemma  \ref{integineqlemma}. 
\end{proof}

\begin{proposition}[\bf $m=3$ with (SITI)]\label{NIP3ISTI}
Assume that $m=3$ and that $\sigma_{i_0j_0}>\sigma_{i_0k}+\sigma_{kj_0}$  for some pair $(i_0,j_0)$  of distinct indices. There exists a constant $\mathfrak{p}_3=\mathfrak{p}_3(\boldsymbol{\sigma},s,n)>0$ (depending only on $\boldsymbol{\sigma}$, $s$, and~$n$) such  that for every ball $\overline D_{r}(x_0)\subset\Omega$ and 
 $h\in\{i_0,j_0\}$,  
$$|\{E^*_h\cap D_r(x_0)\}|\leq  \mathfrak{p}_3r^n\quad\Longrightarrow\quad |\{E^*_h\cap D_{r/2}(x_0)\}|=0\,.$$

\begin{proof}
Let us fix $h\in\{i_0,j_0\}$, and write $j(h)$ the element of $\{i_0,j_0\}\setminus\{h\}$, and $k_0$ the element of $\{1,2,3\}\setminus\{i_0,j_0\}$. 
Once again, we start from \eqref{generineqinfiltrprop} at the end of Step 1, in the proof of Proposition \ref{NIPnearlyhom}, which gives for $k=k_0$, 
\begin{multline*}
\sigma_{hk_0}\mathcal{I}_{2s}(E^*_h\cap D_r,E^*_{k_0}\cap D_r) + \sigma_{hj(h)}\mathcal{I}_{2s}(E^*_h\cap D_r,E^*_{j(h)}\cap D_r)\\
\leq \sigma_{k_0j(h)}\mathcal{I}_{2s}(E^*_h\cap D_r,E^*_{j(h)}\cap D_r) +\frac{n\omega_n\boldsymbol{\sigma}_{\rm max}}{2s}\int_0^r \frac{f^\prime(t)}{(r-t)^{2s}}\,\de t \,.
\end{multline*}
Inserting our assumption $\sigma_{hj(h)}>\sigma_{hk_0}+\sigma_{k_0j(h)}$, we derive that 
$$\boldsymbol{\sigma}_{\rm min} \mathcal{I}_{2s}(E^*_h\cap D_r,(E^*_{h})^c\cap D_r) \leq \sigma_{hk_0}\mathcal{I}_{2s}(E^*_h\cap D_r,(E^*_{h})^c\cap D_r)\leq  \frac{n\omega_n\boldsymbol{\sigma}_{\rm max}}{2s}\int_0^r \frac{f^\prime(t)}{(r-t)^{2s}}\,\de t \,. $$
Then we can argue exactly as in the proof of Proposition \ref{NIP3STI} to show that 
$$  f(r)^{1-\frac{2s}{n}} \leq K \int_0^r \frac{f^\prime(t)}{(r-t)^{2s}}\,\de t\quad\text{with}\quad K:= \frac{n\omega_n^{2-\frac{2s}{n}}\boldsymbol{\sigma}_{\rm max}}{s\boldsymbol{\sigma}_{\rm min}P_{2s}(D_1)}\,.$$
Choosing
$$\mathfrak{p}_3:=\bigg(\frac{s\boldsymbol{\sigma}_{\rm min}(1-2s)P_{2s}(D_1)}{2^{1+n/2s}n\,\omega_n^{2-2s/n}\boldsymbol{\sigma}_{\rm max}}\bigg)^{n/2s} \,,$$
leads to the announced result, still by Lemma  \ref{integineqlemma}. 
\end{proof}
\end{proposition}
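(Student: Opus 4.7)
The plan is to mimic the strategy used in Propositions \ref{NIPnearlyhom} and \ref{NIP3STI}, namely to derive a Gronwall-type integro-differential inequality on the volume function
$$f(r):=|E^*_h\cap D_r(x_0)|,$$
and then conclude by the De Giorgi-type iteration of Lemma \ref{integineqlemma}. The crucial observation, which selects a single natural competitor instead of a symmetrized family, is the following. Write $k_0$ for the unique index in $\{1,2,3\}\setminus\{i_0,j_0\}$, and for $h\in\{i_0,j_0\}$ denote by $j(h)$ the other element of $\{i_0,j_0\}$. Since we pay the tensions $\sigma_{h k_0}$ and $\sigma_{k_0 j(h)}$ to separate $E^*_{k_0}$ from its two neighbors, the strict inverse triangle inequality
$$\sigma_{h j(h)}=\sigma_{i_0 j_0}>\sigma_{i_0 k_0}+\sigma_{k_0 j_0}=\sigma_{h k_0}+\sigma_{k_0 j(h)}$$
makes it energetically favorable to empty a small occurrence of $E^*_h$ into $E^*_{k_0}$, so this is the competitor I will use.

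More precisely, I will apply the general inequality \eqref{generineqinfiltrprop} (established in Step 1 of the proof of Proposition \ref{NIPnearlyhom}) with $k=k_0$. Using $\sigma_{k_0 k_0}=0$, the right-hand side collapses to a single interaction term, and one obtains
$$\sigma_{h k_0}\,\mathcal{I}_{2s}(E^*_h\cap D_r,E^*_{k_0}\cap D_r)+\sigma_{hj(h)}\,\mathcal{I}_{2s}(E^*_h\cap D_r,E^*_{j(h)}\cap D_r)$$
$$\leq \sigma_{k_0 j(h)}\,\mathcal{I}_{2s}(E^*_h\cap D_r,E^*_{j(h)}\cap D_r)+\frac{n\omega_n\boldsymbol{\sigma}_{\max}}{2s}\int_0^r\frac{f'(t)}{(r-t)^{2s}}\,\de t.$$
Rearranging and using the inverse triangle inequality $\sigma_{hj(h)}-\sigma_{k_0 j(h)}>\sigma_{h k_0}>0$, both coefficients on the left become at least $\sigma_{h k_0}\geq\boldsymbol{\sigma}_{\min}$, which yields
$$\boldsymbol{\sigma}_{\min}\,\mathcal{I}_{2s}\bigl(E^*_h\cap D_r,(E^*_h)^c\cap D_r\bigr)\leq \frac{n\omega_n\boldsymbol{\sigma}_{\max}}{2s}\int_0^r\frac{f'(t)}{(r-t)^{2s}}\,\de t.$$

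From this, the estimate \eqref{coareainfiltr} applied to $\mathcal{I}_{2s}(E^*_h\cap D_r,D_r^c)$ and the decomposition $(E^*_h\cap D_r)^c=((E^*_h)^c\cap D_r)\cup D_r^c$ yield
$$P_{2s}(E^*_h\cap D_r)\leq \frac{C\,n\omega_n\boldsymbol{\sigma}_{\max}}{s\,\boldsymbol{\sigma}_{\min}}\int_0^r\frac{f'(t)}{(r-t)^{2s}}\,\de t.$$
Combining with the fractional isoperimetric inequality \eqref{isoperinieq} gives
$$f(r)^{1-\frac{2s}{n}}\leq K\int_0^r\frac{f'(t)}{(r-t)^{2s}}\,\de t$$
with $K$ depending only on $n$, $s$, $\boldsymbol{\sigma}_{\min}$, and $\boldsymbol{\sigma}_{\max}$. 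Finally, one reads off an explicit constant $\mathfrak{p}_3$ from Lemma \ref{integineqlemma} (with $\alpha=2s/n$ and $\gamma=2s$) such that $f(r)\leq \mathfrak{p}_3 r^n$ forces $f(r/2)=0$.

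The mildly delicate point, which distinguishes this case from the others, is the asymmetry of the argument: non-infiltration can only be proved for $h\in\{i_0,j_0\}$ because the competitor flows mass into $E^*_{k_0}$, and it is precisely the inverse triangle inequality that makes the tension gap $\sigma_{hj(h)}-\sigma_{k_0 j(h)}$ strictly positive. In particular, no analogous bound can be expected for $h=k_0$, which is consistent with the statement of Theorem \ref{mainthm5} placing $\partial E^*_{i_0}\cap\partial E^*_{j_0}\cap\Omega$ in the singular set.
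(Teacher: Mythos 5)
Your proof is correct and follows essentially the same route as the paper's: you apply \eqref{generineqinfiltrprop} with the single choice $k=k_0$, exploit $\sigma_{k_0k_0}=0$ and the strict inverse triangle inequality $\sigma_{hj(h)}-\sigma_{k_0j(h)}>\sigma_{hk_0}\geq\boldsymbol{\sigma}_{\min}$ to control $\mathcal{I}_{2s}(E^*_h\cap D_r,(E^*_h)^c\cap D_r)$, and then conclude via \eqref{coareainfiltr}, the fractional isoperimetric inequality, and Lemma~\ref{integineqlemma}, exactly as the paper does. Your closing remark correctly pinpoints why the bound is one-sided (the rearranged coefficient for $h=k_0$ would be negative), which is the heart of why $\partial E^*_{i_0}\cap\partial E^*_{j_0}$ ends up in $\Sigma_{\rm sing}$.
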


\subsection{Partial regularity for minimizing partitions}

We now ready to prove the partial regularity results from Theorems \ref{mainthm3} \&  \ref{mainthm4} \&  \ref{mainthm5}. As explained in Subsection~\ref{NIPsect}, the non infiltration property allows us to reduce the regularity problem for the partition to a regularity problem for almost minimizers of the fractional perimeter $P_{2s}$. This is the purpose of the following lemma. 

\begin{lemma}
Let $x_0\in\Omega$ and $r>0$ such that $D_{2r}(x_0)\subset \Omega$. Assume that there is a pair $(h,k)$ of distinct indices such that $E^*_j\cap D_{2r}(x_0)=\emptyset$ 
for each $j\in\{1,\ldots,m\}\setminus\{h,k\}$. Then, 
\begin{equation}\label{minprescribcurvreduc}
P_{2s}\big(E^*_h,D_r(x_0)\big) -\int_{E^*_h\cap D_r(x_0)} f_{hk}(x)\,\de x\leq P_{2s}\big(\widetilde E,D_r(x_0)\big) -\int_{\widetilde E\cap D_r(x_0)} f_{hk}(x)\,\de x
\end{equation}
for every $\widetilde E\subset \R^n$ such that $\widetilde E\triangle E^*_h\subset D_r(x_0)$, where $f_{hk}\in C^\infty(D_{2r}(x_0))$ is given by
\begin{equation}\label{deffcthk}
f_{hk}(x):=\sum_{j\not\in\{h,k\}} \frac{\sigma_{hk}+\sigma_{kj}-\sigma_{hj}}{\sigma_{hk}} \int_{E^*_j}\frac{1}{|x-y|^{n+2s}}\,\de y\,.
\end{equation}
\end{lemma}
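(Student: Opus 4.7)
My plan is to produce an admissible competitor by routing all the local mass change through the phase $k$, reducing the minimality of $\mathfrak{E}^*$ to an almost-minimality statement for $E^*_h$ alone. Given $\widetilde E$ with $\widetilde E\triangle E^*_h\subset D_r(x_0)$, since only phases $h$ and $k$ are present in $D_{2r}(x_0)$ (so that $D_r(x_0)\setminus E^*_h = E^*_k\cap D_r(x_0)$ up to a negligible set), I will define $\mathfrak{F}=(F_1,\ldots,F_m)$ by
\begin{equation*}
F_h:=\widetilde E,\qquad F_k:=(E^*_k\setminus D_r(x_0))\cup(D_r(x_0)\setminus\widetilde E),\qquad F_j:=E^*_j\text{ for }j\notin\{h,k\}.
\end{equation*}
By construction $\mathfrak{F}\in\mathscr{A}_m(\Omega)$ is a genuine partition of $\R^n$, and each $F_j\triangle E^*_j$ is contained in $D_r(x_0)$, which is compactly included in $\Omega$. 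Hence Lemma~\ref{minimlaityinset}, applied with $A=D_r(x_0)$, yields
\begin{equation*}
\mathscr{P}^{\boldsymbol{\sigma}}_{2s}(\mathfrak{E}^*,D_r(x_0))\leq \mathscr{P}^{\boldsymbol{\sigma}}_{2s}(\mathfrak{F},D_r(x_0)).
\end{equation*}

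Next, I will expand both sides using the definitions \eqref{defpersigintro}--\eqref{definterintro}. The key observation is that, since $F_j\cap D_r(x_0)=\emptyset$ for $j\notin\{h,k\}$ (and the same for $\mathfrak{E}^*$), only three families of terms contribute to $\mathscr{P}^{\boldsymbol{\sigma}}_{2s}(\mathfrak{F},D_r(x_0))$: the $(h,k)$ interaction, which recombines exactly into $\sigma_{hk}P_{2s}(\widetilde E,D_r(x_0))$ minus a term of the form $\sigma_{hk}\sum_{j\notin\{h,k\}}\mathcal{I}_{2s}(\widetilde E\cap D_r(x_0),E^*_j)$ (using that $\widetilde E\setminus D_r(x_0)=E^*_h\setminus D_r(x_0)$ and the complementary piece of $\widetilde E^c\setminus D_r(x_0)$ splits along the outer phases); the cross terms $(h,j)$ for $j\notin\{h,k\}$, which give $\sigma_{hj}\mathcal{I}_{2s}(\widetilde E\cap D_r(x_0),E^*_j)$; and the cross terms $(k,j)$, which give $\sigma_{kj}\mathcal{I}_{2s}(D_r(x_0)\setminus\widetilde E,E^*_j)$. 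Writing $\mathcal{I}_{2s}(D_r(x_0)\setminus\widetilde E,E^*_j)=\mathcal{I}_{2s}(D_r(x_0),E^*_j)-\mathcal{I}_{2s}(\widetilde E\cap D_r(x_0),E^*_j)$ and collecting coefficients, all the constant pieces (independent of $\widetilde E$) gather into a single quantity $C$, and the $\widetilde E$-dependent part becomes
\begin{equation*}
\mathscr{P}^{\boldsymbol{\sigma}}_{2s}(\mathfrak{F},D_r(x_0))=\sigma_{hk}\,P_{2s}(\widetilde E,D_r(x_0))-\sum_{j\notin\{h,k\}}(\sigma_{hk}+\sigma_{kj}-\sigma_{hj})\,\mathcal{I}_{2s}(\widetilde E\cap D_r(x_0),E^*_j)+C.
\end{equation*}
Recognizing $f_{hk}$ as defined in \eqref{deffcthk} via Fubini, $\sum_{j\notin\{h,k\}}\frac{\sigma_{hk}+\sigma_{kj}-\sigma_{hj}}{\sigma_{hk}}\mathcal{I}_{2s}(\widetilde E\cap D_r(x_0),E^*_j)=\int_{\widetilde E\cap D_r(x_0)}f_{hk}\,\de x$. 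The same computation performed with $\widetilde E=E^*_h$ expresses $\mathscr{P}^{\boldsymbol{\sigma}}_{2s}(\mathfrak{E}^*,D_r(x_0))$ with the identical constant $C$. Dividing the minimality inequality by $\sigma_{hk}>0$ and subtracting $C/\sigma_{hk}$ from both sides yields exactly \eqref{minprescribcurvreduc}.

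Two side points are worth noting but are essentially routine. First, the smoothness of $f_{hk}$ on $D_{2r}(x_0)$ follows from the fact that each $E^*_j$ ($j\notin\{h,k\}$) is contained in $\R^n\setminus D_{2r}(x_0)$, so the integrand $|x-y|^{-n-2s}$ is smooth in $x\in D_{2r}(x_0)$ and differentiation under the integral sign is justified by the uniform lower bound $|x-y|\geq \rmdist(x,\partial D_{2r}(x_0))$. Second, I should be careful when splitting $\widetilde E^c\setminus D_r(x_0)$ to insist that $\widetilde E^c\setminus D_r(x_0)=(E^*_h)^c\setminus D_r(x_0)$, which follows from $\widetilde E\triangle E^*_h\subset D_r(x_0)$, so that the identity used to reconstruct $P_{2s}(\widetilde E,D_r(x_0))$ from the three pieces is straightforward. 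No real obstacle is expected: the main (mild) bookkeeping challenge is simply to keep track of which cross-interactions survive and to verify that the constant $C$ is truly the same for $\mathfrak{F}$ and for $\mathfrak{E}^*$, which it is because $C$ only involves $\mathcal{I}_{2s}(D_r(x_0),E^*_j)$ for $j\notin\{h,k\}$ and these sets are unchanged.
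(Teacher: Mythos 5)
Your proof is correct and follows essentially the same route as the paper: you construct the identical competitor $\mathfrak{F}$ (routing all local change through the $k$-phase), invoke Lemma~\ref{minimlaityinset} with $A=D_r(x_0)$, and decompose $\mathscr{P}^{\boldsymbol{\sigma}}_{2s}(\mathfrak{F},D_r)$ into $\sigma_{hk}\bigl(P_{2s}(\widetilde E,D_r)-\int_{\widetilde E\cap D_r}f_{hk}\bigr)$ plus the $\widetilde E$-independent constant $\sum_{j\notin\{h,k\}}\sigma_{kj}\mathcal{I}_{2s}(E^*_j,D_r)$, exactly as in the paper's \eqref{rewritpernoinfil1}--\eqref{rewritpernoinfil2}. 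The only difference is cosmetic: you organize the expansion pair-by-pair and compute for general $\widetilde E$ first, specializing to $\widetilde E=E^*_h$ at the end, while the paper first expands $\mathscr{P}^{\boldsymbol{\sigma}}_{2s}(\mathfrak{E}^*,D_r)$ and then observes the same algebra carries over to $\mathfrak{F}$.
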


\begin{proof}
Assume without loss of generality that $x_0=0$. We observe that 
\begin{multline*}
\mathscr{P}^{\boldsymbol{\sigma}}_{2s}(\mathfrak{E}^*,D_r)=\sigma_{hk}\,\mathcal{I}_{2s}(E^*_h\cap D_r,E^*_k\cap D_r)
+\sum_{j\not=h} \sigma_{hj}\,\mathcal{I}_{2s}(E^*_h\cap D_r,E^*_j\cap D^c_r)\\
+ \sum_{j\not=k} \sigma_{kj}\,\mathcal{I}_{2s}(E^*_j\cap D^c_r,E^*_k\cap D_r)\,.
\end{multline*}
Since $E^*_k\cap D_r=(E^*_h)^c\cap D_r$ and $E^*_j\cap D_r^c=E^*_j$ for $j\not\in\{h,k\}$, we have 
\begin{multline*}
\mathscr{P}^{\boldsymbol{\sigma}}_{2s}(\mathfrak{E}^*,D_r)=\sigma_{hk}\,\mathcal{I}_{2s}(E^*_h\cap D_r,(E^*_h)^c\cap D_r) +  \sigma_{hk}\,\mathcal{I}_{2s}(E^*_h\cap D_r,E^*_k\cap D^c_r)
\\
+ \sigma_{hk}\,\mathcal{I}_{2s}(E^*_h\cap D^c_r,(E^*_h)^c\cap D_r)+\sum_{j\not\in\{h,k\}} \sigma_{hj}\,\mathcal{I}_{2s}(E^*_h\cap D_r,E^*_j)+ \sum_{j\not\in\{h,k\}} \sigma_{kj}\,\mathcal{I}_{2s}(E^*_j,(E^*_h)^c\cap D_r)\,,
\end{multline*}
that we can further rewrite as 
\begin{multline*}
\mathscr{P}^{\boldsymbol{\sigma}}_{2s}(\mathfrak{E}^*,D_r)=\sigma_{hk} P_{2s}(E^*_h,D_r)+\sum_{j\not\in\{h,k\}} (\sigma_{hj}-\sigma_{hk})\,\mathcal{I}_{2s}(E^*_h\cap D_r,E^*_j)\\
+ \sum_{j\not\in\{h,k\}} \sigma_{kj}\,\mathcal{I}_{2s}(E^*_j,(E^*_h)^c\cap D_r)\,.
\end{multline*}
Then we notice that 
$$ \mathcal{I}_{2s}(E^*_j,(E^*_h)^c\cap D_r)=\mathcal{I}_{2s}(E^*_j,D_r)-\mathcal{I}_{2s}(E^*_j,E^*_h\cap D_r)\,, $$
which leads to 
\begin{equation}\label{rewritpernoinfil1}
\mathscr{P}^{\boldsymbol{\sigma}}_{2s}(\mathfrak{E}^*,D_r)=\sigma_{hk}\Big(P_{2s}(E^*_h,D_r)-\int_{E^*_h\cap D_r(x_0)} f_{hk}(x)\,\de x\Big)
+ \sum_{j\not\in\{h,k\}} \sigma_{kj}\,\mathcal{I}_{2s}(E^*_j,D_r)\,.
\end{equation}
Now we consider an arbitrary set $\widetilde E\subset\R^n$ such that $\widetilde E\triangle E^*_h\subset D_r$.  We define the competitor $\mathfrak{F}=(F_1,\ldots,F_m)$  by 
$$F_j:=\begin{cases} 
E^*_j & \text{if $j\not\in\{h,k\}$}\,,\\
\widetilde E & \text{if $j=h$}\,,\\
(E^*_k\setminus D_r)\cup (\widetilde E^c\cap D_r) & \text{if $j=k$}\,.
\end{cases}$$
By construction, we have $F_j\triangle E^*_j=\emptyset$ for $j\not\in\{h,k\}$, and $F_j\triangle E^*_j\subset D_r$ for $j\in\{h,k\}$. Hence Lemma~\ref{minimlaityinset} applies and we deduce that
\begin{equation}\label{miniminBinfil}
\mathscr{P}^{\boldsymbol{\sigma}}_{2s}(\mathfrak{E}^*,D_r)\leq \mathscr{P}^{\boldsymbol{\sigma}}_{2s}(\mathfrak{F},D_r)\,.
\end{equation}
On the other hand, $F_j\cap D_r=\emptyset$ and $F_j=E^*_j$ for $j\not\in\{h,k\}$ so that the computations above apply and leads to 
\begin{equation}\label{rewritpernoinfil2}
\mathscr{P}^{\boldsymbol{\sigma}}_{2s}(\mathfrak{F},D_r)=\sigma_{hk}\Big(P_{2s}(\widetilde E,D_r)-\int_{\widetilde E\cap D_r(x_0)} f_{hk}(x)\,\de x\Big)
+ \sum_{j\not\in\{h,k\}} \sigma_{kj}\,\mathcal{I}_{2s}(E^*_j,D_r)\,.
\end{equation}
Combining \eqref{rewritpernoinfil1}, \eqref{miniminBinfil}, and \eqref{rewritpernoinfil2} yields the announced result. 
\end{proof}

\begin{remark}
The argument of the previous proof also applies to the stationary case. More precisely, 
if $\mathfrak{E}=(E_1,\ldots,E_m)\in\mathscr{A}_m(\Omega)$ is a stationary  point in  $\Omega$ of $\mathscr{P}^{\boldsymbol{\sigma}}_{2s}$ (i.e., a solution to equation \eqref{SectParteq1}) satisfying $E_j\cap D_{2r}(x_0)=\emptyset$ for each $j\in\{1,\ldots,m\}\setminus\{h,k\}$ and some pair  $(h,k)$ of distinct indices, then 
\begin{equation}\label{firstvar2phases}
\delta P_{2s}\big(E_h,D_r(x_0)\big)[X]=\int_{E_h\cap D_r(x_0)}{\rm div}(f_{hk}X)\,\de x \qquad\forall X\in C_c^1(D_r(x_0);\R^n)\,.
\end{equation}
Indeed, given $X\in C_c^1(D_r(x_0);\R^n)$, its integral flow $\{\phi_t\}_{t\in\R}$ satisfies ${\rm spt}(\phi_t-{\rm id})\subset D_r(x_0)$. Therefore, $\phi_t(E_j)=E_j$ for $j\not\in\{h,k\}$, and $\phi_t(E_j)\triangle E_j\subset D_r(x_0)$ for $j\in\{h,k\}$.  
Computing as in the proof of Lemma \ref{minimlaityinset} and \eqref{rewritpernoinfil1}-\eqref{rewritpernoinfil2}, we realize that 
$\mathfrak{E}_t:=(\phi_t(E_1),\ldots,\phi_t(E_m))$ satisfies for every $t\in\R$, 
\begin{align*}
\mathscr{P}^{\boldsymbol{\sigma}}_{2s}(\mathfrak{E}_t,\Omega) -\mathscr{P}^{\boldsymbol{\sigma}}_{2s}(\mathfrak{E},\Omega)=\,&\mathscr{P}^{\boldsymbol{\sigma}}_{2s}\big(\mathfrak{E}_t,D_r(x_0)\big) -\mathscr{P}^{\boldsymbol{\sigma}}_{2s}\big(\mathfrak{E},D_r(x_0)\big)\\
=\,&\sigma_{hk}\Big(P_{2s}\big(\phi_t(E_h),D_r(x_0)\big))-P_{2s}\big(E_h,D_r(x_0)\big)\\
&\qquad -\int_{\phi_t(E_h)\cap D_r(x_0)} f_{hk}(x)\,\de x+\int_{E_h\cap D_r(x_0)} f_{hk}(x)\,\de x\Big)\,. 
\end{align*}
Taking the derivative at $t=0$ yields 
$$0=\delta \mathscr{P}^{\boldsymbol{\sigma}}_{2s}(\mathfrak{E},\Omega)[X]=\sigma_{hk}\Big(\delta P_{2s}\big(E_h,D_r(x_0)\big)[X] - \int_{E_h\cap D_r(x_0)} {\rm div}(f_{hk}X)\,\de x\Big)\,, $$
and \eqref{firstvar2phases} follows. 
\end{remark}
\vskip5pt
\begin{remark}\label{RemELeqpartit}
Relation \eqref{firstvar2phases} corresponds to the weak (or distributional) formulation of the Euler-Lagrange equation 
\begin{equation}\label{meancurveqrem}
{\rm H}^{(2s)}_{\partial E_h}=f_{hk} \quad \text{on $\partial E_h\cap D_r(x_0)$}\,,
\end{equation}
where ${\rm H}^{(2s)}_{\partial E_h}$ is the {\sl nonlocal mean curvature} of  $\partial E_h$ defined by 
\begin{equation}\label{defH2Srem}
{\rm H}^{(2s)}_{\partial E_h}(x)={\rm p.v.}\;\int_{\R^n}\frac{\chi_{E_h^c}(y)-\chi_{E_h}(y)}{|x-y|^{n+2s}}\,\de y\,, \quad x\in \partial E_h\,,
\end{equation}
and the notation p.v. means that integral is taken in the principal value sense. 

Indeed, if $\partial E_h\cap D_r(x_0)$ is a smooth hypersurface, then \cite[Theorem 6.1]{FFMMM} shows that 
$$ \delta P_{2s}\big(E_h,D_r(x_0)\big)[X]=\int_{\partial E_h\cap D_r(x_0)}{\rm H}^{(2s)}_{\partial E_h} \nu_{\rm ext}\cdot X\,\de\mathcal{H}^{n-1}\,,$$
where  $\nu_{\rm ext}$ denotes the unit exterior normal field on $\partial E_h\cap D_r(x_0)$. Hence, choosing $X$ of the form $X=w \,\nu_{ext}$  with $w\in C^\infty_c(D_r(x_0))$ in \eqref{firstvar2phases} and integrating by parts the right hand side yields 
$$\int_{\partial E_h\cap D_r(x_0)}{\rm H}^{(2s)}_{\partial E_h}w\,\de\mathcal{H}^{n-1} = \int_{\partial E_h\cap D_r(x_0)}f_{hk}w\,\de\mathcal{H}^{n-1}\,,$$
and \eqref{meancurveqrem} follows. 
\end{remark}

\vskip5pt

\begin{proof}[Proof of Theorem \ref{mainthm3}] 
Applying the results from Subsection \ref{complrestprescrNMMC}, we have the decomposition $\partial\mathfrak{E}^*\cap\Omega=(\Sigma_{\rm reg}(u_*^\e)\cap\Omega)\cup \Sigma_{\rm sing}$, 
where $\Sigma_{\rm reg}(u_*^\e)$ is defined by \eqref{defsigmareg} and $\Sigma_{\rm sing}:={\rm Sing}^{n-2}(u_*^\e)$.  By Theorem~\ref{dimMink}, the set $\Sigma_{\rm sing}$ has a Hausdorff dimension less than $(n-2)$, and it is countable for $n=2$. We aim to prove that, for each point $x_0\in \Sigma_{\rm reg}(u_*^\e)\cap\Omega$, 
$\partial\mathfrak{E}^*\cap\Omega$ is a smooth hypersurface in a neighborhood of~$x_0$. Obviously, this will imply that $\Sigma_{\rm sing}$ is relatively closed subset of $\partial\mathfrak{E}^*\cap\Omega$, which must be locally finite in $\Omega$ for $n=2$. 
\vskip5pt

We fix $x_0\in \Sigma_{\rm reg}(u_*^\e)\cap\Omega$, and we first apply Proposition \ref{regblowupset} to obtain a sequence of radii $\rho_l\to 0$ as $l\to\infty$, two distinct indices $h,k\in\{1,\ldots,m\}$ and a half space $H\subset \R^n$ with $0\in\partial H$ such that 
\begin{equation}\label{blowuptoplaneprf}
(u_*)_{x_0,\rho_l}\to  \varphi_0:=\chi_{{\rm int}(H)}{\bf a}_{h}+\chi_{{\rm int}(H^c)}{\bf a}_{k}\quad \text{in $L^1(D_4)$}\,,
\end{equation}
and 
\begin{equation}\label{blowuptoplaneprf2}
\Sigma\big((u^\e_*)_{x_0,\rho_l}\big)\to \partial H\quad\text{locally uniformly in $\R^n$}\,. 
\end{equation}
Without loss of generality, we may assume that $H=\{x_n<0\}$, $\rho_0\leq 1$, that the sequence $\{\rho_l\}$ is non increasing, and that $D_{5\rho_l}(x_0)\subset\Omega$ for each $l$. 
\vskip3pt

Let us write 
$$\mathfrak{E}^*_{x_0,\rho_l}:=\big(({E}^*_1)_{x_0,\rho_l},\ldots,({E}^*_m)_{x_0,\rho_l}\big)\text{ with }({E}^*_j)_{x_0,\rho_l}:=(E^*_j-x_0)/\rho_l\,.$$ 
For an arbitrary index $j\in\{1,\ldots,m\}\setminus\{h,k\}$, we consider the Lipschitz map ${\rm p}_j$ defined in \eqref{projtrickEj}. Noticing that  ${\rm p}_j(\varphi_0)=0$,  we deduce from \eqref{blowuptoplaneprf} that 
$$\big|({E}^*_j)_{x_0,\rho_l}\cap D_4\big|=\big\|{\rm p}_j\big((u_*)_{x_0,\rho_l}\big)-{\rm p}_j(\varphi_0)\big\|_{L^1(D_4)}\leq \frac{1}{\boldsymbol{\sigma}_{\rm min}^{1/2}} 
\|(u_*)_{x_0,\rho_l}-\varphi_0\|_{L^1(D_4)}\mathop{\longrightarrow}\limits_{l\to\infty}0\,,$$
Therefore, we can find $l_0=l_0(x_0)\in\mathbb{N}$ such that for $l\geq l_0$, 
$$\big|({E}^*_j)_{x_0,\rho_l}\cap D_4\big|\leq \mathfrak{p}_14^n\qquad\forall j\in\{1,\ldots,m\}\setminus\{h,k\}\,,$$
 where the constant $ \mathfrak{p}_1>0$  is given by Proposition \ref{NIPnearlyhom}. Scaling back, we infer that for $l\geq l_0$, 
 $$\big|{E}^*_j\cap D_{4\rho_l}(x_0)\big|\leq \mathfrak{p}_1(4\rho_l)^n\qquad\forall j\in\{1,\ldots,m\}\setminus\{h,k\}\,.$$
 By Proposition \ref{NIPnearlyhom}, it implies that $\big|{E}^*_j\cap D_{2\rho_l}(x_0)\big|=0$ for every $j\in\{1,\ldots,m\}\setminus\{h,k\}$ and $l\geq l_0$. Since each ${E}^*_j\cap\Omega$ is an open set, we conclude that for $l\geq l_0$,
 \begin{equation}\label{cleaninfilproof}
 E^*_j\cap D_{2\rho_l}(x_0)=\emptyset  \qquad\forall j\in\{1,\ldots,m\}\setminus\{h,k\}\,.
 \end{equation}
 Applying Proposition \ref{NIPnearlyhom}, we deduce that ${E}^*_h$ satisfies \eqref{minprescribcurvreduc} with $r=\rho_l$ for $l\geq l_0$. Rescaling variables again, we infer that, for $l\geq l_0$, the set 
  $({E}^*_h)_{x_0,\rho_l}$ satisfies 
  \begin{equation}\label{minimblowupprf}
  P_{2s}\big(({E}^*_h)_{x_0,\rho_l}, D_1\big)-\int_{({E}^*_h)_{x_0,\rho_l}\cap D_1}\widehat f_{l}(x)\,\de x\leq  P_{2s}\big(\widetilde E, D_1\big)-\int_{\widetilde E\cap D_1}\widehat f_{l}(x)\,\de x
  \end{equation}
 for every $\widetilde E\subset \R^n$ such that $\widetilde E\triangle ({E}^*_h)_{x_0,\rho_l}\subset D_1$, with 
 $$\widehat f_{l}(x):=\rho_l^{2s}f_{hk}(x_0+\rho_l x)\quad\text{and $f_{hk}$ given by \eqref{deffcthk}}\,.$$
 Since $\rho_l\leq 1$ and $\{\rho_l\}$ is non increasing, we infer from \eqref{cleaninfilproof} that for $l\geq l_0$, 
 \begin{equation}\label{linftyestprescrcurvblowup}
 \| \widehat f_{l}\|_{L^\infty(D_1)}\leq \|f_{hk}\|_{L^\infty(D_{\rho_{l_0}}(x_0))} \leq C(\rho_{l_0})^{-2s}\,,
 \end{equation}
 for a constant $C$ depending only on $n$, $s$, and $\boldsymbol{\sigma}$. 
 
 Moreover, 
 as a further consequence of \eqref{cleaninfilproof}, we have for $l\geq l_0$,  
 $$\Sigma(u_*^\e)\cap D_{2\rho_l}(x_0)=\partial\mathfrak{E}^*\cap D_{2\rho_l}(x_0)=\partial E^*_h\cap D_{2\rho_l}(x_0)\,.$$ 
 In view of \eqref{blowuptoplaneprf2}, it implies that $\partial ({E}^*_h)_{x_0,\rho_l}\to \partial H=\{x_n=0\}$ locally uniformly in $D_2$. In particular, we can find $l_1\geq l_0$ such that for $\rho_*:=\rho_{l_1}$,  
 $$\text{$({E}^*_h)_{x_0,\rho_{*}}$ satisfies \eqref{minimblowupprf} with $l=l_1$, and }\,   \partial({E}^*_h)_{x_0,\rho_{*}} \cap \overline D_1\subset  \big\{|x_n|\leq \boldsymbol{\varepsilon}_0\big \}\,,$$
 where the constant $\boldsymbol{\varepsilon}_0>0$ is given by \cite[Theorem 1.16]{DVV} and it only depends on $n$, $s$, $\boldsymbol{\sigma}$, and $\rho_{l_0}$ thanks to \eqref{linftyestprescrcurvblowup}.  According to \cite[Theorem 1.16]{DVV}, $\partial ({E}^*_h)_{x_0,\rho_{*}}\cap D_{1/2}$ is $C^{1,\alpha}$-hypersurface for every $\alpha\in(0,2s)$.  Scaling back, we conclude that 
 \begin{equation}\label{firstregapriori}
 \text{$\partial {E}^*_h\cap D_{\rho_*/2}(x_0)$ is a $C^{1,\alpha}$-hypersurface for every $\alpha\in(0,2s)$.} 
 \end{equation}
Since ${E}^*_h$ satisfies \eqref{minprescribcurvreduc} for $r=\rho_*/2$,  according to \cite{CapGui,DVV}, the following equation 
$${\rm H}^{(2s)}_{\partial E^*_h}= f_{hk} \quad\text{on $\partial {E}^*_h\cap D_{\rho_*/2}(x_0)$}\,,$$
holds in a certain viscosity sense (defined in \cite{CapGui,DVV}, see also the original article \cite{CRS} in the homogeneous case), where ${\rm H}^{(2s)}_{\partial E^*_h}$ is the nonlocal mean curvature of $\partial E^*_h$ defined in \eqref{defH2Srem}. By the a priori regularity \eqref{firstregapriori} and the fact that $f_{hk}$ is smooth in $ D_{\rho_1}(x_0)$, one can apply the higher order regularity result  in \cite[Theorem 5]{BFV} to conclude that 
$\partial\mathfrak{E}^*\cap D_{r}(x_0)=\partial {E}^*_h\cap D_{r}(x_0)$ is a $C^{\infty}$-hypersurface for some $r\in(0,\rho_*/2)$. 
\end{proof}

\begin{proof}[Proof of Theorem \ref{mainthm4}] 
The proof  follows exactly the proof of Theorem \ref{mainthm3} replacing Proposition  \ref{NIPnearlyhom} by Proposition \ref{NIP3STI}. 
\end{proof}

To handle the last case where $m=3$ and {\rm ({\bf SITI})} holds, we need the following general fact about tangent maps in the minimizing case. 

\begin{lemma}\label{lemmeminimtangmap}
If $u_*\in \widehat H^s(\Omega;\mathcal{Z}_{\boldsymbol{\sigma}})$ is  a $\mathcal{E}_s$-minimizing map into $\mathcal{Z}_{\boldsymbol{\sigma}}$ in $\Omega$, then for every $x_0\in \Omega$, (the restriction to $\R^n$ of) any tangent map $\varphi\in T_{x_0}(u_*^\e)$ is a $\mathcal{E}_s$-minimizing map into $\mathcal{Z}_{\boldsymbol{\sigma}}$ in $D_1$. 
\end{lemma}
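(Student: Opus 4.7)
The plan is to adapt the classical ``minimality passes to blow-up limits'' argument to the nonlocal setting. The argument rests on three ingredients: scaling invariance of minimality, a localization property of the nonlocal energy, and a competitor-grafting construction.

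First, I would check that scaling preserves minimality. By change of variables, $\mathcal{E}_s(u_*,\Omega) = \rho^{n-2s}\,\mathcal{E}_s(u_{x_0,\rho}, \Omega_\rho)$ where $u_{x_0,\rho}(x) := u_*(x_0+\rho x)$ and $\Omega_\rho := (\Omega-x_0)/\rho$. Since a competitor $\tilde w$ for $u_{x_0,\rho}$ in $\Omega_\rho$ corresponds, via the inverse rescaling, to a competitor for $u_*$ in $\Omega$ with the same compact-support property, minimality of $u_*$ in $\Omega$ transfers to minimality of $u_{x_0,\rho}$ in $\Omega_\rho$. Note that $\Omega_{\rho_k}$ eventually contains $\overline{D_1}$ (in fact, any fixed compact set) as $\rho_k \to 0$.

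Second, I would record a localization property for the nonlocal energy. For $D \subset U$ and any $\tilde u$ with $\tilde u = u$ on $U\setminus D$ and on $U^c$, decomposing the integrals appearing in $\mathcal{E}_s(\cdot,U)$ into pieces $D\times D$, $D\times(U\setminus D)$, $D\times U^c$, $(U\setminus D)\times(U\setminus D)$, $(U\setminus D)\times U^c$, and recognizing $D^c = (U\setminus D)\cup U^c$, yields the identity
$$\mathcal{E}_s(u,U) - \mathcal{E}_s(\tilde u,U) = \mathcal{E}_s(u,D) - \mathcal{E}_s(\tilde u,D).$$
Consequently, minimality in $U$ implies $\mathcal{E}_s(u,D) \leq \mathcal{E}_s(\tilde u,D)$ for every such $\tilde u$.

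Third, given $\varphi \in T_{x_0}(u_*^\e)$, obtained as the strong $H^1_{\rm loc}(\overline{\mathbb{R}^{n+1}_+},|z|^a\de\mathbf{x})$ limit of $(u_*^\e)_{x_0,\rho_k}$ along some subsequence $\rho_k\downarrow 0$ (Lemma~\ref{tangmapstat}), the compact trace embedding \eqref{compactembL1trace} provides $u_{x_0,\rho_k}\to \varphi|_{\mathbb{R}^n}$ in $L^1_{\rm loc}(\mathbb{R}^n)$, and therefore a.e.\ along a further subsequence. Given a competitor $\tilde v \in \widehat H^s(D_1;\mathcal{Z}_{\boldsymbol{\sigma}})$ with $\tilde v - \varphi|_{\mathbb{R}^n}$ compactly supported in $D_1$, I would define the grafted competitor
$$\tilde u_k := \chi_{D_1}\tilde v + \chi_{D_1^c}\, u_{x_0,\rho_k},$$
which is still $\mathcal{Z}_{\boldsymbol{\sigma}}$-valued, satisfies $\tilde u_k = u_{x_0,\rho_k}$ off $\overline{D_1}\subset\Omega_{\rho_k}$ for $k$ large, and has finite $\mathcal{E}_s$ energy on $\Omega_{\rho_k}$ because both pieces do. Steps one and two together give $\mathcal{E}_s(u_{x_0,\rho_k},D_1) \leq \mathcal{E}_s(\tilde u_k,D_1)$. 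Passing to the liminf on the left via Fatou's lemma applied to the (nonnegative) integrand of $\mathcal{E}_s(\cdot,D_1)$, and passing to the limit on the right, yields $\mathcal{E}_s(\varphi|_{\mathbb{R}^n},D_1) \leq \mathcal{E}_s(\tilde v,D_1)$, as desired.

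The main technical obstacle is the limit on the right: the only $k$-dependence of $\mathcal{E}_s(\tilde u_k,D_1)$ sits in the cross-term
$$\frac{\gamma_{n,s}}{2}\iint_{D_1\times D_1^c}\frac{|\tilde v(x) - u_{x_0,\rho_k}(y)|^2}{|x-y|^{n+2s}}\,\de x\,\de y,$$
whose integration domain is unbounded while $u_{x_0,\rho_k}\to\varphi|_{\mathbb{R}^n}$ only locally. The rescue is the uniform bound $|u_{x_0,\rho_k}|,|\varphi|\leq |\mathbf{a}|_{\max}$ from Lemma~\ref{relattangmap}, which dominates the difference of the two kernels by a constant multiple of $\chi_{D_1\times D_1^c}(x,y)/|x-y|^{n+2s}$; integrability of this kernel (i.e.\ $P_{2s}(D_1)<\infty$ for $s<1/2$) together with a.e.\ convergence enables dominated convergence and closes the argument.
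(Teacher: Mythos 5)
Your proposal is correct and follows essentially the same strategy as the paper's own proof: rescale to transfer minimality of $u_*$ in $\Omega$ to minimality of $u_{x_0,\rho_k}$ in $\Omega_{\rho_k}$, graft the competitor onto the rescaled solution, invoke the localization identity for $\mathcal{E}_s$, and pass to the limit using the uniform $\mathcal{Z}_{\boldsymbol{\sigma}}$-bound together with the integrability of $|x-y|^{-n-2s}$ over $D_1\times D_1^c$. The only noticeable deviation is at the final limit: the paper fixes an intermediate radius $r\in(0,1)$ with $\operatorname{spt}(v-\varphi)\subset D_r$, grafts on $D_r$, and then uses the strong $H^s(D_r)$ convergence of $u_k\to\varphi$ (from the compact trace embedding and \cite[Lemma~2.9]{MilPegSch}) to pass to the limit in $[u_k]^2_{H^s(D_r)}$; you instead graft on $D_1$ directly and use Fatou on the left-hand side of the inequality, which only requires the a.e.\ convergence you already extracted. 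This is a slightly more economical variant (it avoids invoking the strong $H^s$ convergence and the auxiliary radius), buys nothing substantive, but works because the inequality faces the right direction for Fatou. Both routes are valid.
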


\begin{proof}
Let $x_0\in \Omega$ and $\varphi\in T_{x_0}(u_*^\e)$ an arbitrary tangent map.  By the very definition of tangent map and Lemma \ref{tangmapstat}, there exists a sequence of radii $\rho_k\to 0$ such that the rescaled maps $u_k:=(u_*^\e)_{x_0,\rho_k}$ (as defined in \eqref{defrescmap}) strongly converges in $H^1(B_r^+,|z|^a\de {\bf x})$ for every $r>0$. By \cite[Lemma 2.9]{MilPegSch}, it implies that $u_k\to \varphi$ strongly in $H^s(D_r)$ for every $r>0$. By the embedding $H^s(D_r)\hookrightarrow L^1(D_r)$, we have $u_k\to \varphi$ strongly in $L^1_{\rm loc}(\R^n)$, and thus $u_k\to \varphi$ a.e. in $\R^n$ extracting a subsequence if necessary. 

We now fix $r\in(0,1)$ and $v\in  \widehat H^s(D_1;\mathcal{Z}_{\boldsymbol{\sigma}})$ such that ${\rm spt}(v-\varphi_0)\subset D_r$. We define a competitor $v_k\in  \widehat H^s(D_1;\mathcal{Z}_{\boldsymbol{\sigma}})$ by setting $v_k(x):=v(x)$ for $|x|<r$, and $v_k(x):=u_k(x)$ for $|x|>r$ (here we use that $P_{2s}(D_r,\R^n)<\infty$). Since ${\rm spt}(v_k -u_k)\subset D_1$, the minimality of $u_*$ and a rescaling of variables yield
\begin{equation}\label{mintangmapineq}
\mathcal{E}_s(v_k,D_r)- \mathcal{E}_s(u_k,D_r)=  \mathcal{E}_s(v_k,D_1)- \mathcal{E}_s(u_k,D_1)\geq 0\,.
\end{equation}
We have 
$$ \mathcal{E}_s(v_k,D_r)=\frac{1}{2}[v]^2_{H^s(D_r)} +\frac{\gamma_{n,s}}{2}\iint_{D_r\times D_r^c}\frac{|v(x)-u_k(y)|^2}{|x-y|^{n+2s}}\,\de x\de y\,,$$
and 
$$ \mathcal{E}_s(u_k,D_r)=\frac{1}{2}[u_k]^2_{H^s(D_r)} +\frac{\gamma_{n,s}}{2}\iint_{D_r\times D_r^c}\frac{|u_k(x)-u_k(y)|^2}{|x-y|^{n+2s}}\,\de x\de y\,.$$
Since 
$$ \frac{|v(x)-u_k(y)|^2}{|x-y|^{n+2s}} +\frac{|u_k(x)-u_k(y)|^2}{|x-y|^{n+2s}}\leq \frac{2\boldsymbol{\sigma}_{\rm max}}{|x-y|^{n+2s}} \in L^1(D_r\times D_r^c)\,,$$
by dominated convergence and the fact that  $u_k\to \varphi$ strongly in $H^s(D_r)$, we have
$$ \mathcal{E}_s(v_k,D_r)\to  \mathcal{E}_s(v,D_r) \quad\text{and}\quad \mathcal{E}_s(u_k,D_r)\to  \mathcal{E}_s(\varphi,D_r)\,. $$
Letting $k\to\infty$ in \eqref{mintangmapineq} yields
$$0\leq  \mathcal{E}_s(v,D_r)- \mathcal{E}_s(\varphi,D_r)=  \mathcal{E}_s(v,D_1)- \mathcal{E}_s(\varphi,D_1)\,, $$
and the minimality of $\varphi$ is proven. 
\end{proof}

From the previous lemma, we may now prove that certain $\mathcal{Z}_{\boldsymbol{\sigma}}$-cones do not appear as tangent maps in the minimizing case if {\rm ({\bf SITI})} holds, 
at least for $s$ close enough to $1/2$. 

\begin{lemma}\label{mintangmapclassm3}
Assume that $m=3$ and that condition {\rm ({\bf SITI})} holds: $\sigma_{i_0j_0}>\sigma_{i_0k_0}+\sigma_{k_0j_0}$  for some indices $\{i_0,j_0,k_0\}=\{1,2,3\}$. Then, there exists $s_*=s_*(\boldsymbol{\sigma},n)\in(0,1/2)$ such that if $s\in(s_*,1/2)$, at  every $x_0\in\Sigma_{\rm reg}(u_*^\e)$, any tangent map 
 $\varphi\in T_{x_0}(u_*^\e)$ is of the form  
 $$\varphi=\chi_{{\rm int}(H)} {\bf a}_{h}+\chi_{{\rm int}(H^c)}{\bf a}_{k_0}\quad\text{on $\R^n$}\,,$$
 for some $h\in\{i_0,j_0\}$ and  some half space $H\subset \R^n$ with $0\in\partial H$. 
\end{lemma}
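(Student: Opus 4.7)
My plan is to argue in three steps combining the existence of a half-plane tangent map, a slab-insertion competitor, and the non-infiltration Proposition~\ref{NIP3ISTI} together with an $\varepsilon$-regularity argument. Fix $x_0 \in \Sigma_{\rm reg}(u_*^\e)$. By Proposition~\ref{regblowupset} there exist a sequence $\rho_\ell \downarrow 0$, indices $h_0 \neq k_0' \in \{1,2,3\}$, and a half-space $H_0$ with $0 \in \partial H_0$ such that $(u_*)_{x_0,\rho_\ell} \to \varphi_0 := \chi_{{\rm int}(H_0)}\mathbf{a}_{h_0} + \chi_{{\rm int}(H_0^c)}\mathbf{a}_{k_0'}$ in $L^1_{\rm loc}(\R^n)$ and $\Sigma\big((u_*^\e)_{x_0,\rho_\ell}\big) \to \partial H_0$ locally uniformly, and by Lemma~\ref{lemmeminimtangmap} the map $\varphi_0$ is $\mathcal{E}_s$-minimizing into $\mathcal{Z}_{\boldsymbol{\sigma}}$ in $D_1$.

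The main step is to show that $\{h_0, k_0'\} \neq \{i_0, j_0\}$ for $s$ sufficiently close to $1/2$. Arguing by contradiction and using rotation invariance to take $H_0 = \{x_n < 0\}$, I insert a compactly supported slab of the middle phase $k_0$: define $\widetilde\varphi = \mathbf{a}_{k_0}$ on $D_{1/2} \cap \{|x_n|<\delta\}$ and $\widetilde\varphi = \varphi_0$ elsewhere. By \eqref{basdec2} the minimality of $\varphi_0$ forces $\mathscr{P}^{\boldsymbol{\sigma}}_{2s}(\mathfrak{E}_{\varphi_0},D_1) \leq \mathscr{P}^{\boldsymbol{\sigma}}_{2s}(\mathfrak{F}_{\widetilde\varphi},D_1)$. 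Using the asymptotic $(1-2s)P_{2s}(E,D_1) \to c_n\,\mathrm{Per}(E,D_1)$ from \cite{ADPM,CafVal,Dav}, and observing that the cross-interaction $\sigma_{i_0j_0}\,\mathcal{I}_{2s}(F_{i_0}\cap D_{1/2}, F_{j_0}\cap D_{1/2})$ vanishes in the $(1-2s)$-rescaled limit since the two sets are separated by a strip of width $2\delta$, the inner sum reduces in the limit to the interior-perimeter contributions $\sigma_{i_0k_0} + \sigma_{k_0j_0}$ across $\{x_n = \pm\delta\} \cap D_{1/2}$ on the competitor side versus $\sigma_{i_0j_0}$ across $\{x_n = 0\} \cap D_{1/2}$ on the $\varphi_0$ side. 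Explicitly,
\[
\lim_{s\uparrow 1/2}(1-2s)\Big[\mathscr{P}^{\boldsymbol{\sigma}}_{2s}(\mathfrak{F}_{\widetilde\varphi},D_1) - \mathscr{P}^{\boldsymbol{\sigma}}_{2s}(\mathfrak{E}_{\varphi_0},D_1)\Big] = c_n\,\omega_{n-1}\,2^{-(n-1)}\big(\sigma_{i_0k_0}+\sigma_{k_0j_0}-\sigma_{i_0j_0}\big) + O(\delta^2),
\]
which is strictly negative by \textbf{(SITI)} once $\delta$ is small. Fixing $\delta$ in terms of the (SITI)-gap (hence of $\boldsymbol{\sigma}$) and then $s_* = s_*(\boldsymbol{\sigma},n) \in (0,1/2)$ close enough to $1/2$ contradicts minimality for $s \in (s_*,1/2)$, so $k_0 \in \{h_0, k_0'\}$. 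Writing $h \in \{i_0,j_0\}$ for the other index and $j_* := \{i_0,j_0\}\setminus\{h\}$ for the absent phase, $\varphi_0$ already has the claimed form.

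To upgrade this to \emph{every} tangent map, I apply the non-infiltration Proposition~\ref{NIP3ISTI} to $j_*$: the $L^1_{\rm loc}$-convergence $(u_*)_{x_0,\rho_\ell} \to \varphi_0$ composed with the $\boldsymbol{\sigma}_{\rm min}^{-1/2}$-Lipschitz projection $\mathrm{p}_{j_*}$ from \eqref{projtrickEj} yields $|E^*_{j_*}\cap D_{4\rho_\ell}(x_0)|/(4\rho_\ell)^n \to 0$, so Proposition~\ref{NIP3ISTI} at scale $4\rho_\ell$ gives $E^*_{j_*}\cap D_{2\rho_\ell}(x_0) = \emptyset$ for $\ell$ large (using that $E^*_{j_*}\cap\Omega$ is open). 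In $D_{2\rho_\ell}(x_0)$ the partition reduces to the two phases $\{h, k_0\}$, and by \eqref{minprescribcurvreduc} $E^*_h$ is an almost-minimizer of $P_{2s}$ with smooth bounded prescribed nonlocal mean curvature $f_{hk_0}$; moreover the locally uniform convergence of $\Sigma\big((u_*^\e)_{x_0,\rho_\ell}\big)$ to $\partial H_0$ makes the rescaled boundary $\boldsymbol{\varepsilon}_0$-close to $\partial H_0$ on $\overline D_1$ for $\ell$ large. The $\varepsilon$-regularity theorem \cite[Theorem 1.16]{DVV} then promotes $\partial E^*_h$ to a $C^{1,\alpha}$ hypersurface in a neighbourhood of $x_0$, so $T_{x_0}(u_*^\e)$ is reduced to the tangent half-plane of $\partial E^*_h$ at $x_0$, of exactly the claimed form.

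The main obstacle is the quantitative passage to the limit in the second step: the threshold $s_*$ must depend only on $\boldsymbol{\sigma}$ and $n$, not on the particular half-plane $H_0$ nor on the minimizing partition under consideration, which requires controlling both the $O(\delta^2)$ geometric correction and the rate of convergence $(1-2s)P_{2s} \to c_n\,\mathrm{Per}$ uniformly for the two explicit model sets (the half-space and the slab of width $2\delta$ inside $D_{1/2}$). Rotation-invariance of $D_1$ reduces the analysis to the canonical $H_0 = \{x_n<0\}$; once this is in place, the choice $\delta = \delta(\boldsymbol{\sigma},n)$ is dictated purely by the positive gap $\sigma_{i_0j_0} - \sigma_{i_0k_0} - \sigma_{k_0j_0}$.
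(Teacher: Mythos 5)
Your core argument is the same as the paper's: insert a thin cylinder/slab of the $k_0$-phase across the interface, rescale by $(1-2s)$, invoke the $\Gamma$-convergence $(1-2s)P_{2s}\to K_n\,{\rm Per}$ of \cite{Dav}, and use \textbf{(SITI)} to get a strictly negative leading coefficient. Two remarks. First, the geometric correction coming from the lateral face of the cylinder is $O(\delta)$, not $O(\delta^2)$ (the lateral boundary of $\{|x'|<1/2,\,|x_n|<\delta\}$ has $(n-1)$-measure proportional to $\delta$, which is precisely the $2\varepsilon\,\omega_{n-2}$-type term in the paper's \eqref{condmintangentsigmainv6}); the argument survives because the leading contribution $\sigma_{i_0k_0}+\sigma_{k_0j_0}-\sigma_{i_0j_0}<0$ still dominates for $\delta=\delta(\boldsymbol{\sigma},n)$ small, but the stated rate should be corrected. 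Second, your concluding step (non-infiltration via Proposition~\ref{NIP3ISTI} plus the $\varepsilon$-regularity of \cite{DVV} to get uniqueness of the tangent map) goes beyond what the paper's proof of the lemma actually does: the paper only shows that the half-space cone $\chi_{\rm int(H)}{\bf a}_{i_0}+\chi_{\rm int(H^c)}{\bf a}_{j_0}$ fails to be $\mathcal{E}_s$-minimizing, which pins down the indices of the flat tangent map provided by Proposition~\ref{regblowupset} but does not, by itself, rule out tangent cones of smaller spine dimension at a point of $\Sigma_{\rm reg}$. Your extra step is what honestly justifies the \emph{any} tangent map phrasing of the statement; the cost is that you are front-loading the argument of Theorem~\ref{mainthm4} into the lemma, whereas the paper only needs the weaker existence form of the conclusion in its downstream use and so omits this upgrade.
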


\begin{proof}
By Remark \ref{remhalfspa}, since $x_0\in\Sigma_{\rm reg}(u_*^\e)$, it is enough to prove  that for $s$ close enough to $1/2$, the map
\begin{equation}\label{imposstgtmap}
\varphi_0=\chi_{{\rm int}(H)} {\bf a}_{i_0}+\chi_{{\rm int}(H^c)}{\bf a}_{j_0}
\end{equation}
is not a tangent map to $u_*^\e$ at $x_0$. Since $u_*$ is $\mathcal{E}_s$-minimizing in $\Omega$, by Lemma \ref{lemmeminimtangmap} it is enough to prove that 
 $\varphi_0$ in \eqref{imposstgtmap} is not $\mathcal{E}_s$-minimizing in $D_1$ for any half space $H\subset \R^n$ with $0\in\partial H$. By invariance under rotations, it is obviously enough to consider the case $H=\{x_n<0\}$. 

To prove the non minimality of $\varphi_0$, we argue by contradiction assuming that $\varphi_0$ is a $\mathcal{E}_s$-minimizing map into $\mathcal{Z}_{\boldsymbol{\sigma}}$ in $D_1$ 
for some (not labeled) sequence $s\to 1/2$. We shall obtain a contradiction construction a suitable competitor. To this purpose, we set 
$$\varepsilon:=\frac{1}{3}\min\Big\{1,\frac{\omega_{n-1}}{\omega_{n-2}}\Big(\frac{\sigma_{i_0j_0}}{\sigma_{i_0k_0}+\sigma_{j_0k_0}}-1\Big)\Big\}\in(0,1/3]\,, $$
and
$$T:=\big\{x=(x^\prime,x_n)\in\R^n: |x^\prime|< 1/2\,,\;|x_n|< \varepsilon\big\} \subset D_1,.$$
Then we define the map $v\in\widehat H(D_1;\mathcal{Z}_{\boldsymbol{\sigma}})$ by 
$$v:= \chi_{H \setminus T} {\bf a}_{i_0}+\chi_{H^c\setminus T}{\bf a}_{j_0}+\chi_{T}{\bf a}_{k_0}\,, $$
and the associated partitions 
$$\mathfrak{E}^0:=(E_1,E_2,E_3)\text{ with } E_{i_0}= H\,,\;E_{j_0}:=H^c\,,\;E_{k_0}=\emptyset\,,$$
and 
$$\mathfrak{F}:=(F_1,F_2,F_3)\text{ with } F_{i_0}= H\setminus T\,,\;E_{j_0}:=H^c\setminus T\,,\;F_{k_0}=T\,.$$
Since ${\rm spt}(v-\varphi_0)\subset D_1$, the minimality of $\varphi_0$ implies that 
$$\mathscr{P}^{\boldsymbol{\sigma}}_{2s}(\mathfrak{F},D_1)=\frac{2}{\gamma_{n,s}}\mathcal{E}_s(v,D_1)
\geq  \frac{2}{\gamma_{n,s}}\mathcal{E}_s(\varphi_0,D_1)=\mathscr{P}^{\boldsymbol{\sigma}}_{2s}(\mathfrak{E}^0,D_1)\,.$$ 
Since $F_j\triangle E^0_j\subset T$ for each $j$, we infer from Lemma \ref{minimlaityinset} that 
\begin{equation}\label{condmintangentsigmainv}
0\leq \mathscr{P}^{\boldsymbol{\sigma}}_{2s}(\mathfrak{F},D_1) - \mathscr{P}^{\boldsymbol{\sigma}}_{2s}(\mathfrak{E}^0,D_1)= 
 \mathscr{P}^{\boldsymbol{\sigma}}_{2s}(\mathfrak{F},T) - \mathscr{P}^{\boldsymbol{\sigma}}_{2s}(\mathfrak{E}^0,T)\,.
 \end{equation}
By the computations in \eqref{coeffalpham=3}, we have $\sigma_{ij}=\alpha_i+\alpha_j$ for every $i,j\in\{1,\ldots,m\}$ with 
$$\alpha_{i_0}>0\,,\quad \alpha_{j_0}>0\,,\quad \alpha_{k_0}<0\,. $$
As a consequence,
\begin{multline}\label{condmintangentsigmainv1}
 \mathscr{P}^{\boldsymbol{\sigma}}_{2s}(\mathfrak{E}^0,T)=\alpha_{i_0}P_{2s}(H,T)+ \alpha_{j_0}P_{2s}(H^c,T)\\
 \geq (\alpha_{i_0}+ \alpha_{j_0})\mathcal{I}_{2s}(H\cap T,H^c\cap T)
=\frac{\sigma_{i_0j_0}}{2}\iint_{T\times T}\frac{|\chi_H(x)-\chi_H(y)|}{|x-y|^{n+2s}}\,\de x\de y\,,
\end{multline}
and  by symmetry of $H$ and $T$ with respect to $\{x_n=0\}$, 
\begin{align}
\nonumber \mathscr{P}^{\boldsymbol{\sigma}}_{2s}(\mathfrak{F},T)&=\alpha_{i_0}P_{2s}(H\setminus T,T)+\alpha_{j_0}P_{2s}(H^c\setminus T,T)+\alpha_{k_0}P_{2s}(T,T)\\
\nonumber& = \alpha_{i_0}\mathcal{I}_{2s}(H\cap T^c,T)+ \alpha_{j_0}\mathcal{I}_{2s}(H^c\cap T^c,T)+ \alpha_{k_0}\mathcal{I}_{2s}(T,T^c)\\
\nonumber &= \Big(\frac{\alpha_{i_0}+\alpha_{j_0}}{2}+ \alpha_{k_0}\Big)\mathcal{I}_{2s}(T,T^c)\\
& = \frac{\sigma_{i_0k_0}+\sigma_{k_0j_0}}{2} \,\mathcal{I}_{2s}(T,T^c)\,. \label{condmintangentsigmainv2}
\end{align}
Then, we observe that  
\begin{multline}\label{condmintangentsigmainv3}
\mathcal{I}_{2s}(T,T^c)= \mathcal{I}_{2s}(T\cap D_2,T^c\cap D_2) + \mathcal{I}_{2s}(T\cap D_1, D^c_2)\\
=\frac{1}{2}\iint_{D_2\times D_2}\frac{|\chi_T(x)-\chi_T(y)|}{|x-y|^{n+2s}}\,\de x\de y + \mathcal{I}_{2s}(T\cap D_1, D^c_2)\,,
\end{multline}
and 
\begin{equation}\label{condmintangentsigmainv4}
 \mathcal{I}_{2s}(T\cap D_1, D^c_2)\leq  \omega_n\int_{D_2^c}\frac{1}{(|y|-1)^{n+2s}}\,\de y\leq 2^{n+2s}n\omega_n^2\int_2^{+\infty}\frac{dr}{r^{1+2s}}=\frac{2^n\omega_n^2}{2s}\,.
 \end{equation}
According to \cite[Theorem 1]{Dav}, we have 
\begin{equation}\label{condmintangentsigmainv5}
\lim_{s\to 1/2}(1-2s)\iint_{T\times T}\frac{|\chi_H(x)-\chi_H(y)|}{|x-y|^{n+2s}}\,\de x\de y=K_n {\rm Per}(H,T)= \frac{K_n}{2^{n-1}}\omega_{n-1}\,,
\end{equation}
and 
\begin{multline}\label{condmintangentsigmainv6}
 \lim_{s\to 1/2} (1-2s)\iint_{D_2\times D_2}\frac{|\chi_T(x)-\chi_T(y)|}{|x-y|^{n+2s}}\,\de x\de y = K_n{\rm Per}(T,D_2)\\
 =(\omega_{n-1}+2\varepsilon \omega_{n-2})\frac{K_n}{2^{n-2}}\,,
 \end{multline}
where ${\rm Per}(\cdot,A)$ denotes the standard relative perimeter in an open set $A$,  and $K_n$ is a constant depending only on $n$. 

Multiplying \eqref{condmintangentsigmainv} by $(1-2s)$ and letting $s\to 1/2$, we deduce from \eqref{condmintangentsigmainv1}-\eqref{condmintangentsigmainv2}-\eqref{condmintangentsigmainv3}-\eqref{condmintangentsigmainv4}-\eqref{condmintangentsigmainv5}-\eqref{condmintangentsigmainv6} and our choice of $\varepsilon$ that 
$$0\leq \frac{K_n}{2^n}\Big( (\omega_{n-1}+2\varepsilon\omega_{n-2})(\sigma_{i_0k_0}+\sigma_{k_0j_0}) -\omega_{n-1}\sigma_{i_0j_0} \Big)<0\,,$$
a contradiction. 
\end{proof}

\begin{proof}[Proof of Theorem \ref{mainthm4}] 
We choose $s\in(s_*,1/2)$ with $s_*$ given by Lemma \ref{mintangmapclassm3}.  Then we reproduce the proof of Theorem  \ref{mainthm3} using that \eqref{blowuptoplaneprf} holds with $h\in\{i_0,j_0\}$ and $k=k_0$ by Lemma \ref{mintangmapclassm3}. In turns, this allows to use Proposition \ref{NIP3ISTI} instead of Proposition  \ref{NIPnearlyhom}, and it proves that $(\partial\mathfrak{E}^*\cap\Omega)\setminus \Sigma_{\rm sing}=(\Sigma_{\rm reg}(u_*^\e)\cap\Omega)\setminus  \Sigma_{\rm sing}$ is locally a smooth hypersurface. 

We finally claim that $(\partial E^*_{i_0}\cap \partial E^*_{j_0})\cap \Omega\subset \Sigma_{\rm sing}$. Indeed, assume by contradiction that there exists a point $x_0\in (\partial E^*_{i_0}\cap \partial E^*_{j_0}\cap\Omega)\setminus  \Sigma_{\rm sing}$. Since $\partial\mathfrak{E}^*$ is smooth in a neighborhood of $x_0$, we have $\partial\mathfrak{E}^*\cap D_r(x_0)=\partial E^*_{i_0}\cap \partial E^*_{j_0}\cap D_r(x_0)$ and $|(E^*_{i_0}\cup E^*_{j_0})\triangle D_r(x_0)|=0$ for $r$ small enough. As a consequence, there is a unique tangent map to $u_*^\e$ at $x_0$ of the form  \eqref{imposstgtmap} on $\R^n$, which contradicts Lemma \ref{mintangmapclassm3}. 
\end{proof}

%%%%%%%%%%%%%%%%%%%%%%%%%%%%%%%%%%%%%%%%%%%%%%%%%%%%%%%%
%%%%%%%%%%%%%%%%%%%%%%%%%%%%%%%%%%%%%%%%%%%%%%%%%%%%%%%%

%=======================
% BIBLIOGRAPHY AND INDEX
%=======================

\end{document}